\newtheorem{theorem}{Theorem}[section]{\bf}{\it}
\newtheorem{lemma}[theorem]{Lemma}{\bf}{\it}
\newtheorem{proposition}[theorem]{Proposition}{\bf}{\it}
\newtheorem{corollary}[theorem]{Corollary}{\bf}{\it}
{\bf}{\it} 
{\bf}{\it}
\newtheorem*{theorem*}{Theorem}
\newtheorem{remark}[theorem]{Remark}
\newtheorem*{remark*}{Remark}
{\bf}{\it}
{\bf}{\it}
\newtheorem{convention}[theorem]{Convention}{\bf}{\it}
\newtheorem{definition}[theorem]{Definition}
\newtheorem{standing}[theorem]{Standing assumptions}{\bf}{\it}
{\bf}{\it}
\theoremstyle{remark}
\theoremstyle{definition}
\theoremstyle{remark}
\newcommand{\R}{\mathbb R}
\newcommand{\C}{\mathbb C}
\newcommand{\N}{\mathbb N}
\newcommand{\loc}{{\operatorname{loc}}}
\newcommand{\dist}{{\operatorname{dist}\,}}
\newcommand{\id}{{\operatorname{id}}}
\newdimen\vintkern\vintkern11pt
\def\vint{-\kern-\vintkern\int}
\newcommand{\norm}[1]{\lVert #1 \rVert}
\newcommand{\bS}{\mathbb{S}}
\newcommand{\cL}{\mathcal{L}}
\newcommand{\cP}{\mathcal{P}}
\newcommand{\interior}{\mathrm{int}}
\newcommand{\Star}{\mathrm{St}}
\newcommand{\cD}{\mathcal{D}}
\newcommand{\cT}{\mathcal{T}}
\newcommand{\cC}{\mathcal{C}}
\newcommand{\cR}{\mathcal{R}}
\newcommand{\sfK}{\mathsf{K}}
\newcommand{\Wedge}{\mathrm{Wedge}}
\newcommand{\Tunnel}{\mathsf{Tunnel}}
\newcommand{\partialtop}{\partial_{\mathrm{top}}}
\newcommand{\tunnel}{\mathsf{tunnel}}
\newcommand{\cA}{\mathcal{A}}
\newcommand{\fT}{\mathfrak{T}}
\newcommand{\cl}{\mathrm{cl}}
\newcommand{\D}{\mathsf{D}}
\newcommand{\cS}{\mathcal S}
\newcommand{\sK}{\mathsf{K}}
\renewcommand{\emptyset}{\varnothing}
\newcommand{\Refine}{\mathrm{Ref}}
\newcommand{\Core}{\mathsf{Core}}
\newcommand{\Comp}{\mathsf{Comp}}
\newcommand{\Real}{\mathsf{Lift}}
\newcommand{\Realdented}{\mathsf{Lift}^{\mathsf{dented}}}
\newcommand{\Rec}{\mathrm{Comp}^{\mathsf{rec}}}
\newcommand{\Urec}{U^{\mathsf{rec}}}
\newcommand{\Gammacut}{\Gamma^{\mathsf{cut}}}
\newcommand{\Tr}{\mathsf{Tr}}
\newcommand{\Span}{\mathrm{Span}}
\newcommand{\sfJ}{\mathsf{J}}
\newcommand{\sfE}{\mathsf{E}}
\newcommand{\sfC}{\mathsf{C}}
\newcommand{\sfR}{\mathsf{R}}
\newcommand{\sfT}{\mathsf{T}}
\newcommand{\sfRC}{\mathsf{RC}}
\newcommand{\Channel}{\mathsf{Ch}}
\newcommand{\Dent}{\mathsf{D}}
\newcommand{\RC}{\mathsf{RC}}
\title[On Lakes of Wada]{On Lakes of Wada}
\author{Pekka Pankka}
\address{Department of Mathematics and Statistics, P.O. Box 68 (Pietari Kalmin katu 5), FI-00014 University of Helsinki, Finland}
\email{pekka.pankka@helsinki.fi}
\author{Jang-Mei Wu}
\address{Department of Mathematics, University of Illinois,  1409 West Green Street, Urbana, IL 61822, USA}
\email{jmwu@illinois.edu}
\thanks{This work was supported in part by the Research Council of Finland (RCF) projects \#256228, \#297258 and \#332671, RCF Center of Excellence FiRST, and a grant from the Simons Foundation \#353435. }
\subjclass[2020]{Primary 30C65; Secondary 30L10}
\begin{document}

\begin{abstract}
There exist Lakes of Wada in $\bS^n, n\ge 3,$ which are quasiconformally equivalent to a Euclidean ball and are John domains.
\end{abstract}

\maketitle

\setcounter{tocdepth}{1}  

\tableofcontents

\section{Introduction}

Classically, Lakes of Wada are three or more open sets in $\bS^2$ which have a common boundary. The common boundary is necessarily a continuum and it is called a Wada continuum. This definition generalizes immediately to higher dimensional spheres and further to connected manifolds (possibly with boundary):
\emph{A continuum $X$ on a connected manifold $M$ is a \emph{Wada continuum} if $M\setminus X$ has at least three connected components and each point in $X$ is a common boundary point of all connected components of $M\setminus X$. In this case, the connected components of $M\setminus X$ are called \emph{Lakes of Wada}}.

In $\bS^2$, Lakes of Wada were constructed by Yoneyama \cite{Yoneyama} in 1917, who credited the idea to Takeo Wada. They were also known previously by  Brouwer \cite{Brouwer} in 1910. In 1951, Luba\'nski \cite{Lubanski} proved the existence of Lakes of Wada in $\bS^n$ for $n \ge 3$.

While Lakes of Wada in $\bS^2$ are conformally equivalent to the unit disk, by the Riemann Mapping Theorem,
they do not have the internal geometry of the unit disk. For example, Lakes of Wada are never John domains (Proposition \ref{prop:Wada_2dim}). This  changes when we pass from two to higher dimensions.

In this article, we construct Wada continua in $\bS^n$ for $n\geq 3$, whose complementary components are both quasiconformally equivalent to the unit ball and John domains.

\begin{restatable}[Lakes of Wada]{theorem}{introthmWadasphere}
\label{intro-thm:Wada-on-sphere}
For $n\ge 3$ and $m \ge 3$, there exists a Wada continuum in $\bS^n$ having exactly $m$ complementary components, each of which is both quasiconformal to the Euclidean ball $B^n(0,1)$ and a John domain.
\end{restatable}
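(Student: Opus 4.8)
The plan is to realise $X$ as a nested intersection of ``lands'' obtained from $\bS^n$ by simultaneously digging $m$ systems of canals, with the canals shaped carefully enough that the quasiconformal and John properties of the lakes can be read off from the construction. Fix a round ball $B\subset\bS^n$ and inside it $m$ pairwise disjoint round balls $U_1^{(0)},\dots,U_m^{(0)}$, and set $Y_0=\bS^n\setminus\bigcup_i U_i^{(0)}$, a compact connected set. I would build inductively, for $k\ge 0$: pairwise disjoint PL balls $U_i^{(k)}$ with $U_i^{(k)}\subset U_i^{(k+1)}$; the land $Y_k=\bS^n\setminus\bigcup_i U_i^{(k)}$, compact, connected and decreasing in $k$; radii $r_k\uparrow 1$ together with $K$-quasiconformal homeomorphisms $h_i^{(k)}\colon B^n(0,r_k)\to U_i^{(k)}$ with $K$ independent of $i,k$ and with $h_i^{(k+1)}|_{B^n(0,r_k)}=h_i^{(k)}$; a fixed centre $p_i\in U_i^{(0)}$ and a uniform $c>0$ such that every point of $U_i^{(k)}$ joins $p_i$ by a $c$-carrot curve inside $U_i^{(k)}$; and a density condition: every point of $Y_k$ lies within $2^{-k}$ of $U_i^{(k+1)}$ for each $i$.

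The single induction step is the crux. Given stage $k$, subdivide the land into a mesh of size $\sim 2^{-k}$, and for each mesh cube $Q$ meeting $Y_k$ and each $i$ attach to $U_i^{(k)}$ a thin \emph{finger} $F_{i,Q}$: a PL tube issuing from $\partial U_i^{(k)}$ over a small topological disc, running through the interior of $Y_k$, with free tip inside a small dilate of $Q$, and with cross-sectional radius comparable to a fixed fraction of its length, all comparability constants absolute. The fingers for distinct $(i,Q)$ are pairwise disjoint and avoid the other lakes; this is exactly where $n\ge 3$ enters, since $1$-dimensional cores have codimension $\ge 2$ and can be routed past one another in general position with room left over to thicken them. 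Put $U_i^{(k+1)}=U_i^{(k)}\cup\bigcup_Q F_{i,Q}$ and $Y_{k+1}=\bS^n\setminus\bigcup_i U_i^{(k+1)}$. One then verifies the inductive claims: $Y_{k+1}$ stays connected, because deleting from the connected set $Y_k$ a disjoint family of closed fingers, each attached to $\partial Y_k$ at one end and with a free interior tip, cannot separate it (a path obstructed by a finger reroutes around its tip); the density condition propagates, since $F_{i,Q}$ reaches into $Q$; $U_i^{(k+1)}$ is again a PL ball, being $U_i^{(k)}$ with finitely many disjoint balls glued along boundary discs; $h_i^{(k)}$ extends across the shell $B^n(0,r_{k+1})\setminus\overline{B^n(0,r_k)}$ to a $K$-quasiconformal map onto $U_i^{(k+1)}$ by filling that shell with copies of one fixed model map adjoining a single carrot-shaped tube, so that $K$ does not grow; and the carrot condition persists, because a new point in a finger reaches $\partial U_i^{(k)}$ down that finger's uniform carrot and then follows the stage-$k$ curve, the relevant scales dropping by a bounded ratio per stage.

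Passing to the limit, $U_i:=\bigcup_k U_i^{(k)}$ is open and connected, and $h_i:=\bigcup_k h_i^{(k)}\colon B^n(0,1)\to U_i$ is a well-defined $K$-quasiconformal homeomorphism, the $h_i^{(k)}$ agreeing on balls exhausting $B^n(0,1)$; hence $U_i$ is quasiconformal to $B^n(0,1)$, in particular a topological ball. The uniform carrot curves survive in the limit, so $U_i$ is a John domain. The set $X:=\bS^n\setminus\bigcup_{i=1}^m U_i=\bigcap_k Y_k$ is a nested intersection of continua, hence a continuum; $\bS^n\setminus X=\bigsqcup_{i=1}^m U_i$ has exactly $m$ connected components; the density condition gives $X\subseteq\partial U_i$ for every $i$, while $X\supseteq\partial U_i$ holds automatically because the $U_i$ are disjoint open sets. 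Thus $\partial U_i=X$ for all $i$, so $X$ is a Wada continuum with exactly $m$ lakes, each quasiconformal to $B^n(0,1)$ and a John domain.

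\textbf{Main obstacle.} I expect the real difficulty to lie entirely in the induction step, and within it in maintaining the quasiconformal constant $K$ — jointly with the John constant $c$ and all the disjointness constraints — over infinitely many stages. The fingers must be routed and thickened so that at \emph{every} scale they present the quasiconformal extension lemma with the same bounded-geometry attachment data; preventing $K$ from creeping up requires an essentially self-similar combinatorial bookkeeping scheme organising the canals, their junctions, and the reservoir of remaining land, of the kind the body of the paper develops. That this is impossible in dimension two is precisely the content of Proposition~\ref{prop:Wada_2dim}: there thin canals are unavoidable and the carrot condition cannot be maintained.
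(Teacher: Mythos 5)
Your overall strategy — grow the $m$ lakes iteratively by digging disjoint thin canals ("fingers") into the remaining land at each stage, use codimension-$\ge 2$ general position of the $1$-dimensional cores to route them (this is where $n\ge3$ enters), propagate a density condition to force a common boundary, and maintain uniform quasiconformal and John constants — is essentially the paper's strategy, and you correctly identify the uniform-constant bookkeeping as the crux. Where the proposal has a genuine gap, however, is precisely in the quasiconformality mechanism: you propose building $h_i^{(k)}$ by extending $h_i^{(k-1)}$ across the spherical shell $B^n(0,r_{k+1})\setminus\overline{B^n(0,r_k)}$ stage by stage, claiming "$K$ does not grow" because each shell is filled with copies of one fixed model map. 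This does not obviously close, and the paper explicitly abandons this iterative route. The obstruction is that the finger bases at stage $k$ sit on $\partial U_i^{(k)}$, whose preimage geometry under $h_i^{(k)}$ on $\partial B^n(0,r_k)$ can become arbitrarily distorted as $k$ grows; extending with fixed aspect-ratio fingers across an increasingly warped interface need not be $K$-QC for a $k$-independent $K$. The paper avoids cumulative distortion by an entirely different bookkeeping: indentations carved over all generations are recorded as a single cumulative indentation $\Dent_\cup(Z_0,\ldots,Z_{k-1};\Sigma_i)$ and flattened \emph{simultaneously} by a single bilipschitz map (Theorem \ref{theorem:flattening-indentation} applied via Proposition \ref{prop:general-tree-structure-of-Realization}), and the remaining tree of tunnels is contracted inside a nested family of disjoint "tents" $\widehat{\Omega}_{q;i}$ so that each composite map $\psi_{1;i}\circ\cdots\circ\psi_{k;i}$ acts on any given point by at most two non-scaling steps (Theorem \ref{theorem:quasiconformal-stable}). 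The \emph{localized} channeling of Section \ref{sec:localization}, which caps the size of every tunnel after the first step by $\lambda_\loc$, is what makes the tent trick possible; your proposal has no analogue of this localization. Without some device of that kind, the proposal's induction step does not yield a uniform $K$, and the argument does not go through.

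A second, smaller point: your claim that deleting the fingers from $Y_k$ "cannot separate it" because "a path obstructed by a finger reroutes around its tip" needs justification. The paper guarantees connectivity of the residual land combinatorially — via Corollary \ref{cor:receded-connected}, which rests on careful placement of pre-reservoir cubes and canals inside the refinement — rather than by general position rerouting. Your topological intuition is right, but with fingers from $m$ different lakes interleaved in each mesh cube this requires an argument, which is again where the paper's marker/connector architecture (Sections \ref{sec:markers}--\ref{sec:connector}) earns its keep.
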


This result on Wada continua on spheres is an immediate consequence of the following existence result for Wada continua on compact Riemannian manifolds with boundary. 

In what follows, for an $n$-manifold $M$ with boundary, we denote $\interior M$ the set of manifold points of $M$, that is, the set of points having neighborhoods homeomorphic to $\R^n$, and denote $\partial M$ the manifold boundary of $M$, that is, non-manifold points of $M$; recall that $\partial M$ is an $(n-1)$-manifold in its relative topology. The sets $\interior M$ and $\partial M$ are called, respectively, the manifold interior and manifold boundary of $M$. To distinguish, we denote by $\partialtop E$ the topological boundary of a subset $E$ of a topological space.

\begin{restatable}{theorem}{introthmWadamanifold}
\label{intro-thm:Wada-Riemannian-manifold} 
\index{Lakes of Wada!quasiconformally controlled} 
Let $n\ge 3$, $m\geq 3$, and $M$ be a compact connected Riemannian $n$-manifold with $m$ boundary components. Then there exist a constant $\sK=\sK(n,M)>1$ and a Wada continuum $X\subset \interior M$ having the following properties:
\begin{enumerate}
\item $M\setminus X$ has $m$ connected components, each of which contains a connected component of $\partial M$; and \label{item:Wada}
\item each component $\Omega$ of $M \setminus X$ is a John domain in $M$ and is $\sK$-quasiconformal to the product $(\Omega \cap \partial M) \times [0,1)$. \label{item:John+QC}
\end{enumerate}
\end{restatable}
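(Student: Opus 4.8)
The plan is to build each lake $\Omega_i$ as an increasing union $\Omega_i=\bigcup_k\Omega_i^k$ of compact $n$-manifolds-with-corners: $\Omega_i^0$ will be a fixed collar of the $i$-th boundary component $\Sigma_i\subset\partial M$, and $\Omega_i^{k+1}$ will be obtained from $\Omega_i^k$ by attaching finitely many thin ``tunnels''. The $m$ families of tunnels are to be dug in an interleaved pattern so that their union becomes dense in $M$; the Wada continuum will be the leftover $X=M\setminus\bigcup_i\Omega_i=\bigcap_k R_k$, where $R_k=M\setminus\bigcup_i\Omega_i^k$. Along the induction I want to maintain three things with constants independent of $k$: (i) $R_k$ is a connected neighbourhood of $X$ and each $\Omega_i^k$ meets every $\ep_k$-ball centred in $R_k$, with $\ep_k\to 0$; (ii) there is a $\sK$-quasiconformal homeomorphism $\Omega_i^k\to\Sigma_i\times[0,t_k)$ with $t_k\uparrow 1$; and (iii) a carrot (John) condition for $\Omega_i^k$.

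First I would fix disjoint bi-Lipschitz collar coordinates $\Sigma_i\times[0,2)\hookrightarrow M$ and a background cubical structure on the complement of the collars, and then fix, once and for all, a finite catalogue of model shapes in $\R^n$: a ``flared tube'' $T$ (a bi-Lipschitz ball with a designated mouth and tip whose cross-section widens linearly with distance to the tip) and a ``junction block'' $J$ (a cube with $m$ disjoint tube-mouths on one face and a single mouth on the opposite face, encoding how the $m$ lakes fan into one cube of a subdivision), arranged so that $T$ and $J$ each satisfy an internal carrot condition. Every tunnel dug below will be a rescaled copy of $T$, every fan-out a rescaled copy of $J$, and every gluing will be in standard position, so that each quasiconformal extension performed is one of finitely many ``catalogue moves'', with distortion constant from a finite set. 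Setting $\sK$ to be the maximum of that finite set together with the distortion of the initial collars gives the bound in (ii), and --- crucially --- prevents $\sK$ from compounding over the infinitely many excavation stages, because distortion is injected only in freshly attached, essentially pairwise disjoint regions, each of which is altered only boundedly often.

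The inductive step: given the stage-$k$ data, pass to a cubical subdivision of $R_k$ of mesh $<\ep_k$; in each cube $Q$ that still needs to be approached, insert a rescaled junction block $J_Q$, and for each $i$ run a rescaled flared tube from $\partialtop\Omega_i^k$ --- reached along the tunnels dug in neighbouring cubes at earlier stages --- into $J_Q$, so that every lake acquires a finger terminating inside $Q$. Choosing the tube radii small and in generic position, a local ``dented cube'' argument gives that $R_{k+1}$ (obtained from $R_k$ by deleting the new tunnels) is still connected and still a neighbourhood of $\bigcap_j R_j$; choosing the radii and the subdivision so that successive generations of fingers have diameters decaying by a fixed factor $\delta\in(0,1)$ sets up the John estimate. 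On the model side I perform the abstract analogue --- attach standard fingers to the top $\Sigma_i\times\{t_k\}$ and pass to $\Sigma_i\times[0,t_{k+1})$ --- and extend the homeomorphism $\Omega_i^k\to\Sigma_i\times[0,t_k)$ over the new pieces by catalogue maps, changing the quasiconformality constant only within the allowed finite set. Passing to the limit, $X=\bigcap_k R_k$ is a nested intersection of compact connected sets, hence a continuum; the mesh control together with the demand that every cube of every subdivision be fingered by all $m$ lakes forces each point of $X$ into $\overline{\Omega_i}$ for every $i$, so $\partialtop\Omega_i=X$ for all $i$, giving the Wada property with $\Sigma_i\subset\Omega_i$; the compatible uniformly $\sK$-quasiconformal maps glue to a $\sK$-quasiconformal homeomorphism $\Omega_i\to\Sigma_i\times[0,1)=(\Omega_i\cap\partial M)\times[0,1)$; and concatenating the internal carrots of the catalogue shapes, from a deep point in a generation-$k$ finger back through the successive generations to the trunk and then to a fixed centre in $\Sigma_i\times\{1/2\}$, using the decay $\delta^k$, yields a global carrot condition, so each $\Omega_i$ is a John domain.

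I expect the main obstacle to be the tension between (i) and (ii): making all $m$ channels dense forces the fingers to become arbitrarily thin and numerous, while keeping each channel globally $\sK$-quasiconformal to a fixed product forbids distortion from accumulating over infinitely many stages. Reconciling these is exactly the point of the finite catalogue of standard shapes and the ``distortion only in fresh regions, always in standard position'' bookkeeping; getting that bookkeeping right --- designing the subdivisions, the relative placement of the $m$ mouths inside each junction block, and the radius/diameter decay so that $R_k$ stays connected, the Wada condition holds, and every quasiconformal extension is one of the finitely many catalogue moves --- is the heart of the argument. A secondary technical point is the local combinatorial lemma that thin generic tubes can always be deleted from a cubical region without disconnecting it and without spoiling the John/quasiconformal bookkeeping.
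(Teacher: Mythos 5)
Your plan follows the paper's strategy closely: both build the lakes iteratively by attaching tunnels along a fixed cubical structure, both use a shrinking mesh, both argue the John property via decaying carrot scales, and both control quasiconformality by injecting distortion only in ``fresh'' regions. In the paper's notation your $\Omega_i^k$ is the interior of the $\Sigma_i$-core $\Core(Z_k;\Sigma_i)$, your flared tubes and junction blocks play the role of the reservoir-canal systems, your ``dented cube'' argument is the indentation-flattening theorem (Theorem~\ref{theorem:flattening-indentation}), and your ``model side'' is the realization $\Real(Z_k;\Sigma_i)$.

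The material gap is precisely the bookkeeping you identify as the heart of the argument, and it is worth seeing why it is not merely ``a secondary technical point.'' When a stage-$(k{+}1)$ finger is docked onto the tip of a stage-$k$ finger, which was itself docked onto a stage-$(k{-}1)$ finger, and so on, a point deep in a generation-$\ell$ finger lies in the support of the tunnel-contraction at every generation $\le\ell$. Extending by catalogue maps does nothing by itself to stop distortion from compounding along this chain; you need all but $O(1)$ of those maps to act \emph{conformally} at that point. The paper secures this through two devices that your proposal does not set up: first, a combinatorial nesting condition (Lemma~\ref{lemma:RC_cup}) which guarantees that each new spectral dent either sits inside an earlier one or abuts $Z_0$, so that all cumulative dents form a single indentation of $\Real(Z_0;\Sigma_i)$ that can be flattened in one uniformly bilipschitz step (Proposition~\ref{prop:general-tree-structure-of-Realization}); and second, a normalization making each tunnel-contraction an exact similarity on the ``tent'' where the next generation docks, so that the composition of all earlier contractions acts only by scaling there (see the proof of Theorem~\ref{theorem:quasiconformal-stable}). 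Your catalogue framework is the natural place to impose both requirements, but as written it does not, and without them the uniform bound $\sK$ in your invariant (ii) is unsupported.
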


\begin{remark}
In the case of two boundary components, the proof of Theorem \ref{intro-thm:Wada-Riemannian-manifold} yields a continuum, which is the the common boundary of two complementary domains, satisfying properties \eqref{item:Wada} and \eqref{item:John+QC}; see Theorem \ref{thm:Wada-Riemannian-manifold} for a statement.
We follow here the tradition and do not call continua having less than three complementary components Wada continua. 

Continua $X\subset \interior M$ having no interior points and satisfying properties \eqref{item:Wada} and \eqref{item:John+QC},  exist also on compact manifolds $M$ without boundary or having only one boundary component. In particular, for a closed and connected Riemannian $n$-manifold $M$, there exists a continuum $X$, having no interior points, for which $M\setminus X$ is a John domain  quasiconformal to an open Euclidean unit $n$-ball. Recall that a compact manifold without boundary is called closed. We refer to Corollary \ref{cor:one_none_boundary} for the constructions of these continua. 
\end{remark}

In order to prove Theorem \ref{intro-thm:Wada-on-sphere}, it suffices to fix $m$ mutually disjoint closed PL balls $B_1,\ldots, B_m$ in $\bS^n$ and  apply Theorem \ref{intro-thm:Wada-Riemannian-manifold} to manifold $M = \bS^n \setminus \interior (B_1 \cup \cdots \cup B_m)$.

We recall now the definitions in these statements. A homeomorphism $f\colon M \to N$ between two oriented Riemannian $n$-manifolds without boundary ($n\geq 2$) is \emph{$\sK$-quasiconformal} for $\sK \ge 1$, if $f$ belongs to the Sobolev space $W^{1,n}_\loc(M;N)$ and satisfies the distortion inequality
\begin{equation}
\label{eq:dist}
\norm{Df}^n \le \sK J_f\quad \text{a.e.}\ M,
\end{equation}
where $\norm{Df}$ is the norm of the weak differential $Df$ of $f$ and $J_f$ is the Jacobian determinant of $f$. 
We say that a homeomorphism $f\colon M\to N$ between Riemannian manifolds with boundary is \emph{quasiconformal} if there exist open Riemannian manifolds $M'$ and $N'$ containing manifolds $M$ and $N$ as smooth submanifolds with boundary, respectively, and $f$ extends to a quasiconformal homeomorphism $f' \colon M' \to N'$.

\begin{remark*}
This definition of quasiconformal homeomorphism between Riemannian manifolds with boundary stems from the corresponding definition of diffeomorphisms. Note that, a diffeomorphism between compact Riemannian manifolds with boundary is a quasiconformal homeomorphism. 
Note also that a homeomorphism between two compact Riemannian manifolds with boundary, if quasiconformal with respect to some Riemannian metrics, is quasiconformal with respect to all Riemannian metrics.
The definition in the case of manifolds with boundary given here is equivalent to the requirement that, for a homeomorphism $M\to N$ between manifolds with boundary, the restriction $\interior M\to \interior N$ is quasiconformal. This follows from classical boundary extension of quasiconformal homeomorphis and theorems of Tukia and V\"ais\"al\"a \cite{Tukia-Vaisala-AASF,Tukia-Vaisala-Annals}. 
There are also other equivalent definitions, in particular, a metric definition and a geometric definition using modulus of curve families; we refer to monographs of Ahlfors \cite{Ahlfors-book}  and V\"ais\"al\"a \cite{Vaisala-book} for the quasiconformal theory. 
\end{remark*}

The concept of John domains is due to F. John \cite{John}, and the term John domain was coined by  Martio and Sarvas in  \cite{Martio-Sarvas}. 
John domains occur frequently in the study of elasticity and geometric analysis.
A proper subdomain $D$ of $\R^n, n\geq 2$ is a $C$-\emph{John domain for $C\ge 1$}, if any two points $a, b \in D$ can be joined by a rectifiable curve $\gamma \subset D$ satisfying the length-distance estimate
\begin{equation}
\label{eq:John-intro-Rn}
\min \{s(\gamma(a,x)), s(\gamma(b,x))\} \leq C  \, \dist (x,\R^n \setminus D)\quad \text{for all}\,\, x\in \gamma, 
\end{equation}
where $\gamma(a,x)$ (resp.~$\gamma(b,x)$) is the part of $\gamma$ between $a$ and $x$ (resp.~between $b$ and $x$) and  $s(\cdot)$ is the length. 

An open and connected proper subset $D$ of a Riemannian manifold $M$ (possibly with boundary) is a \emph{$C$-John domain in $M$} if it satisfies an length-distance estimate  \eqref{eq:John-intro-Rn} for rectifiable curves with respect to $\dist(x, M\setminus D)$.

Roughly speaking  any two points in a John domain $D$ can be connected by a  twisted double-cone in $D$ having vertices at these two points. 
If $f \colon B^n(0,r) \to M$ is a quasiconformal embedding of a Euclidean ball $B^n(0,r)$ into a Riemannian manifold, then the image $f(B^n(0,r/2))$ remains somewhat round and has diameter roughly the distance to the complement 
of the image. While many quasiconformal images of Euclidean balls are John domains, these two classes are not the same.
For example, for $n\ge 3$, a domain in $\R^n$ that contains an inward-directed spike can not be mapped to a ball by a quasiconformal map, but it can be a John domain. On the other hand, a domain, which contains an outward-directed spike, is not a John domain, but it can be mapped by a quasiconformal map to a ball. We refer to \cite{Gehring-Vaisala} and \cite{Heinonen-book} for the explanation.

\smallskip

Apart from the intrinsic geometry of higher dimensional Lakes of Wada, our interest in the Wada continua stems from their roles in the construction of quasiregular mappings 
and in complex dynamics. Recall that a continuous mapping $f\colon M \to N$ between oriented Riemannian manifolds is \emph{quasiregular} if $f$ belongs to the Sobolev space $W^{1,n}_\loc(M;N)$ and satisfies the distortion inequality \eqref{eq:dist} for some $\sK\ge 1$.

Rickman \cite{Rickman_Acta} used two dimensional complexes -- called map complexes -- in his construction of a quasiregular map $\R^3 \to \bS^3$ omitting a given finite set in $\bS^3$. Analogous $2$-dimensional complexes were later used by Heinonen and Rickman \cite{Heinonen-Rickman_Topology, Heinonen-Rickman_Duke} -- first under the name of separating polyhedra in \cite{Heinonen-Rickman_Topology} and then under the name separating complexes in \cite{Heinonen-Rickman_Duke} -- to construct, for example, quasiregular maps $\bS^3 \to \bS^3$, whose branch sets  contain Antoine's necklaces, and quasiregular maps $B^3 \to B^3$ with no radial limits in the limit set of certain Kleinian groups. 
The common property of these constructions is that the distortion of the map is controlled by a quantity depending only on the dimension
and topological data associated to the construction. In particular, in these constructions the distortion does not put restriction on the (global) degree of the mappings.
We refer to \cite{Heinonen-Rickman_Topology}, \cite{Heinonen-Rickman_Duke}, and \cite{Rickman_Acta} for the terminology and precise statements.
In higher dimensions $n\ge 4$, Drasin and the first named author used a cubical version of separating complexes (with respect to standard cubical structure of $\R^n$) in \cite{Drasin-Pankka} to construct a quasiregular map $\R^n \to \bS^n$ omitting a given finite set in $\bS^n$;  see the end of the introduction in \cite{Drasin-Pankka} for discussion.

In complex dynamics,  Mart\'i--Pete, Rempe, and Waterman recently gave a positive answer  in \cite{Marti-Rempe-Waterman} to a question of Fatou from 1926 on the existence of a Wada continuum in $\bS^2$ which is the common boundary of infinitely many Fatou components of a transcendental entire function $\C \to \C$. In dimensions $n\ge 3$, an analog of Fatou's question asks: \emph{Does there exist, for $n\ge 3$, a Wada continuum in $\bS^n$ which is a common boundary of Fatou components of a uniformly quasiregular mapping $f\colon \R^n \to \R^n$?} We refer to \cite{Marti-Rempe-Waterman} for the history of Fatou's problem and examples of Wada continua in dynamics. 

\subsubsection*{Outline of the proof} The  Wada continuum $X$ in Theorem \ref{intro-thm:Wada-Riemannian-manifold} is found by an iterative subdivision process starting from a cubical structure $K$ on the manifold $M$.

We begin by passing from the Riemannian manifold  $(M,g)$ to a cubical $n$-complex $K$ having space $|K|=M$. The cubical complex carries a natural polyhedral metric $d_K$ 
for which the metric space $(|K|, d_K)$ is quasisimilar to $(M,d_g)$, where $d_g$ is the length metric induced by the Riemannian metric $g$; see Proposition \ref{prop:Riemannian-to-cubical}. 

We take next a monotone sequence $K_0=K, K_1, K_2, \ldots$ of cubical subdivisions of $K$ in which each complex $K_{\ell+1}$ is obtained by subdividing the cubes in $K_\ell$ with a fixed rule. This subdivision respects the metric $d_K$ in the sense that cubes in $K_\ell$ have side length $3^{-\ell}$ with respect to metric $d_K$.

The crux of the proof is to construct  
a sequence $Z_0 \subset K_0, Z_1 \subset K_1, Z_2\subset K_2, \ldots$ of $(n-1)$-dimensional (separating) subcomplexes in the interior of $|K|$ 
having the property that each connected component of $|K|\setminus |Z_\ell|$ contains exactly one boundary component of $|K|$ and is quasiconformal to a collar of that boundary component. 
The Wada continuum $X$ is obtained as the Hausdorff limit of the spaces $|Z_\ell|\subset M$ of complexes $Z_\ell$.

Heuristically, the inductive step
from $Z_\ell$ to $Z_{\ell+1}$ may be viewed as a controlled way of trading $n$-dimensional cubes in $K_{\ell+1}$ between 
components of $|K|\setminus |Z_\ell|$ near $Z_\ell$. The aims of this trading are threefold: 
\begin{enumerate}
\item at each step, components of $|K|\setminus |Z_{\ell+1}|$ remain in one-to-one correspondence with respect to the components of $|\partial K|$, 
\item the maximum distance from each cube in $Z_{\ell}$ to any of the components of $|K|\setminus |Z_\ell|$ tends to zero as $\ell \to \infty$, and 
\item each $|K|\setminus |Z_{\ell+1}|$ is $\sK$-quasiconformal to a fixed collar of $|\partial K|$ for a distortion constant $\sK$ independent of $\ell$.
\end{enumerate}

One difficulty of the construction stems from condition (2), which may be viewed as a Wada-type condition for complexes $Z_\ell$. 
More precisely, each $(n-1)$-cube $q$ in $Z_\ell$ is  a face of exacty two $n$-cubes in $K_\ell$. Thus $q$ is
 on the boundary of at most two of the components of $|K|\setminus |Z_\ell|$. By (2), however, $q$ is required to be close to all components of $|K|\setminus |Z_\ell|$.

\subsubsection*{Organization of the article}

We discuss, in Section \ref{sec:From-Riem-To-Cubical}, cubical structures on Riemannian manifolds and the polyhedral metrics on cubical complexes.  We also introduce some terminology related to cubical complexes. 

In Section \ref{sec:adjacency-graphs}, we discuss first adjacency graphs, realizations of subgraphs of adjacency graphs as complexes, and cut-graphs. We define next a special class of complexes, called tunnels, and prove a tunnel contracting lemma which is applied repeatedly in our proof. Finally, we introduce the notion of a good complex, which comprises all essential properties of a cubical complex that are needed in the construction.

Separating complexes are defined in Section \ref{sec:Separating-complexes}, along with the statement of a theorem on the evolution of separating complexes, which is used to guide our construction.

In Section \ref{sec:Indentation}, we introduce the notion of indentation and prove a flattening theorem for indentations, which is used to control the quasiconformality constant of the domains.

The iterative
construction of the sequence $Z_0,Z_1,Z_2,\ldots$ of separating complexes is given in Sections \ref{sec:reservoir-canal-system} and \ref{sec:evolution-seq}.
In Section  \ref{sec:reservoir-canal-system} we discuss the building blocks -- reservoir-canal systems -- for our construction and the method of channeling. In Section \ref{sec:evolution-seq} we discuss the iterative processes.
Quasiconformal stability of the sequence $Z_0,Z_1,Z_2,\ldots$  is proved in Section \ref{sec:quasiconformality}. Finally Theorem \ref{intro-thm:Wada-Riemannian-manifold} is proved in Section \ref{sec:Wada}. In this final section, we also show that two dimensional Lakes of Wada are not John-domains.


\section{Cubical structures on Riemannian manifolds}
\label{sec:From-Riem-To-Cubical}

In this section, we discuss cubical structures on Riemannian manifolds. Recall that every Riemannian $n$-manifold $(M,g)$ admits a triangulation $T$ in which every $n$-simplex $(\sigma, g|_{\sigma})$ is bilipschitz to an affine Euclidean $n$-simplex. We refer to Cairns \cite{Cairns}, Whitehead \cite{WhiteheadC1}, and Munkres \cite{Munkres} for constructions; see also e.g.~Dyer, Vegter, and Wintraecken \cite{Dyer-Vegter-Wintraecken}, Peltonen \cite{Peltonen}, or Saucan \cite{Saucan} for triangulations with controlled geometry.  

In what follows, we call a simplicial complex $T$ having space $M$ a triangulation of $M$. We also denote $d_T$ the simplicial metric on $M$ under which simplices of $T$ are similar to the standard simplices $\Delta^k \subset \R^{k+1}$, for $k=0,\ldots, n$, in the Euclidean space.

Barycentric subdivision of a triangulation induces a cubical structure on $M$ (see e.g.~Shtan'ko and Shtogrin \cite{Shtanko-Shtogrin}), which carries a natural polyhedral metric. Although this reduction of a Riemannian manifold to a metric cubical complex is known to the experts, 
we devote 
this section to establish this connection. We discuss this in three parts: cubical complexes and related notations, polyhedral metrics on cubical complexes, and finally the change of metric on barycentric subdivisions from cubical to simplicial complexes,  then from simplicial to cubical complexes.

The following proposition allows us to pass from compact Riemannian manifolds with boundary to finite cubical complexes in our main theorem (Theorem \ref{intro-thm:Wada-Riemannian-manifold}).

\begin{proposition}
\label{prop:Riemannian-to-cubical}
Associated to each compact Riemannian manifold with boundary $(M,g)$, there exist a cubical complex $K$ with a  metric $d_K$, in which each cube of $K$ is isometric a Euclidean unit cube, and a quasisimilar homeomorphism $(M,g) \to (K,d_K)$.
\end{proposition}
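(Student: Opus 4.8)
## Proof proposal

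The plan is to build the cubical complex $K$ in three stages, following the road map sketched before the statement: first triangulate $(M,g)$ with metric control, then pass to the first barycentric subdivision and recognize it as a cubical complex, and finally compare the metrics involved.

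\textbf{Step 1: From Riemannian manifold to a bilipschitz triangulation.} By the cited results of Cairns, Whitehead and Munkres, the compact Riemannian manifold with boundary $(M,g)$ admits a finite triangulation $T$ in which every $n$-simplex $(\sigma, g|_\sigma)$ is $L$-bilipschitz to an affine Euclidean $n$-simplex, for some $L\geq 1$. Equipping $|T|=M$ with the simplicial metric $d_T$ (under which each simplex is similar to a standard simplex $\Delta^k\subset\R^{k+1}$), the identity map $(M,g)\to (M,d_T)$ is bilipschitz — indeed quasisimilar up to the overall scaling and the finitely many shape-distortion constants coming from $T$. Here I would be slightly careful about what ``quasisimilar'' means (bilipschitz with a uniform constant, possibly after a global rescaling of one of the metrics), and check that finiteness of $T$ makes the comparison constant uniform; this is routine.

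\textbf{Step 2: Barycentric subdivision gives a cubical structure.} The first barycentric subdivision $T'$ of a simplicial complex carries a canonical cubical structure: the $n$-cubes of $K$ are indexed by the flags (maximal chains) $v_0 < v_1 < \cdots < v_n$ of faces of $n$-simplices of $T$, each such flag spanning a combinatorial $n$-cube whose vertices are the barycenters of the sub-chains (see Shtan'ko and Shtogrin). This produces a finite cubical complex $K$ with $|K|=M$. I then equip $K$ with the polyhedral metric $d_K$ built in the earlier part of this section, in which each $n$-cube is isometric to the Euclidean unit cube $[0,1]^n$.

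\textbf{Step 3: Comparing $d_T$ and $d_K$, and concluding.} It remains to compare the simplicial metric $d_T$ on $M$ (from Step 1) with the cubical metric $d_K$ from Step 2. Both are polyhedral metrics glued from finitely many model pieces, and the combinatorial subdivision relating a standard simplex to its barycentric cubulation is fixed and dimension-dependent only; hence the identity $(M,d_T)\to (M,d_K)$ is bilipschitz with a constant $C=C(n)$ — again, quasisimilar after the appropriate normalization. Composing the quasisimilarity of Step 1 with this one yields a quasisimilar homeomorphism $(M,g)\to(K,d_K)$, which is the assertion.

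\textbf{Main obstacle.} The genuinely technical point is Step 3: showing that passing from the standard $k$-simplex with its similarity metric to its barycentric subdivision, and then re-metrizing each resulting combinatorial $k$-cube as a unit Euclidean cube, changes distances by a controlled factor — and that these local comparisons glue to a global bilipschitz estimate across the finite complex. One has to verify that the two polyhedral metrics induce comparable length structures on each shared piece (a simplex of $T$), with a constant independent of which simplex one is in, and that there is no degeneration near lower-dimensional skeleta. Everything else (existence of the bilipschitz triangulation, the combinatorics of barycentric cubulation, finiteness of $M$) is standard or already set up earlier in the section.
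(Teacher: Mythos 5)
Your route is the paper's: triangulate $(M,g)$ so that simplices are bilipschitz to affine Euclidean ones (Cairns/Whitehead/Munkres), cubulate via the barycentric subdivision, and compose the metric comparisons. The paper's proof is exactly this one-line reduction to Proposition~\ref{prop:PL-to-cubical}, with the barycentric cubulation set up in Lemma~\ref{lemma:congruent-cubulation}.

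One concrete slip in Step~2: the $n$-cubes of $T^\square$ are not indexed by flags $\sigma_0<\sigma_1<\cdots<\sigma_n$ of faces. A maximal flag corresponds to an $n$-\emph{simplex} of the barycentric subdivision, not an $n$-cube, and there are $(n+1)!$ of them per $n$-simplex; the construction as you wrote it would not assemble into a cubical complex. The correct cubulation (and the one in Lemma~\ref{lemma:congruent-cubulation}) associates to each vertex $v_i$ of an $n$-simplex $\sigma\in T$ the cube $Q_i$ equal to the star of $v_i$ in the barycentric subdivision of $\sigma$; its $2^n$ vertices are the barycenters of the faces $\tau$ with $v_i\in\tau\subseteq\sigma$, giving $(n+1)$ cubes per $n$-simplex. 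The lemma additionally proves that these $(n+1)$ cubes are congruent and fixes cubical isomorphisms $h_i\colon Q_i\to[0,1]^n$; this congruence is exactly what you need in Step~3. The comparison $d_T\to d_{T^\square}$ then has a constant depending on $n$ (from the $h_i$) and on the finitely many simplex shapes (from the affine maps $\psi_\sigma\colon\sigma\to\Delta_n$), and the compatibility across shared faces is guaranteed by the second part of Lemma~\ref{lemma:congruent-cubulation}, so there is no degeneration near lower skeleta. Composing with the bilipschitz triangulation of Step~1 gives the asserted quasisimilar homeomorphism, with constant depending on $n$ and $M$ as in the statement.
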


Recall that a map $f\colon X\to Y$ is \emph{$L$-bilipschitz for $L\ge 1$}, if  \index{quasisimilarity}
\[
L^{-1} \,d(x,y) \le d(f(x),f(y)) \le L  \,d(x,y)  \quad \text{for all}\,\,\,x,x'\in X.
\]
A  map $f\colon X\to Y$ is \emph{$L$-quasisimilar for $L\ge 1$}, 
if there exists $\lambda>0$ for which \index{quasisimilarity}
\[
L^{-1} \lambda \,d(x,y) \le d(f(x),f(y)) \le L \lambda \,d(x,y)  \quad \text{for all}\,\,\,x,x'\in X.
\]

The proof of this proposition is concluded in Section \ref{sec:from-s-to-c}. We first introduce the polyhedral metric; see e.g.~Bridson and Haefliger \cite[Chapter I.7]{Bridson-Haefliger} for an analogous discussion. After the proof of Proposition \ref{prop:Riemannian-to-cubical}, we introduce the terminology of refinements of a cubical complex in Section \ref{sec:refinement-metric}, and the notion of good complexes in Section \ref{sec:good-complexes}.

\subsection{Preliminaries on cubical complexes}\label{sec:prelim-cubical-complex}

As usual, a \emph{simplicial complex} is a complex whose elements are cells with the standard simplicial structure.

\begin{definition}
\label{def:cell-complex}
A collection $K$ of cells is a \emph{cell complex} if, for $C, C' \in K$, $C \cap C'\in K\cup \{\emptyset\}$.
\end{definition}
It is immediate that the restriction $K|_C=\{C'\subset C \colon C'\in K\}$ of a cell complex $K$ to its cell $C\in K$ is a cell-complex.

For $k\in \N$, we call the complex $\sfC_k=\{\{0\},\{1\}, [0,1]\}^k$ a \emph{standard cubical structure on $[0,1]^k$}; we also set $\sfC_0 = \{\{0\},\{1\}, [0,1]\}^0=\{\{0\}\}$ and $\sfC_{-1}=\{\{0\},\{1\}, [0,1]\}^{-1}=\emptyset$. 

Given a $k$-cell $Q$ and a homeomorphism $\phi \colon Q\to [0,1]^k$, we call the complex 
\[
\sfC_\phi(Q) = \{ \phi^{-1}q \colon q\in \sfC_k\}
\]
a \emph{standard cubical structure on $Q$ (induced by $\phi$)}. If there is no ambiguity, we simply write $\sfC(Q) = \sfC_\phi(Q)$. The $(k-1)$-cubes and $(k-2)$-cubes in $\sfC_\phi(Q)$ are called \emph{faces and edges of $Q$}, respectively.

Cubical complexes are cell complexes, whose cells have cubical structure. In what follows, we use the following slightly more restrictive definition.

\begin{definition}\label{def:cubical-complex}\index{cubical complex}
A cell complex $K$ is a \emph{cubical $n$-complex for $n\ge 0$} if 
\begin{enumerate}
\item for each $Q\in K$, the restriction $K|_Q=\sfC(Q)$ is a standard cubical structure on $Q$, 
\item for each pair $Q,Q'\in K$ of cells, $\sfC(Q)|_{Q\cap Q'} = \sfC(Q')|_{Q\cap Q'}$ and the composition $(\phi_Q|_{Q\cap Q'}) \circ (\phi_{Q'}|{Q\cap Q'})^{-1} \colon \phi_{Q'}(Q\cap Q') \to \phi_Q(Q\cap Q')$ is an isometry,
and \label{eq:def-cub-comp-2}
\item each cell in $K$ is contained in an $n$-cell in $K$. \label{eq:def-cub-comp-3}
\end{enumerate}
The $k$-cells in a cubical $n$-complex $K$ are called $k$-cubes. A collection of cells $P\subset K$ is a \emph{cubical $k$-subcomplex of $K$} if $P$ is a cubical $k$-complex.
\end{definition}

In this definition, we tacitly assume that, for each $n$-cell $Q$ in $K$, we have fixed a homeomorphism $\phi_Q \colon Q\to [0,1]^n$.

\begin{remark}
Property \eqref{eq:def-cub-comp-3} is not typically part of the definition of a complex, and complexes satisfying \eqref{eq:def-cub-comp-3} are called in the literature either homogeneous or pure complexes.  The stronger condition holds for cubical complexes whose spaces are manifolds.
We emphasize that, in what follows, we consider cubical complexes and their subcomplexes satisfying this additional assumption.
\end{remark}

Also the second part of property \eqref{eq:def-cub-comp-2} is not typically part of the definition of a cubical complex. This condition, however, yields a well-defined polyhedral metric $d_K$ on a cubical $n$-complex $K$ having connected space $|K|$. Before discussing construction of polyhedral metrics on cubical complexes, we finish this general discussion with two definitions.

\begin{definition}
Given a homeomorphism $\phi \colon X\to |K|$, where $K$ is a cubical $n$-complex, we call
\[
\phi^*K = \{ \phi^{-1}Q \colon Q\in K \}
\]
the \emph{pull-back of the complex $K$ under $\phi$}; clearly $|\phi^*K| = X$. A homeomorphism $\phi \colon |K|\to |K'|$, between spaces of cubical complexes $K$ and $K'$, is called a \emph{cubical isomorphism} if $\phi^*(K') = K$.  
\end{definition}

\begin{definition}\label{def:essential-partition}
We say that $n$-subcomplexes $K_1,\ldots, K_\ell$ of a cubical $n$-complex $K$ are \emph{essentially disjoint} if $K_i$ and $K_j$ do not have $n$-cubes in common for $i\ne j$.
 We say that $\{K_1,\ldots, K_\ell\}$ is an \emph{essential partition of a cubical $n$-complex $K$} if $K= K_1\cup \cdots \cup K_\ell$, and $K_1,\ldots, K_\ell$ are essentially disjoint.
\end{definition}

\subsubsection{Polyhedral metric of a cubical complex}

We define first, for each $n$-cube $Q$ in $K$, a metric $d_Q \colon Q\times Q\to [0,\infty)$ by the formula $d_Q(x,y) = |\phi_Q(x)-\phi_Q(y)|$ for $x,y\in Q$. By \eqref{eq:def-cub-comp-2} in Definition \ref{def:cubical-complex}, $d_Q(x,y)=d_{Q'}(x,y)$ for $x,y\in |Q\cap Q'|$.

We call a sequence $(x_0,\ldots, x_m)$ in $|K|$ a $K$-chain if $x_i$ and $x_{i-1}$ belong to the same $n$-cube in $K$. Since $|K|$ is connected, any two points $x$ and $y$ in $|K|$ may be connected by a $K$-chain. Thus we may define the polyhedral metric $d_K \colon |K| \times |K|\to [0,\infty)$ of $|K|$ by the formula 
\[
d_K(x,y) = \inf_{(x_0,\ldots, x_m)} \sum_{i=1}^m d_{Q_i}(x_{i-1},x_i),
\]
where the infimum is taken over $K$-chains connecting $x$ and $y$ and each $Q_i$ is an $n$-cube containing $x_{i-1}$ and $x_i$.

\begin{remark}
With respect to the polyhedral metric $d_K$ on a cubical $n$-complex $K$, each $n$-cube $Q$ in $K$ is isometric to $[0,1]^n$ and $d_Q(x,y)= d_K(x,y)$ for $x,y\in |Q|$.
\end{remark}

\subsubsection{Subcomplexes of cubical complexes}

Let $K$ be a cubical $n$-complex and $0\leq k \leq n$. We denote 
$K^{[k]} = \{ q \in K \colon q \text{ is a $k$-cube}\}$ the collection of all $k$-cubes in $K$ and $K^{(k)} = \bigcup_{\ell \le k} K^{[\ell]}$ the \emph{$k$-skeleton of $K$}.

\begin{definition}\label{def:span}
Let $S$ be a subset of a cubical $n$-complex $K$. The subcomplex $\Span_K(S)$ \emph{spanned by $S$} is the smallest subcomplex of $K$ that contains $S$. 
\end{definition}

We say that an $(n-1)$-cube $q$ in a cubical $n$-complex $K$ is a \emph{one-sided} if there exists only one $n$-cube in $K$ containing $q$. 
The boundary $\partial K$ of a cubical $n$-complex is the subcomplex of $K$ spanned by all one-sided cubes, that is, 
\[
\partial K = \Span_K\left( \{ q\in K^{[n-1]} \colon q \text{ is one-sided}\}\right).
\]
A subcomplex $\Sigma \subset K$ is a \emph{boundary component of $K$} if $|\Sigma|$ is a component of $|\partial K|$. Note that, for a cubical complex $K$ having a manifold with boundary as its space $|K|$, the space $|\partial K|$ is the manifold boundary of $|K|$.

Let $P$  and $N$ be subcomplexes of $K$. We call the complex  
\[
P - N =  \Span_P(\{ q \in P \colon q\not \in N\})
\]
the \emph{difference of $P$ and $N$}. Note that, under this definition, $P-N=P$ when the dimension of $P\cap N$ is strictly lower than the dimension of $P$.

Our definition of star is more restrictive than the usual definition; in the usual definition, a star is spanned by all cubes meeting $q$. 

\begin{definition}\label{def:star}
The \emph{star $\Star_K(q)$ of a $k$-cube $q$ in $K$} is the smallest subcomplex spanned by all cubes in $K$ containing $q$, that is,
\[
\Star_K(q)=\Span_K(\{Q\in K\colon q\subset Q\}).
\]
The \emph{star $\Star_K(S)$ of a subset $S\subset K$} is \index{$\Star_K(S)$} 
$\Star_K(S) = \bigcup_{q\in S} \Star_K(q)$.
\end{definition}

The star of a subcomplex yields the notion of locally Euclidean complexes. For the definition, let $\sfC(\R^n)$ be the standard cubical structure on $\R^n$ in which every cube is of unit size and corners in the integer lattice $\mathbb Z^n$.

\begin{definition}\label{def:locally-Euclidean}
Let $K$ be a cubical $n$-complex and $S\subset K$ a
subcomplex.

We say that \emph{$K$ is locally Euclidean at $S$} provided that  
 $(\Star_K(S),S)$ is  isomorphic to a pair of subcomplexes $(\Star(P),P)$ in $\sfC(\R^n)$.
\end{definition}

\begin{remark}
Let $Q$ be an $n$-cube, $q$ an $(n-1)$-cube, and $\xi$ an $(n-2)$-cube in $\R^n$ with $n\geq 2$. Then, by Definition \ref{def:star}, $\Star_{\sfC(\R^n)}(Q)$, $\Star_{\sfC(\R^n)}(q)$, and $\Star_{\sfC(\R^n)}(\xi)$, 
contain one, two, and four $n$-cubes, respectively. Thus the same holds for locally Euclidean cubical $n$-complexes.
\end{remark}

\subsubsection{Refinement of cubical complexes}
\label{sec:refinement-metric}

The standard cubical structure $\sfC_n$ of the Euclidean $n$-cube $[0,1]^n$ admits a natural subdivision into $3^n$ congruent $n$-cubes.

\index{cubical complex!refinement} 
For each $v\in \{0,1,2\}^n$, let $q_v = \frac{1}{3}\left( v+[0,1]^n \right)$ and $\iota_v \colon q_v \to [0,1]^n$  be the congruence $x\mapsto 3^n(x-v)$; we denote $\sfC(q_v) = \sfC_{\iota_v}(q_v)$. Then cubes $q_v$, $v\in \{0,1,2\}^n$, are congruent Euclidean cubes which cover $[0,1]^n$, have mutually disjoint interiors, and satisfy $q_v \cap q_{v'} \in \sfC(q_v) \cap \sfC(q_{v'})$ for $v,v'\in \{0,1,2\}^n$. Then
\[
\sfR_n = \bigcup_{v\in \{0,1,2\}^n} \sfC(q_v)
\]
is a cubical $n$-complex with space $[0,1]^n$, and is called the \emph{standard refinement of $\sfC_n$}.

Let now $K$ be a cubical $n$-complex and, for each $Q\in K^{[n]}$, let $\phi_Q \colon Q \to [0,1]^n$ be the fixed cubical isomorphism in the definition of cubical $n$-complex. 
For each $Q\in K^{[n]}$, we take   
\[
\Refine(Q) = \phi_Q^*(\sfR_n).
\]
Since the transition maps for $n$-cubes having a common face $q=Q\cap Q'\in K^{[n-1]}$ are isometries, we have that $\Refine(Q)|_q = \Refine(Q')|_q$. Further, since each $n$-cube $Q'\in \Refine(K)^{[n]}$ is contained in an $n$-cube $Q\in K^{[n]}$, we have $\phi_Q(Q')=q_v$ for some $v\in \{0,1,2\}^n$ and we may define $\phi_{Q'}\colon Q' \to [0,1]^n$ to be the map $\phi_{Q'} = \iota_v \circ \phi_Q|_{Q'}$.
Hence
\[
\Refine(K) = \bigcup_{Q\in K^{[n]}} \Refine(Q)
\]
is a well-defined cubical $n$-complex. We call $\Refine(K)$ is the \emph{standard refinement of $K$}. \index{$\Refine^k(K)$} For $k\ge 1$, we call the cubical $n$-complex $\Refine^k(K) = \Refine(\Refine^{k-1}(K))$ the \emph{$k$th iterated refinement of $K$}. 

For $x, x' \in |\Refine^k(K)|=|K| $, we have 
\[
d_{\Refine^k(K)} (x,x')= 3^k d_K(x,x') 
\]
under the two different metrics. To be consistent in all refinement scales, we equip all refinements $\Refine^k(K)$, $k\geq 1$, with the metric $d_K$.

\begin{convention}[Standard metric on $\Refine^k(K)$]\label{convention:metric-subcomplex}
Let $K$ be a cubical $n$-complex and $d_K$ the polyhedral metric of $K$. We equip $\Refine^k(K)$, for $k\geq1$, with the metric $d_K=3^{-k} d_{\Refine^k(K)}$. We call \emph{$d_K$ the standard metric on $\Refine^k(K)$}.
\end{convention}


\subsection{From simplicial to cubical structure}
\label{sec:from-s-to-c}

We construct now, using the barycentric subdivision, a cubical complex from a barycentric subdivision of a simplicial complex. The general case stems from the case of a single simplex.
Let
$ 
\Delta_n = [e_1,\ldots, e_{n+1}] \subset \R^{n+1}
$
be the standard $n$-simplex.

In the following statement, an $n$-cube $Q$ is a polyhedron with a cubical structure $\sfC(Q)$ isomorphic to $\sfC_n$, and two $n$-cubes $Q$ and $Q'$ are \emph{congruent} if there exists a cubical isomorphism $\psi \colon Q \to Q'$ which is an isometry.

\begin{lemma}
\label{lemma:congruent-cubulation} 
There exists a cubical complex $\Delta_n^\square$ on $\Delta_n$ consisting of  $(n+1)$ congruent $n$-cubes $Q_0,\ldots, Q_n$, for which the restriction $\Delta_n^\square|_\sigma$  to each face $\sigma$ of $\Delta_n$ is a well-defined cubical $(n-1)$-complex consisting of $n$ congruent $(n-1)$-cubes. 
\end{lemma}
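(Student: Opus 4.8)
The plan is to realize $\Delta_n^\square$ as the image of the standard cubical structure $\sfC(I^{n+1})$ under a suitable combinatorial/geometric map, building the cubulation from the poset of faces of $\Delta_n$. Recall the classical fact (referenced via Shtan'ko–Shtogrin) that the barycentric subdivision of a simplex carries a natural cubical structure: the $n$-cubes of $\Delta_n^\square$ are indexed by the vertices $e_0,\dots,e_n$ of $\Delta_n$, where the cube $Q_i$ is the set of points $x\in\Delta_n$ whose barycentric coordinates satisfy $x_i = \max_j x_j$. Concretely, I would introduce coordinates: a point of $\Delta_n$ is $x=\sum_{j} x_j e_{j+1}$ with $x_j\ge 0$, $\sum x_j = 1$; set $u_j = x_j / x_i$ for $j\ne i$ (well-defined and in $[0,1]$ on $Q_i$), giving a homeomorphism $\psi_i\colon Q_i \to [0,1]^n$. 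This defines the cubical structure $\sfC(Q_i)=\sfC_{\psi_i}(Q_i)$.

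First I would check that the collection $\{Q_0,\dots,Q_n\}$ with these structures forms a cell complex and in fact a cubical $n$-complex in the sense of Definition \ref{def:cubical-complex}: the $Q_i$ cover $\Delta_n$, have disjoint interiors (the defining maxima conditions are mutually exclusive on interiors), and $Q_i\cap Q_j$ is the common subcube $\{x : x_i = x_j = \max_k x_k\}$, which is a face in both $\sfC(Q_i)$ and $\sfC(Q_j)$ — this is the condition $\sfC(Q_i)|_{Q_i\cap Q_j} = \sfC(Q_j)|_{Q_i\cap Q_j}$. Next, congruence: all the $Q_i$ are carried to one another by the barycentric-coordinate permutations induced by $\bSym(\{e_1,\dots,e_{n+1}\})$ acting on $\Delta_n$, and these simplicial symmetries restrict to isometries between the $Q_i$ once $\Delta_n$ is given the standard metric $d_{\Delta_n}$ (or equivalently, once each $Q_i$ is normalized to a Euclidean unit cube via $\psi_i$, the transition maps are isometries). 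This simultaneously verifies part \eqref{eq:def-cub-comp-2} of Definition \ref{def:cubical-complex} (the transition isometry condition) and the congruence claim of the lemma.

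For the restriction-to-a-face statement: a face $\sigma$ of $\Delta_n$ is obtained by setting one barycentric coordinate, say $x_k$, to zero; then $\sigma\cong\Delta_{n-1}$, and $\sigma\cap Q_i$ (for $i\ne k$) is exactly the cube $Q_i^\sigma$ of the analogous cubulation $\Delta_{n-1}^\square$ of $\sigma$, while $\sigma \cap Q_k = \sigma$ with its induced cubulation — so $\Delta_n^\square|_\sigma$ is a disjoint-interior union of $n$ congruent $(n-1)$-cubes, namely $\Delta_{n-1}^\square$. The well-definedness (independence of how $\sigma$ sits as a face) follows because everything is expressed intrinsically in barycentric coordinates; I would note this is really an induction on $n$, with the inductive hypothesis feeding the face statement. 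The base cases $n=0$ (a point, one $0$-cube) and $n=1$ (an edge, two unit $1$-cubes meeting at the barycenter) are immediate.

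The main obstacle I anticipate is purely bookkeeping rather than conceptual: verifying carefully that the transition maps $(\psi_i|_{Q_i\cap Q_j})\circ(\psi_j|_{Q_i\cap Q_j})^{-1}$ are isometries of the appropriate lower-dimensional cubes — i.e. that the barycentric-coordinate chart, after restriction to the intersection and after the maximum conditions collapse the relevant coordinate to $1$, really matches up on the nose. This amounts to choosing the homeomorphisms $\psi_i$ coherently (equivalently, choosing the congruences $Q_i\to Q_j$ to be the symmetric-group elements) and then observing the compatibility is automatic from the symmetry. Once the right chart is fixed, the remaining verifications are routine, and the $n$-cell condition \eqref{eq:def-cub-comp-3} of Definition \ref{def:cubical-complex} holds trivially since every cell of $\Delta_n^\square$ lies in some $Q_i$ by construction.
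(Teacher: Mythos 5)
Your proposal is correct in essence and rests on the same decomposition as the paper: the $n$-cubes $Q_i$ are the stars of the vertices in the barycentric subdivision, equivalently the regions $\{x_i = \max_j x_j\}$ in barycentric coordinates. Where you differ is in \emph{how} the cubical structure on each $Q_i$ is produced: the paper constructs $\sfC(Q_i)$ abstractly by induction on $n$ (building the link of $v_0$ and the $(n-1)$-complex $K_0\cup K_0'$ on $\partial Q_0$ from the lower-dimensional case, then transporting via the isometries $\rho_i$), whereas you give a closed-form chart $\psi_i(x) = \bigl(x_j/x_i\bigr)_{j\ne i}$. This explicit chart buys you a direct verification of the isometry condition in Definition~\ref{def:cubical-complex}\eqref{eq:def-cub-comp-2}: on $Q_i\cap Q_j$ one has $x_i=x_j$, so $\psi_j\circ\psi_i^{-1}$ is literally the identity on the shared coordinates $\{k : k\ne i,j\}$ followed by a reindexing, hence a Euclidean isometry of the boundary faces. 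The price is that it takes some care to see that the resulting cubical structure is subordinate to the barycentric subdivision $X$ (which the paper gets for free), and that the two simplicial symmetries in your "or equivalently" are really two separate facts to check, not reformulations of each other.

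One genuine error in your face-restriction step: if $\sigma = \{x_k = 0\}$ is the face opposite $v_k$, then $\sigma\cap Q_k = \emptyset$, not $\sigma$. Indeed, on $Q_k$ one has $x_k = \max_j x_j$, and if also $x_k=0$ then all coordinates vanish, contradicting $\sum x_j = 1$; equivalently, every vertex $b_F$ of a simplex of $X$ in $\Star_X(v_k)$ has $(b_F)_k>0$, so $|\Star_X(v_k)|$ misses $\sigma$ entirely. Your stated conclusion — that $\Delta_n^\square|_\sigma$ consists of the $n$ congruent $(n-1)$-cubes $\sigma\cap Q_i$ for $i\ne k$, forming $\Delta_{n-1}^\square$ — is correct, but the phrase "$\sigma\cap Q_k = \sigma$ with its induced cubulation" would, if true, contribute extra cubes and make the count wrong. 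The fix is just to drop the claim about $Q_k$; the induction you outline then goes through unchanged.
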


\begin{proof}
We denote $\Delta_n= [v_0,v_1,\ldots, v_n]$ and let $X$ be the barycentric subdivision of $\Delta_n$ for which the simplices of the same dimension are congruent.  For each vertex $v_i$ of $\Delta_n$, let $\Star_X(v_i)$ be the star of $v_i$ in the complex $X$ and let $Q_i=|\Star(v_i)|$.

We claim  that each $Q_i$ admits a cubical structure $\sfC(Q_i)$ isomorphic to  $\sfC_n$ on $[0,1]^n$ for which each  $k$-cube in $\sfC(Q_i)$, $0\leq k\leq n$, 
is a union of  $k$-simplices in $X$. The argument is an induction on dimension. 
The claim clearly holds for $n=1$. 

Suppose that the claim holds for dimension $n-1$. For dimension $n$, it suffices to prove the claim for the vertex $v_0$. 

By the induction assumption,  for each $i\neq 0$,  the restriction of  $\Star_X(v_0)$ to the face $[v_0,v_1,\ldots,v_{i-1}, \hat v_i, v_{i+1}, \ldots, v_n]$  admits a cubical structure $K_{0\, i}$ which has one $(n-1)$-cube and whose $k$-cubes, $k\in[0,  n-1],$ are unions of $k$-simplices in $X|_{Q_0}$. Thus $K_0 = K_{0\, 1}\cup \cdots \cup K_{0\, n}$ is the cubical $(n-1)$-complex $\Star_{X^{(n-1)}}(v_0)$.

Let  $v$ be the unique vertex of $X$ in the interior of $\Delta_n$. Let $L_0$ be the link of vertex $v_0$ in $X$ and $\tau_0$ be the face of $\Delta_n$ opposite to  $v_0$. Since $X$ is the barycentric subdivision of $\Delta_n$,  link $L_0$ is isomorphic  to the barycentric subdivision $X|_{\tau_0}$ of $\tau_0$. 
By the induction assumption,  $X|_{\tau_0}$ admits a cubical $(n-1)$-complex consisting of $n$ cubes of dimension $(n-1)$ and  is the star of the unique vertex in the interior of $\tau_0$.
Thus the same holds true for  $L_0$ and $v$. We denote the corresponding cubical $(n-1)$-complex on $|L_0|$ by $K'_0$.

Then the union $K_0 \cup K'_0$ is a cubical complex on the boundary of $\Star_X(v_0)$, obtained by taking unions of simplices in $X$.  Moreover, $|K_0|$ and $|K'_0|$ are $(n-1)$-cells and $\partial Q_0= |K_0\cup K'_0|$ is an $(n-1)$-sphere. Thus  $Q_0$ is an $n$-cell and is the space of the cubical complex $\sfC(Q_0) = K_0\cup K'_0 \cup \{Q_0\}$ which is isomorphic to $\sfC_n$.

We fix, for each $i=1,\ldots, n$, an isometry $\rho_i \colon \Delta_n \to \Delta_n$ which maps $v_i$ to $v_0$.  Then $\rho_i$ fixes the barycenter $v$ of $\Delta_n $ and satisfies $\rho_i(Q_i)=Q_0$. Thus  $Q_i$ admits a standard cubical structure $\sfC(Q_i)$ for which $\rho_i^*(\sfC(Q_i)) = \sfC(Q_0)$. 
The $n$-cubes $Q_0,\ldots, Q_n$ are congruent by definition.  The fact that  $\sfC(Q_i)|_{Q_i\cap Q_j} = \sfC(Q_j)|_{Q_i\cap Q_j}$ for $i\ne j$ follows  from the induction assumption and the Euclidean barycentric subdivision of $\Delta_n$. 

We set $\Delta_n^\square$ to be the union of complexes $\sfC(Q_0),\ldots, \sfC(Q_n)$ and for brevity, denote  complex $\sfC(Q_i)$ by $Q_i$.

It remains to show the compatibility of the standard cubical structures. 
Let $h \colon Q_0 \to [0,1]^n$ be a cubical isomorphism for which $h(v_0) = 0$, $h(v) = e_1 + \cdots +e_n$, and $h$ conjugates 
each isometry of $Q_0$ that fixes $v_0$ and $v$ to an isometry of $[0,1]^n$.
 Set  $h_0 = h \colon Q_0 \to [0,1]^n$ and, for $i>0$, $h_i = h \circ \rho_i \colon Q_i \to [0,1]^n$.

Since $\rho_j(Q_j\cap Q_i)$ and $\rho_i(Q_j\cap Q_i)$ are unions of faces of $Q_0$ in $K_0'$, which differ by an isometry, 
maps $h_i \cap h_j^{-1}|_{h_j(Q_j\cap Q_i)} \colon h_j(Q_j\cap Q_i) \to h_i(Q_j\cap Q_i)$ are Euclidean isometries. \end{proof}

An immediate consequence of the previous lemma is the existence of a cubical complex $T^\square$ on the space $|T|$ of a simplicial complex $T$.

\begin{proposition}
\label{prop:PL-to-cubical}
Let $T$ be a simplicial $n$-complex for which $(|T|, d_T)$ is a Riemannian manifold. Then there exists a cubical complex $K=T^\square$ on $|T|$ for which the restriction $T^\square|_\sigma$ to  each $n$-simplex $\sigma\in T$ is isomorphic to $\Delta_n^\square$. 
Moreover, the polyhedral metric $d_{T^\square}$ and the Riemannian metric $d_T$ are $L(n,T)$-quasisimilar.
\end{proposition}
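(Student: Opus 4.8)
The plan is to reduce the statement to Lemma \ref{lemma:congruent-cubulation} applied simplex by simplex, and then to control the metric comparison using the bilipschitz property of the triangulation $T$ together with compactness. Concretely, for each $n$-simplex $\sigma \in T$ we have by Lemma \ref{lemma:congruent-cubulation} a cubical complex $\Delta_n^\square$ on the standard simplex $\Delta_n$; transporting this through the affine identification $\sigma \cong \Delta_n$ coming from the simplicial metric $d_T$ gives a cubical structure on $\sigma$ whose $n$-cubes are unions of $n$-simplices in the barycentric subdivision of $\sigma$. The last clause of Lemma \ref{lemma:congruent-cubulation}, asserting that the restriction of $\Delta_n^\square$ to each face $\tau$ is a well-defined cubical complex, is exactly what is needed to glue these structures across shared faces: if $\sigma, \sigma' \in T$ meet in a common face $\tau$, the induced cubical structures on $\tau$ from either side agree (both are $(\Delta_n^\square)|_\tau$ transported via the affine identifications, which themselves agree on $\tau$ by definition of the simplicial metric). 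Hence $K = T^\square := \bigcup_{\sigma \in T^{[n]}} (T^\square|_\sigma)$ is a well-defined cell complex; properties (1)–(3) of Definition \ref{def:cubical-complex} follow because each piece is a standard cubical structure, the transition maps between adjacent $n$-cubes are Euclidean isometries (again by Lemma \ref{lemma:congruent-cubulation}, whose cubes are mutually congruent via isometries), and $|K| = |T|$ is a manifold so every cube lies in an $n$-cube.

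Next I would set up the metric comparison. Equip $|T^\square|$ with its polyhedral metric $d_{T^\square}$, in which each $n$-cube is isometric to $[0,1]^n$. On each $n$-simplex $\sigma$, both $d_T|_\sigma$ and $d_{T^\square}|_\sigma$ are path metrics induced by piecewise-Euclidean structures on the same underlying space, and since $\Delta_n^\square$ is an explicit fixed subdivision of the fixed standard simplex $\Delta_n$ into finitely many congruent cubes, there is an absolute constant $L_0 = L_0(n)$ with $L_0^{-1} d_{\Delta_n}(x,y) \le d_{\Delta_n^\square}(x,y) \le L_0 d_{\Delta_n}(x,y)$ for all $x,y \in \Delta_n$ — this is a comparison of two fixed piecewise-linear metrics on a compact polyhedron, finite-dimensional and independent of $T$. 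Pulling back through the affine identifications $\sigma \cong \Delta_n$, which are similarities with a scaling factor depending on $\sigma$ (here the simplicial metric $d_T$ makes every $n$-simplex similar to $\Delta_n$, so in fact after rescaling we get the same $L_0$), we obtain $L_0$-quasisimilarity locally on each simplex.

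The passage from these local (per-simplex) comparisons to a global comparison on $(|T|, d_T)$ versus $(|T^\square|, d_{T^\square})$ is the step I expect to be the main technical point, though it is standard for piecewise-Euclidean complexes. Since $T$ is a finite (compact) simplicial complex, the identity map $(|T|, d_T) \to (|T^\square|, d_{T^\square})$ is a homeomorphism that is locally $L_0$-bilipschitz in the sense above; one then uses a chain/gluing argument — a geodesic in either metric can be decomposed into finitely many pieces lying in individual simplices (or a covering/Lebesgue-number argument on the compact space) — to upgrade local bilipschitz control to a global bilipschitz, hence quasisimilar, bound, at the cost of a constant $L = L(n,T)$ that now also depends on the combinatorics of $T$ (the number of simplices, the incidence pattern, the relative sizes of simplices in the metric $d_T$). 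Here I would invoke the general principle for length metrics on finite polyhedral complexes: a bijection between two such complexes which is an $L_0$-similarity on each top cell and respects the face identifications is globally quasisimilar with constant controlled by $L_0$ and the complex. Assembling these pieces — $T^\square$ is a well-defined cubical complex by Lemma \ref{lemma:congruent-cubulation}, restricts correctly to each simplex by construction, and the metrics are $L(n,T)$-quasisimilar by the per-simplex comparison plus the gluing argument — completes the proof. The only genuine subtlety beyond bookkeeping is making sure the affine scalings on different simplices, which the metric $d_T$ normalizes away simplex-by-simplex but which may still differ in how barycentric subdivision interacts with them, are absorbed into the constant; this is where the uniform constant $L_0(n)$ coming from the fixed model $\Delta_n^\square$ does the work.
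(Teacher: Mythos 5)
Your proposal follows essentially the same route as the paper: transport $\Delta_n^\square$ onto each $n$-simplex via the affine identifications $\psi_\sigma\colon\sigma\to\Delta_n$, use the face-compatibility of $\Delta_n^\square$ and the fact that the transition maps $\psi_{\sigma'}\circ\psi_\sigma^{-1}$ are Euclidean isometries on common faces to glue, and obtain the metric comparison from a uniform per-simplex (equivalently per-cube) comparison with a constant depending only on $n$, globalized by a chain/length-metric gluing argument over the finite complex. The paper phrases the local estimate via the cubical charts $\phi_Q = h_{i_Q}\circ\psi_\sigma|_Q$ rather than via the ambient comparison of $d_{\Delta_n}$ and $d_{\Delta_n^\square}$, but this is a cosmetic repackaging, not a different argument.
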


\begin{proof}
We fix, for each $n$-simplex $\sigma\in T$, an affine simplicial homeomorphism $\psi_\sigma \colon \sigma \to \Delta_n$. Since the transition maps $\psi_{\sigma'} \circ \psi_{\sigma}^{-1}$ are Euclidean isometries on common faces, the pull-back structures $\psi_\sigma^*(\Delta_n^\square)$ and $\psi_{\sigma'}^*(\Delta_n^\square)$ agree on $\sigma\cap \sigma'$. We define 
\[
T^\square = \bigcup_{\sigma \in T^{[n]}} \,\psi_\sigma^*(\Delta_n^\square).
\]

Recall that, by the proof of Lemma \ref{lemma:congruent-cubulation}, 
all $n$-cubes $Q_0,\ldots, Q_n$ in $\Delta_n^\square$ are congruent and that maps $h_i \colon Q_i \to [0,1]^n$ are cubical isomorphisms.  

Let  $\sigma$ be an $n$-simplex in $T$ and  $Q$ be an $n$-cube in $ T^\square|_\sigma$. Let $i_Q \in \{0,1\ldots, n\}$ be the index for which $\psi_\sigma(Q) = Q_{i_Q}$. Set $\phi_Q = h_{i_Q} \circ \psi_\sigma|_Q \colon Q\to [0,1]^n$ for each $Q\in T^\square$. It is now straightforward to check that transition maps $\phi_{Q'} \circ \phi_Q^{-1}$ are Euclidean isometries on common faces.

Note that, for each $n$-simplex $\sigma$ in $T$ and  $n$-cube $Q\in T^\square|_\sigma$,  $\psi_\sigma$ is a quasisimilarity  with a constant depending on $\sigma$ and $h_{i_Q}$ is a bilipschitz map in Euclidean metric with a constant depending on the dimension $n$.
Thus the map $\phi_Q \colon (Q, d_T|_Q)\to [0,1]^n$ is an $L(n,\sigma)$-quasisimilarity.
From this, the $L(n,T)$-quasisimilarity between the two metric follows.
\end{proof}

\begin{proof}[Proof of Proposition \ref{prop:Riemannian-to-cubical}]
A Riemannian $n$-manifold $M$ admits a simplicial triangulation $T$ in which every $n$-simplex is bilipschitz to an affine Euclidean $n$-simplex. The claim now follows from Proposition \ref{prop:PL-to-cubical}.
\end{proof}

\begin{remark}
The cubical complex in Proposition \ref{prop:Riemannian-to-cubical} may be chosen to have additional properties which are suitable for the Wada construction; see Remark \ref{rmk:Riemannian-good}.
\end{remark}

\section{Adjacency graphs}
\label{sec:adjacency-graphs}

In this section, we discuss adjacency graphs of cubical complexes and their subgraphs. We study cubical complexes, called realizations, whose adjacency graphs are given subgraphs. Of particular interest is the class of cut-graphs and their realizations. 
Using adjacency graphs, we introduce at the end of this section notions of tunnels and good complexes. This section may be viewed as a preliminary for the study of separating complexes in Section \ref{sec:Separating-complexes}. 

We begin by recalling that a complex $K$ is \emph{connected} if its space $|K|$ is connected. Similarly, a subcomplex $P\subset K$ is a \emph{component} of $K$ if $|P|$ is connected component of $|K|$

In what follows, we use a stronger notion of connectedness for cubical complexes based on adjacency.

\index{cubical complex!adjacency graph}
\begin{definition}
\label{def:adjacent}
\index{adjacency graph}
Two $n$-cubes $Q$ and $Q'$ in a cubical $n$-complex $K$ are said to be  \emph{adjacent} if they have a common face $Q\cap Q'$ in $ K^{[n-1]}$. 

The pair
\[
\Gamma(K) = \left( K^{[n]}, \left\{ \{Q,Q'\} \colon Q\ne Q',\ Q\cap Q'\in K^{[n-1]}\right\}\right)
\]
is the \emph{adjacency graph of $K$}. We call cubes in $K^{[n]}$ vertices and adjacent pairs $\{Q,Q'\}$ edges.

A cubical $n$-complex $K$ is said to be \emph{adjacently-connected} if its adjacency graph $\Gamma(K)$ is connected.
\end{definition}

Clearly, two $n$-cubes $Q$ and $Q'$ of $K$ may have non-empty intersection without being adjacent. Thus a cubical $n$-complex may have a topologically connected space without being adjacently-connected.

\begin{remark}
Let $K$ be a cubical $n$-complex in which every $(n-1)$-cube is the face of at most two $n$-cubes. Then each edge $ \{Q,Q'\}$ of $\Gamma(K)$ may be identified with the face $Q\cap Q'\in K^{[n-1]}$,  hence the set of edges of $\Gamma(K)$ with a subset of $K^{[n-1]}$. 
\end{remark}

\subsection{Realization of a subgraph}
\label{sec:Realization-subgraph}

Let $K$ be a cubical $n$-complex. Let $G = (V_G,E_G)$ be a subgraph of the adjacency graph $\Gamma(K)$, and let
\[
\Span_K(G) =  \Span_K\left( \{ Q\in K^{[n]} \colon Q\in V_G\}\right)
\]
be the \emph{subcomplex of $K$ spanned by $G$}. 

We define a cubical $n$-complex $\cR_K(G)$, called \emph{the realization of $G$ with respect to $K$}, by \emph{unidentifying} points in $\Span_K(G)$, which belong to two or more $n$-cubes but not lie in an edge of tree $G$.  See Figures \ref{fig:Realization-3} and \ref{fig:Realization-2}. 

For the definition, let 
\[
S_G = \bigsqcup_{Q\in V_G} Q \quad \text{and}\quad F_G = \bigsqcup_{Q\in V_G} \sfC(Q)
\]
be
the disjoint union of the $n$-cubes $Q$ in $G$ and the cubical $n$-complex having $S_G$ as its space, respectively.
Let $\sim_G$ be the equivalence relation in $S_G$ generated by  $x\sim_G x'$ between points $x\in Q$ and $x'\in Q'$ for which $x = x'$ in $|K|$ and $\{Q,Q'\}$ is an edge in $G$.  Denote by $[x]$  the equivalence class of $x\in S_G$, and by $[q] = \{ [x]\colon x\in q\}$ for $q\in F_G$.

\begin{remark}
For $Q\in \Span_K(G)^{[n]}$, the equivalence class $[Q]$ is an $n$-cube. For $q\in \Span_K(G)^{(n-1)}$, the equivalence class $[q]$ is either one cube or 
union of two cubes.
\end{remark}

\begin{definition}
\label{def:Realization}
\index{Realization of a subgraph $\cR_K(G)$}
Let $K$ be a cubical $n$-complex. The \emph{realization $\cR_K(G)$ of a subgraph $G$ of $\Gamma(K)$ (with respect to $K$)}  is the cubical complex
\[
\cR_K(G)= F_G/{\sim_G} = \{ [q] \colon q\in F_G\}.
\]
\end{definition}

\begin{figure}[htp]
\begin{overpic}[scale=1,unit=1mm]{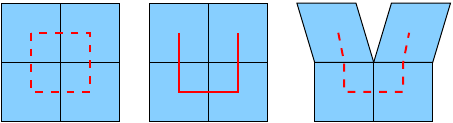} 
\end{overpic}
\caption{Left: complex $K$ and $\Gamma(K)$. Middle: a spanning tree $G \subset \Gamma(K)$ and $\Span_K(G)$. Right: realization $\cR_K(G)$ and $\Gamma(\cR_K(G))$.}
\label{fig:Realization-2}
\end{figure}

\begin{remark}
By the definition, $\Gamma(\cR_K(G))= G$. We want to emphasize the fact that adjacency graph alone does not determine the realization. 
There are cubically non-isomorphic complexes with isomorphic adjacency graphs.
The ambient complex $K$ is crucial in the definition.
\end{remark}

By transitivity of the relation $\sim_G$, the connected components of the realization $\cR_K(G)$ are in one-to-one correspondence with the connected components of $G$.

Let $K$ be a cubical $n$-complex and $G$ a subgraph of $\Gamma(K)$.
Let $\iota_G \colon S_G \to |\Span_K(G)|$ be the inclusion map of $n$-cubes $Q\in V_G$ into $|\Span_K(G)|$ and 
define, with the same symbol, $\iota_G \colon F_G \to \Span_K(G)$  the inclusion map of complexes. By construction, mapping $\iota_G \colon S_G \to |\Span_K(G)|$ factorizes through the quotient map $S_G \to |\cR_K(G)|$, $x\mapsto [x]$, and  mapping $\iota_G \colon F_G \to \Span_K(G)$ through the  quotient $F_G \to \cR_K(G)$, $q\mapsto [q]$. 

Thus we obtain a  well-defined \emph{$G$-quotient maps}, 
\[
\pi_G \colon |\cR_K(G)|\to |\Span_K(G)| \quad \text{and}\,\,\,\, \pi_G \colon \cR_K(G) \to \Span_K(G)
\]
for which the restriction $\pi_G|_Q \colon Q \to \Span_K(G)$ is an isomorphic embedding and the restriction $\pi_G|_{|Q|} \colon |Q| \to |\Span_K(G)|$ a homeomorphic embedding for  $Q\in \cR_K(G)^{[n]}$.
We call the image $\pi_G(Q)$ of a cube $Q\in \cR_K(G)$ the \emph{projection of $Q$}, and the preimage $\pi_G^{-1}(C)$  of a cube $C\in \Span_K(G)$ the \emph{lift of $C$}.

The $G$-quotient $\pi_G \colon \cR_K(G) \to \Span_K(G)$ induces an injective map
\[ \hat{\pi}_G \colon \Gamma(\cR_K(G)) \to \Gamma(\Span_K(G))\]
between adjacency graphs, given by  $Q\mapsto \pi_G(Q)$ on vertices and  $\{Q,Q'\}\mapsto \{ \pi_G(Q),\pi_G(Q')\}$ on  edges. By the definition of  $\cR_K(G)$, graph $\Gamma(\cR_K(G))$ is isomorphic to $G$
  and map $\hat{\pi}_G$ is an isomorphism onto $G \subset \Gamma(\Span_K(G))$. 
 
The inclusion $G\subset\Gamma(\Span_K(G))$ may be proper; see Figure \ref{fig:Realization-2}.
 
\begin{remark}\label{rmk:G-quotient-refinement}
The $G$-quotient 
$\pi_G \colon \cR_K(G) \to \Span_K(G)$ induces a bijection 
between $n$-cubes in $\Refine^k(\cR_K(G))$ and $n$-cubes in $\Refine^k(\Span_K(G))$. 
\end{remark}

In our construction, we consider mainly the realizations $\cR_K(G)$ of subgraphs $G\subset \Gamma(K)$ which are trees; see Section \ref{sec:Realization} on tunnels.

In such cases,  the spaces of the realizations 
are $n$-cells. We prove this fact in the following lemma; see also Figure \ref{fig:Realization-3} for an example.

\begin{lemma}
\label{lemma:Realization-cell}
Let $K$ be a cubical $n$-complex and  $G \subset \Gamma(K)$ be a tree. Then $|\cR_K(G)|$ is an $n$-cell.
\end{lemma}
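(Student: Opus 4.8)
The natural approach is induction on the number of vertices of the tree $G$, building $|\cR_K(G)|$ by attaching one $n$-cube at a time along a single face. If $G$ has one vertex, then $\cR_K(G)$ consists of one $n$-cube together with its cubical structure, so $|\cR_K(G)|$ is an $n$-cell and we are done. For the inductive step, pick a leaf $Q_0$ of $G$ (a tree on at least two vertices always has a leaf) and let $e = \{Q_0, Q_1\}$ be the unique edge of $G$ at $Q_0$. Write $G' = G - Q_0$ for the tree obtained by deleting $Q_0$ and the edge $e$; by the induction hypothesis, $|\cR_K(G')|$ is an $n$-cell. The complex $\cR_K(G)$ is obtained from $\cR_K(G')$ by gluing in the lift of the $n$-cube $Q_0$ along the lift of the single face $q = Q_0 \cap Q_1 \in K^{[n-1]}$ corresponding to the edge $e$.

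\textbf{Key steps.} First I would make precise the gluing description: unwinding Definition \ref{def:Realization}, the equivalence relation $\sim_G$ differs from $\sim_{G'}$ only in that it additionally identifies the copy of $q$ sitting in $Q_0$ with the copy of $q$ sitting in $Q_1$; since $e$ is the only edge of $G$ incident to $Q_0$, no other identifications touch the cube $Q_0$. Hence $|\cR_K(G)| \cong |\cR_K(G')| \cup_{h} ([0,1]^n)$, where $h$ is a cubical isomorphism from one $(n-1)$-face of $[0,1]^n$ onto a face $\tau \subset |\cR_K(G')|$ that is the lift of $q$. The second step is to check that this face $\tau$ is genuinely an $(n-1)$-face on the ``boundary'' of the $n$-cell $|\cR_K(G')|$ in the appropriate sense: since $|\cR_K(G')|$ is an $n$-cell with a cubical structure whose faces of $n$-cubes that are one-sided form its topological boundary sphere, and the face $q$ is a face of $Q_1 \in \cR_K(G')$, its lift $\tau$ is an $(n-1)$-cell contained in $\partialtop |\cR_K(G')|$. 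The third step is the topological gluing lemma: attaching an $n$-cell to an $n$-cell along an $(n-1)$-cell in the boundary of each yields an $n$-cell. Concretely, $[0,1]^n$ can be written as $D \cup_F ([0,1]^n)$ for a half-disc decomposition, or one invokes the standard fact that if $A$ and $B$ are $n$-cells and $A \cap B$ is an $(n-1)$-cell lying in $\partialtop A \cap \partialtop B$, then $A \cup B$ is an $n$-cell (this is a PL/topological statement provable by a collar argument or by citing the annulus/Schoenflies-type results already implicit in the manifold setup). This gives that $|\cR_K(G)|$ is an $n$-cell, completing the induction.

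\textbf{Main obstacle.} The routine parts are the bookkeeping with $\sim_G$ versus $\sim_{G'}$ and the reduction to a one-cube attachment. The real content is the gluing lemma in the third step: one must know that attaching an $n$-cell along a face in the boundary produces an $n$-cell, and — more delicately — that the face $\tau$ along which we glue is \emph{bicollared} (or at least tame) in $\partialtop |\cR_K(G')|$, so that the resulting space is a manifold with boundary homeomorphic to $[0,1]^n$ rather than something wild. Since everything here is PL (the cubes carry fixed cubical isomorphisms to $[0,1]^n$ and the transition maps on faces are isometries by Definition \ref{def:cubical-complex}), tameness is automatic and one can set up an explicit PL homeomorphism $|\cR_K(G')| \cup_h [0,1]^n \to [0,1]^n$. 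I would therefore phrase the argument entirely in the PL category, perhaps first establishing as a sublemma that a cubical complex obtained by iteratively attaching $n$-cubes along single faces to a starting $n$-cube has underlying space PL-homeomorphic to $[0,1]^n$, which is the cleanest way to package the induction and sidesteps any point-set subtlety.
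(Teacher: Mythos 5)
Your approach matches the paper's: the paper also picks a leaf $Q$ of $\Gamma(\cR_K(G))$, shows that $Q$ meets the rest of the realization only along the single face $Q\cap Q'$ shared with its unique neighbour (the substance of its ``$Q$ does not meet any other leaf'' claim, which is exactly your $\sim_G$-versus-$\sim_{G'}$ bookkeeping), and then builds an explicit PL homeomorphism collapsing $Q$ onto $Q'$, iterating down to a single cube. You run the identical induction in the attaching direction and invoke the PL cell-gluing lemma instead of an explicit collapse; these are the same argument, and your tameness remark is correct but, as you note, a non-issue in the PL category.
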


\begin{proof}
Since $G$ is a tree, so is $\Gamma(\cR_K(G))=(V,E)$. Let $Q\in \cR_K(G)$ be an $n$-cube for which $|Q|$ is a leaf of $\Gamma(\cR_K(G))$, that is, there is a unique $n$-cube $Q'\in \cR_K(G)^{[n]}$ for which $\{Q, Q'\}\in G$. 
Let $R'_G =\Span(\cR_K(G)- \sfC(Q))$ be the subcomplex with $\sfC(Q)$ removed.

We claim that $Q$ does not meet any other leaf in $\cR_K(G)$. Otherwise, let $Q''\in \cR_K(G)$ be another leaf that has nonempty intersection with $Q$. By transitivity of the relation $\sim_G$, there exists a chain $Q=Q_0,\ldots, Q_m = Q''$ of adjacent $n$-cubes in $\cR_K(G)$ satisfying $Q\cap Q'' \subset Q_i$ for each $i=0,\ldots, m$. In particular, $Q \cap Q''$ is contained in an $(n-1)$-cube $Q_0\cap Q_1 \in \cR_K(G)$. Since $Q'$ is the only $n$-cube in $\cR_K(G)$ adjacent to $Q$, we have that $Q=Q_0$ and $Q'=Q_1$.

Thus there exists a PL homeomorphism $h_Q \colon |\cR_K(G)| \to  |R'_G|$ for which  $h_Q|_q=\id$ for all $n$-cubes $q\in \cR_K(G)$, $q \neq Q, Q'$. By removing leaves of $G$ iteratively by homeomorphisms, we may reduce $\cR_K(G)$ to an $n$-cube, which is an $n$-cell.
Hence $|\cR_K(G)|$ is an $n$-cell.
\end{proof}

\subsubsection{Cut-graphs}
\label{sec:cut-graphs}
\index{cubical complex!cut-graph}

A particular class of subgraphs of $\Gamma(K)$ is obtained by cutting $\Gamma(K)$ with a codimension one subcomplex $Z$ of $K$.

\begin{definition}
\label{def:complementary-graph}
\index{adjacency graph!cut-graphs} \index{$\Gammacut(K;Z)$}
Let $K$ be a cubical $n$-complex and $Z \subset K$ be a cubical $(n-1)$-subcomplex. The \emph{cut-graph $\Gammacut(K;Z)$ of the adjacency graph $\Gamma(K)$ relative to $Z$} is 
the subgraph 
\[
\Gammacut(K;Z) = \left( K^{[n]}, \left\{\{ Q\cap Q'\} \in \Gamma(K) \colon Q\cap Q'\not \in Z \right\} \right).
\]
We call the realization $\cR_K(\Gammacut(K;Z))$ 
the \emph{lift of $K$ relative to $Z$}, and the
$\Gammacut(K;Z)$-quotient map
 \[\pi_{(K;Z)} \colon \cR_K(\Gammacut(K;Z))  \to K\]
 \emph{the canonical projection from the lift onto $K$}.
\end{definition}

A component $G$ of $\Gammacut(K;Z)$ is called an \emph{inner component} if $\Span_K(G)$ does not meet $\partial K$, and  an \emph{outer component} if it is not an inner component.

\subsubsection{$\Sigma$-components of cut-graphs and their lifts}\label{sec:notation-lift}
In this article, we are mainly interested in the case when $Z$ is in the interior of $K$. 

Suppose that  $Z$ is in the interior of  $K$ and $\Sigma\subset \partial K$ is a boundary component.
We denote by $\Gammacut(K;Z;\Sigma)$  the unique connected component of $\Gammacut(K;Z)$ for which
\[
\Sigma \subset \Span_K(\Gammacut(K;Z;\Sigma)) \subset K
\]
and call the span
\[
\Comp_K(Z;\Sigma) = \Span_K(\Gammacut(K;Z;\Sigma)),
\]
the \emph{$\Sigma$-component of $K$}.\index{cubical complex!$\Sigma$-component} 

\begin{remark}
In general, $Z$ need not separate the components of $\partial K$. Thus it can happen that $\Gammacut(K;Z;\Sigma)=\Gammacut(K;Z;\Sigma')$, hence $\Comp_K(Z;\Sigma)= \Comp_K(Z;\Sigma')\supset \Sigma \cup \Sigma'$, for two distinct boundary components  $\Sigma$ and $\Sigma'$. 
Note also that, if the cut-graph
 $\Gammacut(K;Z)$ has an inner component, 
then the union $\bigcup_\Sigma \Comp_K(Z;\Sigma)$ over all boundary components $\Sigma$ is  strictly contained in $K$. These two observations lead us to the discussion of separating complexes in Section \ref{sec:Separating-complexes}.
\end{remark}

When $Z$ is in the interior of  $K$, we denote
\[\Real_K(Z; \Sigma) = \cR_K(\Gammacut(K;Z;\Sigma)).\]
Since $\pi_{(K;Z)}$ projects $\Real_K(Z;\Sigma)$ onto $\Comp_K(Z;\Sigma)$, we may also write
\[\Real_K(Z;\Sigma)= \pi^{-1}_{(K;Z)}(\Comp_K(Z;\Sigma)),\]
and call it the \emph{lift of $\Comp_K(Z;\Sigma)$ in $\cR_K(\Gammacut(K;Z))$}.

\begin{remark}\label{rmk:metric-Realization}
The $\Sigma$-component $\Comp_K(Z;\Sigma)$, being a subcomplex of $K$, inherits a polyhedral metric from $K$.  
Similarly, cubical $n$-complex $\Real_K(Z;\Sigma)$ has a metric in which each $n$-cube $Q$ is isomorphically isometric to its projection $\pi_{(K;Z)}(Q)$ in $\Comp_K(Z;\Sigma)$.
Under these metrics, the restriction of 
\[\pi_{(K;Z)}|_{\Real_K(Z;\Sigma)} \colon |\Real_K(Z;\Sigma)| \to |\Comp_K(Z;\Sigma)|\]
 is a length preserving map, that is, for each path $\gamma$ in $|\Real_K(Z;\Sigma)|$, 
we have
$\ell(\pi_{(K;Z)} \circ \gamma) = \ell(\gamma)$. 
\end{remark}

Since interiors of $|\Real_K(Z;\Sigma)|$ and $|\Comp_K(Z;\Sigma)|$ are locally Euclidean metric spaces with respect to the inherited metrics, the previous remark immediately yields that  map $\pi_{(K;Z)}$ is conformal in the interior of $|\Real_K(Z;\Sigma)|$. We record this fact as follows.

\begin{lemma}\label{lemma:metric-Realization}With respect to the metrics above, interiors $\interior (|\Real(Z;\Sigma_i)|)$ and $\interior (|\Comp(Z;\Sigma_i)|\setminus |Z|)$ are conformally equivalent.
\end{lemma}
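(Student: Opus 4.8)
The plan is to deduce the statement as a direct corollary of Remark \ref{rmk:metric-Realization}. That remark already tells us that the canonical projection $\pi_{(K;Z)}$ restricted to $|\Real_K(Z;\Sigma_i)|$ is a length-preserving surjection onto $|\Comp_K(Z;\Sigma_i)|$, and that on each $n$-cube $Q$ of $\Real_K(Z;\Sigma_i)$ it restricts to an isometric isomorphism onto its projection. So the only thing to argue is that this length-preserving map is a \emph{conformal equivalence} between the two interiors $\interior(|\Real_K(Z;\Sigma_i)|)$ and $\interior(|\Comp_K(Z;\Sigma_i)|\setminus|Z|)$ -- in particular that it is a bijection there, and that both a forward map and its inverse are conformal.

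First I would observe that away from the codimension-one skeleton, i.e.\ on the union of the open $n$-cubes, $\pi_{(K;Z)}$ is literally an isometry of flat Euclidean pieces, hence conformal; the content is at the shared faces. Here I would use Lemma \ref{lemma:metric-Realization}'s hypothesis implicitly: a point $x$ of $\interior|\Comp_K(Z;\Sigma_i)|$ lying on an $(n-1)$-cube $q$ has, on the $\Comp$ side, a neighborhood that is the union of the two $n$-cubes meeting along $q$ (since $x$ is in the manifold interior and $q\notin Z$, both $n$-cubes adjacent along $q$ belong to $\Comp$, and the edge $\{Q,Q'\}$ is in $\Gammacut(K;Z;\Sigma_i)$); on the $\Real$ side the corresponding two $n$-cubes are glued along the lift of $q$ in exactly the same flat way, because $\{Q,Q'\}$ being an edge of the tree-like cut-graph component means those two lifted cubes are glued along that face. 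Thus $\pi_{(K;Z)}$ is a local isometry near $x$, and since local isometries of locally Euclidean length spaces are conformal (and conformality is a local, symmetric notion), the map is conformal with conformal inverse near each such point. The removal of $|Z|$ from $|\Comp|$ on the target side, and the restriction to interiors on both sides, is precisely what guarantees there are no points where two cubes meet along a face not recorded as an edge of the cut-graph, so there is no "branching" and the map is genuinely bijective: injectivity follows because a failure of injectivity would force two $n$-cubes of $\Real_K(Z;\Sigma_i)$ with a common face to project to two $n$-cubes of $\Comp$ meeting along a face in $Z$, impossible away from $|Z|$, while surjectivity onto $\interior|\Comp|\setminus|Z|$ is clear from $\pi_{(K;Z)}$ being onto $\Comp$ cube-by-cube.

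I would then package this as: the restriction $\pi_{(K;Z)}\colon \interior(|\Real_K(Z;\Sigma_i)|)\to \interior(|\Comp_K(Z;\Sigma_i)|\setminus|Z|)$ is a bijective local isometry of locally Euclidean Riemannian manifolds, hence a conformal homeomorphism whose inverse is also a local isometry, establishing the asserted conformal equivalence.

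The main obstacle is purely a matter of carefully identifying the local model on both sides of an interior $(n-1)$-face and checking injectivity there; once one sees that $\interior|\Comp|\setminus|Z|$ is exactly the locus where the quotient map $\pi_{(K;Z)}$ un-glues nothing (equivalently, the locus over which the fibers are singletons and the adjacency structure is faithfully reproduced by the tree component of the cut-graph), the conformality is immediate from local flatness. No genuinely hard analysis is needed beyond invoking the preceding remark and the elementary fact that a local isometry between locally Euclidean spaces is conformal.
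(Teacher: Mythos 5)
Your argument follows the same route the paper takes: Remark \ref{rmk:metric-Realization} gives that $\pi_{(K;Z)}$ is length-preserving, both interiors are locally Euclidean with respect to the inherited metrics, and therefore the restriction to the interiors is conformal. The paper compresses this into the single sentence preceding the lemma; you unfold it, paying explicit attention to bijectivity and to what happens across an interior $(n-1)$-face.

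Two small points worth flagging. First, $\Gammacut(K;Z;\Sigma_i)$ is a connected subgraph of $\Gamma(K)$ but is not in general a tree (that is only the case when $K$ is tunnel-like with respect to $Z$), so the phrase ``tree-like cut-graph component'' should be dropped; nothing in the argument actually uses it. Second, for bijectivity you should not restrict attention to interior $(n-1)$-faces: a point $y\in\interior(|\Comp_K(Z;\Sigma_i)|\setminus|Z|)$ can lie in the relative interior of a cube $\sigma$ of dimension $\le n-2$, and there the fibre $\pi_{(K;Z)}^{-1}(y)$ is a singleton because (i) no $(n-1)$-cube of $K$ containing $\sigma$ can lie in $Z$ (else $\sigma\subset|Z|$), so all adjacency edges in $\Star_K(\sigma)$ are edges of the cut-graph, and (ii) $\Star_K(\sigma)$ is adjacently connected, which is condition (3) of Definition \ref{def:good-complex}. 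Your $(n-1)$-face reasoning is the measure-theoretic heart of the conformality claim, but the homeomorphism claim needs this extra half-step, which implicitly invokes the good-complex hypothesis. Your injectivity sentence also reads backwards as written (two $n$-cubes of $\Real$ that share a face always project to cubes meeting along a face \emph{not} in $Z$); the correct contrapositive is that if the projections of two \emph{unglued} $n$-cubes share a point $y$, the common face through $y$ must be in $Z$, so $y\in|Z|$.
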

 
We end this section with some more notations. Let  $P$ be an $n$-subcomplex of $K$, we may denote for simplicity 
\[\pi_{(K;Z)}^{-1}(P)= \Span_{\cR_K(\Gammacut(K;Z))}(\pi_{(K;Z)}^{-1}(P^{[n]})).\]

Clearly the $\Gammacut(K;Z)$-quotient map $ \pi_{(K;Z)}$ induces a natural isomorphism $\Refine^k(\cR_K(\Gammacut(K;Z)))  \to \Refine^k(K)$ between refinements. 
Thus, given an $n$-subcomplex $R$ of $\Refine^k(K)$, we may also denote 
\[\pi_{(K;Z)}^{-1}(R)= \Span_{\Refine^k(\cR_K(\Gammacut(K;Z)))}(\pi_{(K;Z)}^{-1}(R^{[n]})).\]

\subsection{Tunnels}
\label{sec:Realization}

In this section, we define a special class of cubical complexes called tunnels.

\begin{definition}
\label{def:tunnel}
\index{tunnel}
A cubical $n$-complex $K$ is a \emph{tunnel} if its adjacency graph $\Gamma(K)$ is a tree and $|K|$ is an $n$-cell. 

A cubical $n$-complex $K$ is \emph{tunnel-like with respect to an $(n-1)$-subcomplex $Z\subset K$} if $\Gammacut(K;Z)$ is a tree. 
\end{definition}

\begin{remark}
The term of tunnel-like complexes is justified by Lemma \ref{lemma:Realization-cell}. Indeed, if $K$ is tunnel-like with respect to a subcomplex $Z$, then the realization $\cR_K(\Gammacut(K;Z)$ is a tunnel.
\end{remark}

\begin{figure}[htp]
\begin{overpic}[scale=0.8,unit=1mm]{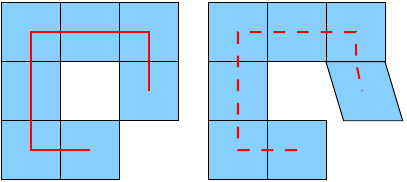} 
\end{overpic}
\caption{Left: not a tunnel. Right: a tunnel}
\label{fig:Realization-3}
\end{figure}

The importance of tunnels stems, in part, from the following contraction property.
This fact is used throughout the proof of Theorem \ref{intro-thm:Wada-Riemannian-manifold}.

\begin{proposition}[Tunnel-contracting]
\label{prop:tunnel-contracting}
Let $K \cup T$ be a cubical $n$-complex for which $T$  is a tunnel and the intersection $q_T=K\cap T$ is the common face of exactly two $n$-cubes.
Then there exist a constant $L=L(n,\# T^{[n]})\ge 1$ and a piecewise linear $L$-bilipschitz homeomorphism $\phi_T \colon |K\cup T| \to |K|$ for which $\phi_T$ is the identity on $|K-Q_T|$, where $Q_T$ is the unique $n$-cube in $K$ having $q_T$ as a face.
\end{proposition}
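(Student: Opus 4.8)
The plan is to contract the tunnel $T$ one leaf at a time, mimicking the inductive leaf-removal argument in the proof of Lemma~\ref{lemma:Realization-cell}, but now keeping explicit track of a bilipschitz constant. Since $T$ is a tunnel, its adjacency graph $\Gamma(T)$ is a tree; let $Q^* \in T^{[n]}$ be the unique $n$-cube of $T$ containing the face $q_T = K\cap T$ (note $Q^*$ is a well-defined vertex of $\Gamma(T)$ because $q_T$ lies in exactly two $n$-cubes of $K\cup T$, one in $K$ and one in $T$). I would root the tree $\Gamma(T)$ at $Q^*$. The induction is on $\#T^{[n]}$: if $T$ consists of a single $n$-cube, then $|K\cup T|$ is obtained from $|K|$ by gluing a cube along the single face $q_T$, and there is a standard PL bilipschitz homeomorphism $|K\cup T|\to |K|$ which is the identity outside $Q_T$ (the unique $n$-cube of $K$ with face $q_T$) and which "folds" $Q^*\cup Q_T$ back into $Q_T$; its bilipschitz constant is an absolute function of $n$. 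For the inductive step, pick a leaf $Q_\ell$ of $\Gamma(T)$ with $Q_\ell\neq Q^*$ (such a leaf exists since the tree has at least two vertices), say attached to its parent along the face $q_\ell = Q_\ell\cap Q_{\mathrm{parent}}$. As in Lemma~\ref{lemma:Realization-cell}, because $\Gamma(T)$ is a tree and $Q_\ell$ is a leaf, $Q_\ell$ meets the rest of $|K\cup T|$ only along $q_\ell$ and faces contained in $\Star$ of $q_\ell$ inside the parent; one gets a PL bilipschitz homeomorphism $h_\ell\colon |K\cup T|\to |K\cup (T-\sfC(Q_\ell))|$ that is the identity outside $Q_\ell\cup Q_{\mathrm{parent}}$ and has bilipschitz constant bounded by an absolute constant $c(n)$. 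Then $T' = T - \sfC(Q_\ell)$ is again a tunnel with $\#(T')^{[n]} = \#T^{[n]}-1$ and still $K\cap T' = q_T$ with $q_T$ a face of exactly two $n$-cubes in $K\cup T'$, so the induction hypothesis provides $\phi_{T'}\colon |K\cup T'|\to |K|$, PL, $L(n,\#(T')^{[n]})$-bilipschitz, identity on $|K-Q_T|$. Composing, $\phi_T = \phi_{T'}\circ h_\ell$ is PL, bilipschitz with constant $L = c(n)\cdot L(n,\#T^{[n]}-1)$, and is the identity on $|K-Q_T|$: outside $Q_T$, $\phi_{T'}$ is the identity by hypothesis, and $h_\ell$ is the identity outside $Q_\ell\cup Q_{\mathrm{parent}}$, both of which are cubes of $T$, hence disjoint from $|K-Q_T|$ except possibly along $q_T$, where one checks $h_\ell$ fixes $q_T$.

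The one-leaf elementary move deserves a word. To build $h_\ell$, fix the cubical isomorphism $\psi\colon Q_\ell\cup Q_{\mathrm{parent}}\to [0,1]^{n-1}\times[-1,1]$ sending $q_\ell$ to $[0,1]^{n-1}\times\{0\}$, $Q_\ell$ to $[0,1]^{n-1}\times[0,1]$, $Q_{\mathrm{parent}}$ to $[0,1]^{n-1}\times[-1,0]$; in these coordinates, retract $[0,1]^{n-1}\times[-1,1]$ onto $[0,1]^{n-1}\times[-1,0]$ by a PL map that is the identity on $[0,1]^{n-1}\times\{-1\}$ and on the lateral boundary $\partial([0,1]^{n-1})\times[-1,0]$, and is, say, the affine squeeze $(x,t)\mapsto (x, (t-1)/2)$ on the top half — then smooth the corner along $[0,1]^{n-1}\times\{0\}$ so the result is a genuine PL homeomorphism onto the image and the identity near the lateral faces. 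Because $\psi$ is an isometry onto a product of standard cubes, all metric distortion is captured by the model map in $\R^n$, whose bilipschitz constant is an explicit function $c(n)$ (one can take $c(n)$ independent of $n$ in fact, the squeeze factor being $1/2$). Extend $h_\ell$ by the identity outside $Q_\ell\cup Q_{\mathrm{parent}}$; it agrees on overlaps because the model map is the identity near the lateral faces, and it fixes $q_T$ since $q_T\subset\partial Q^*$ and $Q^*\neq Q_\ell$, so $q_T$ meets $Q_\ell\cup Q_{\mathrm{parent}}$ only in lower-dimensional lateral faces where $h_\ell=\id$.

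The main obstacle is bookkeeping rather than conceptual: one must be careful that at each stage the hypothesis "$K\cap T'$ is the common face of exactly two $n$-cubes of $K\cup T'$" is preserved. This holds because removing a leaf $Q_\ell\neq Q^*$ from $T$ does not touch $Q^*$ and does not add or remove $n$-cubes adjacent along $q_T$ — the two $n$-cubes sharing $q_T$ are the one in $K$ and $Q^*$, both of which survive. A second point to watch is that the maps $h_\ell$ be compatible with the cubical structure away from $Q_\ell\cup Q_{\mathrm{parent}}$ so that the composition remains PL on $|K\cup T|$; this is immediate since each $h_\ell$ is the identity there. Finally, the constant: unwinding the recursion gives $L(n,\#T^{[n]}) \le c(n)^{\#T^{[n]}-1}$, which depends only on $n$ and $\#T^{[n]}$ as required; no claim of optimality is needed.
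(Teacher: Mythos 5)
Your proposal is correct and takes essentially the same route as the paper: contract leaves of the tree one at a time, noting that a leaf meets the rest only along its shared face with its parent, and track a bilipschitz constant that multiplies once per contraction. The only cosmetic difference is that the paper works with $T' = T \cup Q_T$ and contracts everything down to $Q_T$ in one induction, whereas you root at $Q^*$, contract $T$ to $Q^*$, and then perform a final fold of $Q^* \cup Q_T$ into $Q_T$ as a base case; these are the same argument packaged slightly differently. One small caution worth keeping in mind as you write it up carefully: the affine squeeze $(x,t)\mapsto(x,(t-1)/2)$ you offer as the model map is not the identity on the lateral boundary $\partial([0,1]^{n-1})\times[-1,0]$, so the ``smooth the corner'' step is doing genuine work — you need a PL map that is exactly the identity on $\partial Q_{\mathrm{parent}}\setminus q_\ell$ before extending by identity, and this is also where the justification that the leaf meets the rest of $|K\cup T|$ only along $q_\ell$ (which both you and the paper pull from the realization/Lemma~\ref{lemma:Realization-cell} argument) is actually used.
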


\begin{proof}
Let $T'=T\cup Q_T$. Then $\Gamma(T')$ is a tree. 

Let $Q\in T', Q\neq Q_T,$ be  a leaf in $\Gamma(T')$ and  $Q'$ be the unique $n$-cube in $T'$ sharing a face $q=Q\cap Q'$ with $Q$. Since $T'$ is isomorphic to the realization of $\Gamma(T')$, we have that $Q\cap (T'-Q) = Q\cap Q'$. Thus there exists a piecewise linear homeomorphism $\varphi_Q \colon |Q\cup Q'| \to |Q'|$, which is the identity on $|\partial Q'| \setminus |Q|$
and whose bilipschitz constant depends only on $n$. We extend $\varphi_Q$, by identity, to a homeomorphism
$\varphi_Q \colon |K\cup T| \to |(K\cup T) -Q|$. 
The mapping $\phi_T$ in the proposition is obtained as a composition of such homeomorphisms by contracting one leaf at a time. Thus the bilipschitz constant for $\phi_T$ depends only on the dimension $n$ and the size of the tree $\Gamma(T)$, which is $\#T^{[n]}$. 
\end{proof}

\subsection{Good cubical complexes}
\label{sec:good-complexes}

The cubical complex $K$ associated to a Riemannian manifold defined in Proposition \ref{prop:Riemannian-to-cubical} has certain properties which are needed in our construction. We collect these properties in the notion of good complexes.

\begin{definition}
\label{def:good-complex} \index{good complex}
A  cubical $n$-complex $K$ is a \emph{good complex} if 
\begin{enumerate}
\item $\Gamma(K)$ is connected,  \label{item:good-complex-adjacency-1}
\item each $(n-1)$-cube is a face of at most two $n$-cubes,  \label{item:good-complex-regularity-a}
\item star of each vertex and each $(n-2)$-cube is an adjacently-connected $n$-complex, and   \label{item:good-complex-regularity-b}
\item $\partial K$ is a cubical $(n-1)$-complex with at least one component, and each component $\Sigma$ of $\partial K$ is adjacently-connected.\label{item:good-complex-adjacency-2}
\end{enumerate}
A  cubical $n$-complex $K$ is said to have  a \emph{good interior} if Conditions \eqref{item:good-complex-adjacency-1},  \eqref{item:good-complex-regularity-a}, and  \eqref{item:good-complex-regularity-b}  are satisfied.
\end{definition}

We show next that by 
\eqref{item:good-complex-regularity-a}, the adjacency in  \eqref{item:good-complex-regularity-b} improves to cyclicity. For the statement, we say that a graph $G=(V,E)$ is \emph{cyclic} if it is isomorphic to either
\[
(\{1,\ldots, m\}, \{ \{1,2\}, \{2,3\},\ldots, \{m-1,m\}\})
\]
or 
\[
(\{1,\ldots, m\}, \{ \{1,2\}, \{2,3\},\ldots, \{m-1,m\}, \{m,1\}\}).
\]
In the first case, we say also that $G$ is \emph{linear}.

\begin{lemma}
\label{lemma:star-cyclic}
Let $K$ be a good cubical $n$-complex. Then the adjacency graph $\Gamma(\Star_K(\xi))$ of the star of an $(n-2)$-cube $\xi\in K^{[n-2]}$ is cyclic.
\end{lemma}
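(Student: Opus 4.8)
The goal is to show that, for an $(n-2)$-cube $\xi$ in a good cubical $n$-complex $K$, the adjacency graph $\Gamma(\Star_K(\xi))$ is cyclic, i.e.\ either a path or a single cycle. The key point to exploit is that condition \eqref{item:good-complex-regularity-b} in Definition \ref{def:good-complex} already tells us $\Star_K(\xi)$ is adjacently-connected, while condition \eqref{item:good-complex-regularity-a} forces each $(n-1)$-cube to be a face of at most two $n$-cubes. So the strategy is to prove that in $\Gamma(\Star_K(\xi))$ every vertex has degree at most $2$: a connected graph in which every vertex has degree $\le 2$ is automatically linear (a path) or a single cycle, hence cyclic by the definition given just before the lemma.

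\textbf{Key steps.}
First I would set up local coordinates: since $\Star_K(\xi)$ is the smallest subcomplex spanned by all cubes containing $\xi$, every $n$-cube $Q$ in $\Star_K(\xi)$ contains $\xi$ as an $(n-2)$-face. Fix an $n$-cube $Q_0\in \Star_K(\xi)^{[n]}$ and use the cubical isomorphism $\phi_{Q_0}\colon Q_0\to[0,1]^n$ to arrange that $\xi$ corresponds to the standard $(n-2)$-face $\{0\}\times\{0\}\times[0,1]^{n-2}$; the two $(n-1)$-faces of $Q_0$ containing $\xi$ then correspond to $\{0\}\times[0,1]^{n-1}$ and $[0,1]\times\{0\}\times[0,1]^{n-2}$. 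The second step is to observe that any $n$-cube $Q\in \Star_K(\xi)$ has \emph{exactly two} $(n-1)$-faces containing $\xi$ (an $(n-2)$-face of a combinatorial $n$-cube lies in precisely two facets), call them $q^{(1)}(Q)$ and $q^{(2)}(Q)$. Hence, in $\Gamma(\Star_K(\xi))$, the only possible edges at $Q$ come from gluing another $n$-cube of $\Star_K(\xi)$ along $q^{(1)}(Q)$ or along $q^{(2)}(Q)$ --- here I use that adjacency means sharing a common $(n-1)$-face, and that any $(n-1)$-face shared by two cubes of $\Star_K(\xi)$ must itself contain $\xi$, since otherwise the shared face, together with $\xi$, would span a cube not in $\Star_K(\xi)$; more carefully, if $Q\cap Q'\in K^{[n-1]}$ and both contain $\xi$, then $\xi\subset Q\cap Q'$ because $\xi$ is the unique $(n-2)$-face of the combinatorial structure sitting in the correct position, which I would justify from the standard cubical structure. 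By condition \eqref{item:good-complex-regularity-a}, each of $q^{(1)}(Q)$ and $q^{(2)}(Q)$ is shared by at most one further $n$-cube, so $Q$ has degree at most $2$ in $\Gamma(\Star_K(\xi))$. The third and final step: a connected graph with maximum degree $\le 2$ is either a path or a cycle, matching exactly the two cases in the definition of cyclic; combined with adjacent-connectedness from \eqref{item:good-complex-regularity-b}, this finishes the proof.

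\textbf{Main obstacle.}
The technical heart is the claim that two adjacent $n$-cubes of $\Star_K(\xi)$ must share an $(n-1)$-face \emph{that contains $\xi$}, equivalently that the edges of $\Gamma(\Star_K(\xi))$ are exactly the gluings along the faces $q^{(1)}, q^{(2)}$ and not along some other $(n-1)$-face. One must rule out the possibility that $Q$ and $Q'$ both contain $\xi$ but meet along an $(n-1)$-face $q$ disjoint from $\xi$ --- in a locally Euclidean complex this cannot happen because $q$ would force $Q'$ to be the reflection of $Q$ across $q$, which cannot also contain $\xi$; but $K$ is only assumed \emph{good}, not locally Euclidean, so the argument must be combinatorial, using only Definition \ref{def:cubical-complex}\eqref{eq:def-cub-comp-2} (compatibility of the standard cubical structures along intersections, via isometric transition maps) to pin down the position of $\xi$ inside each cube. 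I expect that the compatibility of the $\sfC(Q)$'s, which says $\sfC(Q)|_{Q\cap Q'} = \sfC(Q')|_{Q\cap Q'}$, together with the fact that $Q\cap Q'\supset\xi$ forces $\xi$ to be a face of the common subcomplex, is exactly what closes this gap without invoking local Euclideanness.
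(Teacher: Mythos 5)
Your proposal is correct and follows essentially the same argument as the paper's: adjacent-connectedness of $\Star_K(\xi)$ from the star condition in Definition~\ref{def:good-complex}, degree at most two in $\Gamma(\Star_K(\xi))$ from the condition that each $(n-1)$-cube is a face of at most two $n$-cubes together with the observation that $\xi$ lies in exactly two $(n-1)$-faces of each $n$-cube containing it, and the elementary fact that a connected graph of maximum valence two is a path or a cycle. The one point worth clarifying is that what you flag as the ``main obstacle'' is not one: by the cell-complex axiom (Definition~\ref{def:cell-complex}) the intersection $Q\cap Q'$ is a single cell, so if $Q,Q'\in\Star_K(\xi)^{[n]}$ share an $(n-1)$-face $q=Q\cap Q'$, then $\xi\subset Q\cap Q'=q$ automatically, and no appeal to the transition maps or to local Euclideanness is needed to rule out a shared face disjoint from $\xi$.
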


\begin{proof}
Let $\xi\in K^{[n-2]}$. By condition \eqref{item:good-complex-regularity-b} in Definition \ref{def:good-complex}, the adjacency graph $\Gamma(\Star_K(\xi))$ is connected. 
Let $Q\in \Star_K(\xi)$ be an $n$-cube. Then $\xi$ is intersection of exactly two faces of $Q$. Thus $Q$ is adjacent to at most two other $n$-cubes in $\Star_K(\xi)$ by condition \eqref{item:good-complex-regularity-a}. Hence $\Gamma(\Star_K(\xi))$ is connected and has valence at most two at each vertex. We conclude that $\Gamma(\Star_K(\xi))$ is cyclic. 
\end{proof}

The refinement of a good complex remains to be good; we omit the simple argument.
\begin{lemma}
\label{lemma:refinement-strongly-connected-good-complex}
If $K$ is a good cubical $n$-complex then  $\Refine(K)$ is a good cubical $n$-complex.
\end{lemma}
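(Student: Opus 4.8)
The plan is to verify that each of the four defining properties of a good complex in Definition \ref{def:good-complex} is inherited by the standard refinement $\Refine(K)$. I would proceed property by property, exploiting that $\Refine$ acts on each $n$-cube $Q\in K^{[n]}$ by pulling back the standard refinement $\sfR_n$ of $[0,1]^n$ via the fixed cubical isomorphism $\phi_Q$, and that the subdivisions agree on shared faces (as already noted when $\Refine(K)$ was defined).

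First, for \eqref{item:good-complex-adjacency-1}, I would observe that if $\Gamma(K)$ is connected, then $\Gamma(\Refine(K))$ is connected: within a single $n$-cube $Q$, the $3^n$ sub-cubes $\phi_Q^{-1}(q_v)$, $v\in\{0,1,2\}^n$, form an adjacently-connected complex (the adjacency graph of $\sfR_n$ is the obvious grid graph on $\{0,1,2\}^n$, which is connected), and whenever $Q,Q'\in K^{[n]}$ are adjacent along a face $q=Q\cap Q'\in K^{[n-1]}$, the refinement of $q$ gives $3^{n-1}$ shared $(n-1)$-sub-cubes, each a common face of an $n$-sub-cube in $\Refine(Q)$ and one in $\Refine(Q')$; hence adjacency of $Q$ and $Q'$ lifts to adjacency between their refinements. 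Walking along a path in $\Gamma(K)$ therefore produces a path in $\Gamma(\Refine(K))$. Second, for \eqref{item:good-complex-regularity-a}, I would note that a sub-cube $\hat q\in\Refine(K)^{[n-1]}$ either lies in the interior of some $Q\in K^{[n]}$, in which case it is a face of exactly two $n$-sub-cubes of $\Refine(Q)$, or it lies in a face $q\in K^{[n-1]}$; in the latter case the $n$-sub-cubes of $\Refine(K)$ having $\hat q$ as a face are exactly the lifts into $\Refine(Q)$ for each $n$-cube $Q\supset q$ of $K$, of which there are at most two by \eqref{item:good-complex-regularity-a} for $K$. Either way $\hat q$ is a face of at most two $n$-cubes.

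Third, for \eqref{item:good-complex-regularity-b}, I would analyze $\Star_{\Refine(K)}(\hat\xi)$ for a vertex or $(n-2)$-sub-cube $\hat\xi$ of $\Refine(K)$. The relevant point is a localization statement: $\hat\xi$ is contained in the interior of $|\Star_K(\sigma)|$ for a unique cell $\sigma\in K$ (the carrier of $\hat\xi$), and $\Star_{\Refine(K)}(\hat\xi)$ is cubically isomorphic to the corresponding star in the refinement of $\Star_K(\sigma)$, which by Lemma \ref{lemma:star-cyclic} (applied in $K$, using \eqref{item:good-complex-regularity-a} and \eqref{item:good-complex-regularity-b}) is either $\Star$ of a cube in $\sfC(\R^n)$ or a union of at most two such stars glued along faces. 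In each local model — $\R^n$ with its standard cubical structure — the refinement is just $\sfC(\R^n)$ again (up to scaling), and one checks directly that stars of vertices and $(n-2)$-cubes in $\sfC(\R^n)$ are adjacently-connected (the vertex star has $2^n$ $n$-cubes arranged around the vertex with grid adjacencies; the $(n-2)$-cube star has four $n$-cubes in a cycle). Gluing at most two such adjacently-connected models along a shared face keeps adjacent-connectivity. Fourth, for \eqref{item:good-complex-adjacency-2}, I would check $\partial(\Refine(K)) = \Refine(\partial K)$ — a one-sided $(n-1)$-cube of $K$ refines into $3^{n-1}$ one-sided $(n-1)$-sub-cubes, and no new one-sided cubes are created in the interior — and then that each component $\Refine(\Sigma)$ of $\Refine(\partial K)$ is adjacently-connected by the same grid-connectivity argument as in the first step, applied one dimension down to the adjacently-connected $\Sigma$.

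The main obstacle I anticipate is \eqref{item:good-complex-regularity-b}: making precise the claim that the star of a sub-cube $\hat\xi$ in $\Refine(K)$ depends only on the local structure of $K$ near the carrier of $\hat\xi$, and correctly enumerating the possible local models (especially distinguishing whether $\hat\xi$ meets a single $n$-cube of $K$, a single face, or a single $(n-2)$-cube of $K$, and tracking the at-most-two-sided condition through each case). The casework is the substance here; the other three properties are essentially bookkeeping once the face-compatibility of $\Refine$ is invoked. Since the authors explicitly say "we omit the simple argument," I would keep this proof to a short paragraph indicating these four verifications and the local-model reduction, rather than writing out every case.

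\begin{proof}
We verify that each of the four conditions in Definition \ref{def:good-complex} passes to $\Refine(K)$; since $\partial(\Refine(K))=\Refine(\partial K)$, condition \eqref{item:good-complex-adjacency-2} for $\Refine(K)$ follows from the argument for \eqref{item:good-complex-adjacency-1} applied to each component $\Sigma$ of $\partial K$.

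\emph{Condition \eqref{item:good-complex-adjacency-1}.} The adjacency graph of the standard refinement $\sfR_n$ of $[0,1]^n$ is the grid graph on the vertex set $\{0,1,2\}^n$, which is connected; hence for each $Q\in K^{[n]}$ the complex $\Refine(Q)$ is adjacently-connected. If $Q,Q'\in K^{[n]}$ are adjacent along $q=Q\cap Q'\in K^{[n-1]}$, then the refinement of $q$ consists of $3^{n-1}$ $(n-1)$-sub-cubes, each of which is a common face of an $n$-sub-cube of $\Refine(Q)$ and one of $\Refine(Q')$; thus $\Refine(Q)$ and $\Refine(Q')$ are joined in $\Gamma(\Refine(K))$. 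Following a path in the connected graph $\Gamma(K)$ yields a path in $\Gamma(\Refine(K))$, so $\Gamma(\Refine(K))$ is connected.

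\emph{Condition \eqref{item:good-complex-regularity-a}.} Let $\hat q\in \Refine(K)^{[n-1]}$. If $\phi_Q(\hat q)$ lies in the interior of $[0,1]^n$ for the $n$-cube $Q\in K^{[n]}$ containing $\hat q$, then within $\Refine(Q)$ the cube $\hat q$ is a face of exactly two $n$-sub-cubes, and of no $n$-sub-cube outside $\Refine(Q)$. Otherwise $\hat q$ lies in a face $q\in K^{[n-1]}$; the $n$-sub-cubes of $\Refine(K)$ having $\hat q$ as a face are then the images in $\Refine(Q)$, one for each $n$-cube $Q\in K^{[n]}$ with $q\subset Q$, and there are at most two such $Q$ by \eqref{item:good-complex-regularity-a} for $K$. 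In either case $\hat q$ is a face of at most two $n$-cubes of $\Refine(K)$.

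\emph{Condition \eqref{item:good-complex-regularity-b}.} Let $\hat\xi\in\Refine(K)^{[k]}$ with $k\in\{0,n-2\}$. There is a unique cell $\sigma\in K$ whose relative interior contains the relative interior of $\hat\xi$; then $\Star_{\Refine(K)}(\hat\xi)$ is cubically isomorphic to the corresponding star of $\hat\xi$ in $\Refine(\Star_K(\sigma))$. By \eqref{item:good-complex-regularity-a} and Lemma \ref{lemma:star-cyclic} (together with \eqref{item:good-complex-regularity-b} for $K$), the complex $\Star_K(\sigma)$ is isomorphic to a subcomplex of $\sfC(\R^n)$, and its refinement is a subcomplex of $\sfC(\R^n)$ up to scaling. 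In $\sfC(\R^n)$, the star of a vertex is the union of the $2^n$ unit cubes meeting at that vertex, with grid adjacencies, and the star of an $(n-2)$-cube is a cycle of four unit $n$-cubes; both are adjacently-connected. When $\Star_K(\sigma)$ is a gluing of at most two such local models along a shared face (which occurs when $\hat\xi$ lies on $\partial(\Refine(K))$ inside a one-sided cube, or when a face separates the two $n$-cubes through a given $(n-2)$-cube), the gluing of adjacently-connected complexes along a common face remains adjacently-connected. Hence $\Star_{\Refine(K)}(\hat\xi)$ is adjacently-connected.

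\emph{Condition \eqref{item:good-complex-adjacency-2}.} A one-sided $(n-1)$-cube $q$ of $K$ refines into $3^{n-1}$ one-sided $(n-1)$-sub-cubes, and refinement creates no new one-sided $(n-1)$-cubes in the interior of $|K|$; thus $\partial(\Refine(K))=\Refine(\partial K)$ has the same number of components as $\partial K$, namely at least one. Each component is $\Refine(\Sigma)$ for a component $\Sigma$ of $\partial K$, and the grid-connectivity argument of Condition \eqref{item:good-complex-adjacency-1}, carried out in dimension $n-1$ using that $\Sigma$ is adjacently-connected, shows $\Refine(\Sigma)$ is adjacently-connected. Therefore $\Refine(K)$ is a good cubical $n$-complex.
\end{proof}
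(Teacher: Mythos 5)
Your verifications of conditions \eqref{item:good-complex-adjacency-1}, \eqref{item:good-complex-regularity-a}, and \eqref{item:good-complex-adjacency-2} are correct. The issue is condition \eqref{item:good-complex-regularity-b}. You start with the right reduction — $\Star_{\Refine(K)}(\hat\xi)$ depends only on the carrier cell $\sigma\in K$ and on $\Refine(\Star_K(\sigma))$ — but then assert that $\Star_K(\sigma)$ is isomorphic to a subcomplex of $\sfC(\R^n)$. This is the statement that $K$ is locally Euclidean at $\sigma$ in the sense of Definition \ref{def:locally-Euclidean}, and it fails for general good complexes: Lemma \ref{lemma:star-cyclic} only says that $\Gamma(\Star_K(\xi))$ is a cycle, of arbitrary length $\ell$, while a cycle of length $\ell>4$ cannot be the adjacency graph of a star in $\sfC(\R^n)$; likewise a vertex in a good complex can have any number of $n$-cubes around it. So the Euclidean local model you invoke need not exist, and the direct computation inside $\sfC(\R^n)$ does not apply.

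What is true, and follows from the first sentence of your argument, is that $\Star_{\Refine(K)}(\hat\xi)$ decomposes into a block of $2^{\dim\sigma}$ sub-cubes inside $\Refine(Q)$ for each $Q\in\Star_K(\sigma)^{[n]}$, each block internally connected by grid adjacencies, with two blocks joined along $\Refine(q)$ exactly when $Q\cap Q'=q$ is a common $(n-1)$-face, which then automatically contains $\sigma$. Hence $\Star_{\Refine(K)}(\hat\xi)$ is adjacently connected if and only if $\Star_K(\sigma)$ is. When $\sigma$ has dimension $0$, $n-2$, $n-1$, or $n$ this is immediate from Definition \ref{def:good-complex} together with Lemma \ref{lemma:star-cyclic}. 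But for $n\geq 4$ a vertex $\hat\xi$ of $\Refine(K)$ may have carrier $\sigma$ of dimension $1,\dots,n-3$, and Definition \ref{def:good-complex} makes no assertion about the star of such a $\sigma$. You would need either an auxiliary argument that in a good complex the star of every cube of dimension at most $n-2$ is adjacently connected (which does not obviously follow from conditions \eqref{item:good-complex-regularity-a} and \eqref{item:good-complex-regularity-b} alone), or to restrict to the complexes $K$ that actually occur in the paper, which arise from $T^\square$ and its collar extension and are cubical structures on manifolds, so that every cube star is adjacently connected. The paper omits its own proof, so there is nothing to compare against directly, but as written your verification of condition \eqref{item:good-complex-regularity-b} has a genuine gap.
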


The cubical structure $T^\square$ in Proposition \ref{prop:PL-to-cubical} is a good complex.

\begin{proposition}
\label{prop:T-square-good}
Let $T$ be a simplicial $n$-complex for which $(|T|, d_T)$ is a Riemannian manifold with boundary. 
Then the cubical structure $T^\square$ associated to $T$ constructed in Proposition \ref{prop:PL-to-cubical} is a good complex.
\end{proposition}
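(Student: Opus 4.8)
The plan is to verify each of the four defining conditions of a good complex (Definition \ref{def:good-complex}) for $K = T^\square$, using the explicit structure of $\Delta_n^\square$ from Lemma \ref{lemma:congruent-cubulation} together with the fact that $|T|$ is a manifold with boundary. The key observation underlying everything is that $T^\square$ is, up to cubical isomorphism, the first barycentric subdivision of $T$ viewed as a cubical complex: an $n$-cube of $T^\square$ sitting inside an $n$-simplex $\sigma$ corresponds to a flag (chain of faces) of $\sigma$, and two such cubes are adjacent exactly when their flags differ by deleting/inserting one term. This combinatorial description lets us read off all local structure from the combinatorics of the poset of simplices of $T$.

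\smallskip

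First I would establish \eqref{item:good-complex-adjacency-1}: $\Gamma(T^\square)$ is connected. Since $|T|$ is a connected manifold, $T$ is a connected simplicial complex in which every simplex lies in an $n$-simplex, and adjacent $n$-simplices (sharing an $(n-1)$-face) give rise to $n$-cubes of $T^\square$ that are joined through a chain of adjacent cubes inside the two simplices and across their common face; connectivity of the nerve of $n$-simplices then propagates to $\Gamma(T^\square)$. Next, \eqref{item:good-complex-regularity-a}: an $(n-1)$-cube $q$ of $T^\square$ corresponds to a flag of length $n$ in some simplex; if this flag does not involve the top face $\sigma$ (equivalently, $q$ lies in the interior of a single $n$-simplex or is shared across a single $(n-1)$-simplex), it is a face of exactly two $n$-cubes of $T^\square$, while if it records a facet on $\partial |T|$ it is a face of exactly one. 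The point is that it is never a face of three or more, because the manifold-with-boundary hypothesis guarantees each $(n-1)$-simplex of $T$ is a face of one or two $n$-simplices. Condition \eqref{item:good-complex-adjacency-2} is similar: $\partial T^\square = (\partial T)^\square$ is a cubical $(n-1)$-complex, and its components are adjacently connected because each component of $\partial |T|$ is a connected closed $(n-1)$-manifold, hence a connected simplicial complex to which the same barycentric-subdivision argument applies one dimension down.

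\smallskip

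The main obstacle is condition \eqref{item:good-complex-regularity-b}: showing that the star (in the restrictive sense of Definition \ref{def:star}) of every vertex and every $(n-2)$-cube of $T^\square$ is adjacently connected. For a vertex $w$ of $T^\square$ (the barycenter of some face $\tau$ of a simplex of $T$), $\Star_{T^\square}(w)$ consists of all $n$-cubes whose flag contains $\tau$; combinatorially this is the join of two barycentric pictures (the faces of $\tau$ below, and the link of $\tau$ above), and its adjacency graph is connected because a flag through $\tau$ can be modified one term at a time. For an $(n-2)$-cube $\xi$ of $T^\square$, the flag has length $n-1$, so exactly two ``slots'' of a full flag are unconstrained, and one checks that the $n$-cubes containing $\xi$ form either a linear or a cyclic chain under adjacency — linear precisely when $\xi$ touches $\partial|T|$ and cyclic otherwise. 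Carrying this out requires a careful but elementary case analysis on where the two missing flag entries can be inserted, using the local manifold structure (link of a codimension-$2$ face is a circle or an arc). Once these local verifications are assembled, all four conditions hold and $T^\square$ is a good complex; I would also remark that, combined with Lemma \ref{lemma:refinement-strongly-connected-good-complex}, this justifies the forthcoming Remark \ref{rmk:Riemannian-good}.
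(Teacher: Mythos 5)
Your proof is correct in strategy and essentially unpacks what the paper's one-sentence proof compresses into ``the simplicial counterpart of the conditions holds for $T$, and $\Delta_n^\square$ is good, so $T^\square$ is good from the construction.'' Where the paper simply appeals to the construction, you make the combinatorics explicit via the barycentric-subdivision picture. One correction to the framing, though: the $n$-cubes of $T^\square$ inside an $n$-simplex $\sigma$ are not in bijection with complete flags of $\sigma$ (there are $(n+1)!$ of those, and they index the $n$-simplices of the barycentric subdivision, not the cubes). Rather, the $n+1$ cubes $Q_0,\dots,Q_n$ of $T^\square|_\sigma$ correspond to the vertices of $\sigma$ — equivalently to the two-term chains $\{v\}\subsetneq\sigma$ — and more generally the $k$-cubes of $T^\square$ are indexed by intervals $[\tau,\tau']$ in the face poset of $T$ with $\dim\tau'-\dim\tau=k$. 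With this reading the adjacency rule is also sharper than ``differ by one term'': two cubes $(v,\sigma)$ and $(v',\sigma)$ with $v\neq v'$ are always adjacent (via the $(n-1)$-cube $[[v,v'],\sigma]$), whereas $(v,\sigma)$ and $(v,\sigma')$ are adjacent precisely when $\sigma\cap\sigma'$ is an $(n-1)$-simplex containing $v$. This last constraint is where the manifold hypothesis enters (the link of a face of $T$ is a sphere or disk, hence its top-dimensional adjacency graph is connected), and it should be cited explicitly both for condition \eqref{item:good-complex-adjacency-1} and for the star connectivity in \eqref{item:good-complex-regularity-b}. Once that is done, your case analysis — codimension of the $(n-1)$-face for \eqref{item:good-complex-regularity-a}, $\partial T^\square=(\partial T)^\square$ plus a one-dimension-down induction for \eqref{item:good-complex-adjacency-2}, and the linear-versus-cyclic dichotomy for $(n-2)$-cubes governed by whether the relevant link is an arc or a circle — is correct and fills in exactly the details the paper omits.
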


\begin{proof}
Since $|T|$ is a connected manifold with boundary, the simplicial complex $T$ satisfies the simplicial counterpart of the conditions in Definition \ref{def:good-complex}. Since  $\Delta_n^\square$ is a good cubical complex, complex $T^\square$ is also good from the construction.
\end{proof}

The complex $T^\square$ remains good after a collar of the boundary is added. We  state this straightforward fact as a lemma.

\begin{lemma}
\label{lemma:cubical-starting-point}
Let $T$ be a simplicial complex whose space $|T|$ is a connected $n$-manifold with boundary. Let $\iota \colon \partial T^\square \to \partial T^\square \times [0,1]$ be the cubical inclusion $q \mapsto q\times \{0\}$, and let $K$ be the cubical $n$-complex
\[
K = T^\square \, {\bigcup}_\iota \left(\partial T^\square \times [0,1]\right)
\]
obtained by identifying points $q$ and $\iota(q)$ for $q\in \partial T^\square$. Then $K$ is a good cubical $n$-complex. Moreover, $(|T^\square|,d_{T^\square})$ and $(K,d_K)$ are bilipschitz equivalent for a constant depending only on $T$. 
\end{lemma}

\begin{remark}\label{rmk:Riemannian-good}
By Lemma \ref{lemma:cubical-starting-point}, we may assume that the cubical $n$-complex $K$ in Proposition \ref{prop:Riemannian-to-cubical} is  a good cubical $n$-complex for which the star $\Star_K(\partial K)$ of the boundary is isomorphic to $\partial K \times [0,1]$. 
Moreover complex $K$, equipped with a polyhedral metric $d_K$, is  quasisimilar to the original Riemannian manifold $M$.
\end{remark}


\section{Separating complexes}
\label{sec:Separating-complexes}
At the center of our construction is the notion of separating complexes. 
Roughly, a separating complex $Z$ in a good cubical $n$-complex $K$ with boundary is an $(n-1)$-dimensional subcomplex whose complement in $|K|$ is homeomorphic to $|\partial K| \times [0,1)$. Typically the space of a separating complex $Z$ is not a manifold. A formal definition stronger than the topological description above uses the notion of lifts. 
We refer to Section \ref{sec:adjacency-graphs} for definitions.

\begin{definition}[Separating complex]
\label{def:separating-complex}
Let $K$ be a good cubical $n$-complex for which  $\Star_K(\partial K)$ is isomorphic to $\partial K \times [0,1]$. An $(n-1)$-subcomplex $Z \subset K$ is a \emph{separating complex in $K$} if 
\begin{enumerate}
\item $Z\cap \Star_K(\partial K) = \emptyset$; \label{item:separating-no-boundary}
\item $\Gamma(Z)$ is connected;   \label{item:separating-strongly-connected}
\item for each boundary component $\Sigma \subset \partial K$, $\Comp_K(Z;\Sigma)\cap \partial K=\Sigma$, and the union of all $\Sigma$-components is $K$, i.e.~$\bigcup_{\Sigma} \Comp_K(Z;\Sigma)=K$; \label{item:separating-total}
\item for each boundary component $\Sigma \subset \partial K$,  the space of  lift $\Real_K(Z;\Sigma)$ is homeomorphic to $|\Sigma|\times [0,1]$.    \label{item:separating-mu-1}
 \end{enumerate}\index{separating complex} 
\end{definition}

\begin{remark}
\label{rmk:separating}
In general, spaces $|\Comp_K(Z;\Sigma)|$ and $|\Real_K(Z;\Sigma)|$ are not homeomorphic, even the interiors of $|\Comp_K(Z;\Sigma)|$ and $|\Real_K(Z;\Sigma)|$ need not be homeomorphic. 
Indeed, in Figure \ref{fig:Separating_complex}, the cut-graph $\Gammacut(K;Z)$ has four connected components and, for each $i=1,\ldots, 4$, $\Real_K(Z;\Sigma_i)$  has two boundary components. The subcomplex $\tau\subset Z$ in the figure, consisting of four $1$-cubes, is contained in $\Comp_K(Z;\Sigma_1)$ but not in $\partial (\Comp_K(Z;\Sigma_1))$. Thus the cubical complex $\Comp_K(Z;\Sigma_1)$ has three boundary components.

Condition \eqref{item:separating-mu-1}, however, implies a weaker property
\begin{align*}
|\Comp_K(Z;\Sigma_1)|\setminus |Z| 
&\approx \pi_{(K;Z)}^{-1}(|\Comp_K(Z;\Sigma_1)|\setminus |Z|) \\
&= |\Real_k(Z;\Sigma_1)| \setminus \pi_{(K,Z)}^{-1}(|Z|) 
\approx |\Sigma_1| \times [0,1).
\end{align*}
\end{remark}

\begin{figure}[h!]
\begin{overpic}[scale=.45,unit=1mm]{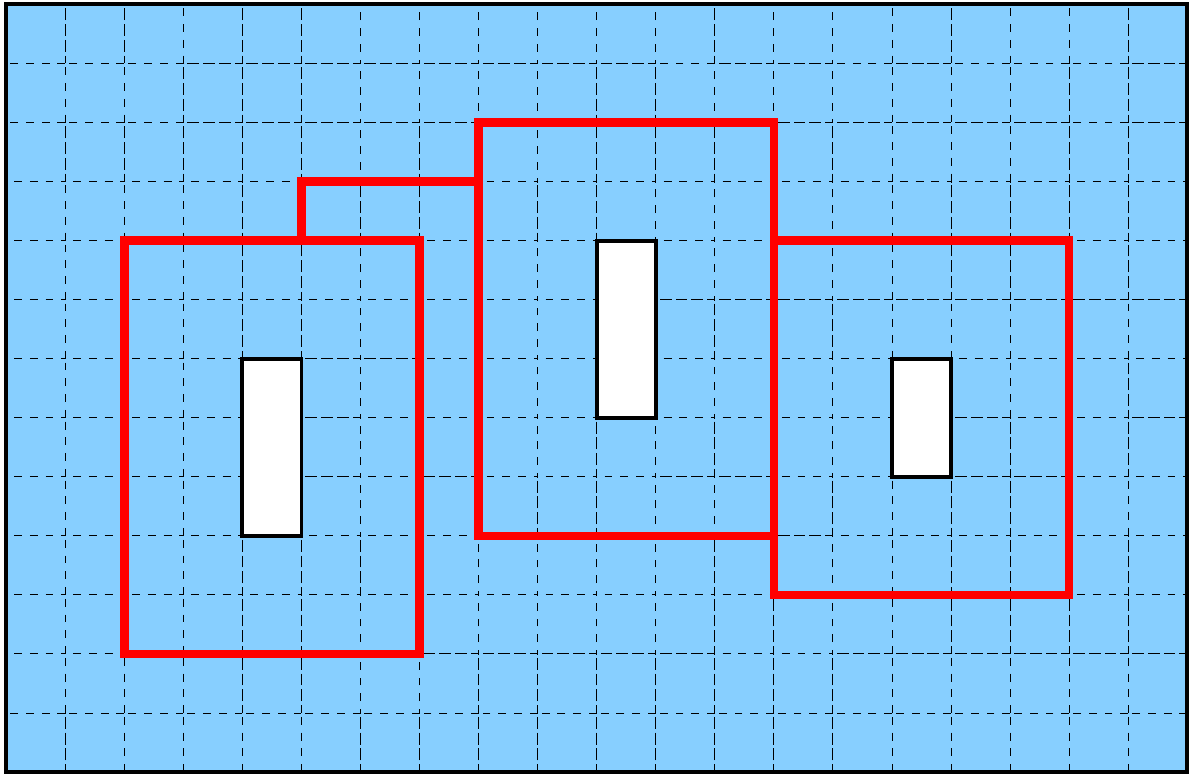}
\put(-4,50){$\Sigma_1$}
\put(14,27){$\Sigma_2$}
\put(41,37){$\Sigma_3$}
\put(63.5,28){$\Sigma_4$}
\put(25,46.5){$\tau$}
\end{overpic}
\caption{Separating complex $Z$ (in red) in a complex $K$ (in blue).}
\label{fig:Separating_complex}
\end{figure}

\begin{remark}
By \eqref{item:separating-no-boundary}, $Z$ does not meet the star $\Star_K(\partial K)$, hence   $\Gamma(\Star_K(\Sigma)) \subset \Gammacut(K;Z, \Sigma)$. Thus $\Star_K(\Sigma)$ lifts isomorphically into $\Real_K(Z;\Sigma)$, i.e., 
\[
\pi_{(K;Z)}|_{\pi_{(K;Z)}^{-1}(\Star_K(\Sigma))} \colon \pi_{(K;Z)}^{-1}(\Star_K(\Sigma)) \to \Star_K(\Sigma)
\]
is an isomorphism. 
From here on, we assume that the star $\Star_K(\Sigma)$ and its lift $\pi_{(K;Z)}^{-1}(\Star_K(\Sigma))$ are so identified.
\end{remark}

\begin{remark}\label{rmk:lift-good}
Let $Z$ be a separating complex in a good cubical $n$-complex $K$ and $\Sigma \subset \partial K$ be a boundary component. Then $\Real_K(Z;\Sigma)$ is also a good complex. We omit the straightforward verification.
\end{remark}

Separating complexes are stable under refinement, we record this fact as a lemma and omit a straightforward proof.

\begin{lemma}
\label{lemma:Separating-complex-trivial}
If $Z\subset K^{(n-1)}$ is a separating complex in a cubical $n$-complex $K$, then $\Refine(Z)$ is a separating complex in $\Refine(K)$.
\end{lemma}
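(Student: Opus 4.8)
The statement to prove is Lemma~\ref{lemma:Separating-complex-trivial}: if $Z\subset K^{(n-1)}$ is a separating complex in a good cubical $n$-complex $K$ with $\Star_K(\partial K)\cong \partial K\times[0,1]$, then $\Refine(Z)$ is a separating complex in $\Refine(K)$.

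\medskip

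The plan is to verify the four conditions of Definition~\ref{def:separating-complex} for $\Refine(Z)$ in $\Refine(K)$ one at a time, using that refinement does not change topology and interacts nicely with stars, adjacency graphs, and cut-graphs. First I would note the preliminary facts: by Lemma~\ref{lemma:refinement-strongly-connected-good-complex}, $\Refine(K)$ is a good cubical $n$-complex, and $\Refine(\Star_K(\partial K))=\Star_{\Refine(K)}(\partial \Refine(K))$ since $\partial\Refine(K)=\Refine(\partial K)$ (subdividing never creates or destroys one-sidedness of faces); moreover $\Refine(\partial K\times[0,1])$ is cubically isomorphic to $\partial\Refine(K)\times[0,1]$ after rescaling, so the hypothesis $\Star_{\Refine(K)}(\partial\Refine(K))\cong \partial\Refine(K)\times[0,1]$ still holds. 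Also $|\Refine(Z)|=|Z|$ and $\Refine(Z)$ is an $(n-1)$-subcomplex of $\Refine(K)^{(n-1)}$.

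\medskip

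For condition~\eqref{item:separating-no-boundary}: since $|\Refine(Z)|=|Z|$ is disjoint from $|\Star_K(\partial K)|=|\Star_{\Refine(K)}(\partial\Refine(K))|$ as sets, the subcomplexes $\Refine(Z)$ and $\Star_{\Refine(K)}(\partial\Refine(K))$ share no cube, so the intersection is empty. For condition~\eqref{item:separating-strongly-connected}, that $\Gamma(\Refine(Z))$ is connected: the key point is that when an $(n-1)$-cube $q$ of $Z$ is subdivided by $\Refine$, the $(n-1)$-subcubes of $q$ form an adjacently-connected complex (a grid of $3^{n-1}$ cubes), and when two $(n-1)$-cubes $q,q'$ of $Z$ share an $(n-2)$-face $\xi$, their refinements are glued along $\Refine(\xi)$, keeping adjacency-connectivity across the seam; since $\Gamma(Z)$ is connected, an induction along an adjacency path in $\Gamma(Z)$ shows $\Gamma(\Refine(Z))$ is connected. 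The cleanest way to package this is: the natural projection (collapsing each refined cube) gives a surjection respecting adjacency, and each fiber is adjacently-connected, forcing connectivity of the total graph.

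\medskip

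Conditions~\eqref{item:separating-total} and~\eqref{item:separating-mu-1} are where the real content sits, and I expect \eqref{item:separating-mu-1} (the lift being homeomorphic to $|\Sigma|\times[0,1]$) to be the main obstacle. The strategy is to relate the cut-graph $\Gammacut(\Refine(K);\Refine(Z))$ to $\Gammacut(K;Z)$ via the refinement bijection on $n$-cubes. Concretely: an edge of $\Gammacut(\Refine(K);\Refine(Z))$ between two refined $n$-cubes $Q',Q''$ lying in a common $n$-cube $Q$ of $K$ is never blocked (those faces are not in $\Refine(Z)$ since $\Refine(Z)$ only refines cubes of $Z\subset K^{(n-1)}$); and an edge between refined $n$-cubes in adjacent $n$-cubes $Q_1,Q_2$ of $K$ across the face $q=Q_1\cap Q_2$ survives in $\Gammacut(\Refine(K);\Refine(Z))$ precisely when $q\notin Z$, i.e. when $\{Q_1,Q_2\}$ is an edge of $\Gammacut(K;Z)$. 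From this one deduces that the refinement bijection on $n$-cubes carries the connected components of $\Gammacut(\Refine(K);\Refine(Z))$ to (the refinements of) the connected components of $\Gammacut(K;Z)$, hence $\Span_{\Refine(K)}(\Gammacut(\Refine(K);\Refine(Z));\Sigma)=\Refine(\Comp_K(Z;\Sigma))$ for each boundary component $\Sigma$. Condition~\eqref{item:separating-total} then follows immediately from the corresponding property for $Z$ together with $\Refine(A)\cup\Refine(B)=\Refine(A\cup B)$. For condition~\eqref{item:separating-mu-1}, the same component correspondence plus Remark~\ref{rmk:G-quotient-refinement} shows that $\Real_{\Refine(K)}(\Refine(Z);\Sigma)$ is cubically isomorphic to $\Refine(\Real_K(Z;\Sigma))$ as abstract cubical complexes; since refining a complex does not change the homeomorphism type of its space, $|\Real_{\Refine(K)}(\Refine(Z);\Sigma)|\cong |\Real_K(Z;\Sigma)|\cong |\Sigma|\times[0,1]$, which gives~\eqref{item:separating-mu-1}. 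The one subtlety to handle carefully is checking that lifting commutes with refining — i.e. that the realization of a refined cut-graph equals the refinement of the realization — which is exactly the content of Remark~\ref{rmk:G-quotient-refinement} applied to the $\Gammacut$-quotient, and I would spell that identification out rather than leave it implicit.
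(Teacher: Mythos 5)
Your approach is the natural one and is essentially what the paper has in mind (the paper declares the proof ``straightforward'' and omits it); the verification of the four conditions of Definition~\ref{def:separating-complex} via the observation that the refinement bijection on $n$-cubes identifies $\Gammacut(\Refine(K);\Refine(Z))$ with $\Gamma(\Refine(\Comp_K(Z;\Sigma)))$-pieces is correct, as is the use of Remark~\ref{rmk:G-quotient-refinement} to transfer the lift to the refinement.

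One factual slip worth fixing: you assert $\Refine(\Star_K(\partial K)) = \Star_{\Refine(K)}(\partial\Refine(K))$. This is false; after refinement, the star of $\partial\Refine(K)$ is a single layer of $3^{-1}$-scale cubes, so it is a \emph{proper} subcomplex of $\Refine(\Star_K(\partial K))$ (the latter is three layers deep). Likewise $\Refine(\partial K\times[0,1])$ is not cubically isomorphic to $\partial\Refine(K)\times[0,1]$ (three layers versus one). What you actually need — and what is true — are the inclusion $\Star_{\Refine(K)}(\partial\Refine(K))\subset \Refine(\Star_K(\partial K))$, which gives condition~\eqref{item:separating-no-boundary} since $|Z|$ is already disjoint from the larger set, and the fact that the innermost layer of $\Refine(\partial K\times[0,1])$ carries a product cubical structure $\Refine(\partial K)\times[0,1]$, which gives the required isomorphism $\Star_{\Refine(K)}(\partial\Refine(K))\cong\partial\Refine(K)\times[0,1]$. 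With those corrections the preliminary observations are sound, and the rest of your argument — adjacency-connectivity of $\Gamma(\Refine(Z))$ via fiberwise-connected projection, the cut-graph/component correspondence for conditions~\eqref{item:separating-total}--\eqref{item:separating-mu-1} — goes through.
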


\subsection{Existence of separating complexes}
\label{sec:separating-complex-existence}

The following theorem shows that, after refinement, every good complex $K$ admits a separating complex of particular form. Indeed, there exists a separating complex $Z$ in $\Refine(K)$ having the properties that $Z\subset \Refine(K - \Star_K(\partial K))$ and $\Refine(K-\Star_K(\partial K))$ is tunnel-line with respect to $Z$. We return to the structures of the subcomplexes separated by a separating complexe and their lifts in Section \ref{sec:evolution-seq} and, in anticipating that discussion, we formulate the statement in terms of the complementary components of $Z$.

\begin{theorem}
\label{theorem:separating-complex-existence}
Let $n\geq 2$, $K$ be a good cubical $n$-complex for which $\Star_K(\partial K)$ is isomorphic to $\partial K \times [0,1]$, and $\Sigma' \subset \partial K$ a boundary component.
Then there exists 
a separating complex $Z$ of $\Refine(K)$ having the following properties:
\begin{enumerate}
\item for each boundary component $\Sigma\subset \partial K$, $\Sigma\ne \Sigma'$, the $\Sigma$-component $\Comp_{\Refine(K)}(Z;\Sigma) = \Refine(\Star_K(\Sigma))$, and the lift $\Real_{\Refine(K)}(Z;\Sigma)$ is isomorphic to $\Refine(\Star_K(\Sigma))$.
\label{item:separating-1}
\item for the boundary component $\Sigma'$, 
 \[\Comp_{\Refine(K)}(Z;\Sigma') =  \Refine(\Star_K(\Sigma'))\, \bigcup \, (\Refine(K) - \Star_K(\partial K)),\]
and
\[ 
\Real_{\Refine(K)}(Z;\Sigma') =\Refine(\Star_K(\Sigma')) \cup \cR_{\Refine(K)}(\tau),
\]
where  $\tau$ is a spanning tree in $\Gamma(\Refine(K - \Star_K(\partial K)))$, $\cR_{\Refine(K)}(\tau)$ is the realization of $\tau$ and $\Refine(\Star_K(\Sigma')) \cap \cR_{\Refine(K)}(\tau)$ is an $(n-1)$-cube. \label{item:separating-2}
\end{enumerate}
\end{theorem}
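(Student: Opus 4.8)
\textbf{Proof plan for Theorem \ref{theorem:separating-complex-existence}.}

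The plan is to build $Z$ explicitly as a subcomplex of $\Refine(K)$ by ``wrapping'' a codimension-one sheet around the core region $\Refine(K-\Star_K(\partial K))$, one layer of $n$-cubes deep inside the collars. First I would fix notation: write $C = K-\Star_K(\partial K)$ for the core, so $K = \Star_K(\partial K) \cup C$ and the two pieces meet in the $(n-1)$-subcomplex $|\Star_K(\partial K)|\cap |C|$, which (since $\Star_K(\partial K)\cong \partial K\times[0,1]$) is a copy of $\partial K$. After refining, $\Refine(K)=\Refine(\Star_K(\partial K))\cup\Refine(C)$, and $\Refine(\Star_K(\Sigma))\cong \Sigma\times[0,1]$ subdivided into three layers of refined cubes in the collar direction. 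I would then pick a spanning tree $\tau$ of the adjacency graph $\Gamma(\Refine(C))$ — this exists because $K$ is good, hence $\Gamma(K)$ is connected, and one checks $\Gamma(\Refine(C))$ is connected using condition \eqref{item:good-complex-regularity-b} on stars (the refinement of a good complex is good by Lemma \ref{lemma:refinement-strongly-connected-good-complex}, and removing the outermost collar star does not disconnect the adjacency graph since each boundary component's star is a collar). The candidate $Z$ is obtained as follows: the cut-graph $\Gammacut(\Refine(K);Z)$ should have $\Sigma$-component equal to $\Gamma(\Refine(\Star_K(\Sigma)))$ for each $\Sigma\ne\Sigma'$, and $\Sigma'$-component equal to $\tau$ together with $\Gamma(\Refine(\Star_K(\Sigma')))$ glued along a single adjacency edge (i.e.\ a single $(n-1)$-cube bridging $\Refine(\Star_K(\Sigma'))$ and $\Refine(C)$). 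Concretely, $Z$ is the $(n-1)$-subcomplex consisting of: (a) for each $\Sigma\ne\Sigma'$, all $(n-1)$-cubes of $\Refine(K)$ that are faces shared between an $n$-cube of $\Refine(\Star_K(\Sigma))$ and an $n$-cube of $\Refine(C)$; (b) all the ``internal'' faces of $\Refine(C)$ not used by the tree $\tau$ — that is, the faces $Q\cap Q'$ for adjacent $n$-cubes $Q,Q'\in\Refine(C)^{[n]}$ with $\{Q,Q'\}\notin\tau$; and (c) for $\Sigma'$, all faces between $\Refine(\Star_K(\Sigma'))$ and $\Refine(C)$ \emph{except one chosen face} $q^*$, which is kept open so that $\Sigma'$'s collar and the core remain in the same cut-component. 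One must locate $Z$ inside $\Refine(C)$ rather than $\Refine(K-\Star_K(\partial K))$ interpreted loosely; here the faces in (a) and (c) lie on $|\partial \Refine(C)|$, hence $Z\subset\Refine(C)\subset\Refine(K-\Star_K(\partial K))$ as required.

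Next I would verify the four axioms of Definition \ref{def:separating-complex} for this $Z$. Axiom \eqref{item:separating-no-boundary}, $Z\cap\Star_{\Refine(K)}(\partial \Refine(K))=\emptyset$: the collar $\Star_K(\partial K)\cong\partial K\times[0,1]$ refines into three sublayers, and $Z$ sits only on the innermost interface, so $Z$ avoids the outer layer which is the star of the boundary; this requires a short check that $\Star_{\Refine(K)}(\partial\Refine(K))$ is exactly the outermost refined layer. Axiom \eqref{item:separating-strongly-connected}, connectedness of $\Gamma(Z)$: this is the content of the statement that the complement of a spanning tree's edge set is ``connected as a sheet'' and that the boundary-interface faces attach connectedly; I would argue it via condition \eqref{item:good-complex-regularity-b} and Lemma \ref{lemma:star-cyclic}, which says the star of each $(n-2)$-cube is cyclic — cyclicity is what lets two $(n-1)$-faces in $Z$ meeting along a common $(n-2)$-cube be joined within $Z$. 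Axiom \eqref{item:separating-total}: by construction the cut-graph $\Gammacut(\Refine(K);Z)$ has exactly one component per boundary component $\Sigma$ (for $\Sigma\ne\Sigma'$ it is the star's adjacency graph; for $\Sigma'$ it is $\tau$ plus the $\Sigma'$-star joined at $q^*$), and these are disjoint and cover all $n$-cubes — there is no inner component precisely because $\tau$ is a \emph{spanning} tree of the core, so every core cube is connected through $q^*$ to $\Sigma'$. The span identities in \eqref{item:separating-1} and \eqref{item:separating-2} then read off immediately from what $\Comp$ means. Axiom \eqref{item:separating-mu-1}, the lift is $|\Sigma|\times[0,1]$: for $\Sigma\ne\Sigma'$ the lift equals $\Refine(\Star_K(\Sigma))\cong\Sigma\times[0,1]$, done; for $\Sigma'$ the lift is $\Refine(\Star_K(\Sigma'))\cup\cR_{\Refine(K)}(\tau)$, and since $\tau$ is a tree, $|\cR_{\Refine(K)}(\tau)|$ is an $n$-cell by Lemma \ref{lemma:Realization-cell}, attached to $\Refine(\Star_K(\Sigma'))\cong|\Sigma'|\times[0,1]$ along the single $(n-1)$-cube (the lift of $q^*$) — gluing an $n$-cell to a collar along a disk on its boundary yields again a space homeomorphic to the collar, so the lift is $|\Sigma'|\times[0,1]$.

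Finally I would assemble the pieces into the two itemized conclusions, which are essentially bookkeeping once $Z$ is in hand: \eqref{item:separating-1} is the assertion $\Comp_{\Refine(K)}(Z;\Sigma)=\Refine(\Star_K(\Sigma))$ and the lift is isomorphic to it, immediate from (a) and the fact that the $\Sigma$-cut-component is exactly $\Gamma(\Refine(\Star_K(\Sigma)))$ which is a tree-free but still faithfully-lifting subgraph in the relevant sense (no cube is identified since no gluing edge is removed within a pure collar); \eqref{item:separating-2} records that $\Comp_{\Refine(K)}(Z;\Sigma') = \Refine(\Star_K(\Sigma'))\cup(\Refine(K)-\Star_K(\partial K))$ because the $\Sigma'$-cut-component is $\tau$-plus-star, and its span is the whole core union the $\Sigma'$-collar; and the lift is $\Refine(\Star_K(\Sigma'))\cup\cR_{\Refine(K)}(\tau)$ with intersection a single $(n-1)$-cube, namely the lift of $q^*$. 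The one point needing care is that $\Refine(K-\Star_K(\partial K))$ is tunnel-like with respect to $Z$, i.e.\ $\Gammacut$ restricted to the core is the tree $\tau$ — this is exactly how $Z$ was built on the core in (b).

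\textbf{Main obstacle.} The step I expect to be the real work is Axiom \eqref{item:separating-strongly-connected}, connectedness of $\Gamma(Z)$ — equivalently, adjacency-connectedness of the ``sheet'' $Z$ built by removing a spanning tree's worth of faces from the core's interface-and-internal faces and leaving the boundary interfaces. One has to show that deleting the faces dual to the edges of $\tau$ from the codimension-one dual complex keeps it connected; this is a combinatorial duality statement of Alexander/Pontryagin type, and the good-complex hypotheses (especially Lemma \ref{lemma:star-cyclic} giving cyclic links of $(n-2)$-cubes, and condition \eqref{item:good-complex-adjacency-2} on boundary components being adjacently-connected) are what make it go through. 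A careful induction peeling off leaves of $\tau$, mirroring the argument of Lemma \ref{lemma:Realization-cell}, is the natural route: each leaf removal changes $Z$ by a local modification near one $n$-cube, and one tracks that adjacency-connectedness of $Z$ is preserved.
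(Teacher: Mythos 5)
Your construction and the paper's are close in spirit --- both take $Z$ to be the $(n{-}1)$-cubes of the refined core $\Refine(K-\Star_K(\partial K))$ minus a spanning tree's worth of faces, minus a single gate to the $\Sigma'$-collar --- but the paper inserts a crucial intermediate step that you omit, and it is exactly the one that resolves what you call the main obstacle.

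The paper does \emph{not} take an arbitrary spanning tree $\tau$ of $\Gamma(\Refine(A'))$. It first fixes a spanning tree $T$ of the \emph{un-refined} graph $\Gamma(A')$, lets $\Omega$ be the set of center $(n{-}1)$-cubes (in the refinement) of the edges of $T$, and forms the intermediate complex $Y = \Span_{\Refine(K)}\bigl(\Refine((A')^{(n-1)})\setminus\Omega\bigr)$. Only then does it pick $\tau$, and it requires $\tau$ to lie in $\Gammacut(\Refine(A');Y)$ and to contain \emph{every} face of the center cube $c(Q)\in\Refine(Q)$ for each $Q\in(A')^{[n]}$. With this choice the adjacent connectivity of $\Gamma(Z)$ is nearly immediate: $Y$ is the refined $(n{-}1)$-skeleton of $A'$ with only small center cubes deleted, hence adjacently connected; and every $(n{-}1)$-cube of $Z$ not already in $Y$ is an internal face of some $\Refine(Q)$ that is not a face of $c(Q)$, and any such face touches $\partial Q$ along an $(n{-}2)$-cube and so is adjacent to a face in $Y$. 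Adjacent connectedness of $Z$ follows.

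In your proposal $\tau$ is an unconstrained spanning tree of $\Gamma(\Refine(C))$ and there is no scaffold $Y$. You correctly flag connectivity of $\Gamma(Z)$ as the main obstacle and gesture at an Alexander/Pontryagin duality argument or an induction on leaves of $\tau$, but neither is carried out, and I do not see how to make either run for a general $\tau$. The worry is concrete: an internal face such as a face $q_0$ of the center cube $c(Q)$ has all of its $\Gamma$-neighbors among other internal faces of $\Refine(Q)$ (it does not touch $\partial Q$ along any $(n{-}2)$-cube), so if enough of those neighbors happen to be edges of $\tau$, $q_0$ can be left isolated in $Z$. The paper's constraints on $\tau$ forbid exactly this --- all faces of $c(Q)$ are removed, so no such $q_0$ survives --- while your construction offers no protection against it. This is a genuine gap; the paper's two-level tree choice is the missing idea, and it is worth noting that it also gives $\tau$ the additional structure (a realization threading through center cubes) that the paper exploits later. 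The rest of your verification of Axioms (1), (3), (4) and the span and lift identities in the two itemized conclusions is consistent with the paper's argument.
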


\begin{proof}
Let $\Sigma_1,\ldots, \Sigma_m$ be the boundary components of $\partial K$ and  $\Sigma'=\Sigma_1$. From the assumption, stars $\Star_K(\Sigma_i)$ are necessarily mutually disjoint and each $\Gamma(\Star_K(\Sigma_i))$ is connected.

Let  $A'$ be the subcomplex of $K$ obtained by removing all stars $\Star_K(\Sigma_i)$, that is, 
\[
A' =  K - \left( \Star_K(\Sigma_1)\cup \cdots \cup \Star_K(\Sigma_m)\right),
\]
recalling the difference of two complexes defined in Section \ref{sec:From-Riem-To-Cubical}. Since  $\Gamma(K)$ is connected and the stars $\Star_K(\Sigma_i)$ are mutually disjoint, the complex $A'$ is adjacently-connected. 
Let $B' = \Star_K(\Sigma_1)$ and
\[
K' = B' \cup A'.
\]
Then $K'$ is adjacently-connected and $B'\cap A' =\partial(\Star_K(\Sigma_1))\setminus \Sigma_1$.    

Let $T\subset \Gamma(A')$ be a spanning tree and let 
\[
\Omega = \{c(q) \in \Refine(q) \colon q\in (A')^{[n-1]}\,\,\, \text{is an edge in}\,\, T\},
\]
where $c(q)$ is the center cube in $\Refine(q)$, be the collection of the center cubes of the edges in $T$. Let 
\[Y= \Span_{\Refine(K)} \left(\Refine((A')^{(n-1)}) \setminus \, \Omega \right).\]

Fix next a spanning tree $\tau$ in $\Gamma (\Refine(A'))$ which is contained in the cut-graph $\Gammacut (\Refine(A');Y)$ and contains all faces of the center cube $c(Q)$ of $\Refine(Q)$ for all $Q\in (A')^{[n]}$.
Fix also an $n$-cube $Q'\in (A')^{[n]}$ having a face $q'$ in $\Star_K(\Sigma_1)$ and denote by $\omega=c(q')$ the center cube in $\Refine(q')$. 

We claim that
\[
Z=\Span_{\Refine(K)} \left( (\Refine(A'))^{[n-1]} \setminus (\tau  \cup \{\omega\})\right)
\] 
is a separating complex in $\Refine(K)$. 

Since $Z\subset \Refine(A')$, the intersection $Z\cap \Star_{\Refine(K)}(\Refine(\partial K)) =\emptyset$. Condition \eqref{item:separating-no-boundary} in the definition of separating complex holds.

Since the $(n-1)$-complex $(A')^{(n-1)}$ is adjacently-connected, the refinement $\Refine((A')^{(n-1)})$ is adjacently-connected. After the collection $\Omega$ of $(n-1)$-dimensional center cubes is removed from  $\Refine((A')^{(n-1)})$, the complex $Y$ remains adjacently-connected. From the selection of $\tau$, any $(n-1)$-cube in $Z$ which does not belong to $Y$ is adjacent to an $(n-1)$-cube in $Y$. Hence $Z$ is adjacently-connected and Condition
\eqref{item:separating-strongly-connected} holds.

From the construction, the cut-graph 
\[\Gammacut(\Refine(K');Z)=\Gamma(\Refine(B')) \cup \tau \cup \{\omega\}\] 
is connected. Thus the connected components of $\Gamma(\Refine(K);Z)$ are in one-to-one correspondence with the components of $\partial K$. Hence Condition \eqref{item:separating-total}  holds.

Since $\Gamma(\Real_{\Refine(K)}(Z;\Sigma_1))$ is isomorphic to $\Gammacut(\Refine(K');Z)$,
the lift has an essential partition 
\[\Real_{\Refine(K)}(Z;\Sigma_1)= B \cup  A, \] 
 where $B=\pi^{-1}_{(K;Z)}(B')$ is isomorphic to, hence identified with,  $\Refine(\Star_K(\Sigma_1))$, $A=\cR_{\Refine(K)}(\tau)$ is a realization of a tree, hence a tunnel having an $n$-cell as its space, and 
$B \cap A = \{\omega\}$.
Finally, for $i=2,\ldots, m$,  the adjacency graph of  $\Real_{\Refine(K)}(Z;\Sigma_i)$ is isomorphic to $\Gamma(\Refine(\Star_K(\Sigma_i)))$. 
Thus Condition  \eqref{item:separating-mu-1} holds also. 

Hence $Z$ is a separating complex in $\Refine(K)$ having the properties stated in the theorem.
\end{proof}

The existence of a separating complex immediately yields a counterpart of Theorem \ref{intro-thm:Wada-Riemannian-manifold} for manifolds, whose boundary has 
at most one component.

\begin{corollary}
\label{cor:one_none_boundary}
Let $n\ge 3$, $m\in \{0,1\}$, and let $M$ be a compact connected Riemannian $n$-manifold with $m$ boundary components. Then there exist a constant $\sK=\sK(n,M)>1$ and a continuum $X\subset \interior M$, which has no interior points, for the following:
\begin{enumerate}
\item For $m=1$, $M\setminus X$ is $\sK$-quasiconformal to $\partial M \times [0,1)$ and a John domain in $M$. \label{item:m=1}
\item For $m=0$, $M\setminus X$ is $\sK$-quasiconformal to the Euclidean $n$-ball $B^n(0,1)$ and  
a John domain in $M$. \label{item:m=0}
\end{enumerate}
\end{corollary}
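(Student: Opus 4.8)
The plan is to reduce both cases to the situation already engineered in the body of the paper, namely a good cubical $n$-complex with $m'\geq 1$ boundary components carrying a separating complex whose complementary components are quasiconformal collars, and then produce $X$ as a Hausdorff limit of iterated separating complexes exactly as in the outline of Theorem \ref{intro-thm:Wada-Riemannian-manifold}. The one genuine new ingredient here is that $M$ has zero or one boundary components, so the ``$m\geq 3$ complementary components'' counting that drives the main theorem does not apply; instead one only needs a \emph{single} complementary domain with the right internal geometry, and Theorem \ref{theorem:separating-complex-existence} already supplies a separating complex $Z$ in $\Refine(K)$ whose unique ``large'' $\Sigma'$-component is $\Refine(\Star_K(\Sigma'))\cup(\Refine(K)-\Star_K(\partial K))$ with a tunnel as its lift. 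So the first step is the reduction to cubical complexes: for $m=1$, apply Proposition \ref{prop:Riemannian-to-cubical} and Remark \ref{rmk:Riemannian-good} directly to get a good cubical $n$-complex $K$ with $|K|=M$ and $\Star_K(\partial K)\cong \partial K\times[0,1]$; for $m=0$, first remove a small closed PL $n$-ball $B\subset \interior M$ and apply the same to $M\setminus\interior B$, so that the resulting manifold has exactly one boundary component, an $(n-1)$-sphere.

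The second step is to run the iterative construction of Sections \ref{sec:reservoir-canal-system}--\ref{sec:evolution-seq} on this $K$. Because $\partial K$ has only one component $\Sigma'$, there is nothing to ``separate'' between distinct lakes; the construction degenerates into repeatedly pushing the separating complex $Z_\ell\subset \Refine^\ell(K)$ deeper while keeping (i) its complement adjacently of the form ``collar of $\Sigma'$ with a tunnel glued on'', (ii) the maximum distance from cubes of $Z_\ell$ to the single complementary domain tending to $0$, and (iii) $\sK$-quasiconformality of that complementary domain to $\Star_K(\Sigma')\cong\Sigma'\times[0,1]$ with $\sK=\sK(n,M)$ independent of $\ell$. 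Point (iii) is where the flattening theorem for indentations (Section \ref{sec:Indentation}) and the quasiconformal stability of Section \ref{sec:quasiconformality} are invoked verbatim; point (ii) is the Wada-type density requirement, which is actually \emph{easier} here than in the $m\geq3$ case since a given $(n-1)$-cube only needs to be close to one domain, not to all of them, but one still runs the channeling to guarantee it. Set $X=\lim_{\ell}|Z_\ell|$ in the Hausdorff metric on compact subsets of $M$ (for $m=1$) or of $M\setminus\interior B\subset M$ (for $m=0$); as in Theorem \ref{intro-thm:Wada-Riemannian-manifold}, $X$ is a continuum with empty interior, contained in $\interior M$, and $M\setminus X$ (resp.\ $(M\setminus\interior B)\setminus X$) is a $\sK$-quasiconformal John domain.

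The third step handles the two cases' conclusions. For $m=1$: $M\setminus X$ is the single complementary domain, which by construction is $\sK$-quasiconformal to $\Sigma'\times[0,1)\cong\partial M\times[0,1)$ and is a John domain in $M$; this is exactly conclusion \eqref{item:m=1}, so nothing further is needed. For $m=0$: the construction gives a domain $\sK$-quasiconformal to $\partial(M\setminus\interior B)\times[0,1)=\bS^{n-1}\times[0,1)$, which is quasiconformally (indeed diffeomorphically, up to a bilipschitz collar adjustment) the open ball $B^n(0,1)$; but $X$ must be a continuum with empty interior in $M$ \emph{with $B$ absorbed}, so one notes that $B\cup$(the thin collar used in Remark \ref{rmk:Riemannian-good}) is itself quasiconformal to a half-open collar of $\bS^{n-1}$ and glues it onto the complementary domain. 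Equivalently, one chooses $\Sigma'$ in Theorem \ref{theorem:separating-complex-existence} to be the sphere bounding $B$, so that the large $\Sigma'$-component automatically contains $B$, and then the resulting domain $M\setminus X$ is a ball quasiconformally and a John domain in $M$; this is conclusion \eqref{item:m=0}, and a quick check (the complement of an outward spike can still be a ball, cf.\ the introduction) confirms the John property survives the ball-identification. The main obstacle, as everywhere in the paper, is controlling the quasiconformality constant uniformly in $\ell$ during step two — but that is precisely the content of the flattening and stability sections, which apply here as a special case of the general construction, so for this corollary the work is genuinely just bookkeeping: verifying that the degenerate ``one boundary component'' input satisfies the hypotheses of those sections and that the limit domain is identified correctly.
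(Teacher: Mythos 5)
Your overall reduction (pass to a good cubical complex; for $m=0$ delete a small cube/ball first; identify the complementary domain as a collar, resp.\ a ball) matches the paper, but the central step is both unnecessary and unjustified. You propose to run the entire iterative machinery of Sections~\ref{sec:reservoir-canal-system}--\ref{sec:evolution-seq} — reservoir-canal systems, localized channeling, the Evolution Theorem, quasiconformal stability — and take $X$ as a Hausdorff limit $\lim_\ell|Z_\ell|$. That machinery carries the Standing assumptions~\ref{standing:Z}, which require $m\geq 2$ boundary components, and the Evolution Theorem~\ref{theorem:evolution-short} is stated for $m\geq 2$ as well. Your claim that ``the construction degenerates'' when there is only one lake is plausible but nowhere verified: the reservoir-canal blocks $\mathbf R_1,\dots,\mathbf R_m$, the marker families of size $\mu^2 m$, the gates $\omega_1,\dots,\omega_m$, and the $\lambda$-perturbation bookkeeping are all indexed by $m$ and their correctness proofs use $m\geq 2$. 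Invoking them at $m=1$ without a fresh check is a gap.

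More to the point, you have overlooked the simplification the paper actually uses: no iteration is needed at all. A single separating complex $Z\subset\Refine(K)$ supplied by Theorem~\ref{theorem:separating-complex-existence} is already an $(n-1)$-dimensional, adjacently connected, finite subcomplex, so $X=|Z|$ is automatically a continuum with empty interior. Then $|K|\setminus|Z|$ is conformal to the interior of its lift (Lemma~\ref{lemma:metric-Realization}), the lift is a collar with one tunnel attached (Theorem~\ref{theorem:separating-complex-existence}\eqref{item:separating-2}), and a single application of the Tunnel-contracting Proposition~\ref{prop:tunnel-contracting} produces an $L(n,K)$-bilipschitz map onto $|\partial K|\times[0,1)$; the John property follows from the finiteness of $K$. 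For $m=0$ one sets $K'=\Refine(K)-Q'$ for an interior cube $Q'$ and glues $Q'$ back, turning the collar into an open cube. There is no need to take any limit, invoke the relative Wada property, the core-expansion, or the quasiconformal stability of a sequence. Your proposal would require proving those results in a regime they do not cover, to reach a conclusion available in one step.
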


\begin{proof}
Let $K$ be a good complex, as in Remark \ref{rmk:Riemannian-good}, having $M$ as its space.

Suppose that $m=1$, and let $Z$ be a separating complex of $\Refine(K)$ chosen in the proof of Theorem \ref{theorem:separating-complex-existence}. Then, by Remark \ref{rmk:metric-Realization}, $|K|\setminus |Z|$ is conformal to its lift and, by Tunnel contracting proposition \ref{prop:tunnel-contracting}, the lift of $|K|\setminus |Z|$ is bilipschitz homeomorphic to $|\partial K|\times [0,1)$ for a constant depending only on $K$ and $n$. Let $X=|Z|$. Thus $M\setminus X$ is $\sK$-quasiconformal to $\partial M \times [0,1)$. The John property of the components of $|K|\setminus |Z|$ follows from the finiteness of the complex $K$.

For $m=0$, let $Q\in K$ be an $n$-cube and $Q'\in \Refine(Q)$ the $n$-cube, which does not meet the boundary of $Q$. Let $K' = \Refine(K)-Q'$. Then $|K'|$ an $n$-manifold with one boundary component and $K'$ is a good complex on $|K'|$. Let $Z$ be a separating complex of $\Refine(K')$ chosen as in Theorem \ref{theorem:separating-complex-existence}. Then $|K'|\setminus |Z|$ is quasiconformal  to $|\partial Q|\times [0,1)$. Thus $M\setminus |Z|$ is quasiconformal to an open $n$-cube $(-1,1)^n$ with a distortion constant depending only on $M$ and $n$. The John property again follows  from the finiteness of the complex $K$.
\end{proof}

\subsection{Statement of the Evolution Theorem}
\label{sec:statement}

In this section we state an Evolution Theorem for separating complexes (Theorem 
\ref{theorem:evolution-short}) which reduces the proof of the main theorem (Theorem \ref{intro-thm:Wada-Riemannian-manifold}) to an iterative construction of a sequence of separating complexes $(Z_k)$, with controlled geometry, in refinements of the good complex $K$.
We begin by associating two quantities to the good cubical complex $K$, namely, the local multiplicity of a cubical complex and the refinement scale.

\begin{definition}
\label{def:mu}
\index{local multiplicity of a complex}
\index{$\mu(K)$}
The \emph{local multiplicity of a cubical $n$-complex $K$} is
\[
\mu(K) = \max\left\{\#\{q\in K^{[n-1]}\colon e\in q^{[n-2]}\} \colon e\in K^{[n-2]}  \right\}.
\]
\end{definition}

\begin{definition}
\label{def:nu}
\index{refinement scale $\nu$}
\index{$\nu(K)$}
The \emph{refinement scale $\nu(K)\ge 1$ of a good cubical $n$-complex $K$ with $m$ boundary components} is the smallest integer $\nu\ge 1$ satisfying 
\[
3^\nu \ge 3^{10}m \mu(K)^2.
\]
\end{definition}

We now describe the specific properties that we are aiming at in the Evolution Theorem.

Suppose from here on that  $K$ is a good cubical $n$-complex for which $\Star_K(\partial K)$ is isomorphic to $\partial K \times [0,1]$, $Z$ is a separating complex in $K$, and $\nu=\nu(K)$ is the refinement scale.

The relative Wada property describes how the complementary components of the separating complexes are progressively intertwined.

\begin{definition}[Relative Wada property] 
\label{def:Wada}   
\index{Wada property!relative}
A separating compilex $Z'$ in $\Refine^{\nu}(K)$ has a \emph{relative Wada property with respect to a separating complex $Z$ of $K$} provided that,
for each $q \in Z^{[n-1]}$, the union $\Refine^\nu(Q\cup Q^*)$ of the $n$-cubes  $Q$ and $Q^*$ sharing the face $q$ 
contains at least one $n$-cube in each $ \Comp_{\Refine^{\nu}(K)}(Z'; \Sigma)$, where $\Sigma$ is a component of $\partial K$.
\end{definition} 

The second is a core-expanding property. For each component $\Sigma \subset \partial K$, the \emph{core of $\Comp_K(Z;\Sigma)$} is defined to be
\[
\Core_K(Z;\Sigma) = \Comp_K(Z; \Sigma) - \Star_K(Z) \subset K. 
\]
Since $Z\cap \Star_K(\partial K)=\emptyset$,  $\Core_K(Z;\Sigma)$ has an \emph{outer boundary component} $\Sigma$ and an \emph{inner boundary component} 
\[(\partial \Core_K(Z;\Sigma))_{\mathrm{inner}}=\partial \Core_K(Z;\Sigma) - \Sigma = \Core_K(Z;\Sigma) \cap \Star_K(Z).\]

\begin{definition}[Core-expanding property]
\label{def:core-expanding}
A separating complex $Z'$ in $\Refine^\nu(K)$ is \emph{core-expanding with respect to a separating complex $Z$ of $K$} if 
\[
|\Core_K(Z;\Sigma)| \subset |\Core_{\Refine^{\nu}(K)}(Z';\Sigma)|\setminus |(\partial \Core_K(Z';\Sigma))_{\mathrm{inner}}|
\]
for each component $\Sigma \subset \partial K$. A sequence $(Z_k)$ of separating complexes, $Z_k \subset \Refine^{\nu k}(K)$, is  said to be \emph{core-expanding} if $Z_{k+1}$ is core-expanding with respect to $Z_k$,\ for each $k$.\index{core!expanding}
\end{definition}

\begin{remark}\label{rmk:cores}
Since  $\Core_K(Z;\Sigma)$ lifts isomorphically into $\Real_K(Z;\Sigma)$, we identify $\Core_K(Z;\Sigma)$ with its lift and consider, from now on, $\Core_K(Z;\Sigma)$  as a subspace of $\Real_K(Z;\Sigma)$.
\end{remark}

The third property quantifies the trading of cubes between complementary components of a separating complex discussed in the introduction.

\begin{definition}[$\lambda$-perturbation]
\label{def:indented-perturbation}
\index{$\lambda$-tame}
A separating complex $Z'$ in a complex $K'=\Refine^{\nu}(K)$  is an \emph{$\lambda$-perturbation of a separating complex $Z$ in $K$} if  
there exist a number $\lambda \ge 1$ and essentially disjoint subcomplexes $T_\Sigma \subset \Refine^\nu(\Star_K(Z))$, where $\Sigma$ is a component of $\partial K$,
for which $T=\bigcup_{\Sigma} T_\Sigma$ has the following properties:
\begin{enumerate}
\item for each $\Sigma$, $\Comp_{K'}(Z';\Sigma) = \left( \Refine^\nu(\Comp_K(Z;\Sigma))-T \right) \bigcup T_\Sigma$, and \label{item:tame-perturbation-1}
\item for each $\Sigma$, $\pi_{(K';Z')}^{-1}(T_\Sigma)$ is a union of disjoint tunnels, each of which has at most $\lambda$ $n$-cubes.
 \label{item:tame-perturbation-2}
\end{enumerate} 
\end{definition}

\begin{remark}
As discussed in the introduction, a perturbation is achieved in  Sections \ref{sec:reservoir-canal-system} and \ref{sec:evolution-seq}
by 
first making a dent to each $\Sigma$-component and collecting the carved out dents into a complex $T$. We next repartition $T$ into  subcomplexes $T_\Sigma$ whose adjacency graphs are trees and then redistribute. Indentation, a formal notion of dent, is  discussed in Section \ref{sec:indentation}.
\end{remark}

For  geometrical discussion, it is helpful to recall the metrics on the $\Sigma_i$-components and the lifts from Remark \ref{rmk:metric-Realization}.

\begin{definition}[Quasiconformal stability]\index{separating complex!quasiconformally stable}
\label{def:qc-stable}
Let $K$ be a good cubical $n$-complex and let $\sfK \ge 1$. A sequence $(Z_k)$ of separating complexes in refinements $(\Refine^{k \nu}(K))$, respectively,  is \emph{$\sfK$-quasiconformally stable} if, for each $k\geq 1$ and each boundary component $\Sigma \subset \partial K$, there exists a $\sfK$-quasiconformal homeomorphism 
\[
 |\Real_K(Z; \Sigma)| \to |\Real_{\Refine^{\nu k}(K)}(Z_k; \Sigma)|, 
\]
which is  identity on $|\Core_K(Z; \Sigma)|$. 
\end{definition}

We are now ready to state an Evolution theorem for separating complexes.

\begin{restatable}[Evolution of separating complexes]{theorem}{restateEvolution}\index{Evolution theorem of separating complexes}
\label{theorem:evolution-short} 
Let $n\geq 3$ and $ m\geq 2$. Let $K$ be a good cubical $n$-complex having $m$ boundary components and a polyhedral metric $d_K$, and let $\nu=\nu(K)\ge 1$ be the refinement scale.  Suppose that $Z_0$ is a separating complex in $K$. Then there exist constants $\lambda=\lambda(K)\ge 1$ and $\sfK= \sfK(K) \geq 1$, and a sequence $(Z_k)$ of separating complexes, $Z_k \subset \Refine^{\nu k}(K)$, for which
\begin{enumerate}
\item  each $Z_k$ has the relative Wada property with respect to $Z_{k-1}$; \label{item:evolution-Wada}
\item $(Z_k)$ is core-expanding;  \label{item:core-expanding}
\item  each $Z_k$ is an $\lambda$-perturbation of $Z_{k-1}$; \label{item:evolution-tameness}
\item $(Z_k)$ is $\sfK$-quasiconformally stable. \label{item:evolution-quasiconformal}
\end{enumerate}
\end{restatable}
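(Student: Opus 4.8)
The plan is to build the sequence $(Z_k)$ one step at a time, each step fabricating $Z_{k+1} \subset \Refine^{\nu}(\Refine^{\nu k}(K))$ from $Z_k$ by a single \emph{channeling} operation that simultaneously realizes all four conclusions. The key observation is that all four properties are local near $\Star_{\Refine^{\nu k}(K)}(Z_k)$ and are stated relative to the previous stage, so it suffices to prove a \emph{one-step} statement: if $Z$ is a separating complex in a good complex $K$ with refinement scale $\nu$, then there is a separating complex $Z'$ in $\Refine^{\nu}(K)$ that is an $\lambda$-perturbation of $Z$, has the relative Wada property with respect to $Z$, is core-expanding over $Z$, and admits a $\sfK$-quasiconformal homeomorphism of lifts rel the core, with $\lambda=\lambda(K)$ and $\sfK=\sfK(K)$ independent of the (bounded combinatorial type of the) complex --- here one uses that under refinement the relevant combinatorial data of $K$ does not grow, so the constants obtained at stage $k$ are uniform in $k$. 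Iterating this one-step construction, starting from $Z_0$, produces the desired sequence; the core-expanding property then guarantees the cores exhaust the complements, which is what will be needed downstream for the Wada continuum. The bulk of the work is therefore the single inductive step.

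For the one-step construction I would proceed as follows. First, in each $\Sigma$-component $\Comp_K(Z;\Sigma)$, pass to the refinement $\Refine^{\nu}$ and carve a \emph{dent}: near each $(n-1)$-cube $q$ of $Z$ adjacent to the $\Sigma$-component, push the boundary of the component inward along a thin finger of refined $n$-cubes reaching across the two cubes $\Refine^{\nu}(Q \cup Q^*)$ sharing $q$, so that cubes previously assigned to the neighbors of $\Sigma$ across $Z$ get reassigned. This is the content of Section~\ref{sec:Indentation} (indentations and the flattening theorem) and Section~\ref{sec:reservoir-canal-system} (reservoir--canal systems, which are the combinatorial templates for the fingers). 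Collect all carved-out material into a complex $T=\bigcup_\Sigma T_\Sigma$, arranged so that the lift $\pi_{(K';Z')}^{-1}(T_\Sigma)$ of each piece is a disjoint union of tunnels of uniformly bounded size $\lambda$; the refinement scale $\nu$, chosen so that $3^\nu \ge 3^{10} m\, \mu(K)^2$, provides exactly enough room for $m$ disjoint fingers plus the buffer regions needed around each $(n-2)$-cube (cf.\ Lemma~\ref{lemma:star-cyclic}, which controls how cubes cyclically surround an $(n-2)$-cube and hence how fingers may cross near codimension-two strata). The definition $\Comp_{K'}(Z';\Sigma) = (\Refine^{\nu}(\Comp_K(Z;\Sigma)) - T)\cup T_\Sigma$ then gives the $\lambda$-perturbation structure of Definition~\ref{def:indented-perturbation}. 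That $Z'$ is again a separating complex --- in particular that each lift $\Real_{K'}(Z';\Sigma)$ is again a product $|\Sigma|\times[0,1]$ --- follows from the Tunnel-contracting Proposition~\ref{prop:tunnel-contracting}: the added tunnels $T_\Sigma$ can be bilipschitz-collapsed away, reducing $\Real_{K'}(Z';\Sigma)$ to $\Refine^{\nu}(\Real_K(Z;\Sigma))$, which is a product by the inductive hypothesis.

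Next I would verify the four conclusions for this one step. The relative Wada property \eqref{item:evolution-Wada} is built into the channeling: the fingers are designed precisely so that each pair $\Refine^{\nu}(Q\cup Q^*)$ across a face $q\in Z^{[n-1]}$ receives at least one refined $n$-cube from every $\Sigma$-component --- this is where having $m$ disjoint fingers sharing the same pair of old cubes is essential, hence the factor $m$ in $\nu$. The core-expanding property \eqref{item:core-expanding} holds because the dents only \emph{shrink} $\Star_{K'}(Z')$ relative to the old core after refinement: the old core $|\Core_K(Z;\Sigma)|$ sits strictly inside the new one once we refine enough, and the fingers are kept in the $\Star$-collar, away from the core. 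Quasiconformal stability \eqref{item:evolution-quasiconformal} is the analytically delicate point and the one I expect to be the main obstacle: one must produce a $\sfK$-quasiconformal homeomorphism $|\Real_K(Z;\Sigma)| \to |\Real_{K'}(Z';\Sigma)|$ that is the identity on the core, with $\sfK$ \emph{independent} of the stage $k$ (and hence of the unboundedly fine scale). The strategy is to combine Proposition~\ref{prop:tunnel-contracting}, which straightens the added tunnels with a bilipschitz constant $L(n,\lambda)$ depending only on $n$ and the uniform bound $\lambda$ on tunnel size, with the flattening theorem for indentations from Section~\ref{sec:Indentation}, which flattens each dent by a quasiconformal map whose distortion depends only on the combinatorial type of the reservoir--canal template and not on where in the (refined) complex it sits. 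Concatenating these localized maps over the disjoint fingers --- disjointness being guaranteed by the buffer regions, again via $\nu$ --- gives a single homeomorphism whose distortion is the max of the local ones, i.e.\ a bound $\sfK=\sfK(n,\lambda,\mu(K))$ that does not deteriorate under iteration because $\mu$ and $\lambda$ are stable under refinement. Pinning down that the flattening maps can be chosen to agree with the identity on overlaps and on the core, so that the pieces glue to a global homeomorphism with no loss of quasiconformality at the seams, is the technical heart of Sections~\ref{sec:Indentation}--\ref{sec:quasiconformality}, and is where the proof of Theorem~\ref{theorem:evolution-short} will mostly be spent.
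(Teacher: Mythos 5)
Your framing---``prove a one-step statement and iterate''---is where the argument breaks, and it breaks in two places that the paper resolves by a genuinely different mechanism.

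First, the uniform constant $\lambda$ in \eqref{item:evolution-tameness} cannot come from a single global channeling applied to $Z_k$ at each stage. The number of $(n-1)$-cubes in $Z_k$ grows like $3^{(n-1)\nu k}\#Z_0^{[n-1]}$, so the spanning trees $\tau_i$ inside a global reservoir--canal system over $Z_k$---and hence the tunnels in the $\lambda$-perturbation---would have unbounded size. Your claim that ``under refinement the relevant combinatorial data of $K$ does not grow'' is true for $K$ but not for $Z_k$, and it is $Z_k$ that the channeling is built over. The paper avoids this by using a global channeling only for $Z_1=\Channel(Z_0)$; from $k\geq 2$ onward it uses a \emph{localized} channeling $Z_k=\Channel_{\cL_{k-1}}(Z_{k-1})$, in which the localization $\cL_{k-1}=\{Z_{k-1}(q)\colon q\in Z_{k-2}^{[n-1]}\}$ is carried forward from the previous generation and caps the tunnel size by the local constant $\lambda_\loc$, independent of $k$ (Lemma~\ref{lemma:admissible-step-1}, Proposition~\ref{prop:localized-Realization-structure-k}).

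Second, and more seriously, quasiconformal stability in the sense of Definition~\ref{def:qc-stable} demands a single $\sfK$-quasiconformal map $|\Real_K(Z_0;\Sigma)|\to|\Real_{\Refine^{\nu k}(K)}(Z_k;\Sigma)|$ with $\sfK$ independent of $k$. If you only have a one-step $\sfK$-quasiconformal (or $L$-bilipschitz, as in Corollaries~\ref{cor:Z_1-bilipschitz-expanding} and \ref{cor:Z_2-bilipschitz-expanding}) map for each adjacent pair and then compose, you get $\sfK^k$, which blows up. Your remark that the distortion is the max of the ``local ones'' addresses only the spatial disjointness of fingers within a single step; it says nothing about the composition across steps. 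The paper's fix is to \emph{not} compose the one-step maps. Instead it forms the cumulative indentation $\Dent_\cup(Z_0,\ldots,Z_{k-1};\Sigma_i)$, proves that this union is still a single indentation in $\Real(Z_0;\Sigma_i)$ (Lemma~\ref{lemma:Dent_cup}), and flattens it in \emph{one} application of Theorem~\ref{theorem:flattening-indentation} (Proposition~\ref{prop:general-tree-structure-of-Realization}). What remains is a base isometric to $|\Real(Z_0;\Sigma_i)|$ with tunnels of all generations attached in a tree-like pattern; these are contracted one generation at a time, but the maps are arranged so that once a tunnel of generation $\ell$ has been pushed into its tent, every subsequent map $\psi_{\ell-1},\ldots,\psi_1$ acts there as a \emph{scaling}. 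Scalings cost nothing in quasiconformal distortion, so the full composition is $\sfK(n,K)$-quasiconformal with $\sfK$ independent of $k$ (proof of Theorem~\ref{theorem:quasiconformal-stable}). This ``flatten everything at once, then contract tunnels so that only boundedly many maps act non-conformally at any point'' structure is the essential idea behind \eqref{item:evolution-quasiconformal}, and it is absent from your proposal.
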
 

The proof of this theorem takes up the next three sections.

\subsubsection*{Topological Lakes of Wada}

The first two conditions in Theorem \ref{theorem:evolution-short} yield the existence of topological Lakes of Wada.

\begin{proposition}\label{prop:topological-lakes-of-wada}
Let $K$ be a good cubical $n$-complex with boundary components $\Sigma_1,\ldots, \Sigma_m$, where $m\geq 3$. Suppose that $(Z_k)$ is a sequence of core-expanding separating complexes in $(\Refine^{\nu k}(K))$ which satisfies the relative Wada property. Then
\[
X = \bigcap_{k=1}^\infty |\Star_{\Refine^{k\nu}(K)}(Z_k)|
\]
is a Wada continuum having the property that each connected component of $ |K|\setminus X$ contains exactly one boundary component $\Sigma_i$.
\end{proposition}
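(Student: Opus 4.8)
The plan is to verify the three defining properties of a Wada continuum for the set $X$ directly from the two hypotheses. First I would observe that $X$ is a continuum: it is an intersection of a nested (by the core-expanding property, since $|\Star_{\Refine^{k\nu}(K)}(Z_k)|$ contains $|Z_k|$ and the cores expand into the next stage, forcing the star-neighborhoods to be nested after possibly passing to the actual containment $|\Star_{\Refine^{(k+1)\nu}(K)}(Z_{k+1})| \subset |\Star_{\Refine^{k\nu}(K)}(Z_k)|$) sequence of nonempty compact connected sets — each $|\Star_{\Refine^{k\nu}(K)}(Z_k)|$ is connected because $\Gamma(Z_k)$ is connected (condition \eqref{item:separating-strongly-connected} of Definition \ref{def:separating-complex}) and taking stars preserves connectedness — so $X$ is a nonempty continuum. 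I would also note $X$ has empty interior, since each $|Z_k|$ has dimension $n-1$ and the star-neighborhood shrinks: by the $\lambda$-perturbation mechanism the mesh of the cubes involved is $3^{-k\nu}$, so $\diam$ of each component of $|\Star_{\Refine^{k\nu}(K)}(Z_k)|$'s "thickness transverse to $Z_k$" goes to zero; hence $X \subset \interior M$ has no interior points.

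Next I would identify the complementary components. For each $k$, by the separating property the components of $|K| \setminus |Z_k|$ are in bijection with the boundary components $\Sigma_1,\dots,\Sigma_m$ via $\Sigma_i \mapsto |\Comp_{\Refine^{k\nu}(K)}(Z_k;\Sigma_i)| \setminus |Z_k|$. The core-expanding property gives $|\Core_K(Z_0;\Sigma_i)| \subset |\Core_{\Refine^{k\nu}(K)}(Z_k;\Sigma_i)|$ for all $k$, and more importantly that these cores form an increasing sequence of connected sets, each avoiding $|\Star_{\Refine^{k\nu}(K)}(Z_k)|$. Set $U_i = \bigcup_{k} |\Core_{\Refine^{k\nu}(K)}(Z_k;\Sigma_i)|$; this is open, connected, contains $\Sigma_i$, and is disjoint from $X$. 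I would then argue that $|K| \setminus X = U_1 \sqcup \cdots \sqcup U_m$: any point $x \notin X$ lies outside $|\Star_{\Refine^{k\nu}(K)}(Z_k)|$ for some $k$, hence in exactly one $|\Comp_{\Refine^{k\nu}(K)}(Z_k;\Sigma_i)| \setminus |\Star_{\Refine^{k\nu}(K)}(Z_k)| = |\Core_{\Refine^{k\nu}(K)}(Z_k;\Sigma_i)| \subset U_i$; and the $U_i$ are pairwise disjoint because for each fixed $k$ the corresponding $\Sigma$-components are essentially disjoint away from $Z_k$ and the cores are nested. So $M \setminus X$ has exactly $m \geq 3$ components, each containing one boundary component $\Sigma_i$ of $|K|$, and each $U_i$ is itself connected.

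Finally, and this is the crux, I would show every point of $X$ is a boundary point of every $U_i$. Fix $x \in X$ and an index $i$. For each $k$, since $x \in |\Star_{\Refine^{k\nu}(K)}(Z_k)|$, there is an $(n-1)$-cube $q \in Z_k^{[n-1]}$ with $x$ in the star of $q$, hence $x$ lies in one of the two $n$-cubes $Q, Q^*$ of $\Refine^{k\nu}(K)$ sharing $q$. By the relative Wada property of $Z_{k+1}$ with respect to $Z_k$, the refined block $\Refine^{\nu}(Q \cup Q^*)$ (viewed in $\Refine^{(k+1)\nu}(K)$) contains an $n$-cube lying in $\Comp_{\Refine^{(k+1)\nu}(K)}(Z_{k+1};\Sigma_i)$, and by the core-expanding property such a cube (after one more stage, to get it out of the star) meets $U_i$. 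Since this cube lies within distance $O(3^{-k\nu})$ of $x$ in the metric $d_K$, we get points of $U_i$ arbitrarily close to $x$; as $x \notin U_i$, this shows $x \in \partialtop U_i$. The same argument with any index shows $x$ is a common boundary point of all the $U_i$, so $X$ is a Wada continuum. The main obstacle is making the last step precise: one must track that the relative Wada property, which is stated about the combinatorial block $\Refine^\nu(Q\cup Q^*)$ at one refinement step, actually produces points of the eventual lake $U_i$ (not just of the $k$-th stage $\Sigma_i$-component, which still touches $Z_k$) — this requires combining one application of relative Wada with one application of core-expanding, and controlling that the relevant cube is genuinely in the interior region $U_i$ and within the claimed distance of $x$; the metric bookkeeping using the standard metric $d_K$ on refinements (Convention \ref{convention:metric-subcomplex}) handles the distance estimate.
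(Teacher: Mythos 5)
Your overall strategy is the same as the paper's: set $M_i = \bigcup_k |\Core(Z_k;\Sigma_i)|$ (your $U_i$), observe that $M_i$ is open and connected by core-expanding, and establish $\partialtop M_i \subset X$ (from core-expanding) and $X \subset \partialtop M_i$ (from relative Wada). Your preliminary remarks — that $X$ is a nested intersection of connected compacta and that $|K|\setminus X = \bigsqcup_i U_i$ — are correct and a bit more explicit than the paper, which dispatches them in one sentence. The issue is the final step, exactly where you flag the "main obstacle."

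You propose to bridge the gap from the Wada cube $Q^*_{k+1}\in \Comp(Z_{k+1};\Sigma_i)^{[n]}\cap \Refine^\nu(Q_k\cup Q'_k)^{[n]}$ to an actual point of $U_i$ by "one more stage" of core-expanding. That does not close the gap: $Q^*_{k+1}$ may lie in $\Star_{\Refine^{(k+1)\nu}(K)}(Z_{k+1})$, and applying the relative Wada property one level higher produces a cube in $\Comp(Z_{k+2};\Sigma_i)$ that could again lie in the star of $Z_{k+2}$. Iterating this indefinitely only yields a nested sequence of cubes all lying in successive star neighborhoods, whose limit is a point of $X$ — never a point of $U_i$. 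Nothing in the abstract hypotheses (core-expanding $+$ relative Wada) forces the iteration to "escape" the star. The paper instead invokes a separate structural fact about separating complexes in good complexes: \emph{every $n$-cube in $\Comp(Z_{k+1};\Sigma_i)$ meets the core $|\Core(Z_{k+1};\Sigma_i)|$} (the same fact is used in the proof of Lemma \ref{lemma:Z_1-core-expanding-John}). With this in hand one simply picks $x_{k+1}\in Q^*_{k+1}\cap |\Core(Z_{k+1};\Sigma_i)|\subset M_i$; since $\diam(Q_k\cup Q'_k)\to 0$ in $d_K$, the sequence $(x_{k+1})$ converges to $x$, so $x\in \overline{M_i}\setminus M_i = \partialtop M_i$. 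No extra refinement stages are needed, and the distance estimate you wanted comes out directly. So: replace "combine one relative Wada with one core-expanding" by this single incidence fact for good complexes, and your argument becomes the paper's proof.
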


\begin{proof}
Let
\[
M_i=\bigcup_{k=1}^\infty |\Core_{\Refine^{k \nu}(K)}(Z_k;\Sigma_i)| \quad \text{for}\,\,\,  i=1,\ldots, m.
\]

The core-expanding property yields the fact that 
each $M_i$ is a domain, i.e.~an open and connected set, in $M=|K|$ and that $\bigcup_{i=1}^m M_i$ is dense in $M$. Furthermore, the relative Wada property implies that $M_1,\ldots,M_m$ share the same topological boundary. We now give the details.

Since the cores
$\Core(Z_k;\Sigma_i)= \Comp(Z_k;\Sigma_i) - \Star_{\Refine^{k\nu}(K)}(Z_k)$  are expanding, it follows immediately that, for each $i= 1,\ldots,m$,  
the boundary $\partialtop M_i $ is contained in $X$.

We check next that $X\subset \partialtop M_i$ for each $i=1,\ldots, m$. 
Fix an index $i$ and let  $x\in X$. Then, for each $k\ge 1$, the point $x$ is contained in the union $Q_k \cup Q'_k$ of two adjacent $n$-cubes $Q_k, Q'_k \in \Star_{\Refine^{k\nu}(K)}(Z_k)$ having a common face in $Z_k^{[n-1]}$.  
The relative Wada property of
$Z_{k+1}$  with respect to $Z_k$ yields the existence of an $n$-cube $Q^*_{k+1}\in \Comp(Z_{k+1};\Sigma_i)^{[n]} \bigcap \Refine^\nu(Q_k \cup Q'_k )^{[n]}$. Every $n$-cube in $\Comp(Z_{k+1};\Sigma_i)$ must meet the core $|\Core(Z_{k+1};\Sigma_i)|$.
We fix a point $x_{k+1}$ in $Q^*_{k+1} \cap |\Core(Z_{k+1};\Sigma_i)|$. Thus $x$ is a limit point of a sequence $(x_{k+1})$ in $ M_i$. Since $x$ is not in the interior of $M_i$, it must be a boundary point of $M_i$.

We conclude that
\[\partialtop M_1 = \cdots = \partialtop M_m =X.\]
\end{proof}


\section{Indentations}\label{sec:Indentation}

The goal of this section is prove an Indentation-flattening Theorem (Theorem \ref{theorem:flattening-indentation}). 
Heuristically, a dented cubical complex, by dents of a particular type,  may be recovered to its pre-dented form by bilipschitz homeomorphisms with bilipschitz constants independent of the structure of the dents. This type of dents are called indentations.
Reader may prefer to read this section after seeing the construction of reservoirs and canals  in Section \ref{sec:sub-reservoir-canal-system}.

\subsection{Spectral cubes}
Before discussing indentations, we define spectral cubes and the spectrum of a subcomplex. 

\begin{definition}
\label{def:spectral-cube}
\index{spectrum of a subcomplex} \index{$\cS_*(P;K)$}
Let $K$ be a cubical $n$-complex, $P$ be an $n$-subcomplex of $ \Refine^k(K)$, and $0\leq j \leq k$. An $n$-cube $Q\in \Refine^j(K)$ is called a \emph{spectral cube of $P$ of rank $j$ with respect to $K$} if 
\begin{enumerate}
\item $|Q|\subset |P|$, and 
\item $Q$ does not belong to any refinement of an $n$-cube $Q'\in \Refine^i (K)$ satisfying $i<j$ and $|Q'|\subset P$. \label{item:spectral-cube-max}
\end{enumerate}
\end{definition}

\begin{remark}
Clearly, a refinement of a subcomplex has the same spectral cubes, that is, $Q$ is a spectral cube of $P \subset \Refine^k(K)$ of rank $j$ if and only if $Q$ is a spectral cube of $\Refine(P)\subset \Refine^{k+1}(K)$ of rank $j$.
\end{remark}

\begin{definition}
\label{def:spectrum}
\index{spectrum of a subcomplex} \index{$\cS_*(P;K)$}
Let $K$ be a cubical $n$-complex, and $P$ be an $n$-subcomplex of $\Refine^k(K)$. The \emph{spectrum $\cS_*(P;K)$ of $P$ with respect to $K$}  is the sequence 
\[
\cS_*(P;K) = (\cS_0(P;K), \cS_1(P;K),\ldots, \cS_k(P;K)),
\]
where $\cS_j(P;K)$ is the subcomplexes of $ \Refine^j(K)$ spanned by the spectral cubes of $P$ of rank $j$ with respect to $K$.
\end{definition}
With a slight abuse of notation, we denote the entire family of spectral cubes
of $P$ with respect to $K$ 
by
\[
\cS_*(P;K)^{[n]} = \bigcup_{j=0}^k \cS_j(P;K)^{[n]}.
\]

\begin{figure}[htp]
\begin{overpic}[scale=.5,unit=1mm]{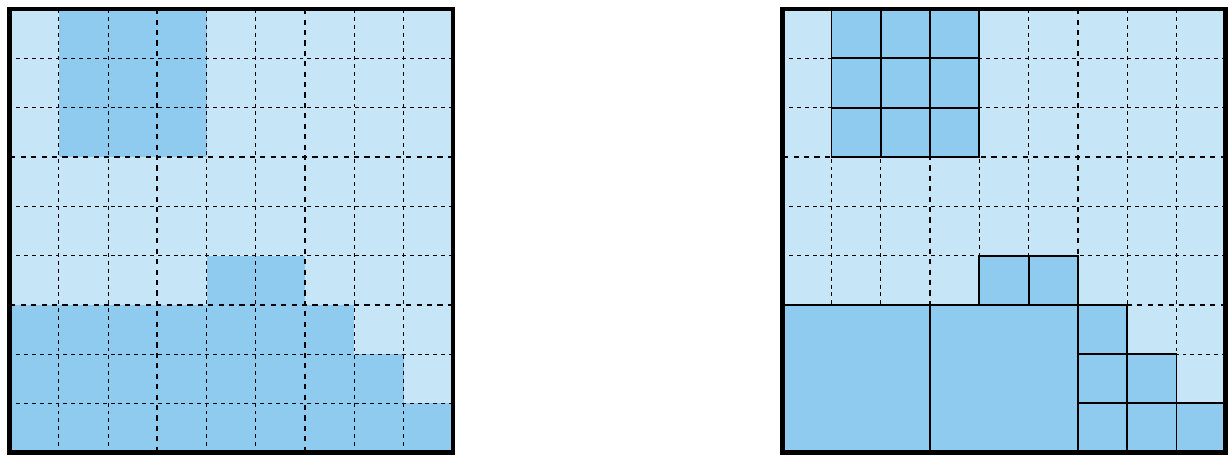} 
\put(16,7){{$P$}}
\put(10,33){{$P$}}
\end{overpic}
\label{fig:Spectrum}
\caption{Left: $K$ a cubical complex consisting of one $2$-cube, and $P$ a subcomplex of $\Refine^2(K)$. Right: spectral cubes  of $P$.}
\end{figure}

\begin{remark}
By 
maximality condition in
\eqref{item:spectral-cube-max}
in Definition \ref{def:spectral-cube}, a subcomplex $P \subset \Refine^k(K)$ has a unique spectrum $\cS_*(P;K)$ and that
\[
P = \bigcup_{j=0}^k \Refine^{k-j}(\cS_j(P;K)).
\]
Furthermore, $\cS_*(P;K)$ is an essential partition of $|P|$.
\end{remark}

\subsection{Cube and star indentations}
\label{sec:indentation}
We define
two classes of indentations, namely, cube-indentations and star-indentations, and  then combine them into the definition of indentation.

Loosely stated, a cube-indentation is an $n$-subcomplex in $\Refine^k(K)$ having all its spectral cubes lying on the boundary $|\partial K|$. 
The following formal definition is more restrictive and imposes some additional regularity conditions on the sizes and adjacency of spectral cubes.

\begin{definition}
\label{def:indentation}
\index{indentation}
Let $K$ be a good cubical $n$-complex, and let $q$ be an $(n-1)$-cube in $(\partial K)^{[n-1]}$. A cubical $n$-subcomplex 
$D\subset \Refine^k(K)$,
possibly empty, is a \emph{cube-indentation in $K$ (over  $q$)} if its spectrum $\cS_*(D;K)$ has the following properties.
\begin{enumerate}
\item $\cS_0(D;K)=\cS_1(D;K) = \emptyset$. \label{item:indentation-1}
\item Each spectral cube of $D$ has a face contained in the interior of $|q|$.  \label{item:indentation-2}
\item  Intersection $Q\cap Q'\cap Q''$ of three distinct spectral cubes $Q,Q',Q''$ is empty. \label{item:indentation-4}
\item The intersection $Q\cap Q'$ of spectral cubes $Q\in \cS_j(D;K)$ and $Q'\in \cS_{j'}(D;K)$, $j\le j'$, if nonempty, is a face of $Q'$; in the case $j<j'$,  $Q'$ meets only two faces of $Q$, one of which is $Q\cap |q|$. 
\label{item:indentation-3} 
\end{enumerate}
\end{definition}

We record two simple observations on cube-indentations.
\begin{lemma}
\label{lemma:s-c-i-adjacency}
Let 
$D\subset \Refine^k(K)$
be a cube-indentation in $K$ over $q\in (\partial K)^{[n-1]}$ and let $Q \in \cS_*(D;K)^{[n]}$ be a spectral cube in $D$. Then $Q$ meets at most two spectral cubes of the same or larger side length, out of which at most one has larger side length than $Q$. Furthermore, if $Q$ meets two spectral cubes $Q'$ and $Q''$ of the same side length 
or larger side length,
then $Q\cap Q'$ and $Q\cap Q''$ are opposite faces of $Q$.
\end{lemma}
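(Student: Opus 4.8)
The plan is to unwind the four spectrum conditions in Definition \ref{def:indentation} and analyze the local picture of a spectral cube $Q$ inside the cubical lattice. First I would fix the spectral cube $Q\in\cS_j(D;K)$ and consider any other spectral cube $Q'\in\cS_{j'}(D;K)$ meeting $Q$ with $\mathrm{side}(Q')\ge\mathrm{side}(Q)$, i.e.\ $j'\le j$. By condition \eqref{item:indentation-4} no three spectral cubes have a common point, so the cubes meeting $Q$ pairwise intersect $Q$ in disjoint subsets of $|Q|$; this already forces any ``corner'' overlaps to be ruled out. Then condition \eqref{item:indentation-3} (applied with the roles chosen so that $Q$ is the smaller cube) says $Q\cap Q'$ is a face of $Q$, and moreover, when $j'<j$ strictly, $Q'$ meets only two faces of $Q$, one of which is $Q\cap|q|$. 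Since $Q$ itself has a face in the interior of $|q|$ by \eqref{item:indentation-2}, the face $Q\cap|q|$ is one fixed face of $Q$; so a strictly larger spectral cube meeting $Q$ must do so along the single face of $Q$ opposite to $Q\cap|q|$. This gives \emph{at most one} spectral cube of strictly larger side length, and it meets $Q$ in the face opposite $Q\cap|q|$.

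Next I would treat spectral cubes $Q'$ of the \emph{same} side length as $Q$ ($j'=j$). Again by \eqref{item:indentation-3} the intersection $Q\cap Q'$ is a face of $Q$ (here $j=j'$, and the face-of-$Q'$ conclusion combined with equal side lengths makes it a genuine $(n-1)$-face, since two distinct equal cubes in the lattice sharing any cell of positive dimension share an $(n-1)$-face). Condition \eqref{item:indentation-2} forces this shared face to be different from $Q\cap|q|$: if $Q\cap Q'$ were the face $Q\cap|q|\subset|q|$, then $Q'$ would lie on the opposite side of $|q|$, i.e.\ outside $|K|$, contradicting $|Q'|\subset|P|\subset|K|$. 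So equal-side neighbours attach along one of the $2n-1$ faces of $Q$ other than $Q\cap|q|$, and distinct such neighbours attach along distinct faces. Combining the two cases: the total number of spectral cubes meeting $Q$ of side length $\ge\mathrm{side}(Q)$ is bounded by the number of faces of $Q$ that are available, but I need the sharper bound of \emph{two}.

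To get the bound of two — and the ``opposite faces'' conclusion — I would use the geometry of the indentation more carefully: the spectral cubes of $D$ together tile $|D|$ (they form an essential partition of $|D|$, by the remark following Definition \ref{def:spectrum}), and condition \eqref{item:indentation-2} pins every spectral cube to the single $(n-1)$-cube $q$ via a face in $\interior|q|$. Heuristically the indentation is a ``finger'' pushed in perpendicular to $|q|$: a spectral cube $Q$ of side length $s$ has its $q$-face on $|q|$, and its $2n-2$ side faces parallel to the finger direction cannot carry same-or-larger neighbours, because such a neighbour would itself need a face in $\interior|q|$ and, being of side $\ge s$ and adjacent to $Q$ along a face parallel to the finger direction, it would overlap the region strictly on the far side of $|q|$ or create a triple point with the cube above $Q$ — both excluded by \eqref{item:indentation-4} and by $|Q'|\subset|K|$. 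Hence the only same-or-larger neighbours sit along the face $Q\cap|q|$'s opposite face; but \eqref{item:indentation-3} already showed a strictly-larger neighbour uses the opposite face, and an equal-side neighbour cannot use $Q\cap|q|$, so at most one equal-side and at most one strictly-larger neighbour remain, and when both occur they are on the two faces opposite to $Q\cap|q|$'s position, which are opposite each other. This completes the ``opposite faces'' claim. The main obstacle I anticipate is making the ``finger perpendicular to $|q|$'' intuition rigorous purely from the combinatorial spectrum conditions — in particular, checking that an equal-side neighbour cannot attach along one of the $2n-2$ lateral faces of $Q$; this requires carefully invoking that such a neighbour would have to have its own $q$-face in $\interior|q|$ (condition \eqref{item:indentation-2}) and then locating it in the lattice relative to $Q$ and to the cube of $\Refine^{j}(K)$ directly ``outside'' $|q|$ at $Q$, using \eqref{item:indentation-4} to exclude the resulting triple intersection. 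Once that local lattice bookkeeping is pinned down, the rest is immediate.
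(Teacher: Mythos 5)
Your proposal has a genuine gap stemming from a misreading of the face-intersection condition~\eqref{item:indentation-3} of Definition~\ref{def:indentation}. Applying it to your fixed $Q\in\cS_j$ and a strictly larger spectral cube $Q'\in\cS_{j'}$, $j'<j$, means putting $Q'$ in the coarser role; the condition then says that $Q$ (the \emph{smaller} cube) meets only two faces of $Q'$, one of which is $Q'\cap|q|$ --- \emph{not}, as you wrote, that $Q'$ meets only two faces of $Q$ with one being $Q\cap|q|$. The conclusion you draw from the misread version --- that a strictly larger neighbour must attach along the face of $Q$ opposite $Q\cap|q|$ --- is false. Every spectral cube has a face in $\interior|q|$, so a larger neighbour stands beside $Q$ on $|q|$ and attaches along a \emph{lateral} face of $Q$; nothing can attach to the face opposite $Q\cap|q|$, since a cube there would have no face at height zero and would violate~\eqref{item:indentation-2}. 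Your ``finger'' heuristic --- that the lateral faces cannot carry same-or-larger neighbours --- is therefore also backwards: the lateral faces are exactly where all spectral neighbours attach, and the argument you sketch for excluding them (``it would overlap the region strictly on the far side of $|q|$'') does not hold for a cube standing beside $Q$ on $|q|$.

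The intended argument is shorter and rests on the empty-triple-intersection condition~\eqref{item:indentation-4}, which you mention at the start but never invoke at the decisive step. Suppose $Q$ meets $Q'$ and $Q''$, each of the same or larger side. By~\eqref{item:indentation-3}, $Q\cap Q'$ and $Q\cap Q''$ are both faces of $Q$ (this part you had). Two distinct non-opposite $(n-1)$-faces of an $n$-cube share an $(n-2)$-cube, and any common point of $Q\cap Q'$ and $Q\cap Q''$ lies in $Q\cap Q'\cap Q''$, which is empty by~\eqref{item:indentation-4}; hence $Q\cap Q'$ and $Q\cap Q''$ are opposite faces of $Q$. Since a face of a cube has exactly one opposite face, at most two same-or-larger cubes can meet $Q$, and when two do, the ``opposite faces'' conclusion holds. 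For ``at most one strictly larger'', observe that $Q'$ and $Q''$ lie on opposite sides of $Q$ and are thus disjoint; disjoint grid-aligned cubes are at distance at least the smaller side length, while the distance between $Q'$ and $Q''$ is at most the width of $Q$. So the smaller of $Q'$, $Q''$ has side at most --- hence exactly --- that of $Q$, leaving at most one that is strictly larger.
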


Note that there is no restriction on the number of spectral cubes in $\cS_*(D;K)$ of smaller side length which cube $Q$ may meet. This is a crucial property in forthcoming constructions.

\begin{proof}[Proof of Lemma \ref{lemma:s-c-i-adjacency}]

For the first claim, suppose that a spectral cube $Q$ of $\cS_*(D;K)^{[n]}$ meets spectral cubes $Q'$ and $Q''$ in $\cS_*(D;K)^{[n]}$ of the same or larger side length. Since $Q\cap Q'\cap Q''$ is empty by \eqref{item:indentation-4}, we have that $Q\cap Q'$ and $Q\cap Q''$ are opposite faces of $Q$. Since this holds for all spectral cubes of $\cS_*(D;K)^{[n]}$ of the same or larger side length than $Q$, we conclude that $Q$ meets at most two spectral cubes of the same or larger side length. 

Retaining the previous notations, we may assume that the side length of $Q''$ is at most the side length of $Q'$. Since mutually disjoint spectral cubes have distance at least the side length of the smaller cube, we further have that $Q''$ has side length at most the side length of $Q$. Thus $Q$ meets at most one spectral cube having larger side length.
\end{proof}

\begin{lemma}\label{lemma:dent-cell}
Suppose that 
$D \subset \Refine^k(K)$
is a cube-indentation in $K$ over  $q$ for an $(n-1)$-cube $q\in \partial K$. Then
\begin{enumerate}
\item a connected $n$-subcomplex of  $\cS_j(D;K)$ consisting of $\ell$ distinct $n$-cubes is isomorphic to $[0,1]^{(n-1)}\times [0,\ell]$;\label{item:spectral-linear}
\item each connected component of $D$ is adjacently-connected, and it is also a cube-indentation over  $q$; and \label{item:spectral-connected-component}
\item if $D$  is connected, then $|D|$ is an $n$-cell and $|D \cap \partial K|$ and $|\partial D - \partial K|$ are $(n-1)$-cells. \label{item:dent-cells}
\end{enumerate}
\end{lemma}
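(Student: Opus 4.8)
The plan is to prove the three assertions in order, exploiting the spectrum structure of a cube-indentation and, in particular, Lemma \ref{lemma:s-c-i-adjacency}.

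For \eqref{item:spectral-linear}, recall that all $n$-cubes in $\cS_j(D;K)$ are cubes of $\Refine^j(K)$ of the same side length $3^{-j}$. By Lemma \ref{lemma:s-c-i-adjacency}, within this single rank each spectral cube $Q$ meets at most two other spectral cubes of the same side length, and when it meets two such cubes $Q',Q''$ the faces $Q\cap Q'$ and $Q\cap Q''$ are opposite faces of $Q$. Hence the adjacency graph of a connected $n$-subcomplex of $\cS_j(D;K)$ with $\ell$ cubes is a connected graph in which every vertex has valence at most two and, moreover, the two neighbours of a degree-two vertex sit on opposite faces; such a graph is a path, and the corresponding subcomplex is a straight column of $\ell$ unit cubes glued along opposite faces, i.e.\ isomorphic to $[0,1]^{n-1}\times[0,\ell]$ after rescaling. (One must also check there is no "loop back" closing the column into a cycle: by \eqref{item:indentation-2} every spectral cube has a face in the interior of $|q|$, the column is a straight translate along a fixed coordinate direction, and a cyclic arrangement of translates along a fixed direction is impossible in $\R^n$.)

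For \eqref{item:spectral-connected-component}, let $D_0$ be a connected component of $D$ in the topological sense. First I would argue that $D_0$ is adjacently-connected: given two spectral cubes $Q\in\cS_j$ and $Q'\in\cS_{j'}$ of $D_0$ with $Q\cap Q'\neq\emptyset$ (and $j\le j'$, say), property \eqref{item:indentation-3} forces $Q\cap Q'$ to be a full face of $Q'$, hence an $(n-1)$-cube of $\Refine^{j'}(K)$ lying in $D_0$; thus intersecting spectral cubes are automatically adjacent in the refined complex, and since the spectrum is an essential partition of $|D_0|$ (the Remark after Definition \ref{def:spectrum}) topological connectedness of $|D_0|$ upgrades to adjacent-connectedness. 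That $D_0$ is again a cube-indentation over the same $q$ is immediate: properties \eqref{item:indentation-1}–\eqref{item:indentation-3} are inherited by any union of spectral cubes, since they are conditions on (pairs or triples of) spectral cubes, and the spectral cubes of $D_0$ form a subfamily of those of $D$.

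For \eqref{item:dent-cells}, assume $D$ connected. I would build $|D|$ up one spectral cube at a time, ordering the spectral cubes so that each new cube $Q$ is attached to the union of the previous ones along a connected union of faces of $Q$. By Lemma \ref{lemma:s-c-i-adjacency}, when $Q$ is adjoined it meets at most one previously-placed cube of side length $\ge$ that of $Q$ (together with possibly several of strictly smaller side length, which were placed later and so do not yet appear); hence at the moment $Q$ is glued on, it is attached along a single face, or along a union of faces forming an $(n-1)$-cell, so the gluing is a collapse and preserves the property of being an $n$-cell. This gives $|D|$ an $n$-cell. For $|D\cap\partial K|$: by \eqref{item:indentation-2} each spectral cube contributes exactly one face inside the interior of $|q|$, these faces are pairwise interior-disjoint by \eqref{item:indentation-4}, and the same path/column analysis as in \eqref{item:spectral-linear} (now one dimension down, inside $|q|$) shows their union is an $(n-1)$-cell; note $|D\cap\partial K|\subset|q|$ since all spectral-cube boundary faces lie in the interior of the single $(n-1)$-cube $q$. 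Finally, $|\partial D - \partial K|$ is the "outer" part of $\partial|D|$; since $|D|$ is an $n$-cell and $|D\cap\partial K|$ is an $(n-1)$-cell sitting in its boundary $(n-1)$-sphere, the complementary piece $|\partial D|-\mathrm{int}|D\cap\partial K|$ is the closure of the complement of a disk in an $(n-1)$-sphere, hence an $(n-1)$-cell.

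The main obstacle I anticipate is item \eqref{item:dent-cells}, specifically producing a global ordering of the spectral cubes under which every cube is glued along an $(n-1)$-cell's worth of faces — one must combine the "at most one larger-or-equal neighbour" bound of Lemma \ref{lemma:s-c-i-adjacency} with the face-intersection restrictions of \eqref{item:indentation-3} to rule out the bad case where a cube is attached along two disjoint faces (which would create a handle rather than a collapse). The column structure from \eqref{item:spectral-linear} within each rank, plus the fact that a finer cube meets a coarser one in a face touching $|q|$ via \eqref{item:indentation-3}, is exactly what prevents this, and organizing that argument cleanly is the only real work.
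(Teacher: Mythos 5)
Your arguments for items \eqref{item:spectral-linear} and \eqref{item:spectral-connected-component} are correct, and in fact considerably more explicit than the paper, which dispatches both with ``follow immediately from the definition.'' For \eqref{item:dent-cells} the paper is equally terse (``easily checked by induction on the number of spectral cubes''), and your attempt follows that same plan, but it contains a genuine gap that you yourself flag and then leave unresolved.

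First a small but consequential misstatement: you write that when $Q$ is adjoined ``it meets at most one previously-placed cube of side length $\ge$ that of $Q$.'' Lemma~\ref{lemma:s-c-i-adjacency} only gives \emph{at most two} such cubes (at most one of strictly larger side length), and when there are two they sit on opposite faces of $Q$. This is exactly the problematic case: if you order the spectral cubes by nonincreasing side length, you can reach a stage where the cube $Q$ about to be adjoined already has neighbours on two opposite faces, so $|Q|\cap|D'|$ is a pair of disjoint $(n-1)$-cubes rather than an $(n-1)$-cell, and the naive ``glue a cube along a cell'' induction breaks down. You acknowledge this as ``the only real work,'' but the assertion that ``the column structure from \eqref{item:spectral-linear} plus \eqref{item:indentation-3} is exactly what prevents this'' is not an argument --- it is the statement of what needs to be proved. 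Concretely, one must rule out (or reduce away) a smallest-rank column $Q_1,\ldots,Q_\ell$ both of whose end cubes are attached to larger spectral cubes: Lemma~\ref{lemma:s-c-i-adjacency} forbids interior cubes $Q_2,\ldots,Q_{\ell-1}$ from touching anything of larger side length (they already have two same-size neighbours), so such a column is a candidate ``bridge,'' and the induction must explain why adjoining such a column still yields a cell. This requires combining the opposite-face constraint of Lemma~\ref{lemma:s-c-i-adjacency}, the corner condition in \eqref{item:indentation-3} (which pins a smaller cube against exactly one side wall and the floor of the larger cube), and property~\eqref{item:indentation-2} (so that everything projects injectively onto disjoint-interior squares in $|q|$); none of this is carried out.

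Two further loose ends in \eqref{item:dent-cells}: your claim that ``the same path/column analysis as in \eqref{item:spectral-linear} (now one dimension down, inside $|q|$) shows'' $|D\cap\partial K|$ is an $(n-1)$-cell does not apply as stated, because that analysis was for a fixed rank $j$ where all cubes have the same side length, whereas the floor faces now have varying side lengths. And the final step, deducing that $|\partial D-\partial K|$ is an $(n-1)$-cell because it is the closed complement of an $(n-1)$-cell in the $(n-1)$-sphere $\partial|D|$, invokes a PL complement-of-a-cell theorem; this is true in the PL setting but is a non-trivial citation, and the more economical route (which the paper presumably has in mind) is to run the same cube-by-cube induction and verify simultaneously that $|D|$, $|D\cap\partial K|$, and $|\partial D-\partial K|$ all remain cells when a suitable leaf spectral cube is adjoined --- the very induction whose key step you have not supplied.
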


\begin{proof}
Properties \eqref{item:spectral-linear} and \eqref{item:spectral-connected-component} follow immediately from the definition. Property \eqref{item:dent-cells} can be easily checked by induction on the number of spectral cubes in $\cS_*(D;K)$.
\end{proof}

We now define the second class of indentations -- star-indentations. Heuristically, cube-indentations over $\partial K$ do not bend. To bend, we connect indentations over  neighboring $(n-1)$-cubes with star-indentations. We give the formal definition in two parts.

\begin{definition}\label{def:partial-star}
Let $K$ be a good cubical $n$-complex and $\xi\in (\partial K)^{[n-2]}$ be an $(n-2)$-cube.  For $e \in (\Refine^j(\xi))^{[n-2]}$, we call the subcomplex $S=\Star_{\Refine^j(K)}(e)\subset \Refine^j(K)$ \emph{a star emitted from $\xi$}. 
\end{definition}

\begin{definition}\label{def:partial-star-indentation}\index{indentation!partial-star} 
Let $K$ be a good cubical $n$-complex and $\xi\in (\partial K)^{[n-2]}$ be an $(n-2)$-cube. A subcomplex $A\subset \Refine^k(K)$ is  a  \emph{star-indentation in $K$ (emitted from $\xi$)} if $A=\Refine^{k-j}(S)$ is the refinement of a star $S =\Star_{\Refine^j(K)}(e)$ emitted from $\xi$. In this case, we call  $S$ the star defining $A$ and denote it by $S_A$.
\end{definition}

\begin{remark}
In view of Lemma \ref{lemma:star-cyclic}, the adjacency graph of $\Star_{\Refine^j(K)}(e)$ is cyclic and  $S\cap \Refine^j(\partial K)$ is the union  of the  two $(n-1)$-cubes in $\partial S$ containing $\eta$. 
\end{remark}

An indentation in $K$ is  defined to be the union of a family of cube-indentations over  $(n-1)$-cubes in $\partial K$ and a family of star-indentations emitted from $(n-2)$-cubes in $\partial K$, with some particular rules on the intersections. 

\begin{definition}
\label{def:indentation-2}
Let $K$ be a good cubical $n$-complex.
A subcomplex $B$ of $\Refine^k(K), k\geq 2,$ is a \emph{indentation in $K$} if $B$ has an essentially disjoint partition 
\[
B=(\bigcup_{D\in \mathcal D(B)} \, D) \,\cup \,(\bigcup_{A \in \mathcal A(B)} \, A),
\]
where $\mathcal D(B)=\{D_q\colon q\in (\partial K)^{[n-1]}\}$ is a family of essentially disjoint cube-indentations and  $\mathcal A(B)$ is a family of mutually disjoint star-indentations in $K$ having following properties:
\begin{enumerate}
\item If $D_q, D_{q'} \in \mathcal D(B)$ meet for $q\neq q'$, then associated to each connected component $\eta$ of $|D_q \cap D_{q'}|$, there exists a star-indentation $A_\eta=\Refine^{k-j}(S_\eta)\in \mathcal A(B)$  emitted from $\xi=q\cap q'$ for which $|D_q \cap D_{q'} \cap A_\eta|=\eta$. \label{item:D}
\item If a star-indentation $A=\Refine^{k-j}(S_A)\in \cA(B)$ intersects a cube-indentation $D_q\in \cD(B)$, where $S_A$ is a star emitted from $\xi$, then $|A\cap D_q|$ is a common face of $S_A$ and a spectral cube in $D_q$ not intersecting $\xi$. \label{item:AD}
\end{enumerate}
\end{definition}

We make two remarks on this definition.
\begin{remark}
Suppose that $D_q\cap D_{q'} \ne \emptyset$ for $q\ne q'$. Then $q$ and $q'$ are faces of the same $n$-cube in $K$, which contains $|D_q|$ and $|D_{q'}|$; see left figure in Figure \ref{fig:indentation}.
By \eqref{item:D} and structure of the star, the each connected component of the intersection is a common edge $e$ (i.e.~$(n-2)$-cube) of the intersecting spectral cubes. In particular, the intersecting spectral cubes have the same side length.
Moreover, in this case, each star emitted from $\xi=q\cap q'$ consists of a single $n$-cube. Thus $A_\eta$ is a single cube having one face in $D_q$ and another in $D_{q'}$. 
\end{remark}

\begin{remark}
Let $A=\Refine^{k-j}(S_A)\in \mathcal A(B)$ be a star-indentation and $S_A$ be a star emitted from $\xi$. Since $\xi$ meets exactly two $(n-1)$-cubes of $\partial K$, $S_A$ meets  at most two cube-indentations in $\cD(B)$. Note that, the definition does not exclude the possibility that there is only one, or possibly none cube-indentation in $\cD(B)$ meeting $S_A$; see right figure in Figure \ref{fig:indentation}.
\end{remark}

\begin{figure}[htp]
\centering
\begin{overpic}[scale=.25,unit=1mm]{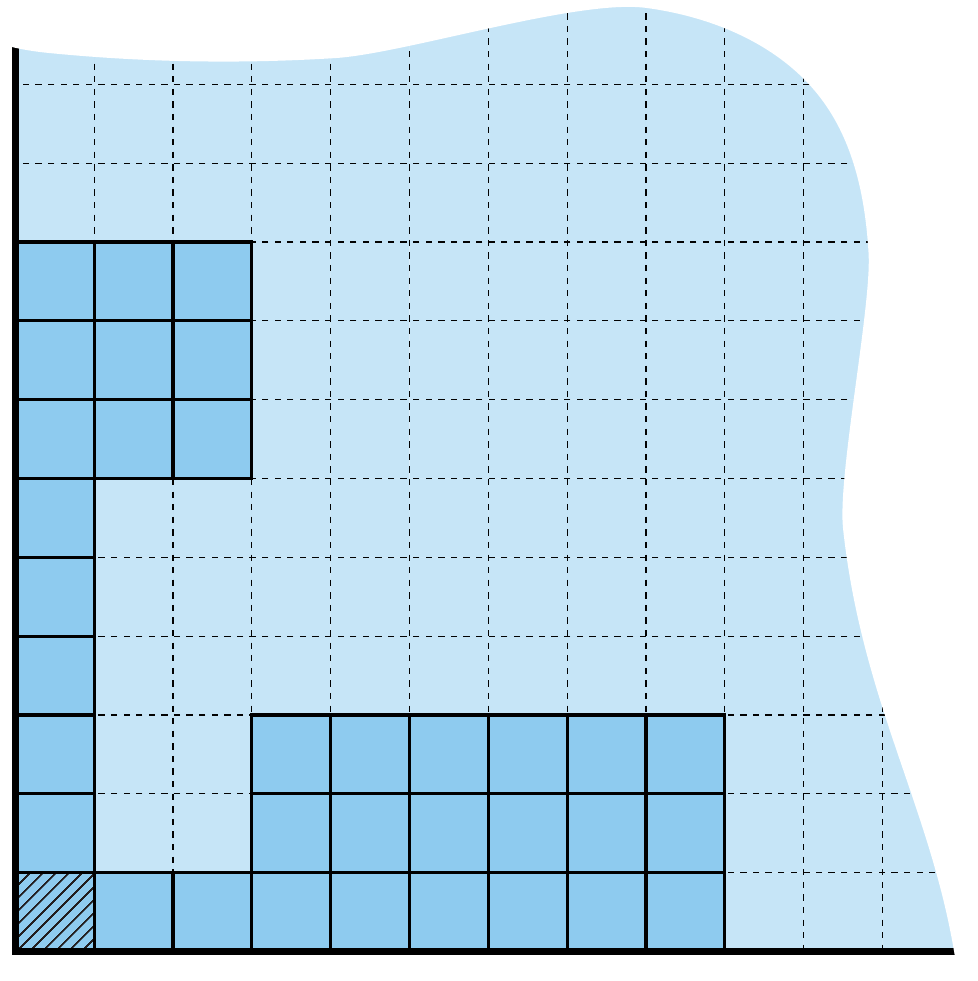} 
\put(4.5,4.5){$\eta$}
\put(-1.5,0){$e$}
\end{overpic}
\hfill
\begin{overpic}[scale=.23,unit=1mm]{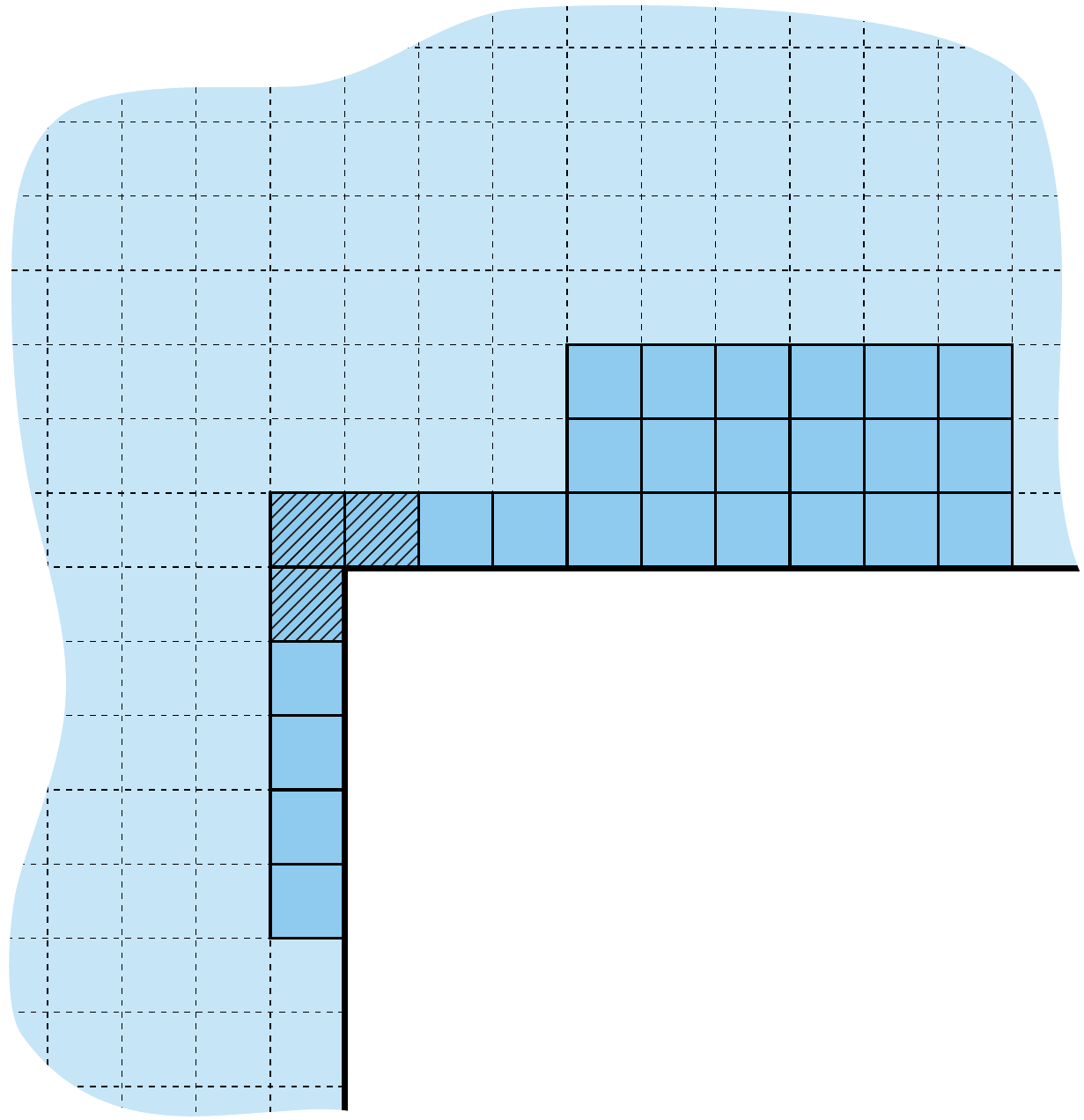} 
\put(16,22.5){$e$}
\end{overpic}
\caption{Indentations - spectral cubes in cube-indentations (medium blue) and stars in star-indentions (shaded blue).}
\label{fig:indentation}
\end{figure}

\subsection{Flattening indentations}
In this section, we discuss an iterative procedure for flattening indentations. The goal is to obtain a bilipschitz homeomorphism from the space of an indented complex to the space of the original complex, whose bilipschitz constant does not depend on the particular indentation. For this reason, we discuss first neighborhoods of indentations, called wedges, which allow us to track the change of bilipschitz constants in the flattening process.

\subsubsection{Wedges}

The definition of a wedge of subcomplex reads as follows.

\begin{definition}
\label{def:wedge}
\index{wedge} \index{$\Wedge_{|K|}(P)$}
Let $K$ be a good cubical $n$-complex.  Let $P \subset \Refine^k(K)$ be a subcomplex in which each spectral cube either has a face on $|\partial K|$, or belongs to a star $\Star_{\Refine^j(K)}(e)$ of an $(n-2)$-cube $e$ in the boundary $|\partial K|$.  We say that $x\in |K|$ is in the \emph{wedge of $P$} if there exists $p\in |P|$ for which 
\[
{\dist}_K(x,|P|) = d_K(x,p) \le \dist_K(p,|\partial K|)/4,
\]
where $d_K$ is the standard metric in $K$ defined Section \ref{sec:refinement-metric}. We call
\[
\Wedge_{|K|}(P)= \{ x\in |K| \colon x \text{ is in the wedge of } P\}
\]
the \emph{wedge of $P$ in $|K|$}.
\end{definition}

Note that $\Wedge_{|K|}(P)=\Wedge_{|K|}(\Refine(P))$ and observe that wedges are well-defined on an indentation as well as on a single $n$-cube $Q$ in an indentation.

We prove first a non-overlapping property of wedges associated to the cubes in a cube-indentation.

\begin{lemma}
\label{lemma:Wedge-intersections}
Let $k\geq 2$ and
$D \subset \Refine^k(K)$
be a cube-indentation in $K$ over an $(n-1)$-cube $q\in \partial K$. Then
\begin{enumerate}
\item if $Q$ and $Q'$ are cubes in $\cS_*(D)^{[n]}$ for which $Q\cap Q'=\emptyset$, then $\Wedge_{|K|}(Q)\cap \Wedge_{|K|}(Q')=\emptyset$; \label{item:Wedge-intersections-1}
\item if $Q,Q',Q''$ are three distinct cubes in $\cS_* (D)^{[n]}$  then $\Wedge_{|K|}(Q)\cap \Wedge_{|K|}(Q') \cap \Wedge_{|K|}(Q'')=\emptyset$. \label{item:Wedge-intersections-2}
\end{enumerate}
\end{lemma}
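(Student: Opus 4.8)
The plan is to exploit the geometry of spectral cubes in a cube-indentation established in Lemmas \ref{lemma:s-c-i-adjacency} and \ref{lemma:dent-cell}, together with the distance estimates built into Definition \ref{def:wedge}. The key quantitative fact is that two disjoint spectral cubes $Q,Q'$ of a cube-indentation have $d_K$-distance at least the side length of the smaller of the two (any two disjoint cubes in a refinement are separated by at least the smaller side length, since they sit in a common lattice after rescaling). On the other hand, by Definition \ref{def:wedge}, every point $x\in\Wedge_{|K|}(Q)$ satisfies $\dist_K(x,|Q|)\le \dist_K(p,|\partial K|)/4$ for some $p\in|Q|$ realizing the distance; since $Q$ has a face on $|\partial K|$ (Definition \ref{def:indentation}\eqref{item:indentation-2}), we have $\dist_K(p,|\partial K|)\le \mathrm{diam}_K|Q| = \sqrt{n}\,s(Q)$ where $s(Q)$ is the side length of $Q$. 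Hence $\Wedge_{|K|}(Q)$ is contained in the closed $d_K$-neighborhood of $|Q|$ of radius $\tfrac{\sqrt n}{4}s(Q)$, and in fact of radius $\tfrac14\dist_K(p,|\partial K|)\le\tfrac14 s(Q)\cdot(\text{something})$ — I will need to be a little careful and use the sharper bound $\dist_K(p,|\partial K|)\le s(Q)$, valid because $Q$ has a face in $\partial K$, to conclude that $\Wedge_{|K|}(Q)$ lies within distance $s(Q)/4$ of $|Q|$.

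For part \eqref{item:Wedge-intersections-1}: suppose $x\in\Wedge_{|K|}(Q)\cap\Wedge_{|K|}(Q')$ with $Q\cap Q'=\emptyset$, and without loss of generality $s(Q')\le s(Q)$. Then $x$ is within $s(Q)/4$ of $|Q|$ and within $s(Q')/4\le s(Q)/4$ of $|Q'|$, so $d_K(|Q|,|Q'|)\le s(Q)/4 + s(Q')/4 \le s(Q)/2$. But as noted, disjoint spectral cubes satisfy $d_K(|Q|,|Q'|)\ge s(Q')$; we also need to handle the case where one cube has much larger side length, but the separation bound $d_K(|Q|,|Q'|)\ge s(Q')$ still holds. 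Combining, $s(Q')\le s(Q)/2$. This alone is not a contradiction, so I instead bound the wedge radius more tightly: the radius of $\Wedge_{|K|}(Q)$ around $|Q|$ is at most $\tfrac14\dist_K(p,|\partial K|)$, and for a spectral cube with a face on $\partial K$ this quantity is at most $s(Q)$ but one can do better — actually $\dist_K(p,|\partial K|)\le s(Q)$ suffices only to give radius $s(Q)/4$, and combined with $d_K(|Q|,|Q'|)\ge s(Q')$ and the symmetric estimate we get $s(Q')\le \tfrac14(s(Q)+s(Q'))$, i.e. $s(Q')\le \tfrac13 s(Q)$, still not immediately contradictory. The genuine resolution is that when $s(Q')\ll s(Q)$, the point $p'\in|Q'|$ realizing $\dist_K(x,|Q'|)$ satisfies $\dist_K(p',|\partial K|)\le s(Q')$, so $x$ is within $s(Q')/4$ of $|Q'|$, hence within $s(Q')/4 + \mathrm{diam}_K|Q'|\le \tfrac{5\sqrt n}{4}s(Q')$ of $|\partial K|$; but $x$ is also within $s(Q)/4$ of $|Q|$, and I will show that any such $x$ has $\dist_K(x,|\partial K|)\ge c\,s(Q)$ for a dimensional constant $c>0$ unless $x$ is near the face $Q\cap|q|$ — and near that face the two wedges cannot both reach because $Q$ and $Q'$ are disjoint and, by Lemma \ref{lemma:s-c-i-adjacency} and Definition \ref{def:indentation}\eqref{item:indentation-3}, a smaller cube meeting the boundary face region of $Q$ would have to be adjacent to $Q$. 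I would organize this as: reduce to the case $s(Q')\le s(Q)$, use that $x$ lies in the $s(Q)/4$-neighborhood of $|Q|$ intersected with the $s(Q')/4$-neighborhood of $|Q'|$, deduce $d_K(|Q|,|Q'|)< s(Q')$ contradicting disjointness of spectral cubes at the finer scale (rescale $\Refine^{j'}(K)$ where $j'$ is the rank of $Q'$: disjoint $n$-cubes there are a full side length apart).

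For part \eqref{item:Wedge-intersections-2}: given three distinct spectral cubes $Q,Q',Q''$, by part \eqref{item:Wedge-intersections-1} any two with empty intersection have disjoint wedges, so if all three wedges met we would need each pair $Q\cap Q', Q\cap Q'', Q'\cap Q''$ to be nonempty; but that contradicts Definition \ref{def:indentation}\eqref{item:indentation-4}, which asserts $Q\cap Q'\cap Q''=\emptyset$ — wait, that only forbids triple intersection, not pairwise. So instead: if the three wedges had a common point $x$, then pairwise the cubes must intersect (else part (1) applies), so $Q\cap Q'\ne\emptyset$, $Q\cap Q''\ne\emptyset$, $Q'\cap Q''\ne\emptyset$; by Lemma \ref{lemma:s-c-i-adjacency}, $Q$ meets at most two spectral cubes of side length $\ge s(Q)$ and those are on opposite faces, and by \eqref{item:indentation-3} a strictly smaller cube meeting $Q$ touches only two faces of $Q$ one being the boundary face — a short combinatorial case analysis on the side lengths of $Q,Q',Q''$ (at most one is strictly largest, etc.) together with \eqref{item:indentation-4} forces two of them, say $Q'$ and $Q''$, to meet the same face of $Q$ on opposite sides, which is impossible, or forces $Q\cap Q'\cap Q''\ne\emptyset$ directly. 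I expect the main obstacle to be pinning down the sharp wedge-radius estimate (that $\Wedge_{|K|}(Q)$ really lies in the $s(Q)/4$-neighborhood of $|Q|$, using that $Q$ has a face on the boundary so the defining point $p$ has $\dist_K(p,|\partial K|)\le s(Q)$) and then making the ``disjoint spectral cubes are a full fine-scale side length apart'' separation argument precise enough that the factor $1/4$ in the wedge definition actually closes the gap; the combinatorics in part (2) is routine once part (1) and the spectral-cube adjacency lemmas are in hand.
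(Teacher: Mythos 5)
Your overall strategy is the same as the paper's — use the separation bound $\dist_K(Q,Q')\ge 3^{-j'}$ for disjoint spectral cubes, bound wedge radii via distance to $\partial K$, and then reduce part~(2) to part~(1) plus the adjacency Lemma~\ref{lemma:s-c-i-adjacency}. However, as you write, part~(1) has a genuine gap that you yourself flag but do not close. Your final stated organization claims that from ``$x$ lies in the $s(Q)/4$-neighborhood of $|Q|$ intersected with the $s(Q')/4$-neighborhood of $|Q'|$'' one can ``deduce $d_K(|Q|,|Q'|)<s(Q')$''; this does not follow, since the triangle inequality only gives $d_K(|Q|,|Q'|)\le s(Q)/4+s(Q')/4$, which when $s(Q)\gg s(Q')$ is much larger than $s(Q')$. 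Your alternative route, showing $\dist_K(x,|\partial K|)\ge c\,s(Q)$ ``unless $x$ is near the face $Q\cap|q|$'' and then handling that near-boundary case combinatorially, is also not a valid bound: $\dist_K(x,|\partial K|)$ is only bounded below by $\tfrac34\dist_K(p,|\partial K|)$, and $\dist_K(p,|\partial K|)$ can be arbitrarily small compared to $s(Q)$.

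The missing step, which is exactly what the paper does, is to exploit the fact that $x$ lies in both wedges to compare $\dist_K(p,|\partial K|)$ and $\dist_K(p',|\partial K|)$ directly: from
\[
\dist_K(p,|\partial K|)\le d_K(p,x)+d_K(x,p')+\dist_K(p',|\partial K|)\le \tfrac14\dist_K(p,|\partial K|)+\tfrac54\dist_K(p',|\partial K|)
\]
one gets $\dist_K(p,|\partial K|)\le\tfrac53\dist_K(p',|\partial K|)$. This replaces your coarse bound $\dist_K(p,|\partial K|)\le s(Q)$ by the sharp one $\dist_K(p,|\partial K|)\le\tfrac53 s(Q')$ (using that $Q'$, the smaller cube, has a face on $\partial K$ and hence $\dist_K(p',|\partial K|)\le 3^{-j'}$). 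Then $d_K(p,p')\le \tfrac{5}{12}\dist_K(p',|\partial K|)+\tfrac14\dist_K(p',|\partial K|)=\tfrac23\dist_K(p',|\partial K|)\le \tfrac23\cdot 3^{-j'}<3^{-j'}$, contradicting the separation. The point is that when $s(Q')\ll s(Q)$, the fact that $x$ is forced close to $\partial K$ (by the $Q'$-wedge) in turn forces $p$ close to $\partial K$, which shrinks the $Q$-wedge radius to $O(s(Q'))$. Your outline for part~(2) would then go through essentially as in the paper (take the smallest of the three, apply Lemma~\ref{lemma:s-c-i-adjacency} to conclude the other two meet opposite faces of it, hence are themselves disjoint, contradiction via part~(1)), so once part~(1) is repaired the argument closes.
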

\begin{proof}
To prove \eqref{item:Wedge-intersections-1}, let $Q\in \cS_j(D)$ and $Q'\in \cS_{j'}(D)$ be two non-intersecting $n$-cubes. We may assume that
$j\leq j'$. Then the distance between $Q$ and $Q'$ is at least $3^{-j'}$.

Suppose towards contradiction that $\Wedge_{|K|}(Q) \cap \Wedge_{|K|}(Q')\ne \emptyset$ and let $x\in \Wedge_{|K|}(Q')\cap \Wedge_{|K|}(Q'')$. Let also $p\in Q$ and $p'\in Q'$ be points for which $\dist(x,Q) = d(x,p)\le \dist(p,|\partial K|)/4$ and $\dist(x,Q') = d(x,p')\le \dist(p',|\partial K|)/4$. 
Observe that
\begin{align*}
\dist(p,|\partial K|)&\leq d(p,x)+d(x,p')+ \dist(p',|\partial K|) \\
&\leq \dist(p,|\partial K|)/4+ 5\, \dist(p',|\partial K|)/4.
\end{align*}
Thus, $\dist(p,|\partial K|)\leq 5\, \dist(p',|\partial K|)/3$, and
\begin{align*}
\dist(Q,Q') &\le d(p,p') \le d(p,x) + d(x,p') \\ &\le \dist(p,|\partial K|)/4 + \dist(p',|\partial K|)/4 \\
&\le 2\dist(p',|\partial K|)/3 \le 2\cdot 3^{-j'-1}.
\end{align*}
This is a contradiction. Thus $\Wedge_{|K|}(Q')\cap \Wedge_{|K|}(Q'') = \emptyset$.

To prove \eqref{item:Wedge-intersections-2}, let $Q,Q',Q''$ be three distinct cubes in $\cS_*(D)^{[n]}$ and  assume that $Q''$ is less than or equal to the other two in size. 
Suppose  that $\Wedge_{|K|}(Q)\cap \Wedge_{|K|}(Q') \cap \Wedge_{|K|}(Q'')\neq \emptyset$. Then, by
 part \eqref{item:Wedge-intersections-1}
of this lemma, $Q\cap Q''\ne \emptyset$ and $Q'\cap Q''\ne \emptyset$ and, by Lemma \ref{lemma:s-c-i-adjacency}, 
the intersections
are opposite faces of $Q''$. 
This is a contradiction, since $Q\cap Q'\ne \emptyset$.
Hence $\Wedge_{|K|}(Q)\cap \Wedge_{|K|}(Q') \cap \Wedge_{|K|}(Q'')=\emptyset$.
\end{proof}

The proof of Lemma \ref{lemma:Wedge-intersections} may be modified to yield the following.

\begin{corollary}\label{cor:Wedge-intersections}
Let $B \subset \Refine^k(K)$ be an \emph{indentation in $K$}. As in Definition \ref{def:indentation-2}, we write $B=\left(\bigcup_{D\in \cD(B)} \, D\right) \cup \left(\bigcup_{A \in \mathcal A(B)} \, A\right)$, and let
\[
\cC(B)= \left(\bigcup_{D\in \cD(B)} \cS_*(D)^{[n]}\right) \cup \{S_A\colon A\in \mathcal A(B)\}
\]
be the collection of spectral cubes and stars defining $B$. Then
\begin{enumerate}
\item if $C$ and $C'$ are two elements in 
$\cC(B)$
for which
$|C|\cap |C'|=\emptyset$, then $\Wedge_{|K|}(C)\cap \Wedge_{|K|}(C')=\emptyset$; \label{item:Wedge-intersections-2-1}
\item if $Q,Q',Q''$ are three distinct spectral cubes in $\mathcal C(B)$, then the intersection of their wedges is empty. \label{item:Wedge-intersections-2-2}

\end{enumerate}
\end{corollary}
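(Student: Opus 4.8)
The plan is to follow the argument in the proof of Lemma \ref{lemma:Wedge-intersections}, the only new bookkeeping being a uniform \emph{scale} attached to each member of $\cC(B)$: to a spectral cube $Q\in \cS_j(D)^{[n]}$ of a cube-indentation $D\in\cD(B)$ we assign $\ell(Q)=3^{-j}$, and to the star $S_A=\Star_{\Refine^j(K)}(e)$ defining a star-indentation $A\in\cA(B)$ we assign $\ell(S_A)=3^{-j}$. Two elementary observations drive the estimate. First, if $C\in\cC(B)$ and $p\in|C|$, then $\dist(p,|\partial K|)\le\sqrt2\,\ell(C)$: a spectral cube has a face on $|\partial K|$, so in fact $\dist(p,|\partial K|)\le\ell(C)$, while each $n$-cube of a star contains the $(n-2)$-cube $e\subset|\partial K|$, giving $\dist(p,|\partial K|)\le\sqrt2\,\ell(C)$. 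Second, since every member of $\cC(B)$ is a union of cubes of iterated refinements of $K$ and the refinement grid is globally consistent across faces (the transition maps being isometries), two members with $|C|\cap|C'|=\emptyset$ satisfy $\dist(|C|,|C'|)\ge\min\{\ell(C),\ell(C')\}$.

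For \eqref{item:Wedge-intersections-2-1}, assume $x\in\Wedge_{|K|}(C)\cap\Wedge_{|K|}(C')$ with $|C|\cap|C'|=\emptyset$, and pick $p\in|C|$, $p'\in|C'|$ with $\dist(x,|C|)=d(x,p)\le\dist(p,|\partial K|)/4$ and $\dist(x,|C'|)=d(x,p')\le\dist(p',|\partial K|)/4$. Exactly the triangle-inequality chain of Lemma \ref{lemma:Wedge-intersections} gives $\dist(p,|\partial K|)\le\tfrac53\dist(p',|\partial K|)$ and, symmetrically, $\dist(p',|\partial K|)\le\tfrac53\dist(p,|\partial K|)$, and then $\dist(|C|,|C'|)\le d(p,p')\le\tfrac23\min\{\dist(p,|\partial K|),\dist(p',|\partial K|)\}\le\tfrac{2\sqrt2}{3}\min\{\ell(C),\ell(C')\}$. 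As $2\sqrt2<3$, this contradicts $\dist(|C|,|C'|)\ge\min\{\ell(C),\ell(C')\}$; the one delicate point is that the comparison survives only because $8<9$.

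For \eqref{item:Wedge-intersections-2-2}, let $Q,Q',Q''$ be distinct spectral cubes of members of $\cD(B)$ whose wedges have a common point; by part \eqref{item:Wedge-intersections-2-1} the spaces $|Q|,|Q'|,|Q''|$ meet pairwise. If all three lie in a single cube-indentation, the proof of Lemma \ref{lemma:Wedge-intersections}\eqref{item:Wedge-intersections-2} applies verbatim. Otherwise two of them, say $Q\in D_q$ and $Q''\in D_{q'}$ with $q\ne q'$, have $Q\cap Q''$ nonempty; by Definition \ref{def:indentation-2}\eqref{item:D} the connected set $Q\cap Q''$ lies in a single star-indentation of $\cA(B)$ emitted from $q\cap q'$, and likewise for any other ``cross'' intersection between cubes of distinct cube-indentations. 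Since the star-indentations in $\cA(B)$ are mutually disjoint, at most one of the three pairwise intersections can be of cross type — if two were, they would sit in disjoint stars and $Q\cap Q'\cap Q''=\emptyset$ at once. So $Q,Q'\in D_q$ and $Q''\in D_{q'}$, and $Q\cap Q''$, $Q'\cap Q''$ both lie in one star $A\in\cA(B)$; by Definition \ref{def:indentation-2}\eqref{item:AD} the complex $A\cap D_q$ is a single face $f$ of a single spectral cube $\tilde Q$ of $D_q$, so $Q\cap Q''\subset Q\cap\tilde Q$ and $Q'\cap Q''\subset Q'\cap\tilde Q$. If $\tilde Q\notin\{Q,Q'\}$, then $Q\cap Q'\cap Q''\subset Q\cap Q'\cap\tilde Q=\emptyset$ by Definition \ref{def:indentation}\eqref{item:indentation-4}. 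If $\tilde Q\in\{Q,Q'\}$, one combines the ``equal side length'' property of intersecting spectral cubes of neighbouring cube-indentations with the requirement (Definition \ref{def:indentation}\eqref{item:indentation-2}) that the spectral cubes of $D_q$ have a face in the \emph{interior} of $q$, to see that $Q''$ would then be forced to meet $\partial K$ only along $q$, contradicting $Q''\in D_{q'}$; hence the triple intersection is empty in all cases.

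The main obstacle is precisely this last case: excluding a spectral cube $Q''$ of a neighbouring cube-indentation that shares an edge with two spectral cubes $Q,Q'$ of $D_q$. Making it rigorous uses the full weight of the indentation axioms — the single-face attachment rule \eqref{item:AD}, the interior-face condition \eqref{item:indentation-2}, and the ``at most two $n$-cubes per $(n-1)$-cube'' property of good complexes — together with a careful local analysis inside the single $n$-cube of $K$ that, by the first remark after Definition \ref{def:indentation-2}, contains all of $D_q$, $D_{q'}$ and the mediating star.
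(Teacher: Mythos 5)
Your part~\eqref{item:Wedge-intersections-2-1} argument is a genuine and correct filling-in of the paper's ``can be proved analogously.'' The useful new bookkeeping is the uniform scale $\ell(C)$ assigned to both spectral cubes and stars, together with the observation that for a star the boundary distance degrades only by the factor $\sqrt2$ (the star's $n$-cubes touch $|\partial K|$ in the $(n-2)$-cube $e$, not in a face), so the same triangle-inequality chain still closes because $2\sqrt2/3<1$, i.e.\ $8<9$. This is exactly the quantitative detail the paper leaves implicit, and you are right that it is tight; the paper's $\dist(p',|\partial K|)\le 3^{-j'}$ must be replaced by $\le\sqrt2\,3^{-j}$ for stars and the margin barely survives.

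For part~\eqref{item:Wedge-intersections-2-2}, the case analysis up to and including ``\,$\tilde Q\notin\{Q,Q'\}$\,'' is fine and tracks the lemma's mechanism (wedge overlap forces pairwise set overlap, which is then fed into property~\eqref{item:indentation-4} of Definition~\ref{def:indentation}). The gap is the final sub-case $\tilde Q\in\{Q,Q'\}$. The sentence ``\,$Q''$ would then be forced to meet $\partial K$ only along $q$\,'' is asserted, not derived, and I do not believe it. Concretely, take $K=[0,1]^3$ with $q=[0,1]^2\times\{0\}$, $q'=[0,1]\times\{0\}\times[0,1]$, $\xi=q\cap q'$, set $Q=[a,a+s]\times[s,2s]\times[0,s]$ and $Q'=[a+s,a+2s]\times[s,2s]\times[0,s]$ as spectral cubes of $D_q$, and $Q''=[a,a+s]\times[0,s]\times[s,2s]$ as a spectral cube of $D_{q'}$, with $A=[a,a+s]\times[0,s]\times[0,s]$ the mediating star. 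Here $\tilde Q=Q$, $Q''$ meets $\partial K$ only along $q'$ (exactly as Definition~\ref{def:indentation}\eqref{item:indentation-2} requires), and yet $Q\cap Q'\cap Q''=\{(a+s,s,s)\}\ne\emptyset$; since every cube is contained in its own wedge, the three wedges share this point. So your intended contradiction does not occur, and in fact this configuration shows that the sub-case $\tilde Q\in\{Q,Q'\}$ is exactly where the corollary's ``empty intersection'' conclusion is delicate: the paper's Remark after Definition~\ref{def:indentation-2} reads as though each connected component of $|D_q\cap D_{q'}|$ should be a shared $(n-2)$-face of \emph{every} pair of intersecting spectral cubes, a constraint that would rule this out, but the formal Definition~\ref{def:indentation-2}\eqref{item:D}--\eqref{item:AD} does not say this, and your argument cannot lean on it without stating it. Either that extra rigidity must be invoked explicitly (as a property of the indentations actually produced in Section~\ref{sec:reservoir-canal-system}) or the conclusion must be weakened (e.g.\ to ``intersection of measure zero'' or ``no interior point in all three wedges,'' which is what the bilipschitz-composition argument in the proof of Theorem~\ref{theorem:flattening-indentation} actually needs). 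As written, the final sub-case of your proof does not close.
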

\begin{proof}
Property  \eqref{item:Wedge-intersections-2-1} has been proved in Lemma \ref{lemma:Wedge-intersections} in the case when both $C$ and $ C'$ are in the same cube-indentation. The cases, when $C$ and $C'$ are in different 
$D$ and $D'$ in $\cD(B)$ 
or when at least one of them is a star, can be proved analogously.

Property \eqref{item:Wedge-intersections-2-2} has been proved for the case when all three cubes are associated to the same 
$D\in \cD(B)$.
Other cases can also be proved analogously.
\end{proof}

\subsubsection{Indentation-flattening theorem}

We are now ready to state the main theorem for this section. Due to non-intersection properties of wedges around spectral cubes, indentations may be flattened by bilipschitz homeomorphisms
supported in wedges.
Recall that $\mu(K)$ in the statement is the local multiplicity of $K$; see Definition \ref{def:mu}.

\begin{theorem}[Indentation-flattening]
\label{theorem:flattening-indentation}
Let $K$ be a good cubical $n$-complex. Then there exists a constant $ L = L(n,\mu(K))\ge 1$ for the following. If $B\subset \Refine^{k}(K)$ is an indentation in $K$, then there exists a piecewise linear $ L$-bilipschitz homeomorphism
\[
\phi_B \colon |\Refine^k(K)-B| \to |\Refine^k(K)|,
\]
which is the identity on $|K|\setminus \Wedge_{|K|}(B)$. 
\end{theorem}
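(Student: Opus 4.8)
The plan is to build $\phi_B$ by hand on the set $\Wedge_{|K|}(B)=\bigcup_{C}\Wedge_{|K|}(C)$, where $C$ runs over the collection $\cC(B)$ of spectral cubes of the cube-indentations and defining stars of the star-indentations that make up $B$ (as in Corollary \ref{cor:Wedge-intersections}), and to let $\phi_B$ be the identity off that set. The reason to work with this particular cover of a neighbourhood of $|B|$ is exactly Corollary \ref{cor:Wedge-intersections}: the sets $\Wedge_{|K|}(C)$ have multiplicity at most two, and two of them overlap only when $C$ and $C'$ meet — in which case, by Definitions \ref{def:indentation} and \ref{def:indentation-2} together with Lemma \ref{lemma:s-c-i-adjacency}, $C\cap C'$ is a prescribed common face or $(n-2)$-cube. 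Thus $\phi_B$ will be assembled from just two kinds of local moves: a \emph{single-notch flattening} on the part of a wedge $\Wedge_{|K|}(C)$ that meets no other wedge, and a \emph{two-notch flattening} on an overlap $\Wedge_{|K|}(C)\cap\Wedge_{|K|}(C')$; no point of $|K|$ ever forces three notches to be handled simultaneously. Each of these is one of finitely many model moves, finitely many because a defining star has at most $\mu(K)$ cubes (Lemma \ref{lemma:star-cyclic}) and the way two notches can meet comes from a bounded list, so each has a bilipschitz constant $L_0=L_0(n,\mu(K))$; the assembled $\phi_B$ then inherits a bilipschitz constant $L=L(n,\mu(K))$.

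First I would record the structural input. By Definition \ref{def:indentation}\eqref{item:indentation-2} every spectral cube of a cube-indentation over $q$ has a face in the interior of $|q|\subset|\partial K|$, and a defining star meets $|\partial K|$ in two of its boundary faces; since $\Star_K(\partial K)$ is a product collar $\partial K\times[0,1]$, each such $C$ stands against this collar, and there is undisturbed material of $|\Refine^k(K)-B|$ on the far side of $|C|$ from $|\partial K|$ — no cube of $B$ lies there, since a cube above $|C|$ does not meet $|q|$ and, by the maximality clause \eqref{item:spectral-cube-max} of Definition \ref{def:spectral-cube}, is not a refinement of a spectral cube either. Each $|C|$ is a cube whose side equals its height over $|\partial K|$, or, for a star, a fan of at most $\mu(K)$ such cubes. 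The single-notch flattening is then the standard move: $|\Refine^k(K)-B|$ contains a notch of shape $|C|$ cut into the boundary collar, and one pushes its lid onto its footprint in $|\partial K|$, stretching the material lying just above $|C|$ and tapering back to the identity. The factor $1/4$ in Definition \ref{def:wedge} is chosen precisely so that $\Wedge_{|K|}(C)$ contains, around every part of the lid, a collar of width a fixed fraction of the local scale; hence this push is realised by a PL homeomorphism supported in $\Wedge_{|K|}(C)$ whose bilipschitz constant $L_0(n)$ (respectively $L_0(n,\mu(K))$ for a star) does not depend on the rank of $C$ or on $k$. When $\Wedge_{|K|}(C)\cap\Wedge_{|K|}(C')\neq\emptyset$ one uses, in the corner described by Definitions \ref{def:indentation}\eqref{item:indentation-3} and \ref{def:indentation-2}, the model move flattening the two adjacent notches at once, arranged so that on $\partial\Wedge_{|K|}(C')$ it restricts to the single-notch flattening of $C$ alone, and symmetrically.

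Because the cover $\{\Wedge_{|K|}(C)\}_{C\in\cC(B)}$ has multiplicity at most two, these local prescriptions agree on the overlaps of the regions on which they are defined and glue to a single PL homeomorphism $\phi_B\colon|\Refine^k(K)-B|\to|\Refine^k(K)|$ that is the identity off $\Wedge_{|K|}(B)$ with a bilipschitz constant depending only on $n$ and $\mu(K)$. The hard part will be this gluing step: choosing the single- and two-notch model moves so that they are mutually compatible along all the interfaces of the cover, with a constant uniform over all indentations, even though a single spectral cube may have unboundedly many much smaller spectral cubes tucked into its corners along $|\partial K|$ (the configuration allowed by the remark following Lemma \ref{lemma:s-c-i-adjacency}). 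This is exactly where Corollary \ref{cor:Wedge-intersections} is indispensable — it guarantees that no point sees three of these corner-nested notches at once, which is what keeps the list of model moves, and hence $L$, finite. A possibly cleaner way to organise the same argument is to flatten the notches in finitely many rounds indexed by a proper colouring of the intersection graph of $\{\Wedge_{|K|}(C)\}_{C\in\cC(B)}$: the structural conditions make this graph $\chi(n,\mu(K))$-colourable — distinct notches are laid out along $|\partial K|$ in a nearly tree-like pattern — notches of one colour have pairwise disjoint wedges and are flattened simultaneously by one $L_0$-bilipschitz map, and one composes the $\chi$ rounds, the multiplicity-two property keeping the composed constant bounded by $L_0^{O(1)}$.
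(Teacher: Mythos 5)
Your proposal correctly identifies the structural inputs — Corollary \ref{cor:Wedge-intersections}, Lemma \ref{lemma:s-c-i-adjacency}, the role of the factor $1/4$ in Definition \ref{def:wedge}, and the fact that a single large spectral cube may abut unboundedly many smaller ones — but the organization of the argument differs from the paper's and the step you flag as ``the hard part'' is in fact a gap that your sketch does not close. The paper does not glue local prescriptions; it flattens the cube-indentations \emph{iteratively by rank}, starting from the spectral cubes of smallest side length (Proposition \ref{prop:flattening-key-lemma} handles one such cube, Lemma \ref{lemma:first-flattening-theorem} handles one whole size class), and only afterwards flattens the star-indentations, which are mutually disjoint and have at most $\mu(K)$ cubes, so they cost a single factor $L(n,\mu(K))$. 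The reason the composition over all size classes does not accumulate a constant depending on $k$ is not a coloring bound but the multiplicity statement in Corollary \ref{cor:Wedge-intersections}\eqref{item:Wedge-intersections-2-2}: since no point of $|K|$ lies in three distinct wedges of spectral cubes, each point is moved by at most two of the individual $L(n)$-bilipschitz flattening homeomorphisms in the whole chain, so the composed map is $L(n)^2$-bilipschitz. This completely sidesteps the compatibility-of-two-notch-models issue you raise.

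Two concrete problems with your version. First, the gluing strategy requires prescribing, for a large spectral cube $Q$ whose wedge overlaps the wedges of arbitrarily many smaller spectral cubes $Q'_1,Q'_2,\ldots$ along $|\partial K|$, two-notch moves for each pair $(Q,Q'_j)$ that simultaneously agree on all of $\Wedge_{|K|}(Q)\setminus\bigcup_j \Wedge_{|K|}(Q'_j)$; you acknowledge this but do not explain how to arrange it, and it is exactly this that the paper's ``smallest first'' ordering avoids, because the small $Q'_j$ are already gone by the time $Q$ is treated. Second, the fallback coloring claim — that the intersection graph of $\{\Wedge_{|K|}(C)\}_{C\in\cC(B)}$ is $\chi(n,\mu(K))$-colorable — is asserted without justification and does not follow from the multiplicity-two property. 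Corollary \ref{cor:Wedge-intersections}\eqref{item:Wedge-intersections-2-2} says no point lies in three wedges; it does not forbid triangles in the intersection graph (three wedges can pairwise intersect with empty triple intersection), and it certainly does not give a degree bound (a large cube's wedge can touch unboundedly many smaller wedges, as you yourself note). Bounded chromatic number is a strictly stronger and unverified hypothesis; the paper needs, and uses, only the multiplicity-two property.
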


Proof of Theorem \ref{theorem:flattening-indentation} begins with the  flattening of a single cube.

\begin{proposition}
\label{prop:flattening-key-lemma}
Let $B \subset \Refine^k(K)$ be an indentation in $K$ and $Q$ be 
spectral cube in $\mathcal C(B)$ of the smallest side length.
Then 
spaces of complexes
$T=\partial Q\cap \partial (\Refine^k(K)-B)$ and $\partial Q-T$ are $(n-1)$-cells, whose common boundary is an $(n-2)$-sphere. Furthermore, there exists  a piecewise linear $L(n)$-bilipschitz map 
\[\psi \colon |\Refine^k(K)-B| \to |\Refine^k(K)-B|\cup |Q|,\]
which is the identity in the complement of $\Wedge_{|K|}(Q)$.
\end{proposition}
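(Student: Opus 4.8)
The plan is as follows. Since the elements of $\mathcal{C}(B)$ other than spectral cubes are stars $S_A$, which are not single cubes, $Q$ must be a spectral cube of one of the cube-indentations $D\in\mathcal{D}(B)$, say over an $(n-1)$-cube $q\in(\partial K)^{[n-1]}$. Write $Q_\bot=Q\cap|q|$; by Definition~\ref{def:indentation}\eqref{item:indentation-2} this is the \emph{unique} face of $Q$ lying on $|\partial K|$, since it is contained in the interior of $|q|$ and hence meets no other stratum of $|\partial K|$. Let $Q_\top$ be the opposite face. Unwinding the definitions, a face $f$ of $Q$ (at the side length of $Q$) lies in $T=\partial Q\cap\partial(\Refine^k(K)-B)$ precisely when $f$ is two-sided in $\Refine^k(K)$ and the $n$-cube on the far side of $f$ does \emph{not} belong to $B$; the complementary faces, namely $Q_\bot$ together with the faces of $Q$ shared with an $n$-cube of $B$, make up $\partial Q-T$. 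So the first task is to show that $T$ and $\partial Q-T$ are subcomplexes of $\partial Q$ (unions of entire faces of $Q$) and that each is an $(n-1)$-cell.

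For this I would carry out a combinatorial analysis, using throughout that $Q$ has minimal side length in $\mathcal{C}(B)$. First: by Lemma~\ref{lemma:s-c-i-adjacency} (every spectral cube of $D$ has side length at least that of $Q$) together with Definition~\ref{def:indentation}\eqref{item:indentation-3}, any intersection of $Q$ with another spectral cube of $D$, or with a refinement of a larger spectral cube of $D$, is an entire face of $Q$; by Definition~\ref{def:indentation-2}\eqref{item:D}, $Q$ shares no face with an $n$-cube of $B$ belonging to a cube-indentation $\ne D$, since distinct cube-indentations meet only in codimension $\ge 2$; and by Definition~\ref{def:indentation-2}\eqref{item:AD}, a star-indentation meets $D$ along an entire face of a spectral cube, which — if that cube is $Q$ — is both a face of the star and a face of $Q$, hence of the same side length, and is disjoint from the emitting $(n-2)$-cube. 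In all cases the relevant face of $Q$ is an entire face, so $\partial Q-T$ is a subcomplex. The decisive point is \emph{which} faces occur: every spectral cube of $D$ and every star-indentation is anchored on $|\partial K|$ on the interior side, and $Q$ meets $|\partial K|$ exactly along $Q_\bot$, so nothing in $B\cup\partial K$ lies beyond $Q_\top$; in particular the $n$-cube across $Q_\top$ exists and is not in $B$, i.e.\ $Q_\top\subset T$. Hence $\partial Q-T$ is $Q_\bot$ with perpendicular flaps glued along some of its boundary facets, which is piecewise linearly homeomorphic to $[0,1]^{n-1}$, an $(n-1)$-cell; symmetrically $T=Q_\top\cup(\text{some faces adjacent to }Q_\top)$ is an $(n-1)$-cell; and, since these are complementary caps of the sphere $\partial Q$, $|T|\cap|\partial Q-T|=\partial|\partial Q-T|$ is an $(n-2)$-sphere. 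This proves the first assertion, with no appeal to Schoenflies.

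To construct $\psi$, note that along the interior of each face of $Q$ composing $T$ the complex $\Refine^k(K)-B$ has that face bordering exactly one of its $n$-cubes, so $\partial(\Refine^k(K)-B)$ is a piecewise linear manifold-with-boundary there and $T$ admits a collar. I would fix a collar $C\cong|T|\times[0,1]$ of $T$ inside $|\Refine^k(K)-B|$, with $|T|\times\{0\}=|T|$, chosen ``wedge-shaped'' so that $C\subset\Wedge_{|K|}(Q)$ and so that its thickness near a point $p$ of $T$ is comparable to $\dist_K(p,|\partial K|)$; this is possible precisely because the factor $1/4$ in the definition of a wedge guarantees that $\Wedge_{|K|}(Q)$ contains a neighborhood of $p$ of this radius, and $|Q|$ itself is thin near $|\partial K|$ by the same amount. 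Since the pair $(|Q|,|T|)$ is piecewise linearly homeomorphic to $([0,1]^{n-1}\times[-1,0],\,[0,1]^{n-1}\times\{0\})$ with bilipschitz constant depending only on $n$ (there are only finitely many combinatorial types of $(\partial Q,T)$), the union $|Q|\cup C$ is identified with $[0,1]^{n-1}\times[-1,1]$ and $C$ with $[0,1]^{n-1}\times[0,1]$. I then define $\psi$ to be the identity on $|\Refine^k(K)-B|\setminus C$ and, on $C\cong[0,1]^{n-1}\times[0,1]$, the map $(y,t)\mapsto(y,2t-1)$ into $[0,1]^{n-1}\times[-1,1]\cong|Q|\cup C$. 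It is the identity on the outer collar face $[0,1]^{n-1}\times\{1\}$, hence agrees with the identity outside $C$ and defines a piecewise linear homeomorphism $|\Refine^k(K)-B|\to|\Refine^k(K)-B|\cup|Q|$; it is $L(n)$-bilipschitz because, up to controlled rescaling, it is one of finitely many linear models; and it is the identity off $C\subset\Wedge_{|K|}(Q)$.

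The main obstacle is the combinatorial step of the second paragraph: one must simultaneously use all four parts of Definition~\ref{def:indentation} and both parts of Definition~\ref{def:indentation-2}, together with the minimality of $Q$, to pin down exactly the faces of $Q$ lying in $\partial Q-T$ and, above all, to exclude the ``belt'' configuration by proving $Q_\top\subset T$ — this is what upgrades ``$\partial Q-T$ and $T$ are connected subcomplexes of the sphere $\partial Q$'' to ``they are $(n-1)$-cells.'' Once this is in hand, the construction of $\psi$ is a standard collar argument, the only delicate point being to keep the collar inside $\Wedge_{|K|}(Q)$ while retaining thickness comparable to the local height over $|\partial K|$, which is exactly what the factor $1/4$ in the definition of a wedge provides.
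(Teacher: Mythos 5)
Your treatment of the first assertion is correct, and it is in fact organized a bit more robustly than the paper's. Rather than enumerating the number $t\in\{0,1,2\}$ of elements of $\mathcal{C}(B)$ adjacent to $Q$ and checking cases, you argue directly that $\partial Q - T$ consists of $Q_\bot$ together with whichever faces of $Q$ are shared with $n$-cubes of $B$, that these are all side faces because $Q_\top\subset T$, and that $Q_\bot$ together with any subset of its side flaps is a cap in the sphere $\partial Q$ (hence so is the complementary cap $T$). This makes the cell conclusion independent of knowing $t\le 2$. The one sentence that deserves expansion is the key claim ``nothing in $B\cup\partial K$ lies beyond $Q_\top$''; it needs the observation that a spectral cube or star sharing $Q_\top$ would have to have a face on $\partial K$ while covering a region at height at least the side length of $Q$, forcing it to contain $Q$ and contradicting maximality — but the paper passes over the same point with an unproved ``since $q_0$ and $q_1$ must be adjacent.''

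The construction of $\psi$ is where there is a genuine gap. Side faces of $Q$ lying in $T$ touch $\partial K$ along their bottom facets, so $\partial T\cap\partial K\ne\emptyset$ in general, and any collar $C\subset\Wedge_{|K|}(Q)$ must taper to zero thickness along $\partial T\cap\partial K$ — while $|Q|$ does \emph{not} taper; it retains full depth equal to its side length there. Your assertion that ``$|Q|$ itself is thin near $|\partial K|$ by the same amount'' is therefore false, and it is exactly what makes the product picture look adequate when it is not. Concretely: the identification $C\cong[0,1]^{n-1}\times[0,1]$ cannot be $L(n)$-bilipschitz (it must flatten a taper of ratio $\to 0$ to a uniform slab), and if one computes the linear stretch $(y,t)\mapsto(y,2t-1)$ through the identifications you propose, its differential blows up like (side length of $Q$)$/\dist(\cdot,\partial K)$ near $T\cap\partial K$. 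The map that actually works near a point of $\partial T\cap\partial K$ is a rotation about the dihedral edge $\partial T\cap\partial K$ expanding the thin sector $\Wedge_{|K|}(Q)\setminus|B|$ (angular opening $\arctan(1/4)$) onto the sector $\Wedge_{|K|}(Q)\setminus|B-Q|$ (angular opening $\arctan(1/4)+\pi/2$); this has bounded distortion precisely because both sectors have openings bounded away from $0$ and $\pi$. A linear collar stretch also fails to wrap $T$ around the whole cap $\partial Q-T$: it sends $T$ only to the opposite face of the model box, whereas $\partial Q - T$ contains $Q_\bot$, which meets $T$ along $\partial T$. The paper's formulation — fix a PL $L(n)$-bilipschitz map $|T|\to|\partial Q-T|$ that is the identity on $|\partial T|$ and extend it over $\cl(\Wedge_{|K|}(Q)\setminus|B|)$, fixing $\partial\Wedge_{|K|}(Q)$ — is designed to absorb exactly this angular phenomenon, since the extension is conical near $\partial T\cap\partial K$ and there are only finitely many PL combinatorial types of the triple $(\cl(\Wedge_{|K|}(Q)\setminus|B|),T,\partial\Wedge_{|K|}(Q))$ for each $n$. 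You should replace the product parameterization by the conical/angular picture (or simply quote the paper's formulation), and drop the claim that $Q$ is thin near $\partial K$.
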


\begin{proof}
Let $q_0 = Q\cap |\partial K|$ and let $t\in \{0,1,2\}$ be the number of elements in $\mathcal C(B)$ adjacent to $Q$. 
For the first claim, 
we have three cases. 

If $t=0$, that is, $Q$ does not meet any other elements in $\mathcal C(B)$, then $\partial Q - T = q_0$ and $T = \partial Q - q_0$ are $(n-1)$-cells.

If $t=1$, that is,  $Q$ either shares a face $q_1$  with a spectral cube or with a star in  $\mathcal C(B)$, and with no other.  
Since $q_0$ and $q_1$ must be adjacent,  $\partial Q-T = q_0\cup q_1$ and $T=\partial Q - (q_0\cup q_1)$ are $(n-1)$-cells. 

Finally, if $t=2$ then $Q$ shares faces $q_1$ and $q_2$ with two distinct elements in $\mathcal C(B)$. Since $q_1$ and $q_2$ must be opposite faces of $Q$, both are adjacent to $q_0$. Hence $\partial Q - T = q_0\cup q_1\cup q_2$ and $T = \partial Q - (q_0\cup q_1 \cup q_2)$ are $(n-1)$-cells.

For the second claim, it suffices to observe that there exist a constant $L\ge 1$ and a piecewise linear $L$-bilipschitz map, $|T|\to |\partial Q-T|$, which is the identity on the boundary $|\partial T|$. This map extends as a piecewise linear bilipschitz map $\cl(\Wedge_{|K|}(Q)\setminus |B|) \to \cl(\Wedge_{|K|}(Q)\setminus |B-Q|)$, which is the identity on the boundary of $\Wedge_{|K|}(Q)$. We may now take $\psi$ to be the map, which is the extension of this map by identity.
\end{proof}

With Proposition \ref{prop:flattening-key-lemma} at our disposal, we can now 
inductively
flatten all spectral cubes of the smallest size. For the statement, let $\delta(B)$ be the largest index $j$ for which $\cS_j(D)\ne \emptyset$ for some $D\in \mathcal D(B)$, and let 
\[
B^\dagger = B- \bigcup_{D\in\mathcal D(B)} \Refine^{k-\delta(B)}(\cS_{\delta(B)}(D))
\]
be a subcomplex of $B$ 
\index{spectral reduction} \index{$B^\dagger$} 
from which the smallest spectral cubes of $B$ are excluded. Note that $B^\dagger$ is again an indentation. Observe that $\delta(B^\dagger) < \delta(B)$ and denote
\[
\mathcal C(B)_{\delta(B)}= \bigcup_{D\in \mathcal D(B)} \cS_{\delta(B)}(D)^{[n]}.
\]

\begin{lemma}
\label{lemma:first-flattening-theorem}
There exists a constant $L=L(n)\ge 1$ for the following. If $B \subset \Refine^k(K)$ is an indentation in $K$, then there exists a piecewise linear $L$-bilipschitz homeomorphism 
\[
\phi_{B^\dagger} \colon |\Refine^k(K)-B| \to |\Refine^k(K)-B^\dagger|,
\]
which is the identity on $|K|\setminus \Wedge_{|K|}(\mathcal C(B)_{\delta(B)})$. 
\end{lemma}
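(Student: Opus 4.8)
The plan is to obtain $\phi_{B^\dagger}$ by filling back the smallest spectral cubes of $B$ one at a time, using Proposition \ref{prop:flattening-key-lemma}, and to keep the bilipschitz constant independent of $B$ by carrying out these fillings in a bounded number of ``waves'', inside each of which the cubes being filled have pairwise disjoint wedges. Recall that every $n$-cube in $\mathcal{C}(B)_{\delta(B)}$ has side length $3^{-\delta(B)}$, and this is the smallest side length occurring among the spectral cubes of $B$; so each $Q\in\mathcal{C}(B)_{\delta(B)}$ is, in the terminology of Proposition \ref{prop:flattening-key-lemma}, a spectral cube of $\mathcal{C}(B)$ of smallest side length.

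The combinatorial heart of the argument is a colouring step. Form the graph $\mathcal{G}$ whose vertices are the cubes in $\mathcal{C}(B)_{\delta(B)}$, with an edge between $Q$ and $Q'$ whenever $\Wedge_{|K|}(Q)\cap\Wedge_{|K|}(Q')\neq\emptyset$. By Corollary \ref{cor:Wedge-intersections}\eqref{item:Wedge-intersections-2-1}, an edge of $\mathcal{G}$ forces $|Q|\cap|Q'|\neq\emptyset$; since all cubes in $\mathcal{C}(B)_{\delta(B)}$ have equal (minimal) side length, Lemma \ref{lemma:s-c-i-adjacency} shows that $Q$ meets at most two of them, so $\mathcal{G}$ has maximum degree at most two. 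A graph of maximum degree two is a disjoint union of paths and cycles, hence properly $3$-colourable; I would fix such a colouring and write $\mathcal{C}(B)_{\delta(B)}=G_1\sqcup G_2\sqcup G_3$, so that within each $G_i$ the wedges $\Wedge_{|K|}(Q)$, $Q\in G_i$, are pairwise disjoint.

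Then I would fill one colour class at a time. Set $B_0=B$ and, for $i=1,2,3$, let $B_i$ be the subcomplex obtained from $B_{i-1}$ by deleting the $n$-cubes in $G_i$ and passing to the spanned subcomplex; one checks that $B_3=B^\dagger$ and that each $B_i$ is again an indentation in $K$ with $\delta(B_i)=\delta(B)$ (provided $G_{i+1}\cup\cdots\cup G_3\neq\emptyset$). For each $Q\in G_i$, Proposition \ref{prop:flattening-key-lemma} applied with ambient indentation $B_{i-1}$ gives a piecewise linear $L(n)$-bilipschitz map $|\Refine^k(K)-B_{i-1}|\to|\Refine^k(K)-B_{i-1}|\cup|Q|$ that is the identity off $\Wedge_{|K|}(Q)$; since the wedges over $G_i$ are disjoint, these glue (identity in between) into a single piecewise linear $L(n)$-bilipschitz homeomorphism $\Psi_i\colon|\Refine^k(K)-B_{i-1}|\to|\Refine^k(K)-B_i|$ which is the identity off $\bigcup_{Q\in G_i}\Wedge_{|K|}(Q)$ (take $\Psi_i=\id$ if $G_i=\emptyset$). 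Composing, $\phi_{B^\dagger}=\Psi_3\circ\Psi_2\circ\Psi_1$ is piecewise linear, $L(n)^3$-bilipschitz, and the identity on $|K|\setminus\Wedge_{|K|}(\mathcal{C}(B)_{\delta(B)})$; taking $L=L(n)^3$ finishes.

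I expect the main obstacle to be the bookkeeping assertion that each $B_i$ is genuinely an indentation in $K$ in the sense of Definition \ref{def:indentation-2}, so that Proposition \ref{prop:flattening-key-lemma} keeps applying. The subtle point is that deleting minimal-rank spectral cubes can strip the end-faces off cube-indentations and can leave star-indentations in $\mathcal{A}(B)$ no longer meeting the cube-indentations they used to bridge; one must check that this only makes the intersection conditions \eqref{item:D}--\eqref{item:AD} of Definition \ref{def:indentation-2} (and the conditions of Definition \ref{def:indentation}) \emph{easier} to satisfy --- no new intersections are ever created, the spectrum of a cube-indentation merely loses its minimal-rank cubes (here one uses that the spectrum is an essential partition, so the coarser spectral cubes survive intact and no new spectral cubes appear), and a ``dangling'' star-indentation is permitted by the definition. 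This is routine but must be done with care; everything else is either a direct invocation of the cited results or the standard fact that a piecewise linear homeomorphism that is $L$-bilipschitz on each region of a decomposition into sets with pairwise disjoint interiors is globally $L$-bilipschitz.
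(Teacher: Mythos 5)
Your plan---fill the smallest spectral cubes in a bounded number of parallel batches, within each of which the wedges are disjoint---is a genuinely different organization from the paper's. The paper fixes an arbitrary enumeration $Q_1,\ldots,Q_s$ of $\mathcal{C}(B)_{\delta(B)}$, applies Proposition~\ref{prop:flattening-key-lemma} once per cube, and then argues that each point $x$ is displaced by at most two of the composed maps; this uses Corollary~\ref{cor:Wedge-intersections}\eqref{item:Wedge-intersections-2-2} (no triple wedge overlaps) together with an explicit extra requirement on each $\psi_i$, namely that $\psi_i\bigl(\Wedge_{|K|}(Q)\bigr)$ avoid $\Wedge_{|K|}(Q')$ for all $Q'\neq Q_i,Q$, so that the distortion does not propagate through a chain of overlapping wedges. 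Your batching replaces that propagation bookkeeping with a clean colouring argument; both give an $n$-dependent constant, and yours is arguably easier to audit.

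There is one step you should tighten. You assert that the wedge-overlap graph $\mathcal{G}$ has maximum degree at most two, citing Lemma~\ref{lemma:s-c-i-adjacency}; but that lemma only bounds the number of same-or-larger spectral cubes in the \emph{same} cube-indentation $D$ that a given $Q$ can meet. The set $\mathcal{C}(B)_{\delta(B)}$ ranges over all $D\in\mathcal{D}(B)$, and by the remark after Definition~\ref{def:indentation-2} two equal-size minimal spectral cubes belonging to neighbouring $D_q$ and $D_{q'}$ (with $q\cap q'=\xi\in(\partial K)^{[n-2]}$) can indeed meet in an $(n-2)$-cube; Corollary~\ref{cor:Wedge-intersections}\eqref{item:Wedge-intersections-2-2} excludes only triple overlaps, not a vertex of degree $\ge 3$. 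So $\Delta(\mathcal{G})=2$ does not directly follow from what you cite. The fix is cheap: the degree is still bounded by a purely dimensional constant (a minimal-rank cube can only touch cubes of the same lattice scale sharing at least a vertex, and the indentation axioms restrict this further), so $\mathcal{G}$ is $c(n)$-colourable for some $c(n)\ge 3$; your argument then produces a piecewise linear $L(n)^{c(n)}$-bilipschitz map, which is all the lemma asks for. Alternatively, you could drop the colouring and invoke Corollary~\ref{cor:Wedge-intersections}\eqref{item:Wedge-intersections-2-2} directly as the paper does. Your discussion of why each intermediate $B_i$ remains an indentation is on point and matches the claim the paper makes for the sets $B-E_i$; both proofs state it without full detail, so your caveat is well placed.
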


\begin{proof}
Fix an enumeration $Q_1,\ldots$, $ Q_s$ of the $n$-cubes in $\mathcal C(B)_{\delta(B)}$, to be used for the order in which the cubes are to be flattened. For each $i=1,\ldots, s$, set $E_i= Q_1\cup \cdots \cup Q_i$,
set also $E_0 = \emptyset$. Note that  each $B-E_i$ is an indentation in $K$ and that $B-E_s = B^\dagger$. 

By Proposition \ref{prop:flattening-key-lemma} that there exist $L=L(n)\ge 1$ and, for each $i=1,\ldots, s$, a piecewise linear $L'$-bilipschitz map 
\[
\psi_i \colon |(\Refine^k(K)-B)\cup E_{i-1}| \to |(\Refine^k(K)-B)\cup E_i|,
\]
which is identity in the complement of $\Wedge_{|K|}(Q_i)$. In view of Corollary \ref{cor:Wedge-intersections}, mapping $\psi_i$ can be chosen so that 
for each $Q\in \mathcal C(B)_{\delta(B)}$ intersecting $Q_i$, image $\psi_i (\Wedge_{|K|}(Q))$ does not meet $\Wedge_{|K|}(Q')$ for any  $Q'\in\mathcal C(B)_{\delta(B)}, Q'\neq Q_i, Q$.

Since each spectral cube of $D$ meets at most two other spectral cubes of the same size, 
we have,
 for each $x\in|K|$, that
\[\psi_i(\psi_{i-1}\ldots (\psi_1(x)))\ne \psi_{i-1}(\psi_{i-2} \ldots( \psi_1(x)))\]
for at most two indices $i\in \{2,\ldots, s\}$. 
Thus the composition
\[
\psi_B = \psi_s \circ \cdots \circ \psi_1 \colon |\Refine^k(K)-B| \to |\Refine^k(K)-B^\dagger|
\]
is an $(L')^2$-bilipschitz homeomorphism, which is an identity in the complement of $\Wedge_{|K|}(\mathcal C(B)_{\delta(B)})$.
\end{proof}

\begin{proof}[Proof of Theorem \ref{theorem:flattening-indentation}]
Since the subcomplex $B^\dagger$ of an indentation $B$ is again an indentation, we may iteratively apply Lemma \ref{lemma:first-flattening-theorem} to flatten all spectral cubes.
By Corollary \ref{cor:Wedge-intersections}, if $Q, Q', Q''$ are three distinct spectral cubes in $\bigcup_{D\in \mathcal D(B)} \cS_*(D)^{[n]}$ then 
\[
\Wedge_{|K|}(Q) \cap \Wedge_{|K|}(Q') \cap \Wedge_{|K|}(Q'') = \emptyset.
\]
An iterative application of  Lemma \ref{lemma:first-flattening-theorem} and its proof yields the existence of individual $L(n)$-bilipschitz homeomorphisms
\[
|\Refine^k(K)-B_0| \to |\Refine^k(K)-B_1| \to \cdots \to |\Refine^k(K) - B_t|,
\]
whose composition $\psi_{\mathcal D(B)}$ is $(L(n))^2$-bilipschitz and is the identity in the complement of $\Wedge_{|K|}(\bigcup_{D\in\mathcal D(B)}\, D)$. Here $B_0 = B$, $B_i = (B_{i-1})^\dagger$, and $t\in [1,k]$ is the number of indices $j$ for which $\cS_j(\bigcup_{D\in\mathcal D(B)}\, D)\ne \emptyset$.  Thus $B_t = \bigcup_{A\in\mathcal A(B)} A$, and all spectral cubes are flattened by the map
\[ \psi_{\mathcal D(B)}\colon |\Refine^k(K)-B|  \to |\Refine^k(K) - \bigcup_{A\in\mathcal A(B)} A|.\]

Since elements in $\mathcal A(B)$ are mutually disjoint, distinct elements  have disjoint wedges  by Corollary \ref{cor:Wedge-intersections}. Star indentations in $\mathcal A(B)$ 
may be flattened independently.
Let $A=\Refine^{k-j}(S_A)\in \mathcal A(B)$ and $S_A \subset \Refine^{j}(K)$. Then  $|\partial A \cap \Refine^k(\partial K)|=|S_A \cap \Refine^j(\partial K)|$ is the union of two adjacent $(n-1)$-cubes, hence an $(n-1)$-cell. 
Thus $\partial A -  \Refine^k(\partial K)|$ is also an $(n-1)$-cell.

Since the number of $n$-cubes in  $S_A,  A\in \mathcal A(B)$ is at most $\mu(K)$,  there exists a piecewise linear $L(n,\mu(K))$-bilipschitz homeomorphism 
\[
\psi_{\mathcal A(B)} \colon |\Refine^k(K)-\bigcup_{A\in \mathcal A(B)} \, A |\to |K|,
\]
which is the identity in the complement of  $\Wedge_{|K|}(\bigcup_{A\in \mathcal A(B)}\, A)$.

The claim follows by setting $ \phi_B = \psi_{\mathcal A(B)} \circ \psi_{\mathcal D(B)}$.
\end{proof}


\section{Channeling by reservoirs and canals}
\label{sec:reservoir-canal-system} 

In this section, we develop the technical part of the proof for the Evolution Theorem  \ref{theorem:evolution-short} -- channeling by reservoirs and canals. 

Suppose that - for concreteness - we are given a separating complex $Z$ in a cubical $n$-complex $K$ having $m$ boundary components, and a refinement scale $\nu$. Our  task is to perturb  $\Refine^\nu(Z)$ in $\Refine^\nu(K)$ into a new $(n-1)$-complex $\widetilde Z$ which has the relative Wada property with respect to $Z$ and has controlled geometry.

Perturbation is made in two steps. In the first step, we build a reservoir-canal system $\sfRC(Z)$ along $Z$ which has $m$ subsystems $\sfRC(Z)_1,\ldots,\sfRC(Z)_m,$ and transform $Z$ into a new $(n-1)$-subcomplex $\Tr(Z)\subset \Refine^\nu(Z)\cup \sfRC(Z)$, which satisfies a pre-Wada property with respect to $Z$.  The complex $\Tr(Z)$ is not yet a separating complex, 
since
the cut-graph $\Gammacut(\Refine^\nu(K); \Tr(Z))$ has $m$ extra connected components, each associated to a spanning tree of $\sfRC(Z)_i$. In the second step, we fix this issue by removing some $(n-1)$-cubes from $\Tr(Z)$ to create connections among some of the components of $\Gammacut(\Refine^\nu(K);\Tr(Z))$. We call this process \emph{channeling}, and the new complex $\widetilde Z=\Channel(Z)$ a channeling of $Z$.

Heuristically, the regions in $|K|$ separated by a separating complex $Z$ may be viewed as lakes filled with water of different colors. Reservoir-canal systems are merely intermediate structures and they are partitioned into artificial lakes after transformation and filled with water after channeling. In particular, the lakes of $\widetilde Z = \Channel(Z)$ have the same colors as lakes of $Z$.

\begin{remark*}
We emphasize that, although $Z$ is a subcomplex of $K$, perturbation of $Z$ is performed in the refinement $\Refine^\nu(K)$ of $K$. Therefore, it is understood that transformation $\Tr(Z)$, channeling $\Channel(Z)$, as well as the reservoir-canal systems $\sfRC(Y)$ are subcomplexes of $\Refine^\nu(K)$. 
\end{remark*}

This section is divided into two parts. In the first part (Sections \ref{sec:transformation}, \ref{sec:building-blocks}, and 
\ref{sec:reservoir-canal-transformation}),
we construct reservoir-canal systems and discuss the transformation. In the second part (Section \ref{sec:single-channeling}), we discuss the channeling. 

After the initial perturbation of $Z$, from the second iterative step onward, channeling  will be performed locally on subcomplexes of separating complexes. For this reason,  the constructions in this section are made on pairs $(U,Y)\subset (K,Z)$ in which the topological relations between $Y$ and $U$ are more complex than that between $Z$ and $K$. In particular, $Y$ need not be a separating complex in $U$, and $Y$ need not even be contained in the interior of $U$.

\subsection{Goal and Standing assumptions}
\label{sec:transformation}

We state  in Proposition \ref{prop:tr-existence-short} the goal of transformation and later, in 
Proposition \ref{prop:summary-Channeling},  summarize the combined process of transformation and channeling.
Throughout this section, we make the following assumptions on complex $K$.

\begin{standing}\label{standing:Z}$\,$ \index{Standing assumptions on $(K;Z)$}
\begin{itemize}
\item $K$ is a good cubical $n$-complex 
with $m (\geq 2)$ boundary components, $\Sigma_1,\ldots, \Sigma_m$, and admitting a separating complex.
\item $\mu=\mu(K)$ is the local multiplicity of $K$, and 
$\nu = \nu(K)\ge 1$ is the refinement scaleof $K$, that is, 
an integer for which  $3^\nu \ge 3^{10}\mu^2 m$.
\end{itemize}
\end{standing}

We now define complex pairs  $(U,Y)$  which are admissible for transformation.

\begin{definition}
\label{def:admissible}
Suppose that $k\geq 0$ and $Z_k$ is a separating complex in $\Refine^{k \nu}(K)$. An adjacently connected $(n-1)$-subcomplex $Y$ of $Z_k$ is said to be  \emph{admissible, in $(\Refine^{k \nu}(K), Z_k)$},  if there exists 
an adjacently connected $n$-subcomplex $U$ of $\Refine^{k \nu}(K)$ for which 
\begin{enumerate}
\item $Y \subset U$, 
\item each $(n-1)$-cube in $Y$ is a face of an $n$-cube in $U$, 
\item each $n$-cube in $U$ has a face in $Y$, 
\item each connected component of the cut-graph $\Gammacut(U;Y)$  is a subgraph of some $\Gamma(\Comp_{\Refine^{k \nu}(K)}(Z_k;\Sigma_i))$, and  \label{item:subgraph}
\item for each $i=1,\ldots, m$, subcomplex $U$ has at least one $n$-cube in $\Comp_{\Refine^{k \nu}(K)}(Z_k;\Sigma_i)$. \label{item:at-least}
\end{enumerate}
We call $(U,Y)$ an \emph{admissible pair.} 
\end{definition}

\begin{definition}\label{def:localization}
An essentially disjoint partition $\cL$ of $Z_k$ is said to be a \emph{localization of $Z_k$} if each element of $\cL$ is admissible in $(\Refine^{k \nu}(K), Z_k)$.
\end{definition}
Trivially,  $(K,Z)$ is an admissible pair in $(K, Z)$ and $\cL_0=\{Z\}$ is a localization of $Z$.

By conditions \eqref{item:subgraph} and \eqref{item:at-least} in Definition \ref{def:admissible}, $\Gammacut(U;Y)$ has at least $m$ connected components. We follow the convention below in labeling these components. 

\begin{convention}\label{convention:labeling}
Suppose that pair $(U, Y)$ is admissible  in $(\Refine^{k \nu}(K), Z_k)$. We label the connected components $G_1,\ldots, G_r$ of the cut-graph $\Gammacut(U;Y)$, where  $r=r(U,Y) \geq m$, so that, for $i\in \{1,\ldots, m\}$, $\Span_U (G_i)$ is contained in $\Comp_{\Refine^{k \nu}(K)}(Z_k;\Sigma_i)$.  
  \end{convention}

We now state the main result of the first part of this section.
\begin{proposition}\label{prop:tr-existence-short}
Assume the Standing assumptions \ref{standing:Z} and let $Z_k$ be a separating complex in $\Refine^{k \nu}(K)$. Suppose that  
$(U,Y)$ is an admissible pair in $(\Refine^{k \nu}(K), Z_k)$ and components $G_1, \ldots, G_r$  of $\Gammacut(U;Y)$ are labeled according to Convention \ref{convention:labeling}.
Then, there exists an  adjacently connected $(n-1)$-subcomplex 
\[ \Tr(Y)\subset \Refine^\nu(U)\]
 for which
\begin{enumerate}
\item the cut-graph $\Gammacut(\Refine^\nu(U);\Tr(Y))$ has $r + m$ connected components,  
\[G'_1, \ldots, G'_r, \tau_1, \ldots, \tau_m,\]
  where $G'_i$ is a subgraph of $\Gamma(\Refine^\nu(\Span_U(G_i)))$ for $i\in \{1,\ldots, r\}$,  and $\tau_i$ is a tree of size at most $\lambda=\lambda(\#(Y^{[n-1]}), \mu, \nu)$ for $i\in\{1,\ldots, m\}$,  and 
 \label{item:rplusm}
\item $ \Tr(Y)$ satisfies a \emph{pre-Wada property}:  for each $q\in Y^{[n-1]}$ and each $i\in \{1,\ldots, m\}$, there is an $n$-cube in $\Span_{\Refine^\nu(U)}(\tau_i)$ having a face in $q$. \label{item:pre-Wada}
\end{enumerate}
\end{proposition}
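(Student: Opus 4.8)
The plan is to construct $\Tr(Y)$ explicitly by building a \emph{reservoir-canal system} $\sfRC(Y) \subset \Refine^\nu(U)$ along $Y$, partitioned into $m$ subsystems $\sfRC(Y)_1,\ldots,\sfRC(Y)_m$, one destined for each boundary component, and then surgically modifying $\Refine^\nu(Y)$ to route these subsystems through the slab of $n$-cubes straddling each $(n-1)$-cube $q \in Y^{[n-1]}$. First I would fix the scale: since $3^\nu \ge 3^{10}\mu^2 m$, each $n$-cube $Q$ of $U$ refines into $3^{n\nu}$ subcubes, giving more than enough room to run $m$ disjoint ``canals'' (thin tubes of subcubes) and attach ``reservoirs'' (small cubical blocks) near $Y$ without collisions; the bound $\lambda = \lambda(\#Y^{[n-1]}, \mu, \nu)$ records exactly how many subcubes each reservoir-tree can accumulate, and it is finite because $U$ is finite and the geometry is uniformly controlled by $\mu$ and $\nu$. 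For each $(n-1)$-cube $q \in Y^{[n-1]}$, with $Q, Q^*$ the two $n$-cubes of $U$ sharing $q$ (property (2) and the regularity of $U$), I would carve, inside $\Refine^\nu(Q \cup Q^*)$, a small block $B_{q,i}$ of subcubes adjacent to $q$ for each $i \in \{1,\ldots,m\}$, arranged so the $B_{q,i}$ are pairwise essentially disjoint and each has a face lying in $q$ — this is where the scale bound $3^\nu \ge 3^{10}\mu^2 m$ is consumed. These blocks are the ``reservoirs''; the ``canals'' are disjoint tubes of subcubes connecting $B_{q,i}$ to a canal network associated with component $G_i$, chosen to stay inside $\Refine^\nu(\Span_U(G_i))$ so as not to disturb the coarse combinatorics.

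The complex $\Tr(Y)$ is then defined as $\Refine^\nu(Y)$ with the appropriate $(n-1)$-faces removed and inserted so that each reservoir $B_{q,i}$ is severed from the side of $G_i$ it naturally sits on and instead becomes attached, through its canal, to the rest of $\sfRC(Y)_i$; equivalently, one adds to $\Refine^\nu(Y)$ the ``outer walls'' of the canals and reservoirs and deletes the single ``inner window'' that would otherwise leave each $B_{q,i}$ in $\Comp(Z_k;\Sigma_i)$. After this surgery, the cut-graph $\Gammacut(\Refine^\nu(U);\Tr(Y))$ decomposes as follows: each original component $G_i$ ($i \le r$) survives, possibly shrunk, as a subgraph $G'_i$ of $\Gamma(\Refine^\nu(\Span_U(G_i)))$ — here property \eqref{item:subgraph} of admissibility is exactly what guarantees the canals of distinct subsystems do not accidentally merge $G_i$ with $G_j$ — and each subsystem $\sfRC(Y)_i$ forms one new connected component, which by construction is a \emph{tree} $\tau_i$ (the reservoirs hang off a common canal spine, no cycles) with at most $\lambda$ vertices. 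This establishes \eqref{item:rplusm}. For \eqref{item:pre-Wada}, the pre-Wada property is built in by design: for each $q \in Y^{[n-1]}$ and each $i$, the reservoir $B_{q,i}$ contributes at least one $n$-cube lying in $\Refine^\nu(Q \cup Q^*)$ with a face in $q$, and $B_{q,i} \subset \Span_{\Refine^\nu(U)}(\tau_i)$.

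The main obstacle I anticipate is the \emph{disjointness bookkeeping}: one must route $m$ families of canals — one family per boundary component, with one canal per $(n-1)$-cube of $Y$ — through the refined complex so that (a) canals of different subsystems never touch (else two $\tau_i$ merge), (b) no canal touches a reservoir of another subsystem, (c) each $\sfRC(Y)_i$ stays adjacently connected and acyclic, and (d) the canals stay within $\Refine^\nu(\Span_U(G_i))$ so the coarse components $G'_i$ are not corrupted. Doing this requires a careful inductive layout of canals along a spanning structure of $Y$, orienting each canal to travel within a fixed ``corridor'' of subcubes reserved by the $3^{10}\mu^2 m$ margin, and verifying at each junction (in particular near $(n-2)$-cubes of $Y$, where up to $\mu$ faces meet) that the reserved corridors of the $m$ subsystems remain separated. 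I would organize this as a sequence of lemmas — first the local model inside a single refined $n$-cube, then propagation across a shared face, then the global assembly along $Y$ — mirroring the structure of the Indentation-flattening proof; the verification that $\tau_i$ has size $\le \lambda$ then reduces to counting subcubes in one reservoir (bounded in terms of $\mu,\nu$) times the number of $(n-1)$-cubes in $Y$.
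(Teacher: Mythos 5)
Your overall strategy---building a reservoir-canal system $\sfRC(Y)\subset\Refine^\nu(U)$ near $Y$, partitioned into $m$ subsystems, and defining $\Tr(Y)$ by including the $(n-1)$-skeleton of the system in the wall while removing a tree's worth of passages---is precisely the paper's approach (Sections~\ref{sec:building-blocks}--\ref{sec:reservoir-canal-transformation}), and your derivation of the pre-Wada property from reservoirs with a face in each $q$ is correct in spirit.

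There is, however, a concrete gap in the canal routing. You propose that the canal network for subsystem $i$ be ``chosen to stay inside $\Refine^\nu(\Span_U(G_i))$.'' This cannot work: for each $q\in Y^{[n-1]}$ the two $n$-cubes $Q,Q^*$ of $U$ sharing $q$ sit in at most two components of $\Gammacut(U;Y)$, say $G_a,G_b$, so for every index $i\notin\{a,b\}$ the reservoir $B_{q,i}\subset\Refine^\nu(Q\cup Q^*)$ has no $n$-cube in $\Refine^\nu(\Span_U(G_i))$ at all. A canal confined to $\Refine^\nu(\Span_U(G_i))$ therefore cannot reach $B_{q,i}$, and the putative cut-graph component $\tau_i$ would be disconnected rather than a tree. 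This is a topological obstruction to the layout as written, not a bookkeeping inconvenience. The paper's construction resolves it by a preference function $\rho\colon Y^{[n-1]}\to U^{[n]}$ that picks one side $\rho(q)$ of every $q$ and then places all $m$ pre-reservoir-blocks for $q$ as a \emph{nested} family inside the single cube $\Refine^\nu(\rho(q))$; the canal sections for all indices then run through the same preference cubes, and transitions across $(n-2)$-faces of $Y$ are handled by short connectors inside the stars of designated markers (Sections~\ref{sec:markers}--\ref{sec:connector}). In that scheme the index is carried by nesting depth, not by which coarse component $G_j$ hosts the canal, and the disjointness constraints you rightly identify as the main obstacle become uniformly local and are discharged by the scale bound $3^\nu\ge 3^{10}\mu^2 m$. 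Adopting nested reservoirs on a single preferred side of each $q$, together with explicit markers for the turns at $(n-2)$-cubes, would repair the plan; as currently stated it fails to yield connected $\tau_i$.

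Two smaller points: the paper does not assume $\Gamma(\sfRC(Y)_i)$ is itself a tree---it chooses a spanning tree $\tau_i$ of that graph and removes only the corresponding passages---so your parenthetical ``no cycles'' needs either justification or replacement by the spanning-tree device; and your appeal to admissibility condition~\eqref{item:subgraph} to keep the $G'_i$ separate is off target, since the receded components $\Urec_j=\Refine^\nu(U_j)-\sfRC(Y)$ stay adjacently connected and pairwise disjoint by a local cube-by-cube argument (Lemma~\ref{lemma:receded-connected} and Corollary~\ref{cor:receded-connected}), not by that clause of admissibility.
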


Complex $\Tr(Y)$ is called a \emph{transformation of $Y$}. The proof of the proposition is completed at the end of  Section \ref{sec:receded-complex}.

\subsubsection{Preference functions}
Before building the reservoir-canal systems for the transformation, we introduce the notion of preference functions which are used to guide the placement of reservoirs and canals.

\begin{definition}
\label{def:preference}
Let $U$ be an $n$-cubical complex with a good interior and $Y\subset U$ an $(n-1)$-subcomplex.
A function $\rho \colon Y^{[n-1]}\to U^{[n]}$ is called a \emph{preference function on $Y$} if, for each $q\in Y^{[n-1]}$,  $\rho(q)$ is an $n$-cube having $q$ as a face. \index{preference function}
\end{definition}

We do not assume the preference function $\rho$ to be injective, nor assume cubes $\rho(q)$ and $\rho(q')$ to be adjacent even for adjacent $q$ and $q'$.

For an admissible pair $(U,Y)$ in 
$(\Refine^{k \nu}(K), Z_k)$, a preference function $\rho_Y \colon Y^{[n-1]} \to U^{[n]}$ is said to be \emph{admissible} if $\rho_Y (Y^{[n-1]})$ contains at least one $n$-cube in each $\Span_U (G_i)$, hence in $\Comp_{\Refine^{k \nu}(K)}(Z_k;\Sigma_i)$, for $i\in \{1,\ldots, m\}$. 
Note that an admissible pair always admits an admissible preference function.

\subsection{Reservoir and canal systems}\label{sec:building-blocks}

We now construct reservoir-canal systems for the proof of Proposition \ref{prop:tr-existence-short}. An illustration of the construction is given in Figures 
\ref{fig:Reservoir_Canal_Single} and \ref{fig:Reservoir_Canal_System}.\footnote{Figures \ref{fig:Reservoir_Canal_Single} and \ref{fig:Reservoir_Canal_System} are drawn with the assistance of Julie Kaufman.}

  To do this, we retain all assumptions in the proposition, and fix an admissible preference function $\rho_Y\colon Y^{[n-1]}\to U^{[n]}$ and 
a partially ordered spanning tree $\cT_Y$ in $\Gamma(Y)$. 

Using data $(U,Y,\rho_Y, \cT_Y)$, we create a reservoir over each $q\in Y^{[n-1]}$ and then connect them with canals.
The placement of the reservoirs is determined by the preference function $\rho_Y$, and the flow of canals is determined by the spanning tree $\cT_Y$. For brevity, we write $\rho=\rho_Y$ and $\cT=\cT_Y$. 

We begin with the constructions of pre-reservoir-blocks, markers, canal sections, and connectors in Sections \ref{sec:pre-reservoir-block} $\sim$ \ref{sec:connector}. 

\subsubsection{Pre-reservoir-blocks}
\label{sec:pre-reservoir-block}

We  fix a family of model reservoirs $\mathbf{R}_1, \ldots, \mathbf{R}_m$ in the unit $n$-cube $[0,1]^n$. For each $i=1,\ldots, m$, let
\[
D_i= \left[\frac{4}{9}+\frac{i-1}{3^\nu}, \frac{5}{9}-\frac{i-1}{3^\nu}\right]^{n-1} \times \left[0,\frac{1}{9}-\frac{i-1}{3^\nu}\right] \subset [0,1]^n,
\]
and let $D_{m+1}= \emptyset$.   We take $\mathbf R_i\subset \Refine^\nu([0,1]^n)$ to be the subcomplex, whose space is $\cl(D_i\setminus D_{i+1})$, that is,  
\[
\mathbf R_i = \Span_K(\{Q\in \Refine^\nu(U)^{[n]}\colon Q\subset \cl(D_i\setminus D_{i-1})\}).
\]
Observe that each $|\mathbf R_i|$ is an $n$-cell, and $\Gamma(\mathbf R_i)$ is connected. We call the cubical complexes $\mathbf R_i$ \emph{pre-reservoir-blocks}.

To construct pre-reservoir-blocks on $Y$ with respect to a preference function $\rho$, let $q\in  Y^{[n-1]}$ and  $Q = \rho(q)$. After applying an isometry of $[0,1]^n$ if needed, we may assume that the map $\phi_Q \colon Q\to [0,1]^n$, 
associated to $Q$ in the cubical structure, has the property that $\phi_Q(q) = [0,1]^{n-1}\times \{0\}$. Since $\Refine^\nu(Q) = \phi_Q^*(\Refine^\nu([0,1]^n)$, subcomplexes 
\[
\sfR_{q;i} = \phi_Q^*(\mathbf R_i) \subset \Refine^\nu(Q), \qquad i=1,\ldots, m,
\]
are well-defined. We call $\sfR_{q,1},\ldots, \sfR_{q,m}$ \emph{pre-reservoir-blocks adjacent to $q$}.   The name stems from the fact that \index{reservoir-canal-system!pre-reservoir block}\index{$\sfR_{q;i}$}
\[
\sfR_{q;i} \cap \Refine^\nu(q) = \phi_Q^*(\sfR_i \cap \Refine^\nu([0,1]^{n-1}\times \{0\}))
\]
is an adjacently-connected cubical complex contained in $|q|$. We call the union
\[
\sfR_q =\bigcup_{i=1}^m \sfR_{q;i}
\]
a \emph{pre-reservoir-cube over $q$}, and observe that $\sfR_q$ is a refinement of a cube in $\Refine^2(Q)$.
\index{$\sfR_q$}

\subsubsection{Markers}
\label{sec:markers}\index{reservoir-canal system!markers}

Fix first a family of model markers in the unit cube $[0,1]^{n-2}$. Since $3^\nu \ge 3^{10}\mu^2 m$, there exists a family 
\[
\mathbf E=\{\zeta_1,\ldots, \zeta_m; \, \zeta_{m+1},\ldots,\zeta_{2m};\, \ldots; \,\zeta_{(\mu^2-1)m+1}\ldots, \zeta_{\mu^2 m}\}
\]
of mutually disjoint $(n-2)$-cubes contained in $\Refine^{\nu-2}\left([\frac{4}{9},\frac{5}{9}]^{n-2}\times \{0\}\times \{0\}\right)$. 
The elements in $\mathbf E$ are called the \emph{model markers}.

Note that $[\frac{4}{9},\frac{5}{9}]^{n-2}\in \Refine^2([0,1]^{n-2})$, hence 
\[
\Refine^{\nu-2}\left(\left[\frac{4}{9},\frac{5}{9}\right]^{n-2}\times \{0\}\times \{0\}\right) \subset \Refine^\nu([0,1]^{n-2}\times \{0\}\times \{0\})).
\]

We use 
cubical isomorphisms $\phi_Q \colon Q\to [0,1]^n$ in the cubical structure of $U$ to define the markers on $Y^{[n-2]}$. To each $(n-2)$-cube $e  \in Y^{[n-2]}$, we fix an $n$-cube $Q\in U^{[n]}$ which contains $e$. Let $\phi_Q \colon Q\to [0,1]^n$ be the map associated to $Q$ in the cubical $n$-complex $U$.
After applying an isometry of $[0,1]^n$ if needed we may assume that $\phi_Q(e) = [0,1]^{n-2}\times \{0\}\times \{0\}$. We call 
\[
\zeta_{e;i}=  \phi_Q^*(\zeta_i), \qquad i=1,\ldots, \mu^2 m,
\]
\emph{markers on $e$}, and set 
\[
\sfE(e)= \{\zeta_{e,1},\ldots, \zeta_{e, m}; \, \zeta_{e, m+1},\ldots,\zeta_{e, 2m};\, \ldots; \,\zeta_{e,(\mu^2-1)m+1}\ldots, \zeta_{e,\mu^2 m}\}.
\]
Note that markers in $\sfE(e)$ are contained in $\Refine^{\nu-2}(c^2(e)) \subset \Refine^\nu(e)$, where $c^2(e) \in \Refine^2(e)$ is the center cube of the center cube $c(e)$ of $e$. We emphasize that, despite the notation, the markers of $U$ are not elements of $U$ but elements of $\Refine^\nu(U)$.

Since the model markers are mutually disjoint $(n-2)$-cubes in the refinement $\Refine^\nu([0,1]^{n-2}\times \{0\}\times \{0\})$,  stars of markers in $\Refine^\nu(U)$ are mutually disjoint; recall the notion of star from Definition \ref{def:star}.

\begin{lemma}
Let $\zeta$ and $\zeta'$ be markers in $U$, $\zeta\ne \zeta'$. Then 
\[
\Star_{\Refine^\nu(U)}(\zeta) \cap \Star_{\Refine^\nu(U)}(\zeta') = \emptyset.
\]
\end{lemma}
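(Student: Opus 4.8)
The plan is to trace through the definitions of markers and their stars, reducing the disjointness claim to the corresponding statement for the model markers inside a single $n$-cube, and then to a short Euclidean computation.

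First I would set up notation following Section~\ref{sec:markers}. Given markers $\zeta = \zeta_{e;i}$ and $\zeta' = \zeta_{e';i'}$ with $(e,i)\neq(e',i')$, recall that $\zeta_{e;i} = \phi_Q^*(\zeta_i)$ for a chosen $n$-cube $Q\in U^{[n]}$ containing $e$, and similarly $\zeta' = \phi_{Q'}^*(\zeta_{i'})$ for a chosen $Q'$ containing $e'$. By the remark at the end of Section~\ref{sec:markers}, the markers on $e$ lie in $\Refine^{\nu-2}(c^2(e)) \subset \Refine^\nu(e)$, where $c^2(e)$ is the center cube of the center cube $c(e)$ of $e$; in particular each marker, together with its star in $\Refine^\nu(U)$, is contained in $\Star_{\Refine^\nu(U)}(\Refine^\nu(e))$, and in fact in the refinement of $\Star_U(e)$.

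I would then distinguish two cases. \textbf{Case 1: $e \neq e'$.} Here I would argue that the stars $\Star_{\Refine^\nu(U)}(\zeta)$ and $\Star_{\Refine^\nu(U)}(\zeta')$ are separated because $\zeta$ is deep inside $c^2(e)$ and $\zeta'$ is deep inside $c^2(e')$, so any $n$-cube of $\Refine^\nu(U)$ meeting $\zeta$ lies within distance $3^{-\nu}$ of $c^2(e)$, hence within the refinement of the star of $e$ but at distance at least roughly $3^{-2}$ (in the $d_U$ metric, coming from the center-of-center placement) from $|\partial c(e)| \supset$ any face of $e$ shared with a neighboring $(n-2)$-cube. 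Since $3^\nu$ is large, $3^{-\nu}$ is far smaller than this gap, so stars of markers attached to distinct $(n-2)$-cubes cannot meet. More carefully, if $Q''\in\Refine^\nu(U)^{[n]}$ meets both stars, then $Q''$ meets $\Refine^{\nu-2}(c^2(e))$ and $\Refine^{\nu-2}(c^2(e'))$ up to one extra layer of $\Refine^\nu(U)$-cubes on each side; one checks these two neighborhoods are disjoint inside $[0,1]^n$ (passing through $\phi_Q$, resp.\ $\phi_{Q'}$, and using compatibility of the cubical structures on $Q\cap Q'$) since $c^2(e)$ sits at distance $\geq \frac{1}{9} - 3^{-\nu}$ from $|\partial e|$ in each $n$-cube containing $e$, and $3^{-\nu}\ll \frac1{27}$. \textbf{Case 2: $e = e'$ but $i\neq i'$.} Then both $\zeta$ and $\zeta'$ arise from the \emph{same} $n$-cube $Q$ via $\phi_Q^*$, so it suffices to show $\Star(\zeta_i)\cap\Star(\zeta_{i'})=\emptyset$ in $\Refine^\nu([0,1]^n)$; this is exactly the statement that the model markers $\mathbf{E}$, being mutually disjoint $(n-2)$-cubes in $\Refine^{\nu-2}([\tfrac49,\tfrac59]^{n-2}\times\{0\}\times\{0\})$, have enough spacing that their stars (one extra layer of $\Refine^\nu$-cubes) stay disjoint — which one can arrange simply by choosing the $\zeta_j$ at mutual distance $\geq 2\cdot 3^{-\nu}$ when constructing the family $\mathbf{E}$, and noting $|\mathbf{E}| = \mu^2 m \leq 3^\nu/3^{10}$ makes this possible inside the $(n-2)$-cube of side length $\tfrac19 \geq 3\cdot 3^{-\nu}$.

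The main obstacle I anticipate is the bookkeeping across the two different chosen $n$-cubes $Q$ and $Q'$ in Case~1: one must check that the union of an $n$-cube of $\Refine^\nu(U)$ with its face-neighbors, when it touches $\Refine^{\nu-2}(c^2(e))$, is genuinely confined to a neighborhood that does not reach any face of $e$ — and this has to be phrased intrinsically, using that any $n$-cube $Q''$ of $U$ containing $e$ sees $e$ as $[0,1]^{n-2}\times\{0\}\times\{0\}$ under $\phi_{Q''}$, so $c^2(e)$ maps to a fixed sub-cube of $[\tfrac49,\tfrac59]^{n-2}\times[0,\tfrac19]\times[0,\tfrac19]$ (up to the ambiguity of which "side" of $e$ one is on, resolved by the compatibility isometries in Definition~\ref{def:cubical-complex}). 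Once this localization is in place, the disjointness is a direct distance estimate using $3^\nu \ge 3^{10}\mu^2 m \ge 3^{10}$. I would write the argument by first proving Case~2 (the cleaner model-marker statement) and then reducing Case~1 to it by observing that distinct $(n-2)$-cubes $e,e'$ of $Y$ either are disjoint or meet in a common face of dimension $\le n-3$, and in either case $c^2(e)$ and $c^2(e')$ — being central sub-cubes — are separated by a definite gap in every $n$-cube of $U$ containing them.
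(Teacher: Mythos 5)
Your proposal is correct and supplies exactly the details the paper leaves implicit: the paper states this lemma without a formal proof, relying on the preceding one-sentence remark that the model markers are mutually disjoint $(n-2)$-cubes confined to $\Refine^{\nu-2}(c^2(e))$, and your two-case reduction (same $e$ versus distinct $e$) together with the distance estimate is precisely the argument intended. One small arithmetic slip: $c^2(e)$ sits at distance $\tfrac{4}{9}$, not $\tfrac{1}{9}$, from $|\partial e|$, but this only makes the margin larger and does not affect the conclusion.
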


For each $e \in U^{[n-2]}$, we distribute the markers on $\sfE(e)$ to the pairs $\{q,q'\}$ of $(n-1)$-cubes in $\Star_U(e)^{[n-1]}$ for which $q\cap q'=e$. To do this, we fix for each $e\in U^{[n-2]}$, an injective marking function  
\[
\kappa_e \colon \{ \{q,q'\} \colon q, q' \in \Star_U(e)^{[n-1]}, q\cap q'=e \} \to \{1,\ldots, \mu^2\},
\]
and denote the $m$ markers associated to the pair $\{q,q'\}$ by 
\[
\zeta_{e,\{q,q'\};i} = \zeta_{e,(\kappa_e(\{q,q'\})-1)m+i}\in \sfE(e), \qquad  i=1,\ldots, m.
\]

The reason for the elaborate placement of markers will be explained in Remark \ref{rmk:markers}.

\subsubsection{Canal sections}
\label{sec:canal-section}

We define canal sections connecting pre-reservoir-blocks to markers guided by a spanning tree $\cT$ of $\Gamma(Y)$; see Figure \ref{fig:Reservoir_Canal_Single}

Let $q\in Y^{[n-1]}$. For each edge $\{q,q'\}$ in $\cT$ and  $i\in \{1,\ldots, m\}$, take $\sfC_{q,\{q,q'\};i}$ to be the unique adjacently connected $n$-subcomplex of $\Refine^\nu(\rho(q))$ consisting of minimal number of $n$-cubes  for which \index{reservoir-canal system!canal section} \index{$\sfC_{q,\{q,q'\};i}$}
\begin{enumerate}
\item $\sfC_{q,\{q,q'\};i} \cap \sfR_{q;i}$ is an $(n-1)$-cube, and 
\item $\sfC_{q,\{q,q'\};i}\cap \Star_{\Refine^\nu(U)}(\zeta_{q\cap q', \{q,q'\};i})$ is an $(n-1)$-cube. \label{item:rcs-2}
\end{enumerate}

\begin{remark}
The minimality of $\sfC_{q,\{q,q'\};i}$ has two immediate implications. First, the space $|\sfC_{q,\{q,q'\};i}|$ is isometric to $[0,s]^{[n-1]}\times [0,s\, \#(\sfC_{q,\{q,q'\};i}^{[n]})]$, where $s$ is the side length of $\zeta_{q\cap q', \{q,q'\};i}$. In particular, complexes $\sfC_{q,\{q,q'\};i}$ and $\sfC_{q',\{q,q'\};j}$ do not meet unless $q=q'$ and $i=j$.

Second, each $n$-cube in $\sfC_{q,\{q,q'\};i}$ has a face in $q$. Indeed, by \eqref{item:rcs-2}, $\sfC_{q,\{q,q'\};i}$ meets $\Star_{\Refine^\nu(U)}(\zeta_{q\cap q', \{q,q'\};i})$ in a face and, by minimality, this face meets $q$. 
\end{remark}

We call $\sfC_{q,\{q,q'\}, i}$ \emph{the canal section connecting the pre-reservoir-block $\sfR_{q;i}$ to the star of marker $\zeta_{q\cap q', \{q,q'\};i}$}. 
We denote 
\[
\sfC_{q;i} = \bigcup_{\{q,q'\}\in \cT} \sfC_{q,\{q,q'\};i}.
\]
Clearly, $\sfC_{q;i} \subset \Refine^\nu(\rho(q)) \subset \Star_{\Refine^\nu(U)}(\Refine^\nu(q))$.

The following two properties follow immediately from the construction, in particular, the nested nature of the pre-reservoir-blocks.

\begin{corollary}
\label{cor:canal-reservoir-intersection}
Let $q\in Y^{[n-1]}$, $\{q,q'\} \in \cT$, and $i,i'\in \{1,\ldots,m\}$. Then $\sfC_{q,\{q,q'\};i}$ intersects $\sfR_{q,i'}$ (in an $n$-cube) if and only if  $1\leq i'< i\leq m$.
\end{corollary}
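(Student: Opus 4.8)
The plan is to carry out the whole argument inside the single refined cube $\Refine^\nu(\rho(q))$, which I identify with $\Refine^\nu([0,1]^n)$ via $\phi_{\rho(q)}$ so that $\phi_{\rho(q)}(q)=[0,1]^{n-1}\times\{0\}$ and $\sfR_{q;i}=\phi_{\rho(q)}^*(\mathbf{R}_i)$. The structural feature to exploit is that the boxes $D_1\supset D_2\supset\cdots\supset D_m\supset D_{m+1}=\emptyset$ are nested, have all their faces on the grid $3^{-\nu}\Z^n$, and $D_{j+1}$ is obtained from $D_j$ by sliding each face other than $\{x_n=0\}$ inward by exactly one edge length $3^{-\nu}$. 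Consequently an $n$-cube $Q''$ of $\Refine^\nu(\rho(q))$ lies in $\sfR_{q;j}$ precisely when $Q''\subset D_j$ and $Q''\not\subset D_{j+1}$; I would record this by the \emph{depth} $\operatorname{depth}(Q'')=\max\{\,j:Q''\subset D_j\,\}\in\{0,1,\dots,m\}$, so that $\operatorname{depth}(Q'')=j\iff Q''\in\sfR_{q;j}$ for $1\le j\le m$ and $\operatorname{depth}(Q'')=0\iff Q''\notin\sfR_q$.

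First I would record two elementary facts about the depth. (a) Adjacent $n$-cubes $Q''$, $Q'''$ satisfy $|\operatorname{depth}(Q'')-\operatorname{depth}(Q''')|\le 1$: writing $b$ for the larger of the two depths, say $b=\operatorname{depth}(Q''')\ge 2$, the cube $Q''$ lies in the closed $3^{-\nu}$-neighbourhood of $D_b$ taken inside $[0,1]^n$, and that neighbourhood is contained in $D_{b-1}$ since each inequality defining $D_b$ relaxes by $3^{-\nu}$ while the constraint $x_n\ge 0$ is retained; the case $b\le 1$ is trivial. (b) Any $n$-cube of $\Refine^\nu(\rho(q))$ that shares a face with an $n$-cube of $\Star_{\Refine^\nu(U)}(\zeta)$, where $\zeta=\zeta_{q\cap q',\{q,q'\};i}$, has depth $0$: the edge $e=q\cap q'$ is a proper $(n-2)$-face of $q$, so in the chosen coordinates it lies in a coordinate hyperplane $\{x_{j_0}=\epsilon_0\}$ with $j_0\le n-1$ and $\epsilon_0\in\{0,1\}$; since $\zeta\subset\Refine^{\nu-2}(c^2(e))$ and $c^2(e)$ retains the equation $x_{j_0}=\epsilon_0$, the whole of $\zeta$ has $x_{j_0}=\epsilon_0$, whereas $D_1$ forces $x_{j_0}\in[\frac{4}{9},\frac{5}{9}]$, and as $3^{-\nu}\le 3^{-10}$ no $n$-cube within sup-distance $2\cdot 3^{-\nu}$ of $\zeta$ can meet $D_1$.

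With this in place the argument is short, and uses only that $\sfC:=\sfC_{q,\{q,q'\};i}$ is adjacently connected and meets each of $\sfR_{q;i}$ and $\Star_{\Refine^\nu(U)}(\zeta)$ in a single $(n-1)$-cube — in particular $\sfC$ shares no $n$-cube with either, a common $n$-cube being impossible in a one-cube intersection. The face $\sfC\cap\sfR_{q;i}$ is a common face of an $n$-cube $Q_{\mathrm{in}}\in\sfC$ and an $n$-cube of $\sfR_{q;i}$, which are distinct, hence adjacent; so by (a), $\operatorname{depth}(Q_{\mathrm{in}})\in\{i-1,i+1\}$ (it is $\ne i$ since $Q_{\mathrm{in}}\notin\sfR_{q;i}$). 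Similarly there is $Q_{\mathrm{out}}\in\sfC$ sharing a face with an $n$-cube of $\Star_{\Refine^\nu(U)}(\zeta)$, whence $\operatorname{depth}(Q_{\mathrm{out}})=0$ by (b). Fix an adjacency path inside $\sfC$ from $Q_{\mathrm{in}}$ to $Q_{\mathrm{out}}$; by (a) the depth changes by at most $1$ per step, so it realises every integer between its endpoint values. If $\operatorname{depth}(Q_{\mathrm{in}})=i+1$, the path would then realise the value $i$, producing an $n$-cube of $\sfC$ in $\sfR_{q;i}$ — a contradiction; hence $\operatorname{depth}(Q_{\mathrm{in}})=i-1$. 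Running the same intermediate-value reasoning on a path from $Q_{\mathrm{in}}$ to an arbitrary $n$-cube of $\sfC$ shows $\sfC$ contains no $n$-cube of depth $\ge i$, i.e. no $n$-cube of $\sfR_{q;i'}$ for $i'\ge i$; and the path from $Q_{\mathrm{in}}$ (depth $i-1$) to $Q_{\mathrm{out}}$ (depth $0$) realises each of $1,\dots,i-1$, so $\sfC$ does contain an $n$-cube of $\sfR_{q;i'}$ for every $1\le i'\le i-1$. This is exactly the asserted equivalence.

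The only step where I expect to need genuine care is fact (b): one must check that the marker attached to the pair $\{q,q'\}$ always sits, in the coordinates adapted to $\rho(q)$, on a facet of $[0,1]^n$ transverse to $D_1$ — which comes down to $e=q\cap q'$ being a proper $(n-2)$-face of $q$ together with $c^2(e)$ being the centre-of-centre subcube — so that its star stays a definite distance (comparable to $\tfrac49$, hence $\gg 3^{-\nu}$) from $D_1$. Fact (a) and the intermediate-value argument are routine bookkeeping once the depth function is introduced.
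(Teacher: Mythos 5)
Your argument is correct. The paper omits the proof entirely, saying only that the corollary ``follows immediately from the construction, in particular, the nested nature of the pre-reservoir-blocks''; your depth function on $\Refine^\nu(\rho(q))$ together with the intermediate-value step along an adjacency path in the canal section is a precise and complete spelling-out of exactly that observation.
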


\begin{corollary}
For each $q\in Y^{[n-1]}$ and $i\in \{1,\ldots, m\}$, the cubical complex $\sfR_{q;i} \cup \sfC_{q;i}$ is adjacently-connected.
\end{corollary}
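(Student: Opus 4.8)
The plan is to verify that the adjacency graph $\Gamma(\sfR_{q;i}\cup\sfC_{q;i})$ is connected by assembling it from adjacently-connected pieces that are glued along common $(n-1)$-faces. The statement is essentially a bookkeeping fact, so there is no substantial obstacle; the only point requiring a short careful argument is the step of extracting an honest adjacency edge from the fact that two of the subcomplexes intersect in an $(n-1)$-cube.

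First I would record that $\sfR_{q;i}$ is adjacently-connected: writing $Q=\rho(q)$, by construction $\sfR_{q;i}=\phi_Q^*(\mathbf R_i)$, where $\phi_Q\colon Q\to[0,1]^n$ is the cubical isomorphism fixed in the cubical structure of $U$; since $\Gamma(\mathbf R_i)$ is connected — as observed when the pre-reservoir-blocks were introduced — and a cubical isomorphism carries adjacency graphs to adjacency graphs, $\Gamma(\sfR_{q;i})$ is connected. Next, for each edge $\{q,q'\}\in\cT$, the canal section $\sfC_{q,\{q,q'\};i}$ is adjacently-connected directly from its definition (it was chosen to be an adjacently-connected $n$-subcomplex of $\Refine^\nu(\rho(q))$).

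The key step is to show that each canal section is adjacency-joined to $\sfR_{q;i}$. By the first defining property of $\sfC_{q,\{q,q'\};i}$, the intersection $\sfC_{q,\{q,q'\};i}\cap\sfR_{q;i}$ is a single $(n-1)$-cube $q''$. As both complexes are $n$-complexes, $q''$ is a face of some $n$-cube $Q_1\in\sfC_{q,\{q,q'\};i}$ and of some $n$-cube $Q_2\in\sfR_{q;i}$. These $n$-cubes are distinct: if $Q_1=Q_2$, this $n$-cube would lie in $\sfC_{q,\{q,q'\};i}\cap\sfR_{q;i}$, contradicting that this intersection is only an $(n-1)$-cube. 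Moreover $Q_1\cap Q_2$ is a common sub-cube of $Q_1$ and $Q_2$ containing $q''$, hence of dimension $\ge n-1$; it cannot be an $n$-cube, since that would force $Q_1=Q_2$, so $Q_1\cap Q_2=q''\in\Refine^\nu(U)^{[n-1]}$. Thus $\{Q_1,Q_2\}$ is an edge of $\Gamma(\Refine^\nu(U))$, hence of $\Gamma(\sfR_{q;i}\cup\sfC_{q;i})$.

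To conclude, since $\sfC_{q;i}=\bigcup_{\{q,q'\}\in\cT}\sfC_{q,\{q,q'\};i}$, every $n$-cube of $\sfR_{q;i}\cup\sfC_{q;i}$ lies either in $\sfR_{q;i}$ or in some section $\sfC_{q,\{q,q'\};i}$; in the latter case it is joined, inside that section, to the corresponding cube $Q_1$, which by the previous step is adjacent to $Q_2\in\sfR_{q;i}$. Together with the connectedness of $\Gamma(\sfR_{q;i})$, this produces a path in $\Gamma(\sfR_{q;i}\cup\sfC_{q;i})$ between any two of its $n$-cubes, so $\sfR_{q;i}\cup\sfC_{q;i}$ is adjacently-connected. (If $q$ happens to be isolated in $\cT$, then $\sfC_{q;i}=\emptyset$ and the claim reduces to the adjacent-connectedness of $\sfR_{q;i}$ established in the second paragraph.)
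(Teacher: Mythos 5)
The paper states this corollary without proof, presenting it as following ``immediately from the construction.'' Your proposal is a correct fleshing-out of exactly that argument: adjacency-connectedness of $\sfR_{q;i}$ and of each canal section are by construction, and the glue is the fact that each $\sfC_{q,\{q,q'\};i}\cap\sfR_{q;i}$ is an $(n-1)$-cube, which (after the careful purity/distinctness check you carry out) yields an honest adjacency edge. This is essentially the paper's intended argument.
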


\subsubsection{Connectors}\label{sec:connector}

We now define for each edge $\{q,q'\}$ in $\cT$ and each $i\in \{1,\ldots, m\}$, a subcomplex $\sfJ_{\{q,q'\};i}$ which connects canal sections $\sfC_{q,\{q,q'\};i}$ and $ \sfC_{q',\{q,q'\};i}$. The definition below uses the fact that complex $U$ has a good interior.

\begin{definition}
\label{def:connector} \index{reservoir-canal system!connector} \index{$\sfJ_{\{q,q'\};i}$}
Let $\{q,q'\}\in \cT$ be an edge and  $i\in \{1,\ldots, m\}$ an index. We define \emph{a connector $\sfJ_{\{q,q'\};i}\subset \Refine^\nu(K)$  connecting $\sfC_{q,\{q,q'\};i}$ and $ \sfC_{q',\{q,q'\};i}$  over marker $\zeta_{q\cap q', \{q,q'\};i}$} in two cases. 
\begin{itemize}
\item[Case 1.] If $\rho(q) = \rho(q')$, we take $\sfJ_{\{q,q'\};i}$ to be the unique $n$-cube in $\Refine^\nu(\rho(q)) \cap \Star_{\Refine^\nu(U)}(\zeta_{q\cap q', \{q,q'\};i})$.
\item [Case 2.]  
If $\rho(q)\neq \rho(q')$, we take $\sfJ_{\{q,q'\};i}$ to be a minimal adjacently-connected $n$-subcomplex of  $\Star_{\Refine^{\nu}(U)}(\zeta_{q\cap q', \{q,q'\};i})$ for which 
\[
\Star_{\Refine^\nu(U)}(\zeta_{q\cap q', \{q,q'\};i})\cap \left( \Refine^\nu(\rho(q)) \cup \Refine^\nu(\rho(q'))\right) \subset \sfJ_{\{q,q'\};i}.
\]
\end{itemize}
\end{definition}

\begin{remark}\label{rmk:J}In Case 2, the connector $\sfJ_{\{q,q'\};i}$ may have $n$-cubes outside
preferred cubes, that is, complexes $\Refine^\nu(\Span_U(\rho(Y^{[n-1]})))$.

Since $\rho(q)\ne \rho(q')$,  $\Star_{\Refine^\nu(U)}(\zeta_{q\cap q', \{q,q'\};i})\cap \left( \Refine^\nu(\rho(q)) \cup \Refine^\nu(\rho(q'))\right)$ has two $n$-cubes. Since  star $\Star_{\Refine^{\nu}(U)}(\zeta_{q\cap q', \{q,q'\};i})$ is cyclic, 
there are at most two possible choices for a connector $\sfJ_{\{q,q'\};i}$.. 
The minimality in the definition ensures that any proper subcomplex of a connector is not a connector.
\end{remark}

We combine now canal sections with connectors and say that two
canal sections are joined together by a connector into a canal stretch; see Figure \ref{fig:Reservoir_Canal_Single}.

\begin{definition}\index{$\sfC_{\{q,q'\};i}$} \index{reservoir-canal system!canal stretch}
For each edge $\{q,q'\}$ in $\cT$ and $i\in \{1,\ldots, m\}$, the complex 
\[
\sfC_{\{q,q'\};i} = \sfC_{q,\{q,q'\};i} \cup \sfJ_{\{q,q'\};i} \cup \sfC_{q',\{q,q'\};i}
\]
is called a \emph{canal stretch of index $i$ over the edge $\{q,q'\}$}.
\end{definition}

\begin{figure}[h!]
\begin{overpic}[scale=.56,unit=1mm]{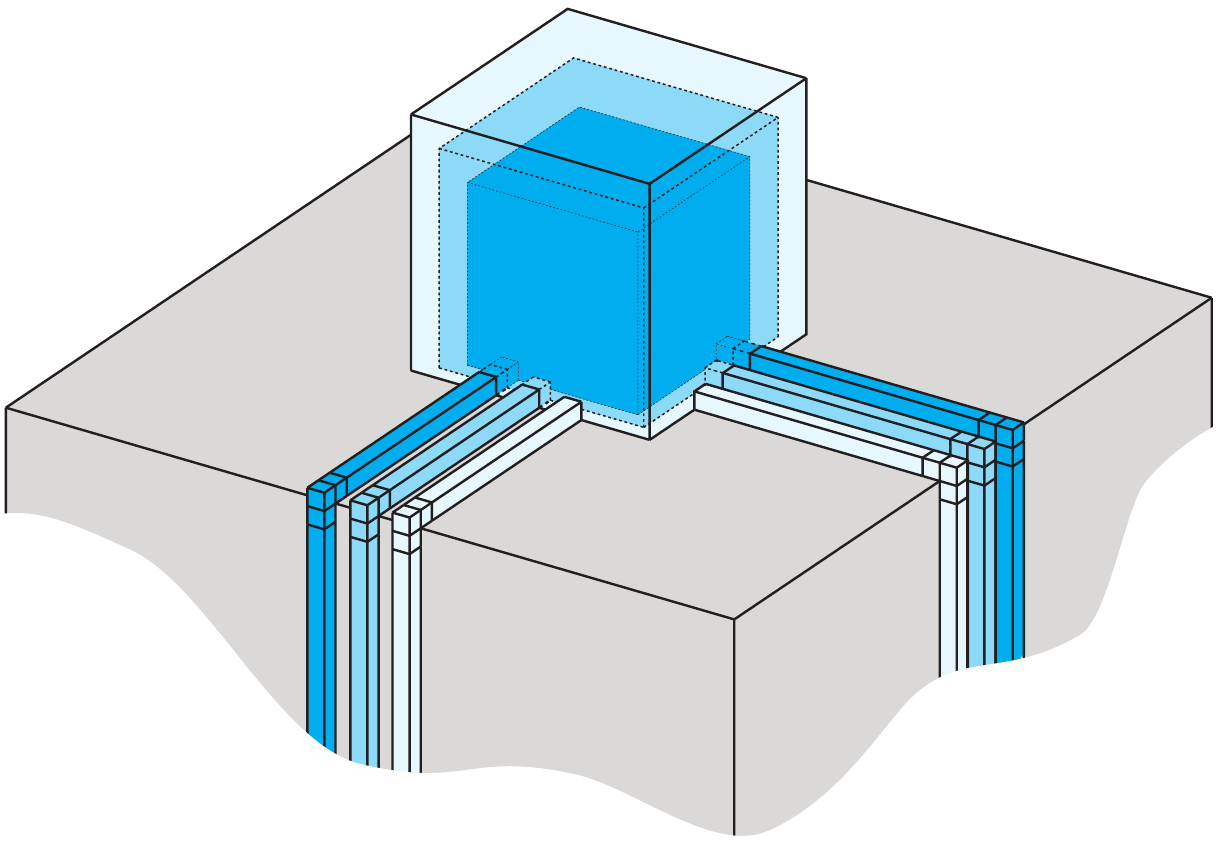}
\put(50,75){\tiny$\sfR_{q,1}$}
\put(50,70){\tiny$\sfR_{q,2}$}
\put(21,47){\tiny$q$}
\put(11,33){\tiny$q'$}
\put(80,20){\tiny$q''$}
\put(53,55){\tiny$\sfR_{q,3}$}
\put(38,35){\tiny$\sfC_{\{q,q'\},2}$}
\put(28,41){\tiny$\sfC_{\{q,q'\},3}$}
\put(40,24){\tiny$\sfC_{\{q,q'\},1}$}
\end{overpic}
\caption{Pre-reservoir-blocks and canal stretches over an $(n-1)$-cube $q\in  Y$ (not in scale).
}
\label{fig:Reservoir_Canal_Single}
\end{figure}

We omit the straightforward proof of the following lemma.

\begin{lemma}
\label{lemma:adjacency-canal-stretch}
For each edge $\{q,q'\}\in \cT$ and $i\in \{1,\ldots, m\}$, the canal stretch $\sfC_{\{q,q'\};i}$ and its boundary $\partial \sfC_{\{q,q'\};i}$ are adjacently-connected.
\end{lemma}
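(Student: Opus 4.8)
The plan is to unwind the definitions of the constituent complexes and read off adjacency-connectedness from the explicit combinatorial descriptions. Recall that a canal stretch is
\[
\sfC_{\{q,q'\};i} = \sfC_{q,\{q,q'\};i} \cup \sfJ_{\{q,q'\};i} \cup \sfC_{q',\{q,q'\};i},
\]
so we have three pieces glued along two interfaces. First I would observe that each canal section $\sfC_{q,\{q,q'\};i}$ is, by its minimality and the Remark following its definition, isometric to a ``straight'' cubical slab $[0,s]^{n-1}\times[0,s\cdot N]$; such a slab is obviously adjacently-connected, and so is its boundary (the boundary of a slab is a cubical $(n-1)$-sphere, which is adjacently-connected once $n\geq 3$; for $n=2$ this is a quadrilateral frame, still adjacently-connected as a $1$-complex in the natural sense). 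The same holds for $\sfC_{q',\{q,q'\};i}$.

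Next I would treat the connector $\sfJ_{\{q,q'\};i}$. In Case 1 it is a single $n$-cube, hence trivially adjacently-connected with adjacently-connected boundary. In Case 2 it is a minimal adjacently-connected $n$-subcomplex of the star $\Star_{\Refine^\nu(U)}(\zeta_{q\cap q',\{q,q'\};i})$; by Lemma \ref{lemma:star-cyclic} that star is cyclic, so the connector is (isomorphic to) a linear arc of $n$-cubes in a cyclic chain, and hence both it and its boundary are adjacently-connected. Then I would check the two gluing interfaces: by construction condition (1)/(2) in the definition of canal sections, $\sfC_{q,\{q,q'\};i}\cap \sfJ_{\{q,q'\};i}$ is an $(n-1)$-cube, i.e.\ a single face shared by an $n$-cube of the section and an $n$-cube of the connector, and likewise for the $q'$ side. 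Thus the adjacency graph $\Gamma(\sfC_{\{q,q'\};i})$ is obtained from the three connected graphs $\Gamma(\sfC_{q,\{q,q'\};i})$, $\Gamma(\sfJ_{\{q,q'\};i})$, $\Gamma(\sfC_{q',\{q,q'\};i})$ by adding (at least) one edge between the first and the second, and one edge between the second and the third; hence it is connected. This proves $\sfC_{\{q,q'\};i}$ is adjacently-connected.

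For the boundary statement I would argue similarly but a little more carefully, since $\partial \sfC_{\{q,q'\};i}$ is not simply the union of the three boundaries. Each piece's boundary is an $(n-1)$-dimensional cubical sphere (or frame, $n=2$); when we glue a section to the connector along an $(n-1)$-cube $q_0$, the boundary of the union is obtained from the two boundary spheres by removing the open cell $\interior q_0$ from each and identifying along $\partial q_0$, i.e.\ it is a connected sum of the two spheres, which is again an adjacently-connected $(n-1)$-complex. Performing this twice (once at each interface) shows $\partial \sfC_{\{q,q'\};i}$ is adjacently-connected. The main obstacle—though it is minor—is the low-dimensional bookkeeping: verifying that ``connected sum along an $(n-1)$-cube'' preserves adjacency-connectedness of the boundary uniformly in $n\geq 3$ (and degenerately for $n=2$), and confirming that the shared faces $q_0$ are genuinely $(n-1)$-cubes of \emph{both} adjoining $n$-cubes so that the adjacency graphs really do get the connecting edge. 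All of this is forced by the minimality clauses in the definitions of canal section and connector, so the proof is a short and routine verification, which is why the authors state it without proof.
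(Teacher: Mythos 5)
The paper omits the proof of this lemma, declaring it straightforward, so there is no authors' proof to compare against; your proposal supplies precisely the omitted verification, and it is correct. The key structural facts you use are all established (or immediate) in the paper: each canal section is a straight Euclidean slab by the minimality remark following its definition; the connector is a single cube (Case 1) or a minimal arc in a cyclic star by Lemma \ref{lemma:star-cyclic} and Remark \ref{rmk:J} (Case 2); each section meets the connector in exactly one $(n-1)$-cube by conditions (1)--(2) in the canal-section definition; and the two sections are disjoint by the remark following that definition. From this the adjacency graph is linear (the paper records this explicitly in Remark \ref{rmk:markers}, immediately after the lemma), which gives adjacent-connectedness of the stretch; your two successive connected-sum steps then give adjacent-connectedness of the boundary. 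The one point I would state more explicitly in a polished version is the justification that the gluing face is the \emph{only} $(n-1)$-cube shared across an interface: this follows because $\sfC_{\{q,q'\};i}$ lives inside a cell complex in the sense of Definition \ref{def:cell-complex}, in which any two distinct $n$-cubes intersect in at most a single cell, so two adjacent cubes can share at most one $(n-1)$-face; together with the disjointness of the two sections this legitimizes the connected-sum description of $\partial \sfC_{\{q,q'\};i}$ you use. With that caveat noted, the argument is sound and is what the authors evidently had in mind.
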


\begin{remark}\label{rmk:markers}
The placement of markers guarantees that canal stretches $\sfC_{\{q,q'\};i}$ and $\sfC_{\{q'',q'''\};j}$ for which $q\cap q'= q''\cap q'''=e$, in other words passing the same edge $e$,  do not meet and that the adjacency graph of each $\sfC_{\{q,q'\};i}$ is linear.
See Figure \ref{fig:Reservoir_Canal_System}.
\end{remark}

\subsubsection{Construction of the system $\sfRC_{\Refine^\nu(U)}(Y)$}
\label{sec:sub-reservoir-canal-system}
For  each fixed $i$, we have connected every pair of pre-reservoir-blocks $\sfR_{q;i}$ and $\sfR_{q';i}$, for $\{q,q'\}\in \cT$, into a canal stretch $\sfC_{\{q,q'\};i}$.  
 Since each canal stretch $\sfC_{\{q,q'\};i}$ enters every pre-reservoir-block $\sfR_{q,i'}$ of a lower index $i'<i$ (Corollary \ref{cor:canal-reservoir-intersection}), we first remove the $n$-cubes in the intersection  from each pre-reservoir-block.

\begin{figure}[h!]
\begin{overpic}[scale=.57,unit=1mm]{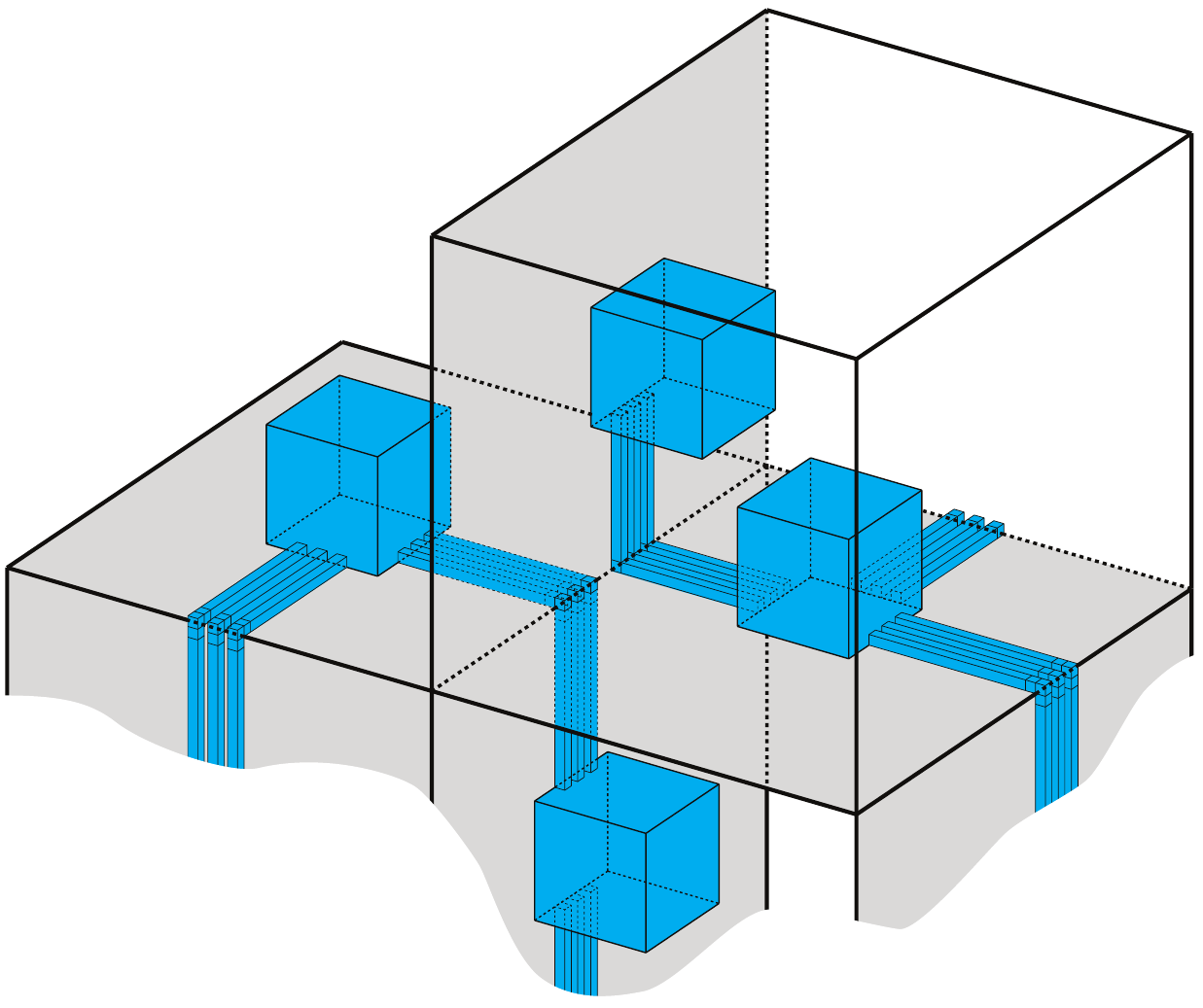}
\put(102,40){$q_1$}
\put(47,24){$q_5$}
\put(94,20){ $q_3$}
\put(10,35){ $q_6$}
\put(17,48){ $q_4$}
\put(67,85){ $q_2$}
\put(50,35){$e$}
\end{overpic}
\caption{Schematic figure of the space of a reservoir-canal system $\sfRC_{\Refine^\nu(U)}(Y)$ - a local picture (not in scale). Cubes in $Y$ are colored in grey.}
\label{fig:Reservoir_Canal_System}
\end{figure}

For each $i\in \{1,\ldots, m\}$ and each $q\in Y^{[n-1]}$, we call 
\[
\widehat \sfR_{q;i} = \sfR_{q;i} - \bigcup_{\{q,q'\}\in \cT} \left( \bigcup_{i'>i}  \sfC_{q,\{q,q'\},i'}\right)
\]
the \emph{reservoir of index $i$ adjacent to $q$}.   \index{reservoir-canal system!reservoir} \index{$\widehat \sfR_{q;i}$}

\begin{lemma}
\label{lemma:adjacency-reservoirs}
Complexes $\widehat \sfR_{q;i}$ and $\partial \widehat \sfR_{q;i}$ are adjacently-connected.
\end{lemma}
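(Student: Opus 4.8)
The plan is to reduce the statement to an explicit check inside the model cube, using the single geometric feature that the walls of a model reservoir block are one $n$-cube thick but many $n$-cubes tall, whereas every $n$-cube deleted in forming $\widehat \sfR_{q;i}$ sits in the bottom layer. First I would localise: since $\sfR_{q;i}\subset\Refine^\nu(\rho(q))$ and each canal section $\sfC_{q,\{q,q'\};i'}$ is by construction a subcomplex of $\Refine^\nu(\rho(q))$, the complex $\widehat \sfR_{q;i}$ is a subcomplex of $\Refine^\nu(\rho(q))$; applying the cubical isomorphism $\phi_{\rho(q)}\colon\rho(q)\to[0,1]^n$ (which preserves adjacency) reduces us to the model complex $\widehat{\mathbf R}_i$ obtained from $\mathbf R_i=\Span(\{Q\in(\Refine^\nu([0,1]^n))^{[n]}\colon Q\subset\cl(D_i\setminus D_{i+1})\})$ by deleting those $n$-cubes that belong to a canal section of index $>i$. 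By Corollary \ref{cor:canal-reservoir-intersection} these deleted cubes exist, and by the minimality built into the canal sections each of them has a face on $q=[0,1]^{n-1}\times\{0\}$; hence every deleted $n$-cube lies in the bottom layer $\{x_n\le 3^{-\nu}\}$. Since $3^\nu\ge 3^{10}m\mu^2$, the box $D_i$ has height $\tfrac19-\tfrac{i-1}{3^\nu}$, that is $3^{\nu-2}-(i-1)\ge 2$ cubes, and its \emph{lid} $D_{i+1}|_{x_n=0}\times[\tfrac19-\tfrac{i}{3^\nu},\tfrac19-\tfrac{i-1}{3^\nu}]$ sits well above the bottom layer; thus the deleted cubes are all \emph{wall} cubes, the walls are one cube thick in each horizontal direction but at least two cubes tall, and the lid is untouched.

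For the first assertion I would use the elementary \emph{lift-above} argument. Given two $n$-cubes of $\widehat{\mathbf R}_i$, connect them by an adjacency walk in $\mathbf R_i$ (possible, since $\Gamma(\mathbf R_i)$ is connected). Whenever the walk visits a bottom-layer cube $Q$, replace $Q$ by the cube $Q^{\uparrow}$ directly above it: $Q^{\uparrow}$ is again a wall cube of $\mathbf R_i$ (the walls being tall), it is face-adjacent to $Q$, it is not in the bottom layer, hence is not deleted, and adjacency is respected since $\uparrow$ is a translation of unit cubes. Connecting the two endpoints to their own lifts (one step each, in case an endpoint is itself a surviving bottom cube) produces an adjacency walk in $\widehat{\mathbf R}_i$; hence $\Gamma(\widehat{\mathbf R}_i)$ is connected, and pulling back by $\phi_{\rho(q)}$ shows $\widehat \sfR_{q;i}$ is adjacently-connected.

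For the boundary I would argue in the same spirit, now tracking faces. Deleting a bottom-layer wall cube $Q$ removes from $\partial\mathbf R_i$ the faces of $Q$ on $\{x_n=0\}$ and on the inner and outer wall surfaces, and it exposes the bottom face of $Q^{\uparrow}$ together with the side faces of the surviving horizontal neighbours of $Q$; each newly exposed face shares an $(n-2)$-face with the inner or outer face of $Q^{\uparrow}$, which themselves belong to $\partial\widehat{\mathbf R}_i$. The inner and outer wall surfaces of $\mathbf R_i$ truncated to heights $\ge 3^{-\nu}$ are adjacently-connected $(n-1)$-complexes (again because the walls are tall and, for $n\ge 3$, a one-cube-thick square frame is adjacently-connected); they meet the exposed bottom faces of the cubes $Q^{\uparrow}$, they meet whatever survives of the one-cube-thick bottom ring $\cl(D_i\setminus D_{i+1})\cap\{x_n=0\}$, and the lid surface is intact. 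Assembling these pieces gives that $\partial\widehat{\mathbf R}_i$ is adjacently-connected, which transports back by $\phi_{\rho(q)}$. I expect the main obstacle to be precisely this last bookkeeping step — deciding which faces are one-sided in $\widehat{\mathbf R}_i$ after all the canal cubes are removed simultaneously, and verifying that the assembled boundary pieces genuinely chain up — which one settles by a direct, if slightly tedious, verification in the model; the connectedness of $\widehat\sfR_{q;i}$ itself is the easy half.
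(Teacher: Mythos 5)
Your proof is correct and makes rigorous the paper's very terse argument, which simply notes that each canal stretch with index $i'>i$ meets $\sfR_{q;i}$ in a single $n$-cube and that these removed cubes are mutually disjoint, then concludes that both adjacency graphs stay connected. Your ``lift-above'' device and boundary-face tracking supply exactly the missing mechanism; the only slip is that the exposed side faces of a deleted cube's surviving neighbours share an $(n-2)$-face with the newly exposed bottom face of $Q^{\uparrow}$ (and with those neighbours' own inner/outer wall faces), not with the inner or outer face of $Q^{\uparrow}$ as you wrote, but this does not affect the chaining argument or the conclusion.
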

\begin{proof}
Since each canal stretch $\sfC_{\{q,q'\},i'}$ meets $\sfR_{q;i}$ in an $n$-cube if and only if $i' > i$, and these $n$-cubes are mutually disjoint, we conclude that both adjacency graphs $\Gamma(\widehat \sfR_{q;i})$ and $\Gamma(\partial \widehat \sfR_{q;i})$ are connected. 
\end{proof}

\begin{definition}
\label{def:reservoir-canal-system}
\index{reservoir-canal system}   
\index{$\sfRC_{\Refine^\nu(U)}(Y)_i $}
We call the subcomplex 
\[
\sfRC_{\Refine^\nu(U)}(Y;\cT, \rho)_i = \left(\bigcup_{q\in Y^{[n-1]}} \widehat \sfR_{q;i} \right) \, \cup \left( \bigcup_{\{q,q'\}\in \cT} \sfC_{\{q,q'\};i} \right) \subset \Refine^\nu(U).
\]
the \emph{reservoir-canal system  of index $i$ over $Y$ (associated to the spanning tree $\cT$ and the preference function $\rho$}),  and call  the complex  
\[
\sfRC_{\Refine^\nu(U)}(Y;\cT,\rho) =\bigcup_{i=1}^m \sfRC_{\Refine^\nu(U)}(Y; \cT,\rho)_i \subset \Refine^\nu(U)
\]
 the \emph{reservoir-canal system over $Y$}. 
For brevity, we  write $ \sfRC_{\Refine^\nu(U)}(Y)_i$  and   $\sfRC_{\Refine^\nu(U)}(Y)$ for  $ \sfRC_{\Refine^\nu(U)}(Y;\cT, \rho)_i$ and $\sfRC_{\Refine^\nu(U)}(Y;\cT,\rho)$, respectively.
\end{definition}

\subsubsection{Properties of reservoir-canal systems} 

We record now properties of reservoir-canal systems and their boundaries, which are needed in the subsequent constructions.

We note first that aside from the family $\sfJ_{\Refine^\nu(U)}(Y)$ of connectors, the system $\sfRC_{\Refine^\nu(U)}(Y)$ is contained in $\Refine^\nu(\rho(Y^{[n-1]})$. More precisely, we have the following corollary.

\begin{corollary}\label{cor:RC-core-relation}
From the construction, we have that 
\[ \sfRC_{\Refine^\nu(U)}(Y) \subset     \Refine^\nu(\rho(Y^{[n-1]}))\,\cup \sfJ_{\Refine^\nu(U)}(Y) \subset \Refine^\nu(\Star_U(Y)),\]
and 
\[\Star_{\Refine^\nu(U)}(\sfRC_{\Refine^\nu(U)}(Y))\subset \Refine^\nu(\Star_U(Y)).\]
\end{corollary}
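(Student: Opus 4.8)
The plan is to read off both inclusions directly from the construction of Section~\ref{sec:building-blocks}, using only the locations of the pre-reservoir-blocks, canal sections and connectors. First I would handle the first display. By Definition~\ref{def:reservoir-canal-system}, $\sfRC_{\Refine^\nu(U)}(Y)$ is the union over $i$ of the reservoirs $\widehat\sfR_{q;i}$ and the canal stretches $\sfC_{\{q,q'\};i}$. Each reservoir is a subcomplex of the pre-reservoir-block $\sfR_{q;i}=\phi_{\rho(q)}^{*}(\mathbf R_i)\subset\Refine^\nu(\rho(q))$, and by definition each canal section satisfies $\sfC_{q,\{q,q'\};i}\subset\Refine^\nu(\rho(q))$; a canal stretch $\sfC_{\{q,q'\};i}=\sfC_{q,\{q,q'\};i}\cup\sfJ_{\{q,q'\};i}\cup\sfC_{q',\{q,q'\};i}$ therefore splits into two canal sections lying in $\Refine^\nu(\rho(q))\cup\Refine^\nu(\rho(q'))\subset\Refine^\nu(\rho(Y^{[n-1]}))$ together with a connector, and the connectors over all edges and indices together constitute the family $\sfJ_{\Refine^\nu(U)}(Y)$. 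This gives $\sfRC_{\Refine^\nu(U)}(Y)\subset\Refine^\nu(\rho(Y^{[n-1]}))\cup\sfJ_{\Refine^\nu(U)}(Y)$.

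For the second inclusion of the first display I would note that for $q\in Y^{[n-1]}$ the cube $\rho(q)$ has $q$ as a face, so $\rho(q)\in\Star_U(q)\subset\Star_U(Y)$ and hence $\Refine^\nu(\rho(q))\subset\Refine^\nu(\Star_U(Y))$. A connector $\sfJ_{\{q,q'\};i}$ lies in $\Refine^\nu(\rho(q))$ in Case~1 of Definition~\ref{def:connector}, already covered, and in $\Star_{\Refine^\nu(U)}(\zeta_{q\cap q',\{q,q'\};i})$ in Case~2. The marker lies in $\Refine^{\nu-2}(c^2(e))$ with $e=q\cap q'$, i.e.\ in the relative interior of the $(n-2)$-cube $e$, and $e\in Y^{[n-2]}$ since $q,q'\in Y$. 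Here I would use the elementary observation that, for any cube $\zeta$ lying in the relative interior of an $(n-2)$-cube $e$ of $U$, one has $\Star_{\Refine^\nu(U)}(\zeta)\subset\Refine^\nu(\Star_U(e))$: any $n$-cube of $\Refine^\nu(U)$ containing $\zeta$ is a subcube of some $Q\in U^{[n]}$, and $Q$ then meets the relative interior of $e$, forcing $Q\cap e=e$, i.e.\ $Q\in\Star_U(e)$. Since $e\in Y$, $\Star_U(e)\subset\Star_U(Y)$, and the first display follows.

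For the second display I would take an $n$-cube $Q''$ of $\Refine^\nu(U)$ containing a cube $c$ of $\sfRC_{\Refine^\nu(U)}(Y)$, write $Q''$ as a subcube of an $n$-cube $\widetilde Q$ of $U$, and show $\widetilde Q\in\Star_U(Y)$. If $c$ is a connector cube, then $c\in\Star_{\Refine^\nu(U)}(\zeta)$ for a marker $\zeta$ contained in $\Refine^{\nu-2}(c^2(e))$; since $c^2(e)$ lies at a fixed positive distance from $\partial e$ inside $e$ while each $n$-cube of $\Refine^\nu(U)$ has diameter $\sqrt n\,3^{-\nu}$ and $3^\nu\ge 3^{10}$, the cube $Q''$ still meets the relative interior of $e$, so $\widetilde Q\supset e\in Y$ and $\widetilde Q\in\Star_U(e)\subset\Star_U(Y)$. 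If instead $c\subset\Refine^\nu(\rho(q))$, lying in a reservoir or in a canal section, then $c$ touches $\partial(\rho(q))$ only along the face $q$: the reservoirs lie inside the central box $D_1=[\frac{4}{9},\frac{5}{9}]^{n-1}\times[0,\frac{1}{9}]$, which meets $\partial([0,1]^n)$ only in $[0,1]^{n-1}\times\{0\}=\phi_{\rho(q)}(q)$, and every $n$-cube of a canal section has a face in $q$ while its remaining coordinates stay in the interior of $[0,1]^n$. Hence $\widetilde Q$ either equals $\rho(q)$ or shares the face $q\in Y$ with it, so $\widetilde Q\in\Star_U(q)\subset\Star_U(Y)$ and $Q''\in\Refine^\nu(\widetilde Q)\subset\Refine^\nu(\Star_U(Y))$ in every case.

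The only point requiring more than a substitution of definitions is the last one: verifying, from the explicit shape of the model reservoirs $\mathbf R_i$ and the minimality of the canal sections and connectors, that $\sfRC_{\Refine^\nu(U)}(Y)$ never reaches a face of a preferred cube $\rho(q)$ other than $q$, and (for connectors over $e$) never reaches an $n$-cube of $U$ not containing $e$. This is a routine but slightly tedious inspection of the construction in Section~\ref{sec:building-blocks}; everything else is an immediate unwinding of the definitions.
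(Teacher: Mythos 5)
The paper states this corollary without proof (it follows "from the construction"), so a detailed unwinding of the definitions is exactly what is called for, and your case analysis --- cubes of $\sfRC_{\Refine^\nu(U)}(Y)$ lying in a reservoir or canal section inside $\Refine^\nu(\rho(q))$, versus cubes lying in a connector --- is the right one. The first display and the reservoir/canal-section case of the second display are handled correctly.

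There is, however, one step in the connector case of the second display that does not hold as stated. You assert that because $c^2(e)$ lies well inside $e$ while the refinement cubes have diameter $\sqrt n\,3^{-\nu}$, ``the cube $Q''$ still meets the relative interior of $e$.'' This is not always true: if $c$ is a proper face of an $n$-cube $J\in\Star_{\Refine^\nu(U)}(\zeta)$ with $\zeta\not\subset c$, a cube $Q''\supset c$ may sit entirely off the hyperplane spanned by $e$ (in the directions perpendicular to $e$) and thus not meet $e$ at all, even though it is very close to $\zeta$. What one should argue instead is directly that $\widetilde Q\supset e$, via a distance estimate: every $(n-1)$-face of an $n$-cube $Q\in\Star_U(e)$ that does not contain $e$ lies at polyhedral distance at least $4/9$ from $\zeta$ (since $\zeta\subset\Refine^{\nu-2}(c^2(e))$ is at distance $\geq 4/9$ from $\partial e$), and consequently any $\widetilde Q\in U^{[n]}$ with $e\not\subset\widetilde Q$ is at distance $\geq 4/9$ from $\zeta$; on the other hand $Q''\supset c\subset J$ is within $2\sqrt n\,3^{-\nu}<4/9$ of $\zeta$, forcing $\widetilde Q\in\Star_U(e)\subset\Star_U(Y)$. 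This repairs the step and the rest of the argument goes through, but the literal intermediate claim about $Q''$ meeting $\operatorname{int}(e)$ should not be relied on.
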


The adjacent connectedness follows from Lemmas \ref{lemma:adjacency-canal-stretch} and \ref{lemma:adjacency-reservoirs}.

\begin{corollary}
\label{cor:adjacency-RC}
The complex $\sfRC_{\Refine^\nu(U)}(Y)_i$  is adjacently-connected for each $i\in\{1,\ldots, m\}$,  and has an adjacently-connected boundary. Moreover, the system  $\sfRC_{\Refine^\nu(U)}(Y)$ is adjacently-connected and  $\bigcup_{i=1}^m \partial \sfRC_{\Refine^\nu(U)}(Y)_i$ is also adjacently-connected. 
\end{corollary}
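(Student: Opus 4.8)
The plan is to assemble $\sfRC_{\Refine^\nu(U)}(Y)_i$ block by block along the spanning tree $\cT$ of $\Gamma(Y)$ and to invoke the adjacent connectedness of the individual reservoirs and canal stretches furnished by Lemmas \ref{lemma:adjacency-reservoirs} and \ref{lemma:adjacency-canal-stretch}. For a fixed index $i$, the complex $\sfRC_{\Refine^\nu(U)}(Y)_i$ is the union of the blocks $\widehat\sfR_{q;i}$, $q\in Y^{[n-1]}$, and $\sfC_{\{q,q'\};i}$, $\{q,q'\}\in\cT$; by construction $\sfC_{\{q,q'\};i}$ shares an $(n-1)$-cube with each of $\widehat\sfR_{q;i}$ and $\widehat\sfR_{q';i}$, namely the faces along which its two canal sections meet the corresponding pre-reservoir-blocks (these faces survive the carving, since the removed higher-index trenches run to pairwise disjoint markers). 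Hence the graph whose vertices are the blocks and whose edges record pairs of blocks sharing an $(n-1)$-cube that is a face of $n$-cubes in both contains a subdivision of the tree $\cT$ and is therefore connected. An induction over the edges of $\cT$ — adding at each step one canal stretch and the reservoir at its new endpoint, i.e.\ attaching adjacently-connected $n$-complexes along single $(n-1)$-cubes — then yields that $\Gamma(\sfRC_{\Refine^\nu(U)}(Y)_i)$ is connected.

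The same induction will handle the boundary, by means of the following gluing principle: if $A$ and $B$ are adjacently-connected $n$-complexes meeting in a single $(n-1)$-cube $f$ that is a face of exactly one $n$-cube in each, then $\partial(A\cup B)=\Span\big((\partial A-f)\cup(\partial B-f)\big)$, and this is adjacently-connected as soon as $\partial A-f$ and $\partial B-f$ are, since the two pieces are joined along the $(n-2)$-sphere $\partial f$. Applying this repeatedly along $\cT$ reduces the second assertion to checking that removing from the boundary of a reservoir (resp.\ canal stretch) the finitely many pairwise disjoint $(n-1)$-cubes where canals attach — together with the scars left by the higher-index trenches already carved out of the reservoir — keeps it adjacently-connected; since the space of each block is an $n$-manifold with boundary whose boundary is adjacently-connected (Lemmas \ref{lemma:adjacency-reservoirs}, \ref{lemma:adjacency-canal-stretch}), for $n\ge 3$ this follows from the fact that a connected closed $(n-1)$-manifold stays connected after the removal of finitely many mutually disjoint open facets. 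I expect this verification — in particular controlling where the trenches and attaching faces sit so that they stay pairwise disjoint and suitably separated — to be the \emph{main obstacle}, and it is precisely here that the nested design of the model reservoirs $\mathbf R_1,\dots,\mathbf R_m$ and the spread-out placement of the model markers $\mathbf E$ are used.

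Finally, for the whole system $\sfRC_{\Refine^\nu(U)}(Y)=\bigcup_{i=1}^m\sfRC_{\Refine^\nu(U)}(Y)_i$: by Corollary \ref{cor:canal-reservoir-intersection}, for every $i\ge 2$ and every edge $\{q,q'\}\in\cT$ the canal section $\sfC_{q,\{q,q'\};i}$, hence the canal stretch $\sfC_{\{q,q'\};i}\subset\sfRC_{\Refine^\nu(U)}(Y)_i$, is carved through the index-$1$ pre-reservoir-block $\sfR_{q;1}$, and the wall of that trench is an $(n-1)$-cube which is a face of an $n$-cube of $\sfRC_{\Refine^\nu(U)}(Y)_i$ and of an $n$-cube of $\widehat\sfR_{q;1}\subset\sfRC_{\Refine^\nu(U)}(Y)_1$; thus $\sfRC_{\Refine^\nu(U)}(Y)_i$ is joined to $\sfRC_{\Refine^\nu(U)}(Y)_1$ in $\Gamma(\sfRC_{\Refine^\nu(U)}(Y))$. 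Moreover that same wall lies simultaneously in $\partial\sfRC_{\Refine^\nu(U)}(Y)_i$ and in $\partial\sfRC_{\Refine^\nu(U)}(Y)_1$, hence is a common vertex of their adjacency graphs. Combined with the first two assertions, this gives that both $\sfRC_{\Refine^\nu(U)}(Y)$ and $\bigcup_{i=1}^m\partial\sfRC_{\Refine^\nu(U)}(Y)_i$ are adjacently-connected. (When $\#(Y^{[n-1]})=1$ there are no canal stretches and every assertion follows immediately from the nesting of $\sfR_{q;1},\dots,\sfR_{q;m}$.)
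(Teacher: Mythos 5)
Your plan — assemble $\sfRC_{\Refine^\nu(U)}(Y)_i$ along the tree $\cT$ using Lemmas \ref{lemma:adjacency-reservoirs} and \ref{lemma:adjacency-canal-stretch} for the blocks, and bridge the pieces via shared faces — is the same approach the paper takes (the paper's proof is exactly the one-sentence citation of those two lemmas in the sentence preceding the statement). Your reduction of the boundary assertion to a gluing principle, and your identification of where the work really is, are both accurate.

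However, the general ``fact'' you invoke to discharge that obstacle is not correct in the sense needed. Removing finitely many pairwise-disjoint facets from a closed $(n-1)$-manifold complex does keep the \emph{space} connected for $n\ge 3$, but it need not keep the \emph{adjacency graph} connected, which is what ``adjacently-connected'' means in this paper (Definition \ref{def:adjacent}). A small counterexample in the boundary of a $3$-cell: on the octahedral $2$-sphere, the four triangles in one class of the bipartition of the dual cube graph are pairwise disjoint (no shared edge), and after removing them the remaining four triangles meet only at vertices, so their adjacency graph is four isolated vertices. Thus the appeal to a purely topological manifold fact does not close the gap you correctly flagged; one has to use the actual geometry — that the attaching faces $\widehat\sfR_{q;i}\cap\sfC_{\{q,q'\};i}$ and the trench scars are single $(n-1)$-cubes of the very fine $\Refine^\nu$-cubulation of the reservoir boundary, located over well-separated markers (Section \ref{sec:markers}), so that around each removed facet the neighbouring facets still form a chain. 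This is exactly what the nested $\mathbf R_i$ design and spread-out $\mathbf E$ are for, as you surmise, but it must be used directly rather than replaced by a manifold-connectivity fact. The paper leaves this step entirely implicit as well, so this is a gap in exposition shared with the source; your version is otherwise a faithful and more explicit reconstruction of the intended argument.
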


The space of the reservoir-canal system of index $i$ is $n$-cell. 

\begin{corollary}
\label{cor:RC-is-a-cell}
For each $i\in\{1,\ldots, m\}$, the space of $\sfRC_{\Refine^\nu(U)}(Y)_i$ is an $n$-cell. 
\end{corollary}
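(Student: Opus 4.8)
The reservoir-canal system of index $i$ is assembled from the reservoirs $\widehat{\sfR}_{q;i}$ (for $q\in Y^{[n-1]}$) and the canal stretches $\sfC_{\{q,q'\};i}$ (for edges $\{q,q'\}$ of the spanning tree $\cT$), glued along the connectors. The strategy is to reconstruct $\sfRC_{\Refine^\nu(U)}(Y)_i$ from these pieces by attaching them one at a time in an order dictated by the tree $\cT$, and to verify that each attachment preserves the property of being an $n$-cell. Concretely, I would root the tree $\cT$ at some $(n-1)$-cube $q_0\in Y^{[n-1]}$ and build the system inductively over the subtrees: the base piece is the reservoir $\widehat{\sfR}_{q_0;i}$, which is an $n$-cell because it is obtained from the model pre-reservoir-block $\mathbf{R}_i$ (whose space $\cl(D_i\setminus D_{i+1})$ is a product of intervals, hence an $n$-cell) by removing from it the $n$-cubes lying in canal stretches $\sfC_{q_0,\{q_0,q'\},i'}$ with $i'>i$; each such removal carves a rectangular ``channel'' reaching to a face, so the result stays an $n$-cell (this is exactly the spirit of Lemma~\ref{lemma:adjacency-reservoirs} and the linearity of the canal adjacency graphs).

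Next, processing the edges of $\cT$ away from the root, each new edge $\{q,q'\}$ (with $q'$ the child) contributes its canal stretch $\sfC_{\{q,q'\};i}$ together with the child reservoir $\widehat{\sfR}_{q';i}$. The canal stretch $\sfC_{\{q,q'\};i} = \sfC_{q,\{q,q'\};i}\cup \sfJ_{\{q,q'\};i}\cup \sfC_{q',\{q,q'\};i}$ has linear adjacency graph (Remark~\ref{rmk:markers}), and its space is a topological $n$-cell: each canal section is isometric to a box $[0,s]^{n-1}\times[0,\ell s]$ and the connector $\sfJ_{\{q,q'\};i}$, living in a cyclic star of a marker, attaches the two sections along opposite faces, so the union is again a box-like $n$-cell. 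The key point is \emph{how} this stretch meets the part already built: by the minimality clauses in the definitions of canal sections and connectors, $\sfC_{\{q,q'\};i}$ meets the reservoir $\widehat{\sfR}_{q;i}$ in a single $(n-1)$-cube (its first section shares exactly one face with $\sfR_{q;i}$), and it meets $\widehat{\sfR}_{q';i}$ in a single $(n-1)$-cube as well; since $\cT$ is a tree, it meets no other previously attached piece (markers over distinct edges have disjoint stars, and the nestedness/Corollary~\ref{cor:canal-reservoir-intersection} controls any remaining incidences). Attaching an $n$-cell to an $n$-cell along a single $(n-1)$-cell face of each yields an $n$-cell (a standard PL fact, the same elementary gluing used in Lemma~\ref{lemma:Realization-cell}); attaching the further $n$-cell $\widehat{\sfR}_{q';i}$ along the opposite face of the canal stretch does likewise.

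Iterating over all edges of the spanning tree $\cT$ (each step gluing on one canal stretch and one reservoir, always along single $(n-1)$-cell faces, never creating a loop because $\cT$ is a tree) exhausts all of $\sfRC_{\Refine^\nu(U)}(Y)_i$, and at every stage the space remains an $n$-cell. Hence $|\sfRC_{\Refine^\nu(U)}(Y)_i|$ is an $n$-cell, which is the assertion of the corollary.

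\textbf{Main obstacle.} The delicate point is verifying that the intersection pattern is exactly as claimed --- namely that when a canal stretch (or a reservoir) is attached, it meets the already-assembled subcomplex in precisely one $(n-1)$-cube and nothing else. This requires carefully combining several earlier facts: the minimality in the definitions of $\sfC_{q,\{q,q'\};i}$, $\sfJ_{\{q,q'\};i}$, and $\widehat{\sfR}_{q;i}$; the disjointness of stars of distinct markers; the nested structure of pre-reservoir-blocks (Corollary~\ref{cor:canal-reservoir-intersection}); and the observation from Remark~\ref{rmk:markers} that canal stretches through the same edge do not meet. Getting the bookkeeping right so that no unintended higher-codimension intersections appear --- which would spoil the ``glue along a single face'' reduction --- is where the real work lies; the topological gluing lemma itself is routine.
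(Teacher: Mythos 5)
Your proposal is correct and takes essentially the same approach as the paper: build $\sfRC_{\Refine^\nu(U)}(Y)_i$ by attaching reservoirs and canal stretches one at a time along the tree $\cT$, observing that each piece is an $n$-cell and each attachment is a gluing along a single $(n-1)$-cell, so the space remains an $n$-cell at every stage. Your version is somewhat more detailed than the paper's (which simply states the facts and says ``the claim follows by induction along the tree $\cT$''), and you correctly identify the delicate point — verifying that each new piece meets the already-assembled subcomplex in exactly one $(n-1)$-cube — as the place where the minimality clauses, disjointness of marker stars, and Corollary~\ref{cor:canal-reservoir-intersection} need to be invoked.
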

\begin{proof}
Let $i\in \{1,\ldots, m\}$. For each $q\in Y^{[n-1]}$, the space of $\widehat \sfR_{q;i}$ is an $n$-cell. Moreover, for each $\{q,q'\}\in \cT$, the space of $\sfC_{\{q,q'\};i}$ is an $n$-cell and $\widehat \sfR_{q;i}\cap \sfC_{\{q,q'\};i}$ is an $(n-1)$-cube, hence the space of $\widehat \sfR_{q;i}\cup \sfC_{\{q,q'\};i}$ is an $n$-cell. The claim follows by induction along the tree $\cT$. 
\end{proof}

Since the complexes $\sfRC_{\Refine^\nu(U)}(Y)_i, i=1,\ldots,m,$ with distinct indices meet only in reservoirs, we have the following. 
\begin{corollary}
\label{cor:reservoir-canal-system-intersection}
For $i\ne i'$, the intersection $\sfRC_{\Refine^\nu(U)}(Y)_i \cap \sfRC_{\Refine^\nu(U)}(Y)_{i'}$ is an $(n-1)$-complex.
\end{corollary}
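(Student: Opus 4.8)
The plan is to prove the stronger and more transparent statement that, for $i\ne i'$, the complexes $\sfRC_{\Refine^\nu(U)}(Y)_i$ and $\sfRC_{\Refine^\nu(U)}(Y)_{i'}$ have no $n$-cube in common; since the intersection of two subcomplexes of $\Refine^\nu(U)$ is again a subcomplex, this is exactly the assertion that the intersection is an $(n-1)$-complex. By Definition~\ref{def:reservoir-canal-system} each $n$-cube of $\sfRC_{\Refine^\nu(U)}(Y)_i$ lies in a reservoir $\widehat\sfR_{q;i}$, in a canal section $\sfC_{q,\{q,q'\};i}$, or in a connector $\sfJ_{\{q,q'\};i}$, so the first step is to rule out that a non-reservoir $n$-cube of index $i$ belongs to $\sfRC_{\Refine^\nu(U)}(Y)_{i'}$, and the second is to compare the reservoirs of the two indices.

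For the first step I would use the following facts, all already at hand. Every connector $\sfJ_{\{q,q'\};i}$ lies in $\Star_{\Refine^\nu(U)}(\zeta_{q\cap q',\{q,q'\};i})$, the stars of distinct markers are disjoint (Section~\ref{sec:markers}), and the marker index $(\kappa_e-1)m+i$ determines both $\kappa_e$ and $i$; hence markers --- and their stars, and thus the connectors --- attached to distinct indices are disjoint, and since each canal section meets the star of its terminal marker only in an $(n-1)$-cube and, by minimality, no other marker star, no $n$-cube of a canal section lies in a connector of any index. By Corollary~\ref{cor:canal-reservoir-intersection} a canal section $\sfC_{q,\{q,q'\};i}$ meets the pre-reservoir-block $\sfR_{q;i'}$ over the \emph{same} $(n-1)$-cube in an $n$-cube only when $i'<i$, and those $n$-cubes are precisely the ones deleted in passing to $\widehat\sfR_{q;i'}$; over a \emph{different} $(n-1)$-cube of $Y$ the two complexes occupy disjoint coordinate windows in the ambient unit cube (the reservoir near the centre of its face, all its coordinates tangential to that face lying in $[\tfrac49,\tfrac59]$, while the canal section is a thin slab along $q$), so again they share no $n$-cube. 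Finally, canal stretches of distinct indices are held apart by the placement of markers --- two through a common edge of $Y$ do not meet (Remark~\ref{rmk:markers}), and over distinct edges they run toward disjoint marker stars --- so a coordinatewise check shows they are disjoint. Consequently every $n$-cube of $\sfRC_{\Refine^\nu(U)}(Y)_i\cap\sfRC_{\Refine^\nu(U)}(Y)_{i'}$ lies in $\widehat\sfR_{q;i}\cap\widehat\sfR_{q';i'}$ for some $q,q'\in Y^{[n-1]}$.

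It then remains to compare reservoirs of distinct indices. If $q=q'$, then $\sfR_{q;i}=\phi_{\rho(q)}^{*}(\mathbf R_i)$ with $\mathbf R_i$ spanned by the $n$-cubes of $\Refine^\nu([0,1]^n)$ in the shell $\cl(D_i\setminus D_{i+1})$, and since $D_1\supset\cdots\supset D_{m+1}=\emptyset$ are nested, $\mathbf R_i$ and $\mathbf R_{i'}$ have disjoint interiors and no common $n$-cube, so neither do $\widehat\sfR_{q;i}$ and $\widehat\sfR_{q;i'}$. If $q\ne q'$ and $\rho(q)\ne\rho(q')$, then $|\sfR_{q;i}|\subset|\rho(q)|$ meets $|\rho(q')|$ only in the common face, which carries no $n$-cube of $\Refine^\nu(\rho(q'))$. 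If $q\ne q'$ and $\rho(q)=\rho(q')$, then $q$ and $q'$ are distinct faces of this common preferred cube, and a coordinatewise check using the explicit boxes $D_i$ --- the coordinate normal to $q'$ is confined to $[\tfrac49,\tfrac59]$ on $D_i$ when that direction is tangential to $q$, and to one end of the unit interval when $q'$ is opposite $q$, while $D_{i'}$ occupies the complementary range --- shows that $\sfR_{q;i}$ and $\sfR_{q';i'}$ share no $n$-cube. This exhausts the cases, so $\sfRC_{\Refine^\nu(U)}(Y)_i\cap\sfRC_{\Refine^\nu(U)}(Y)_{i'}$ contains no $n$-cube and hence is an $(n-1)$-complex. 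The one part I expect to require genuine care is precisely this coordinate bookkeeping --- keeping the reservoirs, canal sections and marker stars inside their prescribed, mutually disjoint coordinate windows inside each unit cube, a verification that is dimension-independent but must be written out; everything else is an assembly of Corollary~\ref{cor:canal-reservoir-intersection}, the disjointness of marker stars, and Remark~\ref{rmk:markers}.
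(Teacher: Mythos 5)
Your proof is correct and follows the same route the paper takes: the paper's one‑sentence justification preceding the corollary is precisely that the systems of distinct indices meet only in reservoirs, and the reservoirs of distinct indices over the same $(n-1)$-cube are nested shells sharing only $(n-1)$-faces. You have filled in the case analysis that the paper leaves implicit (canals avoid each other and avoid the trimmed reservoirs of other indices via Corollary \ref{cor:canal-reservoir-intersection} and the marker placement), but the decomposition and key facts used are the same.
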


Finally,  the system $\sfRC_{\Refine^\nu(U)}(Y)$ is far from the boundary $\partial Y$ of $Y$ in the graph distance.

\begin{corollary}
\label{cor:border-of-Y} Suppose that $\partial Y$ is nonempty.
The graph distance, with respect to $\Gamma({\Refine^\nu(U)})$, between an $n$-cube in $\sfRC_{\Refine^\nu(U)}(Y)$ and any $n$-cube in ${\Refine^\nu(U)}$ that meets  $\Refine^\nu(\partial Y)$ is  at least  $3^{\nu-2}$. 
\end{corollary}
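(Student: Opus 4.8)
The plan is to replace the graph metric on $\Gamma(\Refine^\nu(U))$ by a sup-norm polyhedral metric and then read the estimate off from the explicit location of the reservoir–canal system. Since the transition maps of the cubical structure of $U$ are Euclidean isometries of unit cubes, they are signed coordinate permutations, hence also $\ell^\infty$-isometries; so the local metrics $x,y\mapsto\|\phi_Q(x)-\phi_Q(y)\|_\infty$ patch along $U$-chains into a well-defined length metric $d^\infty$ on $|U|$, under which every $n$-cube of $\Refine^\nu(U)$ has diameter $3^{-\nu}$. If $Q=Q_0,\dots,Q_d=Q'$ is a chain of adjacent $n$-cubes and $p_i\in|Q_{i-1}\cap Q_i|$, then $p_i,p_{i+1}\in|Q_i|$, so $d^\infty(|Q|,|Q'|)\le d^\infty(p_1,p_d)\le(d-1)3^{-\nu}$, whence $d\ge 3^\nu d^\infty(|Q|,|Q'|)$. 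As $Q'$ meets $\Refine^\nu(\partial Y)$ we have $d^\infty(|Q'|,|\partial Y|)=0$, so $d^\infty(|Q|,|Q'|)\ge d^\infty(|Q|,|\partial Y|)-3^{-\nu}$. Thus it is enough to prove
\[
d^\infty\bigl(|Q|,|\partial Y|\bigr)\ \ge\ \tfrac{4}{9}-3^{-\nu}\qquad\text{for every $n$-cube $Q$ of }\sfRC_{\Refine^\nu(U)}(Y),
\]
since then $d\ge 3^\nu(\tfrac49-2\cdot 3^{-\nu})=4\cdot 3^{\nu-2}-2\ge 3^{\nu-2}$.

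For this distance estimate I would use Corollary \ref{cor:RC-core-relation} to put $Q$ into one of the three building blocks and locate it in the coordinates $\phi_P$ of the $n$-cube $P$ of $U$ containing it. If $Q$ lies in a reservoir $\widehat\sfR_{q;i}$ or a canal section $\sfC_{q,\{q,q'\};i}$, then $P=\rho(q)$ with $q$ a face of $P$, and (choosing $\phi_P(q)=[0,1]^{n-1}\times\{0\}$) the definition of $D_1$ and the placement of markers give that $Q$ has $x_1,\dots,x_{n-1}\in[\tfrac49,\tfrac59]$ and $x_n\le\tfrac19$ (reservoir), or $x_n\le 3^{-\nu}$, $x_\ell\in[\tfrac49,\tfrac59]$ for $\ell\ne j,n$, and $x_j$ staying on the side of $c_0$ (hence at distance $\ge\tfrac49$ from $1-c_0$), where the canal reaches toward $q\cap q'=\{x_j=c_0,x_n=0\}$ and a minimal chain stays in the bounding box of its two ends. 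If $Q$ lies in a connector $\sfJ_{\{q,q'\};i}$, then $Q\subset\Star_{\Refine^\nu(U)}(\zeta_{q\cap q',\{q,q'\};i})$ with the marker in $\Refine^{\nu-2}(c^2(q\cap q'))$; since $c^2(q\cap q')$ is the $[\tfrac49,\tfrac59]^{n-2}$-subcube of the $(n-2)$-cube $e:=q\cap q'$, the cube $Q$ sits in an $n$-cube of $U$ with $e$ a face, with the two coordinates cutting out $e$ within $3^{-\nu}$ of $\{0,1\}$ and the remaining $n-2$ coordinates in $[\tfrac49,\tfrac59]$.

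The decisive point is that the $(n-2)$-cube $e=q\cap q'$ that canals and connectors reach toward is an edge of the spanning tree $\cT$ of $\Gamma(Y)$, hence a face of at least two $(n-1)$-cubes of $Y$, so $e\notin\partial Y$; consequently every one-sided $(n-2)$-cube $e_0$ of $Y$ is distinct from $e$. If such an $e_0$ is a face of $P$, one of its two defining coordinates is a coordinate in which $Q$ ranges over $[\tfrac49,\tfrac59]$ — this uses $e_0\ne e$ together with the fact that a reservoir has no exceptional coordinate among $x_1,\dots,x_{n-1}$ — so $d^\infty(|Q|,|e_0|)\ge\tfrac49$. In the remaining situation the nearest point of $|\partial Y|$ lies outside $P$; here one checks that a $d^\infty$-geodesic from $|Q|$ can avoid paying $\tfrac49-3^{-\nu}$ only by passing into the star of $q$ (reservoir) or of $e$ (canal, connector), because from the position of $Q$ every other face of $P$ is at $d^\infty$-distance $\ge\tfrac49$, and leaving $\Star_U(e)$ altogether forces the $e$-coordinate to travel from $[\tfrac49,\tfrac59]^{n-2}$ to $\partial[0,1]^{n-2}$; and since $e_0\ne e$, even inside the central part of that star the point of $|e_0|$ nearest to $Q$ differs from $Q$ by $1-3^{-\nu}$ in one of the two directions defining $e$ (the only way $|e_0|$ can reach that central part). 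In every case $d^\infty(|Q|,|e_0|)\ge\tfrac49-3^{-\nu}$, which proves the displayed estimate.

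The hard part is precisely this last case analysis: verifying, in the sup-norm geometry, that the reservoir–canal system can be "exited" cheaply only by moving into the star of a cube of $Y$ lying in at least two top-dimensional cubes of $Y$, and that such stars remain, in the relevant coordinate, at distance $\ge\tfrac49-3^{-\nu}$ from every one-sided $(n-2)$-cube of $Y$. Everything else — patching $d^\infty$, the chain estimate, and the coordinate comparisons — is routine bookkeeping with signed-permutation coordinates.
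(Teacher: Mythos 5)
Your proof is correct and rests on the same decisive observation as the paper's one‐paragraph argument: the $(n-2)$-cube $e=q\cap q'$ that a canal section or connector reaches toward is an edge of the spanning tree $\cT\subset\Gamma(Y)$, hence two-sided in $Y$, hence not in $\partial Y$, while the pre-reservoir cube $\sfR_q$ sits over the center cube of $\Refine^2(q)$ at $\ell^\infty$-distance $\tfrac49$ from $\partial q$. The difference is one of packaging: you first convert the graph distance on $\Gamma(\Refine^\nu(U))$ into an honest sup-norm length metric $d^\infty$ (using that the transition maps, being cubical isometries, are signed coordinate permutations and hence $\ell^\infty$-isometries), and then carry out the bound $d^\infty(|Q|,|\partial Y|)\ge\tfrac49-3^{-\nu}$ case by case in coordinates; the paper instead leaves the quantitative step implicit after recording the three qualitative facts (no markers on $\partial Y$, no canal sections toward $\Star(\partial Y)$, pre-reservoir blocks centered). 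Your route is more self-contained and also yields the slightly sharper lower bound $4\cdot 3^{\nu-2}-2$; the paper's is shorter because it stops at the qualitative level and states only the crude consequence $3^{\nu-2}$. Both hinge on the same two structural facts, so I would classify this as the same proof written out in coordinates rather than a new idea.
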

\begin{proof}
Since  $(n-2)$-cubes on the boundary of $Y$ are not edges of the tree $\cT$, there are no markers in $\partial Y$. Therefore, there are no canal sections in $\sfRC_{\Refine^\nu(U)}(Y)$ connecting pre-reservoir blocks to $\Star_{\Refine^\nu(U)}(\partial Y)$. The claim now follows from the fact that every pre-reservoir block is located over the center cube of $\Refine^2(q)$ for some  $q\in Y^{[n-1]}$. 
\end{proof}

\subsubsection{A partially ordered partition  of $\sfRC_{\Refine^\nu(U)}(Y)$ determined by tree $\cT$}
\label{sec:RC-partition-UY}
To prepare for a localized perturbation in Section \ref{sec:localization}, we choose and fix a partition 
$\cR \cC_{\Refine^\nu(U)}(Y)$ 
of the reservoir-canal system $\sfRC_{\Refine^\nu(U)}(Y)$ as follows.

We divide each canal stretch into two parts by assigning each connector entirely to one of the two canal sections using the partial order $<_Y$ in $\cT$. For edge $\{q, q'\}\in \cT$ with $q<_Y q'$, we take
\[
\widetilde \sfC_{q,\{q,q'\};i} =\sfC_{q,\{q,q'\};i} \cup J_{\{q,q'\};i} \quad \text{and}\,\,\, \widetilde \sfC_{q',\{q,q'\};i}= \sfC_{q',\{q,q'\};i}.
\]
Let 
\[
\sfRC_{\Refine^\nu(U)}(Y)_{q;i}= \widehat \sfR_{q;i} \cup \bigcup_{\{q,q'\}\in \cT} \widetilde \sfC_{q,\{q,q'\};i},
\]
and 
\[
\sfRC_{\Refine^\nu(U)}(Y)_q = \bigcup_{i=1}^m \sfRC_{\Refine^\nu(U)}(Y)_{q;i}.
\]
Then the collection 
\[ \cR \cC_{\Refine^\nu(U)}(Y)=\{ \sfRC_{\Refine^\nu(U)}(Y)_q \colon q\in Y^{[n-1]}\}\]
 is an essential partition of $\sfRC_{\Refine^\nu(U)}(Y)$. We call $\cR \cC_{\Refine^\nu(U)}(Y)$ a 
\emph{partially ordered partition of $\sfRC_{\Refine^\nu(U)}(Y)$ (uniquely) determined by tree $\cT$}.

By Lemmas \ref{lemma:adjacency-reservoirs} and \ref{lemma:adjacency-canal-stretch}, each $\sfRC_{\Refine^\nu(U)}(Y)_q$ is adjacently-connected and has an adjacently-connected boundary.

\begin{remark}\label{rmk:ambient-essentially-disjoint}
The ambient complex $\Refine^\nu(U)$ is locally Euclidean at each $(n-2)$-cube in  $\sfRC_{\Refine^\nu(U)}(Y)$ which is not a marker in  connectors. The requirement 'each connector belonging entirely to one of the elements in $ \cR \cC_{\Refine^\nu(U)}(Y)$' for the partition ensures each marker belongs to only one element in the partition. This condition simplifies the discussion of local geometry in the future.
\end{remark}

\subsection{Transformation}
\label{sec:reservoir-canal-transformation}
\index{separating complex!transformation}\index{reservoir-canal system!transformation}
Using $\sfRC_{\Refine^\nu(U)}(Y)$, we transform $Y$ into a complex $\Tr(Y)$ in $\Refine^\nu(K)$ which has $m$ additional complementary components than that of $Y$. 

For each $i\in \{1,\ldots, m\}$, we fix a spanning tree $\tau_i \subset \Gamma(\sfRC_{\Refine^\nu(U)}(Y)_i )$ and denote
\[
\tau_Y = \tau_1\cup \cdots \cup \tau_m.
\]
We call the family 
of  $(n-1)$-cubes $Q\cap Q'$ representing the edges in $\tau_Y$,
\[
P_{\tau_Y} = \{ Q\cap Q' \in \sfRC_{\Refine^\nu(U)}(Y)^{[n-1]} \colon \{Q,Q'\}\in \tau_Y \},
\]  \emph{$\tau_Y$-passages.}

\begin{definition}
\label{def:reservoir-canal-transformation}
The \emph{transformation of $Y$} is  the $(n-1)$-subcomplex  \index{$\Tr_{\Refine^\nu(U)}(Y)$} 
\[
\Tr(Y) =\Tr_{\Refine^\nu(U)}(Y)=\left( \Refine^\nu(Y) \cup  \sfRC_{\Refine^\nu(U)}(Y)^{(n-1)}\right) -  P_{\tau_Y}.
\]
\end{definition}

\begin{remark}
Although not emphasized,  the definition of $\Tr(Y)$ depends on the assumptions in Proposition \ref{prop:tr-existence-short}, as well as as the spanning tree $\cT$, the selection of the markers, and the choice of the trees $\tau_i$ in the proof. We omit the recording of these dependencies, as they have no roles in what follows.
\end{remark}

\begin{remark}
\label{rmk:tau_i}
The $\tau_Y$-passages are encoded into the reservoir-canal transformation $\Tr_{\Refine^\nu(U)}(Y)$. Indeed, for each $i=1,\ldots, m$, 
we have that 
\[
\tau_i = \Gammacut \left(\sfRC_{\Refine^\nu(U)}(Y)_i; \, \Tr_{\Refine^\nu(U)}(Y)\cap \sfRC_{\Refine^\nu(U)}(Y)_i \right).
\]
\end{remark}

The following lemma shows that the transformation $\Tr(Y)$ of $Y$ inherits the adjacently-connectedness of $Y$. This is one of the key properties in terms of inductive construction of separating complexes.

\begin{lemma}
\label{lemma:Y_sfRC-connected}
The transformation $\Tr_{\Refine^\nu(U)}(Y)$ is adjacently-connected.
\end{lemma}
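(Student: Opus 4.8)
The plan is to show that $\Tr(Y)$ is adjacently-connected by decomposing it into pieces that are individually adjacently-connected and checking that consecutive pieces overlap in an $(n-1)$-cube, which forces an edge in the adjacency graph of the union. Recall that $\Tr(Y) = \left(\Refine^\nu(Y) \cup \sfRC_{\Refine^\nu(U)}(Y)^{(n-1)}\right) - P_{\tau_Y}$, so every $(n-1)$-cube of $\Tr(Y)$ lies either in $\Refine^\nu(Y)$ or in the reservoir-canal system. The key point is that removing the $\tau_Y$-passages only removes the ``extra'' adjacencies created along the spanning trees $\tau_i$, and does not disconnect the structure when combined with $\Refine^\nu(Y)$.

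First I would record the pieces: the refinement $\Refine^\nu(Y)$ of an adjacently-connected complex is adjacently-connected, and each reservoir-canal subsystem $\sfRC_{\Refine^\nu(U)}(Y)_i$ is adjacently-connected with adjacently-connected boundary by Corollary \ref{cor:adjacency-RC}, as is each canal stretch $\sfC_{\{q,q'\};i}$ and each reservoir $\widehat\sfR_{q;i}$ by Lemmas \ref{lemma:adjacency-canal-stretch} and \ref{lemma:adjacency-reservoirs}. Next I would observe that for each $q \in Y^{[n-1]}$ and each $i$, the pre-reservoir-block $\sfR_{q;i}$ meets $\Refine^\nu(q) \subset \Refine^\nu(Y)$ in an adjacently-connected cubical complex contained in $|q|$; after passing to the reservoir $\widehat\sfR_{q;i}$ and then to its $(n-1)$-skeleton, this intersection still contains an $(n-1)$-cube of $\Tr(Y)$ that is a face of some $n$-cube both in $\Refine^\nu(Y)$ and in $\sfRC_{\Refine^\nu(U)}(Y)_i$. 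So each $\sfRC_{\Refine^\nu(U)}(Y)_i^{(n-1)} \cap \Tr(Y)$ is attached, via a shared $(n-1)$-cube, to $\Refine^\nu(Y)$.

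The main obstacle is to argue that passing to the $(n-1)$-skeleton and then removing $P_{\tau_Y}$ preserves adjacent-connectedness of each subsystem. The heuristic is that $\Tr(Y) \cap \sfRC_{\Refine^\nu(U)}(Y)_i$ plays the role of a separating complex in $\sfRC_{\Refine^\nu(U)}(Y)_i$ relative to which the cut-graph is exactly the tree $\tau_i$ (Remark \ref{rmk:tau_i}), hence the realization is a tunnel and in particular its boundary, which contains $\Tr(Y)\cap \sfRC_{\Refine^\nu(U)}(Y)_i$, projects onto an adjacently-connected complex. More directly: since $\tau_i$ is a \emph{spanning tree} of the connected graph $\Gamma(\sfRC_{\Refine^\nu(U)}(Y)_i)$, every $n$-cube of $\sfRC_{\Refine^\nu(U)}(Y)_i$ retains at least its adjacencies along $\Gamma(\partial\sfRC_{\Refine^\nu(U)}(Y)_i)$ together with at least one face not in a $\tau_Y$-passage; using that $\partial\sfRC_{\Refine^\nu(U)}(Y)_i$ is adjacently-connected (Corollary \ref{cor:adjacency-RC}) and that no $(n-1)$-cube of $\partial\sfRC_{\Refine^\nu(U)}(Y)_i$ represents an edge of $\tau_i$ (edges of $\tau_i$ are internal faces), one deduces that $\sfRC_{\Refine^\nu(U)}(Y)_i^{(n-1)} - P_{\tau_Y}$ is adjacently-connected via the chain of boundary faces.

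Finally I would assemble the argument: $\Tr(Y)$ is the union of the adjacently-connected complex $\Refine^\nu(Y)$ with the complexes $\sfRC_{\Refine^\nu(U)}(Y)_i^{(n-1)} - P_{\tau_Y}$ for $i = 1,\ldots,m$, and, as shown above, each of the latter shares an $(n-1)$-cube of $\Tr(Y)$ with $\Refine^\nu(Y)$. Since a shared $(n-1)$-cube that is a common face of two $n$-cubes lying in the two respective pieces yields an edge in $\Gamma(\Tr(Y))$ joining them, connectedness of the union follows from connectedness of the pieces. Hence $\Gamma(\Tr(Y))$ is connected, i.e.\ $\Tr(Y)$ is adjacently-connected. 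The one routine verification I would spell out carefully is that the shared cube survives the removal of $P_{\tau_Y}$ — this is where the placement of reservoirs over center cubes of $\Refine^2(q)$, away from the tree edges, is used.
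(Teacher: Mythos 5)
Your overall architecture matches the paper's: show $\Refine^\nu(Y)$ and each $\sfRC_{\Refine^\nu(U)}(Y)_i^{(n-1)} - P_i$ are adjacently-connected, then glue the pieces to $\Refine^\nu(Y)$ via shared $(n-1)$-cubes. (The paper attaches via canal-stretch faces in $\Refine^\nu(Y)$; you attach via reservoir faces. Both work, and in both cases the relevant face is a boundary face of $\sfRC(Y)_i$, hence cannot represent a tree edge, which is your promised ``routine verification.'') The paper also first unions the subsystems across $i$ using the nesting of reservoirs before attaching to $\Refine^\nu(Y)$, whereas you attach each subsystem separately; this is an immaterial re-ordering.

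The one place you should be careful is the middle step: why does removing $P_i$ from $\sfRC(Y)_i^{(n-1)}$ leave it adjacently-connected? The paper's reason is short but pointed: each $q\in P_i$ is an \emph{interior} $(n-1)$-cube of $\sfRC(Y)_i$, and around each $(n-2)$-face of such a $q$ the remaining $(n-1)$-cubes of $\Star(e)$ are still pairwise adjacent (they all contain $e$), so deleting $q$ cannot disconnect $\Gamma(\sfRC(Y)_i^{(n-1)})$. Your tunnel-realization remark is avowedly a heuristic, and your ``more direct'' boundary argument has a gap as written: asserting that every $n$-cube ``retains at least one face not in a $\tau_Y$-passage'' exhibits a surviving face, but it does not produce a chain in $\Gamma(\sfRC(Y)_i^{(n-1)} - P_i)$ from an arbitrary surviving interior $(n-1)$-cube to the boundary $\partial\sfRC(Y)_i$. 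That chaining step is exactly what the interiority argument supplies and what is missing from your sketch.
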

\begin{proof}
Since $Y$ is adjacently-connected, the refinement $\Refine^\nu(Y)$ is adjacently-connected. 
We check next that $\Gamma (\sfRC_{\Refine^\nu(U)}(Y)^{(n-1)}  -  P_{\tau_Y})$ is connected. 

For each $i\in \{1,\ldots, m\}$, let $P_i$ be the set of passages associated to the tree $\tau_i$, thus 
\begin{equation}
\label{eq:sfRC-union}
\sfRC_{\Refine^\nu(U)}(Y)^{(n-1)}  -  P_{\tau_Y}  =  \bigcup_{i=1}^m \, (\sfRC_{\Refine^\nu(U)}(Y)_i ^{(n-1)}  -   P_i).
\end{equation}
Since $\sfRC_{\Refine^\nu(U)}(Y)_i$ is an adjacently-connected $n$-complex, the $(n-1)$-subcomplex $\sfRC_{\Refine^\nu(U)}(Y)_i ^{(n-1)}$ is also adjacently-connected. Since $(n-1)$-cubes in $P_i$ are \emph{inside} $\sfRC_{\Refine^\nu(U)}(Y)_i$, 
we conclude that the adjacency graph $\Gamma(\sfRC_{\Refine^\nu(U)}(Y)_i ^{(n-1)}  -  P_i)$ remains connected after their removal.

Since complexes ${\sfRC_{\Refine^\nu(U)}(Y)_i}^{(n-1)}  -   P_i$ and ${\sfRC_{\Refine^\nu(U)}(Y)_j}^{(n-1)}  -   P_j$ having consecutive indices $i$ and $j$, meet in $(n-1)$-cubes, their union is adjacently connected. Thus, by \eqref{eq:sfRC-union}, $\Gamma( \sfRC_{\Refine^\nu(U)}(Y)^{(n-1)}  -  P_{\tau_Y})$ is connected.

The $n$-cubes in the canal stretches in $\sfRC_{\Refine^\nu(U)}(Y)$ have faces in $\Refine^\nu(Y)$ and these faces do not belong to $P_{\tau_Y}$. Therefore, the adjacency graphs $\Gamma(\sfRC_{\Refine^\nu(U)}(Y)^{(n-1)}  -  P_{\tau_Y})$ and $\Gamma(\Refine^\nu(Y))$ belong to the same connected component of $\Gamma(\Tr_{\Refine^\nu(U)}(Y))$. Thus $\Gamma(\Tr_{\Refine^\nu(U)}(Y))$ is connected.
\end{proof}

From Corollary \ref{cor:border-of-Y} and the construction of $\Tr(Y)$, the boundary of $Y$ remains in the boundary of $\Tr(Y)$ after the transformation.

\begin{corollary}\label{cor:boundary-of-Y_sfRC}
The $(n-2)$-complex $\Refine^\nu(\partial Y)$ remains in the boundary $\partial(\Tr_{\Refine^\nu(U)}(Y))$ of the transformation.
\end{corollary}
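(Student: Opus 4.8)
The claim is that the $(n-2)$-dimensional complex $\Refine^\nu(\partial Y)$ is contained in $\partial(\Tr_{\Refine^\nu(U)}(Y))$. Recall that $\Tr_{\Refine^\nu(U)}(Y) = (\Refine^\nu(Y)\cup \sfRC_{\Refine^\nu(U)}(Y)^{(n-1)}) - P_{\tau_Y}$, so the only way an $(n-1)$-cube of $\Refine^\nu(Y)$ fails to survive into $\partial(\Tr(Y))$ is either that it is deleted as a $\tau_Y$-passage, or that it ceases to be a one-sided face, i.e. becomes interior to the complex $\Tr(Y)$ because an $n$-cube of some canal stretch or reservoir sits against it. The plan is to show that near $\Refine^\nu(\partial Y)$ neither phenomenon occurs, so that $\Refine^\nu(\partial Y)$ continues to consist of one-sided $(n-1)$-cubes — hence lies in $\partial(\Tr(Y))$ — by exactly the same reasoning that put $\partial Y$ into $\partial Y\subset \partial\bigl(\Span(Y)\bigr)$ for the original complex.

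First I would invoke Corollary \ref{cor:border-of-Y}: every $n$-cube of $\sfRC_{\Refine^\nu(U)}(Y)$ has graph distance at least $3^{\nu-2}$ in $\Gamma(\Refine^\nu(U))$ from any $n$-cube meeting $\Refine^\nu(\partial Y)$. In particular no $n$-cube of the reservoir-canal system is adjacent to, or even touches, the $(n-1)$-cubes lying in $\Refine^\nu(\partial Y)$; more concretely, by Corollary \ref{cor:RC-core-relation} the whole system sits inside $\Refine^\nu(\Star_U(Y))$ and its pre-reservoir blocks are placed over center cubes $c^2(q)$, so a neighborhood of $\Refine^\nu(\partial Y)$ in $\Refine^\nu(U)$ is untouched by $\sfRC_{\Refine^\nu(U)}(Y)$. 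Consequently, for an $(n-1)$-cube $q'\in (\Refine^\nu(\partial Y))^{[n-1]}$, the only $n$-cubes of $\Tr(Y)$ (viewed inside $\Refine^\nu(U)$, i.e. of $\Span_{\Refine^\nu(U)}(\Tr(Y))$) that can contain $q'$ are the $n$-cubes of $\Refine^\nu(\Span_U(Y))$ containing it.

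Second, I would argue that no $\tau_Y$-passage lies in $\Refine^\nu(\partial Y)$: by construction $P_{\tau_Y}\subset \sfRC_{\Refine^\nu(U)}(Y)^{[n-1]}$, and again by Corollary \ref{cor:border-of-Y} these cubes are far from $\Refine^\nu(\partial Y)$, so the subtraction $-P_{\tau_Y}$ does not remove anything from $\Refine^\nu(\partial Y)$. Combining the two points: an $(n-1)$-cube $q'\subset \Refine^\nu(\partial Y)$ is a one-sided face in $\Refine^\nu(Y)$ (because $Y$ is an $(n-1)$-complex and $q'\subset \partial Y$ means, after refinement, that $q'$ has an $(n-2)$-face not shared by any second $(n-1)$-cube of $\Refine^\nu(Y)$), it survives into $\Tr(Y)$, and no cube of $\sfRC_{\Refine^\nu(U)}(Y)^{(n-1)}$ or $\sfRC_{\Refine^\nu(U)}(Y)^{[n]}$ near it could change its status. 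Hence $q'$ is still one-sided in $\Tr(Y)$, so every $(n-2)$-cube of $\Refine^\nu(\partial Y)$ lies in a one-sided $(n-1)$-cube of $\Tr(Y)$ and therefore in $\partial(\Tr_{\Refine^\nu(U)}(Y))$, proving the corollary.

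The only mildly delicate point — the part I would write most carefully — is making precise what "$\Refine^\nu(\partial Y)$ remains in $\partial(\Tr(Y))$" means given that $\Tr(Y)$ is an $(n-1)$-complex whose own boundary $\partial(\Tr(Y))$ is the span of its one-sided $(n-1)$-cubes: one must check that each $(n-2)$-cube of $\Refine^\nu(\partial Y)$ is actually a face of a one-sided $(n-1)$-cube of $\Tr(Y)$ rather than getting "surrounded" by newly attached canal cubes. This is exactly where the graph-distance bound $3^{\nu-2}$ of Corollary \ref{cor:border-of-Y} does the work, together with the localization of reservoirs over center cubes; everything else is bookkeeping, and the argument is short.
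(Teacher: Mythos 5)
Your proof is correct and takes essentially the same approach as the paper: the paper's justification is exactly the one-line appeal to Corollary \ref{cor:border-of-Y} together with the construction of $\Tr(Y)$, which you have expanded into the observations that neither the new cells of $\sfRC_{\Refine^\nu(U)}(Y)^{(n-1)}$ nor the deleted $\tau_Y$-passages $P_{\tau_Y}$ can affect the neighborhood of $\Refine^\nu(\partial Y)$ because the graph-distance bound $3^{\nu-2}\ge 3^8$ keeps the reservoir-canal system well away from $\Refine^\nu(\partial Y)$. (Minor wording caveat: an $(n-2)$-cube lies in $\partial(\Tr(Y))$ when it is contained in exactly one $(n-1)$-cube of $\Tr(Y)$, not merely when it is a face of a one-sided $(n-1)$-cube, but your argument does in fact establish the correct statement.)
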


\subsubsection{Receded subcomplexes} 
\label{sec:receded-complex}
Recall that $G_1,\ldots, G_r$ are the connected components of the cut-graph $\Gammacut(U;Y)$ labeled according to Convention \ref{convention:labeling}, where $r=r(U,Y)\geq m$. We denote
\[
U_j = \Span_U(G_j), \qquad j=1,\ldots, r.
\]

We call  the subcomplex of $\Refine^\nu(U_j)$ remained, after the removal of the reservoir-canal system $\sfRC_{\Refine^\nu(U)}(Y)$, a receded subcomplex.

\begin{definition}\label{def:receded}For $j\in \{1, \ldots, r\}$, 
we call 
\[
\Urec_j= \Refine^\nu(U_j) - \sfRC_{\Refine^\nu(U)}(Y),
\]
a \emph{receded subcomplex of $\Refine^\nu(U_j)$ of index $j$}.
\end{definition}

Complex $\Urec_j$ is adjacently-connected.
We first prove a local version of this property. 

\begin{lemma}
\label{lemma:receded-connected}
For each $j\in \{1,\ldots, r\}$ and each $n$-cube $Q$ in $U_j$, the adjacency graph $\Gamma(\Refine^\nu(Q)\cap \Urec_j)$ is connected. 
\end{lemma}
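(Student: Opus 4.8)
The plan is to analyze the structure of $\Refine^\nu(Q) \cap \Urec_j$ by understanding exactly which $n$-cubes of $\Refine^\nu(Q)$ are removed when we delete the reservoir-canal system $\sfRC_{\Refine^\nu(U)}(Y)$. Recall that $Q$ is an $n$-cube in $U_j = \Span_U(G_j)$, so the faces of $Q$ lying in $Y$ are exactly the faces $Q\cap Q'$ with $\{Q,Q'\}\notin \Gammacut(U;Y)$, i.e.\ with $Q\cap Q'\in Y$. By Corollary \ref{cor:RC-core-relation}, the only way $\sfRC_{\Refine^\nu(U)}(Y)$ can meet $\Refine^\nu(Q)$ is through pre-reservoir-blocks $\sfR_{q;i}$ and canal sections $\sfC_{q,\{q,q'\};i}$ attached to faces $q\in Y^{[n-1]}$ of $Q$ (via the preference function $\rho$, when $\rho(q)=Q$), together with possibly some connector cubes $\sfJ_{\{q,q'\};i}$ that wander into $\Refine^\nu(Q)$.

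First I would reduce to a face-by-face picture. Fix $Q$ and enumerate the faces $q_1,\dots, q_s \in Y^{[n-1]}$ of $Q$ (there are at most $2n$ of them, and at least one since every $n$-cube in $U$ has a face in $Y$). The removed cubes that come from the reservoir-canal system and lie in $\Refine^\nu(Q)$ are concentrated, by construction, over the center cubes $c^2(q_i)\in\Refine^2(q_i)$ (for the markers/canal-sections attached to $q_i$ when $\rho$ directs a reservoir there) and over a small model region $D_1\supset\dots\supset D_m$ sitting against the boundary of $Q$ near the center of $q_i$ (for the reservoir blocks, only present when $\rho(q_i)=Q$). In all cases, the removed subcomplex over a single face $q_i$ is contained in a union of cubes of $\Refine^\nu(Q)$ whose projection to $q_i$ lies in a fixed small concentric subcube, and whose complement in $\Refine^\nu(q_i)$ is an adjacently-connected $(n-1)$-complex --- this is exactly the kind of statement already established for reservoirs and receded reservoirs in Lemmas \ref{lemma:adjacency-reservoirs} and in the construction of pre-reservoir-blocks. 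So the key local fact to extract is: \textbf{for each face $q_i$ of $Q$, the set of $n$-cubes of $\Refine^\nu(Q)$ removed ``over $q_i$'' forms a subcomplex whose complement within the slab of cubes touching $q_i$ remains adjacently connected, and moreover every removed cube is adjacent to a retained cube of $\Refine^\nu(Q)$ that still has a face in $q_i$.} The second clause follows from the ``each $n$-cube in a canal section has a face in $q$'' remark and from the nested structure of the $D_i$'s (the outermost cubes of each $D_i$, against $\partial Q$, are never the only cubes removed in their column).

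Next I would assemble these local facts across the faces. Since $3^\nu \ge 3^{10}\mu^2 m$ is large, the regions over distinct faces $q_i, q_j$ of $Q$ are separated by many full columns of untouched $n$-cubes in $\Refine^\nu(Q)$ --- the pre-reservoir-blocks and canal sections over $q_i$ live within distance $3^{\nu-2}$-worth of cubes of the center of $q_i$, by the same estimate used in Corollary \ref{cor:border-of-Y}, and similarly for markers which sit in $c^2(e)$ for edges $e$. Hence the ``untouched bulk'' $\Refine^\nu(Q) - \bigcup_i(\text{removed over }q_i)$ contains a large adjacently-connected core (a cube-complex isomorphic to $[0,1]^n$ refined, minus small disjoint near-boundary pockets), and each connected piece of ``nearly removed'' material over a face $q_i$ is, by the local fact, adjacently connected to that core via a retained cube. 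I would then conclude that $\Gamma(\Refine^\nu(Q)\cap\Urec_j)$ is connected by: (i) the core is connected; (ii) every retained cube in $\Refine^\nu(Q)$ is adjacency-path-connected within $\Refine^\nu(Q)\cap\Urec_j$ to the core, arguing column by column (a retained cube sits in a column over some face-region or in the bulk; walk toward $\partial Q$ within its column until reaching an untouched cube, using the ``no removed column is entirely removed'' clause, then walk through the bulk). The main obstacle I anticipate is bookkeeping the connector cubes $\sfJ_{\{q,q'\};i}$ of Case 2 in Definition \ref{def:connector}: these can lie in $\Refine^\nu(Q)$ even when $\rho$ does not send $q$ or $q'$ to $Q$, so I must check they are still sparse (confined to stars of markers, hence to $c^2(e)$-regions for edges $e$ of $\cT$, which are again deep in the interior and pairwise far apart by Remark \ref{rmk:markers}) and that their removal does not disconnect any column either --- this follows because a connector is a single cube or a minimal arc in a cyclic star $\Star_{\Refine^\nu(U)}(\zeta)$, so its complement in that star's intersection with $\Refine^\nu(Q)$ is still adjacently connected, and it too has a retained neighbor reaching the bulk.
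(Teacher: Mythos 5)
Your proposal is correct and takes essentially the same geometric route as the paper: the removed cubes of $\Refine^\nu(Q)$ all sit against $\partial Q$ (the pre-reservoir block is a $\Refine^2(Q)$-cube with a face on $\partial Q$, while canal and connector cubes are $\Refine^\nu(Q)$-cubes with one or two faces on $\partial Q$), so the deep interior of $\Refine^\nu(Q)$ is untouched and every retained cube reaches it by a chain of adjacent cubes. The paper organizes this slightly differently --- it splits on whether $Q$ is a preference cube and, in the nontrivial case, records that $R=\Refine^\nu(Q)\cap\sfRC_{\Refine^\nu(U)}(Y)$ is an essentially disjoint union of $n$-cells of the two types above, from which the ``connect to the center cube'' step is immediate --- whereas you work face-by-face and column-by-column, but the underlying observation is the same. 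One local correction: ``walk toward $\partial Q$ within its column'' should read ``walk away from the face $q_i$ into the interior of $Q$''; the removed material is exactly what lies near $q_i\subset\partial Q$, so walking toward $\partial Q$ is the wrong direction, and the auxiliary claim that every removed cube is adjacent to a retained cube with a face in $q_i$ is false for the interior $\Refine^\nu$-subcubes of the pre-reservoir block (though neither slip affects the validity of the overall plan).
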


\begin{proof}
Let $Q$ be an $n$-cube in $U_j$. 
Suppose first that $Q\cap Y=\emptyset$. Then  $\Refine^\nu(Q) \subset \Urec_j$. Hence $\Gamma(\Refine^\nu(Q)\cap \Urec_j) = \Gamma(\Refine^\nu(Q))$ is connected. 

Suppose next that $Q\cap Y \ne \emptyset$. We consider two cases. 
Assume first that $Q$ is not in the image of the preference function $\rho$. 
Then the  intersection $\Refine^\nu(Q)\cap \sfRC_{\Refine^\nu(U)}(Y)$ either (i) has no $n$-cubes, or (ii)  has only $n$-cubes in the connectors in $\sfRC_{\Refine^\nu(U)}(Y)$. In case (i), $\Gamma(\Refine^\nu(Q)\cap \Urec_j) = \Gamma(\Refine^\nu(Q))$ is connected. 
In case (ii), each $n$-cube in the intersection meets   $\Refine^\nu(\partial Q)$ in an $(n-1)$-cell. Thus $\Gamma(\Refine^\nu(Q)\cap \Urec_j)$ is connected.

Assume second that $Q$ is the preference cube of one or more $(n-1)$-cubes in $Y$, under $\rho$. Then the space of $R= \Refine^\nu(Q)\cap \sfRC_{\Refine^\nu(U)}(Y)$ has an essential partition into $n$-cells, each of which is either the space of an $n$-cube in $\Refine^2(Q)$ with a face on $|\partial Q|$, or the space of an $n$-cube in $\Refine^\nu(Q)$ having one or two faces on $|\partial   Q|$. Since 
\[
\Refine^\nu(Q) \cap \Urec_j= \Refine^\nu(Q) - R,
\]
each $n$-cube in $\Refine^\nu(Q) \cap \Urec_j$ may be connected,
 by a chain of adjacent $n$-cubes, 
in $\Refine^\nu(Q) \cap \Urec_j$ to the center $n$-cube  of $\Refine^\nu(Q)$.  Thus $\Gamma(\Refine^\nu(Q) \cap \Urec_j)$ is connected. 
\end{proof}

\begin{corollary}\label{cor:receded-connected}
Each complex $\Urec_j$ is adjacently-connected.
\end{corollary}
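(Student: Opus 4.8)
The plan is to bootstrap the local statement of Lemma~\ref{lemma:receded-connected} to the global one by moving along the connected component $G_j$ of the cut-graph $\Gammacut(U;Y)$ whose span is $U_j$. First I would record that, since $\Urec_j=\Refine^\nu(U_j)-\sfRC_{\Refine^\nu(U)}(Y)$, the adjacency graph $\Gamma(\Urec_j)$ is exactly the subgraph of $\Gamma(\Refine^\nu(U_j))$ induced on those $n$-cubes of $\Refine^\nu(U_j)$ that do not belong to $\sfRC_{\Refine^\nu(U)}(Y)$; in particular $\Gamma(\Urec_j)$ is covered by the connected subgraphs $\Gamma(\Refine^\nu(Q)\cap\Urec_j)$, $Q\in U_j^{[n]}$, supplied by Lemma~\ref{lemma:receded-connected}, and every $n$-cube of $\Urec_j$ lies in one of them. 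So it suffices to show that for every edge $\{Q,Q'\}$ of $G_j$ the two subgraphs $\Gamma(\Refine^\nu(Q)\cap\Urec_j)$ and $\Gamma(\Refine^\nu(Q')\cap\Urec_j)$ lie in the same component of $\Gamma(\Urec_j)$; since $G_j$ is connected and $U_j^{[n]}=V(G_j)$, connectedness of $\Gamma(\Urec_j)$ follows.

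The key geometric input I would establish is: if $q=Q\cap Q'$ is the face corresponding to an edge of $G_j$, so that $q\in U^{[n-1]}\setminus Y$, then the centre $(n-1)$-cube $\bar q$ of $\Refine^\nu(q)$ is not a face of any $n$-cube of $\sfRC_{\Refine^\nu(U)}(Y)$. Granting this, the two $n$-cubes of $\Refine^\nu(Q)$ and of $\Refine^\nu(Q')$ that are adjacent along $\bar q$ lie in $\Refine^\nu(Q)\cap\Urec_j$ and $\Refine^\nu(Q')\cap\Urec_j$ respectively and are adjacent in $\Gamma(\Urec_j)$, which joins the two subgraphs as required. To prove the claim I would use Corollary~\ref{cor:RC-core-relation}, which confines the reservoir--canal system to $\Refine^\nu(\rho(Y^{[n-1]}))$ together with the connectors: an $n$-cube of $\sfRC_{\Refine^\nu(U)}(Y)$ therefore either sits in a reservoir or canal section over some $q''\in Y^{[n-1]}$ — hence, by the construction of Sections~\ref{sec:pre-reservoir-block}--\ref{sec:canal-section}, in a $3^{-2}$-neighbourhood of $|q''|$ sitting over the centre cube of $\Refine^2(q'')$, away from the centre of every face of $\rho(q'')$ other than $q''$ — or it sits in the star of a marker $\zeta\subset\Refine^{\nu-2}(c^2(e))$ with $e\in Y^{[n-2]}$, hence near $|e|$, which is contained in $|\partial q|$ whenever $e\subset q$. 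Since $q$ itself is not in $Y$ it carries no reservoir, and the only features of $\sfRC_{\Refine^\nu(U)}(Y)$ that can reach into $\Refine^\nu(q)$ cluster near $\partial q$; the standing inequality $3^\nu\ge 3^{10}\mu^2 m$ makes these clusters thin enough on the $\Refine^\nu$-scale that they miss the metric centre $\bar q$.

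I expect the last point — that no canal or connector shadows the centre of a face $q\notin Y$ — to be the only real obstacle, since a priori a connector routed through the star of a marker on an $(n-2)$-cube $e\subset q$ could protrude into $\Refine^\nu(q)$. The resolution is exactly the deep nesting built into the construction (markers inside $\Refine^{\nu-2}(c^2(e))$, reservoirs and canals over centre cubes of $\Refine^2(q'')$), which keeps all of this machinery in neighbourhoods of the boundary strata of $Y$ that are thin compared with $3^{-\nu}$ and so cannot block a path through the centre of $\Refine^\nu(q)$. Everything else is formal: gluing the connected pieces $\Gamma(\Refine^\nu(Q)\cap\Urec_j)$ of Lemma~\ref{lemma:receded-connected} along the edges of the connected graph $G_j$.
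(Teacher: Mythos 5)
Your proof is correct and follows essentially the same two-step strategy as the paper: local connectedness from Lemma~\ref{lemma:receded-connected}, then gluing the local pieces across faces of a connected spanning subgraph of $\Gamma(U_j)$. The paper glues along arbitrary edges of $\Gamma(U_j)$ and merely asserts that \emph{some} $(n-1)$-cubes in $\Refine^\nu(q)$ connect the two pieces, whereas you restrict to edges of $G_j$ (so $q\notin Y$) and pinpoint the centre cube $\bar q$ as the unblocked gluing locus — a sound choice that just makes the gluing step more explicit.
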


\begin{proof}
Let $\{Q,Q'\} \in \Gamma(U_j)$ and $q=Q\cap Q'$. Since both graphs $\Gamma(\Refine^\nu(Q)\cap \Urec_j)$ and  $\Gamma(\Refine^\nu(Q')\cap \Urec_j)$ are connected and they may be connected to each other through some $(n-1)$-cubes in $\Refine^\nu(q)$, graphs $\Gamma(\Refine^\nu(Q)\cap \Urec_j)$ and $\Gamma(\Refine^\nu(Q')\cap \Urec_j)$ belong to the same connected component of  $\Gamma(\Urec_j)$. Since $\Gamma(U_j)$ is connected, we conclude that   $\Gamma(\Urec_j)$ is connected.
\end{proof}

By Remark \ref{rmk:tau_i} and Corollary \ref{cor:receded-connected}, cut-graph $\Gammacut(\Refine^\nu(U);\Tr_{\Refine^\nu(U)}(Y))$ has $r+m$ connected components.

\begin{corollary}
\label{cor:reservoir-canal-components}
Let $\Tr_{\Refine^\nu(U)}(Y)$ be a transformation of $Y$ in $\Refine^\nu(U)$. Then the connected components of  $\Gammacut(\Refine^\nu(U);\Tr_{\Refine^\nu(U)}(Y))$ are 
 \[
\Gamma(\Urec_1),\ldots,  \Gamma(\Urec_r), \,\tau_1,\ldots,\tau_m.
\]
Moreover, each $\tau_i$ is a tree of length at most $\lambda=\lambda(\# Y^{[n-1]}, \mu, \nu)$. 
\end{corollary}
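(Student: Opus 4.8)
The plan is to argue at the level of adjacency graphs. The vertices of $\Gammacut(\Refine^\nu(U);\Tr_{\Refine^\nu(U)}(Y))$ are the $n$-cubes of $\Refine^\nu(U)$, two of them being joined by an edge exactly when they are adjacent and their common face does not lie in $\Tr_{\Refine^\nu(U)}(Y)$. Since $\{U_1,\ldots,U_r\}$ is an essential partition of $U$ and $\Urec_j=\Refine^\nu(U_j)-\sfRC_{\Refine^\nu(U)}(Y)$, the $n$-cubes of $\Refine^\nu(U)$ split into those of $\sfRC_{\Refine^\nu(U)}(Y)$ and those of $\Urec_1,\ldots,\Urec_r$; moreover the subsystems $\sfRC_{\Refine^\nu(U)}(Y)_1,\ldots,\sfRC_{\Refine^\nu(U)}(Y)_m$ pairwise share no $n$-cube by Corollary \ref{cor:reservoir-canal-system-intersection}. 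Thus the $n$-cubes are partitioned into $r+m$ classes, namely those of $\Urec_1,\ldots,\Urec_r$ and those of $\sfRC_{\Refine^\nu(U)}(Y)_1,\ldots,\sfRC_{\Refine^\nu(U)}(Y)_m$.

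Next I would determine the walls of $\Tr_{\Refine^\nu(U)}(Y)=\bigl(\Refine^\nu(Y)\cup\sfRC_{\Refine^\nu(U)}(Y)^{(n-1)}\bigr)-P_{\tau_Y}$. Each $\tau_Y$-passage joins two $n$-cubes of one and the same subsystem $\sfRC_{\Refine^\nu(U)}(Y)_i$; and since the reservoir-canal system meets $|Y|$ only along reservoir bases, which are one-sided in $\sfRC_{\Refine^\nu(U)}(Y)$ because the model reservoirs, canal sections and markers sit in the central part of the unit cube, no passage is a boundary face of $\sfRC_{\Refine^\nu(U)}(Y)$ nor a face on $|Y|$. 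Hence $\Refine^\nu(Y)\cup\partial\sfRC_{\Refine^\nu(U)}(Y)\subset\Tr_{\Refine^\nu(U)}(Y)$. From this I obtain three ``no-edge'' statements for the cut-graph: an edge joining $\Urec_j$ to $\Urec_{j'}$ with $j\ne j'$ must have its face on $|Y|$ (as $G_j,G_{j'}$ are distinct components of $\Gammacut(U;Y)$) and hence in $\Refine^\nu(Y)$; an edge joining $\Urec_j$ to $\sfRC_{\Refine^\nu(U)}(Y)$ has its face in $\partial\sfRC_{\Refine^\nu(U)}(Y)$; and an edge joining $\sfRC_{\Refine^\nu(U)}(Y)_i$ to $\sfRC_{\Refine^\nu(U)}(Y)_{i'}$ with $i\ne i'$ has an interior face of $\sfRC_{\Refine^\nu(U)}(Y)$ that is not a passage. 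In each case the face lies in $\Tr_{\Refine^\nu(U)}(Y)$, so no such edge exists; consequently each of the $r+m$ classes above is a union of connected components of the cut-graph.

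It remains to check that each class is connected and to identify it. Each $\Urec_j$ is adjacently-connected by Corollary \ref{cor:receded-connected}, and the joining chains produced there can be taken to cross only faces off $|Y|$ (argue cube-by-cube as in Lemma \ref{lemma:receded-connected} and use connectedness of $G_j$), so the cut-graph restricted to the $n$-cubes of $\Urec_j$ is connected; this is the component $\Gamma(\Urec_j)$. Inside $\sfRC_{\Refine^\nu(U)}(Y)_i$ an interior face survives in the cut-graph precisely when it is a $\tau_Y$-passage lying in $\tau_i$, so by Remark \ref{rmk:tau_i} the induced cut-graph is $\tau_i$, which is connected since $\tau_i$ is a spanning tree of the connected graph $\Gamma(\sfRC_{\Refine^\nu(U)}(Y)_i)$ (Corollary \ref{cor:adjacency-RC}) and is a tree by choice. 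Hence the cut-graph has exactly the $r+m$ components $\Gamma(\Urec_1),\ldots,\Gamma(\Urec_r),\tau_1,\ldots,\tau_m$. For the last assertion, $\tau_i$ has $\#\sfRC_{\Refine^\nu(U)}(Y)_i^{[n]}$ vertices, and $\sfRC_{\Refine^\nu(U)}(Y)_i$ consists of one reservoir for each of the $\#Y^{[n-1]}$ faces $q\in Y^{[n-1]}$ (each such reservoir being contained in $\Refine^\nu$ of a single $n$-cube $\rho(q)$, hence having at most $3^{\nu n}$ $n$-cubes), together with one canal stretch for each of the $\#Y^{[n-1]}-1$ edges of $\cT$, each built from two canal sections (again contained in $\Refine^\nu(\rho(\cdot))$, so at most $3^{\nu n}$ $n$-cubes apiece) and one connector contained in the star of a marker in $\Refine^\nu(U)$, which by Lemma \ref{lemma:star-cyclic} has a number of $n$-cubes bounded in terms of $\mu$. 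Summing these contributions bounds the length of $\tau_i$ by a constant $\lambda=\lambda(\#Y^{[n-1]},\mu,\nu)$, as required. The only genuinely delicate step, the rest being bookkeeping organized by the earlier corollaries, is the wall identification in the second paragraph — in particular that the reservoir-canal system meets $|Y|$ only along its one-sided reservoir bases — which is exactly what forbids spurious cut-graph edges between the $\Urec_j$ across $|Y|$ and between $\Urec_j$ and $\sfRC_{\Refine^\nu(U)}(Y)$.
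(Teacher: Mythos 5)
Your proof is correct and follows essentially the same route as the paper, which reads the corollary off Remark \ref{rmk:tau_i} (identifying the cut-graph restricted to $\sfRC_{\Refine^\nu(U)}(Y)_i$ with $\tau_i$) and Corollary \ref{cor:receded-connected} (connectedness of $\Urec_j$); you have simply filled in the bookkeeping. The partition of $\Refine^\nu(U)^{[n]}$ into the classes $\Urec_1,\ldots,\Urec_r,\sfRC(Y)_1,\ldots,\sfRC(Y)_m$, the three no-edge statements, and the connectedness of each class restricted to the cut-graph, together with the length estimate for $\tau_i$, are exactly what is needed.

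One intermediate claim, however, is not accurate and is worth flagging. You assert that ``the reservoir-canal system meets $|Y|$ only along reservoir bases'' and conclude that no passage is a face on $|Y|$, hence $\Refine^\nu(Y)\cup\partial\sfRC_{\Refine^\nu(U)}(Y)\subset\Tr_{\Refine^\nu(U)}(Y)$. This is not guaranteed: the canal sections also meet $|Y|$ (every $n$-cube in $\sfC_{q,\{q,q'\};i}$ has a face in $\Refine^\nu(q)$), and, more to the point, a connector $\sfJ_{\{q,q'\};i}$ may have $Y$ passing through its interior — the paper acknowledges exactly this (see the remark preceding Lemma \ref{lemma:canal-stretch-as-indentation} and Figure \ref{fig:lift-of-RC}, where ``the splitting of the lift occurs when $Y$ enters the interior of the connector''). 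A spanning tree $\tau_i$ is chosen freely, so an edge of $\tau_i$ inside such a connector could correspond to a passage lying in $\Refine^\nu(Y)$, in which case $\Refine^\nu(Y)\not\subset\Tr_{\Refine^\nu(U)}(Y)$.

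Fortunately the error is harmless, because the three no-edge statements do not actually need $\Refine^\nu(Y)\subset\Tr_{\Refine^\nu(U)}(Y)$. Case (a): a face shared by $n$-cubes in distinct $\Urec_j,\Urec_{j'}$ has both its $n$-cube neighbors outside $\sfRC_{\Refine^\nu(U)}(Y)$, and since the complex is good, an $(n-1)$-cube has at most two $n$-cube neighbors, so the face is not in $\sfRC_{\Refine^\nu(U)}(Y)^{(n-1)}$ at all and in particular is not a passage; it therefore survives in $\Tr_{\Refine^\nu(U)}(Y)$. Case (b): the face is in $\partial\sfRC_{\Refine^\nu(U)}(Y)$, hence one-sided in $\sfRC_{\Refine^\nu(U)}(Y)$ and not a passage. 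Case (c): the face joins $n$-cubes in distinct $\sfRC_{\Refine^\nu(U)}(Y)_i$ and $\sfRC_{\Refine^\nu(U)}(Y)_{i'}$, so it is not an edge of $\Gamma(\sfRC_{\Refine^\nu(U)}(Y)_j)$ for any $j$ and in particular not an edge of any $\tau_j$, hence not a passage. So the needed conclusion — that in each of the three cases the face lies in $\Tr_{\Refine^\nu(U)}(Y)$ and therefore the cut-graph has no such edge — holds, just not for the reason you gave. The rest of the argument is sound.
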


\begin{proof}[Proof of Proposition \ref{prop:tr-existence-short}]Property \eqref{item:rplusm} in the Proposition follows from
Corollary \ref{cor:reservoir-canal-components}. 
Property \eqref{item:pre-Wada} holds because, for each $i\in \{1,\ldots, m\}$, there is a $\sfRC(Y)_{q;i}$ over each $q\in Y^{[n-1]}$  by the construction.
\end{proof}

\subsubsection{Indentation in realizations}
\label{sec:lift-RC} Recall that $G_1,\ldots, G_r$ are the connected components of the cut-graph  $\Gammacut(U;Y)$, $U_j=\Span_U(G_j)$, $\cR_U(G_j)$ is the realization of $G_j$,
 $\cR_U(\Gammacut(U;Y))$ is the realization of  $\Gammacut(U;Y)$, and 
\[ 
\pi_{(U;Y)} \colon \cR_U(\Gammacut(U;Y)) \to U
\]
is the $\Gammacut(U;Y)$-quotient map. 

We prove next that the lift of $\sfRC_{\Refine^\nu(U)}(Y)$ in $\cR_U(G_j)$,
\[D_U(G_j)= \pi_{(U;Y)}^{-1}\left(\sfRC_{\Refine^\nu(U)}(Y) \cap \Refine^\nu(U_j)\right),\]
is an indentation in $\cR_U(G_j)$, for each $ j= 1,\ldots, r$.

\begin{proposition}
\label{prop:reservoir-canals-as-indentations}
For each $j\in \{1,\ldots,r\}$, the complex $D_U(G_j)$ is an indentation in $\cR_U(G_j)$. 
\end{proposition}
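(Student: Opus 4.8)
The plan is to check the definition of an indentation (Definition~\ref{def:indentation-2}) directly: for the fixed $j$ I would exhibit the required essentially disjoint partition of $D_U(G_j)$ into a family of cube-indentations and a family of star-indentations, and then verify the two intersection axioms \eqref{item:D} and \eqref{item:AD}. Two facts make the partition visible. First, $\cR_U(G_j)$ is itself a good cubical $n$-complex: $U$ has a good interior, the faces of $Y$ become one-sided in the realization, and the argument of Remark~\ref{rmk:lift-good} carries over to the cut-graph component $G_j$ (in particular the star of each lifted $(n-2)$-cube is a single ``block'' of a cyclic star of $U$, hence connected). Second, by Corollary~\ref{cor:RC-core-relation} one has $\sfRC_{\Refine^\nu(U)}(Y)\subset\Refine^\nu(\Star_U(Y))$, which sorts the $n$-cubes of $\sfRC_{\Refine^\nu(U)}(Y)$ meeting $\Refine^\nu(U_j)$ into two classes: those lying in $\Refine^\nu(\rho(q))$ for some $q\in Y^{[n-1]}$ with $\rho(q)\in U_j$ (the reservoirs and canal sections over $q$), and those lying in the star of a marker $\zeta$ on an edge $e=q\cap q'$ of $Y$ (the connectors). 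Under $\pi_{(U;Y)}$, such a face $q$ lifts to a boundary $(n-1)$-cube $\hat q$ of $\cR_U(G_j)$, such an edge $e$ to a boundary $(n-2)$-cube $\hat e$, and the marker $\zeta$ to an $(n-2)$-cube $\hat\zeta\subset\hat e$; the cube-indentations will sit over the $\hat q$'s and the star-indentations over the $\hat e$'s.

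For the cube-indentations I would, for each $q\in Y^{[n-1]}$ with $\rho(q)\in U_j$, take $D_{\hat q}=\pi_{(U;Y)}^{-1}\bigl(\sfR_q\cup\bigcup_{i=1}^m\sfC_{q;i}\bigr)$ and $D_{\hat q}=\emptyset$ for every other boundary $(n-1)$-cube. The point is that this union reassembles the whole reservoir-cube $\sfR_q$, which by Section~\ref{sec:pre-reservoir-block} is a refinement of a single cube of $\Refine^2(\rho(q))$ whose exposed face lies in the interior of $|q|$; so the spectrum of $D_{\hat q}$ with respect to $\cR_U(G_j)$ consists of that one rank-$2$ cube together with the rank-$\nu$ cubes formed by the portions of the canal sections sticking out of $|\sfR_q|$, each of which lies in the bottom layer of $\Refine^\nu(\rho(q))$ and therefore has a face in the interior of $|\hat q|$. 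This yields conditions \eqref{item:indentation-1} and \eqref{item:indentation-2}, while \eqref{item:indentation-4} and \eqref{item:indentation-3} follow from the canal sections leaving $\sfR_q$ towards distinct edges of $q$ within the bottom layer and from their linear adjacency (Remark~\ref{rmk:markers}). Essential disjointness of the family $\{D_{\hat q}\}$ is precisely the non-meeting of reservoir-cubes and canal sections over distinct faces of a common preferred cube (minimality, Section~\ref{sec:canal-section}).

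For the star-indentations I would, for each edge $\{q,q'\}\in\cT$ and index $i$ whose connector $\sfJ_{\{q,q'\};i}$ meets $\Refine^\nu(U_j)$, set $A$ equal to the lift of $\sfJ_{\{q,q'\};i}\cap\Refine^\nu(U_j)$; distinct such $A$'s are disjoint since distinct markers have disjoint stars. Verifying that each $A$ is a genuine star-indentation emitted from $\hat e$ in the sense of Definitions~\ref{def:partial-star}--\ref{def:partial-star-indentation}, namely the refinement of the full (linear) star $\Star_{\Refine^\nu(\cR_U(G_j))}(\hat\zeta)$, is the step that will require the most care, and I expect it to be the main obstacle. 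In Case~1 of Definition~\ref{def:connector} it is immediate, since $\rho(q)=\rho(q')$ forces the relevant lift $\hat e$ of $e$ to have a one-cube star, matching the one-cube connector. In Case~2 one must argue that the minimal arc $\sfJ_{\{q,q'\};i}$ inside the cyclic star $\Star_{\Refine^\nu(U)}(\zeta)$ (cyclic by Lemma~\ref{lemma:star-cyclic}), whose cubes are spread over the components $U_1,\dots,U_r$, intersects $\Refine^\nu(U_j)$ in exactly the block of that cycle that lifts to $\Star_{\cR_U(G_j)}(\hat e)$: the cube $\Star_{\Refine^\nu(U)}(\zeta)\cap\Refine^\nu(\rho(q))$ sits at the $Y$-side end of its block, with a boundary face of $\cR_U(G_j)$ adjacent to $\hat e$, and a symmetric statement holds at the other end; what needs checking is that the minimality of the connector makes it sweep out exactly that block, uniformly over all components and connectors.

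Finally, with the partition $D_U(G_j)=\bigl(\bigcup_{\hat q}D_{\hat q}\bigr)\cup\bigl(\bigcup A\bigr)$ in hand, the intersection axioms are short. Axiom \eqref{item:D} is essentially vacuous, because the cube-indentations $D_{\hat q}$ are pairwise disjoint by the minimality properties already used, and where their closures abut along an $(n-2)$-cube the interposed star-indentation is the $A$ attached to the corresponding marker. Axiom \eqref{item:AD} is read off from the defining property of a canal section: where $A$ meets $D_{\hat q}$ the intersection is the $(n-1)$-cube $\sfC_{q,\{q,q'\};i}\cap\sfJ_{\{q,q'\};i}$, and the spectral cube of $D_{\hat q}$ containing that face is the canal cube adjacent to, but not contained in, $\Star_{\Refine^\nu(U)}(\zeta)$, which lies one step away from the marker and hence misses $\hat e$, exactly as required. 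Putting the pieces together proves that $D_U(G_j)$ is an indentation in $\cR_U(G_j)$.
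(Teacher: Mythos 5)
Your overall strategy matches the paper's: verify Definition~\ref{def:indentation-2} directly by exhibiting the partition of $D_U(G_j)$ into cube-indentations over lifts of faces $q\in Y^{[n-1]}$ with $\rho(q)\in U_j$ (the rank-$2$ reservoir cube plus the rank-$\nu$ canal cubes in the bottom layer) and star-indentations coming from connectors. The paper reaches the same endpoint, but factors the argument through Lemma~\ref{lemma:canal-stretch-as-indentation}, which treats each canal stretch as a unit and shows its lift in $\cR_U(G_j)$ is already an indentation; the proof of Proposition~\ref{prop:reservoir-canals-as-indentations} then only needs to add the lifted reservoir cubes and observe the triple-intersection condition. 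Your per-face bundling is a reasonable reorganization of the same ideas and does not change the substance.

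The place where your plan is genuinely incomplete is exactly the point you flag as ``the main obstacle'': that each connected component of $\pi_{(U;Y)}^{-1}\bigl(\sfJ_{\{q,q'\};i}\cap\Refine^\nu(U_j)\bigr)$ is the \emph{full} star $\Star_{\Refine^\nu(\cR_U(G_j))}(\hat\zeta)$ for the appropriate lift $\hat\zeta$ of the marker, not merely contained in it. Minimality of $\sfJ_{\{q,q'\};i}$ alone does not give this; the paper's argument (see Cases~\eqref{item:P_adjacent_one} and~\eqref{item:P_isolated} in the proof of Lemma~\ref{lemma:canal-stretch-as-indentation}) is that any such component $R$ of $\Gammacut(\sfC_{\{q,q'\};i};Y)$ lying inside the connector is bounded on both ends by $(n-1)$-faces contained in $\Refine^\nu(Y)$. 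At the canal-section end this is the face of $\Star_{\Refine^\nu(U)}(\zeta)\cap\Refine^\nu(\rho(q))$ lying in $q$, and at the other end it is the next $Y$-cut of the cyclic star. Because faces of $\Refine^\nu(Y)$ become boundary faces of $\cR_U(G_j)$, these two faces delimit the entire $\Gamma(\Star_{\cR_U(G_j)}(\hat\zeta))$, so the lift of $R$ is that whole star. That is the missing step, and without it ``the lift of a connector is a star-indentation'' is unproven. Relatedly, you should take one star-indentation per connected component of the lift, not one per connector, since the lift can split (Remark following Lemma~\ref{lemma:canal-stretch-as-indentation}).

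One smaller slip: the cube-indentations $D_{\hat q}$ are not pairwise disjoint. When $\rho(q)=\rho(q')$ and $\{q,q'\}\in\cT$, the terminal cubes of $\sfC_{q,\{q,q'\};i}$ and $\sfC_{q',\{q,q'\};i}$ share an $(n-2)$-cube (the edge of the Case~1 connector opposite to $\zeta$), so condition~\eqref{item:D} of Definition~\ref{def:indentation-2} is not vacuous. Your description of the interposed $A_\eta$ is in fact the correct verification of that axiom; you should just not call it vacuous.
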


First we show that the lift of each canal stretch $\sfC_{\{q,q'\};i}$ is an indentation in $\cR_U(G_j)$. Note that  indices $i$ and $j$ have no relation.

\begin{lemma}
\label{lemma:canal-stretch-as-indentation}
For $j\in \{1,\ldots, r\}$, the lift $\pi_{(U;Y)}^{-1}\left(\sfC_{\{q,q'\};i}\right)\cap \Refine^\nu(\cR_U(G_j))$
of any canal stretch $\sfC_{\{q,q'\};i}$, if nonempty, is an indentation in  $\cR_U(G_j)$. 
\end{lemma}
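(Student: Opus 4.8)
The plan is to verify directly that the lift of a canal stretch satisfies the defining conditions of an indentation (Definition \ref{def:indentation-2}), by inspecting the local structure of each of the three pieces -- the two canal sections and the connector -- and of how they sit inside the realization $\cR_U(G_j)$.

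First I would recall the structure of $\sfC_{\{q,q'\};i} = \sfC_{q,\{q,q'\};i} \cup \sfJ_{\{q,q'\};i} \cup \sfC_{q',\{q,q'\};i}$. By the construction in Sections \ref{sec:canal-section} and \ref{sec:connector}, each canal section $\sfC_{q,\{q,q'\};i}$ is a minimal adjacently connected $n$-subcomplex of $\Refine^\nu(\rho(q))$ whose adjacency graph is linear (by Remark \ref{rmk:markers}), each of whose $n$-cubes has a face in $q$, and which joins the pre-reservoir-block $\sfR_{q;i}$ to the star $\Star_{\Refine^\nu(U)}(\zeta_{q\cap q',\{q,q'\};i})$. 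Since each $n$-cube of the canal section has a face on $|q|\subset |Y|$, when we pass to the realization $\cR_U(G_j)$ -- where $q$ becomes a part of the boundary of the complex whenever $\{Q,Q'\}$ is not an edge of $G_j$ (i.e. $q$ has been unidentified) -- the lifted canal section becomes a subcomplex all of whose spectral cubes have a face on $|\partial \cR_U(G_j)|$. Thus I would argue that the lift of a single canal section is a cube-indentation over the appropriate boundary face: conditions \eqref{item:indentation-1} and \eqref{item:indentation-2} of Definition \ref{def:indentation} hold because the section lives in $\Refine^\nu(\rho(q))$ (so rank-$0$ and rank-$1$ spectral cubes are absent and every spectral cube has a face in the interior of the lifted $q$), and conditions \eqref{item:indentation-4} and \eqref{item:indentation-3} hold because the linear adjacency structure forces the spectral cubes to stack up in a $[0,1]^{n-1}\times[0,\ell]$ pattern, meeting pairwise in opposite faces. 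The connector $\sfJ_{\{q,q'\};i}$, by Definition \ref{def:connector} and Remark \ref{rmk:J}, is a minimal adjacently connected subcomplex of the cyclic star $\Star_{\Refine^\nu(U)}(\zeta_{q\cap q',\{q,q'\};i})$ of an $(n-2)$-cube $\zeta$ sitting over the edge $\xi=q\cap q'$; its lift is therefore a star-indentation emitted from (the lift of) $\xi$, or a single $n$-cube in Case 1. This is precisely the role played by $\mathcal A(B)$ in Definition \ref{def:indentation-2}.

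Next I would assemble these pieces: set $B = \pi_{(U;Y)}^{-1}(\sfC_{\{q,q'\};i})\cap\Refine^\nu(\cR_U(G_j))$, with $\mathcal D(B)$ the (at most two) lifted canal sections and $\mathcal A(B)$ the lifted connector. I must check the two compatibility conditions of Definition \ref{def:indentation-2}: condition \eqref{item:D} is vacuous or easy here because the two cube-indentations $D_q, D_{q'}$ in $\mathcal D(B)$ lie over distinct faces $q,q'$ and meet, if at all, only through the connector, which by construction is attached over $\xi = q\cap q'$; and condition \eqref{item:AD} -- that the intersection of the star-indentation with each cube-indentation is a common face of $S_A$ and a spectral cube not meeting $\xi$ -- follows from requirement \eqref{item:rcs-2} in the definition of the canal section, which glues the section to the star of the marker along an $(n-1)$-cube, together with the fact (noted after Definition \ref{def:connector}) that this gluing face meets $q$ but the marker $\zeta$ is placed over the center $c^2(\xi)$, away from $\xi$ itself. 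The caveat ``if nonempty'' handles the case where $G_j$ omits all of the relevant $n$-cubes of the canal stretch, and the ``at most two canal sections'' phrasing handles the case where exactly one of $q,q'$ survives in $G_j$.

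The main obstacle I anticipate is the bookkeeping at the connector: one must confirm that, after lifting to $\cR_U(G_j)$, the cyclic star $\Star_{\Refine^\nu(U)}(\zeta)$ lifts \emph{isomorphically} (so that $S_A$ is genuinely a star emitted from the lifted $\xi$, not a proper piece of one), and that the two faces along which $\sfJ_{\{q,q'\};i}$ meets the two canal sections lift to the faces of $S_A$ in $\partial S_A$ that do not contain $\xi$ -- exactly the configuration demanded by \eqref{item:AD}. This requires using that $Y\subset Z_k$ has a good interior and that $U$ is locally Euclidean at the $(n-2)$-cubes of the reservoir-canal system that are not markers in connectors (Remark \ref{rmk:ambient-essentially-disjoint}), so that the star is cyclic of size at most $\mu$ and the relevant identifications in $\cR_U(G_j)$ occur only along faces on $|Y|$. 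Once this local picture is pinned down, the verification of the indentation axioms is routine, and the lemma follows.
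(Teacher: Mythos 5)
Your decomposition of the lift into cube-indentations coming from the canal sections and star-indentations coming from the connector is the right idea, and your verification of the cube-indentation axioms for a single lifted canal section (ranks $\geq 2$, faces in the interior of the lifted $q$, opposite-face stacking from the linear adjacency graph) is correct. However, your assembly step --- setting $\mathcal D(B)$ to be ``the (at most two) lifted canal sections'' and $\mathcal A(B)$ to be ``the lifted connector'' --- misses the central subtlety this lemma is designed to absorb: the lift $\pi_{(U;Y)}^{-1}(\sfC_{\{q,q'\};i}) \cap \Refine^\nu(\cR_U(G_j))$ need not be connected, because $Y$ can enter the \emph{interior} of the connector $\sfJ_{\{q,q'\};i}$ and cut the cyclic star into several arcs. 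When that happens, the part of the connector that survives in $U_j$ lifts to a \emph{union} of disjoint star-indentations (each a full linear star around a lifted copy of $\zeta$, precisely because the unidentifications in $\cR_U(G_j)$ occur only along faces in $Y$, as you note). In particular some of those arcs can be \emph{isolated}, adjacent to neither lifted canal section --- either because $Y$ cuts the connector twice, or because $\rho(q)$ or $\rho(q')$ lies outside $U_j$. Your compatibility check for condition~\eqref{item:AD} of Definition~\ref{def:indentation-2} is stated only for the configuration in which one star meets one canal section and does not engage the isolated case, and you never notice that $\mathcal A(B)$ may have more than one element.

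The paper avoids this bookkeeping entirely by working one connected component $P$ of $\Gammacut(\sfC_{\{q,q'\};i};Y)$ at a time, classifying $P$ as (i) the whole stretch, (ii) a canal section together with a partial connector, or (iii) an isolated partial connector; showing each such $P$ lifts to an indentation in $\cR_U(G_j)$; and then invoking that a union of mutually disjoint indentations is an indentation. If you wish to keep the global-$B$ formulation, you should redefine $\mathcal A(B)$ as the family of connected lifted connector arcs, observe that each is a full star in $\Refine^\nu(\cR_U(G_j))$ because its two ends lie on faces in $Y$, and record that condition~\eqref{item:AD} is vacuous for any isolated arc since such a star meets no element of $\mathcal D(B)$. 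With those adjustments your route does reach the conclusion; as written it has a genuine gap.
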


We make an observation before giving the proof.
\begin{remark}
Although a canal stretch $\sfC_{\{q,q'\};i}$ is adjacently-connected, its lift in $\Refine^\nu(\cR(\Gammacut(U;Y))$ need not be connected. 
The splitting of the lift 
occurs when $Y$ enters the interior of the connector $|\sfJ_{\{q,q'\};i}|$; see Figure \ref{fig:lift-of-RC} for an illustration.
\end{remark}

\begin{figure}[htp]
\centering
\begin{overpic}[scale=.30,unit=1mm]{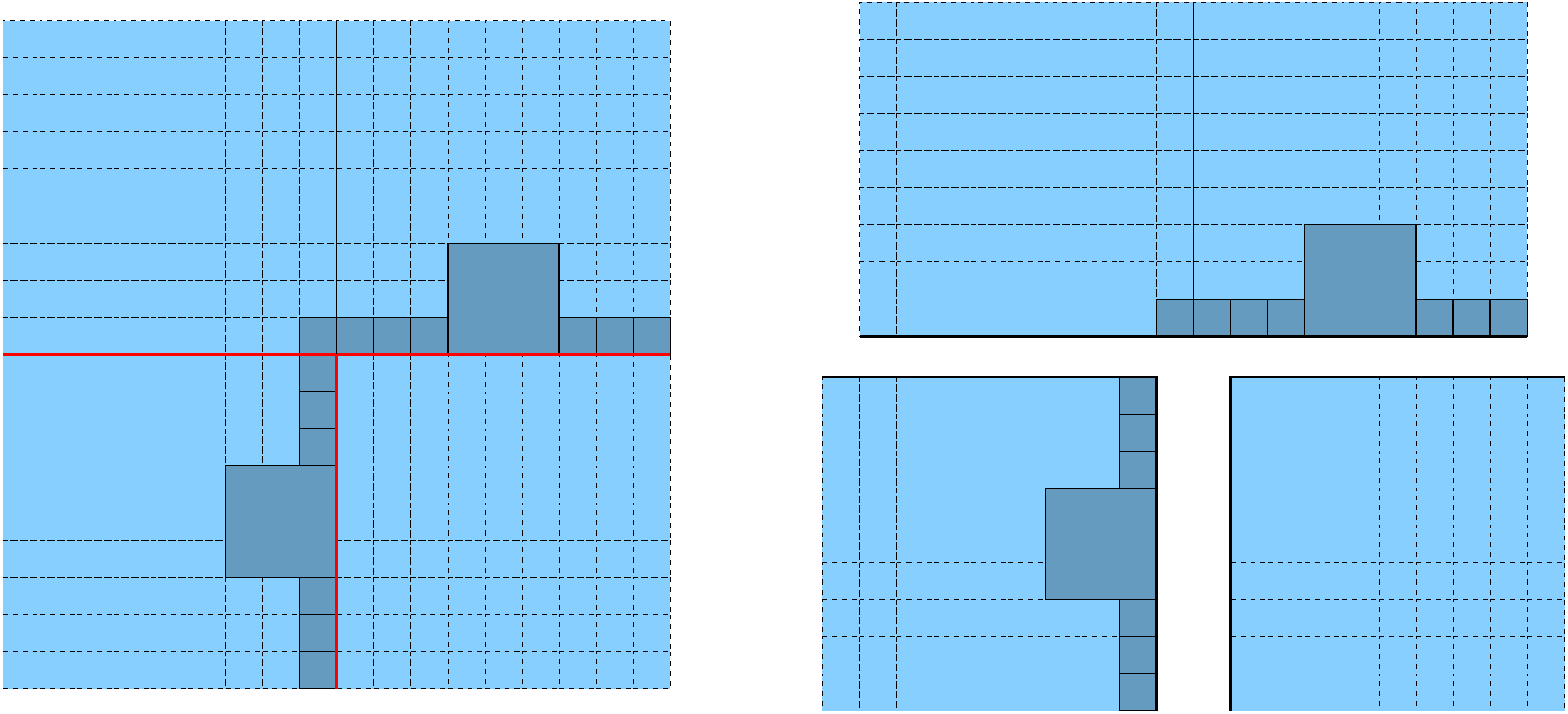} 
\put(3,27){\tiny $Y$}
\put(55,38){\tiny $U$} 
\put(125,38){\tiny $\Real_U$} 
\put(75,28){\tiny $\Real_{Y}$}
\put(28, 26){$e$}
\end{overpic}

\caption{A schematic picture of a reservoir-canal system (dark blue) and its lift.}
\label{fig:lift-of-RC}
\end{figure}

\begin{proof}[Proof of Lemma \ref{lemma:canal-stretch-as-indentation}]
Let $\sfC_{\{q,q'\};i}$ be a canal stretch having a nonempty lift $\pi_{(U;Y)}^{-1}(\sfC_{\{q,q'\};i}) \cap \Refine^\nu( \cR_U(G_j))$.
Let $P$  be a subcomplex of $\sfC_{\{q,q'\};i}$ whose lift  $\pi_{(U;Y)}^{-1}(P)$ is a connected component of $\pi_{(U;Y)}^{-1}(\sfC_{\{q,q'\};i})\cap \Refine^\nu( \cR_U(G_j))$. Then $\Gamma(P)$ is a connected component of  $\Gammacut(\sfC_{\{q,q'\};i};Y)$ and is isomorphic to $\Gamma\left(\pi_{(U;Y)}^{-1}(P\cap \Refine^\nu(U_j))\right)$. 

Complex $P$ then has
one of the following three forms:
 \begin{enumerate}
\item  the entire $\sfC_{\{q,q'\};i}$, or  \label{item:P_adjacent_both}
\item the union of  a canal section in $\sfC_{\{q,q'\};i}$ and an adjacently-connected subcomplex $R$ of the connector, or \label{item:P_adjacent_one}
\item  an adjacently-connected subcomplex $R$ of the connector and it is not adjacent to either canal section.  \label{item:P_isolated}
\end{enumerate}

Each $n$-cube $Q\in P^{[n]}$ belonging to a canal section has a face in the the same $(n-1)$-cube in $\partial U_j$. Thus the lift of a canal section  is a spectral-cube-indentation in $\cR_U(G_j)$. 
The partial star of $R$ in Cases \eqref{item:P_adjacent_one} or  \eqref{item:P_isolated}, has two faces on $Y$, hence  $\pi_{(U;Y)}^{-1}(R)$ is a star in $\Refine^\nu(\cR_U(G_j))$. 
Therefore, $\pi_{(U;Y)}^{-1}\left(P\cap \Refine^\nu(U_j)\right)$ is  an indentation  in $\cR_U(G_j)$. 
Since a union of mutually disjoint indentations is an indentation, the claim 
follows. 
\end{proof}

\begin{proof}[Proof of Proposition \ref{prop:reservoir-canals-as-indentations}]
Every pre-reservoir-cube $Q$ associated to the system $\sfRC_{\Refine^\nu(U)}(Y)$ is a  refinement of  an $n$-cube $C_Q$ in $\Refine^{2}(K)$,  hence $C_Q$ is the spectral cube of $Q$.
Thus,  if a pre-reservoir-cube $Q$ belongs to $U_j$, then  $\pi_{(U;Y)}^{-1}(C_Q)$  is a spectral cube of $D_U(G_j)$ in $\Refine^\nu(\cR_U(G_j))$ and has a face on $\partial U_j$.

From the placement of reservoirs and canals in the construction, a pre-reservoir-cube and a canal stretch can meet only  in a face of the canal stretch. 
In particular, the intersection of  three distinct spectral cubes in $D_U(G_j)$ is empty. Hence $D_U(G_j)$ is an indentation in $\cR_U(G_j)$.
\end{proof}


\subsection{Channeling}
\label{sec:single-channeling}
Reservoir-canal system provides a 
method to transform
$Y\subset U$ into a new complex  $\Tr(Y) \subset \Refine^\nu(U)$. 
While the original cut-graph $\Gammacut(U;Y)$ has $r$ components, where $r=r(U,Y)\geq m$,  
  the cut-graph $\Gammacut(\Refine^\nu(U);\Tr(Y))$ has $m$ additional components $\tau_1,\ldots,\tau_m$ created from the reservoir-canal systems as seen in Corollary \ref{cor:reservoir-canal-components}.
Channeling, which we now discuss, connects  $\tau_1,\ldots,\tau_m$  to $\Gamma(\Urec_1),\ldots, \Gamma(\Urec_m)$, by removing a gate between  
$\sfRC(Y)_i$ and $\Urec_i$ for each $i\in \{1,\ldots, m\}$.

\subsubsection{Gates on $\sfRC(Y)_i$}

Let $(U,Y,\rho_Y)$ be admissible in 
$(\Refine^{k \nu}(K), Z_k)$ and assume 
the labeling of cut-graphs as in Convention \ref{convention:labeling}.

Since $\rho_Y$ is admissible, we may fix for  each $i\in \{1,\ldots, m\}$, an $n$-cube  $C_i\in  \sfRC(Y)_i\cap \Refine^\nu(U_i)$ which is \emph{well-located in the system $\sfRC(Y)$}, that is,  $C_i $ belongs to a canal section in $\sfRC(Y)_i$ but  not adjacent to any pre-reservoir-cube and is closer to a pre-reservoir-cube than a connector in graph distance.
 
Let $\omega_i$ be the unique face of $C_i$  which does not meet $Y$, in other words,  $\omega_i$ is in the interior of $U_i$.  We connect  $\Gamma(\Urec_i)$ and $ \tau_i$ by $\omega_i$. Since $\omega_i \in \sfRC(Y)_i \cap \Urec_i$, we refer to $\omega_i$ as a \emph{gate between $\sfRC(Y)_i$ and $\Urec_i$}.

\begin{definition}
\label{def:Channeling}
The subcomplex  \index{separating complex!channeling} \index{$\Channel(Y) $}
\[
\Channel(Y) = \Tr(Y) -\bigcup_{i=1}^m \{\omega_i \} \subset  \Refine^\nu(U)
\]
is called a \emph{channeling of $Y$ (based on $\rho_Y$ and $\cT_Y$}). 
\end{definition}\index{channeling}

\begin{remark}\label{rmk:sheet-neighborhood}
For future discussion, we fix now, for each $i\in \{1,\ldots, m\}$,
a \emph{sheet neighborhood $\Lambda(\omega_i)$}  of $\omega_i$ on the boundary $\partial(\Urec_i)$ of the receded complex $\Urec_i$, by setting
\[\Lambda(\omega_i) = |\Star_{\Refine^\nu(U)}(\omega_i)\cap \partial (\Urec_i)|.\]
Note that  $\Lambda(\omega_1),\ldots, \Lambda(\omega_m)$ are mutually disjoint $(n-1)$-cells and that $\Lambda(\omega_i)$  contains the corresponding $\omega_i$ in its interior.
\end{remark}

We record now the basic properties of the channeling.
\begin{lemma}
\label{lemma:Channeling-basic-properties}
Let $\Channel(Y)$ be a channeling of $Y$. Then $\Channel(Y)$ is adjacently connected, $\Channel(Y)$ has the pre-Wada property with respect to $Y$ as stated in Proposition \ref{prop:tr-existence-short}, and $\Channel(Y)$ is an $\lambda$-perturbation of $Y$ for a constant $\lambda=\lambda(\#(Y^{[n-1]}),\mu,\nu)$. Moreover, the connected components of the graph $\Gammacut(\Refine^\nu(U);\Channel(Y))$ are
\[
G_{\Channel(Y);i}= 
\begin{cases} \Gamma(\Urec_i) \cup \tau_i \cup \{ \omega_i\} & \quad \text{for}\,\,\,  i=1,\ldots, m, \\
 \Gamma(\Urec_i) & \quad \text{for}\,\,\,  i=m+1,\ldots, r.
\end{cases}
\]
\end{lemma}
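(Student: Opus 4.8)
The proof of Lemma \ref{lemma:Channeling-basic-properties} will proceed by collecting the pieces we have already established about the transformation $\Tr(Y)$ and then tracking the single combinatorial change caused by removing the gates $\omega_1,\ldots,\omega_m$. First I would recall that $\Channel(Y) = \Tr(Y) - \bigcup_{i=1}^m \{\omega_i\}$. Since $\omega_i$ is an $(n-1)$-cube lying in the interior of $U_i$, removing it from $\Tr(Y)$ does not create any new $(n-1)$-cube with a face in some $q\in Y^{[n-1]}$; in particular, the pre-Wada property of Proposition \ref{prop:tr-existence-short}\eqref{item:pre-Wada} is inherited verbatim by $\Channel(Y)$, because the $n$-cubes in $\Span_{\Refine^\nu(U)}(\tau_i)$ having a face in $q$ are unaffected. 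Adjacent connectedness of $\Channel(Y)$: by Lemma \ref{lemma:Y_sfRC-connected}, $\Gamma(\Tr(Y))$ is connected, and removing $(n-1)$-cubes from a subcomplex cannot disconnect its adjacency graph unless it removes an $n$-cube; since $\omega_i$ is an $(n-1)$-cube interior to $U_i$ and each $n$-cube of $\Tr(Y)$ that had $\omega_i$ as a face still survives with all its other faces, $\Gamma(\Channel(Y))$ remains connected. Here I would be slightly careful: removing $\omega_i$ from the subcomplex $\Tr(Y)$ means forming $\Span$ of the remaining cubes, and I should note that every $n$-cube of $\Tr(Y)$ adjacent (within $\Tr(Y)$) across $\omega_i$ is still present, so adjacency within $\Channel(Y)$ is routed around; more simply, $\Channel(Y)$ and $\Tr(Y)$ have the same $n$-cubes and $\Channel(Y)^{[n-1]} = \Tr(Y)^{[n-1]}\setminus\{\omega_1,\ldots,\omega_m\}$, and the canal-stretch $n$-cubes adjacent along $\omega_i$ remain adjacent to their neighbours in $Y$-facing directions.

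Next I would identify the connected components of $\Gammacut(\Refine^\nu(U);\Channel(Y))$. By Corollary \ref{cor:reservoir-canal-components}, the components of $\Gammacut(\Refine^\nu(U);\Tr(Y))$ are $\Gamma(\Urec_1),\ldots,\Gamma(\Urec_r),\tau_1,\ldots,\tau_m$. The graph $\Gammacut(\Refine^\nu(U);\Channel(Y))$ is obtained from $\Gammacut(\Refine^\nu(U);\Tr(Y))$ by adding, for each $i\in\{1,\ldots,m\}$, the edge corresponding to $\omega_i$. By the definition of a gate, $\omega_i\in \sfRC(Y)_i\cap\Urec_i$, i.e.\ $\omega_i$ is a face shared by an $n$-cube in the canal section (hence in $\Span_{\Refine^\nu(U)}(\tau_i)$, after transformation) and an $n$-cube in $\Urec_i$; I should confirm from the construction that $C_i\in\sfRC(Y)_i\cap\Refine^\nu(U_i)$ is well-located so that its $Y$-avoiding face $\omega_i$ is adjacent, on one side, to a cube of $\Urec_i$ and, on the other, to a cube contributing to $\tau_i$. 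Then adding the edge $\omega_i$ merges exactly the two components $\Gamma(\Urec_i)$ and $\tau_i$ into $\Gamma(\Urec_i)\cup\tau_i\cup\{\omega_i\}$, for $i=1,\ldots,m$, while leaving $\Gamma(\Urec_{m+1}),\ldots,\Gamma(\Urec_r)$ untouched; because the gates $\omega_1,\ldots,\omega_m$ lie in different $\Urec_i$'s they cannot interact, so no further merging occurs. This gives the displayed description of $G_{\Channel(Y);i}$.

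Finally, for the $\lambda$-perturbation claim with $\lambda = \lambda(\#(Y^{[n-1]}),\mu,\nu)$, I would exhibit the data required by Definition \ref{def:indented-perturbation}. The candidate subcomplexes $T_{\Sigma_i}$ are (the images under the quotient identification of) the spans of $\tau_i$ together with the gate, i.e.\ $\Span_{\Refine^\nu(U)}(\tau_i)$; these are essentially disjoint because the reservoir-canal subsystems $\sfRC(Y)_i$ meet only in reservoirs, which are $(n-1)$-dimensional intersections (Corollary \ref{cor:reservoir-canal-system-intersection}). Property \eqref{item:tame-perturbation-1} of Definition \ref{def:indented-perturbation} — that $\Comp_{\Refine^\nu(U)}(\Channel(Y);\Sigma_i)$ is $\Refine^\nu(\Comp) $ with $T$ removed and $T_{\Sigma_i}$ re-attached — follows from the component description just established together with the structure of $\Tr(Y)$ (the $\tau_i$-passages), since channeling reattaches $\tau_i$ precisely to the $\Sigma_i$-component; here I would invoke Corollary \ref{cor:RC-core-relation} to locate $T\subset\Refine^\nu(\Star_K(Y))$ as required. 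Property \eqref{item:tame-perturbation-2} — that each lift $\pi^{-1}(T_{\Sigma_i})$ is a disjoint union of tunnels with at most $\lambda$ $n$-cubes each — follows from Proposition \ref{prop:reservoir-canals-as-indentations} (the lift of the reservoir-canal system is an indentation, hence its connected components have $n$-cell spaces, i.e.\ are tunnels by Lemma \ref{lemma:dent-cell}) and from the size estimate in Corollary \ref{cor:reservoir-canal-components} that each $\tau_i$ has length at most $\lambda(\#(Y^{[n-1]}),\mu,\nu)$. The main obstacle, and the step deserving the most care, is verifying that adding the single edge $\omega_i$ merges exactly $\Gamma(\Urec_i)$ with $\tau_i$ and nothing more — this requires using the "well-located" choice of $C_i$ (so $\omega_i$ genuinely straddles $\Urec_i$ and the canal-section part that became $\tau_i$) and the fact that distinct $\sfRC(Y)_i$ interact only along reservoirs, so the $\omega_i$'s are genuinely independent gates.
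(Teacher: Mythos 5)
Your plan and the main reduction --- read the component structure from Corollary \ref{cor:reservoir-canal-components} and the choice of gates, then produce the $\lambda$-perturbation data with $T_{\Sigma_i}=\sfRC(Y)_i$ --- do match the paper's proof. However, your argument for adjacent connectedness of $\Channel(Y)$ has a dimension error that makes it fail as written. Both $\Tr(Y)$ and $\Channel(Y)$ are cubical $(n-1)$-complexes, so they contain no $n$-cubes, and phrases such as ``each $n$-cube of $\Tr(Y)$ that had $\omega_i$ as a face still survives'' and ``$\Channel(Y)$ and $\Tr(Y)$ have the same $n$-cubes'' do not parse. The adjacency graph $\Gamma(\Channel(Y))$ has the $(n-1)$-cubes of $\Channel(Y)$ as vertices, two being joined when they share an $(n-2)$-cube; removing $\omega_i$ therefore removes a vertex of $\Gamma(\Tr(Y))$, and removing a vertex can in general disconnect a graph, so your proposed rule ``removing $(n-1)$-cubes cannot disconnect the adjacency graph unless it removes an $n$-cube'' is not a valid principle here. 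What is actually needed, and what the paper's one-line claim tacitly invokes, is that $\omega_i$ is not a cut-vertex: the $(n-1)$-cubes adjacent to $\omega_i$ in $\Gamma(\Tr(Y))$ all lie in $\sfRC(Y)_i^{(n-1)}$ and remain mutually connected there after $\omega_i$ is deleted (compare the argument for the removal of the passages $P_i$ in the proof of Lemma \ref{lemma:Y_sfRC-connected}).

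A second issue is in the verification of property \eqref{item:tame-perturbation-2} of Definition \ref{def:indented-perturbation}. You argue ``the lift is an indentation, hence its connected components have $n$-cell spaces, i.e.\ are tunnels by Lemma \ref{lemma:dent-cell},'' but having an $n$-cell space is not the same as being a tunnel: a tunnel must also have a tree adjacency graph, and a connected cube-indentation, viewed as a subcomplex of the fine refinement, generally has an adjacency graph with many cycles since each spectral cube of lower rank is itself a grid of $n$-cubes. Moreover, Proposition \ref{prop:reservoir-canals-as-indentations} concerns the lift via $\pi_{(U;Y)}$ into $\cR_U(G_j)$, whereas Definition \ref{def:indented-perturbation}\eqref{item:tame-perturbation-2} requires the lift via $\pi_{(\Refine^\nu(U);\Channel(Y))}$ --- a different quotient. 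The correct route, which the paper takes, is to observe that the adjacency graph of $\pi_{(\Refine^\nu(U);\Channel(Y))}^{-1}(\sfRC(Y)_i)$ is exactly the tree $\tau_i$ (every edge of $\Gammacut(\Refine^\nu(U);\Channel(Y))$ internal to $\sfRC(Y)_i$ is a $\tau_Y$-passage, hence an edge of $\tau_i$, while the gate $\omega_i$ leaves $\sfRC(Y)_i$), so by Lemma \ref{lemma:Realization-cell} the lift is a tunnel, and the size bound comes from Corollary \ref{cor:reservoir-canal-components}.
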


\begin{proof}
For the first claim it suffices to observe that, since $\Tr(Y)$ is adjacently-connected by Lemma \ref{lemma:Y_sfRC-connected} and the removal of mutually disjoint $(n-1)$-cubes $\omega_1,\ldots, \omega_m$ does not change the connectedness, we have that  $\Channel(Y)$ is adjacently connected. The second part has been proved in Proposition \ref{prop:tr-existence-short}.

For the third claim, we observe first that the structure of the connected components $G_{\Channel(Y);i}$ of $\Gammacut(\Refine^\nu(U); \Channel(Y))$ follows immediately from Corollary \ref{cor:reservoir-canal-components} and the choice of gates $\omega_i$.
Let $U'_i=\Span_{\Refine^\nu(U)}(G_{\Channel(Y);i})$ for $i=1,\ldots, r$ and let
\[\pi_{(\Refine^\nu(U);\Channel(Y))}\colon \cR_{\Refine^\nu(U)}(\Gammacut(\Refine^\nu(U); \Channel(Y)))\to \Refine^\nu(U)   \]
be the $\Gammacut(\Refine^\nu(U); \Channel(Y))$-quotient map.

Since $U'_i=(\Refine^\nu(U_i)-\sfRC(Y))\,\cup \,\sfRC(Y)_i$ and the adjacency graph of the lift $\pi_{(\Refine^\nu(U);\Channel(Y))}^{-1}(\sfRC(Y)_i)$ is the tree $\tau_i$, the channeling $\Channel(Y)$ is an $\lambda$-perturbation of $Y$ for a constant $\lambda=\lambda(\#(Y^{[n-1]}),\mu,\nu)$.
\end{proof}

\subsubsection{Summary on channeling}\label{sec:summary}
We summarize the channeling process and collect properties of  $\Channel(Y)$ in the next  proposition; we refer to Sections \ref{sec:reservoir-canal-transformation} and \ref{sec:single-channeling} for notations. All properties listed here have been proved above.
\begin{proposition}
\label{prop:summary-Channeling}
Retain all assumptions and the labeling in Proposition \ref{prop:tr-existence-short}. 
Guided by a preference function $\rho_Y\colon Y^{[n-1]}\to U^{[n]}$ and a spanning tree $\cT_Y$ in $\Gamma(Y)$, we may construct in $\Refine^\nu(U)$ the following:
\begin{enumerate}
\item an adjacently connected reservoir-canal system $\sfRC(Y)=\bigcup_{i=1}^m \sfRC(Y)_i$ consisting of essentially disjoint  
$n$-subcomplexes $\sfRC(Y)_1,\ldots, \sfRC(Y)_m$, each of which is adjacently connected and has a spanning tree, $\tau_i$, of size at most $\lambda=\lambda(\#(Y^{[n-1]}), \mu, \nu)$, which satisfies 
\begin{enumerate}
\item $\sfRC(Y)\subset \Refine^\nu(\Star_U(Y))$, and
\item $\sfRC(Y)\cap \Star_{\Refine^\nu(U)}(\Refine^\nu(\partial Y)) = \emptyset;$ \label{item:RC-existence-7}
\end{enumerate}
\item  an $(n-1)$-subcomplex $ \Tr(Y) \subset \Refine^\nu(Y) \cup  \sfRC(Y)^{(n-1)}$ for which the connected components of 
 $\Gammacut(\Refine^\nu(U);\Tr(Y))$ are 
 \[\Gamma(\Urec_1),\ldots,\Gamma(\Urec_r),\, \tau_1, \ldots, \tau_m,\]
where $r=r(U,Y)\geq m$, $\Urec_i= \Refine^\nu(U_i) - \sfRC(Y)$ for $i\in\{1,\ldots,r\}$, and $\tau_i$ is a spanning tree in $\Gamma(\sfRC(Y)_i)$ for $i\in\{1,\ldots,m\}$;
\item  for each $i\in \{1,\ldots, m\}$, an $(n-1)$-cube $\omega_i\in \partial \sfRC(Y)_i\cap \partial \Urec_i$
contained in the interior of $U_i$ for which their sheet neighborhoods $\Lambda(\omega_i)$ are mutually disjoint;
\item an adjacently connected $(n-1)$-subcomplex
\[\Channel(Y)= \Tr(Y) -\bigcup_{i=1}^m \{\omega_i\} \subset  \Refine^\nu(U),\] 
for which \label{item:summary-Ch(Y)}
 \begin{enumerate}
\item  the connected components of $\Gammacut(\Refine^\nu(U); \Channel(Y))$ are
\[G_{\Channel(Y);i}= 
\begin{cases} \Gamma(\Urec_i) \cup \tau_i \cup \{ \omega_i\} & \quad \text{for}\,\,\,  i=1,\ldots, m, \\
 \Gamma(\Urec_i)  & \quad \text{for}\,\,\,  i=m+1,\ldots, r,
\end{cases}\]
\item  $\Channel(Y)$ satisfies the pre-Wada property with respect to $Y$ in Proposition \ref{prop:tr-existence-short}, and \label{item:channel-wada}
\item  $\Channel(Y)$ is a $\lambda$-perturbation of $Y$. \label{item:perturbation}
\end{enumerate}
\end{enumerate}
\end{proposition}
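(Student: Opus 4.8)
The plan is to obtain Proposition~\ref{prop:summary-Channeling} purely by assembling objects and facts already produced in Sections~\ref{sec:building-blocks}--\ref{sec:single-channeling}: no genuinely new argument is required, only careful bookkeeping of the constructions and lemmas that precede it.

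First I would fix the auxiliary data demanded by the construction — a spanning tree $\cT_Y$ of $\Gamma(Y)$, which exists because $Y$ is adjacently connected (being admissible), together with a partial order $<_Y$ on $\cT_Y$ — and then run the construction of Section~\ref{sec:building-blocks} with the data $(U,Y,\rho_Y,\cT_Y)$ to produce the reservoir-canal system $\sfRC(Y)=\sfRC_{\Refine^\nu(U)}(Y;\cT_Y,\rho_Y)=\bigcup_{i=1}^m\sfRC(Y)_i$. Conclusion (1) is then read off directly: adjacent connectedness of the whole system and of each summand $\sfRC(Y)_i$ is Corollary~\ref{cor:adjacency-RC}; the inclusion $\sfRC(Y)\subset\Refine^\nu(\Star_U(Y))$ of (1)(a) is Corollary~\ref{cor:RC-core-relation}; and (1)(b), namely $\sfRC(Y)\cap\Star_{\Refine^\nu(U)}(\Refine^\nu(\partial Y))=\emptyset$, follows from Corollary~\ref{cor:border-of-Y}, since the markers — hence all canal sections — avoid $\partial Y$. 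Choosing a spanning tree $\tau_i$ in each $\Gamma(\sfRC(Y)_i)$ and bounding $\#\tau_i^{[n]}$ by $\lambda=\lambda(\#(Y^{[n-1]}),\mu,\nu)$ is already part of Definition~\ref{def:reservoir-canal-transformation} and Corollary~\ref{cor:reservoir-canal-components}.

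Next I would recall the transformation $\Tr(Y)=\bigl(\Refine^\nu(Y)\cup\sfRC(Y)^{(n-1)}\bigr)-P_{\tau_Y}$ of Definition~\ref{def:reservoir-canal-transformation} and invoke Corollary~\ref{cor:reservoir-canal-components}, which states precisely that the components of $\Gammacut(\Refine^\nu(U);\Tr(Y))$ are $\Gamma(\Urec_1),\ldots,\Gamma(\Urec_r),\tau_1,\ldots,\tau_m$ with $\Urec_i=\Refine^\nu(U_i)-\sfRC(Y)$; this gives (2). For (3): since $\rho_Y$ is admissible, for each $i$ there is an $n$-cube $C_i\in\sfRC(Y)_i\cap\Refine^\nu(U_i)$ well located in the system in the sense of Section~\ref{sec:single-channeling}, and I would take $\omega_i$ to be the unique face of $C_i$ not meeting $Y$; the sheet neighborhoods $\Lambda(\omega_i)=|\Star_{\Refine^\nu(U)}(\omega_i)\cap\partial(\Urec_i)|$ are pairwise disjoint by Remark~\ref{rmk:sheet-neighborhood}. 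Finally, setting $\Channel(Y)=\Tr(Y)-\bigcup_{i=1}^m\{\omega_i\}$ as in Definition~\ref{def:Channeling}, conclusion (4) is exactly Lemma~\ref{lemma:Channeling-basic-properties}: adjacent connectedness of $\Channel(Y)$, the stated list of components $G_{\Channel(Y);i}$, the pre-Wada property with respect to $Y$ (inherited from Proposition~\ref{prop:tr-existence-short}, since removing the $\omega_i$ — interior faces, not lying in any $q\in Y^{[n-1]}$ — does not disturb the $n$-cubes witnessing that property), and the fact that $\Channel(Y)$ is a $\lambda$-perturbation of $Y$.

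The only point requiring any care — hence the ``obstacle'', such as it is — is the internal consistency of indices and constants: I must confirm that the $\lambda$ appearing in (1), (2), and (4)(c) is one and the same, that the components $G_1,\ldots,G_r$ of $\Gammacut(U;Y)$, and hence $U_j=\Span_U(G_j)$ and $\Urec_j$, are indexed according to Convention~\ref{convention:labeling} so that $\tau_i$ is attached to $\Urec_i$ for $i\le m$, and that each gate $\omega_i$ genuinely lies in $\sfRC(Y)_i\cap\Urec_i$ with the corresponding edge of $\Gammacut(\Refine^\nu(U);\Channel(Y))$ joining the intended pieces. All of this has already been arranged in the preceding subsections, so the proposition follows by collecting these facts.
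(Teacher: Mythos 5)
Your proposal is correct and follows essentially the same route as the paper's proof: Proposition \ref{prop:summary-Channeling} is deliberately a bookkeeping statement, and the paper's own proof likewise consists of pointers to Corollaries \ref{cor:adjacency-RC}, \ref{cor:reservoir-canal-system-intersection}, \ref{cor:RC-core-relation}, \ref{cor:reservoir-canal-components}, Remark \ref{rmk:sheet-neighborhood}, and Lemma \ref{lemma:Channeling-basic-properties}. The only difference is cosmetic: for item (1)(b) you cite Corollary \ref{cor:border-of-Y} while the paper cites Corollary \ref{cor:boundary-of-Y_sfRC}, but the latter is itself a consequence of the former, and since the claim concerns $\sfRC(Y)$ directly, your choice is if anything slightly more apt; you could also add a pointer to Corollary \ref{cor:reservoir-canal-system-intersection} to cover the ``essentially disjoint'' phrase in (1), which the paper cites explicitly.
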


\begin{proof}
For (1), see Corollaries \ref{cor:adjacency-RC} and \ref{cor:reservoir-canal-system-intersection}. The length estimate for the trees $\tau_i$ is recorded in Corollary \ref{cor:reservoir-canal-components}. For (1a), see Corollary \ref{cor:RC-core-relation} and for (1b) Corollary \ref{cor:boundary-of-Y_sfRC}. For (2), see Corollary \ref{cor:reservoir-canal-components}, and, for (3), Remark \ref{rmk:sheet-neighborhood}. Item (4) follows from Lemma \ref{lemma:Channeling-basic-properties}.
\end{proof}

Whereas the previous proposition describes the structure of the complementary components of $\Channel(Y)$ relative to the complementary components of $Y$, the following corollary gives a similar structure for the realization $\cR_{\Refine^\nu(U)}(\Gammacut(\Refine^\nu(U);\Channel(Y)))$ with respect to $\cR_U(\Gammacut(U;Y))$.

\begin{corollary}\label{cor:summary-Channeling}
 In the setting of Proposition \ref{prop:summary-Channeling}, 
let 
\[\pi_{(U;Y)}\colon \cR_U(\Gammacut(U;Y))\to U\]
be the canonical quotient map which maps $\cR_U(G_i)$ onto $U_i$. 
Then the lift 
\[D_U(G_i) =\pi_{(U;Y)}^{-1}(\sfRC(Y) \cap \Refine^\nu(U_i))\]  is an indentation in  $\cR_U(G_i)$, and 
$\cR_{\Refine^\nu(U)}(\Gammacut(\Refine^\nu(U);\Channel(Y)))$
has an essential partition, 
\[\cR_{\Refine^\nu(U)}(G_{\Channel(Y);i})= 
\begin{cases} \cR_{\Refine^\nu(U)}(\Gamma(\Urec_i)) \cup \mathsf{t}_i & \quad \text{for}\,\,\,  i=1,\ldots, m, \\
\cR_{\Refine^\nu(U)}( \Gamma(\Urec_i))  & \quad \text{for}\,\,\,  i=m+1,\ldots, r,
\end{cases}
\]
where $\mathsf{t}_i$ is a tunnel whose adjacency graph is  tree $\tau_i$, and the intersection  $\mathsf{t}_i  \cap \cR_{\Refine^\nu(U)}(\Gamma(\Urec_i)) =\pi_{(U;Y)}^{-1}(\omega_i)$.
\end{corollary}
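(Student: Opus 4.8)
The statement is a translation of Proposition~\ref{prop:summary-Channeling} from subcomplexes of $\Refine^\nu(U)$ to the realization $\cR_{\Refine^\nu(U)}(\Gammacut(\Refine^\nu(U);\Channel(Y)))$, combined with the indentation identification of Proposition~\ref{prop:reservoir-canals-as-indentations}. The plan is therefore to assemble these two inputs and carry out the realization bookkeeping; no essentially new construction is required. The first assertion, that $D_U(G_i)=\pi_{(U;Y)}^{-1}(\sfRC(Y)\cap\Refine^\nu(U_i))$ is an indentation in $\cR_U(G_i)$, is precisely Proposition~\ref{prop:reservoir-canals-as-indentations}, so I would simply invoke it.

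For the essential partition, I would argue as follows. By item~\eqref{item:summary-Ch(Y)} of Proposition~\ref{prop:summary-Channeling} (equivalently Lemma~\ref{lemma:Channeling-basic-properties}), the connected components of $\Gammacut(\Refine^\nu(U);\Channel(Y))$ are exactly $G_{\Channel(Y);1},\dots,G_{\Channel(Y);r}$, with $G_{\Channel(Y);i}=\Gamma(\Urec_i)\cup\tau_i\cup\{\omega_i\}$ for $i\le m$ and $G_{\Channel(Y);i}=\Gamma(\Urec_i)$ for $i>m$; moreover the vertex set of this cut-graph is all of $\Refine^\nu(U)^{[n]}$. By the transitivity of the equivalence relation $\sim_G$ generating the realization (the discussion after Definition~\ref{def:Realization}), the connected components of $\cR_{\Refine^\nu(U)}(\Gammacut(\Refine^\nu(U);\Channel(Y)))$ are in one-to-one correspondence with the $G_{\Channel(Y);i}$, the component attached to $G_{\Channel(Y);i}$ being $\cR_{\Refine^\nu(U)}(G_{\Channel(Y);i})$. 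Distinct components of a realization contain disjoint sets of $n$-cubes and their union exhausts the realization, so $\{\cR_{\Refine^\nu(U)}(G_{\Channel(Y);i})\}_{i=1}^r$ is an essential partition.

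It remains to identify each piece; this is where the only genuine (mild) care is needed. For $i>m$ there is nothing to do. For $i\le m$, set $\mathsf{t}_i=\cR_{\Refine^\nu(U)}(\tau_i)$, which coincides with the lift $\pi_{(\Refine^\nu(U);\Channel(Y))}^{-1}(\sfRC(Y)_i)$ appearing in the proof of Lemma~\ref{lemma:Channeling-basic-properties}. Since $\tau_i$ is a spanning tree of $\Gamma(\sfRC(Y)_i)$, Lemma~\ref{lemma:Realization-cell} gives that $|\mathsf{t}_i|$ is an $n$-cell, so $\mathsf{t}_i$ is a tunnel with $\Gamma(\mathsf{t}_i)=\tau_i$. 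Realizing the decomposition $G_{\Channel(Y);i}=\Gamma(\Urec_i)\cup\tau_i\cup\{\omega_i\}$ then amounts to gluing $\cR_{\Refine^\nu(U)}(\Gamma(\Urec_i))$ to $\mathsf{t}_i$ along the lift of the edge $\omega_i$. The point to verify is that this lift is a single $(n-1)$-cube and not a union of two: this holds because $\omega_i$ is itself an edge of the component $G_{\Channel(Y);i}$, so in forming $\cR_{\Refine^\nu(U)}$ the two $n$-cubes of $\Refine^\nu(U)$ meeting along $\omega_i$ are not un-identified; equivalently, $\omega_i$ lies in the interior of $U_i$ (Proposition~\ref{prop:summary-Channeling}(3)) and its two adjacent $n$-cubes lie in the same component. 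Hence $\cR_{\Refine^\nu(U)}(G_{\Channel(Y);i})=\cR_{\Refine^\nu(U)}(\Gamma(\Urec_i))\cup\mathsf{t}_i$ with $\mathsf{t}_i\cap\cR_{\Refine^\nu(U)}(\Gamma(\Urec_i))=\pi_{(U;Y)}^{-1}(\omega_i)$, as claimed. I expect the verification of this last single-cube-lift point, and the careful transcription of the component labels, to be the only non-formal part of the argument.
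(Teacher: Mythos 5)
Your proof is correct and takes the same route as the paper: the first claim is Proposition~\ref{prop:reservoir-canals-as-indentations} verbatim, and the partition follows from item~(4a) of Proposition~\ref{prop:summary-Channeling} together with the role of the gates $\omega_i$. You merely unpack the paper's terse phrase ``the role of gates $\omega_i$'' into the two small checks it implicitly contains — that $\mathsf{t}_i=\cR_{\Refine^\nu(U)}(\tau_i)$ is a tunnel via Lemma~\ref{lemma:Realization-cell}, and that $\omega_i$ lifts to a single $(n-1)$-cube because it is an edge of the connected component $G_{\Channel(Y);i}$ — which is exactly the intended verification.
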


\begin{proof}
The lift $D_U(G_i)$ is an indentation by Proposition \ref{prop:reservoir-canals-as-indentations} and the essential partition of the realization $\cR_{\Refine^\nu(U)}(\Gammacut(\Refine^\nu(U);\Channel(Y)))$ follows from item (4a) of the previous proposition and the role of gates $\omega_i$.
\end{proof}



\section{Evolution of separating complexes}
\label{sec:evolution-seq}

We return now to the context of a separating complex $Z$ in a cubical $n$-complex $K$. The task in this section is to construct a sequence $ (Z_k)$ of separating complexes for the Evolution Theorem.

\restateEvolution*

We assume the Standing assumptions \ref{standing:Z} throughout this section, and fix
\begin{itemize}
\item a preference function $\rho_Z \colon Z^{[n-1]}\to K^{[n]}$ for which $\rho_Z(Z^{[n-1]})$ contains at least an $n$-cube in each $\Comp_K(Z;\Sigma_i), i=1,\ldots,m$, and 
\item a partially ordered spanning tree $\cT_Z$ in $\Gamma(Z)$. 
\end{itemize}
Fix also an integer 
\[\lambda_\loc=\lambda_\loc(n,\mu, \nu)\]
 which is an upper bound of the number of $n$-cubes and  $(n-1)$-cubes in $\Refine^\nu(\Star_K(Q))$  for all $Q\in K^{[n]}$.

We define channeling $Z_1=\Channel(Z)$ in Section \ref{sec:Z-one}, construct  $Z_2=\Channel(Z_1)$ by a localized channeling of $Z_1$ in Section \ref{sec:localization}, and then iterate the localized procedure to obtain the entire sequence $(Z_k)$ in Section \ref{sec:evolution}. 
Geometry between cores is discussed in Section \ref{sec:core-geometry}, and quasiconformal stability of  $(Z_k)$ is proved in Section \ref{sec:quasiconformality}. 

The reason for using localized channeling after the first step is to ensure that tunnels attached at subsequent steps have a hierarchical relationship  and are uniformly bounded in size. These properties are needed for establishing the quasiconformal stability.

\subsubsection*{Notation}

Throughout this section, we denote 
\[
\pi_k=\pi_{(\Refine^{k\nu}(K);Z_k)}\colon  \cR_{\Refine^{k\nu(K)}}(\Gammacut(\Refine^{k\nu}(K);Z_k)) \to \Refine^{k\nu}(K) 
\]
for the canonical quotient map associated to the realization, whenever $Z_k$ is a separating complex in $\Refine^{k\nu}(K)$. Recall that, we use the notation $\Comp(Z_k;\Sigma_i)$ for the connected component of $\Refine^{\nu k}(K)$ separated by $Z_k$ and containing $\Sigma_i$, and that $\Real(Z_k;\Sigma_i)=\pi_k^{-1}(\Comp(Z_k;\Sigma_i))$. 
Recall also that the subcomplex $\Core(Z_k;\Sigma_i)$ lifts isomorphically in $\pi_k$, and we have identified $\pi_k^{-1}\Core(Z_k;\Sigma_i)$ with $\Core(Z_k;\Sigma_i)$. For this reason, we call $\Core(Z_k;\Sigma_i)$ also a core $\Real(Z_k;\Sigma_i)$.

\subsection{Construction of $Z_1=\Channel(Z)$ by channeling}
\label{sec:Z-one}

Let $U=K$ and $Y= Z$ in Section \ref{sec:single-channeling}. Proposition \ref{prop:summary-Channeling},  applied to  \[(K, \,Z,\, \rho_Z,\, \cT_Z), \]
 yields a channeling
\[
Z_1 = \Channel(Z).
\]

Since $Z_1$ is obtained by channeling, the cut-graph
$\Gammacut(\Refine^\nu(K);Z_1)$ has exactly $m$ components $G_{Z_1;1}, \ldots, G_{Z_1;m}$, one corresponding to each $\Sigma_i$, and 
\[\Comp(Z_1;\Sigma_i)= \Span_{\Refine^\nu(K)}(G_{Z_1;i}).\]

To show  that $Z_1$ is a separating complex, we recall briefly properties of some of the complexes in the construction.
By Proposition \ref{prop:tr-existence-short} and property \eqref{item:RC-existence-7} in Proposition \ref{prop:summary-Channeling}, the reservoir-canal system $\RC(Z)$ satisfies 
\[\Star_{\Refine^{\nu}(K)}(Z_1)\subset  \Star_{\Refine^{\nu}(K)}(\sfRC(Z) \cup \Refine^\nu(Z)) \subset \Refine^\nu(\Star_K(Z)),\]
which yields immediately the core-expanding property.

\begin{corollary}\label{cor:core-Z-Z_1}
For each $i\in \{1,\ldots, m\}$, 
\[
|\Core(Z;\Sigma_i)|\subset |\Core(Z_1;\Sigma_i)| \setminus |(\partial \Core(Z_1;\Sigma_i))_{\mathrm{inner}}|.
\]
\end{corollary}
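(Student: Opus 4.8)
The plan is to reduce the asserted inclusion to an $n$-cube counting statement, for which the displayed inclusion $\Star_{\Refine^\nu(K)}(Z_1)\subset\Refine^\nu(\Star_K(Z))$ together with $\sfRC(Z)\subset\Refine^\nu(\Star_K(Z))$ (Corollary \ref{cor:RC-core-relation}) does essentially all the work, and then to rule out the inner-boundary term by a short metric estimate using the explicit placement of the reservoir-canal blocks. First I would fix $i$ and an $n$-cube $Q$ of $\Core(Z;\Sigma_i)=\Comp(Z;\Sigma_i)-\Star_K(Z)$. Since $Q\notin\Star_K(Z)$, no cube of $Z$ is a face of $Q$, hence $|Q|\cap|Z|=\emptyset$; as distinct $n$-cubes of $K$ have disjoint interiors, $\Refine^\nu(Q)^{[n]}$ is then disjoint from $\Refine^\nu(\Star_K(Z))^{[n]}$, and therefore from $\sfRC(Z)^{[n]}$ and from $\Star_{\Refine^\nu(K)}(Z_1)^{[n]}$. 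Consequently every $n$-cube of $\Refine^\nu(Q)$ lies in $\Urec_i=\Refine^\nu(\Comp(Z;\Sigma_i))-\sfRC(Z)$ and outside $\Star_{\Refine^\nu(K)}(Z_1)$; since $\Gamma(\Urec_i)\subset G_{Z_1;i}$ and $\Comp(Z_1;\Sigma_i)=\Span_{\Refine^\nu(K)}(G_{Z_1;i})$ by Proposition \ref{prop:summary-Channeling}, such a cube belongs to $\Core(Z_1;\Sigma_i)^{[n]}$. Taking spans over $Q\in\Core(Z;\Sigma_i)^{[n]}$ yields $\Refine^\nu(\Core(Z;\Sigma_i))\subset\Core(Z_1;\Sigma_i)$, hence $|\Core(Z;\Sigma_i)|\subset|\Core(Z_1;\Sigma_i)|$. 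This step is pure bookkeeping with the definitions.

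The substantive step is to show that $|\Core(Z;\Sigma_i)|$ avoids the new inner boundary. Since $(\partial\Core(Z_1;\Sigma_i))_{\mathrm{inner}}\subset\Star_{\Refine^\nu(K)}(Z_1)$, it suffices to prove $|\Core(Z;\Sigma_i)|\cap|\Star_{\Refine^\nu(K)}(Z_1)|=\emptyset$, and I would argue by contradiction. If a point $y$ lies in both, I would pick an $n$-cube $Q$ of the old core with $y\in|Q|$ and an $n$-cube $\hat Q$ of $\Star_{\Refine^\nu(K)}(Z_1)$ with $y\in|\hat Q|$. By the first step $\hat Q\notin\Refine^\nu(Q)$, so $\hat Q\in\Refine^\nu(\tilde Q)$ for some $n$-cube $\tilde Q\ne Q$ of $K$, and $y$ lies on the face $q'=Q\cap\tilde Q$ of $\tilde Q$; since $|q'|\subset|Q|$ and $|Q|\cap|Z|=\emptyset$, no subface of $q'$ lies in $Z$. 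Now $\hat Q$ contains a cube $w$ of $Z_1\subset\Refine^\nu(Z)\cup\sfRC(Z)^{(n-1)}$, and as $\hat Q$ has $d_K$-diameter $\sqrt n\,3^{-\nu}$ and meets $|q'|$, one has $\dist_K(|w|,|q'|)\le\sqrt n\,3^{-\nu}$.

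I would then contradict this by bounding $\dist_K(|w|,|q'|)$ from below by an absolute constant. If $w\in\Refine^\nu(Z)$ then $|w|$ lies in a face $\tilde Q\cap z$, $z\in Z$, of the unit cube $\tilde Q$, and $q'$ and $\tilde Q\cap z$ are faces of $\tilde Q$ sharing no subface (such a subface would be a subface of $q'$ lying in $Z$), hence are disjoint and at $d_K$-distance $\ge1$. If instead $w$ lies in a pre-reservoir block, a canal section, a marker, or a connector, then the explicit coordinates of Section \ref{sec:building-blocks} place $|w|$ at $d_K$-distance $\ge\tfrac49$ — with a $\sqrt n\,3^{-\nu}$ slack in the marker and connector cases — from every face of $\tilde Q$ other than those containing the cube of $Z$ over which the block is built; and $q'$ is not such a face, since $q'$ has no subface in $Z$. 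As $3^\nu\ge3^{10}m\mu^2$, each of these lower bounds exceeds $\sqrt n\,3^{-\nu}$, and we reach the desired contradiction.

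I expect this metric estimate to be the only non-formal ingredient; it is precisely the point at which the geometry of the reservoir-canal system is used, namely that every building block of $\sfRC(Z)$ sits deep inside the star of a cube of $Z$ of dimension at least $n-2$, well away from the faces of the ambient $n$-cube that do not contain that cube. Everything else follows at once from the two inclusions into $\Refine^\nu(\Star_K(Z))$, which is why the core-expanding property was said to follow immediately.
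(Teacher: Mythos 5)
Your proof is correct and follows the same conceptual route as the paper's proof of Corollary \ref{cor:core-Z-Z_1}, which is a one-liner invoking the chain of inclusions $\Star_{\Refine^\nu(K)}(Z_1)\subset\Star_{\Refine^\nu(K)}(\sfRC(Z)\cup\Refine^\nu(Z))\subset\Refine^\nu(\Star_K(Z))$ and asserting that the core-expanding property follows ``immediately.'' Your first step (the $n$-cube counting that gives $\Refine^\nu(\Core(Z;\Sigma_i))\subset\Core(Z_1;\Sigma_i)$) is exactly what the inclusion does give immediately. You are right that this alone does not yet rule out the inner-boundary term: the inclusion guarantees disjointness of the $n$-cubes of $\Star(Z_1)$ from those of $\Refine^\nu(\Core(Z;\Sigma_i))$, but the spaces $|\Core(Z;\Sigma_i)|$ and $|\Star_{\Refine^\nu(K)}(Z_1)|$ could a priori still share lower-dimensional faces --- precisely because $|\Core(Z;\Sigma_i)|$ already touches $|\Star_K(Z)|$ along the inner boundary of the old core. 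Your second step, reducing to a face $q'=Q\cap\tilde Q$ with no subface in $Z$ and then bounding $\dist_K(|w|,|q'|)$ from below using the explicit placement of pre-reservoir blocks, canal sections, markers, and connectors inside the middle ninth of the $Z$-faces, is the genuine geometric content that the paper compresses into ``immediately,'' and it is correct; the same placement data is what makes Proposition \ref{prop:reservoir-canals-as-indentations} work. One small simplification worth noting: if you measure in the sup metric along the identifications $\phi_{\tilde Q}\colon\tilde Q\to[0,1]^n$, the lower bound $4/9$ versus the cube side $3^{-\nu}$ gives the contradiction already for $\nu\ge1$, so the $\sqrt n\,3^{-\nu}$ diameter and the appeal to $3^\nu\ge3^{10}m\mu^2$ are not needed for this particular estimate.
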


Note from Proposition \ref{prop:reservoir-canals-as-indentations} that 
\[
\D(Z;\Sigma_i) =\pi_1^{-1}(\sfRC(Z)\cap \Refine^\nu(\Comp(Z;\Sigma_i)))
\] 
is an indentation in $\Real(Z_1;\Sigma_i)$. Corollary \ref{cor:summary-Channeling} may now be restated as follows (cf. Theorem \ref{theorem:separating-complex-existence}).

\begin{corollary}
\label{cor:channel-Realization-structure}
For $i=1,\ldots, m$,
\[
\Real(Z_1;\Sigma_i)=\Realdented(Z;\Sigma_i) \cup \tunnel(Z;\Sigma_i),
\]
where
\begin{enumerate}
\item $ \Realdented(Z;\Sigma_i)=\Refine^\nu(\Real(Z;\Sigma_i)) - \D(Z;\Sigma_i)$,
\item $\tunnel(Z;\Sigma_i)$ is a tunnel whose adjacency graph is a tree $\tau_{Z;i}$ of size at most $\lambda(n, \mu, \nu, \#Z^{[n-1]})$, and
\item $\Realdented(Z;\Sigma_i) \cap \tunnel(Z;\Sigma_i)= \pi_1^{-1}(\omega_{Z;i})$ is the lift of a gate.
\end{enumerate}
\end{corollary}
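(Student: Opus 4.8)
The plan is to obtain this statement by specializing Corollary~\ref{cor:summary-Channeling} to the trivial admissible pair $(U,Y)=(K,Z)$ and translating notation; the only real content is the dictionary between a receded subcomplex and a dented refined realization.

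First I would fix the setup. Since $Z$ is a separating complex in $K$, Condition~\eqref{item:separating-total} of Definition~\ref{def:separating-complex} forces the cut-graph $\Gammacut(K;Z)$ to have exactly $m$ connected components; following Convention~\ref{convention:labeling} I label them $G_1,\dots,G_m$ so that $U_i:=\Span_K(G_i)=\Comp_K(Z;\Sigma_i)$ and $\Sigma_i\subset U_i$. Thus $r=r(K,Z)=m$ in the notation of Proposition~\ref{prop:tr-existence-short}, and $\cR_K(G_i)=\cR_K(\Gammacut(K;Z;\Sigma_i))=\Real(Z;\Sigma_i)$. As recorded after Definition~\ref{def:localization}, $(K,Z)$ is admissible in $(K,Z)$, so Proposition~\ref{prop:summary-Channeling} and Corollary~\ref{cor:summary-Channeling} apply to $(K,Z,\rho_Z,\cT_Z)$ and produce $Z_1=\Channel(Z)$.

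Next I would invoke Proposition~\ref{prop:summary-Channeling}: because $r=m$, the cut-graph $\Gammacut(\Refine^\nu(K);Z_1)$ has exactly the $m$ components $G_{Z_1;i}=\Gamma(\Urec_i)\cup\tau_i\cup\{\omega_i\}$, so $Z_1$ separates $\Refine^\nu(K)$ into the $m$ pieces $\Comp(Z_1;\Sigma_i)=\Span_{\Refine^\nu(K)}(G_{Z_1;i})$ and
\[
\Real(Z_1;\Sigma_i)=\pi_1^{-1}(\Comp(Z_1;\Sigma_i))=\cR_{\Refine^\nu(K)}(G_{Z_1;i}).
\]
Corollary~\ref{cor:summary-Channeling} then gives the essential partition $\Real(Z_1;\Sigma_i)=\cR_{\Refine^\nu(K)}(\Gamma(\Urec_i))\cup\mathsf{t}_i$, with $\mathsf{t}_i$ a tunnel whose adjacency graph is the tree $\tau_i$ and $\mathsf{t}_i\cap\cR_{\Refine^\nu(K)}(\Gamma(\Urec_i))=\pi_{(K;Z)}^{-1}(\omega_i)$. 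I would then set $\tunnel(Z;\Sigma_i):=\mathsf{t}_i$, $\tau_{Z;i}:=\tau_i$, $\omega_{Z;i}:=\omega_i$; the bound $\#\tau_{Z;i}\le\lambda(n,\mu,\nu,\#Z^{[n-1]})$ is Corollary~\ref{cor:reservoir-canal-components}, giving item~(2), and identifying $\pi_1^{-1}(\omega_{Z;i})=\pi_{(K;Z)}^{-1}(\omega_i)$ through the natural refinement isomorphism of Section~\ref{sec:notation-lift} gives item~(3).

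The one point requiring care — and the step I expect to be the main obstacle, though it is more bookkeeping than substance — is item~(1): matching $\cR_{\Refine^\nu(K)}(\Gamma(\Urec_i))$ with $\Realdented(Z;\Sigma_i)=\Refine^\nu(\Real(Z;\Sigma_i))-\D(Z;\Sigma_i)$. Here I would use the natural isomorphism $\Refine^\nu(\cR_K(\Gammacut(K;Z)))\to\Refine^\nu(K)$ of Section~\ref{sec:notation-lift}: under it $\Refine^\nu(\cR_K(G_i))=\Refine^\nu(\Real(Z;\Sigma_i))$ corresponds to $\Refine^\nu(U_i)$, and, by Proposition~\ref{prop:reservoir-canals-as-indentations} in the form recalled just before the corollary, $\D(Z;\Sigma_i)$ corresponds to $\sfRC(Z)\cap\Refine^\nu(U_i)$; hence $\Realdented(Z;\Sigma_i)$ corresponds to $\Refine^\nu(U_i)-\sfRC(Z)=\Urec_i$, i.e.\ $\Realdented(Z;\Sigma_i)=\pi_1^{-1}(\Urec_i)$. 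Finally, since $\Urec_i$ is adjacently-connected (Corollary~\ref{cor:receded-connected}) and $\Gamma(\Urec_i)$ is exactly its adjacency graph as it sits inside $\Gammacut(\Refine^\nu(K);Z_1)$ (Corollary~\ref{cor:reservoir-canal-components}), the realization $\cR_{\Refine^\nu(K)}(\Gamma(\Urec_i))$ is by construction this same lift $\pi_1^{-1}(\Urec_i)$. The care needed is in not confusing the full adjacency graph of $\Urec_i$ with its cut-graph incarnation; once this is granted, the two descriptions coincide and the proof is complete.
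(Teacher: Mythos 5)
Your proof is correct and takes the same approach the paper itself sketches: the paper offers no proof of Corollary~\ref{cor:channel-Realization-structure} beyond the remark that it ``may now be restated'' from Corollary~\ref{cor:summary-Channeling}, and your write-up simply makes explicit the specialization to the trivial admissible pair $(U,Y)=(K,Z)$, the count $r=m$ forced by Condition~\eqref{item:separating-total} of Definition~\ref{def:separating-complex}, and the dictionary $\cR_{\Refine^\nu(K)}(\Gamma(\Urec_i)) = \Refine^\nu(\Real(Z;\Sigma_i))-\D(Z;\Sigma_i)$, $\mathsf{t}_i=\tunnel(Z;\Sigma_i)$, $\tau_i=\tau_{Z;i}$, $\omega_i=\omega_{Z;i}$. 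You correctly isolate the only point of substance (that $\Gamma(\Urec_i)$ as a connected component of the cut-graph coincides with the full adjacency graph of $\Urec_i$, so the two realizations agree), and your treatment of the lift identifications is consistent with Remark~\ref{rmk:G-quotient-refinement} and the conventions of Section~\ref{sec:notation-lift}.
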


Indentation-flattening Theorem \ref{theorem:flattening-indentation} and the Tunnel-contracting Proposition \ref{prop:tunnel-contracting} now yield the following bilipschitz expansion property.

\begin{corollary}
\label{cor:Z_1-bilipschitz-expanding}
There exist a constant $L=L(n,K)\ge 1$, depending only on the dimension $n$ and complex $K$, and surjective  $L(n,K)$-bilipschitz homeomorphisms 
\[ 
|\Real(Z;\Sigma_i)| \xrightarrow{f} |\Realdented(Z;\Sigma_i)| \xrightarrow{g} |\Real(Z_1;\Sigma_i)|,
\]
whose composition is the identity on the core of $\Real(Z;\Sigma_i)$.
\end{corollary}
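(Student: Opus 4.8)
The plan is to build the two bilipschitz homeomorphisms directly from the decomposition of $\Real(Z_1;\Sigma_i)$ recorded in Corollary \ref{cor:channel-Realization-structure}, using the two main tools of the preceding sections. First I would produce $g$. By Corollary \ref{cor:channel-Realization-structure}, $\Real(Z_1;\Sigma_i) = \Realdented(Z;\Sigma_i) \cup \tunnel(Z;\Sigma_i)$, where $\tunnel(Z;\Sigma_i)$ is a tunnel whose adjacency graph is the tree $\tau_{Z;i}$ of size at most $\lambda(n,\mu,\nu,\#Z^{[n-1]})$, and the intersection $\Realdented(Z;\Sigma_i)\cap \tunnel(Z;\Sigma_i)=\pi_1^{-1}(\omega_{Z;i})$ is a single $(n-1)$-cube, hence the common face of exactly two $n$-cubes. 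This is precisely the hypothesis of the Tunnel-contracting Proposition \ref{prop:tunnel-contracting} applied with $K \leftarrow \Realdented(Z;\Sigma_i)$ and $T \leftarrow \tunnel(Z;\Sigma_i)$, which yields a piecewise linear $L$-bilipschitz homeomorphism $|\Real(Z_1;\Sigma_i)| \to |\Realdented(Z;\Sigma_i)|$, with $L = L(n, \#\tunnel(Z;\Sigma_i)^{[n]})$ depending only on $n$ and the tree size, hence only on $n$, $\mu$, $\nu$, $\#Z^{[n-1]}$, and therefore only on $n$ and $K$ (recall $\mu=\mu(K)$, $\nu=\nu(K)$, and $\#Z^{[n-1]}$ is a fixed finite quantity determined by $K$ once $Z$ is fixed). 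Its inverse is the desired $g$, and it is the identity outside the neighborhood of $\omega_{Z;i}$, in particular the identity on the core.

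Next I would produce $f$. By Corollary \ref{cor:channel-Realization-structure}(1), $\Realdented(Z;\Sigma_i) = \Refine^\nu(\Real(Z;\Sigma_i)) - \D(Z;\Sigma_i)$, and by Proposition \ref{prop:reservoir-canals-as-indentations} (as restated just before Corollary \ref{cor:channel-Realization-structure}) the complex $\D(Z;\Sigma_i) = \pi_1^{-1}(\sfRC(Z)\cap \Refine^\nu(\Comp(Z;\Sigma_i)))$ is an indentation in $\Real(Z;\Sigma_i)$ (viewed inside $\Refine^\nu(\Real(Z;\Sigma_i))$). The Indentation-flattening Theorem \ref{theorem:flattening-indentation}, applied to the good complex $\Real(Z;\Sigma_i)$ (good by Remark \ref{rmk:lift-good}) and the indentation $\D(Z;\Sigma_i)\subset \Refine^\nu(\Real(Z;\Sigma_i))$, then provides a piecewise linear $L$-bilipschitz homeomorphism
\[
\phi_{\D(Z;\Sigma_i)} \colon |\Refine^\nu(\Real(Z;\Sigma_i)) - \D(Z;\Sigma_i)| \to |\Refine^\nu(\Real(Z;\Sigma_i))| = |\Real(Z;\Sigma_i)|,
\]
with $L = L(n, \mu(\Real(Z;\Sigma_i)))$, which is the identity on $|\Real(Z;\Sigma_i)|\setminus \Wedge(\D(Z;\Sigma_i))$. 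Its inverse (after identifying $|\Refine^\nu(\Real(Z;\Sigma_i))| = |\Real(Z;\Sigma_i)|$ as the same space with the standard metric, Convention \ref{convention:metric-subcomplex}) is the desired $f$. To see the constant is controlled by $n$ and $K$ alone, note $\mu(\Real(Z;\Sigma_i)) \le \mu(\Refine^\nu(K))$, and refinement does not increase local multiplicity beyond a factor depending only on $n$ — more simply, one checks $\mu(\Refine^k(K)) = \mu(K)$ since refinement subdivides each $(n-2)$-cube's link proportionally — so $L$ depends only on $n$ and $\mu(K)$, hence only on $n$ and $K$.

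To conclude, I would compose: set $L(n,K)$ to be the maximum of the two constants above (and take its square, or just the product, to accommodate the composition), and observe that the composition $g\circ f \colon |\Real(Z;\Sigma_i)| \to |\Real(Z_1;\Sigma_i)|$ is $L(n,K)$-bilipschitz after adjusting the constant. For the core statement: by Remark \ref{rmk:cores} the core $\Core(Z;\Sigma_i)$ lifts isomorphically into $\Real(Z;\Sigma_i)$, and by Corollary \ref{cor:RC-core-relation} and property \eqref{item:RC-existence-7} of Proposition \ref{prop:summary-Channeling}, the reservoir-canal system $\sfRC(Z)$ lies in $\Refine^\nu(\Star_K(Z))$, so $\D(Z;\Sigma_i)$ and its wedge are disjoint from (a neighborhood of) $\Core(Z;\Sigma_i)$; hence $f$ is the identity there. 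Likewise the gate $\omega_{Z;i}$ and the tunnel $\tunnel(Z;\Sigma_i)$ sit inside the image of $\sfRC(Z)$, away from the core, so $g$ is the identity there too. Therefore $g\circ f$ is the identity on $|\Core(Z;\Sigma_i)|$.

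The main obstacle I anticipate is the bookkeeping needed to verify that the indentation $\D(Z;\Sigma_i)$ and the wedge $\Wedge(\D(Z;\Sigma_i))$ genuinely avoid $|\Core(Z;\Sigma_i)|$, and correspondingly that the tunnel $\tunnel(Z;\Sigma_i)$ does — this requires carefully combining Corollary \ref{cor:RC-core-relation} (the system is in $\Refine^\nu(\Star_K(Z))$), the definition of wedge (which only enlarges by a quarter of the distance to $\partial K$, hence stays within a bounded multiple of $\Star_K(Z)$), and the definition of the core $\Core(Z;\Sigma_i) = \Comp(Z;\Sigma_i) - \Star_K(Z)$. Everything else is a direct invocation of Theorem \ref{theorem:flattening-indentation} and Proposition \ref{prop:tunnel-contracting}, with the only subtlety being to track that all resulting bilipschitz constants are absorbed into a single $L(n,K)$.
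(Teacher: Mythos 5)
Your proof is correct and follows the paper's intended route: the paper introduces Corollary \ref{cor:Z_1-bilipschitz-expanding} with the sentence ``Indentation-flattening Theorem \ref{theorem:flattening-indentation} and the Tunnel-contracting Proposition \ref{prop:tunnel-contracting} now yield the following bilipschitz expansion property,'' which is exactly your decomposition of $f$ as the inverse of the indentation-flattening map for $\D(Z;\Sigma_i)\subset \Refine^\nu(\Real(Z;\Sigma_i))$ and $g$ as the inverse of the tunnel-contracting map for $\tunnel(Z;\Sigma_i)$ glued along the gate, together with the observation (via Corollary \ref{cor:RC-core-relation} and the definition of wedge) that both maps fix the core. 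The only small imprecision is in your multiplicity bookkeeping — Theorem \ref{theorem:flattening-indentation} is applied with ambient complex $\Real(Z;\Sigma_i)$ (not $\Refine^\nu(K)$), so the relevant constant is $L(n,\mu(\Real(Z;\Sigma_i)))$, and $\mu(\Real(Z;\Sigma_i))\le \mu(K)$ since the lift only unidentifies; the claim $\mu(\Refine^k(K))=\mu(K)$ you invoke is unnecessary and not obviously true in general.
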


We summarize the properties of $Z_1$ as follows.
\begin{corollary}
\label{cor:channel-separating-complex}
The channeling $Z_1=\Channel(Z)$ of $Z$ is a separating complex in $\Refine^\nu(K)$, and it has the relative Wada property  with respect to $Z$, is core-expanding with respect to $Z$, and is an $\lambda(n,K,Z)$-perturbation  of $Z$.
\end{corollary}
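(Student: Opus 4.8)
The plan is to assemble the corollary from the structural results of Section~\ref{sec:reservoir-canal-system}, applied to the admissible pair $(K,Z)$ with the trivial localization $\cL_0=\{Z\}$, the fixed preference function $\rho_Z$, and the fixed partially ordered spanning tree $\cT_Z$; so Proposition~\ref{prop:summary-Channeling} applies verbatim with $U=K$ and $Y=Z$. The first observation is that, because $Z$ is itself a separating complex, condition~\eqref{item:separating-total} of Definition~\ref{def:separating-complex} forces the spans $\Comp_K(Z;\Sigma_i)$ to exhaust $K$ and to be pairwise distinct; hence $\Gammacut(K;Z)$ has exactly $m$ components and $r=r(K,Z)=m$ in Convention~\ref{convention:labeling}, with $U_i=\Comp_K(Z;\Sigma_i)$. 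This keeps the bookkeeping of Proposition~\ref{prop:summary-Channeling} down to $m$ components.

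Next I would verify that $Z_1=\Channel(Z)$ is a separating complex in $\Refine^\nu(K)$ by running through the four items of Definition~\ref{def:separating-complex}. For~\eqref{item:separating-no-boundary}: by the inclusion $\Star_{\Refine^\nu(K)}(Z_1)\subseteq\Refine^\nu(\Star_K(Z))$ recorded before Corollary~\ref{cor:core-Z-Z_1}, it suffices to note that no $n$-cube of $\Star_K(Z)$ meets $|\partial K|$ (otherwise such an $n$-cube, and hence its face lying in $Z$, would belong to $\Star_K(\partial K)$, contradicting $Z\cap\Star_K(\partial K)=\emptyset$), so $\Refine^\nu(\Star_K(Z))$ is disjoint from the outermost refined layer $\Star_{\Refine^\nu(K)}(\partial\Refine^\nu(K))$. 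Item~\eqref{item:separating-strongly-connected} is the adjacent connectedness of $\Channel(Z)$ from Lemma~\ref{lemma:Channeling-basic-properties}. For~\eqref{item:separating-total}: by Proposition~\ref{prop:summary-Channeling}(4) the components of $\Gammacut(\Refine^\nu(K);Z_1)$ are exactly $G_{Z_1;1},\dots,G_{Z_1;m}$ with $G_{Z_1;i}=\Gamma(\Urec_i)\cup\tau_i\cup\{\omega_i\}$; their vertex sets partition $\Refine^\nu(K)^{[n]}$, so $\bigcup_i\Comp(Z_1;\Sigma_i)=\Refine^\nu(K)$, and since $Z_1$ misses the collar, each piece $\Refine^\nu(\Sigma_i)\times[0,1]$ of $\Refine^\nu(\Star_K(\partial K))$ stays within a single component, whence $\Comp(Z_1;\Sigma_i)\cap\partial\Refine^\nu(K)=\Refine^\nu(\Sigma_i)$.

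The only item of Definition~\ref{def:separating-complex} with genuine geometric content is~\eqref{item:separating-mu-1}, and this is exactly what the two preceding corollaries deliver. By Proposition~\ref{prop:reservoir-canals-as-indentations} the lift $\D(Z;\Sigma_i)$ of $\sfRC(Z)\cap\Refine^\nu(\Comp(Z;\Sigma_i))$ is an indentation in $\Real(Z;\Sigma_i)$, so Corollary~\ref{cor:channel-Realization-structure} gives $\Real(Z_1;\Sigma_i)=\Realdented(Z;\Sigma_i)\cup\tunnel(Z;\Sigma_i)$, a dented copy of $\Refine^\nu(\Real(Z;\Sigma_i))$ with a single tunnel attached along the lift of one gate. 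Flattening the indentation via the Indentation-flattening Theorem~\ref{theorem:flattening-indentation} and contracting the tunnel via the Tunnel-contracting Proposition~\ref{prop:tunnel-contracting} — this is the composition in Corollary~\ref{cor:Z_1-bilipschitz-expanding} — yields a homeomorphism $|\Real(Z_1;\Sigma_i)|\to|\Real(Z;\Sigma_i)|$, and $|\Real(Z;\Sigma_i)|$ is homeomorphic to $|\Sigma_i|\times[0,1]$ because $Z$ is a separating complex. I expect this to be the step that carries the real weight, but it has been done already in Corollaries~\ref{cor:channel-Realization-structure} and~\ref{cor:Z_1-bilipschitz-expanding}, so the argument here is only a citation.

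It remains to record the three remaining properties. The relative Wada property with respect to $Z$ is the pre-Wada property of Proposition~\ref{prop:tr-existence-short}\eqref{item:pre-Wada}: for each $q\in Z^{[n-1]}$ and each $i$ there is an $n$-cube of $\Span_{\Refine^\nu(K)}(\tau_i)\subseteq\Comp(Z_1;\Sigma_i)$ with a face in $q$, and such an $n$-cube lies in $\Refine^\nu(Q\cup Q^*)$, where $Q,Q^*$ are the two $n$-cubes sharing $q$. The core-expanding property with respect to $Z$ is precisely Corollary~\ref{cor:core-Z-Z_1}. Finally, taking $T=\sfRC(Z)$ and $T_{\Sigma_i}=\sfRC(Z)_i$ in Definition~\ref{def:indented-perturbation}, the identity $\Comp(Z_1;\Sigma_i)=\bigl(\Refine^\nu(\Comp(Z;\Sigma_i))-T\bigr)\cup T_{\Sigma_i}$ holds by construction while $\pi_1^{-1}(T_{\Sigma_i})=\tunnel(Z;\Sigma_i)$ is a single tunnel with at most $\lambda=\lambda(\#(Z^{[n-1]}),\mu(K),\nu(K))=\lambda(n,K,Z)$ cubes, so $Z_1$ is an $\lambda$-perturbation of $Z$; this is Proposition~\ref{prop:summary-Channeling}(4). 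In sum, the proof reduces to Proposition~\ref{prop:summary-Channeling} and Corollaries~\ref{cor:core-Z-Z_1}, \ref{cor:channel-Realization-structure}, \ref{cor:Z_1-bilipschitz-expanding} together with the short check of the four items of Definition~\ref{def:separating-complex}.
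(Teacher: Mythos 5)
Your proposal is correct and follows essentially the same route as the paper's own proof: the paper verifies the four conditions of Definition~\ref{def:separating-complex} (quoting Corollary~\ref{cor:Z_1-bilipschitz-expanding} for the product-structure condition) and then cites Corollary~\ref{cor:core-Z-Z_1}, Proposition~\ref{prop:summary-Channeling}(4), and Lemma~\ref{lemma:Channeling-basic-properties} for the remaining three properties. You spell out a few details the paper leaves implicit — notably that $r(K,Z)=m$ forces no inner components, that the pre-Wada property of $\Tr(Z)$ translates into the relative Wada property of $\Channel(Z)$ via the inclusion $\Span_{\Refine^\nu(K)}(\tau_i)\subset\Refine^\nu(Q\cup Q^*)$, and the explicit choice $T=\sfRC(Z)$, $T_{\Sigma_i}=\sfRC(Z)_i$ in Definition~\ref{def:indented-perturbation} — but these are elaborations, not a different argument.
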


\begin{proof}

We verify first that $Z_1 = \Channel(Z)$ is a separating complex of $\Refine^\nu(K)$. 

Since $Z\cap \partial K=\emptyset$, we have that $Z_1(q) \cap \partial \Refine^\nu(K)=\emptyset$ for each $q\in Z^{[n-1]}$. Thus $Z_1 \cap \partial \Refine^\nu(K)=\emptyset$. This verifies the first property in the definition of the separating complex. By \eqref{item:summary-Ch(Y)} in Proposition \ref{prop:summary-Channeling}, $\Gamma(Z_2)$ is connected. This verifies the second property. 

We observe now that the construction immediately yields the properties $\Comp_{\Refine^\nu(K)}(Z_1;\Sigma_i) \cap \partial \Refine^\nu(K) = \Sigma_i$ and $\bigcup_{i=1}^m \Comp_{\Refine^\nu(K)}(Z_1;\Sigma_i) = \Refine^\nu(K)$. Thus the third condition holds. 

By Corollary \ref{cor:Z_1-bilipschitz-expanding}, $|\Real(Z_1;\Sigma_i)| \approx |\Real(Z;\Sigma_i)| \approx |\Sigma_i|\times [0,1]$. Thus the last condition holds and $Z_1$ is a separating complex of $\Refine^\nu(K)$.

Core-expanding property has been shown in Corollary \ref{cor:core-Z-Z_1} and the other properties are listed in (4) of Proposition \ref{prop:summary-Channeling} and in Lemma \ref{lemma:Channeling-basic-properties}.
\end{proof}

\subsubsection{Relative John property of $Z_1$ with respect to $Z$}
As a preparation for the proof of John property of the  Lakes of Wada, we prove now a relative John type property for the cores associated to $Z$ and $Z_1$. 
\begin{lemma}
\label{lemma:Z_1-core-expanding-John}
Each $n$-cube $Q$ in 
\[
P = \Core(Z_1;\Sigma_i) - \Refine^\nu(\Core(Z;\Sigma_i))
\]
can be connected to an $n$-cube $Q'$ in $P$ having a face in the inner boundary component $\Refine^\nu(\partial \Core(Z;\Sigma_i) - \partial K)$ of $\Core(Z;\Sigma_i)$, 
by a chain of adjacent $n$-cubes of length at most $3^\nu \theta(K)$, where 
\[
\theta(K) = \max\left\{\#\{ Q\in \Star_K(\{p\})^{[n]}\} \colon p\in K^{[0]}  \right\}.
\] 

\end{lemma}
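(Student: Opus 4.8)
The statement is essentially a quantitative connectivity claim inside a single refinement layer, so the strategy is to analyze the structure of the complement $P = \Core(Z_1;\Sigma_i) - \Refine^\nu(\Core(Z;\Sigma_i))$ cube by cube, using the explicit description of $Z_1 = \Channel(Z)$ from Section \ref{sec:reservoir-canal-system}. First I would unwind the definitions: by Corollary \ref{cor:channel-Realization-structure}, $\Real(Z_1;\Sigma_i)$ is the dented refinement $\Refine^\nu(\Real(Z;\Sigma_i)) - \D(Z;\Sigma_i)$ glued to a small tunnel, and the core $\Core(Z_1;\Sigma_i)$ is obtained from $\Comp(Z_1;\Sigma_i)$ by removing $\Star_{\Refine^\nu(K)}(Z_1)$. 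Since $\Star_{\Refine^\nu(K)}(Z_1) \subset \Refine^\nu(\Star_K(Z))$ (the containment recalled just before Corollary \ref{cor:core-Z-Z_1}), every $n$-cube of $P$ lies in $\Refine^\nu(\Star_K(Z)) - \Refine^\nu(\Core(Z;\Sigma_i))$; in particular each such cube sits inside $\Refine^\nu(Q_0)$ for some $n$-cube $Q_0 \in K^{[n]}$ that meets $Z$. The inner boundary $\Refine^\nu(\partial\Core(Z;\Sigma_i) - \partial K)$ is precisely $\Refine^\nu(\Core(Z;\Sigma_i) \cap \Star_K(Z))$, again a subcomplex living in the same collection of $\Refine^\nu(Q_0)$'s.

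**Key steps.** The argument then proceeds locally within one $\Refine^\nu(Q_0)$. Fix an $n$-cube $Q \in P^{[n]}$ and let $Q_0 \in K^{[n]}$ be the cube with $Q \subset \Refine^\nu(Q_0)$; then $Q_0 \in \Star_K(Z)$, so $Q_0$ has a face $q \in Z^{[n-1]}$ (or is adjacent through $\Star_K(Z)$ to such a cube — handle both cases). Inside $\Refine^\nu(Q_0)$ the obstruction to moving toward the inner boundary is exactly the part of $\sfRC(Z)$ and the tunnels that were carved out; but by Corollary \ref{cor:RC-core-relation} the whole system $\sfRC(Z)$ is confined to $\Refine^\nu(\Star_K(Z))$, and by construction (pre-reservoir-blocks over center cubes of $\Refine^2(q)$, canal sections of minimal width, connectors in stars of markers) the removed material within any single $\Refine^\nu(Q_0)$ has the essential-partition-into-$n$-cells structure already exploited in Lemma \ref{lemma:receded-connected} and its proof. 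Reusing that exact argument: $\Refine^\nu(Q_0) \cap \Urec_i = \Refine^\nu(Q_0) - R$ where $R$ partitions into $n$-cells, so any $n$-cube of $\Refine^\nu(Q_0) \cap \Urec_i$ connects by a chain of adjacent $n$-cubes, \emph{inside} $\Refine^\nu(Q_0) \cap \Urec_i$, to the center $n$-cube of $\Refine^\nu(Q_0)$; the length of such a chain is at most the diameter of $\Refine^\nu(Q_0)$ in graph distance, i.e. $O(3^\nu)$. One checks the chain stays in $P$ (it avoids $\Star_{\Refine^\nu(K)}(Z_1)$ by the confinement of $Z_1$ and its star to the reservoir-canal region, and it avoids $\Refine^\nu(\Core(Z;\Sigma_i))$ by staying near the boundary face $q$). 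Finally, the center cube of $\Refine^\nu(Q_0)$ — or a cube a bounded number of steps from it — has a face in $\Refine^\nu(\partial Q_0)$ on the side of $q$, hence in $\Refine^\nu(\partial\Core(Z;\Sigma_i) - \partial K)$; traversing at most $\theta(K)$ many such cubes $Q_0$ around the vertex where the chosen path meets the inner boundary accounts for the factor $\theta(K)$ in the bound $3^\nu\theta(K)$.

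**Main obstacle.** The routine part is the chain-counting; the delicate point is keeping the constructed chain simultaneously inside $P$ — that is, avoiding \emph{both} the star of $Z_1$ and the refined core $\Refine^\nu(\Core(Z;\Sigma_i))$ — while still reaching the inner boundary. The tension is that $\partial\Core(Z;\Sigma_i)$ is glued to $\Star_K(Z)$, so the inner boundary cubes are close to $Z$, yet $\Star_{\Refine^\nu(K)}(Z_1)$ is also concentrated near $Z$; one must verify that the channeling leaves a clear corridor of $n$-cubes of $P$ running alongside each face $q \in Z^{[n-1]}$ from the tunnel/gate region out to the inner boundary. This is where the specific placement data — reservoirs over $\Refine^2$-center cubes, Corollary \ref{cor:border-of-Y} keeping the system away from $\partial Z$, and the sheet neighborhoods $\Lambda(\omega_i)$ of the gates lying on $\partial\Urec_i$ — gets used, and it is essentially the same bookkeeping as in the proof of Corollary \ref{cor:core-Z-Z_1}. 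I expect the write-up to be short once one commits to working one $K$-cube $Q_0$ at a time and citing Lemma \ref{lemma:receded-connected}'s proof verbatim for the local connectivity, with the $\theta(K)$ factor absorbing the transition between adjacent $Q_0$'s around a vertex.
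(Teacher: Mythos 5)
Your high-level ingredients are the right ones (working one $K$-cube at a time, Lemma \ref{lemma:receded-connected}-style local connectivity for the $3^{\nu}$ factor, vertex stars for the $\theta(K)$ factor), but two things in the middle of your argument do not hold up, and they concern precisely the step you flagged as the main obstacle.

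First, the order of the $\theta(K)$ factor is backwards, and the geometry of the last step is wrong. You first claim that "the center cube of $\Refine^\nu(Q_0)$ --- or a cube a bounded number of steps from it --- has a face in $\Refine^\nu(\partial Q_0)$ on the side of $q$, hence in $\Refine^\nu(\partial\Core(Z;\Sigma_i)-\partial K)$," and only \emph{afterwards} invoke $\theta(K)$. But a face of $Q_0$ "on the side of $q$" lies in $\Refine^\nu(Z)$, which is the \emph{opposite} end of $\Star_K(Z)$ from the inner boundary $\Core_K(Z;\Sigma_i)\cap\Star_K(Z)$; so this is not the target. Moreover a $K$-cube $Q_0\in\Comp_K(Z;\Sigma_i)\cap\Star_K(Z)$ need not share any $(n-1)$-face with a cube of $\Core_K(Z;\Sigma_i)$; the paper's proof makes the key observation that $Q_0$ meets $\Core_K(Z;\Sigma_i)$ at least in a \emph{vertex} $v$, and then uses adjacent-connectedness of $\Star_K(v)$ (property \eqref{item:good-complex-regularity-b} of a good complex) to walk, \emph{at the $K$-scale and before refining}, to a cube $C'$ that does share a face with a core cube $C''$. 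That chain has length $<\theta(K)$; the $3^\nu$ factor comes afterwards, from crossing each $\Refine^\nu(C_j)$. In your proposal the $\theta(K)$ appears after you claim to have already reached the inner boundary, which doesn't make structural sense.

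Second, the containment $P\subset\Refine^\nu(\Star_K(Z))-\Refine^\nu(\Core(Z;\Sigma_i))$ that you assert from $\Star_{\Refine^\nu(K)}(Z_1)\subset\Refine^\nu(\Star_K(Z))$ does not follow: that inclusion gives $\Core(Z_1;\Sigma_i)\supset\Comp(Z_1;\Sigma_i)-\Refine^\nu(\Star_K(Z))$, which is the wrong direction. What is needed --- and what the paper proves --- is that every $n$-cube of $\Comp(Z_1;\Sigma_i)$ \emph{not} contained in $\Refine^\nu(\Comp(Z;\Sigma_i))$ has a face in $Z_1$, hence is excluded from the core; this pins $P$ down to $\Refine^\nu(\Comp_K(Z;\Sigma_i)\cap\Star_K(Z))-\Star_{\Refine^\nu(K)}(Z_1)$, which is what allows the chain to be built at the $K$-scale inside $\Comp_K(Z;\Sigma_i)\cap\Star_K(Z)$ before descending to the refinement.
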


\begin{proof}
Since
\[
\Core(Z;\Sigma_i) = \Comp(Z;\Sigma_i) - \Star_K(Z)
\]
and 
\[
\Core(Z_1; \Sigma_i) = \Comp(Z_1;\Sigma_i) - \Star_{\Refine^\nu(K)}(Z_1),
\]
we have that
\[
P= \Comp(Z_1;\Sigma_i) \cap \left( \Refine^\nu(\Star_K(Z))- \Star_{\Refine^\nu(K)}(Z_1) \right).
\]
By the construction of transformation $\Tr(Z)$, we have that each $n$-cube in $\Comp(Z_1;\Sigma_i)$, which is not contained in $\Refine^\nu(\Comp(Z;\Sigma_i)$, has a face in $Z_1$, and is hence not in $P$. Thus $P \subset \Refine^\nu(\Comp(Z;\Sigma_i))$ and we have that
\[
P = \Refine^\nu( \Comp_K(Z;\Sigma_i) \cap \Star_K(Z)) - \Star_{\Refine^\nu(K)}(Z_1).
\]

Let now $Q$ be an $n$-cube in $P$ and let $C$ be an $n$-cube in $\Comp_K(Z;\Sigma_i)\cap \Star_K(Z))$, whose refinement contains $Q$. Since $K$ is a good complex and $Z$ is in the interior of $K$, by the definition of core, $C$ meets $\Core_K(Z;\Sigma_i)$ (at least) in a vertex $v$. Since $K$ is a good complex, the star $\Star_K(v)$ of $v$ is adjacently connected. Since $\Star_K(v)$ contains an $n$-cube, say $C''$, in $\Core_K(Z;\Sigma_i)$, we may fix a chain $C = C_1,\ldots, C_r=C'$ adjacent cubes contained in $P$, where $r<\theta(K)$, from $C$ to a cube $C'$ sharing a face with $C''$.
It is now easy to find a chain of adjacent cubes through the refinements $\Refine^\nu(C_1),\ldots, \Refine^\nu(C_r)$ from $Q$ to a cube $Q'$, which has length at most $3^\nu \theta(K)$, where the multiplier $3^\nu$ is an upper bound for the number of $n$-cubes needed in $\Refine^\nu(C_i)$ to connect a given $n$-cube in $\Refine^\nu(C_i)$ to any of the faces of $C_i$ by a chain of adjacent $n$-cubes in $\Refine^\nu(C_i)$.
\end{proof}

\subsubsection{A preference function $\rho_1$ on $Z_1^{[n-1]} $ nearly nested in $\rho_0$}
\label{sec:channeling-preference-function}
Recall that $\rho_0=\rho_Z\colon Z^{[n-1]} \to K^{[n]}$ is the preference function used in channeling $Z$, and 
\[
Z_1=\Channel(Z) \subset  \Refine^\nu(Z)  \cup  \sfRC(Z)^{(n-1)}. 
\]
For the construction of $Z_2$, we fix a preference function 
\[\rho_1\colon Z_1^{[n-1]} \to  \Refine^\nu(K)^{[n]}\]
which satisfies the (nearly nested) conditions:
 \begin{enumerate}
\item $\rho_1(q^*)\in {\sfRC(Z)}^{[n]}$\, if $q^* \in {\sfRC(Z)}^{[n-1]}$, and 
\item $\rho_1(q^*) \in \Refine^\nu(\rho_0(q))$\, if  $q^* \not \in {\sfRC(Z)}^{[n-1]}$ but $q^*\in  \Refine^\nu(q)$ for some $q\in Z^{[n-1]}$. 
\end{enumerate}

\begin{remark}
The name nearly nested stems from the following observation.

Aside from the connectors $\sfJ(Z)$ in $\sfRC(Z)$, the system $\sfRC(Z)$ is completely contained in $\rho_0(Z^{[n-1]})$. Since $\rho_1$ satisfies the conditions above, we have that 
\[
\rho_1((Z_1)^{[n-1]})\subset  \Refine^\nu(\rho_0(Z^{[n-1]}))\,\cup \sfRC(Z) \subset  \Refine^\nu(\rho_0(Z^{[n-1]}))\,\cup \sfJ(Z).
\]
\end{remark}

\subsection{Construction of $Z_2 = \Channel_{\cL_1}(Z_1)$ by localized channeling}
\label{sec:localization}
\index{channeling!localized}
We have already had 
\[ Z_0=Z, \, \cL_0=\{Z_0\},\,  Z_1=\Channel(Z),\]
where the construction  $\Channel(Z)$ is based on $\cL_0$, $\rho_0=\rho_Z$ and  $\cT_0=\cT_Z$.

\subsubsection{Outline for constructing $Z_2$}\label{sec:outlines}

To define $Z_2$, we perturb $Z_1$ locally. We define first a partially ordered partition 
\[\cL_1=\{ Z_1(q) \colon q\in Z^{[n-1]}\}\]
 of  $Z_1$ guided by tree $\cT_0$. After choosing an appropriate ambient complex $U(q)$ for each $Z_1(q)$, we show that all pairs $(U(q), Z_1(q))$ are admissible, hence $\cL_1$ is a localization, and that the restriction of the function $\rho_1$, chosen in Section \ref{sec:channeling-preference-function}, to $Z_1(q)^{[n-1]}$ has image in $U(q)^{[n]}$ for each $q$. Thus 
\[\rho_q = \rho_1|_{Z_1(q)} \colon Z_1(q)^{[n-1]} \to U(q)^{[n]}\]
is a well-defined preference function. Let   
 \[\mathcal{P}_1=\{\rho_q\colon q\in Z^{[n-1]}\}.\]
Lastly, we fix a family 
\[\fT_1=\{\cT_q\colon q\in Z^{[n-1]}\}\]
of spanning trees $\cT_q$ on $Z_1(q)$ each of which is partially order and locally Euclidean in $\Refine^{\nu}(K)$. 

For the definition of locally Euclidean trees, we adopt Definition \ref{def:locally-Euclidean}.
\begin{definition}
A spanning tree $\cT\subset \Gamma(R)$ of an adjacently connected $(n-1)$-subcomplex $R$ in a cubical $n$-complex $P$ is \emph{locally Euclidean in $P$} provided that $P$ is locally Euclidean at each edge of $\cT$.
\end{definition}

Having $\cL_1, \mathcal{P}_1,\fT_1$ in hand, we apply Proposition \ref{prop:summary-Channeling} to  
\[(U(q),Z_1(q), \rho_q, \cT_q)\]
to obtain local channelings $\Channel(Z_1(q))$, and then take their union
\[
Z_2 = \Channel_{\cL_1}(Z_1)=\bigcup_{q\in Z^{n-1]}} \Channel(Z_1(q)).
\]

\subsubsection{Localization of $Z_1$}
\label{sec:Z-localization}\index{localization!separating complex}
We begin by subdividing  complex $Z_1$ into two subcomplexes
\[
I(Z_1) = Z_1\cap \sfRC(Z)
\quad \text{and} \quad
II(Z_1) = \Refine^\nu(Z) - \sfRC(Z)^{(n-1)}.
\]
Using the partially ordered partition 
$\cR \cC_{\Refine^\nu(K)}(Z)= \{\sfRC(Z)_q\colon q\in Z^{[n-1]}\}$ 
 of $\sfRC(Z)$ determined by  $\cT_0$ in Section  \ref{sec:sub-reservoir-canal-system},
we subdivide complexes $I(Z_1)$ and $II(Z_1)$ further into subcomplexes 
\[
I_q(Z_1) =  Z_1\cap \sfRC(Z)_q,
\quad \text{and} \quad
II_q(Z_1) = \Refine^\nu(q) - \sfRC(Z)^{(n-1)},
\]
for $q\in Z^{[n-1]}$. Clearly
\[
I(Z_1) = \bigcup_{q\in Z^{[n-1]}} I_q(Z_1) 
\quad \text{and} \quad
II(Z_1) = \bigcup_{q\in Z^{[n-1]}} II_q(Z_1).
\]
It is straightforward to check that 
\[Z_1(q) = I_q(Z_1) \cup II_q(Z_1)\]
 is adjacently connected.

Since any $(n-1)$-cube common to two distinct $\sfRC(Z)_q$ and $\sfRC(Z)_{q'}$ belongs to the passages that have been removed before  the channeling,  such $(n-1)$-cubes are not in $Z_1$. 
Therefore the family $\{ I_q(Z_1)^{[n-1]} \colon q\in Z^{[n-1]}\}$ is an essential partition of $I(Z_1)^{[n-1]}$. Clearly the family $\{ II_q(Z_1)^{[n-1]} \colon q\in Z^{[n-1]}\}$ is also an essential partition of $II(Z_1)^{[n-1]}$.

We now fix an ambient $n$-subcomplex $U(q)\subset \Refine^\nu(\rho(q))$ for $Z_1(q)$. 
For each $q\in Z^{[n-1]}$, let 
\[
N(q) =\Span_{\Refine^\nu(K)}\left(\{Q' \in \Refine^{\nu}(\rho(q))^{[n]} \colon Q' \cap q \neq \emptyset \} \right) - \sfRC(Z) 
\]
and take   \index{localization!ambient space}
\[
U(q)= \sfRC(Z)_q  \cup N(q) \subset \Refine^{2\nu}(K).
\]
 \index{separating complex!localized ambient space}

As a preparatory step towards localized channeling over $Z_1$, we summarize the data needed for the process.

\begin{lemma}
\label{lemma:admissible-step-1} 
For each $q\in Z^{[n-1]}$, $(U(q),Z_1(q))$ is admissible in $(\Refine^\nu(K), Z_1)$, and
\[
\rho_q = \rho_1|_{Z_1(q)} \colon Z_1(q)^{[n-1]} \to U(q)^{[n]}
\]
is an admissible preference function. 
Moreover,  each $\Gamma(Z_1(q))$ admits a  spanning tree which is locally Euclidean in $\Refine^\nu(K)$ and of size at most $\lambda_\loc$.
\end{lemma}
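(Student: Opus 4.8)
The statement is a bookkeeping lemma whose three assertions --- admissibility of the pair $(U(q), Z_1(q))$, admissibility of the preference function $\rho_q$, and existence of a small locally Euclidean spanning tree of $\Gamma(Z_1(q))$ --- must each be checked against the definitions in Sections \ref{sec:adjacency-graphs} and \ref{sec:reservoir-canal-system}. My plan is to verify the five conditions of Definition \ref{def:admissible} for the pair $(U(q), Z_1(q))$ in order, then deduce admissibility of $\rho_q$ from the nearly nested property of $\rho_1$ recorded in Section \ref{sec:channeling-preference-function}, and finally produce the tree using the size estimate $\lambda_{\loc}$ and local Euclideanness of $\Refine^\nu(K)$ at the relevant edges.

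\textbf{Step 1: adjacent connectedness and the inclusion $Z_1(q) \subset U(q)$.} Recall $Z_1(q) = I_q(Z_1) \cup II_q(Z_1)$ where $I_q(Z_1) = Z_1 \cap \sfRC(Z)_q$ and $II_q(Z_1) = \Refine^\nu(q) - \sfRC(Z)^{(n-1)}$, and $U(q) = \sfRC(Z)_q \cup N(q)$ with $N(q) = \Span_{\Refine^\nu(K)}(\{Q' \in \Refine^\nu(\rho(q))^{[n]} \colon Q' \cap q \ne \emptyset\}) - \sfRC(Z)$. Adjacent connectedness of $Z_1(q)$ is already asserted in the text just before the lemma; I would cite it. For $U(q)$, adjacent connectedness follows because $\sfRC(Z)_q$ is adjacently connected (Section \ref{sec:RC-partition-UY}, via Lemmas \ref{lemma:adjacency-reservoirs} and \ref{lemma:adjacency-canal-stretch}), $N(q)$ is a union of $n$-cubes in $\Refine^\nu(\rho(q))$ each meeting $q$ hence adjacently connected through the refinement of the single cube $\rho(q)$, and the two pieces share $(n-1)$-cubes along $q$. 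The inclusion $Z_1(q) \subset U(q)$: the part $I_q(Z_1) \subset \sfRC(Z)_q \subset U(q)$ is immediate, and $II_q(Z_1) = \Refine^\nu(q) - \sfRC(Z)^{(n-1)}$ consists of $(n-1)$-cubes in $\Refine^\nu(q)$ not in $\sfRC(Z)$; each is a face of an $n$-cube $Q'$ in $\Refine^\nu(\rho(q))$ meeting $q$ and not in $\sfRC(Z)$, hence $Q' \in N(q)$, so these cubes lie in $U(q)$.

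\textbf{Step 2: the face conditions and conditions \eqref{item:subgraph}--\eqref{item:at-least}.} Conditions (2) and (3) of Definition \ref{def:admissible} --- each $(n-1)$-cube in $Z_1(q)$ is a face of an $n$-cube in $U(q)$, and each $n$-cube in $U(q)$ has a face in $Z_1(q)$ --- follow from the construction of $\sfRC(Z)_q$ (every $n$-cube in a canal section has a face in $Z$, every $n$-cube in $N(q)$ meets $q$ hence has a face in $\Refine^\nu(q)$; after removing $\sfRC(Z)^{(n-1)}$ the remaining $(n-1)$-cubes of $\Refine^\nu(q)$ are still faces of such cubes). For condition \eqref{item:subgraph}, I would argue that each connected component of $\Gammacut(U(q); Z_1(q))$ is a subgraph of some $\Gamma(\Comp_{\Refine^\nu(K)}(Z_1; \Sigma_i))$: the cut-graph of $U(q)$ relative to $Z_1(q)$ separates $U(q)$ along the complex $Z_1(q)$, whose $\Sigma_i$-components are inherited (as subcomplexes) from the $\Sigma_i$-components of $\Refine^\nu(K)$ relative to $Z_1$, since $Z_1$ is a separating complex of $\Refine^\nu(K)$ by Corollary \ref{cor:channel-separating-complex}. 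Condition \eqref{item:at-least}, that $U(q)$ meets each $\Comp_{\Refine^\nu(K)}(Z_1; \Sigma_i)$, is where I expect the one genuinely delicate point: I would use that $\sfRC(Z)_q$ contains a reservoir $\widehat\sfR_{q;i}$ of every index $i \in \{1,\ldots,m\}$ over $q$ (by construction in Sections \ref{sec:pre-reservoir-block} and \ref{sec:sub-reservoir-canal-system}), and that after channeling these reservoirs land in distinct complementary components $\Comp(Z_1;\Sigma_i)$ --- this is precisely the content of the pre-Wada/relative Wada property in Proposition \ref{prop:tr-existence-short}\eqref{item:pre-Wada} and Lemma \ref{lemma:Channeling-basic-properties}. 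Admissibility of $\rho_q = \rho_1|_{Z_1(q)}$ then follows: condition (1) of the nearly nested definition sends $I_q(Z_1)^{[n-1]} \subset \sfRC(Z)^{[n-1]}$ into $\sfRC(Z)^{[n]}$, and within $\sfRC(Z)_q$ by the partially ordered partition structure; condition (2) sends $II_q(Z_1)^{[n-1]}$ into $\Refine^\nu(\rho_0(q)) \supset N(q) \cup (\text{part of }\sfRC(Z)_q)$; combined with Step 2's observation that reservoirs of all indices sit over $q$, the image of $\rho_q$ meets each $\Span_{U(q)}(G_i)$, giving admissibility of the preference function.

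\textbf{Step 3: the locally Euclidean spanning tree.} Finally, $\Gamma(Z_1(q))$ is a finite connected graph with at most $\lambda_{\loc}$ vertices, since $Z_1(q)$ lives in $\Refine^\nu(\Star_K(\rho_0(q)))$ and $\lambda_{\loc} = \lambda_{\loc}(n,\mu,\nu)$ was fixed as an upper bound for the number of $n$- and $(n-1)$-cubes in $\Refine^\nu(\Star_K(Q))$. So any spanning tree has size at most $\lambda_{\loc}$. To get one that is locally Euclidean in $\Refine^\nu(K)$, I would observe that $\Refine^\nu(K)$ fails to be locally Euclidean only along $(n-2)$-cubes lying on $\Refine^\nu(\partial K)$, whereas $Z_1 \cap \Refine^\nu(\partial K) = \emptyset$ (first property of a separating complex, reproved in Corollary \ref{cor:channel-separating-complex}); hence $\Refine^\nu(K)$ is locally Euclidean at every $(n-1)$- and $(n-2)$-cube met by $Z_1(q)$, so in particular at every edge of any spanning tree of $\Gamma(Z_1(q))$ --- every spanning tree is automatically locally Euclidean, and we may additionally choose it partially ordered by rooting it at a vertex. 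The main obstacle is Step 2's condition \eqref{item:at-least} / admissibility of $\rho_q$, which requires carefully tracking how the $m$ reservoirs over $q$ are distributed among the $m$ complementary components after channeling; everything else is a direct unwinding of definitions and the size bounds already established.
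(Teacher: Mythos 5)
Your Steps 1 and 2 are essentially consistent with what the paper does (the paper collapses them into a single sentence, keyed to the fact that $\sfRC(Z)_q = \bigcup_{i=1}^m \sfRC(Z)_{q;i}$ with the subsystems landing in distinct $\Sigma_i$-components, and the nearly nested property of $\rho_1$). But Step~3 contains a genuine error.

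You claim that $\Refine^\nu(K)$ fails to be locally Euclidean only at $(n-2)$-cubes lying on $\Refine^\nu(\partial K)$, and conclude that since $Z_1$ avoids the boundary, \emph{every} spanning tree of $\Gamma(Z_1(q))$ is automatically locally Euclidean. This is not where the failure of local Euclideanness lives. By Lemma \ref{lemma:star-cyclic} and the remark after Definition \ref{def:locally-Euclidean}, $\Refine^\nu(K)$ is locally Euclidean at an interior $(n-2)$-cube $e'$ precisely when $\Star_{\Refine^\nu(K)}(e')$ has four $n$-cubes. If $e'$ is contained in an $(n-2)$-cube $\xi$ of $K$ whose star $\Star_K(\xi)$ has $\ne 4$ $n$-cubes (which is exactly what the local multiplicity $\mu(K)$ tolerates), then $\Refine^\nu(K)$ fails to be locally Euclidean at $e'$ regardless of how far $e'$ is from $\partial K$. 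In $Z_1(q)$, the only such $(n-2)$-cubes are the markers in the connectors, as Remark \ref{rmk:ambient-essentially-disjoint} records; the paper's argument is that markers are far from one another and each marker lies in at most one $Z_1(q)$ (Remarks \ref{rmk:markers} and \ref{rmk:ambient-essentially-disjoint}), so one can choose a spanning tree that avoids all markers, and only such trees are guaranteed locally Euclidean. Your stronger conclusion that every spanning tree works is false, and the reasoning you give for it (boundary avoidance) does not establish the claim. You should replace it with the marker-avoidance argument.

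A smaller point: in your verification that $\rho_q$ maps into $U(q)^{[n]}$, the phrase ``and within $\sfRC(Z)_q$ by the partially ordered partition structure'' conceals the actual reason, which is that any $(n-1)$-cube common to two distinct $\sfRC(Z)_q$ and $\sfRC(Z)_{q'}$ belongs to the removed $\tau_Z$-passages and hence is not in $Z_1$ (as recorded at the start of Section~\ref{sec:Z-localization}); therefore $\rho_1$ has no ambiguity about which part of $\sfRC(Z)$ to select. This should be made explicit rather than gestured at.
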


\begin{proof}
Since $\sfRC(Z)_q= \bigcup_{i=1}^m \sfRC(Z)_{q;i}$ and each $\sfRC(Z)_{q;i}$ belongs to a distinct $\Sigma_i$-component $\Comp(Z_1;\Sigma_i)$, the pair $(U(q),Z_1(q))$ is admissible in $(\Refine^\nu(K), Z_1)$. Since $\rho_1$ is nearly nested in $\rho_0$, each preference function $\rho_q$ is admissible.

It follows from Remark \ref{rmk:markers} and Remark \ref{rmk:ambient-essentially-disjoint} that markers in connectors are far from one another and that each marker  is contained in only one $Z_1(q)$. Because markers are the only $(n-2)$-cubes in $Z_1(q)$ at which $\Refine^\nu(K)$ might not be locally Euclidean, there exist spanning trees in $\Gamma(Z_1(q))$ which do not pass markers. Such trees are locally Euclidean.
\end{proof}

\subsubsection{Family of localizations $\Channel(Z_1(q))$}

We begin with the observation that by
Definition \ref{def:localization}, the essential partition $\cL_1$ is  a localization of $Z_1$. In view of Lemma \ref{lemma:admissible-step-1}, we may fix for each $q\in Z^{[n-1]}$, a partially ordered locally Euclidean spanning tree $\cT_q$  of $\Gamma(Z_1(q))$ of size at most $\lambda_\loc$. Let
$\fT_1=\{\cT_q\colon  q\in Z^{[n-1]}\}$. 
 
As outlined in Section \ref{sec:outlines}, we may now apply Proposition \ref{prop:summary-Channeling} to each $(U(q),Z_1(q), \rho_q, \cT_q)$ to obtain $\Channel(Z_1(q))$.

Recall from Definition \ref{def:Channeling}  that channeling 
\[
\Channel(Z_1(q)) = \Tr(Z_1(q)) -  \bigcup_{i=1}^m \omega_{Z_1(q);i}
\]
is obtained by removing gates $ \omega_{Z_1(q);i}$ on the transformation $\Tr(Z_1(q))$ and that  the transformation has the form
\[
\Tr(Z_1(q)) = \left( \Refine^\nu({Z_1(q)}) \cup \sfRC(Z_1(q))^{(n-1)} \right) -  P_{Z_1(q)} \, \subset\, \Refine^{2\nu}(K), 
\]
where $\sfRC(Z_1(q))= \bigcup_{i=1}^m \sfRC(Z_1(q))_i  \subset \Refine^\nu(U(q)) \subset \Refine^{2\nu}(K)$, $P_{Z_1(q)}$ is the family of $(n-1)$-cubes representing the edges in 
\[\tau_{Z_1(q)}= \tau_{Z_1(q),1}\cup \cdots \cup\tau_{Z_1(q),m},\]
and each $\tau_{Z_1(q);i}$ is a  spanning tree in $\Gamma(\sfRC(Z_1(q))_i)$ of size at most $\lambda_\loc$.

Regarding the choice of reservoir-canal systems $\sfRC(Z_1(q))$, we observe first that each tree $\cT_q\in \fT_1$ is locally Euclidean. Thus,
connectors in the reservoir-canal system $\sfRC(Z_1(q))$ over $Z_1(q)$ may be chosen so that 
the connectors are contained in the union of $\sfRC(Z)$ and the preference cubes $\rho_q(Z_1(q))$. We formulate this  observation as follows and leave its verification to the interested reader.

\begin{lemma}
\label{lemma:rho-Channel-regularity}
Let $q\in Z^{[n-1]}$.
The reservoir-canal system  $\sfRC(Z_1(q))$ may be chosen so that 
for each edge $\{q^*, q^{**}\} \in \cT_q$ and each $i  \in \{1,\ldots, m\}$,  the connector $\sfJ_{\{q^*,q^{**}\};i}$ in the canal stretch $\sfC_{\{q^*,q^{**}\};i} \subset \sfRC(Z_1(q))$  satisfies
\[
\sfJ_{\{q^*,q^{**}\};i} \subset \Refine^\nu(\sfRC(Z))\, \cup \, \Refine^\nu(\rho_q(Z_1(q)^{[n-1]})).
\]
\noindent In particular, if $q^*\in \Refine^\nu(\sfRC(Z))$ and  $q^{**}\in \Refine^\nu(Z_1(q))-  \Refine^\nu(\sfRC(Z))$, then 
$\sfJ_{\{q^*,q^{**}\};i}  -\Refine^\nu(\sfRC(Z))$ consists of exactly one $n$-cube, which has a face  in $q$.
\end{lemma}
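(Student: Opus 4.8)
The plan is to choose, for each edge $\{q^*,q^{**}\}\in\cT_q$ and each index $i$, the routing of the connector $\sfJ_{\{q^*,q^{**}\};i}$ inside its canal stretch so that it stays in the prescribed region; since by Remark~\ref{rmk:J} there are at most two admissible connectors once the canal sections and markers are fixed, and since markers on distinct edges carry disjoint stars (cf.~Remark~\ref{rmk:markers}), these choices are independent over the edges of $\cT_q$ and together constitute the desired choice of $\sfRC(Z_1(q))$. Fix such an edge, put $e=q^*\cap q^{**}$, and let $\zeta$ be the marker over $\{q^*,q^{**}\}$ of index $i$; the connector lives in $\Star_{\Refine^\nu(U(q))}(\zeta)$. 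Because $\{q^*,q^{**}\}$ is an edge of $\cT_q$ and $\cT_q$ is locally Euclidean in $\Refine^\nu(K)$, the pair $(\Star_{\Refine^\nu(K)}(e),e)$ is isomorphic to a standard pair in $\sfC(\R^n)$; passing to refinements, $\Refine^{2\nu}(K)$ is locally Euclidean at $\zeta$, so $\Star_{\Refine^{2\nu}(K)}(\zeta)$ is a $4$-cycle of $n$-cubes and $\Star_{\Refine^\nu(U(q))}(\zeta)$ is an adjacently-connected subcomplex of this cycle.

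I would then use the near-nestedness of $\rho_q=\rho_1|_{Z_1(q)}$ in $\rho_0$: for $p\in Z_1(q)^{[n-1]}$ one has $\rho_q(p)\in\sfRC(Z)^{[n]}$ if $p\in I_q(Z_1)$, and $\rho_q(p)\in\Refine^\nu(\rho_0(q))^{[n]}$ if $p\in II_q(Z_1)$. Hence the two anchor $n$-cubes of the connector — the cube of $\sfC_{q^*,\{q^*,q^{**}\};i}\cap\Star_{\Refine^\nu(U(q))}(\zeta)$ and the cube of $\sfC_{q^{**},\{q^*,q^{**}\};i}\cap\Star_{\Refine^\nu(U(q))}(\zeta)$ — lie in $\Refine^\nu(\rho_q(q^*))$, resp.~$\Refine^\nu(\rho_q(q^{**}))$, thus in $\Refine^\nu(\rho_q(Z_1(q)^{[n-1]}))$. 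Since $U(q)\subset\Refine^\nu(\rho_0(q))\cup\sfRC(Z)_q$, the star $\Star_{\Refine^\nu(U(q))}(\zeta)$ contains only those of the four cubes around $\zeta$ that lie on the $\rho_0(q)$-side of $q$ — or all four, when $e$ is interior to $\sfRC(Z)_q$ — so the connector is formed from the two anchors and at most one intermediate $n$-cube. A short case analysis on the membership of $q^*,q^{**}$ in $I_q(Z_1)$ versus $II_q(Z_1)$ then yields the first assertion: if both lie in $II_q(Z_1)$ the two anchors are adjacent or equal and both lie in preference refinements; if at least one lies in $I_q(Z_1)$, the edge $e$ sits on the interface between $\sfRC(Z)$ and its complement in $\Refine^\nu(\rho_0(q))$, and by the placement of the reservoir blocks over $q$ the intermediate cube, when present, can be taken in $\Refine^\nu(\sfRC(Z))$. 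Routing the connector accordingly gives $\sfJ_{\{q^*,q^{**}\};i}\subset\Refine^\nu(\sfRC(Z))\cup\Refine^\nu(\rho_q(Z_1(q)^{[n-1]}))$.

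For the "in particular" clause one specializes to $q^*\in\sfRC(Z)$, $q^{**}\notin\sfRC(Z)$. Then $\rho_q(q^*)\in\sfRC(Z)^{[n]}$, so the anchor over $q^*$ — together with the intermediate cube, if present — belongs to $\Refine^\nu(\sfRC(Z))$, whereas $\rho_q(q^{**})\in\Refine^\nu(\rho_0(q))^{[n]}$ is not in $\sfRC(Z)$, and the anchor over $q^{**}$, being an $n$-cube of the canal section $\sfC_{q^{**},\{q^*,q^{**}\};i}$, has a face in $q^{**}\subset|q|$; hence $\sfJ_{\{q^*,q^{**}\};i}-\Refine^\nu(\sfRC(Z))$ is exactly that one $n$-cube. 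The only genuine work here is the case analysis of the second paragraph together with the bookkeeping of which of the four $n$-cubes around $\zeta$ is present in $\Refine^\nu(U(q))$ and whether it belongs to $\Refine^\nu(\sfRC(Z))$ or to a preference refinement; this is elementary once the local picture of the reservoir-canal system over $q$ is drawn, and is precisely the verification the statement leaves to the reader. I do not expect any conceptual obstacle beyond this.
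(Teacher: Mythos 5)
The paper supplies no proof of this lemma: after noting that each $\cT_q\in\fT_1$ is locally Euclidean, it asserts the claim and writes that it will ``leave its verification to the interested reader.'' Your sketch is exactly the verification the paper defers, built on the two ingredients it hints at: local Euclideanness of $\cT_q$ (which, combined with $\cT_q$ avoiding markers of $\sfRC(Z)$ per Lemma~\ref{lemma:admissible-step-1}, makes the star of each marker $\zeta$ a $4$-cycle of $n$-cubes in $\Refine^{2\nu}(K)$, so Remark~\ref{rmk:J} leaves at most two routings for each connector), and near-nestedness of $\rho_q=\rho_1|_{Z_1(q)}$ in $\rho_0$ (which places the two anchor cubes into $\Refine^\nu(\sfRC(Z))$ or $\Refine^\nu(\rho_q(Z_1(q)^{[n-1]}))$ according to the membership of $q^*,q^{**}$ in $I_q(Z_1)$ or $II_q(Z_1)$). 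Your case analysis and the ``in particular'' argument, resting on the $II_q$-side anchor having a face in $q$, are a faithful elaboration of what the authors leave implicit; the one step you leave soft --- that the intermediate cube, when present, can always be routed into $\Refine^\nu(\sfRC(Z))$ in the mixed case --- is equally unwritten in the source, and your local picture of $\Star_{\Refine^\nu(U(q))}(\zeta)$ as an adjacently-connected subcomplex of a $4$-cycle, together with $U(q)=\sfRC(Z)_q\cup N(q)$ and $N(q)\subset\Refine^\nu(\rho_0(q))$, is the right framework for closing it.
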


Before defining complex $Z_2$, we observe that complexes $\Channel(Z_1(q))$ are essentially disjoint.

\begin{corollary}\label{cor:RC-far}
Let $q\neq q'\in Z^{[n-1]}$. Then local systems $\sfRC(Z_1(q))$ and $\sfRC(Z_1(q'))$ are disjoint and 
their graph distance in $\Gamma(\Refine^{2\nu}(K))$ is at least $3^{\nu-2}$. 
In particular, $|\partial Z_1(q)| \subset |\partial \Channel (Z_1(q))|$ for $q\in Z^{[n-1]}$,  and $|\Channel(Z_1(q)) \cap \Channel(Z_1(q'))| =| Z_1(q)\cap Z_1(q')|$ for $q\neq q'$. 
\end{corollary}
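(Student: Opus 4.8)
The statement is Corollary~\ref{cor:RC-far}, which asserts two things about distinct $(n-1)$-cubes $q\neq q'$ in $Z^{[n-1]}$: (a) the local reservoir-canal systems $\sfRC(Z_1(q))$ and $\sfRC(Z_1(q'))$ are disjoint and their graph distance in $\Gamma(\Refine^{2\nu}(K))$ is at least $3^{\nu-2}$; and (b) the consequences that $|\partial Z_1(q)|\subset |\partial \Channel(Z_1(q))|$ and that $|\Channel(Z_1(q))\cap \Channel(Z_1(q'))| = |Z_1(q)\cap Z_1(q')|$. The key structural input is the localization picture from Section~\ref{sec:Z-localization}: the ambient complexes satisfy $U(q)\subset \Refine^\nu(\rho_0(q))$ modulo the first-generation system $\sfRC(Z)$, the trees $\cT_q$ are \emph{locally Euclidean} of size at most $\lambda_\loc$, and the $\tau_q$-passages between distinct $\sfRC(Z)_q$-blocks have already been removed before the channeling, so that the families $\{I_q(Z_1)^{[n-1]}\}$ and $\{II_q(Z_1)^{[n-1]}\}$ are essential partitions.

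First I would establish the separation at the level of the first-generation blocks: by Corollary~\ref{cor:border-of-Y} applied to each admissible pair $(U(q),Z_1(q))$, the system $\sfRC(Z_1(q))$ stays at graph distance at least $3^{\nu-2}$ from any $n$-cube meeting $\Refine^\nu(\partial Z_1(q))$, computed inside the ambient $\Refine^\nu(U(q))$. The point is that every reservoir of $\sfRC(Z_1(q))$ is placed over the center cube of $\Refine^2$ of some face, so it is pushed to the interior of $U(q)$, far from $\partial U(q)$. Next, since $Z_1(q)$ and $Z_1(q')$ can share at most the $(n-1)$-cubes in $|Z_1(q)\cap Z_1(q')| \subset |\partial Z_1(q)|\cap |\partial Z_1(q')|$ (by the essential-partition property recalled above, together with the partially ordered partition $\cR\cC_{\Refine^\nu(K)}(Z)$ determined by $\cT_0$, which forced removal of the shared passages), the two local systems lie on opposite sides of this common boundary region, each retreated by at least $3^{\nu-2}$. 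Combining, I would conclude $\sfRC(Z_1(q))\cap\sfRC(Z_1(q')) = \emptyset$ and that their graph distance in $\Gamma(\Refine^{2\nu}(K))$ is at least $3^{\nu-2}$. Here Lemma~\ref{lemma:rho-Channel-regularity} is used to control the connectors $\sfJ_{\{q^*,q^{**}\};i}$: they are contained in $\Refine^\nu(\sfRC(Z))\cup\Refine^\nu(\rho_q(Z_1(q)^{[n-1]}))$, so they cannot stray into a neighbor's territory.

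For part (b), the inclusion $|\partial Z_1(q)|\subset|\partial\Channel(Z_1(q))|$ follows from Corollary~\ref{cor:boundary-of-Y_sfRC} applied to $Y=Z_1(q)$, $U=U(q)$: transformation keeps $\Refine^\nu(\partial Y)$ in the boundary, and removing the interior gates $\omega_{Z_1(q);i}$ does not touch $\partial Z_1(q)$. For the intersection identity, note $\Channel(Z_1(q)) \subset \Refine^\nu(Z_1(q))\cup\sfRC(Z_1(q))^{(n-1)}$; the $\Refine^\nu(Z_1(q))$ parts meet only along $\Refine^\nu(|Z_1(q)\cap Z_1(q')|)$ by the essential-partition property, and the $\sfRC$-parts are disjoint (and at graph distance $\ge 3^{\nu-2}$) by part (a), while a $\Refine^\nu(Z_1(q))$-cube and an $\sfRC(Z_1(q'))$-cube also cannot coincide because the latter lies in the interior of $U(q')$, away from $\partial Z_1(q')\supset |Z_1(q)\cap Z_1(q')|$. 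Hence $|\Channel(Z_1(q))\cap\Channel(Z_1(q'))| = |\Refine^\nu(Z_1(q))\cap\Refine^\nu(Z_1(q'))| = |Z_1(q)\cap Z_1(q')|$.

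The main obstacle I anticipate is the bookkeeping in part (a): one must carefully argue that the ambient complexes $U(q) = \sfRC(Z)_q\cup N(q)$ for distinct $q$ only overlap along the already-removed passages and shared faces, so that the retraction estimate of Corollary~\ref{cor:border-of-Y}, which is an \emph{internal} statement about $\Refine^\nu(U(q))$, globalizes to a genuine graph-distance estimate in the common ambient $\Refine^{2\nu}(K)$. This requires invoking the nearly-nested property of $\rho_1$ (Section~\ref{sec:channeling-preference-function}) and Lemma~\ref{lemma:rho-Channel-regularity} to rule out the connectors as a source of leakage between neighboring systems; everything else is a routine consequence of the definitions and the essential-partition structure, so the proof itself should be short.
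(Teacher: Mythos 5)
Your proposal is correct and follows essentially the same route as the paper: the key step is the invocation of Corollary~\ref{cor:border-of-Y} applied to each admissible pair $(U(q),Z_1(q))$, and the consequences for $\Channel(Z_1(q))$ then follow from the essential-partition structure, Corollary~\ref{cor:boundary-of-Y_sfRC}, and the containment $\Channel(Z_1(q))\subset \Refine^\nu(Z_1(q))\cup\sfRC(Z_1(q))^{(n-1)}$. The paper's own proof is terser (two sentences) and simply asserts that Corollary~\ref{cor:border-of-Y} gives the claimed graph distance; you are more explicit about why the internal estimate in $\Gamma(\Refine^\nu(U(q)))$ globalizes to $\Gamma(\Refine^{2\nu}(K))$ and about the role of Lemma~\ref{lemma:rho-Channel-regularity} in ruling out connector leakage, both of which the paper leaves implicit.
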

\begin{proof}

By Corollary \ref{cor:border-of-Y}, complexes $\sfRC(Z_1(q))$, for $q\in Z^{[n-1]},$ have the claimed graph distance. Hence $\Channel(Z_1(q))$ and $\Channel(Z_1(q'))$, if $q\neq q'$, can meet only at their common boundary.
\end{proof}

\subsubsection{Definition and properties of  $Z_2=\Channel(Z_1)$}

We define
\[
Z_2 = \Channel(Z_1)=\bigcup_{q\in Z^{[n-1]}} \Channel(Z_1(q)).
\]

\begin{remark}
The complex $Z_2$ is determined by the choices associated to the channeling of the elements in the localization $\cL_1$, but the localization $\cL_1$ depends on $(K,Z,\rho_Z,\cT_Z)$ as seen in Section \ref{sec:Z-localization}; this dependence is also present in the union taken over the elements of $Z^{[n-1]}$. Hence, the complex $Z_2$ stems from  $Z_1$ as well as from $Z_0=Z$.
\end{remark}

We show that $Z_2$ is a separating complex in $\Refine^{2\nu}(K)$, which 
has the relative Wada property with respect to $Z_1$,  is a $\lambda_\loc$-perturbation of $Z_1$, and is core-expanding.

We begin with four immediate properties.
\begin{corollary}
\label{cor:Z_2-adj-conn}
Complex $Z_2$ is an adjacently connected.
\end{corollary}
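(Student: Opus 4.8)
The claim is that $Z_2 = \bigcup_{q\in Z^{[n-1]}} \Channel(Z_1(q))$ is adjacently connected. The plan is to leverage the fact that $Z_2$ decomposes along the localization $\cL_1 = \{Z_1(q) : q\in Z^{[n-1]}\}$ and that, on the one hand, each piece $\Channel(Z_1(q))$ inherits adjacent connectedness from the channeling construction, while on the other hand the pieces are glued along the adjacently connected complex $Z_1$ which itself sits inside $Z_2$ up to removal of passages and gates.

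First I would recall that each $\Channel(Z_1(q))$ is adjacently connected by Lemma \ref{lemma:Channeling-basic-properties} (or item (4) of Proposition \ref{prop:summary-Channeling}), since the pair $(U(q), Z_1(q))$ is admissible and $Z_1(q)$ is adjacently connected by the discussion in Section \ref{sec:Z-localization}. Next I would use Corollary \ref{cor:RC-far}: for $q\ne q'$ the local reservoir-canal systems are far apart, so $\Channel(Z_1(q))$ and $\Channel(Z_1(q'))$ meet only along their common boundary $|Z_1(q)\cap Z_1(q')|$, which in particular contains $\Refine^\nu(\partial Z_1(q)) \cap \Refine^\nu(\partial Z_1(q'))$. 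The key structural observation is that $\Channel(Z_1(q)) \supset \Refine^\nu(Z_1(q)) - (\text{passages}\cup\text{gates})$, and since the $(n-1)$-cubes removed as $\tau_{Z_1(q)}$-passages lie strictly inside $\sfRC(Z_1(q))$ while the gate $\omega_{Z_1(q);i}$ also lies in the interior of $U(q)$ away from $\partial Z_1(q)$, every $(n-1)$-cube of $\Refine^\nu(Z_1(q))$ near $\partial Z_1(q)$ survives in $\Channel(Z_1(q))$. Thus the refinement $\Refine^\nu(Z_1)$ — which is adjacently connected because $Z_1$ is adjacently connected by Corollary \ref{cor:channel-separating-complex} — is, up to the removed interior passages and gates, a subcomplex of $Z_2$, and moreover any two pieces $\Channel(Z_1(q))$, $\Channel(Z_1(q'))$ whose underlying $Z_1(q), Z_1(q')$ share an $(n-1)$-cube are joined through a chain of adjacent $(n-1)$-cubes of $\Refine^\nu(Z_1)$ lying in $Z_2$.

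The argument then runs as follows. Fix two $n$-cubes $Q, Q'$ of $\Refine^{2\nu}(K)$ that are $(n-1)$-cubes of $Z_2$; more precisely, pick $(n-1)$-cubes $\sigma, \sigma'$ of $Z_2$. Each lies in some $\Channel(Z_1(q))$, respectively $\Channel(Z_1(q'))$. Within $\Channel(Z_1(q))$, connect $\sigma$ by a chain of adjacent $(n-1)$-cubes to an $(n-1)$-cube $\sigma_0 \in \Refine^\nu(Z_1(q))$ that is not a removed passage or gate — this is possible since $\Gamma(\Channel(Z_1(q)))$ is connected and the subcomplex $\Refine^\nu(Z_1(q)) - P_{Z_1(q)} - \{\omega_{Z_1(q);i}\}$ is nonempty and adjacently connected (the passages and gates are isolated enough by the minimality of $\tau_{Z_1(q)}$ and the choice of well-located gate cubes). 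Do the same for $\sigma'$, reaching $\sigma_0' \in \Refine^\nu(Z_1(q'))$. Now $\sigma_0$ and $\sigma_0'$ are both $(n-1)$-cubes of $\Refine^\nu(Z_1)$ lying in $Z_2$; since $\Gamma(\Refine^\nu(Z_1))$ is connected, join them by a chain of adjacent $(n-1)$-cubes in $\Refine^\nu(Z_1)$, and observe that every cube of this chain that is neither a passage nor a gate lies in $Z_2$ — but the passages and gates of all the $\Channel(Z_1(q''))$ together form a sparse set (their graph distance is controlled by Corollary \ref{cor:RC-far}), so one can reroute the chain locally around each removed cube using adjacently connected portions of the relevant $\Channel(Z_1(q''))$. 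Concatenating the three chains gives an adjacency path in $\Gamma(Z_2)$ from $\sigma$ to $\sigma'$.

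The main obstacle is the rerouting step: one must verify that whenever the $\Refine^\nu(Z_1)$-chain passes through a cube removed as a passage or gate of some $\Channel(Z_1(q''))$, there is a detour within $Z_2$. This is where adjacent connectedness of each $\Channel(Z_1(q''))$ together with the "near the boundary $Z_1(q'')$ survives" observation does the work: the removed passages sit inside $\sfRC(Z_1(q''))$ and each such cube is, by Lemma \ref{lemma:rho-Channel-regularity} and the minimality in the reservoir-canal construction, adjacent to surviving cubes of $\Channel(Z_1(q''))$ on both sides, so the detour stays in $\Channel(Z_1(q'')) \subset Z_2$. I expect this to be routine given the earlier lemmas, so the corollary should admit a short proof; the excerpt indeed states it as a \textbf{Corollary} with an \emph{immediate} flavor, so the intended proof is likely just the combination of Corollary \ref{cor:RC-far}, adjacent connectedness of each $\Channel(Z_1(q))$, and adjacent connectedness of $Z_1$.

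\begin{proof}
By Corollary \ref{cor:channel-separating-complex}, the complex $Z_1$ is adjacently connected, hence so is its refinement $\Refine^\nu(Z_1)$. By Lemma \ref{lemma:Channeling-basic-properties}, applied to each admissible pair $(U(q), Z_1(q))$ (Lemma \ref{lemma:admissible-step-1}), each channeling $\Channel(Z_1(q))$ is adjacently connected. By Corollary \ref{cor:RC-far}, for $q\ne q'$ the complexes $\Channel(Z_1(q))$ and $\Channel(Z_1(q'))$ meet exactly in $|Z_1(q)\cap Z_1(q')|$, which in particular contains all $(n-1)$-cubes of $\Refine^\nu(Z_1)$ lying in both $\Refine^\nu(Z_1(q))$ and $\Refine^\nu(Z_1(q'))$.

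We check that $\Gamma(Z_2)$ is connected. Let $\sigma, \sigma'$ be $(n-1)$-cubes of $Z_2$, say $\sigma \in \Channel(Z_1(q))$ and $\sigma' \in \Channel(Z_1(q'))$. Recall that
\[
\Channel(Z_1(q)) = \left( \Refine^\nu(Z_1(q)) \cup \sfRC(Z_1(q))^{(n-1)} \right) - \left( P_{Z_1(q)} \cup \{\omega_{Z_1(q);i} : i=1,\ldots, m\} \right),
\]
and that the removed passages $P_{Z_1(q)}$ lie inside $\sfRC(Z_1(q))$ while the gates $\omega_{Z_1(q);i}$ lie in the interior of $U(q)$, away from $\partial Z_1(q)$. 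Hence the subcomplex $R_q := \Refine^\nu(Z_1(q)) \cap \Channel(Z_1(q))$ contains $\Refine^\nu(\partial Z_1(q))$, and $R_q$ is adjacently connected: it is $\Refine^\nu(Z_1(q))$ with the interior cubes $P_{Z_1(q)}$ and $\{\omega_{Z_1(q);i}\}$ removed, and by the minimality in the construction of $\sfRC(Z_1(q))$ each such removed cube is adjacent on both sides to cubes of $R_q$, so removing them does not disconnect $\Gamma(\Refine^\nu(Z_1(q)))$.

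Since $\Gamma(\Channel(Z_1(q)))$ is connected, we may join $\sigma$ by a chain of adjacent $(n-1)$-cubes inside $\Channel(Z_1(q))$ to some $\sigma_0 \in R_q$; similarly join $\sigma'$ to some $\sigma_0' \in R_{q'}$. Now $\sigma_0$ and $\sigma_0'$ are $(n-1)$-cubes of $\Refine^\nu(Z_1)$, and since $\Gamma(\Refine^\nu(Z_1))$ is connected we may join them by a chain $\sigma_0 = \eta_0, \eta_1, \ldots, \eta_t = \sigma_0'$ of adjacent $(n-1)$-cubes in $\Refine^\nu(Z_1)$. Each $\eta_\ell$ lies in $\Refine^\nu(Z_1(q_\ell))$ for some $q_\ell \in Z^{[n-1]}$. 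If $\eta_\ell \in Z_2$, keep it; otherwise $\eta_\ell$ is a removed passage or gate of $\Channel(Z_1(q_\ell))$, in which case, since $\Gamma(\Channel(Z_1(q_\ell)))$ is connected and $\eta_{\ell-1}, \eta_{\ell+1}$ lie in $R_{q_\ell} \subset \Channel(Z_1(q_\ell))$ (they are in $\Refine^\nu(Z_1)$ and adjacent to $\eta_\ell$, hence in $\Refine^\nu(Z_1(q_\ell))$, and they belong to $Z_2$ by assumption on the chain endpoints together with the fact that at most one consecutive cube can fail), we may replace $\eta_\ell$ by a detour through adjacent cubes of $\Channel(Z_1(q_\ell)) \subset Z_2$. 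Concatenating these chains yields an adjacency path in $\Gamma(Z_2)$ from $\sigma$ to $\sigma'$. Therefore $\Gamma(Z_2)$ is connected, i.e.\ $Z_2$ is adjacently connected.
\end{proof}
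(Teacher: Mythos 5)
Your proof is essentially the same idea as the paper's, but unrolled into explicit chain-chasing and, in doing so, it introduces an unnecessary complication that also contains a small inaccuracy. The paper's proof is two sentences: each $\Channel(Z_1(q))$ is adjacently connected, and the pieces agree along the (refined) boundaries inherited from $Z$, so the union is adjacently connected. Your version gets there by connecting any two cubes of $Z_2$ into $R_q := \Refine^\nu(Z_1(q)) \cap \Channel(Z_1(q))$ and then chaining through $\Refine^\nu(Z_1)$, which is the right skeleton. The issue is your description of $R_q$ as ``$\Refine^\nu(Z_1(q))$ with the interior cubes $P_{Z_1(q)}$ and $\{\omega_{Z_1(q);i}\}$ removed.'' Neither the passages nor the gates lie in $\Refine^\nu(Z_1(q))$: the passages are faces between pairs of $n$-cubes of $\sfRC(Z_1(q))$ that sit over $Z_1(q)$ on the $\rho_q$-side (cf.\ the proof of Lemma~\ref{lemma:Y_sfRC-connected}, which records exactly that faces of canal-stretch cubes lying in $\Refine^\nu(Y)$ are not passages), and each gate $\omega_{Z_1(q);i}$ is by construction a face of a well-located $n$-cube $C_i$ that does not meet $Y$. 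Consequently $R_q = \Refine^\nu(Z_1(q))$ with nothing removed, so $\Refine^\nu(Z_1) \subset Z_2$ and your final ``rerouting around removed cubes'' step is vacuous. That is fortunate, because the rerouting argument as written is the weakest part: the claim that $\eta_{\ell-1},\eta_{\ell+1}$ necessarily lie in $R_{q_\ell}$ and that ``at most one consecutive cube can fail'' is asserted without justification and would require real work if it were ever needed. With the correct observation that $\Refine^\nu(Z_1)\subset Z_2$, your argument collapses to the paper's: join $\sigma$ to $\Refine^\nu(Z_1(q))$ inside the adjacently connected $\Channel(Z_1(q))$, likewise for $\sigma'$, and chain within the adjacently connected $\Refine^\nu(Z_1)\subset Z_2$.
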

\begin{proof}
Since each $\Channel(Z_1(q))$ is adjacently connected and boundary of each $\Channel(Z_1(q))$ agrees with $\Refine^\nu(q)$, we conclude that complex $Z_2$ is adjacently connected.
\end{proof}

\begin{corollary}\label{cor:localization-Wada-Z}
Complex $\Channel(Z_1)\subset \Refine^{2 \nu}(K)$ has the relative Wada property with respect to $Z_1$.
\end{corollary}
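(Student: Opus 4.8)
The plan is to deduce the relative Wada property of $Z_2=\Channel(Z_1)$ (in the sense of Definition~\ref{def:Wada}, with $K,Z,Z'$ there played by $\Refine^\nu(K),Z_1,Z_2$) from the pre-Wada property of the local channelings, and then to check that the index-$i$ reservoir-canal tree is wired, through its gate, into the \emph{global} $\Sigma_i$-component of $\Gammacut(\Refine^{2\nu}(K);Z_2)$. First I would fix a boundary component $\Sigma_i$ of $K$ and an $(n-1)$-cube $q^*\in Z_1^{[n-1]}$. Since the families $\{I_q(Z_1)^{[n-1]}\}$ and $\{II_q(Z_1)^{[n-1]}\}$ are essential partitions of $I(Z_1)^{[n-1]}$ and $II(Z_1)^{[n-1]}$, we have $q^*\in Z_1(q)^{[n-1]}$ for some $q\in Z^{[n-1]}$. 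Let $Q,Q^*$ be the two $n$-cubes of $\Refine^\nu(K)$ sharing the face $q^*$ (there are two, not one, because $Z_1\cap\Star_{\Refine^\nu(K)}(\partial K)=\emptyset$, so $q^*$ is not one-sided). I must produce an $n$-cube of $\Comp_{\Refine^{2\nu}(K)}(Z_2;\Sigma_i)$ inside $\Refine^\nu(Q\cup Q^*)$.

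The first step is to apply the pre-Wada property. By Lemma~\ref{lemma:admissible-step-1}, $(U(q),Z_1(q))$ is admissible in $(\Refine^\nu(K),Z_1)$ with admissible preference function $\rho_q$, and $\Channel(Z_1(q))$ was obtained from the data $(U(q),Z_1(q),\rho_q,\cT_q)$ via Proposition~\ref{prop:summary-Channeling}. By the pre-Wada clause of that proposition (item~\eqref{item:channel-wada}, restating Proposition~\ref{prop:tr-existence-short}\eqref{item:pre-Wada}) there is, for the chosen $i$, an $n$-cube $\widehat Q_i\in\Span_{\Refine^\nu(U(q))}(\tau_{Z_1(q);i})$ having a face contained in $|q^*|$. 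Since $q^*$ is an $(n-1)$-cube of $\Refine^\nu(K)$ that is a face of exactly the two $n$-cubes $Q$ and $Q^*$, while $\widehat Q_i$ is an $n$-cube of $\Refine^{2\nu}(K)$ meeting $|q^*|$ in an $(n-1)$-face, the unique $n$-cube of $\Refine^\nu(K)$ whose refinement contains $\widehat Q_i$ must have $q^*$ as a face; hence $\widehat Q_i\in\Refine^\nu(Q)^{[n]}\cup\Refine^\nu(Q^*)^{[n]}\subset\Refine^\nu(Q\cup Q^*)^{[n]}$.

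It then remains to show $\widehat Q_i\in\Comp_{\Refine^{2\nu}(K)}(Z_2;\Sigma_i)$, i.e.\ that $\widehat Q_i$ lies in the component $\Gammacut(\Refine^{2\nu}(K);Z_2;\Sigma_i)$. The argument I would give: by Corollary~\ref{cor:RC-far} the local systems $\sfRC(Z_1(q'))$, $q'\in Z^{[n-1]}$, are pairwise at graph distance at least $3^{\nu-2}$ in $\Gamma(\Refine^{2\nu}(K))$ and $|\Channel(Z_1(q))\cap\Channel(Z_1(q'))|=|Z_1(q)\cap Z_1(q')|$ for $q\ne q'$, so no $(n-1)$-cube appearing as an edge of the subgraph $G_{\Channel(Z_1(q));i}=\Gamma(\Urec_i)\cup\tau_{Z_1(q);i}\cup\{\omega_{Z_1(q);i}\}$ of $\Gammacut(\Refine^\nu(U(q));\Channel(Z_1(q)))$ (Lemma~\ref{lemma:Channeling-basic-properties}) lies in $\Channel(Z_1(q'))$ for $q'\ne q$; hence $G_{\Channel(Z_1(q));i}$ is a subgraph of $\Gammacut(\Refine^{2\nu}(K);Z_2)$. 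It contains $\widehat Q_i$ and, through the gate $\omega_{Z_1(q);i}$, the receded complex $\Urec_i=\Refine^\nu(\Span_{U(q)}(G_i))-\sfRC(Z_1(q))$, where, by Convention~\ref{convention:labeling}, $\Span_{U(q)}(G_i)\subset\Comp_{\Refine^\nu(K)}(Z_1;\Sigma_i)$. Finally, $Z_2\subset\Refine^\nu(Z_1)\cup\bigcup_{q'}\sfRC(Z_1(q'))^{(n-1)}$, so away from $\bigcup_{q'}\Star_{\Refine^{2\nu}(K)}(\sfRC(Z_1(q')))$ the cut-graph $\Gammacut(\Refine^{2\nu}(K);Z_2)$ agrees with the refinement of $\Gammacut(\Refine^\nu(K);Z_1)$; therefore a chain of adjacent $n$-cubes in $\Gammacut(\Refine^\nu(K);Z_1;\Sigma_i)$ joining a cube of $\Span_{U(q)}(G_i)$ to $\Sigma_i$ (which exists since $Z_1$ is separating) refines to a chain in $\Gammacut(\Refine^{2\nu}(K);Z_2)$ joining $\Urec_i$, hence $\widehat Q_i$, to $\Sigma_i$. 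This gives $\widehat Q_i\in\Comp_{\Refine^{2\nu}(K)}(Z_2;\Sigma_i)^{[n]}$, and letting $q^*$ and $i$ vary completes the proof.

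The step I expect to be the main obstacle is precisely this last one: transferring the purely local description of $\Gammacut(\Refine^\nu(U(q));\Channel(Z_1(q)))$ to the global cut-graph $\Gammacut(\Refine^{2\nu}(K);Z_2)$. One must verify that gluing the localized channelings along their common boundaries $|Z_1(q)\cap Z_1(q')|$, with pairwise far reservoir-canal systems, neither merges nor splits complementary components in an unintended way, so that each index-$i$ tree $\tau_{Z_1(q);i}$ is connected by its gate into the $\Sigma_i$-component and into no other. This is a combinatorial bookkeeping argument relying only on the separation properties of $Z_1$, on Corollary~\ref{cor:RC-far}, and on the structure recorded in Proposition~\ref{prop:summary-Channeling} and Lemma~\ref{lemma:Channeling-basic-properties}; no new geometric input is required, and most of it will be reused when proving that $Z_2$ is a separating complex.
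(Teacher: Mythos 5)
Your proposal follows the same route as the paper's (terse) proof: apply the pre-Wada property of Proposition~\ref{prop:summary-Channeling}\eqref{item:channel-wada} to each local channeling $\Channel(Z_1(q))$ and then check that the index-$i$ tree $\tau_{Z_1(q);i}$ is wired, through the gate $\omega_{Z_1(q);i}$, into the global $\Sigma_i$-component. The ingredients you cite (Corollary~\ref{cor:RC-far}, Proposition~\ref{prop:summary-Channeling}, Lemma~\ref{lemma:Channeling-basic-properties}) are the right ones, and the observation that $\widehat Q_i$ lands inside $\Refine^\nu(Q\cup Q^*)$ because $q^*$ is a face of exactly two $n$-cubes of $\Refine^\nu(K)$ is correct.

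The one step that is stated too loosely is the final ``refines to a chain'' claim. An $n$-cube of a chain in $\Gammacut(\Refine^\nu(K);Z_1;\Sigma_i)$ may, after refinement, have some of its $3^{n\nu}$ subcubes removed by a reservoir-canal system $\sfRC(Z_1(q'))$, so there is no literal ``refined chain''; you must route around these removed subcubes. The fact that makes this routing possible is precisely the receded-complex connectivity: Lemma~\ref{lemma:receded-connected} shows $\Gamma(\Refine^\nu(C)\cap\Urec_j)$ is connected cube-by-cube, and Corollary~\ref{cor:receded-connected} shows the full receded complex is adjacently connected. Invoking these (applied to $\Rec(Z_1;\Sigma_i)=\Refine^\nu(\Comp(Z_1;\Sigma_i))-\sfRC(Z_1)$, cf.\ Corollary~\ref{cor:cut-graph-2}) closes the remaining gap; without the citation the ``refines'' step is the place a careful reader would push back.
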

\begin{proof}
The relative Wada property follows from Proposition \ref{prop:summary-Channeling} and the construction of local channelings $\Channel(Z_1(q))$ for $q\in Z_1^{[n-1]}$.
\end{proof}

\begin{corollary}\label{cor:core-Z_1-Z_2}
For each $i\in \{1,\ldots, m\}$,
\[
|\Core(Z_1;\Sigma_i)|\subset |\Core(Z_2;\Sigma_i)|\setminus  |(\partial \Core(Z_2;\Sigma_i))_{\mathrm{inner}}|.
\]
\end{corollary}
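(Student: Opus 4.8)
The statement asserts that the core associated to $Z_1$ sits strictly inside the core associated to $Z_2$ (away from the inner boundary of the latter). Since $Z_2 = \bigcup_{q\in Z^{[n-1]}} \Channel(Z_1(q))$ is obtained from $Z_1$ by a family of \emph{localized} channelings, the natural plan is to reduce the global containment to the already-established local core-expanding property, exactly as Corollary \ref{cor:core-Z-Z_1} does for the first step. The key structural fact is that channeling is supported near the complex being perturbed: by Proposition \ref{prop:tr-existence-short} and property \eqref{item:RC-existence-7} of Proposition \ref{prop:summary-Channeling}, for each $q$ we have $\sfRC(Z_1(q)) \subset \Refine^\nu(\Star_{U(q)}(Z_1(q)))$ and hence
\[
\Star_{\Refine^{2\nu}(K)}(Z_2) \subset \Star_{\Refine^{2\nu}(K)}\left( \sfRC(Z_1) \cup \Refine^\nu(Z_1)\right) \subset \Refine^\nu(\Star_{\Refine^\nu(K)}(Z_1)).
\]
First I would record this inclusion, using Corollary \ref{cor:RC-far} to see that the local systems $\sfRC(Z_1(q))$ are far apart, so that taking the union over $q$ does not destroy the containment in $\Refine^\nu(\Star(Z_1))$.

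Next I would unwind the definitions of the two cores. Recall $\Core(Z_1;\Sigma_i) = \Comp(Z_1;\Sigma_i) - \Star_{\Refine^\nu(K)}(Z_1)$ and $\Core(Z_2;\Sigma_i) = \Comp(Z_2;\Sigma_i) - \Star_{\Refine^{2\nu}(K)}(Z_2)$. By the structure of the localized channelings (Proposition \ref{prop:summary-Channeling}(4a) applied to each $(U(q),Z_1(q))$), each $\Comp(Z_2;\Sigma_i)$ is obtained from $\Refine^\nu(\Comp(Z_1;\Sigma_i))$ by trading tunnels located inside $\Refine^\nu(\Star(Z_1))$; in particular every $n$-cube of $\Refine^\nu(\Comp(Z_1;\Sigma_i))$ not meeting $\Refine^\nu(\Star(Z_1))$ still belongs to $\Comp(Z_2;\Sigma_i)$. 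Combining this with the inclusion from the first paragraph, any $n$-cube in $|\Core(Z_1;\Sigma_i)|$ — which by definition avoids $\Star_{\Refine^\nu(K)}(Z_1)$ — refines to $n$-cubes that lie in $\Comp(Z_2;\Sigma_i)$ and avoid $\Star_{\Refine^{2\nu}(K)}(Z_2)$, hence lie in $\Core(Z_2;\Sigma_i)$. This gives $|\Core(Z_1;\Sigma_i)| \subset |\Core(Z_2;\Sigma_i)|$.

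Finally, to upgrade this to the strict containment $|\Core(Z_1;\Sigma_i)| \subset |\Core(Z_2;\Sigma_i)| \setminus |(\partial\Core(Z_2;\Sigma_i))_{\mathrm{inner}}|$, I would observe that $(\partial\Core(Z_2;\Sigma_i))_{\mathrm{inner}} = \Core(Z_2;\Sigma_i) \cap \Star_{\Refine^{2\nu}(K)}(Z_2)$, so a point of $|\Core(Z_1;\Sigma_i)|$ lying in it would have to be within one $n$-cube of $|Z_2|$. But $Z_2 \subset \Refine^\nu(\Star_{\Refine^\nu(K)}(Z_1))$, so such a point would be within $\Refine^\nu$-graph-distance $O(1)$ of $|Z_1|$; meanwhile, by the $\Refine^\nu$-scale separation built into $\Star_K(Z_1)$ versus $\Core(Z_1;\Sigma_i)$ (the core omits the \emph{full} star of $Z_1$, which has thickness $3^\nu$ cubes at the $\Refine^{2\nu}$ scale), a point in $|\Core(Z_1;\Sigma_i)|$ has $\Refine^{2\nu}$-graph distance at least $3^\nu$ from $|Z_1|$, a contradiction. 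The main obstacle here is bookkeeping the refinement scales correctly — making precise that removing $\Star_{\Refine^\nu(K)}(Z_1)$ rather than merely $\Star_{\Refine^{2\nu}(K)}$ of a refinement of $Z_1$ buys a full factor of $3^\nu$ of room, which comfortably absorbs the single-cube width of $\Star(Z_2)$ and the bounded size $\lambda_\loc$ of the newly attached tunnels. Everything else is a direct transcription of the first-step argument (Corollaries \ref{cor:core-Z-Z_1} and \ref{cor:Z_1-bilipschitz-expanding}) to the localized setting, using Corollary \ref{cor:summary-Channeling} and Corollary \ref{cor:RC-far} to keep the local pieces disjoint.
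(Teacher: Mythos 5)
Your overall plan matches the paper's approach exactly: record the single inclusion $\Star_{\Refine^{2\nu}(K)}(Z_2)\subset\Refine^\nu(\Star_{\Refine^\nu(K)}(Z_1))$ (via Lemma \ref{lemma:rho-Channel-regularity} and Proposition \ref{prop:summary-Channeling}), then deduce core-expansion; the paper itself does nothing more than state this inclusion and say the rest ``follows as in'' Corollary \ref{cor:core-Z-Z_1}. Your steps 1 and 2, which unpack the containment $|\Core(Z_1;\Sigma_i)|\subset|\Core(Z_2;\Sigma_i)|$, are sound.

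The problem is that the distance comparison you run in step 3 does not actually give the contradiction you claim. You bound the offending point as ``within $\Refine^\nu$-graph-distance $O(1)$ of $|Z_1|$'' and bound $|\Core(Z_1;\Sigma_i)|$ as ``at $\Refine^{2\nu}$-graph distance at least $3^\nu$ from $|Z_1|$.'' Converting both to the $d_K$-metric these say $\lesssim 3^{-\nu}$ and $\geq 3^{-\nu}$ respectively -- the same order of magnitude, so no contradiction. The inclusion $Z_2\subset\Refine^\nu(\Star_{\Refine^\nu(K)}(Z_1))$ alone is too weak for this: a priori it lets $|Z_2|$, and hence $\partial_{\mathrm{top}}|\Star_{\Refine^{2\nu}(K)}(Z_2)|$, sit on the outer boundary of the $Z_1$-star, which is precisely the set $|(\partial\Core(Z_1;\Sigma_i))_{\mathrm{inner}}|\subset|\Core(Z_1;\Sigma_i)|$ you need to avoid. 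The missing ingredient is the much tighter estimate $\dist(|Z_2|,|Z_1|)\leq \tfrac{1}{9}3^{-\nu}$, which comes from the explicit choice in Section \ref{sec:pre-reservoir-block} that the model reservoir blocks $\mathbf R_i$ sit at depth $\leq 1/9$ of a cube (so $|\sfRC(Z_1)|$, hence $|Z_2|$, stays within $3^{-\nu-2}$ of $|Z_1|$), together with the fact that connectors live in $\Star$ of a marker and hence within $O(3^{-2\nu})$ of $|Z_1|$. With this one gets $|\Star_{\Refine^{2\nu}(K)}(Z_2)|$ at $d_K$-distance $\leq 3^{-\nu-2}+O(3^{-2\nu})<3^{-\nu}$ from $|Z_1|$, strictly less than $\dist(|\Core(Z_1;\Sigma_i)|,|Z_1|)\geq 3^{-\nu}$, so $|\Star_{\Refine^{2\nu}(K)}(Z_2)|$ lies in the interior of $|\Star_{\Refine^\nu(K)}(Z_1)|$ and hence misses $|\Core(Z_1;\Sigma_i)|$ entirely. (Equivalently, one can verify the stronger complex-level inclusion $\Star_{\Refine^{2\nu}(K)}(\Star_{\Refine^{2\nu}(K)}(Z_2))\subset\Refine^\nu(\Star_{\Refine^\nu(K)}(Z_1))$, the form the authors use later in Lemma \ref{lemma:core-to-core}.) To be fair, the paper's own one-sentence deduction also relies on this extra quantitative room without spelling it out.
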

\begin{proof}
By Lemma \ref{lemma:rho-Channel-regularity}, $\sfRC(Z_1(q)) \subset \Refine^\nu(\sfRC(Z))\, \cup \, \Refine^\nu(\rho_q({Z_1(q)}))$ and $\Star_{\Refine^{2\nu}(K)}(Z_2)\subset  \Star_{\Refine^{2\nu}(K)}(\sfRC(Z_1) \cup \Refine^\nu(Z_1)) \subset \Refine^\nu(\Star_{\Refine^\nu(K)}(Z_1)$. 
The core-expansion property follows now as in the case of Corollary \ref{cor:core-Z-Z_1}.
\end{proof}

\begin{corollary}\label{cor:cut-graph-2}
The cut-graph  $\Gammacut(\Refine^{2\nu}(K);Z_2)$ has exactly $m$ connected components,
\[
G_{Z_2;i}=\Gamma(\Rec(Z_1;\Sigma_i))\cup \bigcup_{q\in Z^{[n-1]}} (\tau_{Z_1(q);i} \cup  \omega_{Z_1(q);i}),\quad  i=1,\ldots, m.
\]
\end{corollary}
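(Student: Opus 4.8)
The plan is to trace the structure of the cut-graph through the local channeling construction exactly as was done in the first step (Corollary \ref{cor:channel-separating-complex} together with Lemma \ref{lemma:Channeling-basic-properties}), but now respecting the essential partition $\cL_1 = \{Z_1(q)\colon q\in Z^{[n-1]}\}$. First I would recall from Lemma \ref{lemma:Channeling-basic-properties} and Proposition \ref{prop:summary-Channeling}, applied to each admissible tuple $(U(q), Z_1(q), \rho_q, \cT_q)$, that the connected components of the local cut-graph $\Gammacut(\Refine^\nu(U(q)); \Channel(Z_1(q)))$ are precisely
\[
G_{\Channel(Z_1(q));i} = \Gamma(\Urec_{q;i}) \cup \tau_{Z_1(q);i} \cup \{\omega_{Z_1(q);i}\}, \qquad i=1,\ldots, m,
\]
where $\Urec_{q;i} = \Refine^\nu(U(q)_i) - \sfRC(Z_1(q))$ and, by Convention \ref{convention:labeling}, $\Urec_{q;i}$ lies in $\Comp(Z_1;\Sigma_i)$. (Here $r(U(q), Z_1(q)) = m$ because each $\sfRC(Z)_{q;i}$ sits in a distinct $\Sigma_i$-component, as noted in the proof of Lemma \ref{lemma:admissible-step-1}, so there are no extra receded pieces of index $> m$.)

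The next step is to glue these local descriptions along the common boundaries. By Corollary \ref{cor:RC-far}, the local systems $\sfRC(Z_1(q))$ and $\sfRC(Z_1(q'))$ are disjoint with graph distance at least $3^{\nu-2}$ for $q\neq q'$, and $|\Channel(Z_1(q)) \cap \Channel(Z_1(q'))| = |Z_1(q)\cap Z_1(q')|$; moreover $|\partial Z_1(q)| \subset |\partial\Channel(Z_1(q))|$. Consequently, away from the (mutually far apart) local reservoir-canal systems, the $n$-cubes of $\Refine^{2\nu}(K)$ on the two sides of a cube of $Z_1^{[n-1]}$ are separated by $Z_2$ exactly as they were by $\Refine^\nu(Z_1)$ inside $\Comp(Z_1;\Sigma_i)$. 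Gluing the receded pieces $\Urec_{q;i}$ over all $q\in Z^{[n-1]}$ therefore recovers $\Rec(Z_1;\Sigma_i)$, the receded subcomplex of $\Refine^\nu(\Comp(Z_1;\Sigma_i))$ after removal of $\bigcup_q \sfRC(Z_1(q))$; adjacent-connectedness of this union follows by the same argument as Corollary \ref{cor:receded-connected}. Each gate $\omega_{Z_1(q);i}$ is removed in forming $\Channel(Z_1(q))$, and since the gates for distinct $i$ and distinct $q$ are mutually disjoint (Proposition \ref{prop:summary-Channeling}(3) plus Corollary \ref{cor:RC-far}), removing them merely joins each local tree $\tau_{Z_1(q);i}$ onto $\Gamma(\Rec(Z_1;\Sigma_i))$ without ever joining two different indices $i$. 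This yields precisely
\[
G_{Z_2;i} = \Gamma(\Rec(Z_1;\Sigma_i)) \cup \bigcup_{q\in Z^{[n-1]}} \bigl(\tau_{Z_1(q);i} \cup \{\omega_{Z_1(q);i}\}\bigr), \qquad i=1,\ldots, m,
\]
and, since each such $G_{Z_2;i}$ is connected and these $m$ subgraphs are pairwise vertex-disjoint, there are exactly $m$ components.

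The main obstacle, and the point requiring the most care, is verifying that the local channeling operations genuinely do not interfere with one another across the boundaries between adjacent $Z_1(q)$'s — that is, that no cube removed or added in $\Channel(Z_1(q))$ accidentally alters the adjacency relations counted inside $\Comp(Z_1;\Sigma_i)$ near $Z_1(q')$, and symmetrically. This is exactly what Corollary \ref{cor:RC-far} and Lemma \ref{lemma:rho-Channel-regularity} are designed to control: the local systems are far apart in graph distance, each marker belongs to only one $Z_1(q)$ (Remark \ref{rmk:ambient-essentially-disjoint}), and the connectors of $\sfRC(Z_1(q))$ stay inside $\Refine^\nu(\sfRC(Z)) \cup \Refine^\nu(\rho_q(Z_1(q)^{[n-1]}))$. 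Assembling these facts, the identity for $G_{Z_2;i}$ and the count of $m$ components follow; the argument that $Z_2$ is then a separating complex with the asserted relative Wada, perturbation, and core-expansion properties proceeds verbatim as in Corollary \ref{cor:channel-separating-complex}, using Corollaries \ref{cor:Z_2-adj-conn}, \ref{cor:localization-Wada-Z}, and \ref{cor:core-Z_1-Z_2} together with the bilipschitz expansion supplied by Theorem \ref{theorem:flattening-indentation} and Proposition \ref{prop:tunnel-contracting} applied locally.
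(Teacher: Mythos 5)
Your overall strategy — applying Proposition \ref{prop:summary-Channeling} / Lemma \ref{lemma:Channeling-basic-properties} to each local pair $(U(q),Z_1(q))$ and then gluing via the non-interference of Corollary \ref{cor:RC-far} — is the same approach as the paper, and the final identification of $G_{Z_2;i}$ is correct. However, there is a genuine gap in the opening step.

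Your parenthetical claim that $r(U(q),Z_1(q))=m$ is false, and the justification you give (each $\sfRC(Z)_{q;i}$ sits in a distinct $\Sigma_i$-component) only shows $r\ge m$, which is what admissibility already guarantees. In fact for a generic $q\in Z^{[n-1]}$ (one such that no gate $\omega_i$ of $Z_1$ lies in $\partial\sfRC(Z)_q$), the piece $N(q)=\Span_{\Refine^\nu(K)}(\{Q'\in\Refine^\nu(\rho(q))^{[n]}:Q'\cap q\ne\emptyset\})-\sfRC(Z)$ contributes at least one additional connected component to $\Gammacut(U(q);Z_1(q))$: every $(n-1)$-cube on the interface $\partial\sfRC(Z)_q\cap N(q)^{(n-1)}$ lies in $\sfRC(Z)^{(n-1)}$ and is neither a passage in $P_{\tau_Z}$ (those are interior to the $\sfRC(Z)_i$) nor a gate, hence belongs to $Z_1$ and is cut. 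So $r(U(q),Z_1(q))\ge m+1$ for all but at most $m$ choices of $q$, and the formula you quote for the local cut-graph as having exactly $m$ components is wrong; Lemma \ref{lemma:Channeling-basic-properties} actually gives $r\ge m$ components, of which only the first $m$ carry a tree and gate while the remaining $\Gamma(\Urec_{q;j})$, $j>m$, carry none. Relatedly, your claim that $\bigcup_q\Urec_{q;i}$ \emph{equals} $\Rec(Z_1;\Sigma_i)$ is too strong: the global receded complex $\Rec(Z_1;\Sigma_i)=\Refine^\nu(\Comp(Z_1;\Sigma_i))-\sfRC(Z_1)$ also contains all the $n$-cubes of $\Refine^\nu(\Comp(Z_1;\Sigma_i))$ that lie outside $\bigcup_q U(q)$ (i.e.\ far from $Z_1$), together with the extra locally-receded pieces $\Urec_{q;j}$ with $j>m$ that happen to lie in $\Comp(Z_1;\Sigma_i)$.

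The fix is exactly what the paper's proof does, and it makes the parenthetical unnecessary. Do not try to get $r=m$ locally. Instead define $\Rec(Z_1;\Sigma_i)=\Refine^\nu(\Comp(Z_1;\Sigma_i))-\sfRC(Z_1)$ directly as the global receded $\Sigma_i$-component, and show it is adjacently connected by the same argument as Lemma \ref{lemma:receded-connected} and Corollary \ref{cor:receded-connected} (restricting to each $Q\in\Comp(Z_1;\Sigma_i)^{[n]}$ and gluing along $\Gamma(\Comp(Z_1;\Sigma_i))$). Once $\Gamma(\Rec(Z_1;\Sigma_i))$ is known to be a single connected subgraph of $\Gammacut(\Refine^{2\nu}(K);Z_2)$, all the extra locally-receded pieces $\Urec_{q;j}$, $j>m$, are automatically absorbed into it, and then the only remaining local components of $\Gammacut(\Refine^\nu(U(q));\Channel(Z_1(q)))$ are the trees $\tau_{Z_1(q);i}$, each of which attaches to $\Gamma(\Rec(Z_1;\Sigma_i))$ via the single removed gate $\omega_{Z_1(q);i}$ on $\partial\sfRC(Z_1(q))_i\cap\partial\Rec(Z_1;\Sigma_i)$. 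Your observations that gates for distinct $i$ or distinct $q$ are mutually disjoint and that $\Comp(Z_1;\Sigma_i)$ are essentially disjoint then complete the count of exactly $m$ components, with the stated description of each $G_{Z_2;i}$.
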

\begin{proof}
For each $i\in\{1,\ldots, m\}$, the receded $\Sigma_i$-component resulted from the removal of the system $\sfRC(Z_1)=\bigcup_{q\in Z^{[n-1]}}\sfRC(Z_1(q))$ from $\Comp(Z_1;\Sigma_i)$ is
\[\Rec(Z_1;\Sigma_i)= \Refine^\nu(\Comp(Z_1;\Sigma_i))  - \sfRC(Z_1).\]

Since pairs $(U(q); Z_1(q))$ are admissible, all local systems $\sfRC(Z_1(q))_i, q\in Z^{[n-1]},$ with  index $i$ are channeled to the receded complex $\Rec(Z_1;\Sigma_i)$. The claim follows.
\end{proof}

To show that $Z_2$ is an perturbation of $Z_1$, we begin with an observation.
By construction, the $\Sigma_i$-component $\Comp(Z_2;\Sigma_i)=\Span_{\Refine^{2\nu}(K)}(G_{Z_2;i})$ has an essential partition 
\[
\Comp(Z_2;\Sigma_i) = \Rec(Z_1;\Sigma_i) \cup \sfRC(Z_1)_i.
\]
By  
Proposition \ref{prop:reservoir-canals-as-indentations},
\[\D(Z_1;\Sigma_i)=\bigcup_{q\in  Z^{[n-1]}} \pi_2^{-1}\left(\sfRC(Z_1)\cap \Refine^\nu(\Comp(Z_1;\Sigma_i)) \right)\]
 is a union of mutually disjoint indentations hence an indentation in lift $\Real(Z_1;\Sigma_i)=\pi_1^{-1}(\Comp(Z_1;\Sigma_i))$.
Thus Corollary \ref{cor:cut-graph-2} gives rise to a description of the structural relation between $\Real(Z_2;\Sigma_i)=\pi_2^{-1}(\Comp(Z_2;\Sigma_i))$ and $\Real(Z_1;\Sigma_i)$.

\begin{lemma} 
\label{lemma:localized-Realization-structure-2}
For each $i=1,\ldots, m$, 
\[
\Real(Z_2;\Sigma_i) = \Realdented(Z_1;\Sigma_i) \cup \Tunnel(Z_2;\Sigma_i),
\]
where 
\begin{enumerate}
\item $\Realdented(Z_1;\Sigma_i)=\Refine^\nu(\Real(Z_1;\Sigma_i))-\D(Z_1;\Sigma_i)$, 
\item  $\Tunnel(Z_2;\Sigma_i)=  \bigcup_{q\in Z^{[n-1]}}\tunnel(q;\Sigma_i)$ is a family of tunnels whose adjacency graphs are  isomorphic to trees $\tau_{Z_1(q);i}$, of sizes at most  $ \lambda_\loc$, and
\item $\tunnel(q;\Sigma_i)\cap  \Realdented(Z_1;\Sigma_i)  =\pi_2^{-1}( \omega_{Z_1(q);i})$ is the lift a gate.
\end{enumerate}
\end{lemma}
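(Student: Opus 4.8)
The plan is to mimic, almost verbatim, the structure of Corollary~\ref{cor:channel-Realization-structure} for the first channeling step, but now assembled locally over the elements $q\in Z^{[n-1]}$. First I would record the essential partition $\Comp(Z_2;\Sigma_i)=\Rec(Z_1;\Sigma_i)\cup\sfRC(Z_1)_i$ already noted above, where $\sfRC(Z_1)_i=\bigcup_{q\in Z^{[n-1]}}\sfRC(Z_1(q))_i$ and $\Rec(Z_1;\Sigma_i)=\Refine^\nu(\Comp(Z_1;\Sigma_i))-\sfRC(Z_1)$. Lifting under $\pi_2$ and using Corollary~\ref{cor:cut-graph-2}, the connected component $G_{Z_2;i}$ of the cut-graph is $\Gamma(\Rec(Z_1;\Sigma_i))$ with the trees $\tau_{Z_1(q);i}$ attached along the gate faces $\omega_{Z_1(q);i}$. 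Since $\Gamma(\cR_{\Refine^{2\nu}(K)}(G_{Z_2;i}))$ is isomorphic to $G_{Z_2;i}$ and the realization of a tree is a tunnel (Lemma~\ref{lemma:Realization-cell}), the lift decomposes as $\Real(Z_2;\Sigma_i)=\pi_2^{-1}(\Rec(Z_1;\Sigma_i))\cup\bigcup_{q}\tunnel(q;\Sigma_i)$, where $\tunnel(q;\Sigma_i)=\cR_{\Refine^{2\nu}(K)}(\tau_{Z_1(q);i})$ and the intersection with the rest of the lift is the single $(n-1)$-cube $\pi_2^{-1}(\omega_{Z_1(q);i})$.

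Next I would identify $\pi_2^{-1}(\Rec(Z_1;\Sigma_i))$ with $\Realdented(Z_1;\Sigma_i)$. This is where I invoke Corollary~\ref{cor:summary-Channeling}/Proposition~\ref{prop:reservoir-canals-as-indentations}: the lift $\D(Z_1;\Sigma_i)=\bigcup_{q\in Z^{[n-1]}}\pi_2^{-1}(\sfRC(Z_1(q))\cap\Refine^\nu(\Comp(Z_1;\Sigma_i)))$ is a union of mutually disjoint indentations (disjointness by Corollary~\ref{cor:RC-far}), hence an indentation in $\Real(Z_1;\Sigma_i)$, and $\Realdented(Z_1;\Sigma_i)=\Refine^\nu(\Real(Z_1;\Sigma_i))-\D(Z_1;\Sigma_i)$. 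Because removing the reservoir-canal systems from $\Refine^\nu(\Comp(Z_1;\Sigma_i))$ commutes with passing to the lift (the cut-graph $\Gammacut(U(q);Z_1(q))$ components are subgraphs of the relevant $\Sigma_i$-component by admissibility, Lemma~\ref{lemma:admissible-step-1}), the canonical projection $\pi_2$ carries $\pi_2^{-1}(\Rec(Z_1;\Sigma_i))$ isomorphically onto $\Rec(Z_1;\Sigma_i)$ and this complex is precisely $\Realdented(Z_1;\Sigma_i)$. This gives items (1) and (3).

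For item (2), the size bound on each tunnel $\tunnel(q;\Sigma_i)$ is inherited directly: by Lemma~\ref{lemma:admissible-step-1} each $\cT_q$ has size at most $\lambda_\loc$, and by the construction of $\Channel(Z_1(q))$ via Proposition~\ref{prop:summary-Channeling} the spanning tree $\tau_{Z_1(q);i}$ of $\Gamma(\sfRC(Z_1(q))_i)$ again has size at most $\lambda_\loc=\lambda_\loc(n,\mu,\nu)$. That $\Tunnel(Z_2;\Sigma_i)=\bigcup_{q}\tunnel(q;\Sigma_i)$ is genuinely a disjoint union of tunnels (not merely a disjoint union of cells) follows from Corollary~\ref{cor:RC-far}, which keeps the local systems at graph distance at least $3^{\nu-2}$ apart, so distinct $\tunnel(q;\Sigma_i)$ do not share $n$-cubes or even touch.

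The main obstacle I anticipate is the bookkeeping in the second paragraph: verifying carefully that the operation of deleting $\sfRC(Z_1)$ and then lifting coincides with the operation of lifting and then deleting $\D(Z_1;\Sigma_i)$, i.e.\ that the diagram relating $\pi_1$, $\pi_2$, and the refinement maps commutes in a way compatible with the preference functions $\rho_q$ being nearly nested in $\rho_1$ (Section~\ref{sec:channeling-preference-function}). The subtlety is that $Z_2$ is built in the \emph{localized} ambient complexes $U(q)$, so one must check that the local lift $\cR_{\Refine^\nu(U(q))}(\Gammacut(\Refine^\nu(U(q));\Channel(Z_1(q))))$ glues compatibly with the global realization $\cR_{\Refine^{2\nu}(K)}(\Gammacut(\Refine^{2\nu}(K));Z_2))$; this uses that the $U(q)$ intersect only along $\Refine^\nu(q)$-faces that are removed as passages, so no identification conflict arises. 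Once this compatibility is in hand, the statement is a formal consequence of Corollary~\ref{cor:summary-Channeling} applied over each $q$ and then unioned, exactly parallel to how Corollary~\ref{cor:channel-Realization-structure} followed from Corollary~\ref{cor:summary-Channeling} in the global first step.
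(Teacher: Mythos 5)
Your overall strategy is the same one the paper uses: the lemma is treated as a direct consequence of the essential partition $\Comp(Z_2;\Sigma_i)=\Rec(Z_1;\Sigma_i)\cup\sfRC(Z_1)_i$, the indentation structure of $\D(Z_1;\Sigma_i)$ from Proposition~\ref{prop:reservoir-canals-as-indentations}, the description of the cut-graph in Corollary~\ref{cor:cut-graph-2}, and the local application of Corollary~\ref{cor:summary-Channeling} over each $q\in Z^{[n-1]}$, unioned up just as Corollary~\ref{cor:channel-Realization-structure} was deduced globally. You invoke exactly the right supporting statements (Lemma~\ref{lemma:admissible-step-1} for the tree-size bound $\lambda_\loc$, Corollary~\ref{cor:RC-far} for essential disjointness of the local systems), and your third paragraph correctly names the bookkeeping issue as the real content.

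However, there is one genuine error in the second paragraph. You assert that ``the canonical projection $\pi_2$ carries $\pi_2^{-1}(\Rec(Z_1;\Sigma_i))$ isomorphically onto $\Rec(Z_1;\Sigma_i)$.'' This is false in general: $\pi_2$ is the $\Gammacut(\Refine^{2\nu}(K);Z_2)$-quotient map, and the lift $\pi_2^{-1}(\Rec(Z_1;\Sigma_i))$ un-identifies lower-dimensional cells precisely where $\Rec(Z_1;\Sigma_i)$ self-touches along $Z_2\cap\Rec(Z_1;\Sigma_i)\supset\Refine^\nu(Z_1)-\sfRC(Z_1)^{(n-1)}$. Such self-touching is generic here and is exactly the reason the paper works with realizations rather than with the $\Sigma_i$-components directly; see Remark~\ref{rmk:separating}, where it is pointed out that $|\Comp_K(Z;\Sigma)|$ and $|\Real_K(Z;\Sigma)|$ need not even have homeomorphic interiors. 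Moreover, if $\pi_2$ were an isomorphism onto $\Rec(Z_1;\Sigma_i)$, it would force $\Realdented(Z_1;\Sigma_i)\cong\Rec(Z_1;\Sigma_i)$, which is not what the lemma asserts; $\Realdented(Z_1;\Sigma_i)$ lives in the refined $\pi_1$-realization $\Refine^\nu(\Real(Z_1;\Sigma_i))$, not in $\Refine^{2\nu}(K)$. What you actually need is the identification $\pi_2^{-1}(\Rec(Z_1;\Sigma_i))\cong\Realdented(Z_1;\Sigma_i)$, which holds because the two complexes have the same $n$-cubes and are un-identified along compatible copies of $\Refine^\nu(Z_1)-\sfRC(Z_1)^{(n-1)}$ (using that $Z_2$ meets $\Rec(Z_1;\Sigma_i)$ only in its boundary, so $\Gamma(\Rec(Z_1;\Sigma_i))\subset G_{Z_2;i}$ entirely, as recorded in Corollary~\ref{cor:cut-graph-2}); this isomorphism is not induced by $\pi_2$ itself. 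Replacing the isomorphism claim with this compatibility statement, which is in the spirit of what you describe in your third paragraph, repairs the argument.
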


Similarly, as for $Z_1$, Indentation-flattening Theorem \ref{theorem:flattening-indentation} and the Tunnel-contracting Proposition \ref{prop:tunnel-contracting} now yield the following bilipschitz relations. Note that, due to the controlled lengths of the tunnels $\tunnel(q;\Sigma_i)$ and the choice of the nearly nested preference function $\rho_1$, the constants in the statement depend only on the dimension $n$ and the original complex $K$.

\begin{corollary}
\label{cor:Z_2-bilipschitz-expanding}
There exist a constant $L=L(n,K)\ge 1$, depending only on the dimension $n$ and complex $K$, and surjective  $L(n,K)$-bilipschitz homeomorphisms 
\[ 
|\Real(Z_1;\Sigma_i)| \xrightarrow{f} |\Realdented(Z_1;\Sigma_i)| \xrightarrow{g} |\Real(Z_2;\Sigma_i)|,
\]
whose composition is the identity on the core of $\Real(Z_1;\Sigma_i)$.
\end{corollary}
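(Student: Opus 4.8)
The plan is to repeat, almost verbatim, the proof of Corollary \ref{cor:Z_1-bilipschitz-expanding}, now with $Z_1$ in the role of $Z$ and feeding in the structural decomposition of Lemma \ref{lemma:localized-Realization-structure-2} in place of Corollary \ref{cor:channel-Realization-structure}. By that lemma,
\[
\Real(Z_2;\Sigma_i) = \Realdented(Z_1;\Sigma_i) \cup \Tunnel(Z_2;\Sigma_i),
\]
where $\Realdented(Z_1;\Sigma_i) = \Refine^\nu(\Real(Z_1;\Sigma_i)) - \D(Z_1;\Sigma_i)$ with $\D(Z_1;\Sigma_i)$ an indentation in the good complex $\Real(Z_1;\Sigma_i)$ (Proposition \ref{prop:reservoir-canals-as-indentations}, Corollary \ref{cor:summary-Channeling}), and $\Tunnel(Z_2;\Sigma_i) = \bigcup_{q\in Z^{[n-1]}} \tunnel(q;\Sigma_i)$ is a family of tunnels, each with at most $\lambda_\loc = \lambda_\loc(n,\mu,\nu)$ $n$-cubes, attached to $\Realdented(Z_1;\Sigma_i)$ along the lifts $\pi_2^{-1}(\omega_{Z_1(q);i})$ of the gates. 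I will obtain $g$ by un-contracting the tunnels and $f$ by un-flattening the indentation.

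To build $g$, I first note that for each $q \in Z^{[n-1]}$ the tunnel $\tunnel(q;\Sigma_i)$ meets $\Realdented(Z_1;\Sigma_i)$ exactly in the single $(n-1)$-cube $\pi_2^{-1}(\omega_{Z_1(q);i})$; since the gate $\omega_{Z_1(q);i}$ is well-located in the interior of $U(q)$, that cube is the common face of precisely two $n$-cubes, so Proposition \ref{prop:tunnel-contracting} gives a piecewise linear $L(n,\lambda_\loc)$-bilipschitz homeomorphism supported in the star of this gate. By Corollary \ref{cor:RC-far} the local systems, hence the tunnels, are mutually disjoint with pairwise graph distance at least $3^{\nu-2}$, so the supports of these contractions are disjoint; their common extension is a single piecewise linear $L(n,\lambda_\loc)$-bilipschitz homeomorphism $|\Real(Z_2;\Sigma_i)| \to |\Realdented(Z_1;\Sigma_i)|$, the identity off the gate-stars, and I take $g$ to be its inverse.

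To build $f$, I apply the Indentation-flattening Theorem \ref{theorem:flattening-indentation} to the indentation $\D(Z_1;\Sigma_i)$ inside the good complex $\Real(Z_1;\Sigma_i)$. Since passing to refinements and to realizations does not increase the local multiplicity, $\mu(\Real(Z_1;\Sigma_i)) \le \mu(K)$, and the theorem produces a constant $L = L(n,\mu(K))$ together with a piecewise linear $L$-bilipschitz homeomorphism
\[
|\Realdented(Z_1;\Sigma_i)| = |\Refine^\nu(\Real(Z_1;\Sigma_i)) - \D(Z_1;\Sigma_i)| \longrightarrow |\Real(Z_1;\Sigma_i)|
\]
which is the identity outside $\Wedge_{|\Real(Z_1;\Sigma_i)|}(\D(Z_1;\Sigma_i))$; let $f$ be its inverse. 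Finally, by Lemma \ref{lemma:rho-Channel-regularity} the system $\sfRC(Z_1)$ is contained in $\Refine^\nu(\Star_{\Refine^\nu(K)}(Z_1))$ — this is the mechanism behind the core-expansion in Corollary \ref{cor:core-Z_1-Z_2} — so the indentation $\D(Z_1;\Sigma_i)$, its wedge, the gates, and all the tunnels are disjoint from $|\Core(Z_1;\Sigma_i)| = |\Comp(Z_1;\Sigma_i)| \setminus |\Star_{\Refine^\nu(K)}(Z_1)|$. Hence $f$ and $g$ are each the identity on $|\Core(Z_1;\Sigma_i)|$, and so is $g\circ f$; setting $L(n,K) = \max\{L(n,\lambda_\loc), L(n,\mu(K))\}$ finishes the proof.

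The step I expect to need the most care is the uniformity of the constants across the iteration: one must be sure that the tunnels $\tunnel(q;\Sigma_i)$ are bounded in size by $\lambda_\loc(n,\mu,\nu)$ rather than by some quantity that grows with $\#Z_1^{[n-1]}$ — which is precisely why the construction switches to \emph{localized} channeling and uses the nearly-nested preference function $\rho_1$ — and that the wedge of $\D(Z_1;\Sigma_i)$ and the gate-stars stay within $\Refine^\nu(\Star_{\Refine^\nu(K)}(Z_1))$, so that the bilipschitz bounds on $f$ and $g$ neither compound nor degrade as $k$ increases.
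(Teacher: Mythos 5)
Your proposal is correct and follows essentially the same approach as the paper, which deduces this corollary directly from the Indentation-flattening Theorem \ref{theorem:flattening-indentation} and the Tunnel-contracting Proposition \ref{prop:tunnel-contracting} applied to the decomposition in Lemma \ref{lemma:localized-Realization-structure-2}, with the uniformity of the constants attributed, as you note, to the $\lambda_\loc$-bound on the tunnels from localized channeling and the nearly nested preference function. Your use of Corollary \ref{cor:RC-far} for disjointness of tunnel supports and of Remark \ref{rmk:lift-good} (implicitly) for $\Real(Z_1;\Sigma_i)$ being good are the right details to supply; the only minor point is that the containment $\sfRC(Z_1)\subset\Refine^\nu(\Star_{\Refine^\nu(K)}(Z_1))$ is more directly recorded in Corollary \ref{cor:RC-core-relation} than in Lemma \ref{lemma:rho-Channel-regularity}.
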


We are now ready to summarize the properties of $Z_2$.

\begin{corollary}
\label{cor:localization-separating-complex-2}
Complex $Z_2$ is a separating complex in $\Refine^{2\nu}(K)$, which 
has the relative Wada property with respect to $Z_1$,  is a $\lambda_\loc$-perturbation of $Z_1$, and is
core-expanding.
\end{corollary}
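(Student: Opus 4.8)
The plan is to assemble this statement from structural facts about $Z_2$ that have already been established; no new construction is needed. First I would observe that two of the four asserted properties are immediate citations: the relative Wada property of $Z_2$ with respect to $Z_1$ is Corollary \ref{cor:localization-Wada-Z}, and the core-expanding property is Corollary \ref{cor:core-Z_1-Z_2}. So the work reduces to verifying the four conditions of Definition \ref{def:separating-complex} for $Z_2\subset\Refine^{2\nu}(K)$ and matching $Z_2$ to Definition \ref{def:indented-perturbation} with constant $\lambda_\loc$.

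For the separating-complex conditions I would argue as follows. Condition \eqref{item:separating-no-boundary} follows because $Z_1$, being a separating complex, avoids $\Star_{\Refine^\nu(K)}(\partial\Refine^\nu(K))$, and by Corollary \ref{cor:RC-far} each local channeling $\Channel(Z_1(q))$ is confined to a refined star of $Z_1(q)$ with $|\partial Z_1(q)|\subset|\partial\Channel(Z_1(q))|$; hence $Z_2=\bigcup_q\Channel(Z_1(q))$ stays away from $\partial\Refine^{2\nu}(K)$. Condition \eqref{item:separating-strongly-connected} is Corollary \ref{cor:Z_2-adj-conn}. Condition \eqref{item:separating-total} follows from the description in Corollary \ref{cor:cut-graph-2} of the $m$ components of $\Gammacut(\Refine^{2\nu}(K);Z_2)$, whose spans cover $\Refine^{2\nu}(K)$ and meet the boundary exactly in the respective $\Sigma_i$. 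Condition \eqref{item:separating-mu-1} follows by composing the bilipschitz homeomorphism $|\Real(Z_1;\Sigma_i)|\to|\Real(Z_2;\Sigma_i)|$ obtained from Corollary \ref{cor:Z_2-bilipschitz-expanding} with the identification $|\Real(Z_1;\Sigma_i)|\approx|\Sigma_i|\times[0,1]$ supplied by Corollary \ref{cor:channel-separating-complex}.

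For the $\lambda_\loc$-perturbation claim I would take $T_{\Sigma_i}=\sfRC(Z_1)_i$ and $T=\sfRC(Z_1)=\bigcup_i T_{\Sigma_i}$: these are essentially disjoint, and $T\subset\Refine^\nu(\Star_{\Refine^\nu(K)}(Z_1))$ by Corollary \ref{cor:RC-core-relation}. The essential partition $\Comp(Z_2;\Sigma_i)=\Rec(Z_1;\Sigma_i)\cup\sfRC(Z_1)_i$ with $\Rec(Z_1;\Sigma_i)=\Refine^\nu(\Comp(Z_1;\Sigma_i))-\sfRC(Z_1)$ yields property \eqref{item:tame-perturbation-1}, while Lemma \ref{lemma:localized-Realization-structure-2} together with Corollary \ref{cor:RC-far} shows $\pi_2^{-1}(T_{\Sigma_i})$ is a disjoint union of tunnels each having at most $\lambda_\loc$ $n$-cubes, which is property \eqref{item:tame-perturbation-2}.

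Since every ingredient is already in place, there is no real obstacle here; the only point deserving a second look is confirming that the bilipschitz constant in Corollary \ref{cor:Z_2-bilipschitz-expanding}, and hence the implicit homeomorphism constant in condition \eqref{item:separating-mu-1}, depends only on $n$ and $K$ — this is precisely where the uniform bound $\lambda_\loc$ on the tunnel sizes and the nearly nested choice of $\rho_1$ are used, both arranged earlier in Section \ref{sec:localization}.
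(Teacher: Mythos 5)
Your proposal is correct and follows essentially the same route as the paper's proof: cite Corollaries \ref{cor:localization-Wada-Z} and \ref{cor:core-Z_1-Z_2} for the Wada and core-expanding properties, verify the four conditions of Definition \ref{def:separating-complex} using the already-established corollaries of Section \ref{sec:localization}, and extract the $\lambda_\loc$-perturbation from Lemma \ref{lemma:localized-Realization-structure-2}. The only (harmless) overcaution is in the final paragraph: for condition \eqref{item:separating-mu-1} one only needs a homeomorphism $|\Real(Z_2;\Sigma_i)|\approx|\Sigma_i|\times[0,1]$, so the uniformity of the bilipschitz constant in Corollary \ref{cor:Z_2-bilipschitz-expanding} is not actually needed at this point; it becomes relevant only for the quasiconformal stability proved later in Section \ref{sec:quasiconformality}.
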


\begin{proof}
We verify first that $Z_2$ is a separating complex. Since 
\[
Z_1\cap \Star_{\Refine^\nu(K)}(\partial \Refine^\nu(K)) = \emptyset,
\]
 we have that $\Channel(Z_1(q)) \cap \Star_{\Refine^{2\nu}(K)}(\partial \Refine^{2\nu}(K))=\emptyset$ for each $q\in Z_1^{[n-1]}$. Thus $Z_2 \cap \Star_{\Refine^{2\nu}(K)}(\partial \Refine^{2\nu}(K))=\emptyset$ and the first condition holds. By Corollary \ref{cor:Z_2-adj-conn}, $\Gamma(Z_2)$ is connected and the second condition holds.

For the third condition, recall that we identify boundary $\partial K$ and its components $\Sigma_i$ with their refinements $\Refine^{k\nu}(\partial K)$ and $\Refine^{k\nu}(\Sigma_i)$, respectively. Due to the construction, the complex $Z_2$ inherits from $Z_1$ the properties $\Sigma_i \subset \Comp_{\Refine^{2\nu}(K)}(Z_2; \Sigma_i) \cap \Refine^{2\nu}(K) = \Sigma_i$ and $\bigcup_{i=1}^m \Comp_{\Refine^{2\nu}(K)}(Z_2;\Sigma_2) = \Refine^{2\nu}(K)$. Thus the third property holds.

Finally, by Corollary \ref{cor:Z_2-bilipschitz-expanding},
$|\Real(Z_2;\Sigma_i)|$ is homeomorphic to  $|\Real(Z_1;\Sigma_i)|$ hence also to $|\Sigma_i|\times [0,1]$. We conclude that $Z_2$ is a separating complex.

By Corollaries \ref{cor:localization-Wada-Z} and \ref{cor:core-Z_1-Z_2}, $Z_2$ has Wada property with respect to $Z_1$ and is core-expanding with respect to $Z_1$. Finally, by  Lemma \ref{lemma:localized-Realization-structure-2} again, $\Channel(Z_1)$ is an $\lambda_\loc$-perturbation of $Z_1$.
\end{proof}

\subsubsection{Cumulative effect of channelings $\Channel(Z)$ and $\Channel_{\cL_1}(\Channel(Z))$}\label{sec:quote}

Before proceed to the construction of $Z_k$ for $k \ge 3$, we examine the cumulative effect of successive channelings on $\Comp(Z;\Sigma)$.
This cumulative effect can be summarized, in the case of $\Real(Z_2;\Sigma_i)$, as follows:

\begin{quote}
Dents in $\Comp(Z_1;\Sigma_i)$ are made by carving out $\sfRC(Z_1(q))$ from $\Comp(Z_1;\Sigma_i)$ near each $q\in Z^{[n-1]}$, including those $q\in Z^{[n-1]}$ on $\partial \Comp(Z;\Sigma_i)$ which have not been altered during the construction of $Z_1$.
This yields 
that tunnels in $\Real(Z_2;\Sigma_i)$ created in the construction of $Z_2$ are
attached not only to the boundary of tunnels created in the construction of $Z_1$ but also to the part of the boundary of $Z$ not affected by the construction of $Z_1$. This leads to a construction that, in the inductive step,
the newly created tunnels at the $k$-th steps will be attached to its ancestors of all previous generations. 
\end{quote}

This seemingly straightforward observation
leads to 
notational and bookkeeping issues.
We deal with these by considering  combined reservoir-canal systems and cumulative indentations.

For this discussion, we begin with an observation on relationship of spectral cubes in complexes $Z_0=Z$ and $Z_1$.

\begin{lemma}
\label{lemma:RC_cup-1}
Each spectral cube of the complex $\sfRC(Z_1)$ is either contained in a spectral cube of  $\Refine^\nu(\sfRC(Z_0))$ or has a face in $Z_0$. 
\end{lemma}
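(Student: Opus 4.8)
The plan is to unwind the definitions of the relevant reservoir-canal systems and trace through the construction of $Z_1$ and $Z_2$. Recall that $Z_1 = \Channel(Z_0)$ is obtained from the transformation $\Tr_{\Refine^\nu(K)}(Z_0) = \left(\Refine^\nu(Z_0)\cup \sfRC_{\Refine^\nu(K)}(Z_0)^{(n-1)}\right) - P_{\tau_{Z_0}}$ by removing the gates $\omega_{Z_0;i}$, so $Z_1 \subset \Refine^\nu(Z_0)\cup \sfRC(Z_0)^{(n-1)}$. Consequently, every $(n-1)$-cube $q^*\in Z_1^{[n-1]}$ either lies in $\sfRC(Z_0)$ or lies in $\Refine^\nu(Z_0)$ and not in $\sfRC(Z_0)$, i.e.\ $q^*\in \Refine^\nu(q)$ for some $q\in Z_0^{[n-1]}$; these are exactly the two alternatives in the localization $I_q(Z_1)$ versus $II_q(Z_1)$. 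The system $\sfRC(Z_1) = \bigcup_{q\in Z_0^{[n-1]}} \sfRC(Z_1(q))$ is built over $Z_1$ using the preference function $\rho_1$, which by its \emph{nearly nested} property sends $q^*\in \sfRC(Z_0)^{[n-1]}$ into $\sfRC(Z_0)^{[n]}$ and sends $q^*\in \Refine^\nu(q)\setminus \sfRC(Z_0)$ into $\Refine^\nu(\rho_0(q))$.

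First I would recall from Section~\ref{sec:building-blocks} that the spectral cubes of $\sfRC(Z_1)$ are exactly the pre-reservoir-cubes associated to the system: each pre-reservoir-cube $Q$ over a cube $q^*\in Z_1^{[n-1]}$ is a refinement of a cube $C_Q\in \Refine^2(\rho_1(q^*))$, and $C_Q$ is its spectral cube (this is precisely the argument used in the proof of Proposition~\ref{prop:reservoir-canals-as-indentations}). The canal sections and connectors contribute no spectral cubes, since each of their $n$-cubes has a face in $Z_1$; but such faces are faces of $\Refine^\nu(Z_0)$-cubes or of $\sfRC(Z_0)^{(n-1)}$-cubes, and in the latter case, by the minimality built into the definition of canal sections (Section~\ref{sec:canal-section}, cf.\ Lemma~\ref{lemma:rho-Channel-regularity}), such an $n$-cube still has a face in $\Refine^\nu(q)$ for the underlying $q\in Z_0^{[n-1]}$, hence a face in $Z_0$ after identifying $Z_0$ with its refinements.

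The core of the argument is then a case split on the cube $q^*\in Z_1^{[n-1]}$ over which a given spectral cube $C$ of $\sfRC(Z_1)$ sits. If $q^*\notin \sfRC(Z_0)^{[n-1]}$, then $q^*\in \Refine^\nu(q)$ for some $q\in Z_0^{[n-1]}$, and $q^*$ is a face of an $n$-cube $Q$ in the canal or reservoir, which has a face in $\Refine^\nu(q)\subset \Refine^\nu(Z_0)$; tracing the pre-reservoir-block placement (it sits over the center cube of $\Refine^2$ of $q^*$, hence retains a face on the relevant $(n-1)$-cube) shows $C$ has a face in $Z_0$. If instead $q^*\in \sfRC(Z_0)^{[n-1]}$, then $\rho_1(q^*)\in \sfRC(Z_0)^{[n]}$, so the entire pre-reservoir-cube over $q^*$ lies in $\Refine^\nu(\sfRC(Z_0))$; since the spectral cube $C_Q$ of a pre-reservoir-cube $Q$ lies in $\Refine^2(\rho_1(q^*)) \subset \Refine^2(\sfRC(Z_0))$, and since spectral cubes of $\Refine^\nu(\sfRC(Z_0))$ are themselves $n$-cubes of $\Refine^\nu(\sfRC(Z_0))$ with rank determined by the refinement history, $C$ is contained in some spectral cube of $\Refine^\nu(\sfRC(Z_0))$. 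Finally, one should note that the connectors $\sfJ(Z_1(q))$, which by Lemma~\ref{lemma:rho-Channel-regularity} may stray outside $\Refine^\nu(\rho_q(Z_1(q)))$ into $\Refine^\nu(\sfRC(Z_0))$, contribute only star-type pieces and single $n$-cubes with a face in $q$, hence again a face in $Z_0$; so these produce no exceptional spectral cubes.

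The main obstacle I anticipate is bookkeeping rather than conceptual depth: one must carefully reconcile the various refinement scales ($\Refine^\nu$ vs.\ $\Refine^{2\nu}$ vs.\ $\Refine^2$ of preferred cubes) and be precise about what "spectral cube of $\sfRC(Z_1)$" means relative to which ambient complex, since $\sfRC(Z_1)$ lives in $\Refine^{2\nu}(K)$ while $\sfRC(Z_0)$ lives in $\Refine^\nu(K)$. The cleanest route is to observe that a spectral cube of $\sfRC(Z_1)$ is a pre-reservoir-cube's spectral cube, reduce to the two cases above via the nearly nested property of $\rho_1$, and in the first case invoke the explicit geometric location of pre-reservoir-blocks (over center cubes, with a face on the $(n-1)$-cube) to exhibit the face in $Z_0$. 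I expect the whole verification to be routine once the definitions are lined up, which is presumably why a full proof is omitted in the text; I would present it as a short induction-free case analysis.
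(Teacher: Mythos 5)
Your proof follows the same route as the paper: both hinge on the inclusion $\Refine^\nu(Z_0) - \sfRC(Z_0) \subset Z_1 \subset \sfRC(Z_0) \cup (\Refine^\nu(Z_0) - \sfRC(Z_0))$, the nearly-nested property of $\rho_1$, and Lemma~\ref{lemma:rho-Channel-regularity} to control the connectors. However, your preparatory discussion of spectral cubes is incorrect and, taken at face value, leaves a genuine gap. You claim that the spectral cubes of $\sfRC(Z_1)$ are ``exactly the pre-reservoir-cubes'' and that canal sections and connectors ``contribute no spectral cubes, since each of their $n$-cubes has a face in $Z_1$.'' Having a face in $Z_1$ is irrelevant to Definition~\ref{def:spectral-cube}: the $n$-cubes of a canal section or connector are spectral cubes of $\sfRC(Z_1)$ of rank $2\nu$, being maximal $\Refine^{2\nu}(K)$-cubes inside $|\sfRC(Z_1)|$ (this is exactly how such cubes are treated in the proof of Lemma~\ref{lemma:canal-stretch-as-indentation}). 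The fallback argument you then give for them is also wrong: for a canal cube sitting over $q^* \in \sfRC(Z_0)^{(n-1)}$ there is no ``underlying $q \in Z_0^{[n-1]}$'' with $q^*\subset q$, and such a cube has no face in $Z_0$. Rather, by the nearly-nested property $\rho_1(q^*)\in \sfRC(Z_0)^{[n]}$, so the cube lies in $\Refine^\nu(\rho_1(q^*)) \subset \Refine^\nu(\sfRC(Z_0))$ and falls under the \emph{first} alternative of the lemma, not the second. Your subsequent case split on whether $q^*$ lies in $\sfRC(Z_0)^{(n-1)}$ or in $\Refine^\nu(Z_0)-\sfRC(Z_0)$ is the right dichotomy; you should run all spectral cubes of $\sfRC(Z_1)$ (pre-reservoir, canal, and connector alike) through it, rather than attempting to dispose of canal and connector cubes separately.
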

\begin{proof}
Since
\[ \Refine^\nu(Z_0) -  \sfRC(Z_0)\subset Z_1 \subset \sfRC(Z_0) \cup (\Refine^\nu(Z_0) -  \sfRC(Z_0)),\] 
we have
\[\sfRC(Z_1) \subset \Refine^\nu(\sfRC(Z_0))\, \cup \, \Refine^\nu(\rho_1((\Refine^\nu(Z_0) -  \sfRC(Z_0))^{[n-1]}),\]
where $\rho_1\colon Z_1^{[n-1]}\to \Refine^\nu(K)^{[n]}$ is the preference function, nearly nested in $\rho_0$, used for the construction of $Z_2$.
From these inclusion relations and Lemma \ref{lemma:rho-Channel-regularity}, the  claim follows.
\end{proof}

We now combine the reservoir-canal systems in the first two steps into one complex: 
\[
\sfRC_{\cup}(Z_0,Z_1) =   \Refine^{\nu}(\sfRC(Z_0)) \cup \sfRC(Z_1) \subset \Refine^{2\nu}(K).
\]

\begin{lemma}
\label{lemma:Dent_cup-1}
The complex
\[
\Dent_\cup(Z_0, Z_1;\Sigma_i) = \pi_0^{-1}\left(\sfRC_{\cup}(Z_0, Z_1) \bigcap\Refine^\nu(\Comp(Z_0; \Sigma_i))  \right)
\]
is a well-defined subcomplex of $\Refine^{2\nu}(\Real(Z_0;\Sigma_i))$ and is an indentation in the complex $\Real(Z_0;\Sigma_i)$.
\end{lemma}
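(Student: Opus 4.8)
The plan is to combine Lemma~\ref{lemma:RC_cup-1} with the defining properties of an indentation (Definition~\ref{def:indentation-2}) and the fact that $\pi_0$ restricts to a conformal/length-preserving isomorphism on lifts. First I would note that $\pi_0$ identifies $\Refine^{2\nu}(\Comp(Z_0;\Sigma_i))$ with $\Refine^{2\nu}(\Real(Z_0;\Sigma_i))$ up to the unidentification along edges of $\Gammacut(K;Z_0;\Sigma_i)$, so that the preimage in the statement is indeed a well-defined $n$-subcomplex of $\Refine^{2\nu}(\Real(Z_0;\Sigma_i))$; this is the content of Remark~\ref{rmk:metric-Realization} and the notation set up in Section~\ref{sec:notation-lift}. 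The substantive claim is that this preimage is an \emph{indentation} in $\Real(Z_0;\Sigma_i)$ in the sense of Definition~\ref{def:indentation-2}.

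The heart of the argument is to identify the spectral cubes of $\sfRC_\cup(Z_0,Z_1)\cap\Refine^\nu(\Comp(Z_0;\Sigma_i))$ and check they land on $|\partial\Real(Z_0;\Sigma_i)|=|\Sigma_i|$ (which in the lift corresponds to faces on the boundary of the $\Sigma_i$-component, recalling $Z_0\cap\Star_K(\partial K)=\emptyset$ so the boundary of $\Real(Z_0;\Sigma_i)$ that matters here is the inner one coming from $Z_0$). By Lemma~\ref{lemma:RC_cup-1}, every spectral cube of $\sfRC(Z_1)$ is either contained in a spectral cube of $\Refine^\nu(\sfRC(Z_0))$ or has a face in $Z_0$. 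I would split accordingly: the spectral cubes of $\Refine^\nu(\sfRC(Z_0))$ lift, by Proposition~\ref{prop:reservoir-canals-as-indentations} (equivalently Corollary~\ref{cor:summary-Channeling}), to spectral cubes of the indentation $\D(Z_0;\Sigma_i)$, hence each has a face on $|\partial\Real(Z_0;\Sigma_i)|$; the extra spectral cubes coming from $\sfRC(Z_1)$ have, by Lemma~\ref{lemma:RC_cup-1}, a face in $Z_0$, which under $\pi_0^{-1}$ means a face on $|\partial\Real(Z_0;\Sigma_i)|$. So in all cases condition~(2) of Definition~\ref{def:indentation} (a face on $\partial K$) is met after passing to the lift. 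The remaining conditions — $\cS_0=\cS_1=\emptyset$, emptiness of triple intersections of spectral cubes, and the face-intersection rules~(3)–(4) — follow because $\sfRC(Z_0)$ and each $\sfRC(Z_1(q))$ are built from pre-reservoir-cubes (refinements of cubes in $\Refine^2$) and canal stretches/connectors, and by construction (Proposition~\ref{prop:reservoir-canals-as-indentations} and the placement of markers, Remark~\ref{rmk:markers}, Corollary~\ref{cor:RC-far}) distinct local systems $\sfRC(Z_1(q))$ are pairwise at graph distance $\geq 3^{\nu-2}$ and meet $\Refine^\nu(\sfRC(Z_0))$ only in controlled faces via Lemma~\ref{lemma:rho-Channel-regularity}. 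Thus a pre-reservoir-cube and a canal stretch meet only in a face of the canal stretch, no three spectral cubes share a point, and the nesting structure of pre-reservoir-blocks gives the required face relations. The star-indentation part of Definition~\ref{def:indentation-2} is supplied by the connectors, exactly as in the proof of Lemma~\ref{lemma:canal-stretch-as-indentation}.

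Concretely I would organize the proof as: (i) well-definedness of the lift, citing Remark~\ref{rmk:metric-Realization} and Section~\ref{sec:notation-lift}; (ii) write $\sfRC_\cup(Z_0,Z_1)\cap\Refine^\nu(\Comp(Z_0;\Sigma_i)) = \big(\Refine^\nu(\sfRC(Z_0))\cap\Refine^\nu(\Comp(Z_0;\Sigma_i))\big)\cup\big(\sfRC(Z_1)\cap\Refine^\nu(\Comp(Z_0;\Sigma_i))\big)$ and lift each piece; (iii) the first piece lifts to $\D(Z_0;\Sigma_i)$, an indentation by Proposition~\ref{prop:reservoir-canals-as-indentations}; (iv) the second piece lifts, by Lemma~\ref{lemma:RC_cup-1} together with the indentation-structure of each $\sfRC(Z_1(q))$ lift (Corollary~\ref{cor:summary-Channeling}) and Lemma~\ref{lemma:canal-stretch-as-indentation}, to a union of indentations whose spectral cubes all have a face in $Z_0$, hence on the lifted boundary; (v) verify that the union of all these indentations still satisfies the intersection rules of Definition~\ref{def:indentation-2}, using Corollary~\ref{cor:RC-far}, Lemma~\ref{lemma:rho-Channel-regularity}, and Remark~\ref{rmk:markers} to see that distinct local systems contribute essentially disjoint cube-indentations whose overlaps are governed by connectors (which become star-indentations); a union of mutually disjoint indentations is an indentation.

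The main obstacle I expect is step~(v): carefully checking that merging the ``old'' indentation $\D(Z_0;\Sigma_i)$ with the ``new'' local pieces $\pi_0^{-1}(\sfRC(Z_1(q)))$ does not violate the triple-intersection and face-compatibility axioms of Definition~\ref{def:indentation-2} — in particular that a new small pre-reservoir-cube sitting over an old one meets it only in the way permitted by condition~(4) of Definition~\ref{def:indentation} (one face, and the smaller cube meets only two faces of the larger, one being on the boundary). This is exactly where the ``nearly nested'' choice of $\rho_1$ (Section~\ref{sec:channeling-preference-function}) and Lemma~\ref{lemma:rho-Channel-regularity} do the work: they force the new systems either to sit inside the refinements of old preferred cubes (so the new pre-reservoir-cubes are genuinely smaller spectral cubes nested correctly) or to attach directly to $Z_0$ (so they are ``fresh'' boundary spectral cubes), and in neither case can a forbidden triple overlap occur because of the $3^{\nu-2}$ graph-distance separation between distinct $\sfRC(Z_1(q))$'s. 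I would spell out these two alternatives and invoke the corresponding cases of the proofs of Lemma~\ref{lemma:s-c-i-adjacency} and Proposition~\ref{prop:reservoir-canals-as-indentations} rather than re-deriving the combinatorics from scratch.
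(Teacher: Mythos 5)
Your proposal is correct and follows essentially the same route as the paper: both split $\sfRC_\cup(Z_0,Z_1)$ into the $\sfRC(Z_0)$ part (which lifts to the indentation $\D(Z_0;\Sigma_i)$ by Proposition~\ref{prop:reservoir-canals-as-indentations}) and the $\sfRC(Z_1)$ contribution, invoke Lemma~\ref{lemma:RC_cup-1} for the dichotomy ``nested in an old spectral cube or attached to $Z_0$'', and rely on Lemma~\ref{lemma:rho-Channel-regularity} and the $3^{\nu-2}$ separation of local systems to verify the intersection axioms of Definition~\ref{def:indentation-2}. One small discrepancy in the citations: the well-definedness of the lift into $\Refine^{2\nu}(\Real(Z_0;\Sigma_i))$ is exactly Remark~\ref{rmk:G-quotient-refinement} (the $G$-quotient inducing a bijection of refined $n$-cubes), which is the remark the paper actually uses; Remark~\ref{rmk:metric-Realization} addresses the length-preserving property rather than the combinatorial well-definedness, though the bookkeeping of Section~\ref{sec:notation-lift} you cite does cover it.
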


\begin{proof}
The first claim follows from Remark \ref{rmk:G-quotient-refinement}. 
For the second claim, we observe first that the complex
 $\Dent(Z_0;\Sigma_i)=\pi_0^{-1}(\sfRC(Z_0)\cap \Comp(Z_0;\Sigma_i))$ is an indentation in $\Real(Z_0;\Sigma_i)$ by Proposition \ref{prop:reservoir-canals-as-indentations}.

System $\sfRC(Z_1)=\bigcup_{q\in Z_0^{[n-1]}} \sfRC(Z_1(q))$ is a union of mutually disjoint local systems. Since $\sfRC(Z_1(q))-\sfRC(Z_0)$ is contained in the preference cube $\rho_1(q)\in K$, the local geometry is Euclidean.
By Lemma \ref{lemma:RC_cup-1}, every spectral cube of 
 $\sfRC(Z_1(q))$, if not contained in $\sfRC(Z_0)$, has a face in $Z_0$. 
Indeed, by Lemma \ref{lemma:rho-Channel-regularity}, intersection $\sfRC(Z_1(q))-\sfRC(Z_0)$ and $\sfRC(Z_0)$ consists of mutually disjoint $(n-1)$-cubes. Moreover, by construction, intersection of three distinct spectral cubes in $\sfRC_\cup(Z_0,  Z_1)$ is empty.

Lifts $\pi_0^{-1}(\sfRC(Z_1(q))-\sfRC(Z_0))$ and $\sfRC(Z_1(q))-\sfRC(Z_0)$ are isomorphic. Thus
$\Dent_\cup(Z_0,  Z_1;\Sigma_i)$
is a union of essentially disjoint indentations $\Dent(Z_0;\Sigma_i)$ and
$\pi_0^{-1}((\sfRC(Z_1(q))-\sfRC(Z_0))\cap  \Comp(Z_0;\Sigma_i))$
for $q\in Z_0^{[n-1]}$, whose pairwise intersection satisfies the requirements in Definition \ref{def:indentation} and intersection of any three distinct spectral cubes  is empty. 
We conclude that $\Dent_\cup(Z_0,Z_1;\Sigma_i)$ is an indentation in $\Real(Z_0;\Sigma_i)$. 
\end{proof}

The following proposition describes the iterative relation between lifts $\Real(Z_2;\Sigma_i)$ and $ \Real(Z_0;\Sigma_i)$. It can be seen as a composition of Corollary \ref{cor:channel-Realization-structure} and Lemma \ref{lemma:localized-Realization-structure-2}.
\begin{proposition}
\label{prop:combined-localized-Realization-structure-2}
For each $i\in\{1,\ldots,m\}$,  $\Real(Z_2;\Sigma_i)$ has an essential partition
\[
\Real(Z_2;\Sigma_i) = \Real^{\dagger_2}(Z_0;\Sigma_i)  
\cup \Tunnel^{\dagger_2}(Z_1;\Sigma_i)   
 \cup \Tunnel(Z_2;\Sigma_i),
\]
where
\begin{enumerate}
\item $\Real^{\dagger_2}(Z_0;\Sigma_i)  = \Refine^{2\nu}(\Real(Z_0;\Sigma_i))-\Dent(Z_0, Z_{1};\Sigma_i)$;
\item $\Tunnel^{\dagger_2}(Z_1;\Sigma_i) = \Refine^{\nu}(\Tunnel(Z_1;\Sigma_i)) - \Dent(Z_1;\Sigma_i)$
is a dented tunnel which meets $\Real^{\dagger_2}(Z_0;\Sigma_i) $ at  $\pi_2^{-1}(\omega_{Z;i})$;
\item  $\Tunnel(Z_2;\Sigma_i)=  \bigcup_{q\in Z^{[n-1]}}\tunnel(q;\Sigma_i)$ is a collection of tunnels each of which meets 
$\Real^{\dagger_2}(Z_0;\Sigma_i)  \cup  \Tunnel^{\dagger_2}(Z_1;\Sigma_i)) $
at the $(n-1)$-cube $\pi_2^{-1}(\omega_{Z_{1}(q);i})$.
\end{enumerate}
\end{proposition}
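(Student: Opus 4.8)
The plan is to read off the statement by composing the two one–step structural descriptions already proved: Corollary~\ref{cor:channel-Realization-structure}, which describes $\Real(Z_1;\Sigma_i)$ in terms of $\Real(Z_0;\Sigma_i)$, and Lemma~\ref{lemma:localized-Realization-structure-2}, which describes $\Real(Z_2;\Sigma_i)$ in terms of $\Real(Z_1;\Sigma_i)$. The bookkeeping that fuses the two layers of removed cubes into the single indentation $\Dent(Z_0,Z_1;\Sigma_i)=\Dent_\cup(Z_0,Z_1;\Sigma_i)$ is supplied by Lemmas~\ref{lemma:RC_cup-1} and~\ref{lemma:Dent_cup-1}.

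First I would start from Lemma~\ref{lemma:localized-Realization-structure-2}, which gives the essential partition $\Real(Z_2;\Sigma_i)=\Realdented(Z_1;\Sigma_i)\cup\Tunnel(Z_2;\Sigma_i)$ with
\[
\Realdented(Z_1;\Sigma_i)=\Refine^\nu(\Real(Z_1;\Sigma_i))-\D(Z_1;\Sigma_i),\qquad \Tunnel(Z_2;\Sigma_i)=\bigcup_{q\in Z^{[n-1]}}\tunnel(q;\Sigma_i),
\]
where each $\tunnel(q;\Sigma_i)$ is a tunnel meeting $\Realdented(Z_1;\Sigma_i)$ exactly along $\pi_2^{-1}(\omega_{Z_1(q);i})$, and $\D(Z_1;\Sigma_i)$ is the indentation lifting the combined local system $\sfRC(Z_1)=\bigcup_q\sfRC(Z_1(q))$. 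Applying $\Refine^\nu$ to Corollary~\ref{cor:channel-Realization-structure} gives the essential partition
\[
\Refine^\nu(\Real(Z_1;\Sigma_i))=\Refine^\nu(\Realdented(Z_0;\Sigma_i))\cup\Refine^\nu(\tunnel(Z_0;\Sigma_i)),\qquad \Refine^\nu(\Realdented(Z_0;\Sigma_i))=\Refine^{2\nu}(\Real(Z_0;\Sigma_i))-\Refine^\nu(\D(Z_0;\Sigma_i)),
\]
the two pieces meeting along the refined gate $\pi_2^{-1}(\omega_{Z;i})$. I would substitute this into the formula for $\Realdented(Z_1;\Sigma_i)$ and distribute the subtraction of $\D(Z_1;\Sigma_i)$, writing $\D(Z_1;\Sigma_i)$ as the part $\Dent(Z_1;\Sigma_i)$ lying in $\Refine^\nu(\tunnel(Z_0;\Sigma_i))$ and the complementary part lying in $\Refine^\nu(\Realdented(Z_0;\Sigma_i))$. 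This split is legitimate because, by Lemma~\ref{lemma:RC_cup-1}, every spectral cube of $\sfRC(Z_1)$ is either contained in a spectral cube of $\Refine^\nu(\sfRC(Z_0))$ — so its lift lies in the refined $Z_1$–tunnel — or has a face in $Z_0$ — so its lift lies in the refined base realization — with only the single straddling connector cubes controlled by Lemma~\ref{lemma:rho-Channel-regularity}, each of which has a face in $Z_0$ and so is counted on the base side; thus
\[
\Realdented(Z_1;\Sigma_i)=\Bigl(\Refine^{2\nu}(\Real(Z_0;\Sigma_i))-\Refine^\nu(\D(Z_0;\Sigma_i))-\bigl(\D(Z_1;\Sigma_i)-\Dent(Z_1;\Sigma_i)\bigr)\Bigr)\cup\Bigl(\Refine^\nu(\tunnel(Z_0;\Sigma_i))-\Dent(Z_1;\Sigma_i)\Bigr).
\]

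Then I would identify the two pieces with the objects in the statement. By Lemma~\ref{lemma:Dent_cup-1} and the definition $\sfRC_{\cup}(Z_0,Z_1)=\Refine^\nu(\sfRC(Z_0))\cup\sfRC(Z_1)$, the union $\Refine^\nu(\D(Z_0;\Sigma_i))\cup\bigl(\D(Z_1;\Sigma_i)-\Dent(Z_1;\Sigma_i)\bigr)$ is exactly the lift $\Dent_\cup(Z_0,Z_1;\Sigma_i)$ — matching the $\Comp(Z_1;\Sigma_i)$–description of $\D(Z_1;\Sigma_i)$ with the $\Comp(Z_0;\Sigma_i)$–description of the combined system is essentially the content of that lemma — so the first piece equals $\Real^{\dagger_2}(Z_0;\Sigma_i)$; the second piece is by definition the dented tunnel $\Tunnel^{\dagger_2}(Z_1;\Sigma_i)=\Refine^\nu(\Tunnel(Z_1;\Sigma_i))-\Dent(Z_1;\Sigma_i)$ with $\Tunnel(Z_1;\Sigma_i)=\tunnel(Z_0;\Sigma_i)$. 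This gives items~(1) and~(2), once one checks the attaching faces: the gate $\omega_{Z;i}$ was chosen well inside a canal section of $\sfRC(Z_0)$, away from its pre–reservoir–cubes and connectors, and the nearly–nested preference function $\rho_1$ (Lemma~\ref{lemma:rho-Channel-regularity}) keeps $\sfRC(Z_1)$ from reaching it, so $\pi_2^{-1}(\omega_{Z;i})$ remains the unique common face of $\Real^{\dagger_2}(Z_0;\Sigma_i)$ and $\Tunnel^{\dagger_2}(Z_1;\Sigma_i)$. Each gate $\omega_{Z_1(q);i}$ lies on the boundary of the local receded complex and is untouched by all dents, and hence is the face along which $\tunnel(q;\Sigma_i)$ meets $\Realdented(Z_1;\Sigma_i)=\Real^{\dagger_2}(Z_0;\Sigma_i)\cup\Tunnel^{\dagger_2}(Z_1;\Sigma_i)$ — landing in $\Tunnel^{\dagger_2}(Z_1;\Sigma_i)$ when the relevant part of $Z_1(q)$ came from $\sfRC(Z_0)$ and in $\Real^{\dagger_2}(Z_0;\Sigma_i)$ otherwise — which is item~(3). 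Essential disjointness of the three families is inherited from the essential disjointness already present in Corollary~\ref{cor:channel-Realization-structure} and Lemma~\ref{lemma:localized-Realization-structure-2}, together with Corollary~\ref{cor:RC-far} (the local systems $\sfRC(Z_1(q))$ are mutually disjoint with large graph distance).

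The step I expect to be the main obstacle is the regrouping in the second and third paragraphs: one must verify that the cubes deleted in passing from $\Real(Z_0;\Sigma_i)$ to $\Real(Z_1;\Sigma_i)$ (after one further refinement) together with those deleted in passing from $\Real(Z_1;\Sigma_i)$ to $\Real(Z_2;\Sigma_i)$ reassemble exactly into the combined indentation $\Dent_\cup(Z_0,Z_1;\Sigma_i)$ — nothing missing and nothing extra — and, crucially, that the residual piece $\Dent(Z_1;\Sigma_i)$ lies entirely inside the refined $Z_1$–tunnel rather than partly outside it. This is precisely where the nearly–nested property of $\rho_1$, Lemma~\ref{lemma:RC_cup-1}, and Lemma~\ref{lemma:rho-Channel-regularity} carry the weight; once these inclusions are in hand the composition is formal.
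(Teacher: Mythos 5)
Your proposal follows essentially the same route as the paper's proof: start from the one-step decomposition of $\Real(Z_2;\Sigma_i)$ in Lemma~\ref{lemma:localized-Realization-structure-2}, substitute the refined one-step decomposition from Corollary~\ref{cor:channel-Realization-structure}, and regroup the two generations of carved-out cubes into the combined indentation $\Dent_\cup(Z_0,Z_1;\Sigma_i)$ via Lemmas~\ref{lemma:RC_cup-1} and~\ref{lemma:Dent_cup-1}. The paper's proof is a terser display-math manipulation and does not introduce your split of $\D(Z_1;\Sigma_i)$ into a ``tunnel part'' called $\Dent(Z_1;\Sigma_i)$ and a ``base part''; in the paper $\Dent(Z_1;\Sigma_i)$ is simply $\D(Z_1;\Sigma_i)$ and the restriction to the tunnel is absorbed automatically by the $P-N$ difference operation — but the resulting subcomplex is the same, so the argument goes through and the remaining verifications you sketch (attaching faces and essential disjointness) are exactly the ones the paper dispatches with ``Other claims \ldots are restatements of those in Lemma~\ref{lemma:localized-Realization-structure-2}.''
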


\begin{proof}
From Lemma \ref{lemma:localized-Realization-structure-2} and the construction it follows that
\begin{align*}
\Real(Z_2;\Sigma_i)&=\left(\Refine^\nu(\Real(Z_1;\Sigma_i))-\Dent(Z_1;\Sigma_i)\right) \cup \Tunnel(Z_2;\Sigma_i)\\
&=\left(\Refine^{2\nu}(\Real(Z_0;\Sigma_i))-\Dent_\cup(Z_0,Z_1;\Sigma_i)\right)\\
&\quad \,\,  \cup \Big((\Refine^\nu(\Real(Z_1;\Sigma_i))-\Dent(Z_1;\Sigma_i)) \\
& \quad \,\,\quad  - (\Refine^{2\nu}(\Real(Z_0;\Sigma_i))-\Dent_\cup(Z_0,Z_1;\Sigma_i))\Big)
\,\, \cup  \Tunnel(Z_2;\Sigma_i)\\
&=\left(\Refine^{2\nu}(\Real(Z_0;\Sigma_i))-\Dent_\cup(Z_0,Z_1;\Sigma_i)\right)\\
&\quad \,\, \cup \left(\Refine^\nu(\Tunnel(Z_1;\Sigma_i)) - \Dent(Z_1;\Sigma_i) \right)\cup  \Tunnel(Z_2;\Sigma_i).
\end{align*}
Other claims in the proposition are restatements of those in Lemma \ref{lemma:localized-Realization-structure-2}.
\end{proof}

\subsubsection{A preference function $\rho_2$ on $Z_2$ nearly nested in $\rho_1$}\label{sec:rho-2}
The process of defining $Z_2 = \Channel(Z_1)$ readily yield a well-defined preference function 
\[\rho_2=(\rho_1)_{\Channel} \colon {Z_2}^{[n-1]}\to {\Refine^{2\nu}(K)}^{[n]},\]
nearly nested in $\rho_1\colon {Z_1}^{[n-1]}\to {\Refine^\nu(K)}^{[n]}$, for which 
\begin{enumerate}
\item $\rho_2(q^*)\in {\sfRC(Z_1)}^{[n]}$ if $q^* \in {\sfRC(Z_1)}^{[n-1]}$, and
\item $\rho_2(q^*) \in \Refine^\nu(\rho_1(q))$ if $q^* \subset q\in Z_1^{[n-1]}$ and $q^* \not \in {\sfRC(Z_1)}^{[n-1]}$.
\end{enumerate}

\subsubsection{A localization $\cL_2$ of $Z_2$ in accordance with $\cL_1$ and $\fT_1$}\label{sec:localization-2}Let $\cL_1=\{ Z_1(q') \colon q'\in Z_0^{[n-1]}\}$ be the localization  of $Z_1$, and $\fT_1=\{\cT_{q'}\colon q'\in  Z_0^{[n-1]} \}$ the family of spanning trees on $Z_1(q')$ 
previously defined.

For each $q'\in Z_0^{[n-1]}$, the local channeling $\Channel(Z_1(q'))$ has a partially ordered partition $ \{Z_2(q) \colon q\in Z_1(q')^{[n-1]}\}$ determined by the tree $\cT_{q'}\in \fT_1$ by the same rules of subdivision as in Section \ref{sec:RC-partition-UY}. Thus, 
\[\Channel(Z_1(q'))=\bigcup_{q\in Z_1(q')^{[n-1]}}  Z_2(q),\]
and the union   
\[\cL_2 =   \{ Z_2(q) \colon q\in Z_1^{[n-1]}\} = \bigcup_{q'\in Z_0^{[n-1]}} \left(\bigcup_{q\in Z_1(q')^{[n-1]}} \{ Z_2(q)\}\right) \]
is an essential partition of $Z_2$.

Similarly as  in Section \ref{sec:Z-localization}, we may choose an ambient subcomplex $U_2(q)\subset  \Refine^{2\nu}(K)$ for each $Z_2(q)\in \cL_2$ for which all pairs $(U_2(q), Z_2(q))$ are admissible in  $(\Refine^{2\nu}(K), Z_2)$ and the preference functions $\rho_2|_{Z_2(q)}$ are also admissible. Hence  $\cL_2$ is a localization and it is uniquely determined by  
  $\cL_1$ and $\fT_1$.

After fixing a spanning tree on each element in $\cL_2$, we may proceed to construct $Z_3=\Channel(Z_2)$. We iterate this construction in Section \ref{sec:evolution}.

\subsection{Evolution sequence}
\label{sec:evolution}
We now construct the sequence $(Z_k)$ by induction. Analogous to  the construction of $Z_2$, which depends on both $Z_1$ and $Z_0$, the construction of $Z_k$ depends on $Z_{k-1}$ as well as $Z_{k-2}$. 

For the record, separating complexes $Z_0 = Z \subset K$, $Z_1 = \Channel(Z) = \Channel_{\cL_0}(Z)\subset \Refine^\nu(K)$, and $Z_2 = \Channel_{\cL_1}(Z_1)\subset \Refine^{2\nu}(K)$ have been constructed, where $\cL_0 = \{Z_0\}$ and $\cL_1= \{Z_1(q)\colon q \in Z_0^{[n-1]}\}$ is a  localization of $Z_1$.

Suppose that, for some $k\ge 2$, we already have  a sequence 
\[
Z_0,\cL_0, Z_1, \cL_1,\ldots,\cL_{k-2},  Z_{k-1}
\]
of separating complexes $Z_j$'s and localizations $\cL_j$'s, for which 
\[Z_j= \Channel_{\cL_{j-1}}(Z_{j-1})=\bigcup_{q\in Z_{j-2}^{[n-1]}} \Channel(Z_{j-1}(q)) \subset \Refine^{\nu j}(K)\] 
is a localized channeling over
$\cL_{j-1}= \{Z_{j-1}(q)\colon q \in Z_{j-2}^{[n-1]}\}$, based on a family $\cP_{j-1}$ of preference functions and a family $\fT_{j-1}$ of  spanning trees on the elements on $\cL_{j-1}$, for $1\leq j \leq k-1$.

\subsubsection{Construction of $Z_k$}

To construct $Z_k$, we follow the steps for $Z_2$ in Section \ref{sec:localization} almost verbatim, and we only give a sketch.

Define as in Section \ref{sec:localization-2} a partition 
\[\cL_{k-1}= \bigcup_{ q' \in Z_{k-3}^{[n-1]}}  \{Z_{k-1}(q)\colon q \in Z_{k-2}(q')^{[n-1]}\}\]
of $Z_{k-1}=\bigcup_{q'\in Z_{k-3}^{[n-1]}} \Channel(Z_{k-2}(q'))$ in accordance with
$\cL_{k-2}$ and $\fT_{k-2}$.
Thus for $q' \in Z_{k-3}^{[n-1]}$, 
\[
\Channel_{\cL_{k-2}}(Z_{k-2}(q'))=\bigcup_{q \in Z_{k-2}(q')^{[n-1]}} Z_{k-1}(q).
\]

Fix as we may, as in Section \ref{sec:Z-localization},  for each $q\in Z_{k-2}^{[n-1]}$  an ambient complex $U_{k-1}(q)$ of $Z_{k-1}(q)$ in $\Refine^{(k-1)\nu}(K)$ for which $(U_{k-1}(q), Z_{k-1}(q))$ is admissible in $(\Refine^{(k-1)\nu}(K), Z_{k-1})$. Hence the partition $\cL_{k-1}$ is a localization.

Fix next a preference function $\rho_{k-1}  \colon Z_{k-1}^{[n-1]} \to (\Refine^{(k-1)\nu}(K))^{[n]}$ which is nearly nested in $\rho_{k-2}$ as in  Section \ref{sec:rho-2}, and let 
\[ \cP_{k-1}=\{\rho_q= \rho_{k-1}|_{Z_{k-1}(q)}\colon q \in Z_{k-2}^{[n-1]}\}.\]
Each $\rho_q$ in $\cP_{k-1}$ 
is admissible by  Lemma \ref{lemma:admissible-step-1}.

Finally fix, for each $q\in Z_{k-2}^{[n-1]}$,  a partially ordered  locally Euclidean spanning tree on $Z_{k-1}(q)$, and let 
\[\fT_{k-1}=\{\cT_q\colon q\in Z_{k-2}^{[n-1]}\}.\]

Having $(\cL_{k-1}, \cP_{k-1}, \fT_{k-1})$ at our disposal, we follow the steps in defining $Z_2=\Channel(Z_1)$ to obtain 
\[
Z_k = \Channel_{\cL_{k-1}}(Z_{k-1})=\bigcup_{q\in Z_{k-2}^{[n-1]}} \Channel(Z_{k-1}(q)).
\]
as the union of local channelings $\Channel(Z_{k-1}(q))$. 

\begin{remark}
Note that, similarly as we chose complexes $Z_1(q)$ with respect to $Z$, we may assume that, for each $q\in Z_{k-2}^{[n-1]}$, the reservoir-canal system $\sfRC(Z_{k-1}(q))$ satisfies the properties of Lemma \ref{lemma:rho-Channel-regularity} with respect to reservoir-canal system of $Z_{k-2}$ and preference function $\rho_q$.
\end{remark}

\subsubsection{Properties of $Z_k$}

Before summarizing the properties of the complex $Z_k$, we discuss the properties of the lifts $\Real(Z_k;\Sigma_i)$. As for $Z_1$ and $Z_2$, this discussion is used to show that $Z_k$ is a separating complex having $\lambda$-perturbation property.

Analogous to the case $k=2$, we have that 
\[
\Comp(Z_k;\Sigma_i) = \Rec(Z_{k-1};\Sigma_i) \cup \sfRC(Z_{k-1})_i
\]
and that
\[
\D(Z_{k-1};\Sigma_i)= \pi_k^{-1}(\sfRC(Z_{k-1})\cap \Refine^\nu(\Comp(Z_{k-1};\Sigma_i)))
\]
is  an indentation 
in $\Real(Z_{k-1};\Sigma_i)$.

The following proposition formalizes the relation between 
$\Real(Z_k;\Sigma_i)$ and $\Real(Z_{k-1};\Sigma_i)$; cf.~Lemma \ref{lemma:localized-Realization-structure-2}.

\begin{proposition} 
\label{prop:localized-Realization-structure-k}
Let $k\geq 2$. For each $i=1,\ldots, m$, 
\[
\Real(Z_k;\Sigma_i) = \Realdented(Z_{k-1};\Sigma_i) \cup \Tunnel(Z_k;\Sigma_i),
\]
where 
\begin{enumerate}
\item $\Realdented(Z_{k-1};\Sigma_i)= \Refine^\nu(\Real(Z_{k-1};\Sigma_i))-\D(Z_{k-1};\Sigma_i)$, 
\item  $\Tunnel(Z_k;\Sigma_i)= \bigcup_{q\in Z_{k-2}^{[n-1]}}\tunnel(q;\Sigma_i)$ is a family of tunnels each of which has graph size at most  $ \lambda_\loc$, and
\item $\tunnel(q;\Sigma_i) \cap  \Realdented(Z_{k-1};\Sigma_i)  = \pi_k^{-1}(\omega_{Z_{k-1}(q), i})$ is the lift of a gate. 
\end{enumerate}
\end{proposition}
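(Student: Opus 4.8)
The proof of Proposition \ref{prop:localized-Realization-structure-k} is an induction on $k$ that runs in parallel with the construction of $Z_k$ in Section \ref{sec:evolution}, and it follows the template already established for $k=2$ in Lemma \ref{lemma:localized-Realization-structure-2} (together with its proof via Corollary \ref{cor:cut-graph-2}). The base case $k=2$ is precisely Lemma \ref{lemma:localized-Realization-structure-2}. So assume $k\geq 3$ and that $Z_{k-1}$ is a separating complex in $\Refine^{(k-1)\nu}(K)$, obtained as the localized channeling $Z_{k-1}=\Channel_{\cL_{k-2}}(Z_{k-2})=\bigcup_{q'\in Z_{k-3}^{[n-1]}}\Channel(Z_{k-2}(q'))$, and that $Z_k=\Channel_{\cL_{k-1}}(Z_{k-1})=\bigcup_{q\in Z_{k-2}^{[n-1]}}\Channel(Z_{k-1}(q))$ with $\cL_{k-1}=\{Z_{k-1}(q)\colon q\in Z_{k-2}^{[n-1]}\}$ a localization.

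\textbf{Step 1: the cut-graph of $Z_k$.} First I would identify the connected components of $\Gammacut(\Refine^{k\nu}(K);Z_k)$, exactly as in Corollary \ref{cor:cut-graph-2}. Each pair $(U_{k-1}(q),Z_{k-1}(q))$ is admissible in $(\Refine^{(k-1)\nu}(K),Z_{k-1})$ by construction, and applying Proposition \ref{prop:summary-Channeling} (item (4a)) to each $(U_{k-1}(q),Z_{k-1}(q),\rho_q,\cT_q)$ gives that the cut-graph $\Gammacut(\Refine^\nu(U_{k-1}(q));\Channel(Z_{k-1}(q)))$ splits, for each boundary index $i$, into the receded part plus a tree $\tau_{Z_{k-1}(q);i}$ glued through a gate $\omega_{Z_{k-1}(q);i}$; the tree $\tau_{Z_{k-1}(q);i}$ has size at most $\lambda_\loc$ because $\cT_q\in\fT_{k-1}$ is locally Euclidean of size at most $\lambda_\loc$ (Lemma \ref{lemma:admissible-step-1}), so all the reservoir-canal data over $Z_{k-1}(q)$ live in a single refined star $\Refine^\nu(\Star_{U_{k-1}(q)}(Q))$. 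By Corollary \ref{cor:RC-far} (applied at level $k-1$, as noted in the Remark following the construction of $Z_k$), the local systems $\sfRC(Z_{k-1}(q))$ for distinct $q$ are pairwise disjoint with graph distance $\geq 3^{\nu-2}$, so removing the gates $\omega_{Z_{k-1}(q);i}$ does not reconnect distinct $\Sigma_i$-components. Hence $\Gammacut(\Refine^{k\nu}(K);Z_k)$ has exactly $m$ components, the $i$-th being $\Gamma(\Rec(Z_{k-1};\Sigma_i))\cup\bigcup_{q\in Z_{k-2}^{[n-1]}}(\tau_{Z_{k-1}(q);i}\cup\omega_{Z_{k-1}(q);i})$, where $\Rec(Z_{k-1};\Sigma_i)=\Refine^\nu(\Comp(Z_{k-1};\Sigma_i))-\sfRC(Z_{k-1})$ and $\sfRC(Z_{k-1})=\bigcup_q\sfRC(Z_{k-1}(q))$; this uses that admissibility channels every local system of index $i$ into the same receded $\Sigma_i$-component (cf.~the proof of Corollary \ref{cor:cut-graph-2}).

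\textbf{Step 2: passing to realizations and identifying the tunnels.} Next I would take the canonical quotient map $\pi_k$ and translate the component description of Step 1 into the essential partition of $\Real(Z_k;\Sigma_i)=\pi_k^{-1}(\Comp(Z_k;\Sigma_i))$. By Proposition \ref{prop:reservoir-canals-as-indentations} applied level-by-level, $\D(Z_{k-1};\Sigma_i)=\pi_k^{-1}(\sfRC(Z_{k-1})\cap\Refine^\nu(\Comp(Z_{k-1};\Sigma_i)))$ is a union of mutually disjoint indentations in $\Real(Z_{k-1};\Sigma_i)$, hence an indentation there (the disjointness is Corollary \ref{cor:RC-far}; the intersection rules of Definition \ref{def:indentation-2} hold because reservoir-cubes and canal stretches only meet along faces of canal stretches, and triple intersections of spectral cubes are empty). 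The essential partition from Step 1 lifts (Corollary \ref{cor:summary-Channeling}, applied to each $(U_{k-1}(q),Z_{k-1}(q))$ and assembled over $q$) to $\Real(Z_k;\Sigma_i)=\Realdented(Z_{k-1};\Sigma_i)\cup\Tunnel(Z_k;\Sigma_i)$ with $\Realdented(Z_{k-1};\Sigma_i)=\Refine^\nu(\Real(Z_{k-1};\Sigma_i))-\D(Z_{k-1};\Sigma_i)=\pi_k^{-1}(\Rec(Z_{k-1};\Sigma_i))$, and $\Tunnel(Z_k;\Sigma_i)=\bigcup_{q\in Z_{k-2}^{[n-1]}}\tunnel(q;\Sigma_i)$, where each $\tunnel(q;\Sigma_i)=\cR_{\Refine^\nu(U_{k-1}(q))}(\tau_{Z_{k-1}(q);i})$ is a tunnel by Lemma \ref{lemma:Realization-cell} (its adjacency graph $\tau_{Z_{k-1}(q);i}$ is a tree of size at most $\lambda_\loc$, so $|\tunnel(q;\Sigma_i)|$ is an $n$-cell), and $\tunnel(q;\Sigma_i)\cap\Realdented(Z_{k-1};\Sigma_i)=\pi_k^{-1}(\omega_{Z_{k-1}(q);i})$ is the single $(n-1)$-cube lifting the gate. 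This gives items (1)--(3) and completes the induction.

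\textbf{Main obstacle.} The genuinely delicate point is not the component bookkeeping of Steps 1--2 but the implicit claim that the pieces fit together into an \emph{essential partition} with the stated intersection pattern in the realization — i.e.\ that the various tunnels $\tunnel(q;\Sigma_i)$ attach to $\Realdented(Z_{k-1};\Sigma_i)$ along disjoint gate-lifts and do not interfere with one another. This is where the locality of the construction is essential: one must invoke that each $\cT_q$ is locally Euclidean so that the reservoir-canal system $\sfRC(Z_{k-1}(q))$ stays inside $\Refine^\nu(\Star_{U_{k-1}(q)}(\,\cdot\,))$ and inherits Euclidean local geometry away from markers (Remark \ref{rmk:ambient-essentially-disjoint}, Lemma \ref{lemma:rho-Channel-regularity}), that distinct local systems are far apart (Corollary \ref{cor:RC-far}), and that the $\rho_{k-1}$ are nearly nested in $\rho_{k-2}$ so that the carving happens in a controlled neighborhood. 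I expect this to require a careful but routine invocation of these already-proven facts rather than new ideas; the uniform size bound $\lambda_\loc=\lambda_\loc(n,\mu,\nu)$ on the tunnels — independent of $k$ — is the payoff that makes the statement useful later (it is exactly what feeds into the quasiconformal stability via Proposition \ref{prop:tunnel-contracting} and Theorem \ref{theorem:flattening-indentation}).
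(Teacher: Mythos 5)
Your proposal is correct and follows exactly the route the paper has in mind: the paper states Proposition \ref{prop:localized-Realization-structure-k} without proof, pointing back to Lemma \ref{lemma:localized-Realization-structure-2} (the $k=2$ case, which in turn follows from Corollary \ref{cor:cut-graph-2}) and to the observation that the construction of $Z_k$ repeats the $Z_1\to Z_2$ step "almost verbatim." Your two-step structure — first describe the components of $\Gammacut(\Refine^{k\nu}(K);Z_k)$ via Proposition \ref{prop:summary-Channeling}(4a) applied over each local piece $(U_{k-1}(q),Z_{k-1}(q))$, then pass to realizations and identify $\D(Z_{k-1};\Sigma_i)$ as an indentation and the $\tunnel(q;\Sigma_i)$ as tunnels attaching along disjoint gate lifts — is precisely the argument the paper compresses into a cross-reference, and you correctly identify Corollary \ref{cor:RC-far}, the local Euclidean condition on $\cT_q$, the nearly nested preference functions, and Lemma \ref{lemma:rho-Channel-regularity} as the facts that make the essential partition claim go through at every scale.

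One small point worth tightening: you frame this as an induction on $k$, but the statement for $k$ does not actually invoke the statement for $k-1$. What the argument needs from the previous stage is only that $Z_{k-1}$ is a separating complex and that $Z_k=\Channel_{\cL_{k-1}}(Z_{k-1})$ is built from an admissible localization with locally Euclidean trees of bounded size; Proposition \ref{prop:localized-Realization-structure-k} for $k$ then follows directly from the construction at step $k$, exactly as the $k=2$ case did from the step $1\to 2$. (The genuinely cumulative statement in the paper, which does require tracking all generations of dents at once, is Proposition \ref{prop:combined-localized-Realization-structure}, and that one is proved by an actual induction.) This does not affect the correctness of your argument — you never use the inductive hypothesis in a load-bearing way — but separating the "single-step" content of Proposition \ref{prop:localized-Realization-structure-k} from the "cumulative" content of Proposition \ref{prop:combined-localized-Realization-structure} is exactly the paper's organizational point.
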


Analogously to the argument in the proof of Corollary \ref{cor:localization-separating-complex-2}, we have that following properties.

\begin{corollary}
\label{cor:summary_Z-k}
The  sequence $(Z_k)$, $Z_k\subset \Refine^{k\nu}(K)$, consists of separating complexes for which 
\begin{enumerate}
\item $Z_k$ has the relative Wada property with respect to $Z_{k-1}$, 
\item $Z_k$ is core-expanding with respect to $Z_{k-1}$, and 
\item $Z_k$ is a $\lambda$-perturbation of $Z_{k-1}$, 
\end{enumerate}
where constants $\lambda=\lambda_\loc$ and $L=L_\loc$ when $k\geq 2$, and 
$\lambda$ and $L$ depend on $n$ and $K$ when $k=1$.
\end{corollary}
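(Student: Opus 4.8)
The plan is to prove the statement by induction on $k$, the cases $k=1$ and $k=2$ being already established in Corollaries \ref{cor:channel-separating-complex} and \ref{cor:localization-separating-complex-2}, respectively. For the inductive step I would fix $k\ge 3$, assume that $Z_{k-1}\subset \Refine^{(k-1)\nu}(K)$ is a separating complex having the three listed properties with respect to $Z_{k-2}$, and that the localization $\cL_{k-1}$, the family $\cP_{k-1}$ of preference functions, and the family $\fT_{k-1}$ of locally Euclidean partially ordered spanning trees have been chosen as in the construction of $Z_k$. The argument then follows the proof of Corollary \ref{cor:localization-separating-complex-2} essentially verbatim, and I would organize it around the same points.

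First, I would record that $\cL_{k-1}=\{Z_{k-1}(q)\colon q\in Z_{k-2}^{[n-1]}\}$ is genuinely a localization of $Z_{k-1}$ and that each $\rho_q\in \cP_{k-1}$ is an admissible preference function: this is the exact analogue of Lemma \ref{lemma:admissible-step-1}, using that the subsystems $\sfRC(Z_{k-2}(q'))_i$ lie in distinct $\Sigma_i$-components and that $\rho_{k-1}$ is nearly nested in $\rho_{k-2}$. Then Proposition \ref{prop:summary-Channeling}, applied to each admissible quadruple $(U_{k-1}(q),Z_{k-1}(q),\rho_q,\cT_q)$, produces the local channelings $\Channel(Z_{k-1}(q))$, whose union is $Z_k$. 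Since the local reservoir-canal systems $\sfRC(Z_{k-1}(q))$ are mutually disjoint at graph distance at least $3^{\nu-2}$ (Corollary \ref{cor:RC-far}, via Corollary \ref{cor:border-of-Y}) and the boundary of each $\Channel(Z_{k-1}(q))$ coincides with $\Refine^\nu(q)$, the adjacent connectedness of $Z_{k-1}$ passes to $Z_k$, and $Z_k\cap \Star_{\Refine^{k\nu}(K)}(\partial \Refine^{k\nu}(K))=\emptyset$ because $Z_{k-1}$ misses the collar and channeling stays inside $\Refine^\nu(\Star_{\Refine^{(k-1)\nu}(K)}(Z_{k-1}))$.

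Next I would identify the $m$ components of $\Gammacut(\Refine^{k\nu}(K);Z_k)$ as in Corollary \ref{cor:cut-graph-2} — one receded $\Sigma_i$-component with all index-$i$ local tunnels attached through their gates — and invoke Proposition \ref{prop:localized-Realization-structure-k} to write $\Real(Z_k;\Sigma_i)=\Realdented(Z_{k-1};\Sigma_i)\cup \Tunnel(Z_k;\Sigma_i)$ with tunnels $\tunnel(q;\Sigma_i)$ of adjacency-graph size at most $\lambda_\loc$. This decomposition immediately gives the $\lambda_\loc$-perturbation property (item (3)), and, combined with the Indentation-flattening Theorem \ref{theorem:flattening-indentation} applied to $\D(Z_{k-1};\Sigma_i)$ and the Tunnel-contracting Proposition \ref{prop:tunnel-contracting} applied to the tunnels, it yields surjective $L_\loc$-bilipschitz homeomorphisms $|\Real(Z_{k-1};\Sigma_i)|\to |\Realdented(Z_{k-1};\Sigma_i)|\to |\Real(Z_k;\Sigma_i)|$ restricting to the identity on the core (the analogue of Corollary \ref{cor:Z_2-bilipschitz-expanding}); in particular $|\Real(Z_k;\Sigma_i)|\approx |\Sigma_i|\times[0,1]$, which with the previous paragraph verifies all four conditions of Definition \ref{def:separating-complex}. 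The relative Wada property (item (1)) comes from the pre-Wada property built into each local channeling, exactly as in Corollary \ref{cor:localization-Wada-Z}, and core-expansion (item (2)) follows from $\Star_{\Refine^{k\nu}(K)}(Z_k)\subset \Refine^\nu(\Star_{\Refine^{(k-1)\nu}(K)}(Z_{k-1}))$ as in Corollary \ref{cor:core-Z_1-Z_2}.

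The point that requires genuine care — and where I expect the real work to lie — is not the logical skeleton above but the uniformity of the constants $\lambda_\loc$ and $L_\loc$ across all $k$, i.e.\ the content of Proposition \ref{prop:localized-Realization-structure-k}. This rests on two features baked into the construction: $\rho_{k-1}$ is nearly nested in $\rho_{k-2}$, and the trees in $\fT_{k-1}$ are locally Euclidean, so the reservoir-canal systems $\sfRC(Z_{k-1}(q))$ can be chosen to satisfy Lemma \ref{lemma:rho-Channel-regularity} relative to $\sfRC(Z_{k-2})$. Consequently every newly created tunnel lives over a single preference cube of the previous step, hence in a region of Euclidean local geometry and of bounded combinatorial size $\le \lambda_\loc$, so the bilipschitz constants produced by Theorem \ref{theorem:flattening-indentation} and Proposition \ref{prop:tunnel-contracting} depend only on $n$ and $\mu(K)$ and do not accumulate with $k$. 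Verifying this carefully — essentially the $k=2$ computation of Section \ref{sec:localization} carried out in the inductive setting, using the combined-system and cumulative-indentation bookkeeping of Section \ref{sec:quote} — is the substantive part of the argument; the remaining verifications are formal.
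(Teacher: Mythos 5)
Your proposal is correct and takes essentially the same route as the paper, which itself simply asserts that Corollary~\ref{cor:summary_Z-k} follows ``analogously to the argument in the proof of Corollary~\ref{cor:localization-separating-complex-2}''; you have expanded that analogy into a full inductive argument and correctly identified that the substance lies in the uniformity of $\lambda_\loc$ and $L_\loc$, which rests on the nearly nested preference functions, the locally Euclidean trees, and Lemma~\ref{lemma:rho-Channel-regularity}, exactly as intended via Proposition~\ref{prop:localized-Realization-structure-k}.
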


This completes the construction of the sequence $(Z_k)$.

\subsection{Geometry between cores}\label{sec:core-geometry}
For the geometry of Lakes of Wada, we prove that points between cores may be connected a chain of cubes not too close to the boundary. 

For the statement, recall that sequence $(Z_k)$ is core-expanding.

\begin{lemma}\label{lemma:core-to-core}
Let $k\geq 1$. Given  an $n$-cube $Q_0\in \Core(Z;\Sigma_i)$ and an  $n$-cube $Q_k$ in 
\[
P_k= \Core(Z_k;\Sigma_i) - \Refine^\nu(\Core(Z_{k-1};\Sigma_i)),
\] 
there is a chain $\mathcal C_k$ of $n$-cubes
\[
Q_k=Q_{k,1}, \ldots, Q_{k,p_k}; \,\,Q_{k-1,1},\ldots, Q_{k-1,p_{k-1}};\,\,  \ldots; \,\, Q_{0,1},\ldots, Q_{0,p_0}=Q_0,
\]
connecting $Q_k$ to $Q_0$ for which 
\begin{enumerate}
\item for each $\ell\ge 1$, $Q_{\ell,1}, \ldots, Q_{\ell,p_\ell}$ is a linearly adjacently connected sequence of cubes in 
\[P_\ell=\Core(Z_\ell;\Sigma_i) -\Refine^\nu(\Core(Z_{\ell-1};\Sigma_i)),\] 
and $Q_{\ell,p_\ell}$ is adjacent to the $n$-cube  $Q_{\ell-1,1} \in \Refine^\nu(Q_{\ell-1,1})$;
\item $p_\ell \le 3^\nu\theta(K)$ for $1\leq \ell \leq k$, and $p_0\leq \#(K^{[n]})$. 
\end{enumerate}
\end{lemma}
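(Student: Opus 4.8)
The plan is to prove Lemma \ref{lemma:core-to-core} by induction on $k$, using at each level the connectivity property already established in Lemma \ref{lemma:Z_1-core-expanding-John} (in its iterated form for step $k$). The key observation is that the set
\[
P_\ell = \Core(Z_\ell;\Sigma_i) - \Refine^\nu(\Core(Z_{\ell-1};\Sigma_i))
\]
is exactly the region that Lemma \ref{lemma:Z_1-core-expanding-John} addresses, rescaled to the $\ell$th refinement level: it consists of the $n$-cubes of $\Core(Z_\ell;\Sigma_i)$ lying outside the refined previous core. That lemma tells us that each such cube can be joined, inside $P_\ell$, by a chain of at most $3^\nu\theta(K)$ adjacent $n$-cubes to an $n$-cube having a face on the inner boundary of $\Refine^\nu(\Core(Z_{\ell-1};\Sigma_i))$ — i.e.\ to an $n$-cube adjacent to a cube of $\Refine^\nu(\Core(Z_{\ell-1};\Sigma_i))$. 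Since the sequence $(Z_\ell)$ is core-expanding (Corollary \ref{cor:summary_Z-k}(2)), we have the nesting $|\Core(Z_{\ell-1};\Sigma_i)| \subset |\Core(Z_\ell;\Sigma_i)|$ with the inner boundary of the former strictly interior to the latter, which is what makes this ``handoff'' between levels legitimate.

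First I would set up the induction: for $k=0$ there is nothing new beyond connecting two $n$-cubes $Q_0, Q_0'$ inside the fixed finite complex $\Core(Z;\Sigma_i)\subset K$; adjacent-connectedness of this complex (it is a subcomplex of the good complex $K$, and the star-connectivity conditions propagate) gives a chain of length at most $\#(K^{[n]})$, establishing the bound $p_0\le \#(K^{[n]})$. For the inductive step from $k-1$ to $k$: given $Q_k\in P_k$, apply Lemma \ref{lemma:Z_1-core-expanding-John} — with $Z_{k-1}$ in the role of $Z$ and $Z_k$ in the role of $Z_1$, which is exactly the configuration produced in Section \ref{sec:evolution} — to obtain a linearly adjacently connected chain $Q_{k,1}=Q_k,\ldots,Q_{k,p_k}$ in $P_k$ with $p_k\le 3^\nu\theta(K)$, where $Q_{k,p_k}$ is adjacent to an $n$-cube $\wh Q_{k-1}\in \Refine^\nu(\Core(Z_{k-1};\Sigma_i))$; this $\wh Q_{k-1}$ lies in $\Refine^\nu(Q_{k-1,1})$ for some $n$-cube $Q_{k-1,1}\in \Core(Z_{k-1};\Sigma_i)$. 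Now either $Q_{k-1,1}\in P_{k-1}$, in which case the inductive hypothesis applied to $Q_{k-1,1}$ finishes the chain, or $Q_{k-1,1}\in \Refine^\nu(\Core(Z_{k-2};\Sigma_i))$, in which case the chain at level $k-1$ is empty (take $p_{k-1}$ degenerate, or rather absorb it) and we descend directly — the statement as formulated allows some of the intermediate linear segments to be trivial, so one should be slightly careful in the write-up to allow $p_\ell$ to be as small as needed, including the case where the chain passes straight through level $\ell$.

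The main point requiring care — and the step I expect to be the chief obstacle — is the compatibility of the ``rescaling'' of Lemma \ref{lemma:Z_1-core-expanding-John} across levels: that lemma was proved for a pair $(Z, Z_1)$ with $Z$ a separating complex in $K$, but here we need it for $(Z_{k-1}, Z_k)$ with $Z_{k-1}$ a separating complex in $\Refine^{(k-1)\nu}(K)$. This is legitimate because the construction of $Z_k$ from $Z_{k-1}$ in Section \ref{sec:evolution} is (after refinement) structurally identical to the construction of $Z_1$ from $Z_0$, and Lemma \ref{lemma:Z_1-core-expanding-John} only used the properties of the transformation $\Tr$ and the goodness of the underlying complex, together with the bound $\theta(\Refine^{(k-1)\nu}(K))\le \theta(K)$ (refinement does not increase the number of $n$-cubes in the star of a vertex — indeed a vertex of a refinement lies in at most as many $n$-cubes as before, since refinement is the standard $3^n$-subdivision). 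I would state this rescaling explicitly as a short remark before the induction, so that the inductive step is a clean citation. Beyond that, the remaining work is bookkeeping: verifying that the concatenated chain is genuinely a chain of adjacent $n$-cubes (the junction cubes $Q_{\ell,p_\ell}$ and $Q_{\ell-1,1}$ are adjacent by construction, with $Q_{\ell-1,1}$ reinterpreted at the coarser level as required in item (1)), and collecting the length bounds $p_\ell\le 3^\nu\theta(K)$ for $\ell\ge 1$ and $p_0\le\#(K^{[n]})$.
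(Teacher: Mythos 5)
Your plan — apply Lemma~\ref{lemma:Z_1-core-expanding-John} at each level and descend from $k$ to $0$ — is the same route the paper takes, and your observation that $\theta(\Refine^{(k-1)\nu}(K))\le\theta(K)$ correctly addresses why the per-level bound is uniform. But there is a genuine gap at exactly the place you flag as the chief obstacle: the ``handoff'' between levels.

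You allow for the possibility that the junction cube $Q_{k-1,1}\in\Core(Z_{k-1};\Sigma_i)$ might lie in $\Refine^\nu(\Core(Z_{k-2};\Sigma_i))$ rather than in $P_{k-1}$, and propose to resolve it by letting the level-$(k-1)$ segment be empty and descending directly. This does not match the statement (each block $Q_{\ell,1},\ldots,Q_{\ell,p_\ell}$ is required to be nonempty, with an adjacency condition at its endpoints), and more importantly it is unnecessary: the paper proves this second case simply cannot occur. The point you are missing is that the chain produced by (the proof of) Lemma~\ref{lemma:Z_1-core-expanding-John} comes with an extra containment — it lies in $\Refine^\nu(\Star_{\Refine^{(k-1)\nu}(K)}(Z_{k-1}))$ — and therefore the junction cube $Q_{k-1,1}$ lies in the double star $\Star_{\Refine^{(k-1)\nu}(K)}(\Star_{\Refine^{(k-1)\nu}(K)}(Z_{k-1}))$. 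Combined with the inclusion
\[
\Star_{\Refine^{(k-1)\nu}(K)}\bigl(\Star_{\Refine^{(k-1)\nu}(K)}(Z_{k-1})\bigr) \subset \Refine^\nu\bigl(\Star_{\Refine^{(k-2)\nu}(K)}(Z_{k-2})\bigr),
\]
which is essentially disjoint from $\Refine^\nu(\Core(Z_{k-2};\Sigma_i))$ by the definition of the core, this forces $Q_{k-1,1}\in P_{k-1}$. Without recording that containment from the proof of Lemma~\ref{lemma:Z_1-core-expanding-John}, there is no mechanism to rule out your ``case 2,'' and the degenerate-chain workaround does not actually repair the statement. Once you add this double-star argument, your induction becomes a single uniform step, identical to the paper's descent.
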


\begin{proof}
Given $Q_k\in P_k$, we follow the proof of  Lemma \ref{lemma:Z_1-core-expanding-John} to obtain a sequence $Q_k=Q_{k,1}, \ldots, Q_{k,p_k}$ of linearly adjacently connected $n$-cubes in $P_k$ for which  
\begin{enumerate}
\item the sequence is contained in $\Refine^\nu(\Star_{\Refine^{(k-1)\nu}(K)}(Z_{k-1}))$,
\item $Q_{k,p_k}$ has a face in $\Refine^{\nu} \left(\Core(Z_{k-1};\Sigma_i) \cap \Star_{\Refine^{(k-1)\nu}(K)}(Z_{k-1}) \right)$, and
\item  $p_k\leq 3^\nu \theta(K)$.
\end{enumerate}
Let $Q_{k-1,1}$ be the $n$-cube in $\Core(Z_{k-1};\Sigma_i)$ which meets $Q_{k,p_k}$ in a face of $Q_{k,p_k}$.
Since $Q_{k-1,1}$ is contained in $\Star_{\Refine^{(k-1)\nu}(K)}\left(\Star_{\Refine^{(k-1)\nu}(K)}(Z_{k-1})\right) $ and
\[
\Star_{\Refine^{(k-1)\nu}(K)}\left(\Star_{\Refine^{(k-1)\nu}(K)}(Z_{k-1})\right) \subset \Refine^\nu(\Star_{\Refine^{(k-2)\nu}(K)}(Z_{k-2})),
\]
the $n$-cube $Q_{k-1,1}$ is not in $\Refine^\nu(\Core(Z_{k-2};\Sigma_i))$. Thus $Q_{k-1,1}$ is in $P_{k-1}$.

We repeat the construction above for  $\ell =k-1, \ldots, 1,$ in descending order 
to obtain sequences $Q_{\ell,1},\ldots, Q_{\ell,p_{k-1}}$ in $P_\ell$ and  $n$-cubes $Q_{\ell-1,1}$ in $P_{\ell-1}$ for which 
$p_\ell \leq 3^\nu \theta(K)$,
and finally  the
 last sequence $Q_{0,1},\ldots, Q_{0,p_0}=Q_0$ is in $\Core(Z_k;\Sigma_i)$ for a number $p_0\leq \#(K^{[n]})$.
\end{proof}

Lemma \ref{lemma:core-to-core} yields a John property for cores $|\Core(Z_k;\Sigma_i)|$. We do not state this formally here, as we discuss John property of the Lakes of Wada in Section \ref{sec:Wada}. However we record a remark.

\begin{remark}\label{rmk:John}
With respect to the polyhedral
metric $d_K$ on $K$, cubes  $Q_{\ell,j}$, for $j=1,\ldots, p_\ell$, in  Lemma \ref{lemma:core-to-core} have side lengths $3^{- \ell \nu}$. Let $x_{\ell,j}$ be the center of the cube $Q_{\ell,j}$.
Then there exist constants $c_1=c_1(n,\nu)>0$ and $ c_2=c_2(n,\nu)>0$ for $\ell \geq 1$, and constants $c_1=1$ and $c_2=\#(K^{[n]})$ for $\ell=0$, for which 
\[c_1 3^{- \ell \nu}\leq \dist(x_{\ell,j}, |\partial \Core(Z_k;\Sigma_i)|)\leq c_2  3^{- \ell \nu}.\]
Thus we may fix a  PL curve $\sigma$ in $|\Core(Z_k;\Sigma_i)|$ which connects the centers 
\[
x_k= x_{k,1}, \ldots, x_{k,p_k}; x_{k-1,1},\ldots, x_{k-1,p_{k-1}};\,\,  \ldots; \,\, x_{0,1},\ldots, x_{0,p_0}= x_0,
\]
of the cubes in succession by line segments. 
\end{remark}

\subsection{Quasiconformal stability}\label{sec:quasiconformality}

We now prove the quasiconformal stability of the sequence $(Z_k)$. The quasiconformal stability, together with Corollary \ref{cor:summary_Z-k}, completes the proof of the Evolution Theorem (Theorem \ref{theorem:evolution-short}).

The proof of  stability is based on inductive relation between lifts in Proposition \ref{prop:localized-Realization-structure-k}. To control the dilatation, we flatten indentations made in all steps simultaneously, instead of iteratively. To this end, we extend the notions of  \emph{combined reservoir-canal systems} and the \emph{cumulative  indentations} from $k=2$ defined earlier to all $k$'s.

Let $k\ge 0$. We define for each $\ell\in \{1,\ldots,m\}$ a combined reservoir-canal system 
 \[\sfRC_{\cup}(Z_\ell,\ldots, Z_{k-1}) =  \bigcup_{j=\ell}^{k-1} \Refine^{\nu(k-j-1)}(\sfRC(Z_{j})) \subset \Refine^{k\nu}(K). \]
Note that  the combined system does not meet  core $|\Core_{K_\ell}(Z_\ell;\Sigma_i)|$.

The spectral cubes of $\sfRC(Z_k)$ and the spectral cubes of  $\sfRC_{\cup}(Z_\ell, \ldots, Z_{k-1})$ have the following relation.

\begin{lemma}
\label{lemma:RC_cup}
Let $k\ge 1$. A spectral cube of the complex $\sfRC(Z_{k})$ is either contained in a spectral cube of  $\sfRC_{\cup}(Z_0,\ldots, Z_{k-1})$ or has a face in $Z_0$. 
\end{lemma}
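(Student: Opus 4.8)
The statement is the natural generalization to arbitrary $k$ of Lemma \ref{lemma:RC_cup-1}, which handled the case $k=1$, so the plan is to run the same argument by induction on $k$, tracking carefully how the preference functions $\rho_{k-1}$ inherit the nearly-nested property from $\rho_{k-2}$ and how Lemma \ref{lemma:rho-Channel-regularity} propagates. First I would set up the induction: for $k=1$ the statement is exactly Lemma \ref{lemma:RC_cup-1}. Assume the claim for $k-1$, that is, every spectral cube of $\sfRC(Z_{k-1})$ is either contained in a spectral cube of $\sfRC_\cup(Z_0,\ldots,Z_{k-2})$ or has a face in $Z_0$.

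For the inductive step I would recall from the construction of $Z_k=\Channel_{\cL_{k-1}}(Z_{k-1})$ that
\[
\Refine^\nu(Z_{k-1}) - \sfRC(Z_{k-1}) \subset Z_k \subset \sfRC(Z_{k-1}) \cup \bigl(\Refine^\nu(Z_{k-1}) - \sfRC(Z_{k-1})\bigr),
\]
so that, using the containment $\sfRC(Z_k) \subset \Refine^\nu(\Star_{\Refine^{(k-1)\nu}(K)}(Z_k))$ (Corollary \ref{cor:RC-core-relation}) and the nearly-nested property of $\rho_{k-1}$,
\[
\sfRC(Z_k) \subset \Refine^\nu(\sfRC(Z_{k-1})) \,\cup\, \Refine^\nu\bigl(\rho_{k-1}\bigl((\Refine^\nu(Z_{k-1}) - \sfRC(Z_{k-1}))^{[n-1]}\bigr)\bigr).
\]
This is the exact analogue of the displayed inclusion in the proof of Lemma \ref{lemma:RC_cup-1}. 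Now take a spectral cube $Q$ of $\sfRC(Z_k)$. If $Q$ lies in $\Refine^\nu(\sfRC(Z_{k-1}))$, then $Q$ is contained in a refinement of a spectral cube of $\sfRC(Z_{k-1})$, and by the inductive hypothesis that spectral cube is either inside a spectral cube of $\sfRC_\cup(Z_0,\ldots,Z_{k-2})$ — hence $Q$ is inside the corresponding spectral cube of $\sfRC_\cup(Z_0,\ldots,Z_{k-1})$, after the one extra refinement built into the definition of the combined system — or has a face in $Z_0$, in which case $Q$, being in a refinement of that cube, still has a face on $|Z_0|$. Otherwise $Q$ is a spectral cube coming from the part of $\sfRC(Z_k)$ over the local channelings that is not inside $\sfRC(Z_{k-1})$; here I invoke Lemma \ref{lemma:rho-Channel-regularity} applied to the reservoir-canal system of $Z_{k-1}$ (using the Remark after the construction of $Z_k$, which guarantees $\sfRC(Z_{k-1}(q))$ satisfies that lemma with respect to $\sfRC(Z_{k-2})$ and $\rho_q$) exactly as in the $k=1$ proof: such a $Q$ then has a face in $Z_{k-2}$. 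Since $\Refine^\nu(Z_{k-3}) - \sfRC(Z_{k-3}) \subset Z_{k-2}$ and, iterating, the part of $Z_{k-2}$ not contained in the earlier reservoir-canal systems traces back to $Z_0$, I need to chase this face down through all previous generations. The cleanest way is to strengthen the inductive statement: I would carry along, as part of the hypothesis, that any face on $Z_{j}$ that a spectral cube meets is either a face on $Z_0$ or lies inside $\sfRC_\cup(Z_0,\ldots,Z_{j-1})$, which is precisely the bookkeeping made in Section \ref{sec:quote} for $k=2$; then the alternative "has a face in $Z_{k-2}$" collapses into the desired dichotomy.

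\textbf{Main obstacle.} The genuine difficulty is not any single geometric fact — each piece is a direct reuse of Lemma \ref{lemma:RC_cup-1}, Lemma \ref{lemma:rho-Channel-regularity}, and Corollary \ref{cor:RC-core-relation} — but rather the bookkeeping of refinement scales and generations: one must be meticulous that "spectral cube of $\sfRC_\cup(Z_0,\ldots,Z_{k-1})$" is taken with respect to the correct base complex $K$ and that the extra $\Refine^\nu$ appearing each time one passes from step $j$ to step $j+1$ is absorbed consistently (this is why the combined system is defined with the weights $\Refine^{\nu(k-j-1)}$). I expect that writing the induction with the strengthened hypothesis stated above — tracking simultaneously the two alternatives "inside a combined system" and "has a face in $Z_0$" through every generation — is what makes the argument go through cleanly, and that once this is set up the verification at each step is the routine repetition of the $k=1$ and $k=2$ arguments already carried out in Lemmas \ref{lemma:RC_cup-1} and \ref{lemma:Dent_cup-1}.
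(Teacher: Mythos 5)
The paper's proof is not inductive: it states, directly "analogous to the case $k=1$" and from the construction, the inclusions
\[
Z_k \subset \sfRC_{\cup}(Z_0, \ldots, Z_{k-1}) \cup \bigl(\Refine^{k \nu}(Z_0) -  \sfRC_{\cup}(Z_0, \ldots, Z_{k-1})\bigr), \qquad \Refine^{k\nu}(Z_0) -  \sfRC_{\cup}(Z_0, \ldots, Z_{k-1}) \subset Z_k,
\]
and then derives in one step that $\sfRC(Z_k)$ is contained in $\Refine^\nu(\sfRC_\cup(Z_0,\ldots,Z_{k-1}))$ together with preference cubes over $(n-1)$-cubes lying on $|Z_0|$. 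Your proposal instead inducts on $k$, relating $\sfRC(Z_k)$ to $\sfRC(Z_{k-1})$ and invoking the inductive hypothesis. These are genuinely different routes to the same key fact. You correctly identify the technical heart — the part of $Z_k$ outside the previous reservoir-canal systems must be traced back to $Z_0$ — and the strengthened inductive hypothesis you propose (every $(n-1)$-cube of $Z_j$ is either on $|Z_0|$ or in $\sfRC_\cup(Z_0,\ldots,Z_{j-1})$) is exactly the content of the paper's first displayed inclusion, so your plan is sound once it is carried out. The paper avoids the recursion by stating the "unrolled" inclusion for the combined system at once, which is cleaner; your version makes explicit the verification that the paper's phrase "from the construction" glosses over.

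One concrete correction: in the inductive step you say a spectral cube $Q$ of $\sfRC(Z_k)$ not contained in $\Refine^\nu(\sfRC(Z_{k-1}))$ "has a face in $Z_{k-2}$." This is off by one. The analogue of Lemma \ref{lemma:RC_cup-1} one generation up gives: such a $Q$ lies over an $(n-1)$-cube of $Z_k$ that belongs to $\Refine^\nu(Z_{k-1}) - \sfRC(Z_{k-1})$, hence $Q$ has a face on $|Z_{k-1}|$, not on $|Z_{k-2}|$. (The Remark after the construction of $Z_k$, which you cite, concerns $\sfRC(Z_{k-1}(q))$ — the system used to \emph{build} $Z_k$ — whereas the lemma is about $\sfRC(Z_k)$, the system over $Z_k$ used to build $Z_{k+1}$; the relevant generalization of Lemma \ref{lemma:rho-Channel-regularity} is the one for $\sfRC(Z_k(q))$, $q\in Z_{k-1}^{[n-1]}$, relative to $\sfRC(Z_{k-1})$.) With this corrected, the chase down from $Z_{k-1}$ to $Z_0$ via the strengthened hypothesis goes through as you describe.
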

\begin{proof}For $k=1$, the claim follows from Lemma \ref{lemma:RC_cup-1}.

Analogous to the case $k=1$, we obtain from the construction that for $k\geq 2$, 
 \[Z_k \subset \sfRC_{\cup}(Z_\ell, \ldots, Z_{k-1}) \cup \left(\Refine^{k \nu}(Z_0) -  \sfRC_{\cup}(Z_\ell, \ldots, Z_{k-1}))\right)\]
 and 
 \[\Refine^{k\nu}(Z_0) -  \sfRC_{\cup}(Z_\ell, \ldots, Z_{k-1}) \subset Z_k.\] 
Thus, 
\begin{align*}
\sfRC(Z_k) \subset &\Refine^\nu(\sfRC_{\cup}(Z_\ell, \ldots, Z_{k-1}))\\ 
& \cup \left( \Refine^\nu(\rho_{k-1}(\Refine^{(k-1) \nu}(Z_0) -  \sfRC_{\cup}(Z_\ell, \ldots, Z_{k-1})\right),
\end{align*}
where $\rho_{k-1}\colon (Z_{k-1})^{[n-1]}\to \Refine^{(k-1)\nu}(K)^{[n]}$ is the preference function used in the construction of $Z_k$.
From this inclusion relation, the  claim follows.
\end{proof}

Let $k\geq 2$. We define for each $0\leq \ell \leq k-1$,  a subcomplex
\begin{align*}
&\Dent_\cup(Z_\ell, \ldots, Z_{k-1};\Sigma_i) \\
&\quad = \pi_\ell^{-1}\left(\sfRC_{\cup}(Z_\ell,\ldots, Z_{k-1}) \bigcap\Refine^{(k-\ell) \nu}(\Comp(Z_\ell; \Sigma_i))  \right)\\
&\quad \subset \Refine^{(k-\ell) \nu}(\Real(Z_\ell;\Sigma_i)).
\end{align*} \index{$\Dent_\cup( Z_\ell,\ldots, Z_k)$}
As in the case $k=2$, by Remark \ref{rmk:G-quotient-refinement},  the quotient $\pi_\ell$ may be passed to a map between  the refinements of the domain and the refinement of the target, thus $\Dent_\cup(Z_\ell, \ldots, Z_{k-1};\Sigma_i)$ is  well-defined.

Let now for $\ell \ge 1,$
\begin{align*}
&\Dent_\cup^\sfT (Z_\ell, \ldots, Z_{k-1};\Sigma_i) 
=  \Dent_\cup(Z_\ell, \ldots, Z_{k-1};\Sigma_i)\bigcap \Refine^{(k-\ell)\nu}(\Tunnel(Z_\ell;\Sigma_i)),
\end{align*}
where $\Tunnel(Z_\ell;\Sigma_i)$ is the tunnel part of $\Real(Z_\ell;\Sigma_i)$;
see Proposition \ref{prop:localized-Realization-structure-k} for notation.

\begin{lemma}
\label{lemma:Dent_cup}
Let $k \ge 1$ and $i\in \{1,\ldots, m\}$. Then $\Dent_\cup(Z_0,\ldots, Z_{k-1};\Sigma_i)$ is an indentation in $\Real(Z_0;\Sigma_i)$. Furthermore,  for each $\ell \in\{1,\ldots, k-1\}$,
$\Dent_\cup^\sfT(Z_\ell,\ldots, Z_{k-1};\Sigma_i)$ is an indentation in $\Tunnel_{K_\ell}(Z_\ell;\Sigma_i)$.
\end{lemma}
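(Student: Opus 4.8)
The statement asserts two indentation claims: that $\Dent_\cup(Z_0,\ldots,Z_{k-1};\Sigma_i)$ is an indentation in $\Real(Z_0;\Sigma_i)$, and that for each $\ell\in\{1,\ldots,k-1\}$ the truncation $\Dent_\cup^\sfT(Z_\ell,\ldots,Z_{k-1};\Sigma_i)$ is an indentation in $\Tunnel_{K_\ell}(Z_\ell;\Sigma_i)$. I would prove both by induction on $k$, using Lemma \ref{lemma:Dent_cup-1} as the base case $k=2$ (for $k=1$ the first claim reduces to Proposition \ref{prop:reservoir-canals-as-indentations} applied to $\sfRC(Z_0)$, and the second claim is vacuous). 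The main structural input is Lemma \ref{lemma:RC_cup}, which tells us that every spectral cube of $\sfRC(Z_{k-1})$ is either nested inside a spectral cube of the already-combined system $\sfRC_\cup(Z_0,\ldots,Z_{k-2})$ or has a face on $Z_0$; the latter alternative is precisely what makes the new dents ``reach down'' to the unaltered boundary, consistent with the discussion in Section \ref{sec:quote}.

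\textbf{First claim.} Assume inductively that $\Dent_\cup(Z_0,\ldots,Z_{k-2};\Sigma_i)$ is an indentation in $\Real(Z_0;\Sigma_i)$; we want the same with one more generation added. By definition $\sfRC_\cup(Z_0,\ldots,Z_{k-1}) = \Refine^\nu(\sfRC_\cup(Z_0,\ldots,Z_{k-2}))\cup \sfRC(Z_{k-1})$, and $\sfRC(Z_{k-1})=\bigcup_{q'}\sfRC(Z_{k-1}(q'))$ is a disjoint union of local systems by Corollary \ref{cor:RC-far}. The key points to assemble, mirroring the proof of Lemma \ref{lemma:Dent_cup-1}: (i) refining an indentation is again an indentation (remark after Definition \ref{def:spectral-cube} on stability of spectra under refinement), so $\Refine^\nu(\Dent_\cup(Z_0,\ldots,Z_{k-2};\Sigma_i))$ is an indentation in $\Real(Z_0;\Sigma_i)$; (ii) each local piece $\sfRC(Z_{k-1}(q'))-\sfRC_\cup(Z_0,\ldots,Z_{k-2})$ is contained in a single preference cube $\rho_{k-1}(q')$ by Lemma \ref{lemma:rho-Channel-regularity} adapted to generation $k-1$ (see the Remark after the construction of $Z_k$ in Section \ref{sec:evolution}), hence its lift sits in a locally Euclidean region and, by Lemma \ref{lemma:RC_cup}, its spectral cubes all have a face on $Z_0$ — so the lift of each local piece is itself an indentation by Proposition \ref{prop:reservoir-canals-as-indentations}; (iii) the intersections between the old (refined) dent and a new local piece are mutually disjoint $(n-1)$-cubes (again Lemma \ref{lemma:rho-Channel-regularity}), and intersection of any three distinct spectral cubes across $\sfRC_\cup(Z_0,\ldots,Z_{k-1})$ is empty by construction of canal/reservoir placement. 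These three facts verify the axioms in Definition \ref{def:indentation-2}: a family of essentially disjoint cube-indentations together with the star-indentations coming from connectors, with the required triple-intersection vanishing and the required face/edge conditions on pairwise intersections. Passing through $\pi_0^{-1}$ preserves all of this since the lift is isomorphic to the downstairs complex over each $n$-cube.

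\textbf{Second claim.} For the truncated version, fix $\ell\ge 1$. Here I would note that $\Real(Z_\ell;\Sigma_i)$ has, by Proposition \ref{prop:localized-Realization-structure-k}, an essential partition $\Realdented(Z_{\ell-1};\Sigma_i)\cup\Tunnel(Z_\ell;\Sigma_i)$, and the tunnel part $\Tunnel(Z_\ell;\Sigma_i)=\bigcup_q\tunnel(q;\Sigma_i)$ is a disjoint union of tunnels of bounded graph size $\le\lambda_\loc$. Intersecting $\Dent_\cup(Z_\ell,\ldots,Z_{k-1};\Sigma_i)$ with $\Refine^{(k-\ell)\nu}(\Tunnel(Z_\ell;\Sigma_i))$ simply restricts the combined dent to those tunnels; the spectral cubes that survive this restriction are exactly those lying in $|\Tunnel(Z_\ell;\Sigma_i)|$, and their spectrum/adjacency structure is inherited unchanged. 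Since being an indentation is a local condition on spectral cubes and their wedges (Definitions \ref{def:indentation}–\ref{def:indentation-2}), and each $\tunnel(q;\Sigma_i)$ is itself a good cubical complex with boundary (a refinement of a tunnel), the restriction $\Dent_\cup^\sfT(Z_\ell,\ldots,Z_{k-1};\Sigma_i)$ is again an indentation — now in $\Tunnel_{K_\ell}(Z_\ell;\Sigma_i)$. This essentially repeats the argument of the first claim with $Z_0$ replaced by the generation-$\ell$ tunnel as the ``ground floor.''

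\textbf{Expected main obstacle.} The bookkeeping is the real work: one must carefully match up which spectral cubes of the newly added $\sfRC(Z_{k-1})$ attach to the refined old dent versus to the untouched part of $\partial\Real(Z_0;\Sigma_i)$, and verify that connectors across generations do not create forbidden triple intersections or non-face intersections in Definition \ref{def:indentation-2}. The cleanest way around this is to lean entirely on Lemma \ref{lemma:RC_cup} (the dichotomy for spectral cubes) and on Lemma \ref{lemma:rho-Channel-regularity} (disjointness of the cross-generational intersection cubes and the ``exactly one new $n$-cube with a face in $q$'' statement), which together reduce the verification to exactly the case already handled in Lemma \ref{lemma:Dent_cup-1}; the induction then just iterates that lemma. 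I do not expect any genuinely new geometric input beyond what Lemmas \ref{lemma:RC_cup} and \ref{lemma:rho-Channel-regularity} provide.
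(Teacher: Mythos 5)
Your proposal is correct and follows the same strategy as the paper's own proof: induction on $k$ with base cases $k=1$ (via Proposition~\ref{prop:reservoir-canals-as-indentations}) and $k=2$ (via Lemma~\ref{lemma:Dent_cup-1}), an inductive step that decomposes $\sfRC_\cup(Z_0,\ldots,Z_{k-1})$ as the refined combined system of the previous generations union $\sfRC(Z_{k-1})$ and uses the spectral-cube dichotomy of Lemma~\ref{lemma:RC_cup} together with Lemma~\ref{lemma:rho-Channel-regularity} to repeat the verification from Lemma~\ref{lemma:Dent_cup-1}, and a second claim handled by the same argument with $\Tunnel_{K_\ell}(Z_\ell;\Sigma_i)$ playing the role of $\Real(Z_0;\Sigma_i)$. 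The paper's proof is stated more tersely (``follow the same line of argument as that in Lemma~\ref{lemma:Dent_cup-1}'') but invokes exactly the lemmas and the structure you spell out.
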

\begin{proof}
When $k=1$, $\Dent_\cup(Z_0;\Sigma_i)=\pi_0^{-1}(\sfRC(Z_0)\cap \Comp(Z_0;\Sigma_i))$ is an indentation in $\Real(Z_0;\Sigma_i)$ by Proposition \ref{prop:reservoir-canals-as-indentations}. When $k=2$, $\Dent_\cup(Z_0, Z_1;\Sigma_i)$ is an indentation in $\Real(Z_0;\Sigma_i)$ by Lemma \ref{lemma:Dent_cup-1}. 
The claim for $k\geq3$ is by induction. To reach the claim for $k$ assuming the validity of the statement for $k-1$, we apply Lemma \ref{lemma:RC_cup} and follow the same line of argument as that in Lemma \ref{lemma:Dent_cup-1}.  This completes the proof of the first claim. 

Let 
$\ell \in\{1,\ldots, k-1\}$. For the proof of the second claim, we follow the proof of the first statement with $(\Tunnel_{K_\ell}(Z_\ell;\Sigma_i);\Dent_\cup^\sfT(Z_\ell,\ldots, Z_{k-1};\Sigma_i))$ assuming the role of $(\Real(Z_0;\Sigma_i);\Dent_\cup(Z_0,\ldots, Z_{k-1};\Sigma_i))$. Other than the fact that $\Tunnel_{K_\ell}(Z_\ell;\Sigma_i)$ has multiple components, the proof follows almost verbatim. We omit the details.
\end{proof}

We record the iterative relation between the structures of $\Real(Z_k;\Sigma_i)$ and $\Real(Z_0;\Sigma_i)$ in a proposition, which reveals the non-standard tree-like structure of  $\Real(Z_k;\Sigma_i)$ for $k\geq 2$ described in 
beginning of
Section \ref{sec:quote}. Proof 
of the following 
proposition is a straightforward extension of that of Proposition \ref{prop:combined-localized-Realization-structure-2}, we omit the details; see 
Proposition \ref{prop:localized-Realization-structure-k} and Lemma \ref{lemma:Dent_cup} for notations.

\begin{proposition}
\label{prop:combined-localized-Realization-structure}
Let $k\ge 2$. For each $i\in\{1,\ldots,m\}$, $ \Real(Z_k;\Sigma_i)$ has an essential partition
\[
\Real(Z_k;\Sigma_i) = \Real^{\dagger_k}(Z_0;\Sigma_i)  
\cup \bigcup_{\ell=1}^{k-1}\Tunnel^{\dagger_k}(Z_\ell;\Sigma_i)   
 \cup \Tunnel(Z_k;\Sigma_i),
\]
where 
\begin{enumerate}
\item $\Real^{\dagger_k}(Z_0;\Sigma_i)  = \Refine^{k\nu}(\Real(Z_0;\Sigma_i))-\Dent_\cup(Z_0,\ldots, Z_{k-1};\Sigma_i)$;
\item
$\Tunnel^{\dagger_k}(Z_\ell;\Sigma_i) = \Refine^{(k-\ell)\nu}(\Tunnel(Z_\ell;\Sigma_i)) - \Dent_\cup^\sfT(Z_\ell,... Z_{k-1};\Sigma_i)$,
where 
\begin{enumerate}
\item  $\Tunnel^{\dagger_k}(Z_1;\Sigma_i)$ is a dented tunnel  which meets $\Real^{\dagger_k}(Z_0;\Sigma_i)$ at a refinement of  $\pi_k^{-1}(\omega_{Z;i})$, and 
\item for $\ell\ge 2$, 
$\Tunnel^{\dagger_k}(Z_\ell;\Sigma_i)=\bigcup_{q\in Z_{\ell-2}^{[n-1]}}  \tunnel^{\dagger_k}(q;\Sigma_i)$ is a collection of dented tunnels which  meet
\[\Real^{\dagger_k}(Z_0;\Sigma_i)  
\cup \bigcup_{j=1}^{\ell-1}(\Tunnel^{\dagger_k}(Z_j;\Sigma_i)) \]
in a family $\pi_k^{-1}(\omega_{Z_{\ell-1}(q);i})$, $q\in Z_{\ell-2}^{[n-1]}$, of lifts of gates.
\end{enumerate}
\item $\Tunnel(Z_k;\Sigma_i) =\bigcup_{q\in Z_{k-2}^{[n-1]}} \tunnel(q;\Sigma_i)$ is a family of tunnels each of which meets
\[\Real^{\dagger_k}(Z_0;\Sigma_i)  
\cup \bigcup_{j=1}^{k-1} \Tunnel^{\dagger_k}(Z_j;\Sigma_i)) \]
in the lift $\pi_k^{-1}(\omega_{Z_{k-1}(q);i})$ of a gate. 
\end{enumerate}
\end{proposition}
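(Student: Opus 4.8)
The statement to be proved is Proposition~\ref{prop:combined-localized-Realization-structure}, the cumulative structure of the lift $\Real(Z_k;\Sigma_i)$ in terms of the original lift $\Real(Z_0;\Sigma_i)$, dented tunnels from the intermediate steps, and the fresh tunnels from step $k$. The author has already signalled that ``proof of the following proposition is a straightforward extension of that of Proposition~\ref{prop:combined-localized-Realization-structure-2}.'' So the plan is an induction on $k$, with $k=2$ being exactly Proposition~\ref{prop:combined-localized-Realization-structure-2} (whose proof in turn just unwound Lemma~\ref{lemma:localized-Realization-structure-2} together with the definition of the combined indentation from Lemma~\ref{lemma:Dent_cup-1}). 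The inductive step is an algebraic computation with the difference operation on complexes, analogous to the displayed chain of equalities in the proof of Proposition~\ref{prop:combined-localized-Realization-structure-2}.

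\textbf{First step: invoke the one-step relation.} I would start from Proposition~\ref{prop:localized-Realization-structure-k}, which says
\[
\Real(Z_k;\Sigma_i) = \Realdented(Z_{k-1};\Sigma_i) \cup \Tunnel(Z_k;\Sigma_i),
\qquad
\Realdented(Z_{k-1};\Sigma_i)= \Refine^\nu(\Real(Z_{k-1};\Sigma_i))-\D(Z_{k-1};\Sigma_i),
\]
and then substitute the inductive hypothesis for $\Real(Z_{k-1};\Sigma_i)$, namely the essential partition
\[
\Real(Z_{k-1};\Sigma_i) = \Real^{\dagger_{k-1}}(Z_0;\Sigma_i) \cup \bigcup_{\ell=1}^{k-2}\Tunnel^{\dagger_{k-1}}(Z_\ell;\Sigma_i) \cup \Tunnel(Z_{k-1};\Sigma_i).
\]
Taking $\Refine^\nu$ of this and then subtracting $\D(Z_{k-1};\Sigma_i)$, the subtraction has to be distributed over the three pieces: the part of $\D(Z_{k-1};\Sigma_i)$ lying over $\Real^{\dagger_{k-1}}(Z_0;\Sigma_i)$ combines with $\Dent_\cup(Z_0,\ldots,Z_{k-2};\Sigma_i)$ to form $\Dent_\cup(Z_0,\ldots,Z_{k-1};\Sigma_i)$; the part over each $\Tunnel^{\dagger_{k-1}}(Z_\ell;\Sigma_i)$ combines with $\Dent_\cup^\sfT(Z_\ell,\ldots,Z_{k-2};\Sigma_i)$ to give $\Dent_\cup^\sfT(Z_\ell,\ldots,Z_{k-1};\Sigma_i)$; and the part over $\Tunnel(Z_{k-1};\Sigma_i)$ is exactly the new $\D(Z_{k-1};\Sigma_i)$ restricted to that tunnel, producing $\Tunnel^{\dagger_k}(Z_{k-1};\Sigma_i)$. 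Here the facts that the combined systems are mutually disjoint (from the graph-distance estimate in Corollary~\ref{cor:RC-far}/Corollary~\ref{cor:border-of-Y}) and that $\D(Z_{k-1};\Sigma_i)$ is an indentation, along with Lemma~\ref{lemma:RC_cup} describing how the spectral cubes of $\sfRC(Z_{k-1})$ sit relative to those of the combined system $\sfRC_\cup(Z_0,\ldots,Z_{k-2})$, are what make the bookkeeping legitimate: they guarantee no unwanted interaction between the indentations attached at different generations, so the pieces really do partition essentially disjointly.

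\textbf{Second step: identify the attaching loci.} Once the partition is in place set-theoretically, the items (1)--(3) of the conclusion are immediate restatements of the corresponding items in Proposition~\ref{prop:localized-Realization-structure-k} and the inductive hypothesis: each tunnel $\tunnel(q;\Sigma_i)$ in $\Tunnel(Z_k;\Sigma_i)$ meets the rest of the lift in the lifted gate $\pi_k^{-1}(\omega_{Z_{k-1}(q);i})$ by Proposition~\ref{prop:localized-Realization-structure-k}(3), and each $\Tunnel^{\dagger_k}(Z_\ell;\Sigma_i)$ (for $\ell\le k-1$) is $\Refine^{(k-\ell)\nu}(\Tunnel(Z_\ell;\Sigma_i))$ with the portion of $\Dent_\cup^\sfT(Z_\ell,\ldots,Z_{k-1};\Sigma_i)$ carved out, attached at a refinement of the gate $\pi_k^{-1}(\omega_{Z_{\ell-1}(q);i})$ where it was attached at step $\ell$ --- carving out dents does not disconnect a tunnel from its parent because the dents are made away from the gate (this is the ``not adjacent to any pre-reservoir-cube'' clause in the choice of $C_i$ and the ``reservoir-canal system far from $\partial Y$'' statement, Corollary~\ref{cor:border-of-Y}). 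Finally that each of these objects is an indentation in $\Tunnel_{K_\ell}(Z_\ell;\Sigma_i)$ is exactly the content of Lemma~\ref{lemma:Dent_cup}.

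\textbf{Main obstacle.} The real work is not conceptual but notational: keeping the five-index zoo ($k$, $\ell$, $i$, the cube $q\in Z_{\ell-2}^{[n-1]}$, and the refinement scale $(k-\ell)\nu$) consistent while verifying that the subtractions distribute over the essential partition without creating phantom overlaps. The one genuine lemma that must be in hand for the induction is Lemma~\ref{lemma:RC_cup} --- it is the statement that a spectral cube of the $k$-th reservoir-canal system either sits inside a spectral cube of the combined earlier system or has a face on $Z_0$, which is precisely what lets one add the new dent to the accumulated one and still have an indentation in the sense of Definition~\ref{def:indentation-2} (no triple intersection of spectral cubes, the intersection rules are satisfied, local geometry Euclidean). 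Since that lemma is already proved, the proposition follows by the routine induction sketched above, and I would simply present the $k$-th displayed chain of equalities mirroring the one in the proof of Proposition~\ref{prop:combined-localized-Realization-structure-2}, then remark that (1)--(3) are term-by-term rereadings of Proposition~\ref{prop:localized-Realization-structure-k}, Lemma~\ref{lemma:Dent_cup}, and the inductive hypothesis.
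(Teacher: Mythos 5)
Your proposal is correct and follows essentially the same route as the paper: induction on $k$ with $k=2$ (Proposition~\ref{prop:combined-localized-Realization-structure-2}) as the base case, applying the one-step relation from Proposition~\ref{prop:localized-Realization-structure-k}, substituting the inductive hypothesis, and distributing the new indentation $\D(Z_{k-1};\Sigma_i)$ over the pieces of the essential partition using Lemmas~\ref{lemma:RC_cup} and~\ref{lemma:Dent_cup} and the graph-distance separation to justify essential disjointness. The paper's own proof says this explicitly in compressed form (``the proof of the claim for $k$ follows the proof of Proposition~\ref{prop:combined-localized-Realization-structure-2} almost verbatim. We omit the details.''), and your sketch is precisely the unpacking of that omitted step.
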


\begin{proof}
Since $\Real(Z_1;\Sigma_i)=(\Refine^\nu(\Real(Z_0;\Sigma_i)) - \Dent(Z_0;\Sigma_i))\cup \Tunnel(Z_1;\Sigma_i)$, the claim holds for $k=1$. The claim for  $k=2$ has been proved in Proposition \ref{prop:combined-localized-Realization-structure-2}.
For $k\ge 3$, 
the essential partition of $\Real(Z_k;\Sigma_i)$ is proved by induction. Assuming the the validity of the statement for $k-1$, the proof of the claim for $k$  follows the proof of Proposition \ref{prop:combined-localized-Realization-structure-2} almost verbatim. We omit the details.
\end{proof}

Having Proposition \ref{prop:combined-localized-Realization-structure} at our disposal, we may flatten indentations made in different generations simultaneously. Fix an integer $k\geq 2$. 
For the statement, we denote
\[W_{0;i}= \Wedge_{|\Real(Z_k;\Sigma_i))|}(\Dent_\cup(Z_0,\ldots, Z_{k-1};\Sigma_i)),\]
and for $\ell\in\{1,\ldots,k-1\}$,
\[W_{\ell;i}=\Wedge_{|\Real(Z_k;\Sigma_i))|}(\Dent_\cup^\sfT(Z_\ell,\ldots, Z_{k-1};\Sigma_i)).\]

\begin{proposition}
\label{prop:general-tree-structure-of-Realization} Fix an integer $k\geq 2$. For each $i\in\{1,\ldots,m\}$, there exists a space 
\[
X_{k;i}=R_{0;i} \cup \left(\bigcup_{\ell=1}^{k-1} T_{\ell;i}\right)\, \cup \, T_{k;i},
\]
where $R_{0;i}$ is an isometric copy of $|\Real(Z_0;\Sigma_i)|$, and $T_{1;i}, \ldots, T_{k;i}$ are isometric copies of  $|\Tunnel(Z_1;\Sigma_i)|,\ldots$, $|\Tunnel(Z_k;\Sigma_i)|$, respectively, and there exists
an $L(n,K)$-bilipschitz homeomorphism 
\[
\varphi_{k;i} \colon |\Real(Z_k;\Sigma_i)|\to |X_{k;i}|,
\]
which restricts to maps $|\Real^{\dagger_k}(Z_0;\Sigma_i)| \to R_{0;i}$, $|\Tunnel^{\dagger_k}(Z_j;\Sigma_i)| \to T_{j;i}$ for $j=1,\ldots, k-1$, and  a map $|\Tunnel(Z_k;\Sigma_i)| \to T_{k;i}$.
Moreover, 
\begin{enumerate}
\item $\varphi_{k;i}$ is an isometry in the complement of  the wedges 
$W_{0;i}\cup \cdots, \cup W_{k-1;i}$,
and an isometry on $|\pi_k^{-1}(\omega_{Z;i})| \cup \left( \bigcup_{\ell=1}^{k-1} \left( \bigcup_{q\in Z_{\ell-1}^{[n-1]}} |\pi_k^{-1}(\omega_{Z_\ell(q);i})| \right) \right)$; and
\item $T_{1;i} \cap R_{0;i}= \varphi_{k;i}(|\pi_k^{-1}(\omega_{Z;i})|)$, and for  $\ell\in\{2,\ldots, k\}$,
\[ T_{\ell;i} \cap \left(R_{0;i} \cup (\bigcup_{j=1}^{\ell-1}T_{j;i})\right)= \bigcup_{q\in Z_{\ell-2}^{[n-1]}} \varphi_{k;i}(|\pi_k^{-1}(\omega_{Z_{\ell-1}(q);i})|).\] 
\end{enumerate}
\end{proposition}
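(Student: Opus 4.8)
The statement packages the ``flatten all generations at once'' idea into a single bilipschitz model for $|\Real(Z_k;\Sigma_i)|$. The plan is to prove it by induction on $k$, the base case $k=2$ being essentially Proposition~\ref{prop:combined-localized-Realization-structure-2} fed through the Indentation-flattening Theorem~\ref{theorem:flattening-indentation} and the Tunnel-contracting Proposition~\ref{prop:tunnel-contracting}. The key structural input is Proposition~\ref{prop:combined-localized-Realization-structure}, which already exhibits the essential partition
\[
\Real(Z_k;\Sigma_i) = \Real^{\dagger_k}(Z_0;\Sigma_i) \cup \bigcup_{\ell=1}^{k-1}\Tunnel^{\dagger_k}(Z_\ell;\Sigma_i) \cup \Tunnel(Z_k;\Sigma_i),
\]
together with Lemma~\ref{lemma:Dent_cup}, which tells us that the deleted parts $\Dent_\cup(Z_0,\ldots,Z_{k-1};\Sigma_i)$ and $\Dent_\cup^\sfT(Z_\ell,\ldots,Z_{k-1};\Sigma_i)$ are genuine indentations in $\Real(Z_0;\Sigma_i)$ and in the components of $\Tunnel(Z_\ell;\Sigma_i)$, respectively. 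So the work is: undent each piece by a bilipschitz map supported in the wedge of the corresponding indentation, contract the dented tunnels back to standard tunnels, and then check that these maps glue along the gate cubes without fighting each other.

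\textbf{Main steps.} First I would apply Theorem~\ref{theorem:flattening-indentation} to the indentation $\Dent_\cup(Z_0,\ldots,Z_{k-1};\Sigma_i)$ in $\Real(Z_0;\Sigma_i)$: this produces an $L(n,\mu(K))$-bilipschitz homeomorphism $|\Real^{\dagger_k}(Z_0;\Sigma_i)| \to |\Refine^{k\nu}(\Real(Z_0;\Sigma_i))|$ which is the identity outside $W_{0;i}$, and since refinement does not change the metric (Convention~\ref{convention:metric-subcomplex}), the target is isometric to $|\Real(Z_0;\Sigma_i)|$; call this isometric copy $R_{0;i}$. Next, for each $\ell\in\{1,\ldots,k-1\}$ I would first flatten the indentation $\Dent_\cup^\sfT(Z_\ell,\ldots,Z_{k-1};\Sigma_i)$ inside each component of $\Refine^{(k-\ell)\nu}(\Tunnel(Z_\ell;\Sigma_i))$ by Theorem~\ref{theorem:flattening-indentation} again, obtaining an $L$-bilipschitz map onto $|\Refine^{(k-\ell)\nu}(\Tunnel(Z_\ell;\Sigma_i))|\cong|\Tunnel(Z_\ell;\Sigma_i)|$; the tunnel part $\Tunnel(Z_k;\Sigma_i)$ needs no undenting. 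Crucially, each component of each $\Tunnel(Z_\ell;\Sigma_i)$ has at most $\lambda_\loc=\lambda_\loc(n,\mu,\nu)$ cubes by Proposition~\ref{prop:localized-Realization-structure-k}, and the attaching happens only at the gate cubes $\pi_k^{-1}(\omega_{Z_{\ell-1}(q);i})$, so the bilipschitz constants are uniform — depending on $n$ and $K$ only, not on $k$. The flattening maps are all identity near the gates (this is exactly why Definition~\ref{def:Channeling} places $\omega_i$ away from reservoirs and connectors via the ``well-located'' condition of Section~\ref{sec:single-channeling}, and Remark~\ref{rmk:sheet-neighborhood} provides disjoint sheet neighborhoods $\Lambda(\omega_i)$), so the individual homeomorphisms agree on overlaps and assemble into $\varphi_{k;i}$. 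Then items (1) and (2) are read off: (1) follows because each flattening map is the identity outside the relevant wedge and identity near every gate cube, and (2) is just the description of the attaching cubes inherited verbatim from Proposition~\ref{prop:combined-localized-Realization-structure}(2).

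\textbf{Gluing and disjointness of wedges.} The one place requiring genuine care is verifying that the wedges $W_{0;i}, W_{1;i},\ldots,W_{k-1;i}$ do not overlap in a way that would cause the flattening constants to compound with $k$. This is where Corollary~\ref{cor:Wedge-intersections} is used: any three distinct spectral cubes drawn from the combined indentation have empty triple wedge intersection, and two spectral cubes with disjoint spaces have disjoint wedges. The hierarchical nesting from Lemma~\ref{lemma:RC_cup} — each spectral cube of $\sfRC(Z_j)$ either sits inside a spectral cube of the earlier combined system or has a face on $Z_0$ — guarantees that spectral cubes living in $\Dent_\cup^\sfT(Z_\ell,\ldots)$ for different $\ell$ are comparable in the nesting order, so their wedges either nest or are disjoint, and no point of $|\Real(Z_k;\Sigma_i)|$ lies in the wedges of spectral cubes from three distinct generations simultaneously. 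Hence the flattenings of generation $\ell=0,1,\ldots,k-1$ overlap at most two at a time at any point, so the composite $\varphi_{k;i}$ is $(L(n,K))^2$-bilipschitz, a constant independent of $k$ — exactly as in the proof of Lemma~\ref{lemma:first-flattening-theorem} and Theorem~\ref{theorem:flattening-indentation}. I expect this bookkeeping of wedge non-overlap across generations to be the main obstacle; everything else is a routine assembly of maps furnished by the indentation-flattening and tunnel-contracting results already established, plus an induction that follows the $k=2$ case of Proposition~\ref{prop:combined-localized-Realization-structure-2} line by line.
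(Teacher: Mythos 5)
Your proposal takes essentially the same approach as the paper: flatten $\Dent_\cup(Z_0,\ldots,Z_{k-1};\Sigma_i)$ inside $\Real(Z_0;\Sigma_i)$ and $\Dent_\cup^\sfT(Z_\ell,\ldots,Z_{k-1};\Sigma_i)$ inside each $\Tunnel(Z_\ell;\Sigma_i)$ via Theorem~\ref{theorem:flattening-indentation}, keep the tunnels attached at the images of the gates (where each flattening is the identity), and glue. One small mischaracterization in your ``disjointness of wedges'' paragraph: the uniformity of the constant in $k$ is \emph{not} secured by Corollary~\ref{cor:Wedge-intersections} or by cross-generational wedge bookkeeping; rather, Proposition~\ref{prop:combined-localized-Realization-structure} already exhibits $\Real^{\dagger_k}(Z_0;\Sigma_i)$ and the $\Tunnel^{\dagger_k}(Z_\ell;\Sigma_i)$ as an \emph{essential partition} of $\Real(Z_k;\Sigma_i)$, so the successive flattenings act in pairwise essentially disjoint subcomplexes and never interact, giving the glued map the bilipschitz constant of a single application of Theorem~\ref{theorem:flattening-indentation} rather than the $(L)^2$ you state. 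The discrepancy is harmless for the statement, but the reason you offer is not the one doing the work.
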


\begin{proof}
For the proof, we flatten successively the subcomplexes 
$\Real^{\dagger_k}(Z_0;\Sigma_i)$, $\Tunnel^{\dagger_k}(Z_1;\Sigma_i), \ldots,   
\Tunnel^{\dagger_k}(Z_{k-1};\Sigma_i)$
of $\Real(Z_k;\Sigma_i)$ in Proposition \ref{prop:combined-localized-Realization-structure}. We give the details only for the first step. 

Since $\Real^{\dagger_k}(Z_0;\Sigma_i) = \Refine^{k\nu}(\Real(Z_0;\Sigma_i))-\Dent_\cup(Z_0,\ldots, Z_{k-1};\Sigma_i)$, where $\Dent_\cup(Z_0,\ldots, Z_{k-1};\Sigma_i) \cap \Refine^{k\nu}(\Real(Z_0;\Sigma_i))$ is an indentation, there exists, by the Indentation-flattening theorem (Theorem \ref{theorem:flattening-indentation}) a bilipschitz map 
\[
\psi_{0;i} \colon |\Real^{\dagger_k}(Z_0;\Sigma_i)| \to |\Refine^{k\nu}(\Real(Z_0;\Sigma_i))|
\]
which is the identity in the complement of the wedge $W_{0;i}$ of the complex $\Dent_\cup(Z_0,\ldots, Z_{k-1};\Sigma_i)$. Moreover, by the construction in the proof of Theorem \ref{theorem:flattening-indentation}, we may further assume that $\psi_{0;i}$ is an isometry on all gates  in $\Real^{\dagger_k}(Z_0;\Sigma_i)$ where the tunnels are being attached. 

We replace now  subcomplex $\Real^{\dagger_k}(Z_0;\Sigma_i)$ in $\Real(Z;\Sigma_i)$ by the flattened complex $\Refine^{k\nu}(\Real(Z_0;\Sigma_i))$ and keep subcomplexes $\Tunnel^{\dagger_k}(Z_j;\Sigma_i)$ for $j=1,\ldots, k-1$, and tunnels $\Tunnel(Z_k;\Sigma_i)$ attached, canonically,  at the images of the corresponding gates.
 
Repeating this process iteratively for subcomplexes 
$\Tunnel^{\dagger_k}(Z_j;\Sigma_i)$, for $j=1,\ldots, k-1$, 
 the claim follows. Since the application of Theorem \ref{theorem:flattening-indentation} is done in essentially disjoint subcomplexes, the bilipschitz constant of $\phi_{k;i}$ is the bilipschitz constant of Theorem \ref{theorem:flattening-indentation}.
\end{proof}

After the indentations have been flattened, for the proof of quasiconformal stability, it remains to contract tunnels.

\begin{theorem}\label{theorem:quasiconformal-stable}
There exists a constant $\sK=\sK(n, K)\ge 1$ for the following. For each  $k\ge 1$ and $i\in\{1,\ldots, m\}$, there exists a $\sK$-quasiconformal homeomorphism 
\[
f_{k;i} \colon |\Real(Z_k;\Sigma_i)|\to |\Real(Z;\Sigma_i)|,
\]
which is the identity on the core $\Core(Z;\Sigma_i)$.
\end{theorem}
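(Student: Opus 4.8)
The statement follows by combining the structural description of the lifts $\Real(Z_k;\Sigma_i)$ in Proposition \ref{prop:combined-localized-Realization-structure} with the geometric flattening already performed in Proposition \ref{prop:general-tree-structure-of-Realization}, and then contracting the tunnels of the flattened model via the Tunnel-contracting proposition (Proposition \ref{prop:tunnel-contracting}). The idea is to avoid the naive iterative composition of the bilipschitz maps coming from each step $Z_{j-1}\to Z_j$ (whose constants would multiply and blow up with $k$), and instead flatten all indentations of all generations \emph{simultaneously} and then collapse all tunnels \emph{simultaneously}, so that the total distortion is controlled by a single application of Theorem \ref{theorem:flattening-indentation} together with a single application of Proposition \ref{prop:tunnel-contracting} with a uniformly bounded tunnel size.

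\textbf{Step 1: flattening.} Fix $k\ge 2$ (the case $k=1$ is already Corollary \ref{cor:Z_1-bilipschitz-expanding}) and $i\in\{1,\dots,m\}$. Apply Proposition \ref{prop:general-tree-structure-of-Realization} to obtain the space
\[
X_{k;i}=R_{0;i}\cup\Big(\bigcup_{\ell=1}^{k-1}T_{\ell;i}\Big)\cup T_{k;i}
\]
and the $L(n,K)$-bilipschitz homeomorphism $\varphi_{k;i}\colon|\Real(Z_k;\Sigma_i)|\to|X_{k;i}|$ which is an isometry away from the wedges and on all the gate-cubes $\pi_k^{-1}(\omega_{\bullet})$ where tunnels are attached. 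Here $R_{0;i}$ is an isometric copy of $|\Real(Z_0;\Sigma_i)|$ and each $T_{\ell;i}$ is an isometric copy of $|\Tunnel(Z_\ell;\Sigma_i)|$, attached to $R_{0;i}\cup\bigcup_{j<\ell}T_{j;i}$ along the images of the gates. Since the flattening in Proposition \ref{prop:general-tree-structure-of-Realization} is performed in essentially disjoint subcomplexes and leaves the core $\Core(Z_0;\Sigma_i)$ untouched, $\varphi_{k;i}$ is the identity on $|\Core(Z;\Sigma_i)|$ (the core does not meet any combined reservoir-canal system, hence lies outside all wedges).

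\textbf{Step 2: contracting the tunnels.} It remains to produce a bilipschitz homeomorphism $|X_{k;i}|\to R_{0;i}\cong|\Real(Z;\Sigma_i)|$ which is the identity on the core. The space $X_{k;i}$ is $R_{0;i}$ with a finite, hierarchically attached collection of tunnels glued on along single $(n-1)$-cubes (the gate images); each gate is a common face of exactly two $n$-cubes of $X_{k;i}$, and each tunnel has at most $\lambda_\loc$ cubes when $\ell\ge2$ and at most $\lambda(n,\mu,\nu,\#Z^{[n-1]})$ cubes when $\ell=1$ — in all cases a bound depending only on $n$ and $K$. We contract the tunnels from the leaves of the attachment tree inward, one at a time, using Proposition \ref{prop:tunnel-contracting}: each contraction of a tunnel $T$ (together with the unique $n$-cube $Q_T$ of the remaining complex carrying its attaching face) is an $L(n,\#T^{[n]})$-bilipschitz homeomorphism supported away from the rest of the complex and equal to the identity outside $Q_T$. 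Because distinct tunnels (and the distinct $n$-cubes $Q_T$ into which they are absorbed) are essentially disjoint — this is the point of the hierarchical, bounded-size construction, and is exactly what Corollary \ref{cor:RC-far} and the locally Euclidean choice of trees guarantee — each point of $X_{k;i}$ is moved by at most two of these elementary homeomorphisms. Hence the composition is $L(n,K)^2$-bilipschitz. The contractions are supported in (small) neighborhoods of the gates, which are disjoint from $|\Core(Z;\Sigma_i)|\subset R_{0;i}$, so the composite map fixes the core.

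\textbf{Conclusion and the main obstacle.} Composing the map of Step 1 with the contraction of Step 2 yields an $L(n,K)^3$-bilipschitz, in particular $\sK(n,K)$-quasiconformal, homeomorphism $f_{k;i}\colon|\Real(Z_k;\Sigma_i)|\to|\Real(Z;\Sigma_i)|$ which is the identity on $\Core(Z;\Sigma_i)$; set $\sK=\sK(n,K)=L(n,K)^3$. The constant is independent of $k$ because: (i) Theorem \ref{theorem:flattening-indentation} gives a constant depending only on $n$ and $\mu(K)$; (ii) the tunnels have size bounded by $\lambda_\loc=\lambda_\loc(n,\mu,\nu)$ for $k\ge2$ (and by a $K$-dependent constant at the single first step), so Proposition \ref{prop:tunnel-contracting} is applied with a uniform bound; and (iii) the simultaneous (essentially disjoint) nature of both flattening and contracting prevents the constants from compounding with $k$. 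The main obstacle is bookkeeping: one must verify carefully that the combined indentations $\Dent_\cup(Z_0,\dots,Z_{k-1};\Sigma_i)$ and $\Dent_\cup^\sfT(Z_\ell,\dots,Z_{k-1};\Sigma_i)$ really are indentations in the sense of Definition \ref{def:indentation-2} (this is Lemma \ref{lemma:Dent_cup}, which in turn rests on Lemma \ref{lemma:RC_cup} and the nearly-nested preference functions), and that after flattening the resulting tunnels of all generations are mutually essentially disjoint with the $n$-cubes absorbing them, so that the leaf-by-leaf contraction indeed moves each point at most twice — the hierarchical, locally-Euclidean structure imposed in Sections \ref{sec:localization} and \ref{sec:evolution} is precisely what makes this work.
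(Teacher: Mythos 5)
The flattening step (Step 1) is fine and agrees with the paper. The problem is your Step 2, specifically the claim that ``each point of $X_{k;i}$ is moved by at most two of these elementary homeomorphisms'' and the resulting conclusion that the tunnel-contracting composition is $L(n,K)^2$-bilipschitz. This is not true, and the theorem is stated with ``quasiconformal'' rather than ``bilipschitz'' precisely for this reason.

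The issue is the hierarchical attachment structure of the tunnels, which is recorded in Proposition \ref{prop:combined-localized-Realization-structure} and emphasized in Section \ref{sec:quote}: a tunnel at generation $\ell$ is attached to $R_{0;i}\cup\bigcup_{j<\ell}T_{j;i}$, and in general the absorbing $n$-cube lies inside a tunnel of some earlier generation. So when you contract a generation-$k$ tunnel via $\psi_{k;i}$, its image lands in a small cube inside, say, a generation-$(k-1)$ tunnel; the subsequent contraction $\psi_{k-1;i}$ of that tunnel moves those image points again; and so on. A point originating in a deepest-generation tunnel is moved by $O(k)$ of the elementary contractions, not by two. Essential disjointness of the tunnels (Corollary \ref{cor:RC-far}, locally Euclidean trees) controls the structure but does not prevent this cascade: the absorbing cube $Q_T$ is \emph{inside} the ambient complex that gets contracted next. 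Consequently the Lipschitz constant of $\psi_{1;i}\circ\cdots\circ\psi_{k;i}$ grows with $k$, and the composite is not uniformly bilipschitz.

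What rescues the theorem — and what is missing from your proof — is the conformality, not the bilipschitz property, of the later movements. The paper's proof introduces, for each gate, a ``tent'' $\widehat\Omega_{q;i}$ (a cube dilated by a factor $3$ around the absorbing cube $\Omega_{q;i}$) and arranges the maps $\psi_{\ell;i}$ so that on each tent coming from a later generation, $\psi_{\ell;i}$ is a Euclidean scaling. Then a point in $T_{\ell;i}$ is moved by $\psi_{\ell+1;i}$ (within $T_{\ell;i}$), by the tunnel-contraction $\psi_{\ell;i}$ (into a tent), and thereafter only by scalings under $\psi_{1;i}\circ\cdots\circ\psi_{\ell-1;i}$. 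Scalings are conformal, so these extra movements do not compound the distortion $\sK$, even though they do compound the Lipschitz constant. This is the step (borrowed from Drasin--Pankka) that makes $\sK$ independent of $k$. Your proof needs to be amended to replace the false ``moved at most twice'' claim with this scaling-on-tents argument, and the final map should be claimed to be $\sK(n,K)$-quasiconformal, not $L(n,K)^3$-bilipschitz.
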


The following argument is a simplification of the argument in \cite{Drasin-Pankka}. For the argument, we fix first a parameter $\beta\in \N$ as follows.

The localized channeling construction yields a constant $N=N(n, \nu)\ge 1$ for which there are at most $N$ tunnels in $\Tunnel(Z_{\ell+1};\Sigma_i)$ that can be attached to the same tunnel $\tunnel(q;i)$  
in the previous collection $\Tunnel(Z_\ell;\Sigma_i)$. 
Let $\beta \in \N$ be the smallest integer for which $\Refine^{\beta \nu}([0,1]^{n-1})$ has at least $N$ mutually disjoint $(n-1)$-cubes contained in  $(0,1)^{n-1}$. 

For the argument, recall also that the tunnel-contracting map in Proposition \ref{prop:tunnel-contracting} has a bilipschitz constant depending only on $n$ and the size of the tunnel. 
By
Propositions \ref{prop:combined-localized-Realization-structure} and \ref{prop:general-tree-structure-of-Realization}, 
$\Tunnel(Z_1;\Sigma_i)$ consists of exactly one tunnel, which has graph size at most $\#(Z^{[n-1]}) 3^{(n-1)\nu}$ and that for $\ell\geq 2$, each $\tunnel(q;\Sigma_i)$ in the collection  $\Tunnel(Z_\ell;\Sigma_i)$ has  size at most $\lambda_\loc$. 
Hence the tunnel-contracting maps have a bilipschitz constant depending only $n$ and $K$.

\begin{proof}[Proof of Theorem \ref{theorem:quasiconformal-stable}]
As a preliminary step, let
\[
\varphi=\varphi_{k;i} \colon |\Real(Z_k;\Sigma_i)|\to R_{0;i} \cup \bigcup_{\ell=1}^k T_{\ell;i} 
\]
be the $L(n,K)$-bilipschitz indentation-flattening map  in Proposition \ref{prop:general-tree-structure-of-Realization}.

For each $\ell\in \{2,\ldots,k\}$, 
we take $\Omega_{q;i}$, $q\in Z_{\ell-2}^{[n-1]}$, to be the  $n$-cube in $R_{0;i} \cup \bigcup_{j=1}^{\ell-1} T_{j;i}$
which has $\mathcal{w}_{q;i}=\varphi(|\pi_k^{-1}(\omega_{Z_{\ell-1}(q);i})|)$ as a face and call it a cube over  $\mathcal{w}_{q;i}$.
Take also $\widehat \Omega_{q;i}$ to be the cube in $R_{0;i} \cup \bigcup_{j=1}^{\ell-1} T_{j;i}$ containing $\Omega_{q;i}$  for which the pair $(\widehat \Omega_{q;i}, \,\Omega_{q;i})$ is a scaling of  $([0,3]^n, [1,2]^{[n-1]}\times [0,1])$.
We call $\widehat \Omega_{q;i}$ a tent over $\mathcal{w}_{q;i}$. When $\ell=1$, we make the obvious changes on notations, and define cube $\Omega_{q;i}$ and tent $\widehat \Omega_{q;i}$ over $\varphi(|\pi_k^{-1}(\omega_{Z;i})|)$ analogously.

In view of
Proposition \ref{prop:combined-localized-Realization-structure},
we assume as we may,  after post-composing $\varphi$ with a bilipschitz map if necessary, that tents in 
\[\{\widehat \Omega_{q;i}\}\cup \bigcup_{\ell=1}^k \{\widehat \Omega_{q;i}\colon q\in Z_{\ell-2}^{[n-1]} \}\]
 are mutually disjoint.

When $k=1$, by Proposition \ref{prop:tunnel-contracting} there exists an $L'$-bilipschitz homeomorphism, hence quasiconformal, 
$\widehat \psi_{1;i} \colon R_{0;i}  \cup  T_{1;i}   \to R_{0;i}$ which is the identity on $|\Core(Z;\Sigma_i)|$, where  constant $L'=L'(n,K)$ depends only on $n$ and $\#(Z^{[n-1]})$. The composition $\widehat \psi_{1;i} \circ \varphi_{1;i}$ is the claimed map in the proposition.

Let now $k\geq 2$. We fix a tunnel-contracting bilipschitz homeomorphism 
\[
\psi_{k;i} \colon R_{0;i} \cup \left(\bigcup_{\ell=1}^{k-1} T_{\ell;i}\right) \cup T_{k;i} \to R_{0;i} \cup \bigcup_{\ell=1}^{k-1} T_{\ell;i}
\]
as in Proposition \ref{prop:tunnel-contracting}, which 
\begin{enumerate}
\item is the identity in the complement of 
$T_{k;i}\, \cup \left(\bigcup_{q\in Z_{k-2}^{[n-1]}}|\widehat \Omega_{q;i}|\right)$, and
\item maps each   $\tau_{q;i}=\varphi(|\tunnel(q;\Sigma_i)|)$ in $T_{k;i}$, $q\in Z_{k-2}^{[n-1]}$,  onto the cube $\Omega_{q;i}$ adjacent to $\tau_{q;i}$. 
\end{enumerate}
Since the graph size of $\tunnel(q;\Sigma_i)$ is at most $\lambda_\loc$, the bilipschitz constant $L''=L''(n,K)$ may be chosen to depend only on $n$ and $\lambda_\loc$.

We fix next a $L'''$-bilipschitz homeomorphism
\[
\psi_{k-1;i}\colon R_{0;i} \cup \bigcup_{\ell=1}^{k-1} T_{\ell;i} \to R_{0;i} \cup \bigcup_{\ell=1}^{k-2} T_{\ell;i},
\]
which 
\begin{enumerate}
\item is the identity in the complement of  $T_{k-1;i} \cup \left(\bigcup_{q\in Z_{k-3}^{[n-1]}} \widehat \Omega_{q;i}\right)$,
 \item maps each $\tau_{q;i}=\varphi(|\tunnel(q;\Sigma_i)|)$ in $T_{k-1;i}$, 
for
$q\in Z_{k-3}^{[n-1]}$, onto  $\Omega_{q;i}$,  and
 \item  is a scaling on each tent $ \widehat \Omega_{q';i} $  
for 
$q'\in Z_{k-2}^{[n-1]}$, \label{item:scaling-on-tent}
\end{enumerate}
where the bilipschitz constant $L'''=L'''(n,K)\geq L''$ may be chosen to depend only on $n$, $\lambda_\loc$, and $\nu$.

Follow the construction of $\psi_{k-1;i}$ inductively for  $\ell= k-2, \ldots, 2$, and replace condition \eqref{item:scaling-on-tent} by
\begin{enumerate}
\item [(3)'] $\psi_{\ell;i}$ is a scaling on each $ \widehat \Omega_{q';i}$,
for
$q'\in Z_j^{[n-1]}$ and $j\in\{\ell-1, \ldots, k-2\}$.
\end{enumerate}
We obtain then a sequence of 
$L'''$-bilipschitz homeomorphisms 
\[
\psi_{\ell;i}\colon R_{0;i} \cup \bigcup_{j=1}^{\ell} T_{j;i} \to R_{0;i} \cup \bigcup_{j=1}^{\ell-1} T_{j;i},\quad  \ell= k-2, \ldots, 2,
\]
for which the sequence 
\[
\psi_{k;i},\,  \psi_{k-1;i}, \, \psi_{k-2;i},\, \ldots, \,\psi_{1;i},
\]
has an essential property needed for the composition to be quasiconformal with a distortion constant independent of $k$. Namely,  for each $\ell\in \{1,\ldots, k\}$,  points in $T_{\ell;i}$ are not moved by $\psi_{\ell+2;i}\circ\cdots \circ \psi_{k;i}$. They are first moved  by 
$\psi_{\ell+1;i}$ to points in $T_{\ell;i}$, and 
then by the tunnel-contracting map $\psi_{\ell;i}$ into tents adjacent to  $T_{\ell;i}$.  From then on, on each of these tents,   $\psi_{1;i} \circ\cdots \circ \psi_{\ell-1;i}$ is a scaling. 
In the above, we have taken  $\psi_{k+1;i}$ and  $\psi_{k+2;i}$  to be the identity map. Points in $R_{0;i}$ stay fixed under $\psi_{2;i}\circ \cdots, \circ \psi_{k-1;i} \circ \psi_{k;i}$ and are only moved by $\psi_{1;i}$.

Therefore the  composition  
\[f_{k;i}= \psi_{1;i} \circ \cdots, \circ \psi_{k;i} \circ \varphi\colon |\Real(Z_k;\Sigma_i)|\to |\Real(Z;\Sigma_i)|,\]
 is a $\sfK$-quasiconformal homeomorphism for a constant  $\sK$ depending only on $L(n,K), L', L'', L'''$,  and is the identity on the core $\Core_K(Z;\Sigma_i)$.
\end{proof}

\begin{proof}[Proof of Evolution Theorem \ref{theorem:evolution-short}]
Quasiconformal stability of the sequence $(Z_k)$ follows by taking  the inverse $f_{k;i}^{-1}$ of the mapping $f_{k;i}$ in Theorem \ref{theorem:quasiconformal-stable}.

The quasiconformal stability, together with Corollary  \ref{cor:summary_Z-k}, completes the proof of Theorem \ref{theorem:evolution-short}.
\end{proof}



\section{Lakes of Wada}\label{sec:Wada}

\subsection{Proof of the Main Theorem}

Evolution of separating complexes repeated indefinitely yields a continuum whose complement consists of Lakes of Wada. The following theorem, including manifolds with two boundary components, is an extension of the main theorem (Theorem \ref{intro-thm:Wada-Riemannian-manifold}).

\begin{theorem}
\label{thm:Wada-Riemannian-manifold} 
 Let $n\ge 3$, $m\geq 2$, and $M$ be a compact connected Riemannian $n$-manifold with $m$ boundary components.Then there exist a constant $\sK=\sK(n,M)>1$ and a continuum $X\subset \interior M$ having the following properties:
\begin{enumerate}
\item each $x\in X$ is a common boundary point of all connected components of $M \setminus X$; \label{item:Wada-2}
\item  each  component  $\Omega$ of $M \setminus X$ is a John domain in $M$ for which $\Omega \cap \partial M$ contains exactly one component of $\partial M$ and $\Omega$ is $\sK$-quasiconformal to $(\Omega \cap \partial M)  \times [0,1)$.\label{item:John+QC-2}
\end{enumerate}
\end{theorem}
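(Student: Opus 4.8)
The plan is to assemble the theorem from the machinery already in place: Proposition~\ref{prop:Riemannian-to-cubical} (reduce the Riemannian manifold to a good cubical complex), Theorem~\ref{theorem:separating-complex-existence} (obtain an initial separating complex), the Evolution Theorem~\ref{theorem:evolution-short} (produce the controlled sequence $(Z_k)$), Proposition~\ref{prop:topological-lakes-of-wada} (extract the Wada continuum as a Hausdorff-type limit), and the quasiconformal stability plus the core-to-core geometry of Section~\ref{sec:core-geometry} (upgrade each complementary component to a quasiconformal copy of a collar and a John domain). First I would fix, via Remark~\ref{rmk:Riemannian-good}, a good cubical $n$-complex $K$ with $|K|=M$, whose polyhedral metric $d_K$ is quasisimilar to $d_g$ and for which $\Star_K(\partial K)\cong \partial K\times[0,1]$; since quasisimilarity preserves both the quasiconformal class (conformally, in fact, up to the bilipschitz change of metric) and the John property, it suffices to prove the statement for $(|K|,d_K)$. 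Then Theorem~\ref{theorem:separating-complex-existence} supplies a separating complex $Z_0=Z$ in $\Refine(K)$ (replace $K$ by $\Refine(K)$, still good), and the Evolution Theorem yields constants $\lambda,\sfK$ and a sequence $(Z_k)$, $Z_k\subset\Refine^{\nu k}(K)$, which has the relative Wada property, is core-expanding, is an $\lambda$-perturbation at each step, and is $\sfK$-quasiconformally stable.

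Next I would set $X=\bigcap_{k\ge1}|\Star_{\Refine^{k\nu}(K)}(Z_k)|$. By Proposition~\ref{prop:topological-lakes-of-wada} (applicable since $m\ge 2$; for $m\ge 3$ this is exactly a Wada continuum, and for $m=2$ it is the common boundary of the two complementary domains, giving the statement promised in the Remark following Theorem~\ref{intro-thm:Wada-Riemannian-manifold}), $X\subset\interior M$ is a continuum, $M\setminus X$ has exactly $m$ components $M_1,\dots,M_m$ with $M_i=\bigcup_k|\Core_{\Refine^{k\nu}(K)}(Z_k;\Sigma_i)|$, each $M_i$ contains exactly the boundary component $\Sigma_i$ (so $\Omega\cap\partial M$ is a single boundary component), and $\partialtop M_1=\cdots=\partialtop M_m=X$; this gives \eqref{item:Wada-2}. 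It remains to identify each $M_i$ geometrically. By Lemma~\ref{lemma:metric-Realization} the interior of each $\Comp_{\Refine^{k\nu}(K)}(Z_k;\Sigma_i)$ minus $Z_k$ is conformally equivalent to the interior of its lift $\Real(Z_k;\Sigma_i)$, so $M_i$ is conformally the increasing union of the cores inside the lifts. Theorem~\ref{theorem:quasiconformal-stable} gives $\sfK$-quasiconformal homeomorphisms $|\Real(Z_k;\Sigma_i)|\to|\Real(Z;\Sigma_i)|$ that are the identity on $\Core(Z;\Sigma_i)$; since the cores exhaust $M_i$ and these maps are compatible on the (stationary) cores, they paste together to a $\sfK$-quasiconformal homeomorphism of $M_i$ onto $\interior|\Real(Z;\Sigma_i)|$ which, by Definition~\ref{def:separating-complex}\eqref{item:separating-mu-1}, is homeomorphic to $|\Sigma_i|\times[0,1)$; composing with the bilipschitz model identification and unwinding the quasisimilarity between $d_K$ and $d_g$ yields a $\sfK$-quasiconformal homeomorphism $\Omega\to(\Omega\cap\partial M)\times[0,1)$.

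For the John property \eqref{item:John+QC-2} I would argue directly in $d_K$ (the estimate transfers to $d_g$ by quasisimilarity, which changes the John constant by a bounded factor). Given $a,b\in M_i$, choose cubes $Q_a,Q_b$ in cores $\Core(Z_{k_a};\Sigma_i)$, $\Core(Z_{k_b};\Sigma_i)$ containing them. Lemma~\ref{lemma:core-to-core} and Remark~\ref{rmk:John} produce, for each of $a$ and $b$, a PL curve running through a chain of cubes of geometrically decreasing side lengths $3^{-\ell\nu}$ back to a fixed cube in $\Core(Z;\Sigma_i)$, with the two-sided estimate $c_1 3^{-\ell\nu}\le\dist(x_{\ell,j},|\partial\Core(Z_k;\Sigma_i)|)\le c_2 3^{-\ell\nu}$ on the centers; concatenating these two curves (joined inside the fixed finite complex $\Core(Z;\Sigma_i)$, whose diameter and internal geometry contribute only $K$-dependent constants) gives a rectifiable $\gamma\subset M_i$ joining $a$ and $b$. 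Along $\gamma$, the portion between $a$ (resp.~$b$) and a point $x$ has length comparable to the side length of the cube containing $x$, because the side lengths decay geometrically along the chain; and $\dist(x,M\setminus M_i)=\dist(x,X)$ is comparable, from below by $c_13^{-\ell\nu}$ and from above since $X\subset\Star(Z_k)$ lies within $O(3^{-k\nu})$ of the core boundary, to that same side length. This yields the length-distance estimate \eqref{eq:John-intro-Rn} with $C=C(n,K)$, so $\Omega$ is a $C$-John domain in $M$. Putting $\sfK=\sfK(n,M)$ as the larger of the quasiconformal constant and (after the metric conversions) the John constant finishes the proof.

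\textbf{Main obstacle.} I expect the delicate point to be the passage to the limit in the quasiconformal statement: the maps $f_{k;i}^{-1}\colon|\Real(Z;\Sigma_i)|\to|\Real(Z_k;\Sigma_i)|$ are fixed only on the \emph{initial} core $\Core(Z;\Sigma_i)$, so one must check that the induced maps on the interiors are eventually stationary on every compact subset of $M_i$ (using that the cores are nested and increasing by the core-expanding property, and that $f_{k;i}$ fixes $\Core(Z;\Sigma_i)$ while $f_{k+1;i}$ restricted to $\Real(Z_k;\Sigma_i)$ must be post-composed consistently) — more precisely, that the diagram of stability maps commutes up to maps fixing the relevant cores, so that a single well-defined $\sfK$-quasiconformal $M_i\to\interior|\Real(Z;\Sigma_i)|$ emerges. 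A secondary, but routine, subtlety is tracking how the quasisimilarity $d_g\leftrightarrow d_K$ and the bilipschitz model identifications affect the John constant and the quasiconformal distortion, both of which only change by dimension- and $K$-dependent factors and hence can be absorbed into $\sfK(n,M)$.
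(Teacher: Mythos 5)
Your overall route is essentially the paper's: reduce $(M,g)$ to a good cubical complex via Proposition~\ref{prop:Riemannian-to-cubical}, get an initial separating complex via Theorem~\ref{theorem:separating-complex-existence}, run the Evolution Theorem, set $M_i=\bigcup_k|\Core(Z_k;\Sigma_i)|$ and $X=\bigcap_k|\Star_{\Refine^{k\nu}(K)}(Z_k)|$, invoke Proposition~\ref{prop:topological-lakes-of-wada} for property~\eqref{item:Wada-2}, and use Lemma~\ref{lemma:core-to-core} and Remark~\ref{rmk:John} for the John property. The John argument is in substance the same as the paper's and is fine.

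The gap is precisely where you flag it, and it is not merely ``delicate'' --- your proposed fix would not work as stated. You want to ``paste together'' the maps $g_{k;i}=f_{k;i}^{-1}\colon|\Real(Z;\Sigma_i)|\to|\Real(Z_k;\Sigma_i)|$ because ``the cores exhaust $M_i$ and these maps are compatible on the (stationary) cores.'' But by Definition~\ref{def:qc-stable} and Theorem~\ref{theorem:quasiconformal-stable}, each $g_{k;i}$ is required to be the identity only on the \emph{initial} core $\Core(Z;\Sigma_i)$, not on the later cores $\Core(Z_\ell;\Sigma_i)$ ($\ell\ge1$). The Evolution Theorem asserts no coherence between $g_{k;i}$ and $g_{k+1;i}$ beyond this fixed initial set, and the construction in Section~\ref{sec:quasiconformality} (successive flattening of cumulative indentations, tunnel contractions) does not produce such coherence; there is no reason the maps are eventually stationary on compacta of $M_i$. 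So the direct limit you would need does not exist from what is proved. The paper closes this exact gap differently: it sets $h_{k;i}=\pi_k\circ g_{k;i}\circ\pi_0^{-1}$, observes that these are $\sfK$-quasiconformal and all equal the identity on the fixed set $|\Core(Z;\Sigma_i)|$, hence form a normal family; it then extracts a locally uniformly convergent subsequence and invokes the kernel-convergence theorem for quasiconformal maps (V\"ais\"al\"a, Section~20) together with the core-expansion computation of the kernel to conclude that the limit is a $\sfK$-quasiconformal homeomorphism onto $M_i\setminus|\partial K|$. If you keep your plan, replace the pasting step by this compactness/normal-family argument; alternatively, you would have to strengthen the Evolution Theorem to produce a genuinely coherent (inverse-system) family of stability maps, which is not what Section~\ref{sec:quasiconformality} delivers.
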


\begin{proof}
For the proof we pass from the Riemannian manifold $(M,g)$ to a cubical $n$-complex $K$ with a polyhedral 
metric $d_K$ in which each cube is isometric to a Euclidean unit cube, and for which $(|K|, d_K)$ is quasisimilar to $(M,g)$;
see Proposition \ref{prop:Riemannian-to-cubical}.   
Thus it suffices to prove Theorem \ref{intro-thm:Wada-Riemannian-manifold} in the setting of $(|K|, d_K)$. 

We continue to denote  the boundary components of $K$ by $\Sigma_1, \ldots, \Sigma_m$ and refer freely to the construction in Section \ref{sec:evolution-seq}. Let $Z_0, Z_1,\ldots, Z_k, \ldots$ be 
separating complexes in $K, \Refine^\nu(K),\ldots, \Refine^{k\nu}(K), \ldots $, respectively, constructed in Section \ref{sec:evolution-seq}, which have the properties in Theorem \ref{theorem:evolution-short}. 

\subsubsection*{Common boundary}
For each $i\in \{1,\ldots,m\}$, the cores 
\[
\Core(Z_k;\Sigma_i)= \Comp(Z_k;\Sigma_i) - \Star_{\Refine^{k\nu}(K)}(Z_k)
\]
are expanding, that is, $|\Core(Z_{k-1};\Sigma_i)| \subset \interior |\Core(Z_k;\Sigma_i)|$. Thus 
\[
M_i=\bigcup_{k=1}^\infty |\Core(Z_k;\Sigma_i)|,\quad i=1,\ldots, m,
\]
are mutually disjoint open sets in $M$. Let
\[
X= \bigcap_{k=1}^\infty |\Star_{\Refine^{k\nu}(K)}(Z_k)|.
\]
It has been proved in Proposition \ref{prop:topological-lakes-of-wada} that 
the topological boundaries $\partialtop M_i$ of domains $M_i$ satisfy 
$\partialtop M_1 = \ldots = \partialtop M_m =X.$

\subsubsection*{Quasiconformality.}
Recall from Convention \ref{convention:metric-subcomplex} and Lemma \ref{lemma:metric-Realization} that metrics on $|\Real(Z_k;\Sigma_i)|$ and on  $|\Comp(Z_k;\Sigma_i)|$ are conformally equivalent (in fact, locally isometric) when 
restricted to 
interiors $\interior (|\Real(Z_k;\Sigma_i)|)$ and  $\interior (|\Comp(Z_k;\Sigma_i)|\setminus |Z_k|)$.

Recall also that core $\Core(Z_k;\Sigma_i)$ and its  lift $\pi_k^{-1}(\Core(Z_k;\Sigma_i))$ are isomorphic, hence are identified with the same notation.

For each $i\in \{1,\ldots, m\}$ and  $k\ge 2$, let
\[
g_{k;i}= f_{k;i}^{-1}\, \colon |\Real(Z;\Sigma_i)|\to |\Real(Z_k;\Sigma_i)|
\]
be the inverse of the $\sfK$-quasiconformal mapping $f_{k;i}$ in Theorem \ref{theorem:quasiconformal-stable}, and let 
\[
h_{k;i}\colon \interior (|\Comp(Z;\Sigma_i)|\setminus |Z|) \to \interior(|\Comp(Z_k;\Sigma_i)|\setminus |Z_k|)
\]
be the mapping $h_{k;i} =\pi_k\circ  g_{k;i}\circ \pi_0^{-1} $.

For a fixed $i\in \{1,\ldots, m\}$, each mapping $h_{k;i}$ is the identity
on $|\Core(Z;\Sigma_i)|$. Hence the sequence $(h_{k;i})$ is equicontinuous and
they form a normal family of $K$-quasiconformal mappings $ \interior (|\Comp(Z;\Sigma_i)|\setminus |Z|) \to M$. 
Therefore, there is  a subsequence $(h_{k_j;i})$ which converges locally uniformly to a $\sfK$-quasiconformal map 
\[h_i\colon \interior(|\Comp(Z;\Sigma_i)|\setminus |Z|) \to \ker\left(\interior(|\Comp(Z_{k_j};\Sigma_i)|\setminus |Z_{k_j}|)\right);\]
see V\"ais\"al\"a \cite[Section 20]{Vaisala-book}. Recall that the kernel $\ker (A_j)$ of a sequence of sets $(A_j)$  in $|K|$ is the set of all points in $|K|$ which has a neighborhood that is contained in all but finitely many sets $A_j$. 
Since 
\[
|\Core(Z_k;\Sigma_i)|\subset |\Comp(Z_k;\Sigma_i)|\setminus |Z_k| \subset |\Core(Z_k;\Sigma_i)|\cup |\Star_{\Refine^{k\nu}(K)}(Z_k)|,
\]
and cores are expanding,  it is straightforward to check that 
\[
\ker\left(\interior(|\Comp(Z_{k_j};\Sigma_i)|\setminus |Z_{k_j}|)\right)= M_i \setminus |\partial K|.
\]

Since  $ \interior (|\Comp(Z;\Sigma_i)|\setminus |Z|)$ is $L(n,K)$-bilipschitz to $\Sigma_i\times (0,1)$, each $M_i\setminus |\partial K|$ is $\sfK$-quasiconformal to the space $\Sigma_i\times (0,1)$ for some constant $\sfK=\sfK(n,K)\geq 1$.

\subsubsection*{John domain.}
Let $i\in \{1,\ldots,m\}$. We fix an $n$-cube $Q'_0$ in $\Core(Z_0;\Sigma_i)$. Let also $x'_0$ be the center of $Q'_0$.

We construct first, for each $a\in M_i$, a path $\gamma_a$ from $a$ to $x'_0$.
Given $a\in M_i$,  
let $k\ge 0$ be the smallest integer for which $a \in |P_k|$, where 
\[
P_k=\Core(Z_k;\Sigma_i) - \Refine^\nu(\Core(Z_{k-1};\Sigma_i)).
\]
Let $Q_k\in P_k^{[n]}$ be an $n$-cube which contains $a$, and let $x_k$ be the center of $Q_k$. 

Suppose first that $k\ge 1$ and let 
\[
Q_k=Q_{k,1}, \ldots, Q_{k,p_k}; \,\,Q_{k-1,1},\ldots, Q_{k-1,p_{k-1}};\,\,  \ldots; \,\, Q_{0,1},\ldots, Q_{0,p_0}=Q'_0,
\]
be the $n$-cubes in Lemma \ref{lemma:core-to-core} and let points $x_{\ell,j}$ be centers of cubes $Q_{\ell,j}$, respectively.
By the definition of cores and the distance estimates in Remark \ref{rmk:John}, there exist constants $c=c(n,K) >0$ and $ C=C(n,K)>0$ such that 
\[
c 3^{- \ell \nu}\leq \dist (x_{\ell,j}, M\setminus M_i)\leq C  3^{- \ell \nu},
\]
for $1\leq \ell\leq k$ and $j\in\{1,\ldots, p_\ell\}$.
In this case, we fix a path $\gamma_a$ from the center $x_k$ of $Q_k$ to the center $x'_0$ of $Q'_0$ as follows. First, let $\sigma_a$ be the PL curve which connects $x_k=x_{k,1}$ to $x_0=x_{0,p_0}$ as in Remark \ref{rmk:John}. Let also $\sigma_a'$ be the line segment which connects $a$ to $x_k$. We take $\gamma_a = \sigma_a \cup \sigma'_a$.

For $k=0$, we observe that, since $P_0$ is a (product) collar of the boundary component $\Sigma_i$, that there exists a chain of $n$-cubes 
\[
Q_0 = Q_{0,1},\ldots, Q_{0,p_0}=Q'_0,
\]
where $p_0$ depends only on $\Sigma_i$, and hence again only on $K$. Thus, we may fix a path $\gamma_a$ from $x_0$ to $x'_0$ as in the case $k\ge 1$.

It  is  straightforward to check by  Lemma \ref{lemma:core-to-core} that there exists a constant $C=C(n, \mu, \nu, \#(K^{[n]}))>0$ for which
\[ 
s(\gamma_a(a,x))\leq C  \, \dist(x, M\setminus M_i)\quad \text{for all}\,\, x\in \gamma_a, 
\]
where $\gamma_a(x_k,x)$ is the part of $\gamma_a$ between $x_k$ and $x$, $s(\cdot)$ is the length.

Let now $a,b\in M_i$ and define $\gamma = \gamma_a \cup \gamma_b$. Then the length-distance estimates of $\gamma$ in the definition of John domain follows immediately.

This completes the proof of the theorem.
\end{proof}

\begin{corollary}
\label{cor:Wada-sphere} 
Let $n\ge 3$, $m \ge 2$, and $D_1,\ldots, D_m$ be mutually disjoint connected closed PL $n$-dimensional submanifolds of a closed and connected Riemannian $n$-manifold $M$. 
Then there exists a continuum $X$ in $M\setminus \bigcup_{i=1}^m D_i $, whose complement in $M$ has exactly $m$ components $ M_1,\ldots, M_m$ for which $D_i \subset M_i$, and  
\[ 
\partialtop M_1= \cdots = \partialtop M_m =\bigcap_{i=1}^m\,  \overline{M_i}=X,
\]  
and each $M_i$  is quasiconformal to $\interior D_i$ and is a John domain. 
\end{corollary}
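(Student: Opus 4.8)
The plan is to deduce Corollary~\ref{cor:Wada-sphere} from Theorem~\ref{thm:Wada-Riemannian-manifold} by a standard excision argument. First I would form the compact connected Riemannian $n$-manifold with boundary
\[
M' = M \setminus \bigcup_{i=1}^m \interior D_i,
\]
which is a manifold with exactly $m$ boundary components $\partial D_1, \ldots, \partial D_m$, since each $D_i$ is a PL $n$-submanifold of $M$ with (PL, hence topologically) collared boundary. The assumption $m\geq 2$ together with $n\ge 3$ places us squarely in the hypotheses of Theorem~\ref{thm:Wada-Riemannian-manifold}. Applying that theorem to $M'$ produces a constant $\sK=\sK(n,M')$ and a continuum $X\subset \interior M'\subset M\setminus\bigcup_i D_i$ such that each point of $X$ is a common boundary point of all components of $M'\setminus X$, and each component $\Omega$ of $M'\setminus X$ is a John domain in $M'$ containing exactly one boundary component $\partial D_i$ with $\Omega$ being $\sK$-quasiconformal to $(\partial D_i)\times[0,1)$.

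Next I would translate these conclusions back to $M$. Label the components of $M'\setminus X$ as $\Omega_1,\ldots,\Omega_m$ with $\partial D_i\subset \overline{\Omega_i}$, and set $M_i = \Omega_i \cup \interior D_i$. Since $D_i$ is a closed PL submanifold of $M$ with collared boundary $\partial D_i$ and $\Omega_i$ meets $M'$ in a collar of $\partial D_i$ (this is the content of $\Omega_i$ being quasiconformal to $(\partial D_i)\times[0,1)$), the union $M_i$ is open and connected in $M$; moreover $M\setminus X = M_1\cup\cdots\cup M_m$ is a disjoint union of open sets, so $M_1,\ldots,M_m$ are precisely the connected components of $M\setminus X$. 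Because $X\subset \interior M'$ is disjoint from every $D_i$, the topological boundary of $M_i$ in $M$ coincides with the topological boundary of $\Omega_i$ in $M'$, which is $X$; hence $\partialtop M_1=\cdots=\partialtop M_m=X$, and since $X$ is closed with empty interior (each $M_i$ is dense-complemented) one also gets $\bigcap_{i=1}^m \overline{M_i}=X$.

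Finally I would upgrade the quasiconformal and John statements from $M'$ to $M$. For the quasiconformal claim: $M_i = \Omega_i\cup \interior D_i$ is obtained by gluing the open collar $\Omega_i \cong_{\mathrm{qc}} (\partial D_i)\times[0,1)$ onto $\interior D_i$ along a bicollar neighborhood of $\partial D_i$; since the quasiconformal equivalence $\Omega_i\to(\partial D_i)\times[0,1)$ can be arranged (again appealing to the collar structure and the quasiconformal boundary-extension theory cited after \eqref{eq:dist}, via Tukia--V\"ais\"al\"a) to be a PL-collar identification near $\partial D_i$, the two pieces glue to a quasiconformal homeomorphism $M_i\to \interior D_i$ with distortion bounded by $\sK(n,M)$, after a harmless bilipschitz adjustment of the Riemannian metrics on a compact region. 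For the John property: the curves constructed in the proof of Theorem~\ref{thm:Wada-Riemannian-manifold} realizing the John condition in $\Omega_i$ (for points $a,b\in\Omega_i$) already satisfy the length--distance estimate with respect to $\dist(\cdot, M'\setminus\Omega_i)=\dist(\cdot, M\setminus M_i)$ near $X$; for points in $\interior D_i$ one connects them to a fixed base point by a bounded chain inside the compact submanifold $D_i$, whose length--distance ratio is controlled by a constant depending only on $D_i$ (hence on $M$), and then concatenates; combining the two regimes gives a John constant $C=C(n,M)$.

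The main obstacle is the gluing step: one must verify that the quasiconformal map $\Omega_i\to (\partial D_i)\times[0,1)$ extends across $\partial D_i$ to identify $M_i$ with $\interior D_i$ quasiconformally, i.e.\ that the collar coming from the Wada construction matches a genuine bicollar of $\partial D_i$ in $M$ up to bounded distortion. This is where one invokes the boundary regularity of quasiconformal maps (Tukia--V\"ais\"al\"a \cite{Tukia-Vaisala-AASF,Tukia-Vaisala-Annals}) together with the PL (hence smooth-collar) structure of $D_i$, exactly as in the equivalence of the two definitions of quasiconformality for manifolds with boundary discussed in the introduction; the point is that everything in sight is compact, so all the constants can be absorbed into a single $\sK(n,M)$.
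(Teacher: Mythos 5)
Your proof is essentially the paper's own implicit argument: excise the interiors $\interior D_i$ to form $M' = M\setminus\bigcup_i\interior D_i$, apply Theorem~\ref{thm:Wada-Riemannian-manifold}, and glue the resulting collar-domains $\Omega_i$ back onto $\interior D_i$ via the Tukia--V\"ais\"al\"a boundary-extension theory you cite. The only caveat, shared with the paper's statement of the corollary, is that the hypotheses as written do not automatically guarantee that $M'$ is connected (nor that each $\partial D_i$ has a single boundary component); both should be assumed, and both hold for the PL balls used to deduce Theorem~\ref{intro-thm:Wada-on-sphere}.
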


The main theorem (Theorem \ref{intro-thm:Wada-on-sphere})  follows from Corollary \ref{cor:Wada-sphere} by choosing $D_1,\ldots, D_m$ to be Euclidean balls. We restate the theorem for reference.

\introthmWadasphere*

\subsection{Lakes of Wada in $\bS^2$} In dimension $n=2$ the situation is different. We finish this paper with the result that Lakes of Wada in $\bS^2$ are never John domains. Note that Lakes of Wada in $\bS^2$ are always open cells and hence conformal to the unit disk.

\begin{proposition}\label{prop:Wada_2dim} 
Let $m\geq 3$, and $M_1,\ldots, M_m$ be disjoint connected open sets in $\bS^2$ for which
\[ 
\partialtop M_1= \cdots = \partialtop  M_m =\bigcap_{i=1}^m\,  \overline{M_i}.
\]  
Then none of the domains $M_1,\ldots, M_m$ is a John domain.
\end{proposition}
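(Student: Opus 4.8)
The plan is to argue by contradiction, exploiting the planarity of $\bS^2$ together with the John condition to derive a contradiction with the three-or-more Lakes hypothesis. The key structural fact is that in $\bS^2$ a John domain is \emph{finitely connected on its boundary} in a strong quantitative sense: the John center together with the double-cone condition forces the boundary to be locally connected and, more importantly, forces small neighborhoods of a boundary point to meet the domain in boundedly many ``channels''. Concretely, if $D\subset\bS^2$ is a $C$-John domain with John center $x_0$, then for every $x\in\partialtop D$ and every sufficiently small $r>0$, the number of connected components of $D\cap B(x,r)$ that reach inward to distance $\sim r$ is bounded by a constant $N=N(C)$; equivalently, $D\cap(B(x,2r)\setminus B(x,r))$ has at most $N(C)$ components that separate $x$ from $x_0$ within $D$. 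This is a standard consequence of the John cone condition in the plane (one can cite \cite{Martio-Sarvas} or argue directly: each such component must contain the tail of a John cone pointing toward $x_0$, and these cones have definite aperture, so only boundedly many fit).

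\textbf{Key steps.} First I would set up the contradiction: suppose $M_1,\dots,M_m$ with $m\ge 3$ all have common topological boundary $X=\bigcap_i\overline{M_i}$ and suppose, say, $M_1$ is a $C$-John domain. Fix the John center $x_0\in M_1$. Second, pick a point $x\in X$ and a small radius $r$; since $x\in\partialtop M_j$ for \emph{every} $j$, the ball $B(x,r)$ meets all of $M_1,\dots,M_m$. Third — the heart of the argument — I would use planar topology to show that as $r\to 0$, the number of ``inward channels'' of $M_1$ near $x$ must grow without bound. The mechanism: since $M_2,\dots,M_m$ all accumulate at $x$ and they are disjoint open connected sets, near $x$ the complement of $M_1$ contains pieces of each $M_j$ interleaved along $X$; by a Jordan-curve / separation argument in $\bS^2$, between consecutive ``fingers'' of distinct $M_j$'s that touch $x$ there must be a finger of $M_1$, and one can arrange (shrinking $r$ and using that $X$ is the common boundary, so all colors appear at every scale near $x$) that the number of such $M_1$-fingers reaching depth $\sim r$ is at least, crudely, proportional to the number of times the colors $2,\dots,m$ alternate on $\partial B(x,r)\cap X$, which is unbounded as $r\to 0$ precisely because $x$ is a common boundary point of three or more domains. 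Fourth, I would contradict the John bound from the previous paragraph: each deep $M_1$-finger near $x$ must carry a John cone of definite aperture directed toward $x_0$, so their number at scale $r$ is $\le N(C)$, independent of $r$ — contradiction.

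\textbf{Main obstacle.} The delicate point is Step 3: making rigorous the claim that ``having three or more Lakes with common boundary forces unboundedly many fingers of each color near each boundary point.'' One needs a clean planar-topology lemma. I expect the right tool is the following: if $x\in\partialtop M_1\cap\partialtop M_2\cap\partialtop M_3$ with the $M_i$ disjoint open connected, then for every neighborhood $U$ of $x$ and every $j$, the set $M_1\cap U$ has a component whose closure contains $x$, but moreover one cannot have $M_1\cap B(x,r)$ with only boundedly many components touching $x$ for all $r$ while simultaneously $M_2$ and $M_3$ also touch $x$ through $B(x,r)\setminus M_1$ — a counting of the cyclic order of fingers around $x$ on the circle $\partial B(x,r)$, combined with the fact (from $X$ being the \emph{entire} common boundary, not just a shared point) that the pattern refines at every scale. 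An alternative, perhaps cleaner, route is to use that a planar John domain has \emph{John boundary} hence is a John disk, so $M_1$ is a quasidisk-like domain whose boundary is a single curve of controlled geometry; then the prime-end / boundary structure shows $\partialtop M_1$ is locally a finite union of arcs, which is incompatible with being the full common boundary of $m\ge 3$ domains (a point of $X$ where three domains meet cannot be an interior point of a boundary arc of $M_1$, yet the John-disk structure forces $\partialtop M_1$ to be such arcs except at boundedly many points, while $X$ has such triple points densely). I would develop whichever of these two formulations the authors' earlier machinery makes shortest; the contradiction in either case comes down to ``finite local connectivity of a planar John domain'' versus ``triple (or higher) boundary points are dense in a Wada continuum.''
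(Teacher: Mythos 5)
Your proposal and the paper's proof share only the broad flavor of ``planar separation plus the John condition,'' but the mechanisms are genuinely different, and as written your argument has a gap that the paper's construction is specifically designed to avoid.

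Your Route~1 hinges on the claim that, for a boundary point $x$ of a Wada continuum, the number of ``deep channels'' of $M_1\cap B(x,r)$ must grow without bound as $r\to 0$, and you acknowledge that this is the delicate step. That claim is not established by the proposal and is not elementary. The issue is precisely the one your sketch elides: you would need to convert ``all colors accumulate at $x$'' and ``all nearby boundary points are also triple points'' into a quantitative lower bound on the number of alternations of colors crossing a circle $\partial B(x,r)$, and then into a lower bound on the number of components of $M_1\cap B(x,r)$ touching $x$ deeply. None of this follows formally from the hypotheses without a real planar-topology argument; the word ``interleaved'' is doing a lot of unjustified work. (That the claim, once proved, must use dimension two in an essential way is underlined by the rest of the paper: in dimension $\ge 3$ there \emph{are} John Lakes of Wada, so a purely channel-counting argument cannot be dimension-free.) Your Route~2 contains an outright error: the boundary of a planar John disk is \emph{not} locally a finite union of arcs. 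John disks include quasidisks, whose boundaries are quasicircles such as the von Koch snowflake; all you get from the John condition is local connectedness of the boundary, which does not by itself exclude a Wada continuum.

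The paper's proof avoids both issues with a single-scale Jordan-curve construction. It fixes the John center $a\in M_1$, lets $r$ and $r'$ be the smallest and largest radii at which circles about $a$ meet $X$, and chooses points $p\in S^1(a,r)\cap X$ and $p'\in S^1(a,r')\cap X$. For a small $\varepsilon$ it joins arcs $\gamma_2\subset M_2$ and $\gamma_3\subset M_3$ (running between $\partial B^2(p,\varepsilon)$ and $\partial B^2(p',\varepsilon)$ and avoiding both $\varepsilon$-disks) with short arcs $\tau\subset \partial B^2(p,\varepsilon)$ and $\tau'\subset \partial B^2(p',\varepsilon)$ to form a Jordan curve $J$. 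One complementary component of $J$ contains $B^2(a,r)$ and hence $a$; the other contains a point $b\in M_1$ near the middle circle. Any arc in $M_1$ from $a$ to $b$ cannot cross $\gamma_2$ or $\gamma_3$, so it must cross $\tau\cup\tau'$ at a point $t$ with $\dist(t,X)\le\varepsilon$, while $\min\{|t-a|,|t-b|\}$ is bounded below by a quantity of order $\min\{r,(r'-r)/2\}$. Choosing $\varepsilon$ small relative to $r$, $r'-r$, and $|p-p'|$ forces a violation of the $C$-John inequality. Notice that this argument never counts components, never needs an unbounded-complexity statement, and only uses one arc in $M_2$ and one in $M_3$; it thus needs only $m\ge 3$ in the weakest possible way. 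If you wanted to pursue your Route~1, the missing lemma would have to be proved separately, and it is plausibly harder than the result itself.
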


\begin{proof}
Let $X=\bigcap_{i=1}^m\, \overline{M_i}$. Suppose towards contradiction that one of the domains $M_1,\ldots, M_m$ is a John domain. We assume, as we may, that $M_1$ is a John domain. We may assume that $\infty \in M_m$ and hence that domain $M_1$ is a $C$-John domain in $\R^2$. Note that, now $X$ is a continuum in $\R^2$.

Fix a point $a\in M_1$. Let $S^1(a,r)$ (resp. $S^1(a,r')$) be the smallest (resp. the largest) circle centered at $a$ that meets $X$. Thus,  $B^2(a,r) \subset M_1$ and  $ M_1 \cup X\subset \overline{B^2}(a,r')$.

Fix points $p\in S^1(a,r)\cap X$ and $p'\in S^1(a,r')\cap X$ and let 
\[
\epsilon = \min\{ |p-p'|/10, r/(10C), (r'-r)/(10 C)\}.
\] 
Let also $D=B^2(p,\epsilon)$ and $D'=B^2(p',\epsilon)$.
Since $p$ and $p'$ are in $X$, we have that each $M_i$ meets the interior of $D$ and also the interior of $D'$. 

We fix, for $i=2, 3$, a Jordan arc $\gamma_i \subset M_i\setminus (D\cup D')$ having one endpoint $q_i$ in $\partial D$, the other $q_i'$ in $\partial D'$; such Jordan arc $\gamma_i$ is given by a subarc of a Jordan arc in $M_i$ connecting points in $D$ and $D'$.

Let $\tau$ be the subarc of $\partial D$ having endpoints $q_2$ and $q_3$ and not intersecting the disk $B^2(a,r)$. Now $\gamma_2 \cup \tau \cup \gamma_3$ is a Jordan arc in $\R^2\setminus (D \cup D')$. Let now $\tau'$ be one of the subarcs of $\partial D'$ connecting $q_2'$ and $q_3'$. Then $J = \gamma_2\cup \tau \cup \gamma_3 \cup \tau'$ is a Jordan curve having the property that one of the components of $\R^2\setminus J$ contains the disk $B^2(a,r)$. Let $\mathcal Q$ be the component of $\R^2\setminus J$ which does not contain $B^2(a,r)$.

Since both $\gamma_2$ and $\gamma_3$ meet the circle $S=S^1(a, (r+r')/2)$, the intersection $\mathcal Q\cap S$  contains points in $X$. Thus we may fix a point $b \in M_1\cap \mathcal Q$ satisfying $\dist(b,S)<\epsilon$.

Let $\gamma \subset M_1$ be an arc connecting $a$ to $b$. Since $a$ and $b$ belong to different components of $\R^2\setminus J$ and $\gamma$ does not meet arcs $\gamma_2$ and $\gamma_3$, we conclude that $\gamma$ intersects either $\tau$ or $\tau'$. Let $t\in \gamma \cap (\tau \cup \tau')$. Since $\tau \subset \overline D$ and $\tau'\subset \overline{D'}$, we conclude that $\dist(t,X)\leq \epsilon$. Thus 
\[ 
\min\{|t-a|,|t-b|\} \geq \min\{ r, (r'-r)/2-2\epsilon\} >  C \epsilon \geq  C \dist(t, X).
\]
This is a contradiction, since we assumed that $M_1$ is $C$-John.
\end{proof}




\end{document}